\newcommand{\gt}[1]{\mathfrak{#1}}
\newcommand{\mc}[1]{\mathcal{#1}}
\newcommand{\comment}[1]{}
\newcommand{\RR}{{\mathbb R}}
\newcommand{\RRp}{\RR^+}
\newcommand{\CC}{{\mathbb C}}
\newcommand{\ZZ}{{\mathbb Z}}
\newcommand{\ZZp}{\ZZ^+}
\newcommand{\NN}{{\mathbb N}}
\newcommand{\QQ}{{\mathbb Q}}
\newcommand{\QQp}{{\mathbb Q}^+}
\newcommand{\HH}{{\mathbb H}}
\newcommand{\ii}{{\bf i}}
\newcommand{\tpi}{2\pi\ii}
\newcommand{\fps}{4\pi^2}
\newcommand{\lab}{{\langle}}    
\newcommand{\rab}{{\rangle}}    
\newcommand{\llb}{{(\!(}}
\newcommand{\rrb}{{)\!)}}
\newcommand{\Aut}{\operatorname{Aut}}
\newcommand{\Res}{\operatorname{Res}}
\newcommand{\Pdet}{P\!\det}
\newcommand{\ee}{\operatorname{e}} 
\newcommand{\ev}{V} 
\newcommand{\ew}{W} 
\newcommand{\fc}[2]{{\rm c}_{#1}^{#2}} 
\newcommand{\gc}[2]{{\rm g}_{#1}^{#2}} 
\newcommand{\sqnom}[2]{\genfrac{[}{]}{0pt}{}{#1}{#2}}
\newcommand{\lBZ}{\left\llbracket} 
\newcommand{\rBZ}{\right\rrbracket} 
\newcommand{\lBZh}{\left.} 
\newcommand{\rBZh}{\right.} 
\newcommand{\lBgZ}{\lBZ^g} 
\newcommand{\lBhZ}{\lBZ^h} 
\newcommand{\jac}{\operatorname{jac}}
\newcommand{\genus}{\operatorname{genus}}
\newcommand{\Kl}{\operatorname{Kl}} 
\newcommand{\Bf}{\operatorname{Bl}} 
\newcommand{\vq}{{q}}     
\newcommand{\vp}{{p}}    
\newcommand{\cpr}{{\it p}}     
\newcommand{\cqr}{{\it q}}     
\newcommand{\cp}{{\sf p}}     
\newcommand{\cq}{{\sf q}}     
\newcommand{\co}{{\sf r}}     
\newcommand{\cP}{{\sf P}}     
\newcommand{\MT}{T}      
\newcommand{\GD}{J}      
\newcommand{\HO}{\hat{T}}    
\newcommand{\IO}[2]{I_{#1}^{#2}}   
\newcommand{\JO}[2]{J_{#1}^{#2}}   
\newcommand{\rads}{{\varrho}}
\newcommand{\gdim}{\operatorname{gdim}} 
\newcommand{\squan}{\operatorname{{\mathbb S}}}   
\newcommand{\Ex}{\operatorname{Ex}}
\newcommand{\sop}[1]{|^{#1}}            
\newcommand{\cop}[2]{\|_{#1}^{#2}}    
\newcommand{\hop}[2]{\dag_{#1}^{#2}}      
\newcommand{\RS}[3]{R_{#1}^{#2(#3)}}         
\newcommand{\TS}[3]{T_{#1}^{#2(#3)}}         
\newcommand{\TSa}[3]{T_{0,#1}^{#2(#3)}}         
\newcommand{\CS}[3]{\bar{R}_{#1}^{#2(#3)}}   
\newcommand{\QS}[3]{Q_{#1}^{#2(#3)}}         
\newcommand{\PS}[3]{P_{#1}^{#2(#3)}}         
\newcommand{\PSa}[3]{P_{0,#1}^{#2(#3)}}      
\newcommand{\GS}[3]{G_{#1}^{#2(#3)}}           
\newcommand{\FS}[3]{\tilde{F}_{#1}^{#2(#3)}} 
\newcommand{\FR}[3]{F_{#1}^{#2(#3)}}         
\newcommand{\DS}[3]{Z_{#1}^{#2(#3)}} 
\newcommand{\DSt}[3]{\tilde{Z}_{#1}^{#2(#3)}} 
\newcommand{\Rreg}[1]{\operatorname{r}^{#1}}
\newcommand{\Treg}[1]{\operatorname{t}^{#1}}
\newcommand{\PSL}{\operatorname{\textsl{PSL}}}    
\newcommand{\SL}{\operatorname{\textsl{SL}}}      
\newcommand{\PGL}{\operatorname{\textsl{PGL}}}    
\newcommand{\GL}{\operatorname{\textsl{GL}}}      
\newcommand{\MM}{\mathbb{M}}    
\newcommand{\Co}{\operatorname{\textsl{Co}}}    
\newcommand{\vn}{V^{\natural}} 
\newcommand{\vnh}{{W}^{\natural}}
\newcommand{\LL}{\Lambda}     
\newcommand{\commod}{\gt{G}}     
\newcommand{\ogi}{\theta}     
\newcommand{\mla}{\gt{m}}     
\newcommand{\mlav}{\mc{V}}    
\newcommand{\GamN}{\Gamma_0(N)}
\newcommand{\zz}{z} 
\newcommand{\ww}{w} 
\newcommand{\spp}{s} 
\newcommand{\atp}{\kappa}   
\newtheorem{thm}{Theorem}[subsection]
\newtheorem{cor}[thm]{Corollary}
\newtheorem{lem}[thm]{Lemma}
\newtheorem{prop}[thm]{Proposition}
\newtheorem{conj}[thm]{Conjecture}
\theoremstyle{definition}
\theoremstyle{remark}
\newtheorem{rmk}[thm]{Remark}
\numberwithin{equation}{subsection}
\begin{document}

\setstretch{1.3}

\title{
    \textsc{{R}ademacher sums, moonshine and
    gravity}
          }

\author{  John F. R. Duncan\footnote{
          Case Western Reserve University,
          Department of Mathematics, 
          10900 Euclid Avenue,
          Cleveland, OH 44106.
               }\\
          Igor B. Frenkel\footnote{
          Yale University,
          Department of Mathematics,
          10 Hillhouse Avenue,
          New Haven, CT 06520,
          \newline
          The research of I.F. was supported by NSF grant DMS-0457444.
               }
               }

\date{1 March 2012}

\maketitle

\abstract{In 1939 Rademacher derived a conditionally convergent series expression for the elliptic modular invariant, and used this expression---the first Rademacher sum---to verify its modular invariance. By generalizing Rademacher's approach we construct bases for the spaces of automorphic integrals of arbitrary even integer weight, for groups commensurable with the modular group. Our methods provide explicit expressions for the Fourier expansions of the Rademacher sums we construct at arbitrary cusps, and illuminate various aspects of the structure of the spaces of automorphic integrals, including the actions of Hecke operators.

We give a moduli interpretation for a class of groups commensurable with the modular group which includes all those that are associated to the Monster via monstrous moonshine. We show that within this class the monstrous groups can be characterized just in terms of the behavior of their Rademacher sums. In particular, the genus zero property of monstrous moonshine is encoded naturally in the properties of Rademacher sums.

Just as the ellptic modular invariant gives the graded dimension of the moonshine module, the exponential generating function of the Rademacher sums associated to the modular group furnishes the bi-graded dimension of the Verma module for the Monster Lie algebra. This result generalizes naturally to all the groups of monstrous moonshine, and recovers a certain family of monstrous Lie algebras recently introduced by Carnahan.

Our constructions suggest conjectures relating monstrous moonshine to a distinguished family of chiral three dimensional quantum gravities, and relating monstrous Lie algebras and their Verma modules to the second quantization of this family of chiral three dimensional quantum gravities.}

\tableofcontents

\section{Introduction}\label{sec:intro}

\subsection{Monstrous moonshine}\label{sec:intro:moon}

A remarkable coincidence between the coefficients of the elliptic modular invariant
\begin{gather}\label{eqn:intro:moon:Four_Exp_J}
     J(\zz)=\ee(-\zz)+\sum_{n>0}c(n)\ee(n\zz),\quad
     \ee(n\zz)=\exp(\tpi n\zz),
\end{gather}
and the dimensions of the irreducible representations of the {\em Monster group}, denoted $\MM$, led McKay and Thompson \cite{Tho_NmrlgyMonsEllModFn} to conjecture the existence of a naturally defined infinite dimensional representation
\begin{gather}
     V=V_{-1}\oplus V_1\oplus V_2\oplus \cdots
\end{gather}
for the Monster group, with the property that $\dim V_n=c(n)$. Thompson \cite{Tho_FinGpsModFns} also proposed to consider the modular properties of the series
\begin{gather}\label{eqn:intro:moon:Four_Exp_MTg}
     \MT_g(\zz)=\ee(-\zz)+\sum_{n>0}({\rm tr}|_{V_n}g)\ee(n\zz),
\end{gather}
for any $g\in \MM$, the function $J(\zz)$ of (\ref{eqn:intro:moon:Four_Exp_J}) being recovered upon taking $g=e$ in (\ref{eqn:intro:moon:Four_Exp_MTg}). It is clear from the presentation (\ref{eqn:intro:moon:Four_Exp_MTg}) that $\MT_g(\zz)$ is invariant under the {\em translation group}, which we denote $B(\ZZ)$ and which is generated by the translation $\zz\mapsto \zz+1$. In the case (\ref{eqn:intro:moon:Four_Exp_J}) that $g$ is the identity element, the invariance extends to the full modular group $\PSL_2(\ZZ)$. This leads us to the question: what is special about the invariance groups $\Gamma_g$ of the McKay--Thompson series (\ref{eqn:intro:moon:Four_Exp_MTg})?

In their remarkable paper \cite{ConNorMM}, Conway and Norton collected an overwhelming number of coincidences and, in particular, formulated the {\em moonshine conjecture}:
\begin{quote}
The series $\MT_g(\zz)$ is the normalized hauptmodul of a genus zero group $\Gamma_g$ lying between $\GamN$ and its normalizer in $\PSL_2(\RR)$, for each $g\in \MM$.
\end{quote}
The conjecture of McKay and Thompson was proven in \cite{FLM} by the explicit construction of a vertex operator algebra $\vn$ invariant under the Monster group. It was also shown in \cite{FLM} that the McKay--Thompson series $\MT_g(\zz)$ satisfy the moonshine conjecture of Conway and Norton for all elements $g$ in a subgroup of $\MM$ arising as an involution centralizer. Finally, a complete proof of the moonshine conjecture was given by Borcherds in \cite{BorMM}. However, in spite of all the subsequent spectacular developments related to the Monster, the consensus of the experts (cf. e.g. \cite{ConMcKSebDiscGpsM}), is that ``the real nature of moonshine is still remote''.

\subsection{Rademacher sums}\label{sec:intro:radsum}

In this paper we shed new light on the properties of the McKay--Thompson series associated to the Monster, including the mysterious genus zero phenomena. It turns out that the McKay--Thompson series admit presentations as sums over the coset space $B(\ZZ)\backslash \Gamma_g$. The prototype for this is the following expression for the modular invariant $J(\zz)$ which was originally derived by Rademacher in \cite{Rad_FuncEqnModInv}.
\begin{gather}\label{eqn:intro:radsum:Rad_Sum_J}
     J(\zz)+12
     =
     \ee(-{\zz})+
     \lim_{K\to \infty}
     \sum_{\substack{0<c<K\\-K^2<d<K^2\\(c,d)=1}}
     \ee\left(-\frac{a{\zz}+b}{c{\zz}+d}\right)
     -
     \ee\left(-\frac{a}{c}\right)
\end{gather}
The integers $a$ and $b$ occurring in each summand of (\ref{eqn:intro:radsum:Rad_Sum_J}) are chosen so that $ad-bc=1$. This expression (\ref{eqn:intro:radsum:Rad_Sum_J}) is remarkably simple, but the convergence is rather subtle. (See \cite{Kno_RadonJPoinSerNonPosWtsEichCohom} for a nice exposition.) The subtraction of the constant $\ee(-a/c)$ in (\ref{eqn:intro:radsum:Rad_Sum_J}) ensures the existence of the limit in what is otherwise a highly divergent series. The Rademacher sum (\ref{eqn:intro:radsum:Rad_Sum_J}) has been generalized to various discrete subgroups of $\PSL_2(\RR)$ in a series of papers by Knopp (cf. \cite{Kno_ConstMdlrFnsI}, \cite{Kno_ConstMdlrFnsII}, \cite{Kno_ConstAutFrmsSuppSeries}, \cite{Kno_AbIntsMdlrFns}).

Given $\Gamma<\PSL_2(\RR)$ we write $\Gamma_{\infty}$ for the subgroup of $\Gamma$ that fixes $\infty$. If $\Gamma$ is commensurable with the modular group $\PSL_2(\ZZ)$ and has the property that $\Gamma_{\infty}=B(\ZZ)$, then we may naturally associate a Rademacher sum to $\Gamma$ by setting
\begin{gather}\label{eqn:intro:radsum:RS_Gamma}
     \RS{\Gamma}{}{-1}(\zz)
     =
     \ee(-\zz)+
     \lim_{K\to \infty}
     \sum_{\gamma\in ({B(\ZZ)}\backslash\Gamma)_{\leq K}^{\times}}
     \ee(-\gamma\cdot\zz)-\ee(-\gamma\cdot\infty),
\end{gather}
where the sum here is taken over the {\em rectangle}
\begin{gather}\label{eqn:intro:radsum:Rad_Square}
     (B(\ZZ)\backslash\Gamma)_{\leq K}^{\times}=
     \left\{\gamma=B(\ZZ)
     \left[
       \begin{array}{cc}
         * & * \\
         c & d \\
       \end{array}
     \right]
     \in{B(\ZZ)}\backslash\Gamma
     \mid
      0<c\leq K,\,-K^2\leq d\leq K^2
     \right\}
\end{gather}
(cf. \S\ref{sec:radsum:constr}). In (\ref{eqn:intro:radsum:Rad_Square}) we write $\sqnom{a\;b}{c\;d}$ for the image in $\PSL_2(\RR)$ of a matrix $\binom{a\;b}{c\;d}$ in $\SL_2(\RR)$, and in (\ref{eqn:intro:radsum:RS_Gamma}) we write $\gamma\cdot\zz$ for $(a\zz+b)/(c\zz+d)$ and $\gamma\cdot\infty$ for $a/c$ when $\gamma=\sqnom{a\;b}{c\;d}$.

In general the series $\RS{\Gamma}{}{-1}(\zz)$ defines an abelian integral for $\Gamma$, meaning that we have
\begin{gather}\label{eqn:intro:radsum:RS_ab_int}
     \RS{\Gamma}{}{-1}(\gamma\cdot\zz)
     =\RS{\Gamma}{}{-1}(\zz)+\omega(\gamma)
\end{gather}
for some function $\omega:\Gamma\to \CC$. We show in \S\ref{sec:moon:genus} that the Rademacher sum $\RS{\Gamma}{}{-1}(\zz)$ is $\Gamma$-invariant if and only if $\Gamma$ has genus zero. Even more than this, we show (also in \S\ref{sec:moon:genus}) that when $\Gamma$ has genus zero the function
\begin{gather}\label{eqn:intro:radsum:RS_minus_const/2}
     \RS{\Gamma}{}{-1}(\zz)-\frac{1}{2}\fc{\Gamma}{}(-1,0)
\end{gather}
is the normalized hauptmodul for $\Gamma$ (cf. \S\ref{sec:conven:groups}),  for a certain constant $\fc{\Gamma}{}(-1,0)$. This constant $\fc{\Gamma}{}(-1,0)$ turns out to be the {\em Rademacher constant associated to $\Gamma$} as defined in \cite{Nor_MoreMoons} in the case that $\Gamma$ has genus zero (cf. \S\ref{sec:struapp:const}).

Rademacher's proof of the validity of the presentation (\ref{eqn:intro:radsum:Rad_Sum_J}) relies upon explicit formulas for the Fourier coefficients of $J(\zz)$. These formulas may be given in terms of Kloosterman sums and Bessel functions, and are recovered from the expression
\begin{gather}\label{eqn:intro:radsum:Coeff_Fns_Wt_0}
     \fc{\Gamma}{}(m,n)
     =
          \sum_{
          \gamma\in
               {B(\ZZ)}\backslash\Gamma/{B(\ZZ)}
          }
          \Kl_{\gamma}(m,n)
          \Bf_{\gamma}(m,n)
\end{gather}
upon taking $\Gamma=\PSL_2(\ZZ)$ and $m=-1$, and allowing $n$ to range over $\ZZ$. Writing $c(\gamma)$ for $|c|$ when $\gamma=\sqnom{a\;b}{c\;d}$, the functions $\Kl_{\gamma}(m,n)$ and $\Bf_{\gamma}(m,n)$ are defined as follows (cf. \S\ref{sec:radsum:coeff}), for $\gamma\in\Gamma$.
\begin{gather}
     \Kl_{\gamma}(m,n)\label{eqn:intro_Kloos}
     =
     \ee(m\gamma\cdot\infty)\ee(-n\gamma^{-1}\cdot\infty)\\
     \Bf_{\gamma}(m,n)\label{eqn:intro_Bessel}
     =\tpi
     \Res_{\xi=0}
     \ee(m c(\gamma)^{-2}\xi^{-1})\ee(n\xi){\rm d}\xi
\end{gather}
Note that $\Kl_{\gamma}(m,n)$ and $\Bf_{\gamma}(m,n)$ are well-defined complex numbers only when $c(\gamma)\neq 0$, and in this case
\begin{gather}
     \Kl_{\gamma}(m,n)=
     \ee\left(\frac{ma+nd}{c}\right),\quad
     \Bf_{\gamma}(m,n)=\sum_{k\geq 0}\left(\frac{\tpi}{c}\right)^{2k+2}\frac{m^{k+1}}{(k+1)!}\frac{n^k}{k!}
     ,
\end{gather}
for $\gamma=\sqnom{a\;b}{c\;d}$ and $c>0$. The expression (\ref{eqn:intro:radsum:Coeff_Fns_Wt_0}) with $\Gamma=\PSL_2(\ZZ)$ and $m=-1$ is, up to elementary transformations, the formula for the $n$-th coefficient of the elliptic modular invariant given originally by Rademacher in \cite{Rad_FouCoeffMdlrInv}. More generally, the formula (\ref{eqn:intro:radsum:Coeff_Fns_Wt_0}) recovers the Fourier coefficients (other than the constant term) of the Rademacher sum $\RS{\Gamma}{}{-1}(\zz)$, for $\Gamma$ an arbitrary group commensurable with $\PSL_2(\ZZ)$ and satisfying $\Gamma_{\infty}=B(\ZZ)$. The following formula (cf. Theorem \ref{thm:radsum:conver:Relate_RS_FR}) encodes the relationship precisely.
\begin{gather}\label{eqn:intro:radsum:FourExp_RS}
     \RS{\Gamma}{}{-1}(\zz)
     =\ee(-\zz)
     +\frac{1}{2}\fc{\Gamma}{}(-1,0)
     +\sum_{n>0}
     \fc{\Gamma}{}(-1,n)\ee(n\zz)
\end{gather}

\subsection{Conjugate Rademacher sums}\label{sec:intro:conradsum}

It is striking that the constant term in the Fourier expansion (\ref{eqn:intro:radsum:FourExp_RS}) of the Rademacher sum $\RS{\Gamma}{}{-1}(\zz)$ is $\fc{\Gamma}{}(-1,0)/2$ and not $\fc{\Gamma}{}(-1,0)$, given that the coefficient of $\ee(n\zz)$ in (\ref{eqn:intro:radsum:FourExp_RS}) is exactly $\fc{\Gamma}{}(-1,n)$ for $n>0$. One may also observe that the formula (\ref{eqn:intro:radsum:Coeff_Fns_Wt_0}) defining the values $\fc{\Gamma}{}(-1,n)$ makes sense for arbitrary integers $n\in\ZZ$, and so it is natural to ask what r\^ole is played by the $\fc{\Gamma}{}(-1,n)$ for $n<0$? An answer to this question is obtained when we consider the {\em conjugate Rademacher sum associated to $\Gamma$}, denoted $\CS{\Gamma}{}{-1}(\zz)$, and defined by setting
\begin{gather}\label{eqn:intro:conradsum:CS_Gamma}
     \CS{\Gamma}{}{-1}(\zz)
     =
     \ee(-\bar{\zz})+
     \lim_{K\to \infty}
     \sum_{\gamma\in ({B(\ZZ)}\backslash\Gamma)_{\leq K}^{\times}}
     \ee(-\gamma\cdot\bar{\zz})-\ee(-\gamma\cdot\infty)
\end{gather}
for $\zz\in \HH$. The conjugate Rademacher sum $\CS{\Gamma}{}{-1}(\zz)$ defines an anti-holomorphic function on $\HH$. At first glance it appears that we should recover the classical Rademacher sum $\RS{\Gamma}{}{-1}(\zz)$ after substituting $\bar{\zz}$ for $\zz$ in $\CS{\Gamma}{}{-1}(\zz)$, since the expressions defining $\RS{\Gamma}{}{-1}(\zz)$ and $\CS{\Gamma}{}{-1}(\bar{\zz})$ would appear to coincide, but in fact the delicate limit defining the Rademacher sums behaves very differently depending on which half plane the variable $\zz$ lies in, and the difference $\RS{\Gamma}{}{-1}(\zz)-\CS{\Gamma}{}{-1}(\bar{\zz})$ can be rather far from vanishing. As an illustration of this, we show in \S\ref{sec:radsum:conver} that the Fourier expansion of the conjugate Rademacher sum $\CS{\Gamma}{}{-1}(\zz)$ in terms of the anti-holomorphic exponential $\ee(\bar{\zz})$ recovers the values $\fc{\Gamma}{}(-1,n)$ for $n<0$ as Fourier coefficients. More precisely, we establish the following counterpart to (\ref{eqn:intro:radsum:FourExp_RS}) (cf. Theorem \ref{thm:radsum:conver:Relate_CS_FR}). 
\begin{gather}\label{eqn:intro:conradsum:FourExp_CS}
     \CS{\Gamma}{}{-1}(\zz)
     =\ee(-\bar{\zz})
     -\frac{1}{2}\fc{\Gamma}{}(-1,0)
     -\sum_{n<0}
     \fc{\Gamma}{}(-1,n)\ee(n\bar{\zz})
\end{gather}
We demonstrate in \S\ref{sec:radsum:var} that the holomorphic function $\CS{\Gamma}{}{-1}(\bar{\zz})$ is also an abelian integral (cf. Theorem \ref{thm:radsum:var:Var_RS_atp<1}). More than this, the difference
\begin{gather}\label{eqn:intro:conradsum:Diff_RS_CS}
     \RS{\Gamma}{}{-1}(\zz)
     -
     \CS{\Gamma}{}{-1}({\zz})
\end{gather}
is a $\Gamma$-invariant harmonic function on $\HH$ (cf. Theorem \ref{thm:radsum:var:Invar_RS-CS_atp=0}).

The $\Gamma$-invariance of the function (\ref{eqn:intro:conradsum:Diff_RS_CS}) entails a remarkable formula for $\CS{\Gamma}{}{-1}(\zz)$ in the case that $\Gamma$ has genus zero. For since the holomorphic Rademacher sum $\RS{\Gamma}{}{-1}(\zz)$ is $\Gamma$-invariant in the genus zero case, the anti-holomorphic function $\CS{\Gamma}{}{-1}({\zz})$ must also be $\Gamma$-invariant. Given our knowledge (\ref{eqn:intro:conradsum:FourExp_CS}) of the Fourier expansion of $\CS{\Gamma}{}{-1}({\zz})$ it follows that $\CS{\Gamma}{}{-1}({\zz})$ must be identically constant, and furthermore, this constant must be $-\fc{\Gamma}{}(-1,0)/2$. In the case that $\Gamma=\PSL_2(\ZZ)$ we have $\fc{\Gamma}{}(-1,0)=24$, and we thus obtain the remarkable identity
\begin{gather}\label{eqn:intro:conradsum:Rad_Sum_-12}
     \ee(-\bar{\zz})+
     \lim_{K\to \infty}
     \sum_{\substack{0<c<K\\-K^2<d<K^2\\(c,d)=1}}
     \ee\left(-\frac{a\bar{\zz}+b}{c\bar{\zz}+d}\right)
     -
     \ee\left(-\frac{a}{c}\right)
     =-12,
\end{gather}
where in each summand $a$ and $b$ are integers chosen so that $ad-bc=1$. This implies that the function $\RS{\Gamma}{}{-1}(\zz)-\CS{\Gamma}{}{-1}({\zz})$ is a (non-normalized) hauptmodul for $\Gamma$ when $\Gamma$ has genus zero, and in particular, 
\begin{gather}
\fc{\Gamma}{}(-1,n)=\delta_{-1,n}
\end{gather}
for $n<0$. We recover the normalized hauptmodul for $\Gamma$ (cf. \S\ref{sec:conven:groups}) by considering the sum $\RS{\Gamma}{}{-1}(\zz)+\CS{\Gamma}{}{-1}({\zz})$.

\subsection{Solid tori}\label{sec:intro:moduli}

It was observed in \cite{ConNorMM} that each group attached to the Monster via monstrous moonshine may be described as a group of {\em $n\|h$-type}, meaning a discrete group $\Gamma<\PSL_2(\RR)$ of the form $\Gamma=\Gamma_0(n\| h)+S$ for some $n,h\in \ZZp$ with $h|(n,24)$, and $S$ a subgroup of the group of exact divisors of $n/h$. For such $n$, $h$, and $S$, the group $\Gamma_0(n\| h)+S$ contains and normalizes $\Gamma_0(nh)$, and has the property that $(\Gamma_0(n\| h)+S)_{\infty}=B(\ZZ)$. (We recall the precise definition of $\Gamma_0(n\| h)+S$ in \S\ref{sec:conven:groups}.)

We shed new light on the groups of $n\|h$-type by demonstrating in \S\ref{sec:moon:moduli} that they solve a natural family of moduli problems for elliptic curves equipped with certain kinds of extra structure. As we will explain presently, a common feature of all the members of this family is a choice of filling of the elliptic curve in question; that is, a solid torus whose boundary is the given elliptic curve. Hence, we interpret the $n\|h$-type groups as defining moduli of (decorated) solid tori.

The fact that each group $\Gamma$ of $n\|h$-type satisfies $\Gamma_{\infty}=B(\ZZ)$ suggests the importance of the quotient $B(\ZZ)\backslash\HH$. Regarding $\HH$ as a moduli space for triples $(E,\gamma,\gamma')$ where $E$ is an elliptic curve and $(\gamma,\gamma')$ is an oriented basis for the first homology group $H_1(E,\ZZ)$ of $E$, we see that $B(\ZZ)\backslash\HH$ parameterizes pairs $(E,\gamma)$ where $E$ is an elliptic curve and $\gamma$ is a primitive element of $H_1(E,\ZZ)$. To each such pair $(E,\gamma)$ is naturally associated an infinite volume hyperbolic $3$-manifold with boundary an elliptic curve, for the cycle $\gamma$ specifies a way to fill in the surface defined by $E$, thus yielding a solid torus. According to a theorem of Sullivan (cf. \cite{Sul_ErgThyArbDscGrpHypMtns}, \cite{Mcm_RmnSrfGeo3Mfd}) every complete, smooth, infinite volume hyperbolic $3$-manifold with boundary an elliptic curve arises in this fashion, so we consider pairs $(E,C)$ where $E$ is an elliptic curve over $\CC$ and $C$ is an oriented subgroup of $E$ isomorphic to $S^1$. We call such a pair a {\em solid torus}; according to our discussion the quotient space $B(\ZZ)\backslash \HH$ naturally parameterizes solid tori.

For $(E,C)$ a solid torus and $n\in \ZZp$ we write $C[n]$ for the group of $n$-division points of $C$, and we define an {\em $n$-compatible isogeny of solid tori} $(E',C')\to (E,C)$ to be an isogeny $E'\to E$ of elliptic curves that maps $C'[n]$ to a subgroup of $C[n]$, and we define a {\em fully compatible isogeny of solid tori} to be an isogeny $E'\to E$ of elliptic curves that restricts to an orientation preserving map $C'\to C$ on the underlying primitive cycles. (Note that the $n$ in an $n$-compatible isogeny of solid tori does not restrict the degree of the underlying isogeny of elliptic curves.) For $(E,C)$ a solid torus, the canonical map $E\to E/C[n]$ may be viewed as defining a fully compatible isogeny of solid tori $(E,C)\to (E,C)/C[n]$, where $(E,C)/C[n]$ is a shorthand for the solid torus whose underlying elliptic curve is $E/C[n]$, and whose primitive cycle is the image of $C$ under the natural map $E\to E/C[n]$. It may happen that an $n$-compatible isogeny induces invertible $n/e$-compatible isogenies (i.e. {\em $n/e$-compatible isomorphisms}) $(E',C')/C'[e]\to (E,C)$ and $(E',C')\to (E,C)/C[e]$ for some divisor $e$ of $n$. In this case we say that $(E',C')$ and $(E,C)$ are {\em $n+e$-related}. By considering the existence or otherwise of such morphisms of solid tori we arrive at moduli interpretations for all the $n\|h$-type groups. For example, given a subgroup $S$ of the group of exact divisors of $n$, say solid tori $(E,C)$ and $(E',C')$ are {\em $n+S$-related} if $(E,C)$ and $(E',C')$ are {$n+e$-related} for some $e\in S$. Then the $n+S$-relation is an equivalence relation on solid tori, and the quotient $(\Gamma_0(n)+S)\backslash\HH$ is in natural correspondence with $n+S$-equivalence classes of solid tori.

In this way we demonstrate that the groups of $n\|h$-type may be explained as being the groups that solve a natural family of moduli problems for decorated solid tori with conformal structure on the boundary.

\subsection{Moonshine via Rademacher sums}\label{sec:intro:modularity}

The fact that the function (\ref{eqn:intro:radsum:RS_minus_const/2}) recovers the normalized hauptmodul of the group $\Gamma$ when $\Gamma$ has genus zero is a strong indication that the Rademacher sum may be useful for the purpose of understanding the functions of monstrous moonshine. A group theoretic characterization of the functions arising as McKay--Thompson series was given in \cite{ConMcKSebDiscGpsM}. There the authors showed that a holomorphic function on $\HH$ coincides with $\MT_g(\zz)$ for some $g\in \MM$ if and only if it is the normalized hauptmodul for a group $\Gamma$ satisfying each of the following properties.
\begin{enumerate}
\item\label{item:intro:modularity:Cond_Genus_0}
The Riemann surface $\Gamma\backslash\HH$ has genus zero.
\item\label{item:intro:modularity:Cond_n|h_group}
The group $\Gamma$ is of $n\|h$-type.
\item\label{item:intro:modularity:Cond_Exp2}
The group $\Gamma$ is an extension of $\Gamma_0(nh)$ by a group of exponent $2$.
\item\label{item:intro:modularity:Cond_Cusp}
Each cusp of $\Gamma$ can be mapped to the infinite cusp by an element $\sigma\in\PSL_2(\RR)$ with the property that $(\sigma\Gamma\sigma^{-1})_{\infty}=B(\ZZ)$ and the intersection $\Gamma\cap\sigma\Gamma\sigma^{-1}$ contains $\Gamma_0(nh)$.
\end{enumerate}
As we saw in \S\ref{sec:intro:radsum} the variance of the Rademacher sum $\RS{\Gamma}{}{-1}(\zz)$ with respect to the action of $\Gamma$ detects whether or not $\Gamma$ has genus zero, and we have discussed in \S\ref{sec:intro:moduli} the significance of the groups appearing in condition \ref{item:intro:modularity:Cond_n|h_group}. We show in \S\S\ref{sec:moon:exp},\ref{sec:moon:cusp} that the conditions \ref{item:intro:modularity:Cond_Exp2} and \ref{item:intro:modularity:Cond_Cusp} also admit natural reformulations in terms of the Rademacher sums.

Our reformulations come into view when we generalize the construction (\ref{eqn:intro:radsum:RS_Gamma}) so as to associate a Rademacher sum $\RS{\Gamma,\cp|\cq}{}{-1}(\zz)$ to each triple $(\Gamma,\cp,\cq)$ where $\Gamma$ is a group commensurable with $\PSL_2(\ZZ)$, and $\cp$ and $\cq$ are cusps of $\Gamma$. (This construction is presented in \S\ref{sec:radsum}.) 
We write $\RS{\Gamma,\cp}{}{-1}(\zz)$ as a shorthand for $\RS{\Gamma,\cp|\cq}{}{-1}(\zz)$ when $\cq$ is the {\em infinite cusp} $\Gamma\cdot\infty$. It develops in \S\ref{sec:radsum:var} that $\RS{\Gamma,\cp}{}{-1}(\zz)$ is an abelian integral for $\Gamma$ with a simple pole at $\cp$, and no other singularities, and the expansion of $\RS{\Gamma,\cp|\cq}{}{-1}(\zz)$ at the infinite cusp coincides with that of $\RS{\Gamma,\cp}{}{-1}(\zz)$ at $\cq$, up to (addition by) a certain constant function (cf. Theorem \ref{thm:radsum:var:Var_RS_atp<1}). Considering all the functions $\RS{\Gamma,\cp|\cq}{}{-1}(\zz)$ for varying cusps $\cp$ and $\cq$ we are able to access more subtle properties of the curve ${\sf X}_{\Gamma}=\Gamma\backslash\HH\cup\Gamma\backslash\hat{\QQ}$ (cf. \S\ref{sec:conven:groups}) beyond its genus. We find in \S\ref{sec:moon:exp} that condition \ref{item:intro:modularity:Cond_Exp2} translates into a certain condition of symmetry (cf. Proposition \ref{prop:moon:exp:cusp_commute_implies_scalg_coset_sq}) in the Rademacher sums associated to the Hecke congruence group $\Gamma_0(nh)$, where $n$ and $h$ are as in condition \ref{item:intro:modularity:Cond_n|h_group}, and we find in \S\ref{sec:moon:cusp} that condition \ref{item:intro:modularity:Cond_Cusp} translates into the condition (cf. Proposition \ref{prop:moon:cusp:scaling_coset_cont_Delta_iff_TSpq_inv}) that the function $\RS{\Gamma,\cp|\cq}{}{-1}$ be invariant for this subgroup $\Gamma_0(nh)$.

Our moduli interpretation of the $n\|h$-type groups, together with the invariance and symmetry conditions on Rademacher sums just described, facilitate a reformulation of the conditions of \cite{ConMcKSebDiscGpsM}. We thus arrive at a new characterization (cf. Theorem \ref{thm:moon:monster:char_thm}) of the functions of monstrous moonshine in terms of Rademacher sums, and moduli of solid tori.

\subsection{Modified Rademacher sums}\label{sec:intro:modradsum}

In order to understand better the behavior of the Rademacher sums $\RS{\Gamma}{}{-1}(\zz)$, and, in particular, the subtraction of the constant $\ee(-\gamma\cdot\infty)$ in (\ref{eqn:intro:radsum:RS_Gamma}), we consider (also in \S\ref{sec:radsum}) a generalization of the Rademacher sum adapted to the problem of constructing modular forms of arbitrary even integer weight. Our approach to Rademacher sums of negative weight is inspired by the work of Niebur in \cite{Nie_ConstAutInts}.

For $\atp,m\in \ZZ$ we define the {\em Rademacher sum of weight $2\atp$ and order $m$ associated to $\Gamma$} by setting 
\begin{gather}\label{eqn:intro:modradsum:RS_Gamma_atp}
     \RS{\Gamma}{\atp}{m}(\zz)
     =
     \ee(m\zz)+
     \lim_{K\to \infty}
     \sum_{\gamma\in ({B(\ZZ)}\backslash\Gamma)_{\leq K}^{\times}}
     \ee(m\gamma\cdot\zz)
     \Rreg{\atp}(m,\gamma,\zz)
     {(c\zz+d)^{-2\atp}},
\end{gather}
where $c,d\in\RR$ are chosen (for each summand) so that $\gamma$ is the image of a matrix $\binom{a\;b}{c\;d}\in \SL_2(\RR)$ in $\PSL_2(\RR)$, and $\Rreg{\atp}(m,\gamma,\zz)$ is the {\em Rademacher regularization factor of weight $2\atp$}, satisfying
\begin{gather}\label{eqn:intro:modradsum:Defn_Rreg}
     \Rreg{\atp}(m,\gamma,\zz)
     =
     1-
     \ee(m\gamma\cdot\infty-m\gamma\cdot\zz)
     \ee(m\gamma\cdot\zz-m\gamma\cdot\infty)_{<1-2\atp},
\end{gather}
where $\ee(\zz)_{<K}$ denotes the {\em partial exponential function} $\ee(\zz)_{<K}=\sum_{0\leq k<K}(\tpi\zz)^{k}/k!$ (cf. \S\ref{sec:radsum:constr}). Observe that we recover the Rademacher sum $\RS{\Gamma}{}{-1}(\zz)$ of (\ref{eqn:intro:radsum:RS_Gamma}) when $\atp=0$. Also, we have $\Rreg{\atp}(m,\gamma,\zz)=1$ when $\atp$ is positive, so that the Rademacher sum $\RS{\Gamma}{\atp}{m}(\zz)$ is a holomorphic Poincar\'e series (cf. \S\ref{sec:radsum:constr}) when $\atp>0$ (which is absolutely and locally uniformly convergent for $\atp>1$).

In \S\ref{sec:radsum:constr} we furnish generalizations of the functions $\Kl_{\gamma}(m,n)$ and $\Bf_{\gamma}(m,n)$ for arbitrary $\atp\in \ZZ$, obtaining a generalization of the formula (\ref{eqn:intro:radsum:Coeff_Fns_Wt_0}), and leading to the following analogue (cf. Theorem \ref{thm:radsum:conver:Relate_RS_FR}) of (\ref{eqn:intro:radsum:FourExp_RS}).
\begin{gather}\label{eqn:intro:modradsum:FourExp_RS_atp}
     \RS{\Gamma}{\atp}{m}(\zz)
     =\ee(m\zz)
     +\frac{1}{2}\fc{\Gamma}{\atp}(m,0)
     +\sum_{n>0}
     \fc{\Gamma}{\atp}(m,n)\ee(n\zz)
\end{gather}
The {\em conjugate Rademacher sum of weight $2\atp$ and order $m$ associated to $\Gamma$} is defined in direct analogy with the definition (\ref{eqn:intro:conradsum:CS_Gamma}) of the conjugate Rademacher sum $\CS{\Gamma}{}{-1}(\zz)$, which we presently recognize as the conjugate Rademacher sum of weight $0$ and order $1$ associated to $\Gamma$.
\begin{gather}\label{eqn:intro:modradsum:CS_Gamma_atp}
     \CS{\Gamma}{\atp}{m}(\zz)
     =
     \ee(m\bar{\zz})+
     \lim_{K\to \infty}
     \sum_{\gamma\in ({B(\ZZ)}\backslash\Gamma)_{\leq K}^{\times}}
     \ee(m\gamma\cdot\bar{\zz})
     \Rreg{\atp}(m,\gamma,\bar{\zz})
     {(c\bar{\zz}+d)^{-2\atp}}.
\end{gather}
We have the following analogue of (\ref{eqn:intro:conradsum:FourExp_CS}), and counterpart to (\ref{eqn:intro:modradsum:FourExp_RS_atp}), which leads naturally to analogues of (\ref{eqn:intro:conradsum:Diff_RS_CS}) for arbitrary $\atp\in \ZZ$ (cf. Theorem \ref{thm:radsum:conver:Relate_CS_FR}).
\begin{gather}\label{eqn:intro:modradsum:FourExp_CS_atp}
     \CS{\Gamma}{\atp}{m}(\zz)
     =\ee(m\bar{\zz})
     -\frac{1}{2}\fc{\Gamma}{\atp}(m,0)
     -\sum_{n<0}
     \fc{\Gamma}{\atp}(m,n)\ee(n\bar{\zz})
\end{gather}

The variance of the functions $\RS{\Gamma}{\atp}{m}(\zz)$ with respect to the natural (weight $2\atp$) action of the group $\Gamma$ was described by Niebur in \cite{Nie_ConstAutInts} for the case that $\Gamma$ has a single cusp. We verify in \S\ref{sec:modradsum:var} that for $\Gamma$ an arbitrary group commensurable with the modular group $\PSL_2(\ZZ)$, the function $\RS{\Gamma}{\atp}{m}(\zz)+\fc{\Gamma}{\atp}(m,0)/2$ defines an {\em automorphic integral of weight $2\atp$ for $\Gamma$} (cf. \S\ref{sec:conven:autfrm}) when $m<0$, meaning that $\RS{\Gamma}{\atp}{m}(\zz)$ is holomorphic on $\HH$, possibly having poles at cusps of $\Gamma$, and we have
\begin{gather}\label{eqn:intro:modradsum:RS_aut_int}
     \left(
     \RS{\Gamma}{\atp}{m}(\gamma\cdot\zz)
     +\frac{1}{2}\fc{\Gamma}{\atp}(m,0)
     \right)
     (c\zz+d)^{-2\atp}
     =\RS{\Gamma}{\atp}{m}(\zz)
     +\frac{1}{2}\fc{\Gamma}{\atp}(m,0)
     +\omega(\gamma)(\zz)
\end{gather}
for some function $\omega:\Gamma\to \CC[\zz]$, where the polynomial $\omega(\gamma)(\zz)$ has degree at most $-2\atp$ in $\zz$ (cf. Theorem \ref{thm:modradsum:var:QS_is_aut_int}, Proposition \ref{prop:modradsum:conver:Relate_QS_s=1_RS}). An identification of the function $\omega$ may be given in terms of a certain canonically defined map $I_{\atp}(\Gamma)\to S_{1-\atp}(\Gamma)$, where $I_{\atp}(\Gamma)$ denotes the space of automorphic integrals of weight $2\atp$ for $\Gamma$ (cf. \S\ref{sec:conven:autfrm}), and $S_{\atp}(\Gamma)$ denotes the space of cusp forms of weight $2\atp$ for $\Gamma$. We show (cf. Theorem \ref{thm:modradsum:var:MIS_exact}) that the sequence
\begin{gather}\label{eqn:intro:modradsum:MIS_Exact_Sequence}
     0
     \to
     M_{\atp}(\Gamma)
     \to
     I_{\atp}(\Gamma)
     \to
     S_{1-\atp}(\Gamma)
     \to
     0
\end{gather}
is exact, where $M_{\atp}(\Gamma)$ denotes the space of modular forms of weight $2\atp$ for $\Gamma$ possibly having poles at cusps (cf. \S\ref{sec:conven:autfrm}), and the map $M_{\atp}(\Gamma)\to I_{\atp}(\Gamma)$ is the natural inclusion.

Apart from the delicate convergence of the limit defining the Rademacher sum $\RS{\Gamma}{\atp}{m}(\zz)$, the most curious feature of the functions $\RS{\Gamma}{\atp}{m}(\zz)$ is that they transform naturally with respect to the group $\Gamma$ only after the addition of the constant $\fc{\Gamma}{\atp}(m,0)/2$, which is typically non-zero. We overcome this feature of the classical Rademacher sum by introducing in \S\ref{sec:modradsum} the {\em continued Rademacher sum of weight $2\atp$ and order $m$ associated to $\Gamma$}, defined for $\atp\leq 0$ and $m<0$ by setting
\begin{gather}\label{eqn:intro:modradsum:Defn_RS_cont}
     \TS{\Gamma}{\atp}{m}(\zz,\spp)
     =
     \ee(m\zz)
     +
     \sum_{\gamma\in ({B(\ZZ)}\backslash\Gamma)^{\times}}
     \ee(m\gamma\cdot\zz)
     \Treg{\atp}(m,\gamma,\zz,\spp)
     (c\zz+d)^{-2\atp}
\end{gather}
where $\Treg{\atp}(m,\gamma,\zz,\spp)$ is a generalization of the Rademacher regularization factor $\Rreg{\atp}(m,\gamma,\zz)$ of (\ref{eqn:intro:modradsum:Defn_Rreg}) satisfying $\Treg{\atp}(m,\gamma,\zz,1)=\Rreg{\atp}(m,\gamma,\zz)$.

The right hand side of (\ref{eqn:intro:modradsum:Defn_RS_cont}) converges absolutely and locally uniformly in $\zz$ and $\spp$ for $\zz\in \HH$ and $\Re(\spp)>1$. We define $\TS{\Gamma}{\atp}{m}(\zz)$ by taking the limit as $\spp$ tends to $1$ in $\TS{\Gamma}{\atp}{m}(\zz,\spp)$. We find that
\begin{gather}\label{eqn:intro:modradsum:Relate_TS_RS}
     \TS{\Gamma}{\atp}{m}(\zz)
     =
     \lim_{\spp\to 1^+}
     \TS{\Gamma}{\atp}{m}(\zz,\spp)
     =\RS{\Gamma}{\atp}{m}(\zz)-\frac{1}{2}\fc{\Gamma}{\atp}(m,0)
\end{gather}
(cf. Proposition \ref{prop:modradsum:conver:Relate_TS_FR_s=1}), so that the Fourier expansion of $\TS{\Gamma}{\atp}{m}(\zz)$ has vanishing constant term. Then, in the case that $\Gamma$ has genus zero, the function $\TS{\Gamma}{}{-1}(\zz)=\TS{\Gamma}{0}{1}(\zz)$ is precisely the normalized hauptmodul of $\Gamma$. We call $\TS{\Gamma}{\atp}{m}(\zz)$ the {\em normalized Rademacher sum of weight $2\atp$ and order $m$ associated to $\Gamma$}. We may regard the disappearance of the constant term in (\ref{eqn:intro:modradsum:Relate_TS_RS}) as a consequence of the noncommutativity of the limits $\spp\to 1$ and $K\to \infty$ when applied to (the expression obtained by replacing $({B(\ZZ)}\backslash\Gamma)^{\times}$ with $({B(\ZZ)}\backslash\Gamma)^{\times}_{\leq K}$ in) the right hand side of (\ref{eqn:intro:modradsum:Defn_RS_cont}).

To recover functions with natural modular properties at non-positive weights we modify the continued Rademacher sum by subtracting a renormalized value at $\zz=0$.
\begin{gather}
     \QS{\Gamma}{\atp}{m}(\zz,\spp)
     =\TS{\Gamma}{\atp}{m}(\zz,\spp)
     -\TSa{\Gamma}{\atp}{m}(\spp)
\end{gather}
The function $\TSa{\Gamma}{\atp}{m}(\spp)$ is defined by an expression analogous to (\ref{eqn:intro:modradsum:Defn_RS_cont}) which also converges absolutely and locally uniformly in $\spp$ for $\Re(\spp)>1$, and would vanish at $\spp=1$ if it were not for the noncommutativity of limits as $\spp\to 1$ and $K\to \infty$. We define the {\em modified Rademacher sum of weight $2\atp$ and order $m$ associated to $\Gamma$}, denoted $\QS{\Gamma}{\atp}{m}(\zz)$, by taking the limit as $\spp$ tends to $1$ in $\QS{\Gamma}{\atp}{m}(\zz,\spp)$, and as a counterpart to (\ref{eqn:intro:modradsum:Relate_TS_RS}) we have
\begin{gather}\label{eqn:intro:modradsum:Relate_QS_RS}
     \QS{\Gamma}{\atp}{m}(\zz)
     =
     \lim_{\spp\to 1^+}
     \QS{\Gamma}{\atp}{m}(\zz,\spp)
     =
     \RS{\Gamma}{\atp}{m}(\zz)
     +\frac{1}{2}\fc{\Gamma}{\atp}(m,0)
\end{gather}
(cf. Proposition \ref{prop:modradsum:conver:Relate_QS_s=1_RS}), so that, in light of (\ref{eqn:intro:modradsum:RS_aut_int}), the modified Rademacher sum $\QS{\Gamma}{\atp}{m}(\zz)$ is an automorphic integral of weight $2\atp$ for $\Gamma$.

The definition of the modified Rademacher sum $\QS{\Gamma}{\atp}{m}(\zz)$ is inspired by Hurwitz's relation for the Hurwitz zeta function, and our continuation procedure may be regarded as identifying the factor $1/2$, appearing in the constant term of the Fourier expansion (\ref{eqn:intro:modradsum:FourExp_RS_atp}) of the classical Rademacher sum of non-positive weight, with $-1$ times the value of the Riemann zeta function $\zeta(\spp)$ at $\spp=0$ (cf. \S\ref{sec:modradsum:dirser}).

In order to obtain and analyze spanning sets for spaces of automorphic integrals we attach modified Rademacher sums $\QS{\Gamma,\cp|\cq}{\atp}{m}(\zz)$ to each triple $(\Gamma,\cp,\cq)$ where $\Gamma$ is a group commensurable with the modular group $\PSL_2(\ZZ)$ and $\cp$ and $\cq$ are cusps of $\Gamma$ (cf. \S\ref{sec:modradsum:constr}). The function $\QS{\Gamma}{\atp}{m}(\zz)$ is then recovered upon taking both $\cp$ and $\cq$ to be the {\em infinite cusp} $\Gamma\cdot\infty$. If we write $\QS{\Gamma,\cp}{\atp}{m}(\zz)$ as a shorthand for $\QS{\Gamma,\cp|\cq}{\atp}{m}(\zz)$ when $\cq=\Gamma\cdot\infty$, then, as is shown in \S\ref{sec:modradsum:var}, the modified Rademacher sums $\QS{\Gamma,\cp}{\atp}{m}(\zz)$, for varying cusps $\cp\in\Gamma\backslash\hat{\QQ}$ and positive integers $m\in\ZZp$, constitute a basis for the space $I_{\atp}(\Gamma)$ when $\atp<0$ (cf. Theorem \ref{thm:modradsum:var:Basis_QS_p_m}). In the case that $\atp=0$ the modified Rademacher sums $\QS{\Gamma,\cp}{\atp}{m}(\zz)$ span a subspace of $I_{0}(\Gamma)$ of codimension $1$, and a full basis is obtained by including a constant function. The function $\QS{\Gamma,\cp}{\atp}{m}(\zz)$ defines an automorphic integral with a single pole at the cusp $\cp$, and the function $\QS{\Gamma,\cp|\cq}{\atp}{m}(\zz)$ encodes the Fourier expansion of $\QS{\Gamma,\cp}{\atp}{m}(\zz)$ at the cusp $\cq$, and no correction of constant terms is necessary for the validity of these statements.

\subsection{Hecke operators}\label{sec:intro:heckeops}
 
The original result of Rademacher \cite{Rad_FuncEqnModInv} identifies the first Rademacher sum $\RS{\Gamma}{}{-1}(\zz)$, for $\Gamma=\PSL_2(\ZZ)$, with the function $J(\zz)+12$, where $J(\zz)$ is the normalized hauptmodul for the modular group. In light of this it is natural to try to identify more general Rademacher sums in a similar fashion. In this section we will take $\atp=0$, but we will allow $m$ to be an arbitrary non-zero integer. The identity
\begin{gather}
     \RS{\Gamma}{}{m}(\zz)
     =
     \overline{\CS{\Gamma}{}{-m}(\zz)}
\end{gather}
follows immediately from the definitions of the classical and conjugate Rademacher sums, demonstrating that the conjugate Rademacher sums of negative order may be recovered from the classical Rademacher sums with positive order. On the other hand, one can show, as we did for the special case that $m=-1$, that $\CS{\Gamma}{}{m}(\zz)$ is the constant function with constant value $-\fc{\Gamma}{}(m,0)/2$ in case $m<0$ and $\Gamma$ has genus zero. When $\Gamma=\PSL_2(\ZZ)$ we have $\fc{\Gamma}{}(m,0)=24\sigma(-m,1)$ for $m<0$, where $\sigma(n,1)$ is the sum of the divisors of $n$, so we have the following generalization of (\ref{eqn:intro:conradsum:Rad_Sum_-12}) for $m<0$.
\begin{gather}\label{eqn:intro:conradsum:Rad_Sum_m}
     \ee(m\bar{\zz})+
     \lim_{K\to \infty}
     \sum_{\substack{0<c<K\\-K^2<d<K^2\\(c,d)=1}}
     \ee\left(m\frac{a\bar{\zz}+b}{c\bar{\zz}+d}\right)
     -
     \ee\left(m\frac{a}{c}\right)
     =-12\sigma(-m,1)
\end{gather}

To obtain an expression for the classical Rademacher sums $\RS{\Gamma}{}{m}(\zz)$ with negative $m$ we can analyze again its Fourier coefficients $\fc{\Gamma}{}(m,n)$. It follows from our results in \S\ref{sec:moon:genus} that 
\begin{gather}
\fc{\Gamma}{}(m,n)=\delta_{m,n}
\end{gather}
for $m,n<0$ in case $\Gamma$ has genus zero. The resulting function $\TS{\Gamma}{}{m}(\zz)=\RS{\Gamma}{}{m}(\zz)-\fc{\Gamma}{}(m,0)/2$, satisfying the identity
\begin{gather}\label{eqn:intro:heckeops:Defn_GD_Gamma_m}
     \TS{\Gamma}{}{m}(\zz)
     +\frac{1}{2}\fc{\Gamma}{}(m,0)
     =
     \ee(m\zz)+
     \lim_{K\to \infty}
     \sum_{\gamma\in ({B(\ZZ)}\backslash\Gamma)_{\leq K}^{\times}}
     \ee(m\gamma\cdot\zz)-\ee(m\gamma\cdot\infty),
\end{gather}
is therefore holomorphic on the upper half plane, invariant for the action of $\Gamma$, and of the form $\vq^{-|m|}+o(1)$ for $\vq=\ee(\zz)$. Consequently each normalized Rademacher sum $\MT^{(-m)}_{\Gamma}(\zz)$, for $m>0$, is expressible as a degree $m$ polynomial in $\MT^{(-1)}_{\Gamma}(\zz)=\MT_{\Gamma}(\zz)$. This polynomial is none other than the so-called {\em $m$-th Faber polynomial for $\MT_{\Gamma}(\zz)$}.

Proceeding from another direction, we can obtain the functions $\MT^{(-m)}_{\Gamma}(\zz)$ from the normalized hauptmodul $\MT_{\Gamma}(\zz)$ by applying Hecke operators. In \S\ref{sec:struapp:hops} we study, in particular, the case that $\Gamma$ is the modular group $\PSL_2(\ZZ)$. Recall that for $n\in\ZZp$ the action of the Hecke operator $\HO(n)$ on a modular function $f(\zz)$ may be given by setting
\begin{gather}\label{eqn:intro:heckeops:HeckeOpDefn}
     (\HO(n)f)(\zz)=\frac{1}{n}
          \sum_{\substack{ad=n\\0\leq b<d}}
          f\left(\frac{a\zz+b}{d}\right).
\end{gather}
From this description one may deduce that the difference $\MT_{\Gamma}^{(-m)}(\zz)-m(\HO(m)\MT_{\Gamma})(\zz)$ is holomorphic in $\HH$ and vanishes as $\zz\to \ii\infty$. We thus obtain the remarkable expression
\begin{gather}\label{eqn:intro:heckeops:HeckeOpId}
     \MT_{\Gamma}^{(-m)}(\zz)=m(\HO(m)\MT_{\Gamma})(\zz)
\end{gather} 
for the higher order Rademacher sums which generalizes the original result of Rademacher. Combining the presentation (\ref{eqn:intro:radsum:Rad_Sum_J}) of $\MT_{\Gamma}(\zz)$ as a Rademacher sum with the definition (\ref{eqn:intro:heckeops:HeckeOpDefn}) of the Hecke operator $\HO(m)$ we obtain the expression
\begin{gather}\label{eqn:intro:heckeops:TmJ_RS}
     m(\HO(m)\MT_{\Gamma})(\zz)
     +\frac{1}{2}\fc{\Gamma}{}(-m,0)
     =\ee(-m\zz)+
     \lim_{K\to \infty}
     \sum_{\gamma\in ({B(\ZZ)}\backslash M(m))_{\leq K}^{\times}}
     \ee(-\gamma\cdot\zz)-\ee(-\gamma\cdot\infty),
\end{gather}
where $M(m)$ denotes the (image in $\PGL_2^+(\QQ)$ of the) set of $2\times 2$ matrices with integral entries and determinant $m$. Comparison of (\ref{eqn:intro:heckeops:Defn_GD_Gamma_m}) with (\ref{eqn:intro:heckeops:TmJ_RS}), in view of the identity (\ref{eqn:intro:heckeops:HeckeOpId}), suggests an equality of some, if not all, exponential terms in both sums, and in fact there is an injective map
\begin{gather}\label{eqn:intro_CosetHeckeCorresp}
     \begin{split}
          {B(\ZZ)}\backslash M(1)&\hookrightarrow
               {B(\ZZ)}\backslash M(m)\\
               \gamma&\mapsto \tilde{\gamma},
     \end{split}
\end{gather}
since $M(1)=\Gamma$ when $\Gamma=\PSL_2(\ZZ)$, with the property that $\ee(-m\gamma\cdot\zz)=\ee(-\tilde{\gamma}\cdot\zz)$. In the case that $m$ is prime, the remaining terms assemble into a fractional power Rademacher sum
\begin{gather}\label{eqn:intro_RadSumFracPow}
     \ee\left(-\frac{1}{m}\zz\right)+
     \lim_{K\to \infty}
     \sum_{\gamma\in (B(m\ZZ)\backslash\Gamma)_{\leq K}^{\times}}
     \ee\left(-\frac{1}{m}\gamma\cdot\zz\right)
     -
     \ee\left(-\frac{1}{m}\gamma\cdot\infty\right),
\end{gather}
where $B(m\ZZ)$ is the {\em $m$-fold translation group}, generated by $\zz\mapsto \zz+m$. Thus we obtain another injective correspondence
\begin{gather}\label{eqn:intro_CosetHeckeCorresp2}
     \begin{split}
          {B(m\ZZ)}\backslash M(1)&\hookrightarrow
               {B(\ZZ)}\backslash M(m)\\
               \gamma&\mapsto \tilde{\gamma}
     \end{split}
\end{gather}
with the property that $\ee(-\gamma\cdot\zz/m)=\ee(-\tilde{\gamma}\cdot\zz)$. The identity (\ref{eqn:intro:heckeops:TmJ_RS}) implies the vanishing of the fractional power Rademacher sum (\ref{eqn:intro_RadSumFracPow}). This can also be proven directly, and we give a general vanishing result for fractional Rademacher sums in \S\ref{sec:struapp:frac}. More generally we obtain fractional power sums of the form
\begin{gather}
     \ee\left(-\frac{m}{l^2}\zz\right)+
     \lim_{K\to \infty}
     \sum_{\gamma\in (B(l\ZZ)\backslash \Gamma)_{\leq K}^{\times}}
     \ee\left(-\frac{m}{l^2}\gamma\cdot\zz\right)
     -\ee\left(-\frac{m}{l^2}\gamma\cdot\infty\right),
\end{gather}
for each exact divisor $l$ of $m$, and each of these fractional power sums vanishes except for the sum corresponding to $l=1$. Thus our analysis covers generalizations of the Rademacher sums to fractional orders in addition to the higher integral orders. All the results discussed in this section admit generalizations to arbitrary groups commensurable with the modular group but we restrict ourselves to the modular group in this article.

\subsection{Monstrous Lie algebras}\label{sec:intro:mlas}

The presentation (\ref{eqn:intro:heckeops:HeckeOpId}) of the functions $\MT_{\Gamma}^{(-m)}(\zz)$ by means of Hecke operators (we restrict to the case $\Gamma=\PSL_2(\ZZ)$ in \S\ref{sec:intro:heckeops}) immediately implies that their Fourier coefficients are positive integers. This fact points to the existence of further algebraic structures beyond the moonshine module vertex operator algebra $\vn$. In his proof of the moonshine conjectures \cite{BorMM} Borcherds introduced the Monster Lie algebra $\mla$, which admits a presentation as a bi-graded generalized Kac--Moody algebra. The key to Borcherds' method is the denominator identity for $\mla$. This in turn can be interpreted as a BGG-type resolution of the trivial $\mla$-module. Let $\mlav$ denote the Verma module with highest weight $0$ for $\mla$. Then $\mlav$ is the first term in this BGG-type resolution of the trivial module, and the bi-graded dimension $\gdim\mlav$ of $\mlav$ is obtained by computing the coefficients of $\vp$ and $\vq$ in the expression
\begin{gather}\label{eqn:intro:mlas:VermaDim}
     \gdim \mlav =
     \exp\left(\sum_{m>0}(\HO(m)\MT_{\Gamma})(\zz)p^m\right)
\end{gather}
where $q=\ee(\zz)$. Applying the operator $F\mapsto p\partial_p \log F$ to (\ref{eqn:intro:mlas:VermaDim}) we obtain a generating function for the higher (absolute) order normalized Rademacher sums $\TS{\Gamma}{}{-m}(\zz)$, by (\ref{eqn:intro:heckeops:HeckeOpId}). It is well-known (cf. \cite{Mac_SymFnsHallPolys}) that this operator relates the complete homogeneous symmetric functions to the power-sum symmetric functions. Thus the bi-graded dimension of the Verma module for the Monster Lie algebra may be viewed as the {\em complete Rademacher sum} (of weight $0$) associated to $\Gamma$.

In \S\ref{sec:gravity:mlas} we consider a family of generalized Kac--Moody algebras $\{\mla_g\}$, indexed by elements $g$ in the Monster group $\MM$. Just as the Monster Lie algebra may be constructed from the moonshine module $\vn$, the monstrous Lie algebras $\mla_g$, studied by Carnahan in \cite{Car_Phd}, may be constructed from the $g$-twisted $\vn$-modules $\vn_g$, for $g\in \MM$. The graded dimensions of the spaces $\vn_g$ are given by functions $\GD_g(\zz)=\GD_{\Gamma_g}(\zz)$, related to the McKay--Thompson series $\MT_g(\zz)$ via the involution $\zz\mapsto-1/\zz$.
\begin{gather}
     \GD_g(\zz)=\MT_g(-1/\zz)
\end{gather}
One should note that Borcherds set the precedent here, introducing a family $\{\mla_g'\}$ of monstrous Lie superalgebras in \cite{BorMM}. These algebras $\mla_g'$ are more directly related to the functions $\MT_g(\zz)$, rather than the $\GD_g(\zz)$, and for our purposes the algebras $\mla_g$ appear to be more convenient.

The monstrous Lie algebras $\mla_g$ are constructed in \cite{Car_Phd} using the semi-infinite cohomology version of the no-ghost theorem. We identify the bi-graded subspaces of the algebra $\mla_g$ in terms of the twisted modules $\vn_{h}$, with $h\in\lab g\rab$. One can show that the $\mla_g$ are generalized Kac--Moody algebras, and one can also deduce formulas for the bi-graded dimensions of their Verma modules $\mlav_g$, generalizing (\ref{eqn:intro:mlas:VermaDim}).
\begin{gather}\label{eqn:intro:mlas:VermagDim}
     \gdim \mlav_g =
     \exp\left(\sum_{m>0}(\HO(m)\MT_g)(\zz)p^m\right)
\end{gather}
Carnahan \cite{Car_Phd} also found a remarkable denominator identity which yields an alternative expression for the graded dimension of $\mlav_g$; namely,
\begin{gather}\label{eqn:intro:gdim:mlas:mlav_TJ}
     \gdim\mlav_g
     =\frac{1}{\vp(\MT_g(\ww)-\GD_g(\zz))}
\end{gather}
where $p=\ee(\ww)$. This identity (\ref{eqn:intro:gdim:mlas:mlav_TJ}) demonstrates that the bi-graded dimension can be viewed as a meromorphic function on $\HH\times\HH$ with poles at $\ww\in\Gamma_g\cdot(-1/\zz)$.

We also show in \S\ref{sec:gravity:mlas} that the bi-graded dimension of $\mlav_g$ is bounded below, coefficient-wise, by the generating function
\begin{gather}\label{eqn:intro:mlas:GenFnTwRadSums}
     \sum_{m>0}\GD_g^{(-m)}(\zz)\vp^m
\end{gather}
where $\GD_g^{(-m)}(\zz)=\MT_g^{(-m)}(-1/\zz)$ when $g$ is of Fricke type (cf. Proposition \ref{prop:conseq:mlas:gdim_mlavg_Ineq}). This suggests that the Verma modules $\mlav_g$ may contain naturally defined subspaces whose bi-graded dimensions coincide with the expressions (\ref{eqn:intro:mlas:GenFnTwRadSums}).

\subsection{Chiral gravity}

Our results on Rademacher sums, at first glance, add as much to the mystery of monstrous moonshine as they reveal. We have formulated a characterization of the groups of monstrous moonshine in terms of Rademacher sums and solid tori. Having done so we face the new question of where the Rademacher sums themselves appear in relation to the moonshine module vertex operator algebra $\vn$? It appears now that the relation to physics should again be useful, as it was in the construction of $\vn$ \cite{FLM}, and the proof of the moonshine conjecture \cite{BorMM}.

In \S\ref{sec:gravity:conj1} we explain how the Rademacher sums associated to elements of the Monster might be regarded as giving strong evidence for the existence of another construction of the moonshine module $\vn$, and its twisted sectors $\vn_g$, in which all the features of monstrous moonshine, including the genus zero property, become transparent. In fact the original Rademacher sum (\ref{eqn:intro:radsum:Rad_Sum_J}) does appear as a saddle point approximation to the partition function of the simplest chiral three dimensional quantum gravity. (See \cite{Wit_3DGravRev}, \cite{MalWit_QGravPartFns3D}, \cite{Man_AdS3PFnsRecon}, \cite{ManMoo_ModFryTail}, \cite{LiSonStr_ChGrav3D} and \cite{MalSonStr_ChiGravLogGravExtCFT} for the development of this idea in the physics literature.) Our new identity (\ref{eqn:intro:conradsum:Rad_Sum_-12}) for the conjugate Rademacher sum must reflect the chiral nature of this three dimensional quantum gravity.

We expect that our analytic continuation of the Rademacher sums, and the explanation of the appearance of the constant term, will also develop from a rigorous analysis of the saddle point approximation using a zeta function regularization of the chiral gravity partition function, as was done for various models of two dimensional CFT. Our analysis of the Rademacher sums corresponding to the discrete groups of monstrous moonshine, and of moduli spaces of decorated solid tori points to a description of all the spaces $\vn_g$, for $g\in \MM$, via a family of $g$-twisted versions of the simplest chiral three dimensional quantum gravity. We expect that our reformulation of monstrous moonshine in terms of Rademacher sums and moduli of solid tori will eventually be understood from the properties of this remarkable chiral three dimensional quantum gravity.

In \S\ref{sec:gravity:conj2} we also elucidate the relationship between the higher order Rademacher sums, and the key object of Borcherds' proof of the moonshine conjecture; viz., the Monster Lie algebra, and its twisted counterparts. This leads us to the stringy quantization (cf. \cite{DijMooVerVer_EllGenSndQntStrgs}) of $\vn$, and we generalize this to all twisted sectors $\vn_g$, for $g\in \MM$. We prove in Theorem \ref{thm:conseq:conj2:Isom_Sec_Quant_Sym_mlag} that the resulting second quantized spaces have the same bi-graded dimensions as the Verma modules $\mlav_g$ (cf. (\ref{eqn:intro:mlas:VermagDim})) and thus may be viewed as stringy realizations of the representations of the generalized Kac--Moody algebras $\mla_g$. It is an interesting problem to interpret the spaces $\mlav_g$, and their subspaces with graded dimension given by the higher order normalized Rademacher sums (\ref{eqn:intro:mlas:GenFnTwRadSums}), in terms of the second quantized chiral three dimensional quantum gravities. In particular, the simple description of the singularities of the partition function given by (\ref{eqn:intro:gdim:mlas:mlav_TJ}) should have a natural interpretation in terms of quantum gravity.

Thus our present results on the Rademacher sums, and the new conjectures about their origin, clearly indicate that the real nature of moonshine might not be as remote anymore, and more than this, the full examination of its structure might give new insight into the fundamental problem of modern physics.

\section{Conventions}\label{sec:conven}

We write $\ZZ^+$ for the set of positive integers, and the notations $\QQp$ and $\RR^+$ are to be interpreted similarly. We write $\NN$ for the set of non-negative integers. 
For $R$ a ring without zero divisors, we write $R^{\times}$ for the multiplicative monoid of non-zero elements in $R$.

\subsection{Functions}\label{sec:conven:fns}

For $z,s\in \CC$ with $\zz\neq 0$, we write $z^{s}$ as a shorthand
for $\exp(s\log(z))$, where $\log$ denotes the {principal branch of
the logarithm}, so that
\begin{gather}
     -\pi<\Im(\log(z))\leq \pi,
\end{gather}
and we write $z^{(s)}$ as a shorthand for $z^{s}/\Gamma(s+1)$,
where $\Gamma(\spp)$ denotes the {Gamma function}. 
We adopt the convention of setting
\begin{gather}
     \ee(z)=\exp(\tpi z)=\sum_{k\geq 0} (\tpi z)^{(k)}
\end{gather}
for $z\in \CC$. We write $\Phi(a,b,\zz)$ for the analytic function
on $\CC^3$ defined by setting
\begin{gather}\label{eqn:conven:fns:Defn_Phi}
     \Phi(a,b,\zz)
          =
          \frac{_1F_1(a;b;2\pi\ii\zz)}{\Gamma(b)}
          =
          \sum_{k\geq 0}
          \frac{\Gamma(k+a)}{\Gamma(a)\Gamma(k+b)}(2\pi\ii\zz)^{(k)}
\end{gather}
where $_1F_1(a;b;x)$ denotes the {confluent hypergeometric
function} (cf. \cite[\S13]{AbrSte_Handbook}). {Kummer's transformations} for the confluent
hypergeometric function yield the following identity for $\Phi$.
\begin{gather}\label{eqn:conven:fns:Kummer_Xform}
     \Phi(a,b,\zz)=\ee(\zz)\Phi(b-a,b,-\zz)
\end{gather}
The exponential function $\ee(\zz)$ is a solution to the
differential equation
\begin{gather}
     (\zz\partial_{\zz}+1-\spp)(\partial_{\zz}-\tpi)u(\zz)=0,
\end{gather}
which has a regular singular point at $0$ and an irregular
singularity at $\infty$. Another solution is furnished by the
function $\zz\mapsto\Phi(1,1+\spp,\zz)(\tpi\zz)^{\spp}$, which we
denote also by $\ee(\zz,\spp)$.
\begin{gather}\label{eqn:conven:fns:Genzd_Exp}
     \ee(\zz,\spp)
     =
     \Phi(1,1+\spp,\zz)(\tpi\zz)^{\spp}
     =
     \sum_{k\geq 0}
     (\tpi\zz)^{(k+\spp)}
\end{gather}
Observe that we have $\ee(\zz,n)=\ee(\zz)-\ee(\zz)_{<n}$ for $n\in
\ZZ$, where $\ee(z)_{< K}$ denotes the {\em partial exponential}
$\ee(z)_{< K}=\sum_{{ 0\leq k< K}} (\tpi z)^{(k)}$. In
particular, $\ee(\zz,n)=\ee(\zz)$ when $n\leq 0$.

\subsection{Divisors}\label{sec:conven:div}

For $n\in\ZZp$ and $\spp\in \CC$ we write $\sigma(n,\spp)$ for the
{\em divisor function}
\begin{gather}\label{eqn:conven:fns:Div_Fn}
     \sigma(n,\spp)=\sum_{d|n}d^{\spp}.
\end{gather}

For $a,b\in \ZZ$ we write $(a,b)$ for the greatest common divisor of
$a$ and $b$, and we generalize this notation by writing $((a,b)$ for
the greatest positive integer $c$ say, such that $c^2|a$ and $c|b$,
and by writing $(a^{\infty},b)$ for the largest divisor of $b$ that
divides some power of $a$.
\begin{gather}\label{eqn:conven:div:genzd_gcd}
     ((a,b)={\rm max}
     \left\{
     c\in\ZZp\mid
     c^2|a\text{ and }c|b
     \right\}\\
     (a^{\infty},b)=\lim_{n\to\infty}(a^n,b)
\end{gather}
For $d,n\in \ZZp$ we write $d\|n$ in the case that $d|n$ and
$(d,n/d)=1$, and call such a $d$ an {\em exact divisor of $n$}. We
write $\Ex(n)$ for the set of exact divisors of $n$. Then $\Ex(n)$
becomes a group of exponent $2$ when equipped with the product
$(e,f)\mapsto ef/(e,f)^2$. Indeed, if we write $\Pi(n)$ for the set
of primes that divide $n$ then there is a naturally defined
isomorphism $\Ex(n)\cong (\ZZ/2)^{\Pi(n)}$ which associates to an
exact divisor $e\in \Ex(n)$ the indicator function
$\chi_e:\Pi(n)\to\ZZ/2$, satisfying $\chi_e(p)=1$ if $p|e$ and
$\chi_e(p)=0$ otherwise.

Let $\Pi$ denote the set of all primes. Then the infinite product
$(\ZZ/2)^{\Pi}$ comes equipped with a naturally defined projection
$(\ZZ/2)^{\Pi}\to(\ZZ/2)^{\Pi(n)}$, and a naturally defined section
for it $(\ZZ/2)^{\Pi(n)}\to(\ZZ/2)^{\Pi}$, for each $n\in \ZZp$.
Consequently we have naturally defined morphisms of groups $(\ZZ/2)^{\Pi}\to\Ex(n)$ and
$\Ex(n)\to (\ZZ/2)^{\Pi}$ for each $n$, and for any $m,n\in\ZZp$ we
have the natural map $\Ex(m)\to\Ex(n)$ furnished by the composition
$\Ex(m)\to(\ZZ/2)^{\Pi}\to\Ex(n)$. If $m$ divides $n$ then the map $\Ex(m)\to \Ex(n)$ is
injective, the map $\Ex(n)\to\Ex(m)$ is surjective, and the composition $\Ex(m)\to\Ex(n)\to\Ex(m)$ is the identity on $\Ex(m)$.

\subsection{Isometries}\label{sec:conven:isomhyp}

Let us write $G(\RR)$ for the simple real Lie group $\PSL_2(\RR)$.
The group $G(\RR)$ acts naturally, from the left, on the upper-half
plane $\HH=\{\zz\in\CC\mid\Im(\zz)>0\}$, and is just the group of
orientation preserving isometries of $\HH$ when we equip it with the
{\em hyperbolic measure} ${\rm d}\mu(\zz)={{\rm d}x{\rm d}y}/{y^2}$,
for $\zz=x+\ii y$.

It is convenient to enlist matrices in $\SL_2(\RR)$ for the purpose
of specifying elements of $G(\RR)$. We write $\sqnom{a\;b}{c\;d}$
for the image of a matrix $\binom{a\;b}{c\;d}\in \SL_2(\RR)$ in
$G(\RR)$. Analogously, we write $A\mapsto [A]$ for the canonical map
$\SL_2(\RR)\to G(\RR)$. The action of $G(\RR)$ on $\HH$ is now
described explicitly by the formula
\begin{gather}
     \gamma\cdot\zz
     =
     \frac{a\zz+b}{c\zz+d}
\end{gather}
for $\gamma=\sqnom{a\;b}{c\;d}$. The matrix
$\ogi=\binom{-1\;0}{0\;\;1}\in\GL_2(\RR)$ induces an
outer-automorphism of $G(\RR)$, which we call {\em conjugation on
$G(\RR)$}, and which we denote $\gamma\mapsto \bar{\gamma}$.
Explicitly, we have
\begin{gather}\label{eqn:conven:isomhyp:Conj}
     \bar{\gamma}
     =
     \left[
       \begin{array}{cc}
         -a & b \\
         c & -d \\
       \end{array}
     \right]
     \Longleftarrow
     \gamma=
     \left[
       \begin{array}{cc}
         a & b \\
         c & d \\
       \end{array}
     \right].
\end{gather}

For each matrix $A\in \GL_2^+(\QQ)$ there is a unique $\mu\in\RRp$
for which $A'=\mu A$ belongs to $\SL_2(\RR)$. The assignment
$\GL_2^+(\QQ)\to G(\RR)$ given by $A\mapsto [A']$ then factors
through $\PGL_2^+(\QQ)$, and the resulting map $\PGL_2^+(\QQ)\to
G(\RR)$ is an embedding of groups. We write $G(\QQ)$ for the copy of
$\PGL_2^+(\QQ)$ in $G(\RR)$ obtained in this way. We write $G(\ZZ)$
for the {\em modular group} $\PSL_2(\ZZ)$.

Extending the notation introduced above, we write $[A]$ and
$\sqnom{a\;b}{c\;d}$ for the image in $G(\QQ)<G(\RR)$ of a matrix
$A=\binom{a\;b}{c\;d}$ in $\GL_2^+(\QQ)$. Given $\mu\in \QQp$ we
write $[\mu]$ as a shorthand for $[A]$ when $A$ is the diagonal
matrix $\binom{\mu\;0}{0\;1}$. Then $[\mu]\cdot\zz=\mu\zz$ for
$\zz\in \HH$.
\begin{gather}\label{eqn:conven:isomhyp:Defn_[alpha]}
     [\mu]
     =
     \left[
       \begin{array}{cc}
         \mu & 0 \\
         0 & 1 \\
       \end{array}
     \right]
\end{gather}

Let $B(\RR)$ denote the Borel subgroup of $G(\RR)$ consisting of
images of upper-triangular matrices in $\SL_2(\RR)$, so that
$B(\RR)$ is just the subgroup of $G(\RR)$ that fixes the
distinguished point $\infty$ on the {\em pointed real projective
line} $\hat{\RR}=\RR\cup\{\infty\}$. Let $B(\QQ)$ and $B(\ZZ)$
denote the intersections $B(\RR)\cap G(\QQ)$ and $B(\RR)\cap
G(\ZZ)$, respectively. Then $B(\ZZ)$ is the group generated by the
{\em modular translation} $T=\sqnom{1\;1}{0\;1}$. Set
$\hat{\QQ}=\QQ\cup\{\infty\}\subset\hat{\RR}$. Then $G(\QQ)$ acts
naturally on $\hat{\QQ}$, and $B(\QQ)$ is just the subgroup of
$G(\QQ)$ that fixes the point $\infty$. For $\alpha\in \QQ$ we write
$T^{\alpha}$ for the element $\sqnom{1\;\alpha}{0\;1}\in B(\QQ)$.
\begin{gather}\label{eqn:conven:isomhyp:Defn_T^alpha}
     T^{\alpha}=
     \left[
     \begin{array}{cc}
     1 & \alpha \\
     0 & 1 \\
     \end{array}
     \right]
\end{gather}
Then the elements of $B(\QQ)$ of the form $T^{\alpha}$ for
$\alpha\in \QQ$ constitute the {\em unipotent subgroup of $B(\QQ)$},
which we denote $B_u(\QQ)$. Given $\alpha\in \QQp$ we write
$B(\alpha\ZZ)$ for the subgroup of $B_u(\QQ)$ generated by
$T^{\alpha}$.
\begin{gather}\label{eqn:conven:isomhyp:Defn_B(alphaZZ)}
     B(\alpha\ZZ)
     =
     \left\{
     \left[
     \begin{array}{cc}
     1 & \alpha n \\
     0 & 1 \\
     \end{array}
     \right]
     \mid
     n\in\ZZ
     \right\}
\end{gather}
The elements of $B(\QQ)$ of the form $[\mu]$ for $\mu\in \QQp$
constitute the {\em diagonal subgroup of $B(\QQ)$}, which we denote
$B_d(\QQ)$. The group $B(\QQ)$ is naturally isomorphic to the
semidirect product $B_u(\QQ)\rtimes B_d(\QQ)$. In particular,
$B_u(\QQ)$ is a normal subgroup of $B(\QQ)$ and for any $\chi\in
B(\QQ)$ we have $\chi=T^{\alpha}[\mu]$ for some uniquely determined
$\alpha\in \QQ$ and $\mu\in \QQp$.

An element $\gamma\in G(\QQ)$ will have many preimages in
$\GL_2^+(\QQ)$, but among these there will be exactly two that have
integral entries with no common divisor. Either of these two
matrices will be called a {\em preferred representative} for
$\gamma$. We define functions $\Pdet:G(\QQ)\to \ZZp$ and $c:
G(\QQ)\to \NN$ and $d: G(\QQ)\to\NN$ by setting
\begin{gather}\label{eqn:conven:isomhyp:Defn_Pdet}
     \Pdet(\gamma)
     =
     ad-bc,\quad
     c(\gamma)
     =
     |c|,\quad
     d(\gamma)
     =|d|,
\end{gather}
in case $\binom{a\;b}{c\;d}\in GL_2^+(\QQ)$ is a preferred
representative for $\gamma$. We call $\Pdet$ the {\em projective
determinant}. The group $G(\ZZ)$ is exactly the preimage of $1$ with
respect to the projective determinant. The projective determinant is
not multiplicative, but we have
$\Pdet(\gamma\sigma)=\Pdet(\sigma)=\Pdet(\sigma\gamma)$ for all
$\sigma\in G(\QQ)$ in case $\Pdet(\gamma)=1$, so for any $n\in
\ZZp$ the preimage of $n$ under $\Pdet$ admits commuting left and right actions of $G(\ZZ)$.

Observe that $c(T\gamma)=c(\gamma T)=c(\gamma)$ and
$d(T\gamma)=d(\gamma)$. Given $X\subset G(\QQ)$ we set
$X_{\infty}=X\cap B(\QQ)$ and $X^{\times}=X-X_{\infty}$. Then
$X_{\infty}=\{\chi\in X\mid c(\chi)=0\}$. We define
$X^{\times\times}$ to be the subset of $X^{\times}$ consisting of
$\chi\in X$ such that both $c(\chi)$ and $d(\chi)$ are non-zero.
\begin{gather}
     X_{\infty}\label{eqn:conven:isomhyp:Defn_X_infty}
     =
     \left\{\chi\in X\mid c(\chi)=0\right\},
     \\
     X^{\times}\label{eqn:conven:isomhyp:Defn_X_times}
     =
     \left\{\chi\in X\mid c(\chi)\neq 0\right\},
     \quad
     X^{\times\times}
     =
     \left\{\chi\in X\mid c(\chi)d(\chi)\neq 0\right\}
\end{gather}

For $\gamma\in G(\QQ)$ with $\binom{a\;b}{c\;d}$ a preferred
representative, we have
\begin{gather}\label{eqn:conven:isomhyp_DiffLeftMults}
     \gamma\cdot\zz-\gamma\cdot\zz'=
     \frac{\Pdet(\gamma)}{(c\zz+d)(c\zz'+d)}
     (\zz-\zz')
\end{gather}
for $\zz,\zz'\in\HH$, so that the derivative of the function
$\zz\mapsto \gamma\cdot\zz$, which we will denote
$\jac(\gamma,\zz)$, is given by
\begin{gather}\label{eqn:conven:isomhyp_jacleftmult}
     \jac(\gamma,\zz)=\frac{\Pdet(\gamma)}{(c\zz+d)^2}.
\end{gather}
Note that the assignment $\jac:G(\QQ)\to \mc{O}(\HH)$, which
associates the function $\zz\mapsto \jac(\gamma,\zz)$ to an element
$\gamma\in G(\QQ)$, descends naturally to the coset space
$B(\ZZ)\backslash G(\QQ)$, and even further to $B_u(\QQ)\backslash
G(\QQ)$.

From (\ref{eqn:conven:isomhyp_DiffLeftMults}) and
(\ref{eqn:conven:isomhyp_jacleftmult}) we see that
$|\gamma\cdot\zz-\gamma\cdot\zz'|=|\zz-\zz'|$ just in the case that
$\zz$ and $\zz'$ belong to the set $\{\zz\in\HH\mid
|\jac(\gamma,\zz)|=1\}$ which we call the {\em isometric locus of
$\gamma$}. The isometric locus of $\gamma$ is a (Euclidean)
semicircle just when $c(\gamma)>0$, in which case its center is
$\gamma^{-1}\cdot\infty$, and its radius is $\sqrt{\rads(\gamma)}$,
where
\begin{gather}\label{eqn:conven:isomhyp:Scaling_Factor}
     \rads(\gamma)=\frac{\Pdet(\gamma)}{c(\gamma)^2}
     =\frac{ad-bc}{c^2}
\end{gather}
in case $\binom{a\;b}{c\;d}$ is a preferred
representative for $\gamma$. We call $\rads(\gamma)$ the {\em scaling factor} associated to $\gamma$.

\subsection{Cosets}\label{sec:conven:cosets}

Given a subset $X\subset G(\QQ)$, we will write $\lBZ X\rBZ $ for
the set consisting of double cosets of the form $B(\ZZ)\chi B(\ZZ)$,
for $\chi\in X$. We will write $\lBZ \chi\rBZ $ as a shorthand for
the particular double coset $B(\ZZ)\chi B(\ZZ)$.
\begin{gather}
     \lBZ\chi\rBZ=B(\ZZ)\chi B(\ZZ),\quad
     \lBZ X\rBZ =
     \left\{\lBZ \chi\rBZ
          \in B(\ZZ)\backslash G(\QQ)/B(\ZZ)\mid\chi\in X\right\}
\end{gather}
Similarly, we will write $\lBZ X\rBZh $ for the set of right cosets
of the form $B(\ZZ)\chi$, for $\chi\in X$, and $\chi\mapsto \lBZ
\chi\rBZh $ will denote the natural map $X\to \lBZ X\rBZh $, and the
notations $\lBZh X\rBZ $ and $\chi\mapsto \lBZh \chi\rBZ $ will have
the analogous meanings. Observe that $B(\ZZ)$ is stable under the
operations of inversion and conjugation (cf.
(\ref{eqn:conven:isomhyp:Conj})), and thus these operations descend
naturally to the coset spaces $\lBZ G(\QQ)\rBZh$ and $\lBZh
G(\QQ)\rBZ$ and $\lBZ G(\QQ)\rBZ$.

Recall from \S\ref{sec:conven:isomhyp} that the assignment
$\chi\mapsto \jac(\chi,\zz)$ descends to a well-defined association
of holomorphic functions on $\HH$ to right cosets of $B(\ZZ)$ in
$G(\QQ)$. Accordingly, we may safely set
$\jac(\lBZ\chi\rBZh,\zz)=\jac(\chi,\zz)$, when given some
$\lBZ\chi\rBZh\in \lBZ G(\QQ)\rBZh$.
\begin{gather}
     \begin{split}
          \jac(\cdot\,,\zz):\lBZ G(\QQ)\rBZh&\to\mc{O}(\HH)\\
          \lBZ\chi\rBZh&\mapsto \jac(\lBZ\chi\rBZh,\zz)
     \end{split}
\end{gather}
Given $U\subset\lBZ G(\QQ)\rBZh$ or $S\subset\lBZ G(\QQ)\rBZ$ we set
$U_{\infty}=U\cap\lBZ B(\QQ)\rBZh$ and $U^{\times}=U-U_{\infty}$,
and similarly, $S_{\infty}=S\cap\lBZ B(\QQ)\rBZ$ and
$S^{\times}=S-S_{\infty}$. In particular, $\lBZ G(\QQ)\rBZ^{\times}$
denotes the set of double cosets $\lBZ \chi\rBZ $, for $\chi\in
G(\QQ)$, for which $c(\chi)\neq 0$.

Observe that the isometric loci (cf. \S\ref{sec:conven:isomhyp})
associated to $\chi,\chi'\in G(\QQ)$ coincide when
$\lBZ\chi\rBZh=\lBZ\chi'\rBZh$. Thus we may regard these loci as
naturally attached to right cosets of $B(\ZZ)$ in $G(\QQ)$, and the
locus attached to $\lBZ\chi\rBZh$ is a Euclidean semicircle just
when $\lBZ\chi\rBZh\in\lBZ G(\QQ)\rBZh^{\times}$. Also, the loci
associated to cosets $\lBZ\chi\rBZh,\lBZ\chi'\rBZh\in\lBZ
G(\QQ)\rBZh^{\times}$ have the same radii when
$\lBZ\chi\rBZ=\lBZ\chi'\rBZ$; that is to say, the function $\rads$
of (\ref{eqn:conven:isomhyp:Scaling_Factor}) descends to a
well-defined map $\lBZ G(\QQ)\rBZ^{\times}\to \QQp$. It is worth
noting that the function $\chi\mapsto\rads(\chi)$ actually satisfies
an even stronger invariance condition.
\begin{lem}\label{lem:conven:cosets_RadsInv}
The assignment $\lBZ\chi\rBZ\mapsto \rads\lBZ\chi\rBZ$ descends
naturally to a well-defined function on the double coset space
$B_u(\QQ)\backslash G(\QQ)/ B_u(\QQ)$. That is, we have $\rads\lBZ
T^{\alpha}\chi\rBZ=\rads\lBZ\chi T^{\alpha}\rBZ=\rads\lBZ\chi\rBZ$
for any $\alpha\in\QQ$ and $\chi\in G(\QQ)^{\times}$.
\end{lem}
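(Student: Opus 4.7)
The plan is to strengthen the built-in $B(\ZZ)$-bi-invariance by observing that $\rads$ admits a computation from an arbitrary rational matrix lift, and that the unipotent matrix representing $T^{\alpha}$ is transparent to both the determinant and the bottom-left entry.

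First I would verify the following sharpening of (\ref{eqn:conven:isomhyp:Scaling_Factor}): for any $\chi\in G(\QQ)^{\times}$ and any $A=\binom{a\;b}{c\;d}\in \GL_2^+(\QQ)$ with $[A]=\chi$ and $c\neq 0$, the identity
\[
\rads(\chi)=\frac{\det(A)}{c^2}
\]
holds, not just for the preferred representative. Indeed, if $A'=\mu A$ for $\mu\in \QQp$, then $\det(A')=\mu^2\det(A)$ while the bottom-left entry of $A'$ is $\mu c$, so the quotient $\det(A)/c^2$ is unchanged under rescaling the matrix. The preferred representative is just the unique (up to sign) choice of $\mu$ making the entries coprime integers, so the formula (\ref{eqn:conven:isomhyp:Scaling_Factor}) coincides with the projectively intrinsic quantity $\det(A)/c^2$ computed from any lift.

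Next I would compute both $T^{\alpha}\chi$ and $\chi T^{\alpha}$ at the level of matrices. Take the lift $M=\binom{1\;\alpha}{0\;1}\in \GL_2^+(\QQ)$ of $T^{\alpha}$, noting $\det(M)=1$ and that the bottom row of $M$ is $(0,1)$, and write $A=\binom{a\;b}{c\;d}$ for any lift of $\chi$. Then
\[
MA=\begin{pmatrix} a+\alpha c & b+\alpha d \\ c & d \end{pmatrix},\qquad AM=\begin{pmatrix} a & \alpha a+b \\ c & \alpha c+d \end{pmatrix},
\]
so both $MA$ and $AM$ have determinant $\det(A)$ and bottom-left entry $c$. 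Invoking the sharpened formula from the previous paragraph yields
\[
\rads(T^{\alpha}\chi)=\rads(\chi T^{\alpha})=\frac{\det(A)}{c^2}=\rads(\chi),
\]
and combining with the $B(\ZZ)$-bi-invariance already noted before the lemma gives the asserted descent of $\rads$ to $B_u(\QQ)\backslash G(\QQ)^{\times}/B_u(\QQ)$.

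There is no serious obstacle here; the only point requiring attention is recognizing that the preferred-representative convention, while convenient for pinning down integer data such as $\Pdet(\chi)$ and $c(\chi)=|c|$ individually, masks the fact that their ratio is insensitive to rescaling. Once this is observed, the two checks for left and right multiplication by $T^{\alpha}$ are essentially identical matrix computations, and the extension from $T^{\alpha}$ to arbitrary elements of $B_u(\QQ)$ is automatic since $B_u(\QQ)=\{T^{\alpha}\mid\alpha\in\QQ\}$.
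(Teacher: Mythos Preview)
Your argument is correct. The paper states this lemma without proof, presumably regarding it as a routine verification; your write-up supplies exactly that verification, and the key observation---that $\det(A)/c^2$ is invariant under rescaling $A\mapsto\mu A$ and hence computable from any rational lift rather than only the preferred representative---is precisely what makes the matrix computation go through cleanly.
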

The function $\rads$ is also invariant under inversion and
conjugation, so that we have
$\rads\lBZ\chi\rBZ=\rads\lBZ\bar{\chi}\rBZ=\rads\lBZ\chi^{-1}\rBZ$
for any $\chi\in G(\QQ)^{\times}$. For $\chi\in G(\QQ)^{\times}$ the
identity (\ref{eqn:conven:isomhyp_DiffLeftMults}) may be rewritten
\begin{gather}
     \chi\cdot\zz-\chi\cdot\zz'
     =\frac{\rads\lBZ\chi\rBZ}
     {(\zz-\chi^{-1}\cdot\infty)(\zz'-\chi^{-1}\cdot\infty)}\
     (\zz-\zz'),
\end{gather}
and we may consider the limit as $\zz'\to\infty$, which yields a
useful expression for $\chi\cdot\zz-\chi\cdot\infty$; viz.,
\begin{gather}\label{eqn:conven:cosets_UsefulChiDotTau}
     \chi\cdot\zz-\chi\cdot\infty
     =-\frac{\rads\lBZ\chi\rBZ}{\zz-\chi^{-1}\cdot\infty}.
\end{gather}
Comparing with (\ref{eqn:conven:isomhyp_jacleftmult}) and
(\ref{eqn:conven:cosets_UsefulChiDotTau}) we have
\begin{gather}\label{eqn:conven:isomhyp:jac_rads_over_zchi}
     \jac(\lBZ\chi\rBZh,\zz)
     =\frac{\rads\lBZ\chi\rBZ}
     {(\zz-\lBZh\chi^{-1}\rBZ\cdot\infty)^2}
\end{gather}
for $\lBZ\chi\rBZh\in\lBZ G(\QQ)\rBZh^{\times}$ (and
$\jac(\lBZ\chi\rBZh,\zz)=1$ otherwise).

The functions $c,d:G(\QQ)\to\NN$, of \S\ref{sec:conven:isomhyp},
descend to well-defined functions on the coset space $\lBZ
G(\QQ)\rBZh=B(\ZZ)\backslash G(\QQ)$. The function $c$ descends
further, to be well-defined on the double coset space $\lBZ
G(\QQ)\rBZ =B(\ZZ)\backslash G(\QQ)/B(\ZZ)$. For $K\in \RRp$ and $U$
a subset of $\lBZ G(\QQ)\rBZh$, define $U_{\leq K}$ to be the subset
of $U$ consisting of cosets $\lBZ \chi\rBZh \in U$ for which
$c(\chi)\leq K$ and $d(\chi)\leq K^2$. Analogously, for $S$ a subset
of $\lBZ G(\QQ)\rBZ$, define $S_{\leq K}$ to be the subset of $S$
consisting of double cosets $\lBZ \chi\rBZ \in S$ for which
$c(\chi)\leq K$.
\begin{gather}\label{eqn:conven:cosets:Defn U_sub_K}
     U_{\leq K}
     =\left\{\lBZ \chi\rBZh \in U\mid
          c(\chi)\leq K,\,d(\chi)\leq K^2\right\}\\\label{eqn:conven:cosets:Defn S_sub_K}
     S_{\leq K}
     =\left\{\lBZ \chi\rBZ \in S\mid
          c(\chi)\leq K\right\}
\end{gather}

\subsection{Groups}\label{sec:conven:groups}

The group $G(\QQ)$ is the commensurator of $G(\ZZ)$. Consequently,
any group $\Gamma<G(\RR)$ that is commensurable with the modular
group $G(\ZZ)$ is automatically a subgroup of $G(\QQ)$, and is an
example of a {Fuchsian group of the first kind}. For such a group
$\Gamma$ there is a natural way to equip the orbit space
$\Gamma\backslash \HH$ with the structure of a Riemann surface; we
will denote this object by ${\sf Y}_{\Gamma}$. Any group $\Gamma$
commensurable with $G(\ZZ)$ has parabolic elements, and any fixed
point of any parabolic element of $\Gamma$ lies on the (pointed)
rational projective line $\hat{\QQ}\subset\hat{\RR}$. The orbit
space ${\sf P}_{\Gamma}=\Gamma\backslash\hat{\QQ}$ is the set of
{\em cusps of $\Gamma$}. For $\Gamma$ commensurable with $G(\ZZ)$,
the Riemann surface ${\sf Y}_{\Gamma}$ is not compact, but can be
compactified in a natural way by adjoining a single point for each
cusp of $\Gamma$. We will write ${\sf X}_{\Gamma}$ for the
corresponding compact Riemann surface obtained by the adjunction of
the cusps of $\Gamma$.
\begin{gather}\label{eqn:conven:groups:Defn_X_Gamma_Y_Gamma_P_Gamma}
     {\sf X}_{\Gamma}
     ={\sf Y}_{\Gamma}\cup{\sf P}_{\Gamma}
     =\Gamma\backslash\HH\cup\Gamma\backslash\hat{\QQ}
\end{gather}
We will say that $\Gamma$ is a {\em group of genus zero} in the case
that ${\sf X}_{\Gamma}$ has genus zero as a Riemann surface. We set
$\genus(\Gamma)=\genus({\sf X}_{\Gamma})$ for $\Gamma<G(\RR)$
commensurable with $G(\ZZ)$.

We write $\hat{\CC}$ for the {\em (pointed) complex projective line}
$\CC\cup\{\infty\}$, which we regard as a compact Riemann surface in
the usual way. If $\Gamma$ is a group for which the compact Riemann
surface ${\sf X}_{\Gamma}$ has genus zero, then there is an
isomorphism $\phi:{\sf X}_{\Gamma}\xrightarrow{\sim}\hat{\CC}$ that
witnesses this fact, and the set of such isomorphisms admits a simply transitive action by $G(\CC)$. Thus we may assume that $\phi$ maps the point of ${\sf
X}_{\Gamma}$ corresponding to the {\em infinite cusp}
$\Gamma\cdot\infty$ to the distinguished point $\infty\in
\hat{\CC}$. Such an isomorphism $\phi$ determines a
$\Gamma$-invariant holomorphic function, $f$ say, on $\HH$, which
admits an expression $f(\zz)=\sum_{n\geq -1}c(n/h)\ee(n\zz/h)$ with
$c(-1/h)\neq 0$ for some positive integer $h$ called the {\em width of $\Gamma$ at infinity}. After multiplying by $1/c(-1/h)$ we may assume that
$c(-1/h)=1$. A $\Gamma$-invariant function of the form
$\ee(-\zz/h)+\sum_{n\geq 0}c(n/h)\ee(n\zz/h)$ which is holomorphic on
$\HH$ and induces an isomorphism ${\sf X}_{\Gamma}\xrightarrow{\sim}
\hat{\CC}$ will be called a {\em hauptmodul} for $\Gamma$. In case
$c(0)=0$ we say that $f$ is a {\em normalized hauptmodul} for
$\Gamma$.

The {\em Hecke congruence groups}, denoted $\Gamma_0(n)$ for $n\in
\ZZp$, play a special r\^ole in our analysis.
\begin{gather}
     \Gamma_0(n)
     =\left\{
     \left[
       \begin{array}{cc}
         a & b \\
         cn & d \\
       \end{array}
     \right]\mid
     a,b,c,d\in \ZZ,\,ad-bcn=1
     \right\}
\end{gather}
According to \cite{ConNorMM} the normalizer $N(\Gamma_0(n))$ of
$\Gamma_0(n)$ in $G(\RR)$ is commensurable with $G(\ZZ)$ and admits
the description
\begin{gather}\label{eqn:conven:groups:Normalizer_Gamma0(n)}
     N(\Gamma_0(n))
     =\left\{
     \left[
       \begin{array}{cc}
         ae & b/h \\
         cn/h & de \\
       \end{array}
     \right]\mid
     a,b,c,d\in \ZZ,\,e\in \ZZp,\,e\|n/h,\,ade-bcn/eh^2=1
     \right\}
\end{gather}
where $h=((n,24)$ is the largest divisor of $24$ such that $h^2$ divides
$n$ (cf. \S\ref{sec:conven:div}). The expression (\ref{eqn:conven:groups:Normalizer_Gamma0(n)})
tells us, in particular, that $N(\Gamma_0(n))_{\infty}=\lab
T^{1/h}\rab$ for $h=((n,24)$.
\begin{prop}\label{prop:conven:groups:Normalizer_Gamma0(n)}
Let $n\in \ZZp$. Then $N(\Gamma_0(n))$ acts transitively on
$\hat{\QQ}$. We have $N(\Gamma_0(n))_{\infty}=\Gamma_0(n)_{\infty}$
if and only if $n$ is not divisible by $4$ or $9$.
\end{prop}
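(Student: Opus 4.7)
The second assertion will follow directly from the parenthetical remark preceding the proposition. Inspecting (\ref{eqn:conven:groups:Normalizer_Gamma0(n)}), an element $\sqnom{ae\;\;b/h}{cn\;\;de}$ of $N(\Gamma_0(n))$ fixes $\infty$ precisely when its lower-left entry vanishes, i.e.\ $c=0$; the determinant condition then becomes $ade=1$, forcing $e=1$ (as $e\in\ZZp$) and $a=d=\pm 1$. Hence the stabilizer consists of matrices $\sqnom{\pm 1\;\;b/h}{0\;\;\pm 1}$ with $b\in\ZZ$, and so $N(\Gamma_0(n))_\infty=\lab T^{1/h}\rab$. Since $\Gamma_0(n)_\infty=\lab T\rab$, equality of the two stabilizers is equivalent to $h=1$. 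But $h=((n,24)$ is the largest $h'|24$ with $(h')^2|n$, and every $h'>1$ dividing $24$ is divisible by $2$ or $3$; therefore $h>1$ iff $4|n$ or $9|n$, completing the argument for the second claim.

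\textbf{Transitivity.} For the first assertion, the plan is to show that each $\Gamma_0(n)$-orbit on $\hat{\QQ}$ meets the $N(\Gamma_0(n))$-orbit of $\infty$. Since every $\Gamma_0(n)$-cusp has a representative $a/c$ with $c\mid n$, and the element $\sqnom{\alpha e\;\;\beta/h}{\gamma n\;\;\delta e}$ in (\ref{eqn:conven:groups:Normalizer_Gamma0(n)}) sends $\infty$ to $\alpha e/(\gamma n)$, it will suffice to solve $\alpha e/(\gamma n)=a/c$ in integers $\alpha,\gamma$ and exact divisor $e\| n/h$, subject to the solvability (in $\beta,\delta$) of the Bezout-type condition $\alpha\delta e-\beta\gamma n/(eh)=1$, i.e.\ $\gcd(\alpha e,\gamma n/(eh))=1$. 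My strategy is to first handle $c\| n$ by invoking the Atkin-Lehner involution $w_{n/c}$, realized in (\ref{eqn:conven:groups:Normalizer_Gamma0(n)}) by taking $e=n/c$ and choosing the other parameters appropriately; this sends $\infty$ to a rational with reduced denominator $c$. For $c\mid n$ with $(c,n/c)>1$, the relevant primes $p$ satisfy $p^2\mid n$, and for $p\in\{2,3\}$ we have $p\mid h$, so I will compose Atkin-Lehner involutions with the fractional translations $T^{i/h}$ to adjust the denominator, using the identity $w_n T^{i/h} w_n\cdot\infty=-i/(nh^{-1}i)$ (up to reduction) and its siblings to produce the required denominators as divisors of $n$ involving the $2$-- and $3$--primary parts.

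\textbf{Numerator adjustment and obstacle.} Once the denominator $c$ of the target cusp is achieved, matching the numerator $a$ within its residue class modulo $\gcd(c,n/c)$ will be handled by varying $\beta,\delta$ in the determinant relation, together with the $\Gamma_0(n)$-action (which is itself transitive on each of its own cusp orbits). The hard part will be the denominator matching step for $c\mid n$ with $(c,n/c)>1$: this requires a detailed case analysis on the prime factorization of $n$, since primes $p\geq 5$ with $p^2\mid n$ contribute no fractional translation (as $p\nmid 24$), and one must check that Atkin-Lehner involutions alone---possibly in conjunction with elements $\sqnom{\alpha e\;\;\beta/h}{\gamma n\;\;\delta e}$ with $\gamma\neq\pm 1$---suffice to reach these cusps. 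This amounts to an elementary but intricate Diophantine verification that the parameters in (\ref{eqn:conven:groups:Normalizer_Gamma0(n)}) admit enough freedom, for each divisor $c\mid n$, to realize $c$ as the reduced denominator of $\alpha e/(\gamma n)$ for some admissible choice of $e\|n/h$ and coprime $\alpha e,\gamma n/(eh)$.
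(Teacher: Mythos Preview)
The paper states this proposition without proof, so there is no argument of the paper's to compare against. Your treatment of the second assertion is correct and is essentially the observation the paper itself makes just before the proposition: from (\ref{eqn:conven:groups:Normalizer_Gamma0(n)}) one reads off $N(\Gamma_0(n))_\infty=\lab T^{1/h}\rab$ with $h=((n,24)$, while $\Gamma_0(n)_\infty=\lab T\rab$, so equality is equivalent to $h=1$; and since every divisor of $24$ exceeding $1$ is divisible by $2$ or $3$, one has $h>1$ if and only if $4\mid n$ or $9\mid n$.

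For the first assertion, however, the obstacle you flag is not a technicality to be dispatched by case analysis but a genuine obstruction: the transitivity claim is false as written. Take $n=25$. Then $h=((25,24)=1$, and the exact divisors of $n/h=25$ are $\{1,25\}$, so (\ref{eqn:conven:groups:Normalizer_Gamma0(n)}) gives
\[
N(\Gamma_0(25))=\Gamma_0(25)\cup W_{25}=\Gamma_0(25)+,
\]
an index-$2$ extension of $\Gamma_0(25)$. But $\Gamma_0(25)$ has six cusps, since $\sum_{d\mid 25}\phi((d,25/d))=1+4+1=6$, so an index-$2$ overgroup has at least three orbits on $\hat{\QQ}$. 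Explicitly, writing $w_{25}$ for the Fricke involution $z\mapsto -1/(25z)$, one has $w_{25}(\infty)=0$ and $w_{25}(k/5)=-1/(5k)\sim_{\Gamma_0(25)}(5-k)/5$ for $k\in\{1,2,3,4\}$; for instance $T\cdot(-1/5)=4/5$ and $\sqnom{13\;\;1}{25\;\;2}\cdot(-1/10)=3/5$. This yields exactly the three $N(\Gamma_0(25))$-orbits $\{\infty,0\}$, $\{1/5,4/5\}$, $\{2/5,3/5\}$. Your ``elementary but intricate Diophantine verification'' therefore cannot succeed in general: when $p\geq 5$ and $p^2\mid n$ there is neither a fractional translation (since $p\nmid h$) nor an Atkin--Lehner element available to bridge the distinct cusps of denominator $p$. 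The transitivity statement appears to require the additional hypothesis that no prime $p\geq 5$ satisfy $p^2\mid n$.
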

An important family of groups, each one commensurable with the
modular group $G(\ZZ)$, and each one containing and normalizing some
$\Gamma_0(n)$, was introduced in \cite{ConNorMM}; these are the
groups of {\em $n\|h$-type}, and we now recall their definition. For $n,h\in \ZZp$ with $h|n$, and for $S$ a
subgroup of $\Ex(n/h)$ (cf. \S\ref{sec:conven:div}), we define a group $\Gamma_0(n|h)+S$ by
setting
\begin{gather}\label{eqn:conven:groups:Gamma0(n|h)+S}
     \Gamma_0(n|h)+S
     =\left\{
     \left[
       \begin{array}{cc}
         ae & b/h \\
         cn & de \\
       \end{array}
     \right]
     \mid
     a,b,c,d\in \ZZ,\,
     e\in S,\,
     ade-bcn/eh=1
     \right\}.
\end{gather}
Evidently the group $\Gamma_0(n|h)+S$ contains $\Gamma_0(nh)$. In
case $h|24$ it also normalizes $\Gamma_0(nh)$. Assume then that
$h|24$. We say that $\Gamma_0(n|h)+S$ is of {\em Fricke type} if the
element $\sqnom{0\;-1}{nh\;\;0}$ belongs to it.

The group $\Gamma_0(n\|h)+S$ is defined (cf. \cite{ConNorMM}) as the
subgroup of $\Gamma_0(n|h)+S$ arising as the kernel of a certain
morphism $\Gamma_0(n|h)+S\to \ZZ/h$ which factors through the
canonical map $\Gamma_0(n|h)+S\to (\Gamma_0(n|h)+S)/\Gamma_0(nh)$.
In order to describe it let $S'$ denote the image of $S$ in $\Ex(nh)$ under the natural injection $\Ex(n/h)\to\Ex(nh)$ (cf. \S\ref{sec:conven:div}). Then as generators for the quotient $(\Gamma_0(n|h)+S)/\Gamma_0(nh)$
we may take the cosets $X$, $Y$, and $W_{e'}$ for $e'\in S'$, given
by
\begin{gather}
     X
     =
     \left[
       \begin{array}{cc}
         1 & 1/h \\
         0 & 1 \\
       \end{array}
     \right]
     \Gamma_0(nh),\quad
     Y
     =
     \left[
       \begin{array}{cc}
         1 & 0 \\
         n & 1 \\
       \end{array}
     \right]
     \Gamma_0(nh),\quad
     W_{e'}
     =
     \left[
       \begin{array}{cc}
         ae' & b \\
         cnh & de' \\
       \end{array}
     \right]
     \Gamma_0(nh),
\end{gather}
where the $a$, $b$, $c$ and $d$ in the definition of $W_{e'}$ are
arbitrary integers for which $ade'-bcnh/e'=1$, and we may define $\Gamma_0(n\|h)+S$ to be the
kernel of the composition
\begin{gather}
  \Gamma_0(n|h)+S
  \to
  (\Gamma_0(n|h)+S)/\Gamma_0(nh)
  \xrightarrow{\lambda}
  \ZZ/h
\end{gather}
where the map $\lambda$ is determined by the requirements that
$\lambda(W_{e'})=0$ for all $e'\in S'$, $\lambda(X)=1$, and
$\lambda(Y)$ is $1$ or $-1$ according as $\Gamma_0(n|h)+S$ is of
Fricke type or not. 

The group $\Gamma_0(n\|h)+S$ is also described in \cite{Fer_Genus0prob}, where it is denoted $(1/h)\Gamma_0(n|h)+e_1,e_2,\ldots$ for $S=\{1,e_1,e_2,\ldots\}$, although we should remark that certain variations on $\lambda$ are considered there (in the case that $h$ is even), so that the class of groups considered in \cite{Fer_Genus0prob} properly contains the groups of $n\|h$-type.

\subsection{Scalings}\label{sec:conven:scaling}

Let $\Gamma$ be a group commensurable with $G(\ZZ)$ and let $\cp\in
\cP_{\Gamma}$ (cf.
(\ref{eqn:conven:groups:Defn_X_Gamma_Y_Gamma_P_Gamma})) be a cusp of
$\Gamma$. An element $\cpr\in \cp\subset \hat{\QQ}$ will be called a
{\em representative for $\cp$}.
\begin{lem}\label{lem:conven:scaling:scaling_elts_exist}
Let $\Gamma$ be a group commensurable with $G(\ZZ)$ and let
$\cpr\in\hat{\QQ}$. Then there exists an element $\sigma_{\cpr}\in
G(\QQ)$ such that
\begin{gather}\label{eqn:conven:scaling:scaling_elt_conds}
     \cpr=\sigma_{\cpr}\cdot\infty,\quad
     (\sigma_{\cpr}^{-1}\Gamma\sigma_{\cpr})_{\infty}=B(\ZZ).
\end{gather}
\end{lem}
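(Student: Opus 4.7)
The plan is to construct $\sigma_{\cpr}$ in two steps: first move $\cpr$ to $\infty$ by any convenient element of $G(\QQ)$, then rescale by a diagonal element of $B(\QQ)$ so that the stabilizer of $\infty$ in the conjugated group is exactly $B(\ZZ) = \langle T\rangle$.

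For the first step, observe that $G(\ZZ)$ already acts transitively on $\hat{\QQ}$: writing $\cpr = p/q$ in lowest terms (or $\infty$ if $q=0$), Bezout produces integers $a,b$ with $ap - bq = 1$, and then the element $\sqnom{p\;a}{q\;b}$ of $G(\ZZ)$ carries $\infty$ to $\cpr$. So we pick any $\tau \in G(\QQ)$ with $\tau\cdot\infty = \cpr$ and replace $\Gamma$ with $\Gamma' = \tau^{-1}\Gamma\tau$. Since $G(\QQ)$ is the commensurator of $G(\ZZ)$, the group $\Gamma'$ is again commensurable with $G(\ZZ)$, and its stabilizer of $\infty$ is $\Gamma'_{\infty} = \tau^{-1}\Gamma_{\cpr}\tau$, where $\Gamma_{\cpr}$ denotes the stabilizer of $\cpr$ in $\Gamma$.

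Next I would identify the structure of $\Gamma'_{\infty}$. Because $\Gamma$ is a Fuchsian group of the first kind, no element of $\Gamma$ other than the identity can fix a point of $\hat{\RR}$ without being parabolic there (a discrete subgroup cannot contain a hyperbolic element sharing a fixed point with a parabolic one), so every nontrivial element of $\Gamma_{\cpr}$, and hence of $\Gamma'_{\infty}$, is parabolic at $\infty$. In $G(\QQ)$ the parabolic elements fixing $\infty$ are exactly those of the form $T^{\beta}$ with $\beta \in \QQ^{\times}$, so $\Gamma'_{\infty} \subset B_u(\QQ)$. On the other hand $\Gamma'_{\infty}$ is commensurable with $G(\ZZ)_{\infty} = B(\ZZ) = \langle T\rangle$, and any infinite subgroup of $B_u(\QQ) \cong \QQ$ commensurable with $\langle T\rangle$ is cyclic; thus there is a unique $\alpha \in \QQp$ with $\Gamma'_{\infty} = \langle T^{\alpha}\rangle$.

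Finally I set $\sigma_{\cpr} = \tau[\alpha]$. The first required property is immediate:
\begin{gather*}
     \sigma_{\cpr}\cdot\infty = \tau\cdot([\alpha]\cdot\infty) = \tau\cdot\infty = \cpr,
\end{gather*}
since $[\alpha]$ fixes $\infty$. For the second, a direct computation of matrices gives $[\alpha]^{-1}T^{\alpha}[\alpha] = T$, so conjugation by $[\alpha]$ carries $\langle T^{\alpha}\rangle$ onto $\langle T\rangle = B(\ZZ)$, yielding
\begin{gather*}
     (\sigma_{\cpr}^{-1}\Gamma\sigma_{\cpr})_{\infty}
     = [\alpha]^{-1}\Gamma'_{\infty}[\alpha]
     = [\alpha]^{-1}\langle T^{\alpha}\rangle[\alpha]
     = B(\ZZ).
\end{gather*}
The only slightly delicate point in the argument is the assertion that $\Gamma_{\cpr}$ consists only of parabolic elements and the identity; this is where the hypothesis that $\Gamma$ is commensurable with $G(\ZZ)$ (hence Fuchsian of the first kind) is used, and it guarantees that $\Gamma'_{\infty}$ lies inside $B_u(\QQ)$ rather than merely inside $B(\QQ)$. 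Everything else is bookkeeping.
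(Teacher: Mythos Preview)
Your proof is correct and follows the same two-step strategy as the paper: pick $\tau\in G(\QQ)$ carrying $\infty$ to $\cpr$, then post-multiply by $[\alpha]$ to normalize the stabilizer. The one substantive difference is in how you exclude hyperbolic elements from $\Gamma'_{\infty}$. You invoke the standard Fuchsian fact that a discrete group cannot contain a parabolic and a hyperbolic element sharing a fixed point (which, note, requires you to already know $\Gamma'_{\infty}$ contains a parabolic --- this comes from the commensurability observation you make one sentence later, so the order of your presentation could be tightened). The paper instead gives a self-contained algebraic argument: if a generator $\gamma_{\infty}$ of $\Gamma'_{\infty}$ had $\Pdet(\gamma_{\infty})=e>1$, then the powers $\gamma_{\infty}^{n}$ would have $\Pdet=e^{n}$ and hence lie in infinitely many distinct cosets of $\Gamma'\cap G(\ZZ)$ in $\Gamma'$, contradicting commensurability. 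Your route is quicker given the Fuchsian machinery; the paper's route is more elementary and stays entirely within the projective-determinant bookkeeping it has already set up.
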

\begin{proof}
In case $p=\infty$ the group $\Gamma_{\infty}$ (cf.
(\ref{eqn:conven:isomhyp:Defn_X_infty})) of elements in $\Gamma$
that fix $\infty$ is infinite cyclic and generated by some element
$\gamma_{\infty}\in B(\QQ)$. We claim that in fact $\gamma_{\infty}$
lies in $B_u(\QQ)$, and thus equals $T^{\alpha}$ (cf.
(\ref{eqn:conven:isomhyp:Defn_T^alpha})) for some $\alpha\in \QQp$.
For otherwise $\Pdet(\gamma_{\infty})=e$ for some $e>1$ (cf.
(\ref{eqn:conven:isomhyp:Defn_Pdet})), so that
$\Pdet(\gamma_{\infty}^n)=e^n$ for $n\in \ZZ$, since the restriction
$\Pdet:B(\QQ)\to \ZZp$ is multiplicative. Then each power of
$\gamma_{\infty}$ lies in a distinct coset of the intersection
$\Gamma\cap G(\ZZ)$ in $\Gamma$ since $\Pdet$ is invariant under
multiplication by elements of $G(\ZZ)$. This contradicts the
hypothesis that $\Gamma$ is commensurable with $G(\ZZ)$, so we
conclude that $\Gamma_{\infty}=\lab T^{\alpha}\rab$ for some
$\alpha\in \QQp$. Then we may take $\sigma_{\infty}=[\alpha]$ (cf.
(\ref{eqn:conven:isomhyp:Defn_[alpha]})), for upon calculating
$[\mu]T^{\alpha}[1/\mu]=T^{\mu\alpha}$ we find that
\begin{gather}
     ([1/\alpha]\Gamma[\alpha])_{\infty}
     =
     [1/\alpha]\Gamma_{\infty}[\alpha]
     =[1/\alpha]\lab T^{\alpha}\rab[\alpha]
     =\lab T\rab,
\end{gather}
and the group $\lab T\rab$ is just $B(\ZZ)$, so the element
$\sigma_{\infty}=[\alpha]$ satisfies the two conditions
(\ref{eqn:conven:scaling:scaling_elt_conds}).

In case $p=a/c$ for coprime integers $a,c\in \ZZ$ with $c\neq 0$, we
may choose $b,d\in\ZZ$ such that $ad-bc=1$ and set
$\sigma=\sqnom{a\;b}{c\;d}$. Then $p=\sigma\cdot\infty$ and $\sigma$
satisfies the first condition in
(\ref{eqn:conven:scaling:scaling_elt_conds}). Now we rerun the
argument of the paragraph above with $\sigma^{-1}\Gamma\sigma$ in
place of $\Gamma$ to find that
$(\sigma^{-1}\Gamma\sigma)_{\infty}=\lab T^{\alpha}\rab$ for some
$\alpha\in \QQp$, and $\sigma_p=\sigma[\alpha]$ satisfies the two
conditions (\ref{eqn:conven:scaling:scaling_elt_conds}). This
completes the proof of the lemma.
\end{proof}
An element $\sigma_{\cpr}\in G(\QQ)$ satisfying the two conditions
(\ref{eqn:conven:scaling:scaling_elt_conds}) of Lemma
\ref{lem:conven:scaling:scaling_elts_exist} will be called a {\em
scaling element for $\Gamma$ at the cusp representative $\cpr$}. It
is useful to have a replacement for the notion of scaling element
that is independent of a choice of cusp representative.
\begin{lem}\label{lem:conven:scaling:scaling_cosets_exist}
Let $\Gamma$ be a group commensurable with $G(\ZZ)$ and let
$\cp\in\cP_{\Gamma}$ be a cusp of $\Gamma$. Then there exists a
coset $\Sigma_{\cp}\in \Gamma\backslash G(\QQ)$ such that
\begin{gather}\label{eqn:conven:scaling:scaling_coset_conds}
     \cp=\Sigma_{\cp}\cdot\infty,\quad
     (\Sigma_{\cp}^{-1}\Sigma_{\cp})_{\infty}=B(\ZZ).
\end{gather}
\end{lem}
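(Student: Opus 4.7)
The plan is to deduce this directly from Lemma \ref{lem:conven:scaling:scaling_elts_exist} by showing that the desired coset is obtained as $\Sigma_{\cp} = \Gamma \sigma_{\cpr}$ for any scaling element $\sigma_{\cpr}$ at any representative $\cpr$ of $\cp$. So first I would choose a representative $\cpr \in \cp \subset \hat{\QQ}$, and apply Lemma \ref{lem:conven:scaling:scaling_elts_exist} to obtain an element $\sigma_{\cpr} \in G(\QQ)$ satisfying $\cpr = \sigma_{\cpr} \cdot \infty$ and $(\sigma_{\cpr}^{-1} \Gamma \sigma_{\cpr})_{\infty} = B(\ZZ)$. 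Then set $\Sigma_{\cp} = \Gamma \sigma_{\cpr} \in \Gamma \backslash G(\QQ)$.

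Next I would verify the two conditions (\ref{eqn:conven:scaling:scaling_coset_conds}). For the first, interpreting $\Sigma_{\cp} \cdot \infty$ as the $\Gamma$-orbit $\Gamma \cdot (\sigma_{\cpr} \cdot \infty) \in \Gamma \backslash \hat{\QQ} = \cP_{\Gamma}$, we have $\sigma_{\cpr} \cdot \infty = \cpr$ by construction, so the orbit is $\Gamma \cdot \cpr = \cp$, as required. For the second condition, I would expand the product of the cosets as subsets of $G(\QQ)$: since $\Sigma_{\cp}^{-1} = \sigma_{\cpr}^{-1} \Gamma^{-1} = \sigma_{\cpr}^{-1} \Gamma$ (using that $\Gamma$ is a group), we compute
\begin{gather}
\Sigma_{\cp}^{-1} \Sigma_{\cp} = \sigma_{\cpr}^{-1} \Gamma \Gamma \sigma_{\cpr} = \sigma_{\cpr}^{-1} \Gamma \sigma_{\cpr}.
\end{gather}
Intersecting with $B(\QQ)$ and invoking the second condition in (\ref{eqn:conven:scaling:scaling_elt_conds}) gives $(\Sigma_{\cp}^{-1}\Sigma_{\cp})_{\infty} = (\sigma_{\cpr}^{-1}\Gamma\sigma_{\cpr})_{\infty} = B(\ZZ)$, completing the verification.

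There is no real obstacle here, since all the work has already been done in Lemma \ref{lem:conven:scaling:scaling_elts_exist}; the only thing to be careful about is the conventions for products and inverses of cosets as subsets of $G(\QQ)$, and the fact that the subgroup property of $\Gamma$ makes $\Gamma \cdot \Gamma = \Gamma$, so that the factor of $\Gamma$ collapses cleanly in the middle of $\Sigma_{\cp}^{-1}\Sigma_{\cp}$. One could also remark that the coset $\Sigma_{\cp}$ is independent of the choice of representative $\cpr$ in $\cp$ up to modifying $\sigma_{\cpr}$ by left multiplication by an element of $\Gamma$, since any two representatives differ by an element of $\Gamma$, but this is not needed for the statement of the lemma.
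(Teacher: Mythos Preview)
Your proposal is correct and takes essentially the same approach as the paper: choose a representative $\cpr\in\cp$, apply Lemma~\ref{lem:conven:scaling:scaling_elts_exist} to obtain a scaling element $\sigma_{\cpr}$, and set $\Sigma_{\cp}=\Gamma\sigma_{\cpr}$. The paper's proof is terser, simply asserting that this coset satisfies the required conditions, whereas you spell out the verifications $\Sigma_{\cp}\cdot\infty=\Gamma\cdot\cpr=\cp$ and $\Sigma_{\cp}^{-1}\Sigma_{\cp}=\sigma_{\cpr}^{-1}\Gamma\sigma_{\cpr}$ explicitly.
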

\begin{proof}
Let $\cpr\in \cp\subset\hat{\QQ}$ be a representative for $\cp$ and
let $\sigma_{\cpr}\in G(\QQ)$ satisfy the conditions
(\ref{eqn:conven:scaling:scaling_elt_conds}) of Lemma
\ref{lem:conven:scaling:scaling_elts_exist}. Then the coset
$\Sigma_{\cp}=\Gamma\sigma_{\cpr}$ satisfies the conditions
(\ref{eqn:conven:scaling:scaling_coset_conds}).
\end{proof}
A coset $\Sigma_{\cp}\in \Gamma\backslash G(\QQ)$ satisfying the two
conditions (\ref{eqn:conven:scaling:scaling_coset_conds}) of Lemma
\ref{lem:conven:scaling:scaling_cosets_exist} will be called a {\em
scaling coset for $\Gamma$ at the cusp $\cp$}. Any element
$\sigma\in \Sigma_{\cp}$ is a scaling element for $\Gamma$ at the
representative $\sigma\cdot\infty$ for the cusp $\cp$, and if
$\sigma_{\cpr}$ is a scaling element for $\Gamma$ at the cusp
representative $\cpr$ then the coset $\Gamma\sigma_{\cpr}$ is a
scaling coset for $\Gamma$ at the cusp $\cp=\Gamma\cdot\cpr$
represented by $\cpr$.

Scaling cosets are determined only up to right multiplication by
elements of $B_u(\QQ)$. Indeed, both the conditions defining the
notion of scaling coset are invariant under the replacement of
$\Sigma_{\cp}$ by $\Sigma_{\cp} T^{\alpha}$ for some $\alpha\in\QQ$
(cf. (\ref{eqn:conven:isomhyp:Defn_T^alpha})). A set
$\{\Sigma_{\cp}\mid \cp\in \cP_{\Gamma}\}\subset \Gamma\backslash
G(\QQ)$ such that $\Sigma_{\cp}$ is a scaling coset for $\Gamma$ at
$\cp$ for each $\cp\in \cP_{\Gamma}$ will be called a {\em system of
scaling cosets for $\Gamma$}.

The problem of constructing scaling cosets for a group $\Gamma$ may
be viewed in the following way. The group $G(\QQ)$ acts transitively
on $\hat{\QQ}$, and the subgroup $B(\QQ)$ is just the stabilizer of
the distinguished point $\infty\in\hat{\QQ}$. Thus the map
$\Gamma\backslash G(\QQ)\to\cP_{\Gamma}$ given by
$\Gamma\sigma\mapsto \Gamma\sigma\cdot\infty$ induces an isomorphism
$\Gamma\backslash G(\QQ)/B(\QQ)\cong\cP_{\Gamma}$ which we may
regard as identifying the sets $\Gamma\backslash G(\QQ)/B(\QQ)$ and
$\cP_{\Gamma}$. Now $B(\QQ)$ is naturally isomorphic to the
semidirect product $B_u(\QQ)\rtimes B_d(\QQ)$ (cf.
\S\ref{sec:conven:groups}), so the natural map $\Gamma\backslash
G(\QQ)\to \Gamma\backslash G(\QQ)/B(\QQ)$ factors through
$\Gamma\backslash G(\QQ)/B_u(\QQ)$, yielding a naturally defined
sequence
\begin{gather}\label{eqn:conven:scaling:Gamma_G_Bu_to_P_Gamma}
     \Gamma\backslash G(\QQ)
     \to
     \Gamma\backslash G(\QQ)/B_u(\QQ)
     \to
     \Gamma\backslash G(\QQ)/B(\QQ)
     \cong
     \cP_{\Gamma}
\end{gather}
where the fibres of the second map are torsors for the diagonal
group $B_d(\QQ)\simeq \QQp$. Given a cusp $\cp\in \cP_{\Gamma}$, any
preimage of $\cp$ in $\Gamma\backslash G(\QQ)$ under the composition
(\ref{eqn:conven:scaling:Gamma_G_Bu_to_P_Gamma}) is a coset of
$\Gamma$ satisfying the first condition defining a scaling coset for
$\Gamma$ at $\cp$. In order to satisfy also the second condition we
should multiply this coset by $[\mu]$ (cf.
(\ref{eqn:conven:isomhyp:Defn_[alpha]})) for some (uniquely defined)
$\mu\in \QQp$. Thus the two conditions defining scaling cosets
define a section of the $B_d(\QQ)$-bundle $\Gamma\backslash
G(\QQ)/B_u(\QQ)\to \cP_{\Gamma}$. We denote this map $\cp\mapsto
\gt{S}_{\cp}$. We may naturally identify $\gt{S}_{\cp}$ with the set
of scaling cosets for $\Gamma$ at $\cp$, for these are precisely the
preimages of $\gt{S}_{\cp}$ in $\Gamma\backslash G(\QQ)$ under the
first map of (\ref{eqn:conven:scaling:Gamma_G_Bu_to_P_Gamma}).
\begin{gather}\label{eqn:conven:scaling:cusp_to_scaling_cosets}
     \begin{split}
     \cP_{\Gamma}
     &\to
     \Gamma\backslash G(\QQ)/B_u(\QQ)\\
     \cp
     &\mapsto
     \gt{S}_{\cp}
     =
     \left\{
     \Sigma_{\cp}T^{\alpha}\mid\alpha\in\QQ
     \right\}
     \end{split}
\end{gather}

One says that $\Gamma$ has {\em width one at infinity} in the case
that $\Gamma_{\infty}=B(\ZZ)$. Observe that $\Gamma$ has width one
at infinity if and only if $\Gamma$ is a scaling coset for itself at
the infinite cusp $\Gamma\cdot\infty$; that is, if and only if
$\Gamma\in\gt{S}_{\Gamma\cdot\infty}$. If $\Gamma$ does not have
width one at infinity then there is a unique $\mu\in \QQp$ with the
property that $\Gamma[\mu]$ is a scaling coset for $\Gamma$ at the
infinite cusp $\Gamma\cdot\infty$, and then the group
$\Gamma^{[\mu]}=[1/\mu]\Gamma[\mu]$ is a group with width one at
infinity.

When engaged in the task of computing Fourier coefficients of
modular forms for a group $\Gamma$ say, one frequently has use for
double coset spaces of the form
$\Gamma_{\cpr}\backslash\Gamma/\Gamma_{\cqr}$, for some
$\cpr,\cqr\in \hat{\QQ}$, where $\Gamma_{\cpr}$ denotes the
stabilizer in $\Gamma$ of $\cpr$. If $\sigma_{\cpr}$ and
$\sigma_{\cqr}$ are scaling elements for $\Gamma$ at $\cpr$ and
$\cqr$, respectively, then we have
$\sigma_{\cpr}^{-1}\Gamma_{\cpr}\sigma_{\cpr}=B(\ZZ)$, and similarly
with $\cqr$ in place of $\cpr$, so the set of {\em translates}
\begin{gather}
     \sigma_{\cpr}^{-1}
     (\Gamma_{\cpr}\backslash\Gamma/\Gamma_{\cqr})
     \sigma_{\cqr}
     =
     \{
     \sigma_{\cpr}^{-1}\Gamma_{\cpr}\gamma\Gamma_{\cqr}\sigma_{\cqr}
     \mid \gamma\in\Gamma
     \}
\end{gather}
is in fact a set of double cosets of $B(\ZZ)$. In the notation of
\S\ref{sec:conven:cosets} we have
\begin{gather}
     \sigma_{\cpr}^{-1}
     (\Gamma_{\cpr}\backslash\Gamma/\Gamma_{\cqr})
     \sigma_{\cqr}
     =\lBZ\sigma_{\cpr}^{-1}\Gamma\sigma_{\cqr}\rBZ
     =\left\{
     \lBZ\sigma_{\cpr}^{-1}\gamma\sigma_{\cqr}\rBZ \mid \gamma\in\Gamma
     \right\}.
\end{gather}
Set $\cp=\Gamma\cdot\cpr$ and $\cq=\Gamma\cdot\cqr$, and set
$\Sigma_{\cp}=\Gamma\sigma_{\cpr}$ and
$\Sigma_{\cq}=\Gamma\sigma_{\cqr}$, so that $\Sigma_{\cp}$ and
$\Sigma_{\cq}$ are scaling cosets for $\Gamma$ at $\cp$ and $\cq$,
respectively. Then we have
$\sigma_{\cpr}^{-1}\Gamma\sigma_{\cqr}=\Sigma_{\cp}^{-1}\Sigma_{\cq}$,
so that these translates depend only on cusps, and not on cusp
representatives. We see from this discussion that for any pair of
cusps $\cp,\cq\in \cP_{\Gamma}$, with scaling cosets $\Sigma_{\cp}$
and $\Sigma_{\cq}$, respectively, the set
$\Sigma_{\cp}^{-1}\Sigma_{\cq}$ is a union of double cosets of
$B(\ZZ)$. The assignment
$(\cp,\cq)\mapsto\Sigma_{\cp}^{-1}\Sigma_{\cq}$ is sensitive to the
choice of scaling cosets $\Sigma_{\cp}$ and $\Sigma_{\cq}$, to the
extent that a different choice will replace
$\Sigma_{\cp}^{-1}\Sigma_{\cq}$ with a set of the form
$T^{\alpha}\Sigma_{\cp}^{-1}\Sigma_{\cq}T^{\beta}$ for some
$\alpha,\beta\in \QQ$.

In general there may be no canonical choice of scaling coset for a
particular group at a particular cusp, but there are situations in
which some choices might be preferred over others. For example, it
is natural to take $\Sigma_{\Gamma\cdot\infty}=\Gamma$ in case
$\Gamma$ has width one at infinity, for we then have
$\Sigma_{\cp}^{-1}\Sigma_{\cq}=\Sigma_{\cp}^{-1}$ when
$\cq=\Gamma\cdot\infty$, and
$\Sigma_{\cp}^{-1}\Sigma_{\cq}=\Sigma_{\cq}$ when
$\cp=\Gamma\cdot\infty$, and $\Sigma_{\cp}^{-1}\Sigma_{\cq}=\Gamma$
when $\cp=\cq=\Gamma\cdot\infty$. We shall always take the
scaling coset $\Sigma_{\Gamma\cdot\infty}$, for $\Gamma$ at the
infinite cusp, to be of the form $\Gamma[\mu]$ for $\mu\in \QQp$.
This value $\mu$ is uniquely determined.

Observe that if $\Sigma_{\cp}$ and $\Sigma_{\cq}$ are scaling cosets
for $\Gamma$ at $\cp$ and $\cq$, respectively, then the intersection
$\Sigma_{\cp}^{-1}\Sigma_{\cq}\cap
B(\QQ)=(\Sigma_{\cp}^{-1}\Sigma_{\cq})_{\infty}$ can be non-empty
only in the case that $\cp=\cq$. For $\cp$ and $\cq$ cusps for $\Gamma$ we define $\delta_{\Gamma|\cp,\cq}$ to be $1$ or $0$ according as $\cp=\cq$ or not.

Given a system $\{\Sigma_{\cp}\mid\cp\in\cP_{\Gamma}\}$ of scaling
cosets for $\Gamma$, we write $\Gamma^{\cp}$ as a shorthand for the
group obtained as the conjugate of $\Gamma$ by $\Sigma_{\cp}$.
\begin{gather}\label{eqn:conven:scaling:Defn_Conj_Gamma_by_sigma_cq}
     \Gamma^{\cp}=\Sigma_{\cp}^{-1}\Sigma_{\cp}
\end{gather}
The notation (\ref{eqn:conven:scaling:Defn_Conj_Gamma_by_sigma_cq})
suppresses the dependence on the choice of scaling coset: a
different choice will replace $\Gamma^{\cp}$ with a group of the
form $T^{-\alpha}\Gamma^{\cp}T^{\alpha}$ for some $\alpha\in \QQ$.

\subsection{Integrals}\label{sec:conven:autfrm}

Write $\mc{O}(\HH)$ for the ring of holomorphic functions on the
upper half plane $\HH$.

For $\atp\in \ZZ$, we define the {\em weight $2\atp$ (right) action
of $G(\QQ)$ on $\mc{O}(\HH)$}, to be denoted $(f,\chi)\mapsto
f\sop{\atp}\chi$, by setting
\begin{gather}\label{eqn:conven:autfrm:Defn_slash_op}
          \left(
          f\sop{\atp}\chi
          \right)
          (\zz)
          =f(\chi\cdot\zz)
          \jac(\chi,\zz)^{\atp}
\end{gather}
for $f\in \mc{O}(\HH)$ and $\chi\in G(\QQ)$. For $\Gamma$ a group
commensurable with $G(\ZZ)$ (and hence a subgroup of $G(\QQ)$), we
call $f\in \mc{O}(\HH)$ an {\em unrestricted modular form of weight
$2\atp$ for $\Gamma$} in case it is a fixed point for the weight
$2\atp$ action of $\Gamma$. Suppose $f$ is an unrestricted modular
form of weight $2\atp$ for $\Gamma$. Then for $X\in \Gamma\backslash
G(\QQ)$ a right coset of $\Gamma$ in $G(\QQ)$ we may define a
function $(f\cop{\Gamma}{\atp}X)(\zz)$ by setting
\begin{gather}\label{eqn:conven:autfrm:Defn_cop}
     f\cop{\Gamma}{\atp}X
     =
     f\sop{\atp}\chi
\end{gather}
where $\chi$ is any representative for the coset
$X\in\Gamma\backslash G(\QQ)$. Let $\cp\in \cP_{\Gamma}$ be a cusp
of $\Gamma$ and let $\Sigma_{\cp}\in\Gamma\backslash G(\QQ)$ a
scaling coset for $\Gamma$ at $\cp$. Then for $f$ an unrestricted
modular form of weight $2\atp$ for $\Gamma$ we define
$f_{|\cp}\in\mc{O}(\HH)$ by setting
$f_{|\cp}=f\cop{\Gamma}{\atp}\Sigma_{\cp}$. Then
$f_{|\cp}(\zz+1)=f_{|\cp}(\zz)$ for all $\zz\in \HH$, by the
defining properties of $\Sigma_{\cp}$ (cf.
\S\ref{sec:conven:scaling}), so we have
\begin{gather}\label{eqn:conven:autfrm_FourExpp}
     f_{|\cp}(\zz)=\sum_{n\in\ZZ} c_{|\cp}(n)\ee(n\zz)
\end{gather}
for some $c_{|\cp}(n)\in \ZZ$. We call the right hand side of
(\ref{eqn:conven:autfrm_FourExpp}) the {\em Fourier expansion of $f$
at $\cp$ with respect to $\Sigma_{\cp}$}, and we say that $f$ is
{\em meromorphic at $\cp$} if the right hand side of
(\ref{eqn:conven:autfrm_FourExpp}) is a meromorphic function of
$\vq=\ee(\zz)$ in a neighborhood of $\vq=0$. A different choice of
scaling coset will replace $c_{|\cp}(n)$ with
$\ee(n\alpha)c_{|\cp}(n)$ for some $\alpha\in \QQ$, so the notion of
being meromorphic at $\cp$ is independent of the choice of scaling
coset at $\cp$.

We define $M_{\atp}(\Gamma)$ to be the vector space consisting of
unrestricted modular forms of weight $2\atp$ for $\Gamma$ that are
meromorphic at the cusps of $\Gamma$. In a slight departure from
standard convention we call $M_{\atp}(\Gamma)$ the space of {\em
modular forms of weight $2\atp$ for $\Gamma$}. We define
$S_{\atp}(\Gamma)$ to be the vector space consisting of modular
forms of weight $2\atp$ for $\Gamma$ that vanish at the cusps of
$\Gamma$. We call $S_{\atp}(\Gamma)$ the space of {\em cusp forms of
weight $2\atp$ for $\Gamma$}. The vector space $S_{\atp}(\Gamma)$ is
the zero vector space when $\atp\leq 0$.

Let $\atp\in \ZZ$ such that $\atp\leq 0$, and let $\ww\in
\HH\cup\hat{\QQ}$. Given a holomorphic function $g(\zz)$ on $\HH$ with sufficiently rapid decay as the imaginary part of $\zz$ tends to $\infty$ we define a function $\IO{\ww}{\atp}g$ on $\CC$ by setting
$\IO{\infty}{\atp}g$ to be the zero function and by setting
\begin{gather}\label{eqn:conven:autfrm:Defn_IO_w_atp}
     (\IO{\ww}{\atp}g)(\zz)
     =\tpi
     \int_{\ww}^{\infty}
     g(\xi)(\tpi(\xi-\zz))^{(-2\atp)}
     {\rm d}\xi
\end{gather}
for $\ww\neq\infty$, where the integral is taken over the vertical
line $\{\ww+\ii t\mid t\in\RRp\}$. In our applications $g$ will either be a cusp form (for some $\Gamma<\PSL_2(\RR)$ commensurable with $\PSL_2(\ZZ)$) or will admit a finite power series expansion of the form $g(\zz)=\sum_{n=1}^N c(n)\ee(n\zz)$. In these cases the convergence of the integral defining $\IO{\ww}{\atp}g$ is clear.  

We define a closely related
operator $g\mapsto \JO{\ww}{\atp}g$ by setting
\begin{gather}\label{eqn:conven:autfrm:Defn_JO_w_atp}
     (\JO{\ww}{\atp}g)(\zz)
     =\overline{(\IO{\ww}{\atp}g)(\bar{\zz})}.
\end{gather}
Observe that when $\atp\leq 0$, the functions
$(\IO{\ww}{\atp}g)(\zz)$ and $(\JO{\ww}{\atp}g)(\zz)$ are
polynomials of degree at most $-2\atp$ in $\zz$	. For
$\chi\in G(\QQ)$ we have
\begin{gather}\label{eqn:conven:autfrm:Var_IO_w_atp}
     \left.
     (\IO{\ww}{\atp}g)
     \right\sop{\atp}\chi
     =
     \IO{\ww\cdot\chi}{\atp}
     \left(g\sop{1-\atp}\chi\right)
     -
     \IO{\infty\cdot\chi}{\atp}
     \left(g\sop{1-\atp}\chi\right)
\end{gather}
where $\ww\cdot\chi=\chi^{-1}\cdot\ww$ denotes the natural right
action of $G(\QQ)$ on $\HH\cup\hat{\QQ}$. In particular, if $g\in
M_{1-\atp}(\Gamma)$ for some group $\Gamma$ then we have
$\left.(\IO{\ww}{\atp}g)\right\sop{\atp}\gamma=\IO{\ww\cdot\gamma}{\atp}g-\IO{\infty\cdot\gamma}{\atp}g$
for $\gamma\in\Gamma$.
\begin{lem}\label{lem:conven:autfrm:Faithfulness_I_p_atp}
Let $\Gamma$ be a group commensurable with $G(\ZZ)$, let $\atp\in
\ZZ$ such that $\atp\leq 0$, and let $g\in S_{1-\atp}(\Gamma)$.
Then we have $I_{\cpr}^{\atp}g=0$ for all $\cpr\in \hat{\QQ}$ if and
only if $g=0$.
\end{lem}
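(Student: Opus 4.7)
The ``if'' direction is trivial. For the converse, assume $g\in S_{1-\atp}(\Gamma)$ satisfies $\IO{\cpr}{\atp}g=0$ for every $\cpr\in\hat\QQ$. My plan is to manufacture from $g$ an Eichler-type integral $F$ which is a holomorphic modular form of weight $2\atp$ for $\Gamma$ vanishing at every cusp; then $F\in S_{\atp}(\Gamma)=0$, and $g$ can be recovered as a high-order derivative of $F$ via Bol's identity. Specifically, I would set
\begin{gather*}
    H(\ww,\zz)=\int_{\ww}^{\infty}g(\xi)(\xi-\zz)^{-2\atp}\,\diff\xi,
    \qquad
    F(\ww)=H(\ww,\ww),
\end{gather*}
with integration along the vertical ray emanating from $\ww$. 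Since $g$ decays exponentially at $\infty$ these integrals converge, $H(\ww,\zz)$ is a polynomial in $\zz$ of degree $-2\atp$ with coefficients holomorphic in $\ww\in\HH$, and so $F$ is holomorphic on $\HH$; up to a non-zero constant $H(\ww,\zz)=(\IO{\ww}{\atp}g)(\zz)$, so the hypothesis reads $H(\cpr,\zz)=0$ for every $\cpr\in\hat\QQ$.

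The first key step will be to establish modularity. For $\gamma=\sqnom{a\;b}{c\;d}\in\Gamma$, the substitution $\xi=\gamma\cdot\eta$ in $\int_{\gamma\cdot\ww}^{\gamma\cdot\infty}$, combined with $g\sop{1-\atp}\gamma=g$ and the factorization $\gamma\cdot\eta-\zz=(a-c\zz)(\eta-\gamma^{-1}\cdot\zz)(c\eta+d)^{-1}$, should yield
\begin{gather*}
    H(\gamma\cdot\ww,\zz)=(a-c\zz)^{-2\atp}\,H(\ww,\gamma^{-1}\cdot\zz)+H(\gamma\cdot\infty,\zz),
\end{gather*}
whose last term vanishes by hypothesis. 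Specializing $\zz=\gamma\cdot\ww$ and using $a-c(\gamma\cdot\ww)=(c\ww+d)^{-1}$ will give $F(\gamma\cdot\ww)=(c\ww+d)^{2\atp}F(\ww)$, so $F$ is an unrestricted modular form of weight $2\atp$ for $\Gamma$. Next I will verify vanishing at cusps. For $\cp\in\cP_{\Gamma}$ with scaling element $\sigma_{\cp}$, write $g_{\cp}=g\sop{1-\atp}\sigma_{\cp}$ and $\cpr'=\sigma_{\cp}^{-1}\cdot\infty\in\hat\QQ$; applying~(\ref{eqn:conven:autfrm:Var_IO_w_atp}) with $\chi=\sigma_{\cp}$ and arbitrary $\ww=\cpr''\in\hat\QQ$ should show that $\IO{\cpr}{\atp}g_{\cp}$ is independent of $\cpr\in\hat\QQ$, and evaluating at $\cpr=\infty$ will give $\IO{\cpr}{\atp}g_{\cp}=0$ for all $\cpr\in\hat\QQ$. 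A parallel substitution $\xi=\sigma_{\cp}\cdot\eta$ in $F_{|\cp}=F\sop{\atp}\sigma_{\cp}$ then exhibits
\begin{gather*}
    F_{|\cp}(\ww)=\int_{\ww}^{\cpr'}g_{\cp}(\eta)(\eta-\ww)^{-2\atp}\,\diff\eta=\int_{\ww}^{\infty}g_{\cp}(\eta)(\eta-\ww)^{-2\atp}\,\diff\eta,
\end{gather*}
the second equality using the just-established vanishing of $\IO{\cpr'}{\atp}g_{\cp}$; integrating term-wise against the Fourier expansion of $g_{\cp}$ at $\infty$ will reveal this as a convergent Fourier series in $\ee(\ww)$ with no polar or constant term, so $F$ is holomorphic at $\cp$ and vanishes there.

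From here $F\in S_{\atp}(\Gamma)$, which is the zero space since $1-\atp\in\ZZp$, so $F\equiv 0$. Iterated differentiation under the integral sign (Bol's identity) will then yield $F^{(1-2\atp)}(\ww)=-(-2\atp)!\,g(\ww)$, completing the argument with $g\equiv 0$. The main technical obstacle I anticipate is the verification that $F_{|\cp}$ has no polar or constant term in its Fourier expansion at $\cp$: this hinges on showing that the correction term $\IO{\cpr'}{\atp}g_{\cp}$ vanishes, which in turn uses the full strength of the hypothesis---that $\IO{\cpr}{\atp}g$ vanishes at \emph{every} rational $\cpr$, and not merely at those in the $\Gamma$-orbit of $\infty$.
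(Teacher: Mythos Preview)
Your proposal is correct and follows essentially the same Eichler-integral strategy as the paper: both define the same function (your $F$ is, up to a harmless constant, the paper's $f(\zz)=(\IO{\zz}{\atp}g)(\zz)$), both use the variance identity (\ref{eqn:conven:autfrm:Var_IO_w_atp}) together with the hypothesis to verify modularity, and both check that the correction term at a general cusp vanishes so as to identify $F_{|\cp}$ with the Eichler integral of $g_{|\cp}$ from $\ww$ to $\infty$, whence $F\in S_{\atp}(\Gamma)=0$. The only notable divergence is in the final recovery of $g$ from $F\equiv 0$: the paper reads off the Fourier coefficients directly, obtaining $c_{|\cp}(n)=-n^{2\atp-1}b_{|\cp}(n)$ and hence $b_{|\cp}(n)=0$ for every $n$ and every cusp $\cp$, whereas you invoke Bol's identity $F^{(1-2\atp)}=-(-2\atp)!\,g$. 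Both are valid; the Fourier-coefficient route has the slight advantage of giving the vanishing of $g$ at \emph{every} cusp simultaneously and making the relationship between $f$ and $g$ explicit, while Bol's identity is cleaner conceptually and avoids the term-by-term computation.
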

\begin{proof}
Certainly $\IO{\cpr}{\atp}g=0$ for all $\cpr\in\hat{\QQ}$ if $g=0$,
so let $g\in S_{1-\atp}(\Gamma)$ such that $\IO{\cpr}{\atp}g=0$ for
all $\cpr\in \hat{\QQ}$. Following the proof of Lemma 3.2 in
\cite{Nie_ConstAutInts} we define a function $f(\zz)$ by setting
\begin{gather}
     f(\zz)
     =
     (\IO{\zz}{\atp}g)(\zz)
     =
     \tpi
     \int_{\zz}^{\infty}
     g(\xi)(\tpi(\xi-\zz))^{(-2\atp)}
     {\rm d}\xi.
\end{gather}
The identity (\ref{eqn:conven:autfrm:Var_IO_w_atp}) implies
$f\sop{\atp}\gamma=f-\IO{\infty\cdot\gamma}{\atp}g$ for $\gamma\in
\Gamma$, so that $f$ is an unrestricted modular form of weight
$2\atp$ for $\Gamma$. We should consider the behavior of $f$ at the
cusps of $\Gamma$. Let $\cp\in \cP_{\Gamma}$, and let $\Sigma_{\cp}$
be a scaling coset for $\Gamma$ at $\cp$. Then we have
\begin{gather}
     g_{|\cp}(\xi)
     =\sum_{n>0}
     b_{|\cp}(n)\ee(n\xi)
\end{gather}
for some $b_{|\cp}(n)\in \CC$, where
$g_{|\cp}=g\cop{\Gamma}{1-\atp}\Sigma_{\cp}$ (cf.
(\ref{eqn:conven:autfrm:Defn_cop})). In order to analyze
$f_{|\cp}=f\cop{\Gamma}{\atp}\Sigma_{\cp}$ we choose $\sigma\in
\Sigma_{\cp}$ and compute
\begin{gather}
     f\sop{\atp}\sigma
     =
     \IO{\zz}{\atp}(g_{|\cp})
     -
     \IO{\infty\cdot\sigma}{\atp}(g_{|\cp})
\end{gather}
by (\ref{eqn:conven:autfrm:Var_IO_w_atp}). Now the term
$\IO{\infty\cdot\sigma}{\atp}(g_{|\cp})$ vanishes, since we have
\begin{gather}
     \begin{split}
     (\IO{\infty\cdot\sigma}{\atp}(g_{|\cp}))(\zz)
     &=\tpi^{1-2\atp}
     \int_{\sigma^{-1}\cdot\infty}^{\infty}
     g(\sigma\cdot\xi)
     \jac(\sigma,\xi)^{1-\atp}
     (\xi-\zz)^{(-2\atp)}
     {\rm d}\xi\\
     &=\tpi^{1-2\atp}
     \int_{\sigma^{-1}\cdot\infty}^{\infty}
     g(\sigma\cdot\xi)
     (\sigma\cdot\xi-\sigma\cdot\zz)^{(-2\atp)}
     \jac(\sigma,\zz)^{\atp}
     {\rm d}(\sigma\cdot\xi)\\
     &=
     -(\IO{\sigma\cdot\infty}{\atp}g)
     \jac(\sigma,\zz)^{\atp},
     \end{split}
\end{gather}
which vanishes by our hypothesis on $g$. So we have the Fourier
expansion
\begin{gather}
     f_{|\cp}(\zz)
     =
     \sum_{n>0}
     c_{|\cp}(n)
     \ee(n\zz)
\end{gather}
for $f$ at $\cp$, where $c_{|\cp}(n)$ is determined by
$b_{|\cp}(n)$ according to the formula
\begin{gather}
     c_{|\cp}(n)
     =
     b_{|\cp}(n)
     \tpi
     (\IO{0}{\atp}g)(0)
     =-n^{2\atp-1}b_{\cp}(n).
\end{gather}
We conclude that $f$ is a cusp form of weight $\atp$. Since
$\atp\leq 0$ we have $S_{\atp}(\Gamma)=0$, and so
$c_{|\cp}(n)=0$ for all $n\in\ZZ$ and all $\cp\in\cP_{\Gamma}$, and
so $b_{|\cp}(n)=0$ for all $n\in\ZZ$ and all $\cp\in \cP_{\Gamma}$.
That is to say, $g$ vanishes identically, which is what we required
to show.
\end{proof}
Let $\atp\in \ZZ$ such that $\atp\leq 0$. Following Niebur (cf.
\cite{Nie_ConstAutInts}) we say that a holomorphic function $f\in
\mc{O}(\HH)$ is an {\em unrestricted automorphic integral of weight
$2\atp$ for $\Gamma$} in case there exists a cusp form $g\in
S_{1-\atp}(\Gamma)$ such that
\begin{gather}\label{eqn:conven:autfrm:AutInt_Xform}
     f\sop{\atp}\gamma
     =
     f
     -
     \JO{\infty\cdot\gamma}{\atp}g
\end{gather}
for each $\gamma\in \Gamma$. According to Lemma
\ref{lem:conven:autfrm:Faithfulness_I_p_atp} the cusp form $g\in
S_{1-\atp}(\Gamma)$ satisfying
(\ref{eqn:conven:autfrm:AutInt_Xform}) is uniquely determined. We
call it the {\em cusp form associated to the unrestricted
automorphic integral $f$}. The operator $f\mapsto
f\cop{\Gamma}{\atp}X$, for $X$ a right coset of $\Gamma$, may be
extended to unrestricted automorphic integrals as follows. Suppose
$f$ is an unrestricted automorphic integral of weight $2\atp$ for
$\Gamma$ with associated cusp form $g$. Then for $X\in
\Gamma\backslash G(\QQ)$ we may define $f\cop{\Gamma}{\atp}X\in
\mc{O}(\HH)$ by setting
\begin{gather}\label{eqn:conven:autfrm:Defn_cop_on_ai}
     f\cop{\Gamma}{\atp}X
     =
     (f-\JO{\chi\cdot\infty}{\atp}g)\sop{\atp}\chi
\end{gather}
where $\chi$ is any representative for the coset
$X\in\Gamma\backslash G(\QQ)$. The transformation properties
(\ref{eqn:conven:autfrm:Var_IO_w_atp}) and
(\ref{eqn:conven:autfrm:AutInt_Xform}) confirm that the function
$f\cop{\Gamma}{\atp}X$ is independent of the choice of coset
representative $\chi$. Let $\Sigma_{\cp}$ be a scaling coset for
$\Gamma$ at a cusp $\cp\in\cP_{\Gamma}$, and define $f_{|\cp}$ by
setting
\begin{gather}
     f_{|\cp}
     =
     f\cop{\Gamma}{\atp}\Sigma_{\cp}.
\end{gather}
By the defining properties of scaling cosets we find that
$f_{|\cp}(\zz+1)=f_{|\cp}(\zz)$ for all $\zz\in \HH$, and so we have
\begin{gather}\label{eqn:conven:autfrm:AutInt_FourExp_p}
     f_{|\cp}(\zz)=\sum_{n\in\ZZ} c_{|\cp}(n)\ee(n\zz)
\end{gather}
for some $c_{|\cp}(n)\in \CC$. We call
(\ref{eqn:conven:autfrm:AutInt_FourExp_p}) the {\em Fourier
expansion of $f$ at $\cp$ with respect to $\Sigma_{\cp}$}, and we
say that $f$ is {\em meromorphic at $\cp$} if the right hand side of
(\ref{eqn:conven:autfrm:AutInt_FourExp_p}) is a meromorphic function
of $\vq=\ee(\zz)$ in a neighborhood of $\vq=0$.

We define $I_{\atp}(\Gamma)$ to be the vector space consisting of
unrestricted automorphic integrals of weight $2\atp$ for $\Gamma$
that are meromorphic at every cusp of $\Gamma$. We call
$I_{\atp}(\Gamma)$ the space of {\em automorphic integrals of weight
$2\atp$ for $\Gamma$}.

The space $M_{\atp}(\Gamma)$ of modular forms of weight $2\atp$ for
$\Gamma$ is a subspace of $I_{\atp}(\Gamma)$ by definition. Lemma
\ref{lem:conven:autfrm:Faithfulness_I_p_atp} may be regarded as
stating that $M_{\atp}(\Gamma)$ is the kernel of the map
$I_{\atp}(\Gamma)\to S_{1-\atp}(\Gamma)$ which sends an automorphic
integral $f$ to its associated cusp form $g$ (cf.
(\ref{eqn:conven:autfrm:AutInt_Xform})). We will see in
\S\ref{sec:modradsum:var} that the map $I_{\atp}(\Gamma)\to
S_{1-\atp}(\Gamma)$ is surjective, so that we have an exact sequence
of vector spaces
\begin{gather}\label{eqn:conven:autfrm:Ex_Seq_MIS}
     0\to
     M_{\atp}(\Gamma)
     \to
     I_{\atp}(\Gamma)
     \to
     S_{1-\atp}(\Gamma)
     \to
     0.
\end{gather}
The sequence (\ref{eqn:conven:autfrm:Ex_Seq_MIS}) is trivially exact
in case $\atp>0$, for then $S_{1-\atp}(\Gamma)=0$, so that an
automorphic integral of weight $2\atp$ is automatically a modular
form. Indeed, and more generally, if $\atp\in\ZZ$ is chosen so that
$S_{1-\atp}(\Gamma)=0$, then the spaces $M_{\atp}(\Gamma)$ and
$I_{\atp}(\Gamma)$ coincide by the definition (cf.
(\ref{eqn:conven:autfrm:AutInt_Xform})) of an automorphic integral.

\section{Rademacher sums}\label{sec:radsum}

In this section we associate Rademacher sums, and conjugate Rademacher sums (cf. \S\ref{sec:intro:conradsum}), of arbitrary even integer weight to triples $(\Gamma,\cp,\cq)$ where $\Gamma$ is a group commensurable with the modular group and $\cp$ and $\cq$ are cusps for $\Gamma$. We derive explicit expressions for the Fourier expansions of these functions, and study how they transform under the action of $\Gamma$.

\subsection{Construction}\label{sec:radsum:constr}

Given $\atp,m\in\ZZ$ and $\lBZ\chi\rBZh\in \lBZ G(\QQ)\rBZh$, define
a function $\zz\mapsto \PS{\lBZ\chi\rBZh}{\atp}{m}(\zz)$ on $\HH$ by
setting
\begin{gather}\label{eqn:radsum:constr:Defn_PS_chi}
     \PS{\lBZ\chi\rBZh}{\atp}{m}(\zz)
     =
     \ee(m\lBZ\chi\rBZh\cdot\zz)
     \jac(\lBZ\chi\rBZh,\zz)^{\atp}.
\end{gather}
Given $U\subset\lBZ G(\QQ)\rBZh$, define the {\em holomorphic
Poincar\'e series of weight $2\atp$ and order $m$ associated to
$U$}, to be denoted $\PS{U}{\atp}{m}(\zz)$, by setting
\begin{gather}\label{eqn:radsum:constr:Defn_PS_U_m}
     \PS{U}{\atp}{m}(\zz)
     =\lim_{K\to\infty}
     \sum_{\lBZ\chi\rBZh\in U_{\leq K}}
     \PS{\lBZ\chi\rBZh}{\atp}{m}(\zz)
\end{gather}
where $U_{\leq K}$ is as in (\ref{eqn:conven:cosets:Defn U_sub_K}). Of course, the limit defining $\PS{U}{\atp}{m}(\zz)$ may or may not converge depending on the choice of $\kappa$, $m$, and $U$. If $\atp>1$ and $m\geq 0$ then for $U=\lBZ\Gamma\rBZh$ say, for $\Gamma$ a group commensurable with $G(\ZZ)$, the limiting sum in (\ref{eqn:radsum:constr:Defn_PS_U_m}) converges absolutely and uniformly on compact subsets of $\HH$ (cf. Lemma \ref{lem:radsum:var:RS_U_slash_sigma_vs_RS_Usigma}) and the symbols $\lim_{K\to \infty}$ and $_{\leq K}$ are unnecessary. If $\atp\leq 0$ then even with the limit the expression on the right hand side of (\ref{eqn:radsum:constr:Defn_PS_U_m}) fails to converge, and the Poincar\'e series $\PS{U}{\atp}{m}(\zz)$ is not defined.

As was originally demonstrated by Rademacher in
\cite{Rad_FuncEqnModInv}, the non-convergence of the Poincar\'e
series $\PS{U}{\atp}{m}(\zz)$ when $\atp=0$ can be circumvented by
replacing $\PS{\lBZ\chi\rBZh}{0}{m}(\zz)$ with
$\PS{\lBZ\chi\rBZh}{0}{m}(\zz)-\PSa{\lBZ\chi\rBZh}{0}{m}$ in the
right-hand side of (\ref{eqn:radsum:constr:Defn_PS_U_m}) where
\begin{gather}
     \PS{\lBZ\chi\rBZh}{0}{m}(\zz)
     -\PSa{\lBZ\chi\rBZh}{0}{m}
     =
     \ee(m\lBZ\chi\rBZh\cdot\zz)
     -\ee(m\lBZ\chi\rBZh\cdot\infty)
\end{gather}
in case $\lBZ\chi\rBZh\in \lBZ G(\QQ)\rBZh^{\times}$, and
$\PSa{\lBZ\chi\rBZh}{0}{m}=0$ otherwise. Rademacher showed for $m=-1$
(cf. loc. cit.) that the resulting expression is a (conditionally)
convergent series which recovers a function invariant for the
modular group $G(\ZZ)$ in the case that $U=\lBZ G(\ZZ)\rBZh$.
Generalizations of Rademacher's construction were given by Knopp
(cf. \cite{Kno_ConstAutFrmsSuppSeries}) and Niebur (cf.
\cite{Nie_ConstAutInts}) so as to obtain conditionally convergent
Poincar\'e series for arbitrary real non-positive weights, for
various subgroups of $G(\RR)=\PSL_2(\RR)$.

With the constructions of Rademacher, Knopp and Niebur in mind, we
introduce the {\em Rademacher component function of weight $2\atp$
and order $m$ associated to $\lBZ\chi\rBZh$}, denoted $\zz\mapsto
\RS{\lBZ\chi\rBZh}{\atp}{m}(\zz)$ and defined for
$\atp,m\in\ZZ$ and $\lBZ\chi\rBZh\in \lBZ G(\QQ)\rBZh$, by setting
\begin{gather}\label{eqn:radsum:constr:Defn_RS_chi}
     \RS{\lBZ\chi\rBZh}{\atp}{m}(\zz)
     =
     \ee(m\lBZ\chi\rBZh\cdot\zz)
     \Rreg{\atp}(m,\lBZ\chi\rBZh,\zz)
     \jac(\lBZ\chi\rBZh,\zz)^{\atp},
\end{gather}
where $\Rreg{\atp}(m,\lBZ\chi\rBZh,\zz)$ is the {\em Rademacher
regularization factor of weight $2\atp$}, given by
\begin{gather}\label{eqn:radsum:constr:Defn_Rad_reg}
     \Rreg{\atp}(m,\lBZ\chi\rBZh,\zz)
     =
     \Phi(1-2\atp,2-2\atp,
     m\lBZ\chi\rBZh\cdot\infty
     -m\lBZ\chi\rBZh\cdot\zz)
     (
     \tpi(
          m\lBZ\chi\rBZh\cdot\zz
          -m\lBZ\chi\rBZh\cdot\infty
          )
          )^{1-2\atp}
\end{gather}
in case $\chi\in G(\QQ)^{\times}$, and
$\Rreg{\atp}(m,\lBZ\chi\rBZh,\zz)=1$ otherwise (cf.
(\ref{eqn:conven:fns:Defn_Phi})). The function $\RS{\lBZ\chi\rBZh}{\atp}{m}(\zz)$ is entire in case $\chi\in B(\QQ)$, and is otherwise holomorphic away from $z=\chi^{-1}\cdot\infty$. For $U\subset \lBZ G(\QQ)\rBZh$ we
define the {\em classical Rademacher sum of weight $2\atp$ and order
$m$ associated to $U$}, to be denoted $\RS{U}{\atp}{m}(\zz)$, by
setting
\begin{gather}\label{eqn:radsum:constr:Defn_RS_U_m}
     \RS{U}{\atp}{m}(\zz)
     =\lim_{K\to\infty}
     \sum_{\lBZ\chi\rBZh\in U_{\leq K}}
     \RS{\lBZ\chi\rBZh}{\atp}{m}(\zz).
\end{gather}
Again, the expression defining $\RS{U}{\atp}{m}(\zz)$ will not converge for all choices of $\kappa$, $m$ and $U$. Employing the {Kummer transformation}
(\ref{eqn:conven:fns:Kummer_Xform}) we may rewrite the Rademacher
component function $\RS{\lBZ\chi\rBZh}{\atp}{m}(\zz)$ as
\begin{gather}\label{eqn:radsum:constr:RS_chi_using_gen_exp}
     \RS{\lBZ\chi\rBZh}{\atp}{m}(\zz)
     =
     \ee(m\lBZ\chi\rBZh\cdot\infty)
     \ee(
     m\lBZ\chi\rBZh\cdot\zz
     -
     m\lBZ\chi\rBZh\cdot\infty
     ,1-2\atp)
     \jac(\lBZ\chi\rBZh,\zz)^{\atp}
\end{gather}
in case $\lBZ\chi\rBZh\in \lBZ G(\QQ)\rBZh^{\times}$, where
$\ee(\zz,\spp)$ is the generalized exponential function of
(\ref{eqn:conven:fns:Genzd_Exp}). Since we restrict attention to the
case that $\atp$ is an integer (cf. \cite{Nie_ConstAutInts} for
non-integral weights), the generalized exponential in
(\ref{eqn:radsum:constr:RS_chi_using_gen_exp}) may in turn be
written in terms of the partial exponential function $\ee(\zz)_{<
K}$ (cf. \S\ref{sec:conven:fns}). We thus obtain the identity
\begin{gather}\label{eqn:radsum:constr:Rreg_Part_Exp}
     \ee(m\lBZ\chi\rBZh\cdot\zz)
     \Rreg{\atp}(m,\lBZ\chi\rBZh,\zz)
     =
     \ee(m\lBZ\chi\rBZh\cdot\zz)
     -\ee(m\lBZ\chi\rBZh\cdot\infty)
     \ee(
     m\lBZ\chi\rBZh\cdot\zz
     -m\lBZ\chi\rBZh\cdot\infty
     )_{<1-2\atp}
\end{gather}
when $\chi\in G(\QQ)^{\times}$. In particular, we have
$\Rreg{\atp}(m,\lBZ\chi\rBZh,\zz)=1$ when $\atp$ is positive, so
that the Rademacher sum $\RS{U}{\atp}{m}(\zz)$ is exactly (albeit formally) the
Poincar\'e series $\PS{U}{\atp}{m}(\zz)$ when $\atp>0$.

The case that $\atp=0$ is special, for the corresponding Poincar\'e
series, if it were to converge, would define a
$\Gamma$-invariant function on $\HH$ for any group $\Gamma$
satisfying $U\Gamma=U$. With $\atp=0$ we have
$\Rreg{0}(m,\lBZ\chi\rBZh,\zz)=1-\ee(m\lBZ\chi\rBZh\cdot\infty-m\lBZ\chi\rBZh\cdot\zz)$
when $\chi\in G(\QQ)^{\times}$, and hence
\begin{gather}\label{eqn:radsum:constr:reg_s=0}
     \RS{\lBZ\chi\rBZh}{0}{m}(\zz)
     =
     \PS{\lBZ\chi\rBZh}{0}{m}(\zz)
     -\PSa{\lBZ\chi\rBZh}{0}{m}
\end{gather}
for all $\lBZ\chi\rBZh\in \lBZ G(\QQ)\rBZh^{\times}$. In other
words, the Rademacher sum $\RS{U}{0}{m}(\zz)$ is given by
\begin{gather}\label{eqn:radsum:constr:RS_atp=0}
     \begin{split}
     \RS{U}{0}{m}(\zz)
     &=\lim_{K\to\infty}
     \sum_{\lBZ\chi\rBZh\in U_{\leq K}^{\times}}
     \PS{\lBZ\chi\rBZh}{0}{m}(\zz)
     -\PSa{\lBZ\chi\rBZh}{0}{m}
     \\
     &=\sum_{\lBZ\chi\rBZh\in U_{\infty}}
     \ee(m\lBZ\chi\rBZh\cdot\zz)
     +\lim_{K\to\infty}
     \sum_{\lBZ\chi\rBZh\in U_{\leq K}^{\times}}
     \ee(m\lBZ\chi\rBZh\cdot\zz)-
     \ee(m\lBZ\chi\rBZh\cdot\infty),
     \end{split}
\end{gather}
with the latter expression holding at least when there are only
finitely many cosets of $B(\ZZ)$ in $U_{\infty}$. This expression
(\ref{eqn:radsum:constr:RS_atp=0}), with $m=-1$ and $U=\lBZ
G(\ZZ)\rBZh$, is the series originally considered by Rademacher in
\cite{Rad_FuncEqnModInv}.

It is interesting to consider the function obtained by substituting
$\bar{\zz}$ for $\zz$ in the right hand side of
(\ref{eqn:radsum:constr:Defn_RS_U_m}). We define the {\em conjugate
Rademacher sum of weight $2\atp$ and order $m$ associated to $U$},
to be denoted $\CS{U}{\atp}{m}(\zz)$, by setting
\begin{gather}\label{eqn:radsum:constr:Defn_CS_U_m}
     \CS{U}{\atp}{m}(\zz)
     =\lim_{K\to\infty}
     \sum_{\lBZ\chi\rBZh\in U_{\leq K}}
     \CS{\lBZ\chi\rBZh}{\atp}{m}({\zz}),
\end{gather}
where
$\CS{\lBZ\chi\rBZh}{\atp}{m}({\zz})=\RS{\lBZ\chi\rBZh}{\atp}{m}(\bar{\zz})$
by definition. Evidently, the conjugate Rademacher sum
$\CS{U}{\atp}{m}(\zz)$ is an anti-holomorphic function on $\HH$
whenever $U$, $\atp$ and $m$ are such that the limit in (\ref{eqn:radsum:constr:Defn_CS_U_m}) converges locally uniformly in $\zz$. At first glance it appears
that we should recover the classical Rademacher sum
$\RS{U}{\atp}{m}(\zz)$ from the conjugate Rademacher sum according
to the identity $\RS{U}{\atp}{m}(\zz)=\CS{U}{\atp}{m}(\bar{\zz})$,
but as we shall see in \S\ref{sec:radsum:var} the relationship
between the holomorphic functions $\RS{U}{\atp}{m}(\zz)$ and
$\CS{U}{\atp}{m}(\bar{\zz})$ is generally more interesting that
this.

We typically take $U$ to be of the form
$U=\lBZ\Sigma_{\cp}^{-1}\Sigma_{\cq}\rBZh$ where
$\{\Sigma_{\cp}\mid\cp\in \cP_{\Gamma}\}$ is a system of scaling
cosets (cf. \S\ref{sec:conven:scaling}) for some group $\Gamma$
commensurable with the modular group, and $\cp,\cq\in\cP_{\Gamma}$
are cusps of $\Gamma$. In this case we write
$\RS{\Gamma,\cp|\cq}{\atp}{m}(\zz)$ for $\RS{U}{\atp}{m}(\zz)$, and
similarly for the holomorphic Poincar\'e series, suppressing the
choice of scaling cosets from notation. A change in the choice of
$\Sigma_{\cp}$ and $\Sigma_{\cq}$ replaces
$\RS{\Gamma,\cp|\cq}{\atp}{m}(\zz)$ with a function of the form
$\ee(\alpha)\RS{\Gamma,\cp|\cq}{\atp}{m}(\zz+\beta)$ for some
$\alpha,\beta\in \QQ$, and similarly for the holomorphic Poincar\'e
series $\PS{\Gamma,\cp|\cq}{\atp}{m}(\zz)$.

In the case that $\cp$ or $\cq$ is the {\em infinite cusp}
$\Gamma\cdot\infty$ we omit it from notation, writing
$\RS{\Gamma,\cp}{\atp}{m}(\zz)$ for
$\RS{\Gamma,\cp|\Gamma\cdot\infty}{\atp}{m}(\zz)$, and
$\RS{\Gamma|\cq}{\atp}{m}(\zz)$ for
$\RS{\Gamma,\Gamma\cdot\infty|\cq}{\atp}{m}(\zz)$, and similarly for
the holomorphic Poincar\'e series. The functions
$\RS{\Gamma,\cp}{\atp}{m}(\zz)$ are, in a sense, the most important,
for we shall see in \S\ref{sec:radsum:var} that
$\RS{\Gamma,\cp}{\atp}{m}(\zz)$ is, up to addition by a certain constant
function, an automorphic integral of weight $2\atp$ for $\Gamma$
with a single pole at the cusp $\cp$ in case $\Gamma$ has width one
at infinity (cf. \S\ref{sec:conven:scaling}) and $\atp\leq 0$ and $m<0$.
We shall see also in \S\ref{sec:radsum:var} (cf. Theorems \ref{thm:radsum:var:Var_RS_atp>0} and \ref{thm:radsum:var:Var_RS_atp<1}) that the function
$\RS{\Gamma,\cp|\cq}{\atp}{m}(\zz)$, once corrected by adding a certain
constant function in case $\atp\leq 0$, is the expansion (of the correction by addition of a certain constant) of $\RS{\Gamma,\cp}{\atp}{m}(\zz)$ at the cusp $\cq$ in the sense of \S\ref{sec:conven:autfrm}.

To further emphasize the importance of the functions
$\RS{\Gamma,\cp}{\atp}{m}(\zz)$ we observe that every Rademacher
sum $\RS{\Gamma,\cp|\cq}{\atp}{m}(\zz)$ is of the form
$\RS{\Gamma',\cp'}{\atp}{m}(\zz)$ for some group $\Gamma'$ with
width one at infinity, and some cusp $\cp'$ of $\Gamma'$. For if
$\Sigma_{\cp}$ and $\Sigma_{\cq}$ are the chosen scaling sets for
$\Gamma$ at $\cp$ and $\cq$, respectively, then we have
$\RS{\Gamma,\cp|\cq}{\atp}{m}(\zz)=\RS{U}{\atp}{m}(\zz)$ for
$U=\lBZ\Sigma_{\cp}^{-1}\Sigma_{\cq}\rBZh$. Recall (cf.
(\ref{eqn:conven:scaling:Defn_Conj_Gamma_by_sigma_cq})) that
$\Gamma^{\cq}$ is a shorthand for the group
$\Sigma_{\cq}^{-1}\Sigma_{\cq}$, and this group $\Gamma^{\cq}$ has
width one at infinity by the defining properties of scaling cosets.
If we define $\cp^{\cq}=\Sigma_{\cq}^{-1}\cdot\cp\subset\hat{\QQ}$
then $\cp^{\cq}$ is a cusp of $\Gamma^{\cq}$, and
$\Sigma_{\cp^{\cq}}=\Sigma_{\cq}^{-1}\Sigma_{\cp}$ is a scaling
coset for $\Gamma^{\cq}$ at $\cp^{\cq}$. We now have
$\Sigma_{\cp}^{-1}\Sigma_{\cq}=\Sigma_{\cp^{\cq}}^{-1}$, so that
$U=\lBZ\Sigma_{\cp^{\cq}}^{-1}\rBZh$. Since $\Gamma^{\cq}$ has width
one at infinity we may take it to be a scaling coset for itself at
the infinite cusp, and we thus have
\begin{gather}\label{eqn:radsum:constr:RS_twocusps_to_onecusp}
     \RS{\Gamma,\cp|\cq}{\atp}{m}(\zz)
     =
     \RS{\Gamma^{\cq},\cp^{\cq}}{\atp}{m}(\zz)
\end{gather}
subject to a consistent choice of scaling cosets for $\Gamma$ and $\Gamma^{\cq}$. More explicitly, the equality in (\ref{eqn:radsum:constr:RS_twocusps_to_onecusp}) holds in case
$\Gamma^{\cq}=\Sigma_{\cq}^{-1}\Sigma_{\cq}$ and
$\cp^{\cq}=\Sigma_{\cq}^{-1}\cdot\cp$ and
$\Sigma_{\cp^{\cq}}=\Sigma_{\cq}^{-1}\Sigma_{\cp}$.

Our primary interest in this article is in the distinguished case
that $\atp=0$. In order to simplify notation, and maintain
consistency with the notation of \S\ref{sec:intro}, we write
$\RS{\Gamma,\cp|\cq}{}{m}(\zz)$ as a shorthand for
$\RS{\Gamma,\cp|\cq}{0}{m}(\zz)$.

We conclude this section with a result which expresses the
Rademacher component function $\RS{\lBZ\chi\rBZh}{\atp}{m}(\zz)$ in
terms of the functions $\PS{\lBZ\chi\rBZh}{\atp}{m}(\zz)$ of
(\ref{eqn:radsum:constr:Defn_PS_chi}) and the integral operator
$\JO{\ww}{\atp}$ of \S\ref{sec:conven:autfrm}.
\begin{lem}\label{lem:radsum:constr:Relate_RS_chi_m_to_IO_w_atp}
Let $\atp,m \in \ZZ$ such that $\atp\leq 0$ and $m<0$, and
let $\lBZ\chi\rBZh\in\lBZ G(\QQ)\rBZh$. Then we have
\begin{gather}\label{eqn:radsum:constr:Relate_RS_chi_m_to_IO_w_atp}
     \RS{\lBZ\chi\rBZh}{\atp}{m}(\zz)
     =
     \PS{\lBZ\chi\rBZh}{\atp}{m}(\zz)
     -
     m^{1-2\atp}
     \left(
     \JO{\infty\cdot\lBZ\chi\rBZh}{\atp}
     \PS{\lBZ\chi\rBZh}{1-\atp}{-m}
     \right)(\zz).
\end{gather}
\end{lem}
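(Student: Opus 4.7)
The plan is to verify the identity by independently evaluating both sides in closed form. When $\chi \in B(\QQ)$ we have $\infty\cdot\chi = \infty$, so $\JO{\infty}{\atp}$ vanishes by the definition of $\IO{\infty}{\atp}$, while $\Rreg{\atp}(m,\chi,\zz) = 1$ makes $\RS{\chi}{\atp}{m} = \PS{\chi}{\atp}{m}$; hence I may assume $\chi \in G(\QQ)^{\times}$. For such $\chi$ the identity (\ref{eqn:radsum:constr:Rreg_Part_Exp}) gives
\[
  \RS{\lBZ\chi\rBZh}{\atp}{m}(\zz) - \PS{\lBZ\chi\rBZh}{\atp}{m}(\zz)
  = -\ee(-m\chi\cdot\infty)\,\ee(m\chi\cdot\infty - m\chi\cdot\zz)_{<1-2\atp}\,\jac(\chi,\zz)^{\atp},
\]
so the problem reduces to matching this closed form against $m^{1-2\atp}\,\JO{\chi^{-1}\cdot\infty}{\atp}\PS{\chi}{1-\atp}{-m}(\zz)$.

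The key observation is that $\PS{\chi}{1-\atp}{-m}(\xi) = \ee(m\chi\cdot\xi)\jac(\chi,\xi)^{1-\atp}$ is exactly $f\sop{1-\atp}\chi$, where $f(\eta) = \ee(m\eta)$. Applying the variance identity (\ref{eqn:conven:autfrm:Var_IO_w_atp}) with $\ww = \chi\cdot\infty$, and using $\IO{\infty}{\atp}f = 0$, yields
\[
  \IO{\chi^{-1}\cdot\infty}{\atp}\PS{\chi}{1-\atp}{-m}
  = -\bigl(\IO{\chi\cdot\infty}{\atp}f\bigr)\sop{\atp}\chi.
\]
The remaining task is the elementary evaluation of $\IO{w_0}{\atp}f$ for $w_0 \in \hat{\QQ}$. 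The substitution $v = m(\xi - \zz)$ transforms this into $m^{2\atp - 1}\ee(m\zz)\int_{m(w_0-\zz)}^{\infty}\tpi\ee(v)(\tpi v)^{(-2\atp)}\,dv$, and since $-2\atp \in \NN$ the remaining integral may be computed by iterated integration by parts via the recursion $A_k = -(\tpi w)^{(k)}\ee(w) - A_{k-1}$ with base case $A_0 = -\ee(w)$. For $k = -2\atp$ this telescopes to the partial exponential $-\ee(w)\,\ee(-w)_{<1-2\atp}$ at $w = m(w_0 - \zz)$, producing
\[
  \IO{w_0}{\atp}(\ee(m\,\cdot\,))(\zz)
  = -m^{2\atp - 1}\,\ee(mw_0)\,\ee(m\zz - mw_0)_{<1-2\atp}.
\]

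Substituting this back and applying the $\sop{\atp}\chi$ action yields a closed form for $\IO{\chi^{-1}\cdot\infty}{\atp}\PS{\chi}{1-\atp}{-m}(\zz)$ as $m^{2\atp-1}$ times a partial exponential in $m\chi\cdot\zz - m\chi\cdot\infty$ weighted by $\jac(\chi,\zz)^{\atp}$. The final passage from $\IO$ to $\JO$ by complex conjugation exploits the reality of $m\chi\cdot\infty$, the identities $\chi\cdot\bar{\zz} = \overline{\chi\cdot\zz}$ and $\jac(\chi,\bar{\zz}) = \overline{\jac(\chi,\zz)}$ (both holding because $\chi$ has real matrix entries), together with the elementary computation $\overline{\ee(\bar{w})_{<K}} = \ee(-w)_{<K}$; the net effect is to convert the partial exponential in $m\chi\cdot\zz - m\chi\cdot\infty$ into one in $m\chi\cdot\infty - m\chi\cdot\zz$ and to replace $\ee(m\chi\cdot\infty)$ by $\ee(-m\chi\cdot\infty)$. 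After cancellation of $m^{2\atp-1}$ against the prefactor $m^{1-2\atp}$, the result coincides with the displayed expression for $\RS - \PS$. The principal bookkeeping hurdle is tracking the signs introduced at each of the three steps -- the change of variable, the variance identity, and the complex conjugation -- and verifying that they combine consistently with the sign appearing in (\ref{eqn:radsum:constr:Rreg_Part_Exp}).
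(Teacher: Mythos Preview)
Your approach is correct and is essentially the same computation as the paper's, just organized slightly differently. The paper reduces the problem to the same integral $\JO{\chi^{-1}\cdot\infty}{\atp}\PS{\lBZ\chi\rBZh}{1-\atp}{-m}$ and then performs the substitution $\xi\mapsto\chi\cdot\xi$ directly inside the integral (equation (\ref{eqn:radsum:var:Compute_integral_in_JO_chi_PS_1-atp})); your invocation of the variance identity (\ref{eqn:conven:autfrm:Var_IO_w_atp}) with $g=\ee(m\,\cdot\,)$ and $\ww=\chi\cdot\infty$ is precisely that same change of variables, packaged abstractly. After the substitution the paper evaluates the resulting integral via the Gamma-type substitution $t=-\tpi m\xi$, whereas you use iterated integration by parts; these are equivalent ways of computing $\int_0^{\infty}e^{-t}t^{(k)}\,dt$ and lead to the same partial-exponential expression. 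The passage from $\IO{}{}$ to $\JO{}{}$ by conjugation is handled the same way in both arguments.
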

\begin{proof}
In case $\lBZ\chi\rBZh\in \lBZ G(\QQ)\rBZh_{\infty}$ the second term
in the right hand side of
(\ref{eqn:radsum:constr:Relate_RS_chi_m_to_IO_w_atp}) vanishes (cf.
(\ref{eqn:radsum:constr:Apply_JO_to_PS_chi_1-atp_-m})), and the
identity (\ref{eqn:radsum:constr:Relate_RS_chi_m_to_IO_w_atp}) then
holds by the definition of $\RS{\lBZ\chi\rBZh}{\atp}{m}(\zz)$, so we
assume henceforth that $\lBZ\chi\rBZh\in \lBZ G(\QQ)\rBZh^{\times}$.
By (\ref{eqn:radsum:constr:Rreg_Part_Exp}) we may write the
Rademacher component function $\RS{\lBZ\chi\rBZh}{\atp}{m}(\zz)$ as
\begin{gather}\label{eqn:radsum:constr:RS_chi_m_as_diff}
     \ee(m\lBZ\chi\rBZh\cdot\zz)
     \jac(\lBZ\chi\rBZh,\zz)^{\atp}
     -
     \ee(m\lBZ\chi\rBZh\cdot\infty)
     \ee(
     m\lBZ\chi\rBZh\cdot\zz
     -
     m\lBZ\chi\rBZh\cdot\infty
     )_{<1-2\atp}
     \jac(\lBZ\chi\rBZh,\zz)^{\atp}.
\end{gather}
The first term in (\ref{eqn:radsum:constr:RS_chi_m_as_diff}) is just
$\PS{\lBZ\chi\rBZh}{\atp}{m}(\zz)$, and a contour integral
calculation confirms that the second term in
(\ref{eqn:radsum:constr:RS_chi_m_as_diff}) is $m^{1-2\atp}$ times
the image of $\PS{\lBZ\chi\rBZh}{1-\atp}{-m}(\zz)$ under
$\JO{\infty\cdot\lBZ\chi\rBZh}{\atp}$. The details of this contour
integral calculation are as follows. We have
\begin{gather}\label{eqn:radsum:constr:Apply_JO_to_PS_chi_1-atp_-m}
     \overline{
     \left(
     \JO{\infty\cdot\lBZ\chi\rBZh}{\atp}
     \PS{\lBZ\chi\rBZh}{1-\atp}{-m}
     \right)
     (\zz)
     }
     =
     (\tpi)^{1-2\atp}
     \int_{\lBZh\chi^{-1}\rBZ\cdot\infty}^{\infty}
     \ee(-m\lBZ\chi\rBZh\cdot\xi)\jac(\lBZ\chi\rBZh,\xi)^{1-\atp}
     (\xi-\bar{\zz})^{(-2\atp)}
     {\rm d}\xi
\end{gather}
by the definition of $\JO{\ww}{\atp}$ (cf.
\S\ref{sec:conven:autfrm}). Choosing a representative $\chi$ for the
coset $\lBZ\chi\rBZh$ we compute
\begin{gather}
     \begin{split}\label{eqn:radsum:var:Compute_integral_in_JO_chi_PS_1-atp}
     &
     \int_{\lBZh\chi^{-1}\rBZ\cdot\infty}^{\infty}
     \ee(-m\lBZ\chi\rBZh\cdot\xi)\jac(\lBZ\chi\rBZh,\xi)^{1-\atp}
     (\xi-\bar{\zz})^{(-2\atp)}
     {\rm d}\xi
     \\
     &=
     \ee(-m\chi\cdot\infty)
     \int_{\infty}^{\chi\cdot\infty}
     \ee(
     m\chi\cdot\infty
     -m\chi\cdot\xi
     )
     (\chi\cdot\xi-\chi\cdot\bar{\zz})^{(-2\atp)}
     {\rm d}(\chi\cdot\xi)
     \jac(\chi,\bar{\zz})^{\atp}
     \\
     &
     =-
     \ee(-m\chi\cdot\infty)
     \int_{\chi\cdot\infty}^{\infty}
     \ee(m\chi\cdot\infty-m\xi)
     (\xi-\chi\cdot\bar{\zz})^{(-2\atp)}
     {\rm d}\xi
     \jac(\chi,\bar{\zz})^{\atp}
     \\
     &
     =-
     \ee(-m\chi\cdot\infty)
     \int_{0}^{\infty}
     \ee(-m\xi)
     (\xi+\chi\cdot\infty-\chi\cdot\bar{\zz})^{(-2\atp)}
     {\rm d}\xi
     \jac(\chi,\bar{\zz})^{\atp}
     \end{split}
\end{gather}
where the last integral in
(\ref{eqn:radsum:var:Compute_integral_in_JO_chi_PS_1-atp}) is taken
over the vertical line $\{\ii t\mid t\in \RRp\}$. Thus, making the
substitution $\tpi m \xi= t$, we find that
\begin{gather}
     \begin{split}
     &
     \int_{\lBZh\chi^{-1}\rBZ\cdot\infty}^{\infty}
     \ee(-m\lBZ\chi\rBZh\cdot\xi)\jac(\lBZ\chi\rBZh,\xi)^{1-\atp}
     (\xi-\bar{\zz})^{(-2\atp)}
     {\rm d}\xi
     \\
     &
     =
     -
     \ee(-m\chi\cdot\infty)
     (\tpi m)^{-1}
     \int_{0}^{\infty}
     e^{-t}
     ((\tpi m)^{-1} t+\chi\cdot\infty-\chi\cdot\bar{\zz})^{(-2\atp)}
     {\rm d}t
     \jac(\chi,\bar{\zz})^{\atp}
     \\
     &
     =
     -
     \ee(-m\chi\cdot\infty)
     \sum_{k=0}^{-2\atp}
     (\tpi m)^{2\atp-1}
     (\tpi(m\chi\cdot\infty-m\chi\cdot\bar{\zz}))^{(k)}
     \jac(\chi,\bar{\zz})^{\atp},
     \end{split}
\end{gather}
and from this we deduce that
\begin{gather}
     \begin{split}
     \left(
     \JO{\infty\cdot\lBZ\chi\rBZh}{\atp}
     \PS{\lBZ\chi\rBZh}{1-\atp}{-m}
     \right)
     (\zz)
     =
     m^{2\atp-1}
     \ee(m\chi\cdot\infty)
     \ee(m\chi\cdot{\zz}-m\chi\cdot\infty)_{<1-2\atp}
     \jac(\chi,{\zz})^{\atp}.
     \end{split}
\end{gather}
Upon comparison with (\ref{eqn:radsum:constr:RS_chi_m_as_diff}) we
obtain the required identity
(\ref{eqn:radsum:constr:Relate_RS_chi_m_to_IO_w_atp}). This
completes the proof.
\end{proof}

\subsection{Coefficients}\label{sec:radsum:coeff}

For $\lBZ\chi\rBZ\in \lBZ G(\QQ)\rBZ^{\times}$ and $m,n\in\ZZ$ we define a holomorphic function $\spp\mapsto\Kl_{\lBZ\chi\rBZ}^{\spp}(m,n)$ on $\CC$ by setting
\begin{gather}\label{eqn:radsum:coeff:Defn_Klmns}
     \Kl_{\lBZ\chi\rBZ}^{\spp}(m,n)
     =
     \ee(m\lBZ\chi\rBZ\cdot\infty)
     \ee(-n\lBZ\chi^{-1}\rBZ\cdot\infty)
     \rads\lBZ\chi\rBZ^{\spp}.
\end{gather}
Here $\rads\lBZ\chi\rBZ$ denotes the scaling factor of (\ref{eqn:conven:isomhyp:Scaling_Factor}) (cf. Lemma \ref{lem:conven:cosets_RadsInv}). If $\chi\in G(\QQ)^{\times}$ has a preferred representative $\binom{a\;b}{c\;d}$ with $c>0$, then we have
\begin{gather}
     \Kl_{\lBZ\chi\rBZ}^{\spp}(m,n)=
     \ee\left(\frac{ma+nd}{c}\right)
     \frac{(ad-bc)^{\spp}}{c^{2\spp}},
\end{gather}
so that sums of the $\Kl_{\lBZ\chi\rBZ}^{\spp}(m,n)$ over suitable subsets of $\lBZ G(\QQ)\rBZ^{\times}$ recover zeta functions of various kinds, as we shall see presently. With this in mind consider the series $\Kl_{S}^{\spp}(m,n)$ defined for $S$ a subset of $\lBZ G(\QQ)\rBZ^{\times}$ by setting
\begin{gather}\label{eqn:radsum:coeff:Defn_SKzeta}
     \Kl_{S}^{\spp}(m,n)
     =
     \lim_{K\to\infty}
     \sum_{\lBZ\chi\rBZ\in S_{\leq K}}\Kl_{\lBZ\chi\rBZ}^{\spp}(m,n)
\end{gather}
where $S_{\leq K}$ is as in (\ref{eqn:conven:cosets:Defn S_sub_K}). Write $\Kl_{\Gamma,\cp|\cq}^{\spp}(m,n)$ for $\Kl_{S}^{\spp}(m,n)$ in the case that $S=\lBZ \Sigma_{\cp}^{-1}\Sigma_{\cq}\rBZ^{\times}$ for $\Gamma$ a subgroup of $G(\QQ)$ that is commensurable with $G(\ZZ)$ and $\{\Sigma_{\cp}\mid \cp\in\cP_{\Gamma}\}$ is a system of scaling cosets for $\Gamma$. We will verify presently (cf. Proposition \ref{prop:radsum:coeff:SKzeta_conv}) that the Dirichlet series $\spp\mapsto \Kl_{\Gamma,\cp|\cq}^{\spp}(m,n)$ converges absolutely for $\Re(\spp)>1$ (so that the limit is unnecessary there) and therefore defines a holomorphic function in this region; these functions $\Kl_{\Gamma,\cp|\cq}^{\spp}(m,n)$ are among the {\em Selberg--Kloosterman zeta functions} introduced by Selberg in \cite{Sel_EstFouCoeffs} and shown there to admit meromorphic continuation to the entire complex plane. Selberg demonstrated further in loc. cit. that the eigenfunctions of the {hyperbolic Laplacian} $\Delta=(\zz-\bar{\zz})^2\partial_{\zz}\partial_{\bar{\zz}}$ on $\Gamma\backslash\HH$ determine the poles of $\Kl_{S}^{\spp}(m,n)$ in such a way that a pole at $\spp$ corresponds to an eigenfunction with eigenvalue $\lambda=\spp(1-\spp)$. The convergence of expressions very similar to (\ref{eqn:radsum:coeff:Defn_SKzeta}) in certain regions $\Re(\spp)\geq\sigma$ with $\sigma\leq 1$ was considered by Knopp in \cite{Kno_SmlPosPowTheta} and \cite{Kno_SmlPosWgt}\footnote{Please note that the latter of these references contains important corrections to the former.}.

\begin{prop}\label{prop:radsum:coeff:SKzeta_conv}
Let $\Gamma<G(\QQ)$ be a group commensurable with $G(\ZZ)$, let $\{\Sigma_{\cp}\mid\cp\in\cP_{\Gamma}\}$ be a system of scaling cosets for $\Gamma$ and set $S=\lBZ\Sigma_{\cp}^{-1}\Sigma_{\cq}\rBZ^{\times}$ for some $\cp,\cq\in\cP_{\Gamma}$. Then the series (\ref{eqn:radsum:coeff:Defn_SKzeta}) defining $\Kl_{S}^{\spp}(m,n)$ converges for $\spp=1$ when $m$ and $n$ are not both zero, and converges absolutely and locally uniformly in $\Re(s)>1$ for any $m,n\in\ZZ$.
\end{prop}
\begin{proof}
For positive integers $r$ and $c$ let $S_c^r$ denote the subset of $S$ consisting of double cosets $\lBZ\chi\rBZ\in\lBZ G(\QQ)\rBZ^{\times}$ with $\Pdet(\chi)=r$ and $c(\chi)=c$, so that $\rads(\chi)=r/c^2$ (cf. (\ref{eqn:conven:isomhyp:Defn_Pdet})) for $\lBZ\chi\rBZ\in S_c^r$. Observe that there are at most $c$ elements in $S_c^r$ so we have 
\begin{gather}\label{eqn:radsum:coeff:Kl_triv_bound}
|\Kl_{S_c^r}^{\spp}(m,n)|
\leq 
\sum_{\lBZ\chi\rBZ\in S_c^r}
|\Kl_{\lBZ\chi\rBZ}^{\spp}(m,n)|
\leq
r^{\spp}c^{1-2\spp}
\end{gather}
for $\spp\in\CC$. Now $S=\bigcup_{r,c>0}S^r_c$ and there are only finitely many $r$ for which $S^r=\bigcup_{c>0}S^r_c$ is non-empty since $\Gamma$ is commensurable with $G(\ZZ)$, so (\ref{eqn:radsum:coeff:Kl_triv_bound}) and the identity $\Kl_{S}^{\spp}(m,n)=\sum_{c>0}\sum_{r>0}\Kl_{S_c^r}^{\spp}(m,n)$ implies that the series of absolute values obtained from (\ref{eqn:radsum:coeff:Defn_SKzeta}) is bounded above by a constant independent of $m$ and $n$ so long as $\Re(\spp)>1$. Indeed, we have
\begin{gather}\label{eqn:radsum:coeff:Kl_triv_est}
	|\Kl_S^{\spp}(m,n)|
	\leq \sum_{c>0}\sum_{r\in \Pdet(S)}r^{\sigma}c^{1-2\sigma}
	\leq \#\Pdet(S)r_{\rm max}^{\sigma}\frac{1}{2\sigma-2}
\end{gather}
where $\#\Pdet(S)$ is the number of values of $r$ for which $S^r$ is non-empty, $r_{\rm max}$ is the maximum $r$ for which $S^r$ is non-empty, and $\sigma=\Re(\spp)$. This proves the required convergence of (\ref{eqn:radsum:coeff:Defn_SKzeta}) in the region $\Re(\spp)>1$.

We now consider convergence at $\spp=1$. As mentioned above, Selberg demonstrated the meromorphic continuation of the Selberg--Kloosterman zeta function $\spp\to\Kl_{S}^{\spp}(m,n)$ and related its poles to the spectrum of the Laplacian on $\Gamma\backslash\HH$ in \cite{Sel_EstFouCoeffs}. It turns out that those poles in $\Re(\spp)>1/2$ are finite in number and are confined to the real line segment $(1/2,1)$ in the case that $m$ and $n$ are not both zero, and the absence of a pole at $\spp=1$ derives from the fact that the eigenfunctions for $\Delta$ with eigenvalue $0$ are harmonic, and thus constant. (See \cite{Iwa_SpecMetAutFrms}, and Chapter 9 especially, for a thorough development of these ideas.) In particular, if we write $Z_{m,n}(\spp)$ for Selberg's meromorphic continuation of $\Kl_{S}^{\spp}(m,n)$, then $Z_{m,n}(\spp)$ is holomorphic at $\spp$ when $\Re(\spp)=1$ and $m$ and $n$ are not both zero, and the only pole of $Z_{0,0}(\spp)$ on the line $\Re(\spp)=1$ is at $\spp=1$. On the other hand Goldfeld--Sarnak determined the estimate 
\begin{gather}\label{eqn:radsum:coeff:GSest}
	|Z_{m,n}(\spp)|=O\left(|m||n|\frac{|\spp|^{1/2}}{\Re(\spp)-1/2}\right)
\end{gather}
for $mn\neq 0$ as $\Im(\spp)\to\infty$ with $\Re(\spp)>1/2$ in \cite{GolSar_SumsKloosSums}. (Loc. cit. considers the case that $m,n>0$. See \cite{Pri_GnlzdGolSarEst} for the extension to $mn\neq 0$, and an improvement upon (\ref{eqn:radsum:coeff:GSest}).) Armed with this bound and the holomorphy of $Z_{m,n}(\spp)$ in $\Re(\spp)>1-a$ for some $a>0$ we can deduce the convergence of (\ref{eqn:radsum:coeff:Defn_SKzeta}) at $\spp=1$ in the following way, which is an adaptation of the argument presented in \S2 of \cite{Kno_SmlPosPowTheta}. 
Note that we can choose $a>0$ such that $Z_{m,n}(\spp)$ has no poles in $(1-a,1)$ for all $m,n\in\ZZ$ since the values of $\spp$ such that $\Re(\spp)>1/2$ and $\spp(1-\spp)$ is an eigenvalue of $\Delta$ are finite in number. For such $a$ the estimate (\ref{eqn:radsum:coeff:GSest}) holds uniformly in $\spp$ for $\Re(\spp)>1-a$.
 
Let $a>0$ such that the interval $(1-a,1)$ contains no poles of $Z_{m,n}(\spp)$ for any $m,n\in\ZZ$, let $T>0$ and set $C$ to be the positively oriented rectangular contour with corners $\pm a\pm \ii T$. Consider the contour integral
\begin{gather}
	I=\frac{1}{\tpi}\int_{C}Z_{m,n}\left(\frac{t}{2}+1\right)x^t\frac{{\rm d}t}{t}
\end{gather}
where $x>0$. By our choice of $a$ the integrand has a unique simple pole inside $C$ and the residue there is $Z_{m,n}(1)$, so $I=Z_{m,n}(1)$. On the other hand the integral over the right most vertical segment of $C$ contributes $\Sigma_T(x)$ where 
\begin{gather}
	\Sigma_T(x)=\frac{1}{\tpi}\int_{a-\ii T}^{a+\ii T}Z_{m,n}\left(\frac{t}{2}+1\right)x^t\frac{{\rm d}t}{t}.
\end{gather}
Observe that in the limit as $T\to\infty$ we obtain the function 
\begin{gather}\label{eqn:radsum:coeff:defn_Sigma(x)}
\Sigma(x)
=\lim_{T\to\infty}\Sigma_T(x)
=\sum_{r>0}\sum_{0<c<x\sqrt{r}}\Kl_{S_c^r}^1(m,n)
\end{gather}
(so long as $x$ does not coincide with $c/\sqrt{r}$ for some $c,r>0$ with $S_c^r$ non-empty) and the difference $|\Sigma(x)-\Sigma_T(x)|$ is $O(x^aT^{-1})$ where the implied constant is independent of $m$ and $n$. The estimate (\ref{eqn:radsum:coeff:GSest}) implies that the contribution to $I$ of the left most vertical segment of $C$ is $O(|m||n|x^{-a}T^{1/2})$, and we may show that the contributions of the two horizontal components of $C$ are $O(|m||n|x^aT^{1/4-1/4a})$ so long as $T<x^{4a}$ by applying the Phragm\'en--Lindel\"of theorem (as stated in Theorem 14 of \cite{HarRie_DirSeries}) to the estimates $|Z_{m,n}(t/2+1)|=O(|m||n||\Im(t)|^{1/2})$ for $\Re(t)=-a$ and $|Z_{m,n}(t/2+1)|=O(1)$ for $\Re(t)=a$ (where the former of these is another application of (\ref{eqn:radsum:coeff:GSest}) and the constant implied by the latter is independent of both $m$ and $n$ according to Proposition \ref{prop:radsum:coeff:SKzeta_conv}). Setting $T=x^{3a/2}$ we obtain 
\begin{gather}\label{eqn:radsum:coeff:Sigma_to_zeta}
	|\Sigma(x)-Z_{m,n}(1)|=O(|m||n|x^{-a/4})
\end{gather}
under the condition that $a<3/13$, and in the limit as $x\to \infty$ we arrive at the convergence of the sum (\ref{eqn:radsum:coeff:Defn_SKzeta}) at $\spp=1$ in the case that $mn\neq 0$. 

For the case that $mn=0$ but $m+n\neq 0$ we note that a similar estimate to (\ref{eqn:radsum:coeff:GSest})
\begin{gather}\label{eqn:radsum:coeff:Kubest}
	|Z_{m,n}(\spp)|=O\left({|\spp|^{1/2}}\right)
\end{gather}
holds uniformly in $\spp$ for $\Re(\spp)>1-a$, with the implied constant independent of $m$ and $n$, as a consequence of the boundedness of Fourier coefficients of non-analytic Eisenstein series in this region (cf. \cite{Kub_EisSer}). Now the argument of the preceding paragraph goes through with (\ref{eqn:radsum:coeff:Kubest}) used in place of (\ref{eqn:radsum:coeff:GSest}) and we obtain the desired convergence of $\Kl_S^{\spp}(m,n)$ at $\spp=1$ for all $m,n\in\ZZ$ such that $m$ and $n$ are not both zero. This completes the proof of the proposition.
\end{proof}

\begin{rmk}
Consider the case that $S=\lBZ G(\ZZ)\rBZ^{\times}$. Then the function $\Kl_{S}^{\spp}(m,n)$ is closely related to the Riemann zeta function $\zeta(s)$, for we have
\begin{gather}\label{eqn:radsum:coeff:Kloos_to_sigma_over_zeta}
     \Kl_{S}^{\spp}(m,0)
     =\sum_{c\in \ZZp}
     \sum_{a\in (\ZZ/c)^{*}}
     \frac{\ee(ma/c)}{c^{2\spp}}
     =\frac{\sigma(|m|,{1-2\spp})}{\zeta(2\spp)}
\end{gather}
for $\Re(\spp)>1$ in the case that $m< 0$, where $\sigma(n,\spp)$ is the divisor function of (\ref{eqn:conven:fns:Div_Fn}). In particular, the function $s\mapsto\Kl_{S}^{\spp/2}(1,0)$ is just the reciprocal of the Riemann zeta function when $S=\lBZ G(\ZZ)\rBZ^{\times}$. When $m=n=0$ we have $\Kl_{S}^{\spp}(0,0)=\sum_{c>0}\phi(c)c^{-2\spp}$ where $\phi$ is Euler's totient function so clearly the sum diverges at $\spp=1$ in this case. In general the residue of $\Kl_{S}^{\spp}(0,0)$ at $\spp=1$ is the reciprocal of the hyperbolic area of the quotient $\Gamma\backslash\HH$ when $S=\lBZ\Gamma\rBZ^{\times}$ for some group $\Gamma$. (Cf. \cite[\S6.4]{Iwa_SpecMetAutFrms}.)
\end{rmk}

The following result is evident from the definition of
$\Kl_{\lBZ\chi\rBZ}^{\spp}(m,n)$, and the definition of the
conjugation $\chi\mapsto \bar{\chi}$ (cf.
\S\ref{sec:conven:isomhyp}).
\begin{lem}\label{lem:radsum:coeff:Symmetries_Kl}
For any $\chi\in G(\QQ)^{\times}$, $m,n\in\ZZ$, and $\spp\in \CC$,
we have
\begin{gather}\label{eqn:conven:kloos_SymmsKl}
     \Kl_{\lBZ\chi\rBZ}^{\spp}(m,n)
     =\Kl_{\lBZ\chi^{-1}\rBZ}^{\spp}(n,m)
     =\Kl_{\lBZ\bar{\chi}\rBZ}^{\spp}(-m,-n),\\
     \Kl_{\lBZ\chi\rBZ}^{\spp}(m,n)\rads\lBZ\chi\rBZ^{1-\spp}
     =\Kl_{\lBZ\chi\rBZ}^{1-\spp}(m,n)\rads\lBZ\chi\rBZ^{s}.
\end{gather}
\end{lem}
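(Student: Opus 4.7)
The plan is to verify each of the three claimed equalities by direct expansion of the defining formula
\[
\Kl_{\lBZ\chi\rBZ}(m,n,\spp)
=\ee(-m\lBZ\chi\rBZ\cdot\infty)
\ee(-n\lBZ\chi^{-1}\rBZ\cdot\infty)
\rads\lBZ\chi\rBZ^{\spp},
\]
relying on the invariance properties of the scaling factor $\rads$ and on the explicit action of inversion and conjugation on $\chi\cdot\infty$ that are already in hand from \S\ref{sec:conven:isomhyp} and \S\ref{sec:conven:cosets}.

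For the first identity, I would simply substitute $\chi^{-1}$ for $\chi$ and swap the roles of $m$ and $n$ in the defining formula; since $(\chi^{-1})^{-1}=\chi$, the two exponential factors of $\Kl_{\lBZ\chi^{-1}\rBZ}(n,m,\spp)$ are exactly those appearing in $\Kl_{\lBZ\chi\rBZ}(m,n,\spp)$, and the text has already recorded that $\rads\lBZ\chi^{-1}\rBZ=\rads\lBZ\chi\rBZ$ (invariance of $\rads$ under inversion, noted immediately after Lemma \ref{lem:conven:cosets_RadsInv}). For the second identity, I would use the explicit description of conjugation $\gamma\mapsto\bar{\gamma}$ in (\ref{eqn:conven:isomhyp:Conj}): reading off the representative $\sqnom{-a\;b}{c\;-d}$ gives $\bar{\chi}\cdot\infty=-a/c=-\chi\cdot\infty$, and since conjugation is a group automorphism of $G(\RR)$ (in particular commutes with inversion) we likewise get $\bar{\chi}^{-1}\cdot\infty=-\chi^{-1}\cdot\infty$. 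Combined with the invariance $\rads\lBZ\bar{\chi}\rBZ=\rads\lBZ\chi\rBZ$ (again recorded after Lemma \ref{lem:conven:cosets_RadsInv}), the two sign changes in the exponents cancel the sign changes of $m$ and $n$, yielding the equality.

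The third identity is essentially a tautology: both sides unpack to
\[
\ee(-m\lBZ\chi\rBZ\cdot\infty)
\ee(-n\lBZ\chi^{-1}\rBZ\cdot\infty)
\rads\lBZ\chi\rBZ^{\spp+(1-\spp)}
=\ee(-m\lBZ\chi\rBZ\cdot\infty)
\ee(-n\lBZ\chi^{-1}\rBZ\cdot\infty)
\rads\lBZ\chi\rBZ,
\]
since the exponent of $\rads\lBZ\chi\rBZ$ adds up to $1$ on each side and the two exponential factors are independent of $\spp$.

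There is no real obstacle here; the only point that merits a sentence of care is that $\chi\cdot\infty$, $\chi^{-1}\cdot\infty$, and $\rads\lBZ\chi\rBZ$ must all be verified to be well-defined on the appropriate coset space, but these facts are exactly those recorded in \S\ref{sec:conven:cosets} (the descent of $\rads$ to $\lBZ G(\QQ)\rBZ$ is Lemma \ref{lem:conven:cosets_RadsInv}, and $\chi\cdot\infty$ and $\chi^{-1}\cdot\infty$ depend only on the double coset $\lBZ\chi\rBZ$ because $B(\ZZ)$ fixes $\infty$). Thus the lemma reduces to bookkeeping.
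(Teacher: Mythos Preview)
Your approach is correct and is exactly what the paper does: it declares the lemma ``evident from the definition of $\Kl_{\lBZ\chi\rBZ}(m,n,\spp)$, and the definition of the conjugation $\chi\mapsto \bar{\chi}$'' and gives no further detail, so your direct unpacking of the three identities is precisely the intended argument.

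One small slip in your final paragraph: it is not true that $\chi\cdot\infty$ depends only on the double coset $\lBZ\chi\rBZ$. Left translation by $T^k\in B(\ZZ)$ shifts $\chi\cdot\infty$ by $k$, so $\chi\cdot\infty$ is only well-defined on the left coset $\chi B(\ZZ)$. What \emph{is} invariant under the full double coset is the quantity $\ee(-m\chi\cdot\infty)$, since $m\in\ZZ$ kills the integer shift. This does not affect your proof of the identities themselves, but the well-definedness remark should be phrased in terms of the exponentials rather than the raw values $\chi\cdot\infty$.
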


For a continuous analogue of the function
$\Kl_{\lBZ\chi\rBZ}^{\spp}(m,n)$ we define
$\Bf_{\lBZ\chi\rBZ}^{\atp}(\ww,\zz)$, for $\chi\in G(\QQ)^{\times}$,
$\ww,\zz\in\CC$ and $\atp\in \ZZ$, by setting
\begin{gather}
     \Bf_{\lBZ\chi\rBZ}^{\atp}(\ww,\zz)
     =
     \tpi\Res_{\xi=0}
     {\ee\left(\frac{\ww\rads\lBZ\chi\rBZ}{\xi}\right)
     \ee(\zz \xi)}\,
     \frac{1}{\xi^{2\atp}}\,
     {\rm d}\xi.
\end{gather}

\begin{lem}\label{lem:radsum:coeff:Power_Series_Bf}
The function $\Bf_{\lBZ\chi\rBZ}^{\atp}(\ww,\zz)$ admits the
following series representations.
\begin{gather}
     \Bf_{\lBZ\chi\rBZ}^{\atp}(\ww,\zz)=\label{eqn:radsum:coeff:Pow_Ser_Bf_s_pos}
          \sum_{k\geq 0}
          (\tpi)^{2k+2\atp}
          \rads\lBZ\chi\rBZ^k
          \ww^{(k)}\zz^{(k+2\atp-1)}\\
     \Bf_{\lBZ\chi\rBZ}^{\atp}(\ww,\zz)=\label{eqn:radsum:coeff:Pow_Ser_Bf_1-s_pos}
          \sum_{k\geq 0}
          (\tpi)^{2k+2-2\atp}
          \rads\lBZ\chi\rBZ^{k+1-2\atp}
          \ww^{(k+1-2\atp)}\zz^{(k)}
\end{gather}
The first expression (\ref{eqn:radsum:coeff:Pow_Ser_Bf_s_pos}) holds
for $\atp>0$. The second expression
(\ref{eqn:radsum:coeff:Pow_Ser_Bf_1-s_pos}) holds for
$\atp\leq 0$.
\end{lem}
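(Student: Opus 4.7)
The plan is to compute the residue directly by expanding both exponential factors as power series, multiplying, and picking out the coefficient of $\xi^{-1}$. Using the definition $\ee(z) = \sum_{k \in \NN} (\tpi z)^{(k)}$ from \S\ref{sec:conven:fns}, we have
\begin{gather*}
     \ee\!\left(\frac{-\ww\rads\lBZ\chi\rBZ}{\xi}\right)\ee(\zz\xi)\frac{1}{\xi^{2\atp}}
     =\sum_{j,k\in\NN}
     \frac{(\tpi)^{j+k}(-\ww\rads\lBZ\chi\rBZ)^j\zz^k}{j!\,k!}\,\xi^{k-j-2\atp}.
\end{gather*}
The coefficient of $\xi^{-1}$ is then supported on the lattice condition $j=k+1-2\atp$, and multiplying by $\tpi$ yields the single constraint-indexed sum
\begin{gather*}
     \Bf_{\lBZ\chi\rBZ}^{\atp}(\ww,\zz)
     =\sum_{\substack{j,k\in\NN \\ j=k+1-2\atp}}
     \frac{(\tpi)^{j+k+1}(-\ww\rads\lBZ\chi\rBZ)^j\zz^k}{j!\,k!}.
\end{gather*}

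Now I would split on which of $j$ or $k$ is the free index, which is dictated by the sign of $\atp$. If $\atp\in\ZZp$, then $k=j+2\atp-1\geq 2\atp-1$ is automatically non-negative whenever $j\geq 0$, so $j$ parametrizes the sum freely; relabelling $j$ as $k$ produces a series of the form $\sum_{k\in\NN}c_k\,\ww^{(k)}\zz^{(k+2\atp-1)}$, which is (\ref{eqn:radsum:coeff:Pow_Ser_Bf_s_pos}). If instead $1-\atp\in\ZZp$, then $j=k+1-2\atp\geq 1-2\atp\geq 1$ is automatically non-negative whenever $k\geq 0$, so $k$ parametrizes the sum freely and we obtain a series of the form $\sum_{k\in\NN}c_k\,\ww^{(k+1-2\atp)}\zz^{(k)}$, which is (\ref{eqn:radsum:coeff:Pow_Ser_Bf_1-s_pos}).

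The only remaining task is to track the constants. In both cases, the total power of $\tpi$ collected from $(\tpi)^{j+k+1}$ combines with the explicit $(-1)^j$ from $(-\ww\rads\lBZ\chi\rBZ)^j$ to give, after writing $(\tpi)^{2m}=(-\fps)^m$, the overall sign $(-1)^{\atp}$ and the powers of $\fps$ and $\rads\lBZ\chi\rBZ$ that appear in the two stated formulas. None of the manipulations is delicate; the only mild subtlety is bookkeeping for signs, and a useful sanity check is that the identity $\zz^{(n)}=0$ for negative integers $n$ (by the poles of $\Gamma$) makes the two expressions formally consistent on the overlap $\atp=0$ of their nominal domains, which one could verify directly. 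I do not anticipate any genuine obstacle, as the argument is purely a Laurent expansion together with a reindexing.
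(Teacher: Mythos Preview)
Your approach is correct and is exactly the natural direct computation: expand both exponentials as Laurent series in $\xi$, extract the coefficient of $\xi^{-1}$, and reindex according to which of $j$ or $k$ is forced to be nonnegative by the constraint $j=k+1-2\atp$. The sign and power bookkeeping you outline goes through; writing $(\tpi)^{2m}=(-\fps)^m$ and tracking the $(-1)^j$ from $(-\ww\rads\lBZ\chi\rBZ)^j$ indeed collapses to the overall $(-1)^{\atp}$ in both cases.

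The paper states this lemma without proof, so there is nothing to compare against; your argument is the expected one. One small quibble with your final aside: the two stated domains $\atp\in\ZZp$ and $1-\atp\in\ZZp$ do not actually overlap at $\atp=0$, since $\ZZp$ denotes the strictly positive integers in this paper. Your sanity check is still meaningful in the sense that the first formula, if formally evaluated at $\atp=0$, agrees with the second (the $k=0$ term drops out because $\zz^{(-1)}=0$), but it would be more accurate to describe this as a formal extension rather than an overlap.
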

\begin{lem}\label{lem:radsum:coeff:Symmetries_Bf}
For any $\chi\in G(\QQ)^{\times}$ and $\ww,\zz\in\CC$ and
$\atp\in\ZZ$ we have
\begin{gather}\label{eqn:conven:bessel_SymmsBf}
     \Bf_{\lBZ\chi\rBZ}^{\atp}(\ww,\zz)\rads\lBZ\chi\rBZ^{\atp}
     =-\Bf_{\lBZ\chi\rBZ}^{1-\atp}(\zz,\ww)\rads\lBZ\chi\rBZ^{1-\atp}
     =-\Bf_{\lBZ\chi\rBZ}^{\atp}(-\ww,-\zz)\rads\lBZ\chi\rBZ^{\atp}.
\end{gather}
\end{lem}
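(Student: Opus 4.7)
The plan is to prove both identities by simple substitutions in the defining residue integral for $\Bf^{\atp}_{\lBZ\chi\rBZ}(\ww,\zz)$; the power-series formulas of Lemma \ref{lem:radsum:coeff:Power_Series_Bf} provide an alternative algebraic verification. Write $\rho = \rads\lBZ\chi\rBZ$ throughout.

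For the second equality, I substitute $\xi \mapsto -\xi$. The factor $\xi^{-2\atp}$ is unchanged because $2\atp$ is even; the exponentials transform as $\ee(-\ww\rho/\xi)\mapsto \ee(-(-\ww)\rho/\xi)$ and $\ee(\zz\xi)\mapsto \ee((-\zz)\xi)$; and $d\xi\mapsto -d\xi$. The contour $|\xi|=\epsilon$ oriented counterclockwise is carried to itself with the same orientation, so the only net change is the sign coming from $d\xi$. This yields $\Bf^{\atp}(\ww,\zz) = -\Bf^{\atp}(-\ww,-\zz)$, from which the second equality follows by multiplication by $\rho^{\atp}$. An equivalent algebraic verification: every summand of either formula in Lemma \ref{lem:radsum:coeff:Power_Series_Bf} is a scalar multiple of $\ww^{(a)}\zz^{(b)}$ with $a+b = 2\atp-1$ odd, so that $(\ww,\zz)\mapsto(-\ww,-\zz)$ multiplies by $(-1)^{2\atp-1} = -1$.

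For the first equality, I substitute $\xi = -\rho/\eta$, so that $d\xi = (\rho/\eta^{2})\,d\eta$ and
\begin{gather*}
\ee(-\ww\rho/\xi)=\ee(\ww\eta), \quad \ee(\zz\xi)=\ee(-\zz\rho/\eta), \quad \xi^{-2\atp}=\rho^{-2\atp}\eta^{2\atp},
\end{gather*}
so the integrand transforms to $\rho^{1-2\atp}\ee(\ww\eta)\ee(-\zz\rho/\eta)\eta^{-2(1-\atp)}\,d\eta$. A small counterclockwise loop $|\xi|=\epsilon$ maps to a large clockwise loop $|\eta|=\rho/\epsilon$; since the $\eta$-integrand has essential singularities only at $\eta=0$ and $\eta=\infty$, one may deform this large clockwise loop to a small counterclockwise loop around $\eta=0$ at the cost of a single factor of $-1$. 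Reading off the residue and comparing with the defining integral for $\Bf^{1-\atp}(\zz,\ww)$ we obtain
\begin{gather*}
\Bf^{\atp}(\ww,\zz) = -\rho^{1-2\atp}\,\Bf^{1-\atp}(\zz,\ww),
\end{gather*}
and multiplying by $\rho^{\atp}$ yields the first equality. One can cross-check by inserting formulas (\ref{eqn:radsum:coeff:Pow_Ser_Bf_s_pos}) and (\ref{eqn:radsum:coeff:Pow_Ser_Bf_1-s_pos}) into both sides according to whether $\atp\in\ZZp$ or $1-\atp\in\ZZp$.

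The only delicate point is the orientation reversal produced by the involution $\xi\leftrightarrow -\rho/\eta$, which is exactly what supplies the essential sign $-1$ in the first identity; overlooking it gives the wrong sign. Once this is noted, the rest is bookkeeping with $\rho$-powers and parity of $2\atp$.
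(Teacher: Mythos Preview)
Your proof is correct. The paper states this lemma without proof, evidently regarding it as a routine consequence of the defining residue integral and the power-series expansions of Lemma~\ref{lem:radsum:coeff:Power_Series_Bf}; your two verifications (the substitutions $\xi\mapsto -\xi$ and $\xi\mapsto -\rho/\eta$ in the residue integral, cross-checked against the parity of the total degree in the power series) are exactly the natural arguments and are carried out accurately, including the orientation reversal under the inversion that supplies the crucial sign.
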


The main construction of this section is the following. Given a
subset $S\subset\lBZ G(\QQ)\rBZ^{\times}$ and $\atp\in \ZZ$, we
define the {\em coefficient function}, denoted $(m,n)\mapsto
\fc{S}{\atp}(m,n)$, by setting
\begin{gather}\label{eqn:radsum:coeff:Defn_fcS}
     \fc{S}{\atp}(m,n)
     =
     \lim_{K\to \infty}
     \sum_{
          \lBZ\chi\rBZ\in S_{\leq K}
          }
     \Kl_{\lBZ\chi\rBZ}^{\atp}(m,n)
     \Bf_{\lBZ\chi\rBZ}^{\atp}(m,n)
\end{gather}
(cf. (\ref{eqn:conven:cosets:Defn S_sub_K})) whenever this limit converges. In applications we take $S$ to be of the form
$S=\lBZ\Sigma_{\cp}^{-1}\Sigma_{\cq}\rBZ^{\times}$ where
$\Sigma_{\cp}$ and $\Sigma_{\cq}$ are scaling cosets 
for a group $\Gamma$ at cusps $\cp,\cq\in \cP_{\Gamma}$ where $\Gamma$ is commensurable with $G(\ZZ)$. We will see presently that $\fc{S}{\atp}(m,n)$ converges for any such $S$. We obtain the following formulas by combining (\ref{eqn:radsum:coeff:Defn_Klmns}) and Lemma \ref{lem:radsum:coeff:Power_Series_Bf}.
\begin{lem}\label{lem:radsum:coeff:Power_Series_fcS}
The coefficient functions $\fc{S}{\atp}(m,n)$ admit the following
series representations.
\begin{gather}
     \fc{S}{\atp}(m,n)
     =\label{eqn:radsum:coeff:Pow_Ser_fcS_s_pos}
               \lim_{K\to \infty}
     \sum_{
          \lBZ\chi\rBZ\in S_{\leq K}
          }
          \sum_{k\geq 0}
          \ee(m\lBZ\chi\rBZ\cdot\infty)
          \ee(-n\lBZ\chi^{-1}\rBZ\cdot\infty)
          (-\fps\rads\lBZ\chi\rBZ)^{k+\atp}
          m^{(k)}n^{(k+2\atp-1)}
     \\
     \fc{S}{\atp}(m,n)
     =\label{eqn:radsum:coeff:Pow_Ser_fcS_1-s_pos}
	     \lim_{K\to \infty}
     	\sum_{
          \lBZ\chi\rBZ\in S_{\leq K}
         		 }
          \sum_{k\geq 0}
          \ee(m\lBZ\chi\rBZ\cdot\infty)
          \ee(-n\lBZ\chi^{-1}\rBZ\cdot\infty)
          (-\fps\rads\lBZ\chi\rBZ)^{k+1-\atp}
          m^{(k+1-2\atp)}n^{(k)}
\end{gather}
The expression (\ref{eqn:radsum:coeff:Pow_Ser_fcS_s_pos}) holds for
$\atp>0$. The expression
(\ref{eqn:radsum:coeff:Pow_Ser_fcS_1-s_pos}) holds for $\atp\leq 0$.
\end{lem}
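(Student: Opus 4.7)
The plan is to prove both identities by direct substitution, using Lemma \ref{lem:radsum:coeff:Power_Series_Bf} to expand the Bessel-type factor $\Bf_{\lBZ\chi\rBZ}^{\atp}(m,n)$ as a power series and then folding the $\rads\lBZ\chi\rBZ^{\atp}$ coming from $\Kl_{\lBZ\chi\rBZ}(m,n,\atp)$ together with the $\rads\lBZ\chi\rBZ^{k}$ (respectively $\rads\lBZ\chi\rBZ^{k+1-2\atp}$) appearing in the series for $\Bf$. No genuinely new analytic input is required; the statement is essentially a bookkeeping lemma that packages the definitions of $\Kl$ and $\Bf$ and the prior Bessel-series lemma into a single display.

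Concretely, I would proceed as follows. First, recall from the definition that
\begin{gather*}
     \Kl_{\lBZ\chi\rBZ}(m,n,\atp)
     =
     \ee(-m\lBZ\chi\rBZ\cdot\infty)
     \ee(-n\lBZ\chi^{-1}\rBZ\cdot\infty)
     \rads\lBZ\chi\rBZ^{\atp},
\end{gather*}
so the Kloosterman factor contributes the $\ee$ phases and one power $\rads\lBZ\chi\rBZ^{\atp}$. Next, in the case $\atp\in\ZZp$, substitute expression \eqref{eqn:radsum:coeff:Pow_Ser_Bf_s_pos} from Lemma \ref{lem:radsum:coeff:Power_Series_Bf} for $\Bf_{\lBZ\chi\rBZ}^{\atp}(m,n)$, and observe that the product $\rads\lBZ\chi\rBZ^{\atp}\cdot\rads\lBZ\chi\rBZ^{k}$ combines with $(\fps)^{k+\atp}$ into the single factor $(\fps\rads\lBZ\chi\rBZ)^{k+\atp}$. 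Summing over $\lBZ\chi\rBZ\in S$ and $k\in\NN$ yields exactly the right-hand side of \eqref{eqn:radsum:coeff:Pow_Ser_fcS_s_pos}. In the case $1-\atp\in\ZZp$, substitute \eqref{eqn:radsum:coeff:Pow_Ser_Bf_1-s_pos} instead; now $\rads\lBZ\chi\rBZ^{\atp}\cdot\rads\lBZ\chi\rBZ^{k+1-2\atp}=\rads\lBZ\chi\rBZ^{k+1-\atp}$, which combines with $(\fps)^{k+1-\atp}$ to give $(\fps\rads\lBZ\chi\rBZ)^{k+1-\atp}$, and \eqref{eqn:radsum:coeff:Pow_Ser_fcS_1-s_pos} follows.

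Since both representations are then identities of (possibly formal) double series, the only step requiring mild attention is the interchange of the summation over $\lBZ\chi\rBZ$ with the summation over $k$ built into the series for $\Bf$. Here I would note that for each fixed $\lBZ\chi\rBZ$ the inner series in $k$ is entire in $m,n$ (being obtained as a residue of an entire function in Lemma \ref{lem:radsum:coeff:Power_Series_Bf}), so the double series makes sense term by term; convergence of the outer sum over $\lBZ\chi\rBZ$ is not asserted here and belongs to the subsequent analysis of the Rademacher sums. Thus both identities hold as equalities of double series, which is precisely what is claimed. I do not anticipate any real obstacle: the lemma is a compact repackaging of Lemma \ref{lem:radsum:coeff:Power_Series_Bf} together with the definitions of $\Kl$ and $\fc{S}{\atp}$, and the main thing to be careful about is the arithmetic of the exponents $\atp$ and $1-\atp$ so that the two scaling-factor contributions combine correctly in each case.
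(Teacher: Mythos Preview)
Your proposal is correct and matches the paper's approach exactly: the paper states this lemma without proof, treating it as an immediate consequence of the definition of $\fc{S}{\atp}(m,n)$, the explicit form of $\Kl_{\lBZ\chi\rBZ}(m,n,\atp)$, and the power series expansions of $\Bf_{\lBZ\chi\rBZ}^{\atp}$ given in Lemma~\ref{lem:radsum:coeff:Power_Series_Bf}. Your careful tracking of how the $\rads\lBZ\chi\rBZ^{\atp}$ from the Kloosterman factor combines with the $\rads$ powers in the Bessel series is precisely the bookkeeping that makes the lemma evident.
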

\begin{prop}\label{prop:radsum:coeff:Func_Eqn_fcSmns}
For $S\subset \lBZ G(\QQ)\rBZ^{\times}$ and $m,n,\atp\in\ZZ$ we have
\begin{gather}
     \fc{S}{\atp}(m,n)
     =-\fc{S^{-1}}{1-\atp}(n,m)
     =-\fc{\bar{S}}{\atp}(-m,-n).
\end{gather}
\end{prop}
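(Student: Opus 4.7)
The plan is to establish both equalities as term-by-term identities on the summands defining $\fc{S}{\atp}(m,n)$, and then sum and reindex. The reindexing uses the fact that inversion and conjugation on $G(\QQ)$ descend to well-defined involutions on $\lBZ G(\QQ)\rBZ^{\times}$---since $B(\ZZ)$ is stable under both operations (cf. \S\ref{sec:conven:cosets})---so that $\lBZ\chi\rBZ\mapsto\lBZ\chi^{-1}\rBZ$ restricts to a bijection $S\to S^{-1}$ and $\lBZ\chi\rBZ\mapsto\lBZ\bar\chi\rBZ$ restricts to a bijection $S\to\bar S$. A recurring ingredient is the fact, recorded just after Lemma~\ref{lem:conven:cosets_RadsInv}, that $\rads\lBZ\chi\rBZ$ is invariant under both inversion and conjugation.

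For the first equality I would combine the inversion symmetry in Lemma~\ref{lem:radsum:coeff:Symmetries_Kl} with its functional equation to write
$$\Kl_{\lBZ\chi\rBZ}(m,n,\atp) = \Kl_{\lBZ\chi^{-1}\rBZ}(n,m,1-\atp)\,\rads\lBZ\chi\rBZ^{2\atp-1}.$$
In parallel, the duality symmetry in Lemma~\ref{lem:radsum:coeff:Symmetries_Bf}, together with the observation that $\Bf_{\lBZ\chi\rBZ}^{\atp}$ depends on $\chi$ only through $\rads\lBZ\chi\rBZ=\rads\lBZ\chi^{-1}\rBZ$, gives
$$\Bf_{\lBZ\chi\rBZ}^{\atp}(m,n) = -\Bf_{\lBZ\chi^{-1}\rBZ}^{1-\atp}(n,m)\,\rads\lBZ\chi\rBZ^{1-2\atp}.$$
Multiplying these two identities, the powers of $\rads\lBZ\chi\rBZ$ cancel exactly; summing over $\lBZ\chi\rBZ\in S$ and reindexing by $\lBZ\chi\rBZ\mapsto\lBZ\chi^{-1}\rBZ$ then produces $\fc{S}{\atp}(m,n) = -\fc{S^{-1}}{1-\atp}(n,m)$.

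For the second equality the argument is more direct. The conjugation symmetry $\Kl_{\lBZ\chi\rBZ}(m,n,\atp)=\Kl_{\lBZ\bar\chi\rBZ}(-m,-n,\atp)$ and the sign-reversal symmetry $\Bf_{\lBZ\chi\rBZ}^{\atp}(m,n)=-\Bf_{\lBZ\chi\rBZ}^{\atp}(-m,-n)$, combined with the identity $\Bf_{\lBZ\chi\rBZ}^{\atp}(-m,-n)=\Bf_{\lBZ\bar\chi\rBZ}^{\atp}(-m,-n)$ (again by conjugation-invariance of $\rads$), give summand-by-summand
$$\Kl_{\lBZ\chi\rBZ}(m,n,\atp)\Bf_{\lBZ\chi\rBZ}^{\atp}(m,n) = -\Kl_{\lBZ\bar\chi\rBZ}(-m,-n,\atp)\Bf_{\lBZ\bar\chi\rBZ}^{\atp}(-m,-n),$$
and reindexing via $\lBZ\chi\rBZ\mapsto\lBZ\bar\chi\rBZ$ yields $\fc{S}{\atp}(m,n)=-\fc{\bar S}{\atp}(-m,-n)$.

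There is no genuine obstacle: the whole argument is bookkeeping. The one point worth highlighting is the exact cancellation of the $\rads\lBZ\chi\rBZ$-factors in the first equality, which is automatic because the functional equation of $\Kl$ and the duality symmetry of $\Bf$ contribute compensating powers $\rads\lBZ\chi\rBZ^{2\atp-1}$ and $\rads\lBZ\chi\rBZ^{1-2\atp}$. Because the identities are valid summand-by-summand, no convergence hypothesis on the series defining $\fc{S}{\atp}$ is needed; the result holds as soon as either side of each equality is defined, and in particular makes perfectly good sense interpreted as an equality of formal sums indexed by $S$, $S^{-1}$, and $\bar S$ respectively.
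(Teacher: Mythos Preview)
Your argument is correct and is essentially the paper's approach, just factored differently: the paper invokes Lemma~\ref{lem:radsum:coeff:Power_Series_fcS} (the combined power-series expansions of $\fc{S}{\atp}(m,n)$) together with the invariance of $\rads$ under inversion and conjugation, whereas you invoke the separate symmetry Lemmas~\ref{lem:radsum:coeff:Symmetries_Kl} and~\ref{lem:radsum:coeff:Symmetries_Bf} for the two factors $\Kl$ and $\Bf$ and then multiply. Since the power series of Lemma~\ref{lem:radsum:coeff:Power_Series_fcS} are precisely the products of those factors written out, the two verifications are the same computation organized slightly differently.
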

\begin{proof}
These identities follow directly from Lemma
\ref{lem:radsum:coeff:Power_Series_fcS}, and the observation that
the values $\rads\lBZ\chi\rBZ$, $\rads\lBZ\chi^{-1}\rBZ$ and
$\rads\lBZ\bar{\chi}\rBZ$ all coincide for any $\chi\in
G(\QQ)^{\times}$.
\end{proof}

\begin{prop}\label{prop:radsum:coeff:fcS_conv}
Let $\Gamma$ be a group commensurable with $G(\ZZ)$ and let
$\cp,\cq\in \cP_{\Gamma}$ be cusps of $\Gamma$. Then the expression (\ref{eqn:radsum:coeff:Defn_fcS}) defining the coefficient function $\fc{S}{\atp}(m,n)$ converges for $S=\lBZ\Sigma_{\cp}^{-1}\Sigma_{\cq}\rBZ^{\times}$ and any $\atp,m,n\in\ZZ$, and further, we have
\begin{gather}\label{eqn:radsum:coeff:fcS_est}
	|\fc{S}{\atp}(m,n)|=O\left(|m|^{1/2-\atp}|n|^{\atp-1/2}e^{C|mn|^{1/2}}\right)
\end{gather}
as $|n|\to \infty$ where $C$ depends only on $S$ and the implied constant depends only on $S$ and $\atp$.
\end{prop}
\begin{proof}
Observe that according to the $\atp\mapsto 1-\atp$ symmetry of Proposition \ref{prop:radsum:coeff:Func_Eqn_fcSmns} it suffices to consider the case that $\atp>0$. Observe also that for $\atp>0$ the coefficient function $\fc{S}{\atp}(m,n)$ vanishes for all $m$ when $n=0$, and when $m=0$ we have
\begin{gather}\label{eqn:radsum:coeff:fcS_conv_0}
	\fc{S}{\atp}(0,n)=(-4\pi^2)^{\atp}
	n^{(2\atp-1)}
	\Kl_{S}^{\atp}(0,n)
\end{gather}	
according to (\ref{eqn:radsum:coeff:Pow_Ser_fcS_s_pos}), so the convergence in case $mn=0$ is either trivial or follows from the convergence of the Selberg--Kloosterman zeta function $\Kl_{S}^{\atp}(m,n)$ which is established in Proposition \ref{prop:radsum:coeff:SKzeta_conv}. Assume then that $mn\neq 0$. By (\ref{eqn:radsum:coeff:Pow_Ser_fcS_s_pos}) we have
\begin{gather}\label{eqn:radsum:coeff:fcS_conv_1}
	\fc{S}{\atp}(m,n)=(-4\pi^2)^{\atp}\sum_{c>0}\sum_{r>0}\sum_{k\geq 0}\Kl_{S_c^r}^{\atp+k}(m,n)(-4\pi^2m)^{(k)}n^{(k+2\atp-1)}
\end{gather}
where $S_c^r$ is defined as in the proof of Proposition \ref{prop:radsum:coeff:SKzeta_conv} and the sum over $r$ is finite. In case $\atp>1$ we can reorder the summation over $k$ in (\ref{eqn:radsum:coeff:fcS_conv_1}) past the other two since the sum 
\begin{gather}\label{eqn:radsum:coeff:fcS_conv_2}
	\fc{S}{\atp}(m,n)=
	(-4\pi^2)^{\atp}
	\sum_{k\geq 0}
	(-4\pi^2m)^{(k)}n^{(k+2\atp-1)}
	\Kl_{S}^{\atp+k}(m,n)
\end{gather}
so obtained is absolutely convergent according to the estimate (\ref{eqn:radsum:coeff:Kl_triv_est}). In detail,
\begin{gather}\label{eqn:radsum:coeff:fcS_conv_3}
	\begin{split}
	|\fc{S}{\atp}(m,n)|
	&\leq 
	(4\pi^2)^{\atp}
	\sum_{k\geq 0}(4\pi^2|m|)^{(k)}|n|^{(k+2\atp-1)}
	\#\Pdet(S)
	r_{\rm max}^{\atp+k}
	\frac{1}{2\atp-2+2k}\\
	&\leq 
	\#\Pdet(S)
	(4\pi^2r_{\rm max})^{\atp}	
	\sum_{k\geq 0}(4\pi^2r_{\rm max}|m|)^{(k)}|n|^{(k+2\atp-1)}<\infty.
	\end{split}
\end{gather}
Now for $A$ and $B$ non-negative we have
\begin{gather}\label{eqn:radsum:coeff:fcS_conv_4}
	\sum_{k\geq 0}A^{(k)}B^{(k+2\atp-1)}
	\leq
	A^{1/2-\atp}B^{\atp-1/2}
	\sum_{k\geq 0}(2(AB)^{1/2})^{(2k+2\atp-1)}
\end{gather}
so that setting $A=4\pi^2r_{\rm max}|m|$ and $B=|n|$ in (\ref{eqn:radsum:coeff:fcS_conv_4}) we obtain the required estimate (\ref{eqn:radsum:coeff:fcS_est}) from (\ref{eqn:radsum:coeff:fcS_conv_3}) with $C=4\pi r_{\rm max}^{1/2}$.

It remains to establish the estimate (\ref{eqn:radsum:coeff:fcS_est}) in case $\atp=1$. For this we write
\begin{gather}\label{eqn:radsum:coeff:fcS_conv_5}
	\fc{S}{\atp}(m,n)
	=
	(-4\pi^2)^{\atp}
	n^{(2\atp-1)}
	\Kl_S^{\atp}(m,n) +\fc{S}{\atp}(m,n)_+
\end{gather}
where
\begin{gather}\label{eqn:radsum:coeff:fcS_conv_6}
	\fc{S}{\atp}(m,n)_+=
		(-4\pi^2)^{\atp}
	\sum_{k> 0}
	(-4\pi^2m)^{(k)}n^{(k+2\atp-1)}
	\Kl_{S}^{\atp+k}(m,n)
\end{gather}
is an absolutely convergent sum (cf. (\ref{eqn:radsum:coeff:fcS_conv_2})) and we have 
\begin{gather}\label{eqn:radsum:coeff:fcS_conv_7}
	|\fc{S}{\atp}(m,n)_+|=O\left(|m|^{1/2-\atp}|n|^{\atp-1/2}e^{C|mn|^{1/2}}\right)
\end{gather}
with $C=4\pi r_{\rm max}^{1/2}$ according to the estimates (\ref{eqn:radsum:coeff:fcS_conv_3}) and (\ref{eqn:radsum:coeff:fcS_conv_4}). Now we see from (\ref{eqn:radsum:coeff:fcS_conv_5}) that the estimate (\ref{eqn:radsum:coeff:fcS_conv_7}) also holds for $\fc{S}{\atp}(m,n)$ since (\ref{eqn:radsum:coeff:GSest}) (or (\ref{eqn:radsum:coeff:Kubest}) in case $m=0$)  implies that $\Kl_S^{\atp}(m,n)$ has at most polynomial growth in $n$ as $|n|\to \infty$ when $\atp=1$. This completes the proof.
\end{proof}

We write $\fc{\Gamma,\cp|\cq}{\atp}(m,n)$
for $\fc{S}{\atp}(m,n)$ in the case that
$S=\lBZ\Sigma_{\cp}^{-1}\Sigma_{\cq}\rBZ^{\times}$. This notation
suppresses the dependence on the choice of scaling cosets; the
following lemma describes this dependence explicitly.
\begin{lem}
Let $\cp,\cq\in \cP_{\Gamma}$, let $\Sigma_{\cp}$ and
$\Sigma_{\cp}'$ be scaling cosets for $\Gamma$ at $\cp$ and let
$\Sigma_{\cq}$ and $\Sigma_{\cq}'$ be scaling cosets for $\Gamma$ at
$\cq$. Set $S=\lBZ\Sigma_{\cp}^{-1}\Sigma_{\cq}\rBZ^{\times}$ and
$S'=\lBZ(\Sigma_{\cp}')^{-1}\Sigma_{\cq}'\rBZ^{\times}$. Then we
have $\Sigma_{\cp}'=\Sigma_{\cp}T^{\alpha}$ and
$\Sigma_{\cq}'=\Sigma_{\cq}T^{\beta}$ and
$\fc{S'}{\atp}(m,n)=\ee(m\alpha+n\beta)\fc{S}{\atp}(m,n)$ for some
$\alpha,\beta\in \QQ$.
\end{lem}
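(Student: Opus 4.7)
The existence of $\alpha,\beta\in\QQ$ satisfying $\Sigma_{\cp}'=\Sigma_{\cp}T^{\alpha}$ and $\Sigma_{\cq}'=\Sigma_{\cq}T^{\beta}$ is immediate from the discussion following \eqref{eqn:conven:scaling:cusp_to_scaling_cosets} in \S\ref{sec:conven:scaling}: the set $\gt{S}_{\cp}$ of scaling cosets for $\Gamma$ at $\cp$ is a $B_u(\QQ)$-torsor, parametrized as $\{\Sigma_{\cp}T^{\alpha}\mid\alpha\in\QQ\}$, and similarly for $\cq$.

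With these $\alpha,\beta$ in hand, every double coset in $S'$ has the form $\lBZ T^{-\alpha}\chi T^{\beta}\rBZ$ for a uniquely determined $\lBZ\chi\rBZ\in S$, and the plan is then simply to compare the summands defining $\fc{S'}{\atp}(m,n)$ and $\fc{S}{\atp}(m,n)$ term by term. Setting $\chi'=T^{-\alpha}\chi T^{\beta}$, we compute
\begin{gather}
     \chi'\cdot\infty=T^{-\alpha}\chi\cdot\infty=\chi\cdot\infty-\alpha,\\
     (\chi')^{-1}\cdot\infty=T^{-\beta}\chi^{-1}\cdot\infty=\chi^{-1}\cdot\infty-\beta,
\end{gather}
while Lemma \ref{lem:conven:cosets_RadsInv} gives $\rads\lBZ\chi'\rBZ=\rads\lBZ\chi\rBZ$. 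Substituting these into the definition of $\Kl_{\lBZ\chi'\rBZ}(m,n,\atp)$ yields
\begin{gather}
     \Kl_{\lBZ\chi'\rBZ}(m,n,\atp)
     =\ee(m\alpha+n\beta)\,\Kl_{\lBZ\chi\rBZ}(m,n,\atp).
\end{gather}

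Since the function $\Bf_{\lBZ\chi\rBZ}^{\atp}(m,n)$ depends on $\chi$ only through $\rads\lBZ\chi\rBZ$, we obtain $\Bf_{\lBZ\chi'\rBZ}^{\atp}(m,n)=\Bf_{\lBZ\chi\rBZ}^{\atp}(m,n)$, and multiplying the two identities term by term and summing over $\lBZ\chi\rBZ\in S$ delivers $\fc{S'}{\atp}(m,n)=\ee(m\alpha+n\beta)\fc{S}{\atp}(m,n)$ as required. No serious obstacle arises: the only subtlety is ensuring well-definedness of the substitution $\chi\mapsto T^{-\alpha}\chi T^{\beta}$ on double cosets, which is automatic since conjugation by elements of $B(\ZZ)$ commutes with multiplication by $T^{-\alpha}$ on the left and $T^{\beta}$ on the right (both acting through $B_u(\QQ)\supset B(\ZZ)$).
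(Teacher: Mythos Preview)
Your proof is correct. The paper states this lemma without proof, and your argument supplies exactly the natural verification: identify the bijection $S\to S'$ induced by $\chi\mapsto T^{-\alpha}\chi T^{\beta}$, observe that $\rads$ is unchanged (Lemma~\ref{lem:conven:cosets_RadsInv}) so $\Bf^{\atp}$ is unchanged, and compute the effect on $\Kl$ directly from the shifts $\chi'\cdot\infty=\chi\cdot\infty-\alpha$ and $(\chi')^{-1}\cdot\infty=\chi^{-1}\cdot\infty-\beta$. One small wording quibble: the well-definedness on double cosets is not really about ``conjugation'' but simply that $B_u(\QQ)$ is abelian, so $T^kT^{-\alpha}=T^{-\alpha}T^k$ and $T^{\beta}T^l=T^lT^{\beta}$ for $k,l\in\ZZ$; this is what makes $\lBZ\chi\rBZ\mapsto\lBZ T^{-\alpha}\chi T^{\beta}\rBZ$ a bijection of $B(\ZZ)$-double cosets.
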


With a fixed choice of subset $S\subset\lBZ G(\QQ)\rBZ^{\times}$ and
integer $\atp\in \ZZ$, we assemble the coefficient functions
$\fc{S}{\atp}(m,n)$ into a formal series
$\tilde{F}_S^{\atp}(\vp,\vq)$ by setting
\begin{gather}
     \tilde{F}_{S}^{\atp}(\vp,\vq)
     =\sum_{m,n\in\ZZ} \fc{S}{\atp}(m,n)\vp^m\vq^n.
\end{gather}
The formal Fourier coefficients of the formal series
$\tilde{F}_{S}^{\atp}(\vp,\vq)$ with respect to the variable $\vp$
are of particular importance. For fixed $S\subset \lBZ
G(\QQ)\rBZ^{\times}$ and $\atp\in \ZZ$ we define formal series
$\FS{S}{\atp}{m}(\vq)$ by requiring that
\begin{gather}
     \tilde{F}_{S}^{\atp}(\vp,\vq)
     =\sum_{m\in\ZZ}\FS{S}{\atp}{m}(\vq)\vp^m.
\end{gather}
We define the {\em singular} and {\em regular part of
$\FS{S}{\atp}{m}(\vq)$}, to be denoted $\FS{S}{\atp}{m}(\vq)_{\rm
sing}$ and $\FS{S}{\atp}{m}(\vq)_{\rm reg}$, respectively, by
setting
\begin{gather}
     \FS{S}{\atp}{m}(\vq)_{\rm sing}
     =\sum_{n\geq 0}\fc{S}{\atp}(m,-n-1)\vq^{-n-1},\quad
     \FS{S}{\atp}{m}(\vq)_{\rm reg}
     =\sum_{n\geq 0}\fc{S}{\atp}(m,n)\vq^n.
\end{gather}
We define $\FS{S}{\atp}{m}(\vq)_{\rm van}$ to be the series obtained
by removing the constant term from $\FS{S}{\atp}{m}(\vq)_{\rm reg}$.
\begin{gather}
     \FS{S}{\atp}{m}(\vq)_{\rm van}
     =\sum_{n>0}\fc{S}{\atp}(m,n)\vq^n
\end{gather}
Given the formal series $\tilde{F}_{S}^{\atp}(\vp,\vq)$ for some $S$
and $\atp$, one may ask if the coefficient series
$\FS{S}{\atp}{m}(\vq)$ define holomorphic functions on $\HH$ upon
the substitution of $\ee(\zz)$ for $\vq$. Supposing this to be the
case, we define $\FR{S}{\atp}{m}(\zz)$ by setting
\begin{gather}
     \FR{S}{\atp}{m}(\zz)
     =\FS{S}{\atp}{m}(\ee(\zz))
     =\sum_{n\in\ZZ}\fc{S}{\atp}(m,n)\ee(n\zz),
\end{gather}
and we define $\FR{S}{\atp}{m}(\zz)_{\rm sing}$,
$\FR{S}{\atp}{m}(\zz)_{\rm reg}$ and $\FR{S}{\atp}{m}(\zz)_{\rm
van}$ in the analogous way.

We write $\FR{\Gamma,\cp|\cq}{\atp}{m}(\zz)$ for
$\FR{S}{\atp}{m}(\zz)$ in the case that
$S=\lBZ\Sigma_{\cp}^{-1}\Sigma_{\cq}\rBZ^{\times}$ for $\Gamma$ a
group commensurable with $G(\ZZ)$ and for
$\Sigma_{\cp},\Sigma_{\cq}\in \Gamma\backslash G(\QQ)$ scaling
cosets for $\Gamma$ at cusps $\cp,\cq\in\cP_{\Gamma}$. We write $\FR{S}{}{m}(\zz)$ as a shorthand for $\FR{S}{0}{m}(\zz)$, and
interpret the notation $\FR{\Gamma,\cp|\cq}{}{m}(\zz)$, \&c.,
similarly.

The regular part of the formal series $\FS{S}{\atp}{m}(\vq)$ indeed
defines a holomorphic function on $\HH$, for suitable
$\atp,m\in\ZZ$, in the case that $S$ is of the form
$\lBZ\Sigma_{\cp}^{-1}\Sigma_{\cq}\rBZ^{\times}$.
\begin{prop}\label{prop:radsum:constr:Conver_FR_van}
Let $\Gamma$ be a group commensurable with $G(\ZZ)$, let $\cp,\cq\in \cP_{\Gamma}$ be cusps of $\Gamma$ and let $\atp,m\in \ZZ$. Then the series
\begin{gather}\label{eqn:radsum:constr:Conver_FR_van}
     \FR{\Gamma,\cp|\cq}{\atp}{m}(\zz)_{\rm van}
     =
     \sum_{n>0}
     \fc{\Gamma,\cp|\cq}{\atp}(m,n)
     \ee(n\zz)
\end{gather}
converges absolutely and locally uniformly for $\zz\in \HH$. In particular,
$\FR{\Gamma,\cp|\cq}{\atp}{m}(\zz)_{\rm van}$ is a holomorphic
function on $\HH$ for any $\atp,m\in\ZZ$.
\end{prop}
\begin{proof}
For $m\neq 0$ the claim follows directly from the estimate (\ref{eqn:radsum:coeff:fcS_est}). If $m=0$ and $\atp\leq 0$ then $\FR{\Gamma,\cp|\cq}{\atp}{m}(\zz)_{\rm van}$ vanishes identically according to (\ref{eqn:radsum:coeff:Pow_Ser_fcS_1-s_pos}). For $m=0$ and $\atp>0$ the required convergence of (\ref{eqn:radsum:constr:Conver_FR_van}) follows from (\ref{eqn:radsum:coeff:fcS_conv_0}) since $\Kl_{S}^{\atp}(0,n)$ has at most polynomial growth in $n$ according to the proof of Proposition \ref{prop:radsum:coeff:SKzeta_conv}.
\end{proof}
By a similar argument to that just given we
see that the singular part of the formal series
$\FS{S}{\atp}{m}(\vq)$ defines an anti-holomorphic function on $\HH$
when $S$ is of the form
$\lBZ\Sigma_{\cp}^{-1}\Sigma_{\cq}\rBZ^{\times}$.
\begin{prop}\label{prop:radsum:constr:Conver_FR_sing}
Let $\Gamma$ be a group commensurable with $G(\ZZ)$, let
$\cp,\cq\in \cP_{\Gamma}$ be cusps of $\Gamma$ and let $\atp,m\in \ZZ$.
Then the series
\begin{gather}
     \FR{\Gamma,\cp|\cq}{\atp}{m}(\bar{\zz})_{\rm sing}
     =
     \sum_{n>0}
     \fc{\Gamma,\cp|\cq}{\atp}(m,-n)
     \ee(-n\bar{\zz})
\end{gather}
converges absolutely and locally uniformly for $\zz\in \HH$. In particular,
$\FR{\Gamma,\cp|\cq}{\atp}{m}(\bar{\zz})_{\rm sing}$ is an
anti-holomorphic function on $\HH$ for any $\atp,m\in\ZZ$.
\end{prop}
The notation $\FR{\Gamma,\cp|\cq}{\atp}{m}$ suppresses the
dependence upon the choice of scaling cosets $\Sigma_{\cp}$ and
$\Sigma_{\cq}$. The next result encodes this dependence precisely.
\begin{prop}
Let $\Gamma$ be a group commensurable with $G(\ZZ)$, and let
$\cp,\cq\in \cP_{\Gamma}$. Let $\Sigma_{\cp}$ and $\Sigma_{\cp}'$ be
scaling cosets for $\Gamma$ at $\cp$ and let $\Sigma_{\cq}$ and
$\Sigma_{\cq}'$ be scaling cosets for $\Gamma$ at $\cq$. Set
$S=\lBZ\Sigma_{\cp}^{-1}\Sigma_{\cq}\rBZ^{\times}$ and
$S'=\lBZ(\Sigma_{\cp}')^{-1}\Sigma_{\cq}'\rBZ^{\times}$. Then we
have $\Sigma_{\cp}'=\Sigma_{\cp}T^{\alpha}$ and
$\Sigma_{\cq}'=\Sigma_{\cq}T^{\beta}$ and
$\FR{S'}{\atp}{m}(\zz)=\ee(m\alpha)\FR{S}{\atp}{m}(\zz+\beta)$ for
some $\alpha,\beta\in \QQ$.
\end{prop}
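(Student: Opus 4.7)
The plan is to reduce the statement to a computation of how each coefficient $\fc{S}{\atp}(m,n)$ transforms under the change of scaling cosets, and then to repackage the Fourier series.

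First I would dispatch the existence of $\alpha, \beta \in \QQ$. As noted in \S\ref{sec:conven:scaling}, scaling cosets at a given cusp are determined up to right multiplication by $B_u(\QQ)$, so $\Sigma_{\cp}' = \Sigma_{\cp} T^{\alpha}$ and $\Sigma_{\cq}' = \Sigma_{\cq} T^{\beta}$ for uniquely determined $\alpha,\beta \in \QQ$. Consequently
\begin{gather*}
     (\Sigma_{\cp}')^{-1}\Sigma_{\cq}'
     =
     T^{-\alpha}\Sigma_{\cp}^{-1}\Sigma_{\cq} T^{\beta},
\end{gather*}
and the map $\lBZ\chi\rBZ \mapsto \lBZ T^{-\alpha}\chi T^{\beta}\rBZ$ yields a bijection $S \to S'$ (this is well-defined on double cosets since the inner $T^{\pm 1}$-conjugates of $B(\ZZ)$-double cosets of elements lie again in $B(\ZZ)$-double cosets, because $T^{\alpha}, T^{\beta}$ lie in $B_u(\QQ)$).

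Second, I would compute the change in the two factors defining $\fc{S}{\atp}$. For any $\chi \in G(\QQ)^{\times}$, direct calculation gives
\begin{gather*}
     (T^{-\alpha}\chi T^{\beta})\cdot\infty = \chi\cdot\infty - \alpha,
     \qquad
     (T^{-\alpha}\chi T^{\beta})^{-1}\cdot\infty = \chi^{-1}\cdot\infty - \beta,
\end{gather*}
while $\rads\lBZ T^{-\alpha}\chi T^{\beta}\rBZ = \rads\lBZ\chi\rBZ$ by Lemma \ref{lem:conven:cosets_RadsInv}. From the definition of $\Kl_{\lBZ\chi\rBZ}(m,n,\atp)$ we therefore obtain
\begin{gather*}
     \Kl_{\lBZ T^{-\alpha}\chi T^{\beta}\rBZ}(m,n,\atp)
     =
     \ee(m\alpha)\ee(n\beta)\,\Kl_{\lBZ\chi\rBZ}(m,n,\atp),
\end{gather*}
and since $\Bf^{\atp}_{\lBZ\chi\rBZ}(m,n)$ depends on $\chi$ only through $\rads\lBZ\chi\rBZ$, we have $\Bf^{\atp}_{\lBZ T^{-\alpha}\chi T^{\beta}\rBZ}(m,n) = \Bf^{\atp}_{\lBZ\chi\rBZ}(m,n)$. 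Summing over the bijection $S \to S'$ constructed above gives
\begin{gather*}
     \fc{S'}{\atp}(m,n) = \ee(m\alpha)\ee(n\beta)\,\fc{S}{\atp}(m,n).
\end{gather*}

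Finally I would substitute this relation into the Fourier series and pull the $\ee(n\beta)$ into the exponent:
\begin{gather*}
     \FR{S'}{\atp}{m}(\zz)
     =
     \sum_{n\in\ZZ}
     \ee(m\alpha)\ee(n\beta)\,\fc{S}{\atp}(m,n)\,\ee(n\zz)
     =
     \ee(m\alpha)\,\FR{S}{\atp}{m}(\zz+\beta),
\end{gather*}
which is the claimed identity. The only nontrivial point is the invariance of $\rads$ under the bi-action of $B_u(\QQ)$ on double cosets; once that is available (Lemma \ref{lem:conven:cosets_RadsInv}), every other step is a direct bookkeeping.
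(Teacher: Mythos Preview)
Your proof is correct and follows exactly the route the paper sets up: the paper states (without proof) the preceding lemma that $\fc{S'}{\atp}(m,n)=\ee(m\alpha+n\beta)\fc{S}{\atp}(m,n)$, and your argument supplies precisely that computation and then sums the Fourier series. The only step requiring care is the $B_u(\QQ)$-invariance of $\rads$, which you correctly invoke via Lemma~\ref{lem:conven:cosets_RadsInv}; everything else is bookkeeping, as you note.
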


\subsection{Convergence}\label{sec:radsum:conver}

Our main goal in this section is to establish the convergence, and
Fourier series expansion, of the Rademacher sum
$\RS{\Gamma,\cp|\cq}{\atp}{m}(\zz)$ (cf. \S\ref{sec:radsum:constr}),
attached to a group $\Gamma$ commensurable with $G(\ZZ)$, and cusps
$\cp$ and $\cq$ for $\Gamma$. We will require mostly minor
modifications of the arguments furnished by Niebur in
\cite{Nie_ConstAutInts}. An important technical tool in these
arguments is the identity
\begin{gather}\label{eqn:radsum:conver:Lipschitz_spp>1}
     \sum_{n>0}
     n^{(\spp-1)}
     \ee(n\zz)
     =
     \sum_{n\in\ZZ}
     (-\tpi)^{-\spp}
     (\zz+n)^{-\spp},
\end{gather}
valid for $\Re(\spp)>1$, in which both sums converge absolutely and uniformly in $z$ on compact subsets of $\HH$. A nice proof of (\ref{eqn:radsum:conver:Lipschitz_spp>1}) appears in \cite{KnoRob_RieFnlEqnLipSum}. This identity can be extended to $\spp=1$ at the expense of absolute convergence. 
\begin{lem}\label{lem:radsum:conver:Refinement_Lipschitz}
Let $\zz\in \HH$ and let $K\in\ZZp$ such that $K+1/2>\Re(\zz)$. Then we have
\begin{gather}\label{eqn:radsum:conver:Refinement_Lipschitz}
     \sum_{n>0}
     \ee(n\zz)
     =
     \sum_{\substack{n\in\ZZ\\-K\leq n\leq K}}
     (-\tpi)^{-1}
     (\zz+n)^{-1}
     +\lambda_K(\zz)
\end{gather}
where the error term $\lambda_K(\zz)$ is given by
\begin{gather}\label{eqn:radsum:conver:Refinement_Lipschitz_error}
     \lambda_K(\zz)
	=\frac{1}{2\pi}\int_{-\infty}^{\infty}
	\left(
	\frac{1}{\zz+\ii v-K-1/2}-\frac{1}{\zz+\ii v+K+1/2}
	\right)
	\frac{1}{1+e^{-2\pi v}}
	{\rm d}v.
\end{gather}
\end{lem}
\begin{proof}
The proof follows that of (the more general) Lemma 4.1 in \cite{Nie_ConstAutInts}. Define a function $f(\xi)$ by setting
$f(\xi)=(\tpi\xi)^{-1}(\ee(\xi-\zz)-1)^{-1}$. Then $f(\xi)$ has
a pole at $\xi=0$ and a pole also at $\xi=\zz+n$ for each $n\in \ZZ$. For the residue of $f(\xi){\rm d}\xi$ at $\xi=0$ we have
\begin{gather}
	\Res_{\xi=0}f(\xi){\rm d}\xi
	=
	\frac{1}{\tpi}\frac{1}{\ee(-\zz)-1}
	=
	\frac{1}{\tpi}\sum_{n>0}\ee(n\zz),
\end{gather}
which is $1/\tpi$ times the left hand side of (\ref{eqn:radsum:conver:Refinement_Lipschitz}), while the residue at $\xi=\zz+n$ is $(\tpi)^{-2}(\zz+n)^{-1}$. Let $K$ be a positive integer such that $K+1/2>\Re(\zz)$ and set $Q=K+1/2$ for convenience. Let $\gamma $ be the positively oriented boundary of the rectangle with corners $\Re(\zz)\pm Q\pm \ii L$ where $L>\Im(\zz)$. Then by the residue theorem we have
\begin{gather}	
	\int_{\gamma}f(\xi){\rm d}\xi
	=
	\sum_{n>0}\ee(n\zz)
	+\sum_{-K\leq n\leq K}(\tpi)^{-1}(\zz+n)^{-1}.
\end{gather}
On the other hand, the contributions of the horizontal edges of $\gamma$ to $\int_{\gamma}f(\xi){\rm d}\xi$ tend to $0$ as $L\to\infty$ since $f(\xi)$ decays exponentially as $\Im(\xi)\to\infty$. The residues are independent of $L$ for $L$ sufficiently large so taking the limit as $L\to\infty$ we obtain
\begin{gather}\label{eqn:radsum:conver:Refinement_Lipschitz_1}
	\sum_{n>0}\ee(n\zz)
	+\sum_{-K\leq n\leq K}(\tpi)^{-1}(\zz+n)^{-1}
	=
	\lambda_K(\zz)
\end{gather}
where $\lambda_K(\zz)$ is the limit as $L\to\infty$ of the contributions of the vertical edges of $\gamma$. Setting $x=\Re(\zz)$ we have
\begin{gather}
	\lambda_K(\zz)
	=
	\ii\int_{-\infty}^{\infty}
	(f(x+Q+\ii v)-f(x-Q+\ii v)){\rm d}v.
\end{gather}
After replacing $v$ with $v+\Im(\zz)$ and observing that $\ee(Q)=\ee(-Q)=-1$ since $K$ is an integer we obtain the expression
\begin{gather}
	\lambda_K(\zz)
	=\frac{1}{2\pi}\int_{-\infty}^{\infty}
	\left(
	\frac{1}{\zz+\ii v-Q}-\frac{1}{\zz+\ii v+Q}
	\right)
	\frac{1}{1+\ee(\ii v)}
	{\rm d}v
\end{gather}
and combining this with (\ref{eqn:radsum:conver:Refinement_Lipschitz_1}) recovers the required identity.
\end{proof}

Setting $Q=K+1/2$ and substituting $Qv$ for $v$ in (\ref{eqn:radsum:conver:Refinement_Lipschitz_error}) we find that
\begin{gather}
\begin{split}
	\lambda_K(\zz)
	&=
	\frac{1}{2\pi}\int_{-\infty}^{\infty}
	\left(
	\frac{1}{\zz/Q+\ii v-1}-\frac{1}{\zz/Q+\ii v+1}
	\right)
	\frac{1}{1+\ee(\ii Q v)}
	{\rm d}v\\
	&=
	\frac{1}{\pi}\int_{-\infty}^{\infty}
	\frac{1}{((\zz/Q+\ii v)^2-1)}
	\frac{1}{1+e^{-2\pi Q v}}
	{\rm d}v
\end{split}
\end{gather}
which demonstrates that $\lambda_K(\zz)$ tends to $-1/\pi$ times $\int_0^{\infty}(1+v^2)^{-1}{\rm d}v=\pi/2$ as $K\to\infty$. We thus obtain
\begin{gather}\label{eqn:radsum:conver:Lipschitz_spp=1}
     \sum_{n>0}
     \ee(n\zz)
     =
     -
     \frac{1}{2}
     +
     \lim_{K\to \infty}
     \sum_{
     -K<n<K}
     (-\tpi)^{-1}
     (\zz+n)^{-1}.
\end{gather}
The formulas (\ref{eqn:radsum:conver:Lipschitz_spp>1}) and
(\ref{eqn:radsum:conver:Lipschitz_spp=1}) are collectively known as
the {\em Lipschitz summation formula}.

\begin{thm}\label{thm:radsum:conver:Relate_RS_FR}
Let $\Gamma$ be a group commensurable with $G(\ZZ)$, and let
$\cp,\cq\in \cP_{\Gamma}$ be cusps of $\Gamma$. Let $\atp,m\in \ZZ$
such that $\atp\leq 0$ and $m<0$. Then the limit
defining the classical Rademacher sum
$\RS{\Gamma,\cp|\cq}{\atp}{m}(\zz)$ converges locally uniformly to a holomorphic
function on $\HH$, and we have
\begin{gather}\label{eqn:radsum:conver:Relate_RS_FR}
     \RS{\Gamma,\cp|\cq}{\atp}{m}(\zz)
     =
     \delta_{\Gamma,\cp|\cq}\ee(m\zz)
     +
     \frac{1}{2}
     \fc{\Gamma,\cp|\cq}{\atp}(m,0)
     +
     \FR{\Gamma,\cp|\cq}{\atp}{m}(\zz)_{\rm van}.
\end{gather}
\end{thm}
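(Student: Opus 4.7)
The plan is to follow Niebur's approach in \cite{Nie_ConstAutInts}, decomposing the defining sum for $\RS{\Gamma,\cp|\cq}{\atp}{m}(\zz)$ according to the double-coset structure and then applying the Lipschitz-type summation (Lemma \ref{lem:radsum:conver:Refinement_Lipschitz}) to each orbit. Writing $U = \lBZ\Sigma_\cp^{-1}\Sigma_\cq\rBZh$, I first split $U_{\leq K} = U_{\infty,\leq K}\sqcup U^\times_{\leq K}$. The parabolic part $U_\infty$ is non-empty exactly when $\cp = \cq$ (for compatibly chosen scaling cosets), in which case it consists of a single coset $\lBZ\chi_0\rBZh$ with $\chi_0 \in B(\QQ)$, on which the regularization factor is trivial; this produces the term $\delta_{\Gamma,\cp|\cq}\ee(-m\zz)$.

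For the non-parabolic part, I organize right cosets over each double coset $\lBZ\chi\rBZ \in \lBZ\Sigma_\cp^{-1}\Sigma_\cq\rBZ^\times$ via the parameterization $n \mapsto \lBZ\chi T^n\rBZh$, under which the Rademacher truncation becomes $|n|\leq K^2/c(\chi)+O(1)$. The identity $\RS{\lBZ\chi T^n\rBZh}{\atp}{m}(\zz) = \RS{\lBZ\chi\rBZh}{\atp}{m}(\zz+n)$, together with (\ref{eqn:conven:cosets_UsefulChiDotTau}) and the hypothesis $1-\atp\in\ZZp$, permits the expansion
\begin{gather*}
     \RS{\lBZ\chi\rBZh}{\atp}{m}(\zz+n) = \ee(-m\chi\cdot\infty)\sum_{k\in\NN}\frac{(\tpi m)^{k+1-2\atp}\rads\lBZ\chi\rBZ^{k+1-\atp}}{\Gamma(k+2-2\atp)\,(\zz+n-\chi^{-1}\cdot\infty)^{k+1}}.
\end{gather*}
Applying Lemma \ref{lem:radsum:conver:Refinement_Lipschitz} term-by-term to the inner sum in $n$ converts each $k$-term into a Fourier series in $\ee(\zz)$. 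For $k\geq 1$ the inner sum converges absolutely via (\ref{eqn:radsum:conver:Lipschitz_spp>1}), and comparison of prefactors using the identity $(\tpi)^{k+1-2\atp}(-\tpi)^{k+1} = (-1)^\atp(\fps)^{k+1-\atp}$ matches the series representation (\ref{eqn:radsum:coeff:Pow_Ser_Bf_1-s_pos}) of $\Bf_{\lBZ\chi\rBZ}^\atp(m,n)$. The $k=0$ term is only conditionally convergent: its principal Fourier part assembles with the $k\geq 1$ pieces into $\FR{\Gamma,\cp|\cq}{\atp}{m}(\zz)_{\rm van}$, while the extra $-1/2$ contribution from (\ref{eqn:radsum:conver:Lipschitz_spp=1}) combines with the $k=0$ prefactor to produce $\tfrac{1}{2}\fc{\Gamma,\cp|\cq}{\atp}(m,0)$.

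The principal obstacle is justifying the interchange of the truncation limit $K\to\infty$ on the rectangles $U_{\leq K}$ with the orbit-wise Lipschitz summation, since neither is absolutely convergent on its own. I would handle this by tracking the explicit error term $\lambda_{K^2/c(\chi)+O(1)}(\zz-\chi^{-1}\cdot\infty, 1)$ of (\ref{eqn:radsum:conver:Refinement_Lipschitz_error}) on each double coset, summing over $\lBZ\chi\rBZ$, and showing via Kloosterman-type estimates in the spirit of Lemmas 2.5--2.6 of \cite{Kno_ConstAutFrmsSuppSeries} that this aggregate error tends to zero as $K\to\infty$. Combined with the absolute convergence of $\FR{\Gamma,\cp|\cq}{\atp}{m}(\zz)_{\rm van}$ established in Proposition \ref{prop:radsum:constr:Conver_FR_van}, this yields the claimed identity (\ref{eqn:radsum:conver:Relate_RS_FR}) together with the holomorphy on $\HH$.
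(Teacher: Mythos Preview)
Your proposal is correct and follows essentially the same route as the paper: both organize the non-parabolic cosets into $B(\ZZ)$-orbits indexed by double cosets, expand the Rademacher component via the generalized exponential (\ref{eqn:radsum:constr:RS_chi_using_gen_exp}), and apply the Lipschitz summation of Lemma~\ref{lem:radsum:conver:Refinement_Lipschitz} term-by-term, with the $k=0$ case supplying the half-constant via (\ref{eqn:radsum:conver:Lipschitz_spp=1}). The paper simply outsources the convergence justification and the Fourier computation to Niebur's Lemmas~4.2--4.3 in \cite{Nie_ConstAutInts}, whereas you have spelled out the mechanism explicitly; your error-term tracking of $\lambda_K$ over double cosets is precisely what underlies Niebur's Lemma~4.3.
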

\begin{proof}
Set $U=\lBZ\Sigma_{\cp}^{-1}\Sigma_{\cq}\rBZh$ and $S=\lBZ\Sigma_{\cp}^{-1}\Sigma_{\cq}\rBZ^{\times}$. For $\atp<0$ the result follows from Lemmas 4.2 and 4.3 of \cite{Nie_ConstAutInts}. Indeed, using Lemma
\ref{lem:radsum:constr:Relate_RS_chi_m_to_IO_w_atp} we may identify
the Rademacher component function $\RS{\lBZ\chi\rBZh}{\atp}{m}(\zz)$
with the function $s_{\lBZ\chi\rBZh}(\zz)+t_{\lBZ\chi\rBZh}(\zz)$
defined on page 376 of \cite{Nie_ConstAutInts}. Then the expression
\begin{gather}\label{eqn:radsum:conver:RS_lim_dbl_sum_to_sum_lim_sum}
     \sum_{c\in\ZZp}
     \lim_{K\to \infty}
     \sum_{\substack{
          \lBZ\chi\rBZh\in U_{\leq K}\\
          c(\chi)=c
          }}
     \RS{\lBZ\chi\rBZh}{\atp}{m}(\zz)
\end{gather}
is, up to addition by a certain constant function, the definition of {Rademacher sum}
used by Niebur in loc. cit. The Lipschitz summation formula(s) (\ref{eqn:radsum:conver:Lipschitz_spp>1}) and (\ref{eqn:radsum:conver:Lipschitz_spp=1}) are the main tools used in equating (\ref{eqn:radsum:conver:RS_lim_dbl_sum_to_sum_lim_sum}) with the right hand side of (\ref{eqn:radsum:conver:Relate_RS_FR}).

In case $\atp=0$ we require to reinforce the argument of \cite{Nie_ConstAutInts} with a non-trivial estimate for the Selberg--Kloosterman zeta function $\Kl_S^{\spp}(m,n)$ at $\spp=1$; we will apply the result (\ref{eqn:radsum:coeff:GSest}) of Goldfeld--Sarnark to this purpose. Assume then that $\atp=0$ and observe that we have
\begin{gather}
\begin{split}
	&\RS{\Gamma,\cp|\cq}{\atp}{m}(\zz)
	=
	\delta_{\Gamma,\cp|\cq}
	\ee(m\zz)
	+
	\lim_{K\to \infty}
	\sum_{\lBZ\chi\rBZh\in U_{\leq K}^{\times}}
	\ee(m\lBZ\chi\rBZh\cdot\infty)
	\ee(m\lBZ\chi\rBZh\cdot\zz-m\lBZ\chi\rBZh\cdot\infty,1)\\
	&=
	\delta_{\Gamma,\cp|\cq}
	\ee(m\zz)
	+
	\lim_{K\to \infty}
	\sum_{\lBZ\chi\rBZh\in U_{\leq K}^{\times}}
	\ee(m\lBZ\chi\rBZh\cdot\infty)
	\sum_{k\geq 0}
	(-\tpi\rads\lBZ\chi\rBZ)^{k+1}m^{(k+1)}(\zz-\lBZh\chi^{-1}\rBZ\cdot\infty)^{-k-1}
\end{split}	
\end{gather}
according to (\ref{eqn:radsum:constr:RS_chi_using_gen_exp}), the definition (\ref{eqn:conven:fns:Genzd_Exp}) of the generalized exponential $\ee(\zz,\spp)$, and the identity (\ref{eqn:conven:cosets_UsefulChiDotTau}). We write $\RS{\Gamma,\cp|\cq}{\atp}{m}(\zz)=\ee(m\zz)+R_0+R_+$ where
\begin{gather}
	\label{eqn:radsum:conver:Defn_R0}
	R_0=	
	\lim_{K\to \infty}
	\sum_{\lBZ\chi\rBZh\in U_{\leq K}^{\times}}
	\ee(m\lBZ\chi\rBZh\cdot\infty)
	(-\tpi\rads\lBZ\chi\rBZ)m(\zz-\lBZh\chi^{-1}\rBZ\cdot\infty)^{-1},\\
	\label{eqn:radsum:conver:Defn_Rp}
	R_+=
	\sum_{\lBZ\chi\rBZh\in U^{\times}}
	\ee(m\lBZ\chi\rBZh\cdot\infty)
	\sum_{k> 0}
	(-\tpi\rads\lBZ\chi\rBZ)^{k+1}m^{(k+1)}(\zz-\lBZh\chi^{-1}\rBZ\cdot\infty)^{-k-1},
\end{gather}
and the second sum $R_+$ is absolutely convergent, locally uniformly for $\zz\in\HH$. After choosing a representative $\chi$ for each double coset $\lBZ\chi\rBZ$ in $S$ we may rewrite $R_+$ as
\begin{gather}
	R_+=\sum_{\lBZ\chi\rBZ\in S}
	\ee(m\lBZ\chi\rBZ\cdot\infty)
	\sum_{k>0}
	(-\tpi\rads\lBZ\chi\rBZ)^{k+1}m^{(k+1)}
	\sum_{n\in\ZZ}
	(\zz-\lBZh\chi^{-1}\rBZ\cdot\infty+n)^{-k-1},
\end{gather}
and applying the Lipschitz summation formula (\ref{eqn:radsum:conver:Lipschitz_spp>1}) to this we obtain
\begin{gather}\label{eqn:radsum:conver:Rconv_1-atp_pos_R+_01}
	\begin{split}
	R_+
	&=\sum_{n>0}
	\sum_{\lBZ\chi\rBZ\in S}\ee(m\lBZ\chi\rBZ\cdot\infty)\ee(-n\lBZ\chi^{-1}\rBZ\cdot\infty)
	\sum_{k>0}(-4\pi^2\rads\lBZ\chi\rBZ)^{k+1}m^{(k+1)}n^{(k)}\ee(n\zz)\\
	&=\sum_{n>0}
	\fc{S}{\atp}(m,n)_+\ee(n\zz)
	\end{split}
\end{gather}
where $\fc{S}{\atp}(m,n)_+$ satisfies $\fc{S}{\atp}(m,n)=(-4\pi^2)m\Kl_S^1(m,n)+\fc{S}{\atp}(m,n)_+$ according to (\ref{eqn:radsum:coeff:Pow_Ser_fcS_1-s_pos}). For $K$ a positive integer set 
\begin{gather}
	R_0(K)=(-\tpi)m\sum_{U_{\leq K}^{\times}}\ee(m\lBZ\chi\rBZh\cdot\infty)\rads\lBZ\chi\rBZ(\zz-\lBZh\chi^{-1}\rBZ\cdot\infty)^{-1}
\end{gather}
so that $R_0=\lim_{K\to \infty}R_0(K)$. Then choosing as above a representative $\chi$ for each $\lBZ\chi\rBZ\in S$ we have
\begin{gather}\label{eqn:radsum:conver:Rconv_1-atp_pos_R0_01}
	\begin{split}
	R_0(K)
	&=(-\tpi)m
	\sum_{\lBZ\chi\rBZ\in S_{\leq K}}\ee(m\lBZ\chi\rBZ\cdot\infty)\rads\lBZ\chi\rBZ
	\sum_{\substack{n\in\ZZ\\-K^2\leq d(\chi)+nc(\chi)\leq K^2}}(\zz-\lBZh\chi^{-1}\rBZ\cdot\infty+n)^{-1}\\
	&=(-\tpi)m
	\sum_{0<c<K}
	\sum_{\lBZ\chi\rBZ\in S_c}
	\ee(m\lBZ\chi\rBZ\cdot\infty)\rads\lBZ\chi\rBZ
	\left(\sum_{
			|n|\leq K^2/c
			}(\zz-\lBZh\chi^{-1}\rBZ\cdot\infty+n)^{-1}+O(c/K^2)\right)
	\end{split}
\end{gather}
where the term $O(c/K^2)$ accounts for the discrepency between the two summations over $n$. Applying the Lipschitz summation formula (\ref{eqn:radsum:conver:Refinement_Lipschitz_1}) to the second line of (\ref{eqn:radsum:conver:Rconv_1-atp_pos_R0_01}) we obtain
\begin{gather}\label{eqn:radsum:conver:Rconv_1-atp_pos_R0_02}
	R_0(K)=(-4\pi^2)m
	\sum_{0<c<K}
	\sum_{\lBZ\chi\rBZ\in S_{c}}\ee(m\lBZ\chi\rBZ\cdot\infty)\rads\lBZ\chi\rBZ
	\left(\frac{1}{2}+\sum_{n>0}\ee(-n\lBZ\chi^{-1}\rBZ\cdot\infty)\ee(n\zz)+O(c/K^2)\right)
\end{gather}
since the error term $\lambda_X(\zz)$ appearing in (\ref{eqn:radsum:conver:Refinement_Lipschitz_1}) satisfies $\lambda_X(\zz)+1/2=O(1/X)$. We conclude from this that 
\begin{gather}\label{eqn:radsum:conver:Rconv_1-atp_pos_R0_03}
	R_0
	=
	\lim_{K\to \infty}
	\sum_{\lBZ\chi\rBZ\in S_{\leq K}}\ee(m\lBZ\chi\rBZ\cdot\infty)	
	(-\fps)\rads\lBZ\chi\rBZ
	m
	\left(\frac{1}{2}+\sum_{n>0}\ee(-n\lBZ\chi^{-1}\rBZ\cdot\infty)\ee(n\zz)\right).
\end{gather}
We claim now that $R_0=\fc{S}{\atp}(m,0)/2+\sum_{n>0}\fc{S}{\atp}(m,n)_0\ee(n\zz)$ where $\fc{S}{\atp}(m,n)_0=\fc{S}{\atp}(m,n)-\fc{S}{\atp}(m,n)_+=(-4\pi^2)m\Kl_S^1(m,n)$. To see this observe that for fixed positive $K$ we have
\begin{gather}\label{eqn:radsum:conver:Rconv_1-atp_pos_R0_04}
	\sum_{\lBZ\chi\rBZ\in S_{\leq K}}\ee(m\lBZ\chi\rBZ\cdot\infty)	
	\rads\lBZ\chi\rBZ
	\sum_{n>0}\ee(-n\lBZ\chi^{-1}\rBZ\cdot\infty)\ee(n\zz)
	=
	\sum_{n>0}
	\Kl_{S_{\leq K}}^1(m,n)
	\ee(n\zz)
\end{gather}
since the left hand side converges absolutely. Now $\Kl_{S_{\leq K}}^1(m,n)=\Kl_S^1(m,n)+O(n)$ as $K\to \infty$ according to (\ref{eqn:radsum:coeff:Sigma_to_zeta}) since the $\Sigma(x)$ there is just $\Kl_{S_{\leq x}}^1(m,n)$ (cf. (\ref{eqn:radsum:coeff:defn_Sigma(x)})). Then (\ref{eqn:radsum:coeff:GSest}) implies $\Kl_{S_{\leq K}}^1(m,n)=O(n)$ uniformly in $K$ and $n$ and we conclude that the limit as $K\to\infty$ of the right hand side of (\ref{eqn:radsum:conver:Rconv_1-atp_pos_R0_04}) is $\sum_{n>0}\Kl_S^1(m,n)\ee(n\zz)$, and thus 
\begin{gather}\label{eqn:radsum:conver:Rconv_1-atp_pos_R0_05}
	R_0=\fc{S}{\atp}(m,0)/2+
	\sum_{n>0}
	\fc{S}{\atp}(m,n)_0
	\ee(n\zz)
\end{gather} 
as was claimed. Taking (\ref{eqn:radsum:conver:Rconv_1-atp_pos_R0_05}) together with (\ref{eqn:radsum:conver:Rconv_1-atp_pos_R+_01}) we arrive at the required identity (\ref{eqn:radsum:conver:Relate_RS_FR}) in case $\atp=0$. The proof is complete.
\end{proof}
The next result identifies the Fourier series expansion of the
conjugate Rademacher sum $\CS{\Gamma,\cp|\cq}{\atp}{m}(\zz)$. As
such it is a natural counterpart to Theorem
\ref{thm:radsum:conver:Relate_RS_FR}. The method of proof is
directly analogous to that of Theorem \ref{thm:radsum:conver:Relate_RS_FR}.
\begin{thm}\label{thm:radsum:conver:Relate_CS_FR}
Let $\Gamma$ be a group commensurable with $G(\ZZ)$, and let
$\cp,\cq\in \cP_{\Gamma}$ be cusps of $\Gamma$. Let $\atp,m\in \ZZ$
such that $\atp\leq 0$ and $m<0$. Then the limit
defining the conjugate Rademacher sum
$\CS{\Gamma,\cp|\cq}{\atp}{m}(\zz)$ converges locally uniformly to an anti-holomorphic
function on $\HH$, and we have
\begin{gather}
     \CS{\Gamma,\cp|\cq}{\atp}{m}(\zz)
     =
     \delta_{\Gamma,\cp|\cq}\ee(m\bar{\zz})
     -
     \frac{1}{2}
     \fc{\Gamma,\cp|\cq}{\atp}(m,0)
     -
     \FR{\Gamma,\cp|\cq}{\atp}{m}(\bar{\zz})_{\rm sing}.
\end{gather}
\end{thm}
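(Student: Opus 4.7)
The plan is to mirror the proof of Theorem \ref{thm:radsum:conver:Relate_RS_FR}, working throughout with the anti-holomorphic variable $\bar{\zz}$. Setting $U=\lBZ\Sigma_{\cp}^{-1}\Sigma_{\cq}\rBZh$, the identity $\CS{\lBZ\chi\rBZh}{\atp}{m}(\zz)=\RS{\lBZ\chi\rBZh}{\atp}{m}(\bar{\zz})$ reduces the task to controlling
\[
\sum_{\lBZ\chi\rBZh\in U_{\leq K}}\RS{\lBZ\chi\rBZh}{\atp}{m}(\bar{\zz})
\]
as $K\to\infty$. The estimates employed in Lemma 4.3 of \cite{Nie_ConstAutInts} depend only on $|\jac(\chi,\cdot)|$ and on the distance from the argument to the real axis and to $\lBZh\chi^{-1}\rBZ\cdot\infty$; for our variable $\bar{\zz}$ these are governed by the same quantities in $\Im\zz$ that appear in the classical proof. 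So those estimates apply essentially unchanged, yielding both the convergence of $\CS{\Gamma,\cp|\cq}{\atp}{m}(\zz)$ (to an anti-holomorphic function, since each summand is anti-holomorphic and the convergence is uniform on compacta) and the rewriting
\[
\CS{\Gamma,\cp|\cq}{\atp}{m}(\zz)
=\delta_{\Gamma,\cp|\cq}\ee(-m\bar{\zz})
+\sum_{c\in\ZZp}\lim_{K\to\infty}
\sum_{\substack{\lBZ\chi\rBZh\in U^\times_{\leq K}\\ c(\chi)=c}}
\RS{\lBZ\chi\rBZh}{\atp}{m}(\bar{\zz}),
\]
in which the $\delta$-term collects the (at most one nontrivial) contribution from $U_{\infty}$.

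Next, by means of the Kummer representation (\ref{eqn:radsum:constr:RS_chi_using_gen_exp}) (equivalently, Lemma \ref{lem:radsum:constr:Relate_RS_chi_m_to_IO_w_atp}), each inner sum in the $d$-aspect becomes a sum of terms of the shape $\sum_{n} n^{(\spp-1)}(\bar{\zz}+\text{const}+n)^{-\spp}$. The Lipschitz summation formula (Lemma \ref{lem:radsum:conver:Refinement_Lipschitz}) applied with $-\bar{\zz}$ in place of $\zz$, which lies in the upper half-plane, converts these into absolutely convergent sums involving $\ee(-n\bar{\zz})$ for $n\in\ZZp$. Matching the resulting coefficients against the definition of $\fc{\Gamma,\cp|\cq}{\atp}(m,-n)$ identifies the output as $-\FR{\Gamma,\cp|\cq}{\atp}{m}(\bar{\zz})_{\rm sing}$, whose absolute convergence (hence its identification with a genuine anti-holomorphic function) is already supplied by Proposition \ref{prop:radsum:constr:Conver_FR_sing}.

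The main subtlety, and the one step where the classical argument must be re-examined, is the constant-term sign. In the proof of Theorem \ref{thm:radsum:conver:Relate_RS_FR} the error term $\lambda_K(\zz,1)\to-\tfrac{1}{2}$ from (\ref{eqn:radsum:conver:Refinement_Lipschitz_error}) is subtracted from the Lipschitz identity to deliver the $+\tfrac{1}{2}\fc{\Gamma,\cp|\cq}{\atp}(m,0)$ of that theorem. Here the analogous error term $\lambda_K(-\bar{\zz},1)$ again tends to $-\tfrac{1}{2}$, but since the exponential side and the pole side of the Lipschitz identity appear in opposite roles relative to the classical case (we now solve for exponentials in $-\bar{\zz}$, not $\zz$), this contribution is effectively added rather than subtracted, producing the asserted $-\tfrac{1}{2}\fc{\Gamma,\cp|\cq}{\atp}(m,0)$. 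The hard part will be exactly this sign bookkeeping across the Lipschitz reorganization; no new analytic input beyond what already appears in the proof of Theorem \ref{thm:radsum:conver:Relate_RS_FR} is needed.
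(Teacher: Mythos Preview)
Your proposal is correct and takes essentially the same approach as the paper, which simply notes that the method of proof is identical to that of Theorem~\ref{thm:radsum:conver:Relate_RS_FR}. Your added detail about applying the Lipschitz formula at $-\bar{\zz}\in\HH$ and tracking the resulting sign flip in the constant term is exactly the bookkeeping the paper leaves implicit.
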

Theorems \ref{thm:radsum:conver:Relate_RS_FR} and
\ref{thm:radsum:conver:Relate_CS_FR} furnish the Fourier expansions
of the classical and conjugate Rademacher sums of non-positive
weight. Directly analogous methods can be used to determine
expressions for the Fourier coefficients of the Rademacher sums of
positive weight, which are, after all, just the holomorphic
Poincar\'e series (cf. \S\ref{sec:radsum:constr}). Indeed, there are
fewer technical difficulties in the case of positive weight, since
the sum appearing in the definition
(\ref{eqn:radsum:constr:Defn_PS_U_m}) of the Poincar\'e series
$\PS{\Gamma,\cp|\cq}{\atp}{m}$ is absolutely and locally uniformly convergent (at least
when $\atp>1$). For completeness we state an analogue of Theorem
\ref{thm:radsum:conver:Relate_RS_FR} for $\atp>0$. The result
is standard (cf. \cite{Iwa_SpecMetAutFrms}).
\begin{thm}\label{thm:radsum:conver:Relate_PS_FR_atp>0}
Let $\Gamma$ be a group commensurable with $G(\ZZ)$, and let
$\cp,\cq\in \cP_{\Gamma}$ be cusps of $\Gamma$. Let $\atp,m\in\ZZ$ such that $\atp>0$ and $m\geq 0$. Then for $\zz\in \HH$ we
have
\begin{gather}\label{eqn:radsum:conver:Relate_PS_FR_atp>0}
     \RS{\Gamma,\cp|\cq}{\atp}{m}(\zz)
     =
     \delta_{\Gamma,\cp|\cq}\ee(m\zz)
     +
     \FR{\Gamma,\cp|\cq}{\atp}{m}(\zz)_{\rm van}.
\end{gather}
\end{thm}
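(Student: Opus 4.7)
The plan is to reduce this result to a direct computation, exploiting the fact that the hypothesis $\atp\in\ZZp$ trivializes the Rademacher regularization factor. Specifically, the expression (\ref{eqn:radsum:constr:Defn_Rad_reg}) and the remark after (\ref{eqn:radsum:constr:Rreg_Part_Exp}) show that $\Rreg{\atp}(m,\lBZ\chi\rBZh,\zz)=1$ whenever $1-2\atp<0$, i.e. whenever $\atp\in\ZZp$. Thus each Rademacher component function coincides with the Poincar\'e component function and $\RS{\Gamma,\cp|\cq}{\atp}{m}(\zz)$ equals the holomorphic Poincar\'e series $\PS{\Gamma,\cp|\cq}{\atp}{m}(\zz)$. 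For $\atp>1$ the defining sum converges absolutely by the standard estimates on $\sum|c\zz+d|^{-2\atp}$, and for $\atp=1$ the conditional convergence argument of Lemma~4.3 of \cite{Nie_ConstAutInts} (already invoked in the proof of Theorem~\ref{thm:radsum:conver:Relate_RS_FR}) still goes through, since the regularization terms that were essential there are identically zero here.

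Next I would isolate the contribution of $U_\infty=\lBZ\Sigma_\cp^{-1}\Sigma_\cq\rBZh_\infty$. By the discussion at the end of \S\ref{sec:conven:scaling}, $\Sigma_\cp^{-1}\Sigma_\cq\cap B(\QQ)$ is empty unless $\cp=\cq$, in which case our convention of choosing the scaling coset at the infinite cusp to be of the form $\Gamma[\mu]$ ensures that this intersection contributes a single right coset, whose representative acts trivially on $\zz$. Its contribution is $\ee(-m\zz)$, accounting for the term $\delta_{\Gamma,\cp|\cq}\ee(-m\zz)$ of (\ref{eqn:radsum:conver:Relate_PS_FR_atp>0}).

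For the remaining sum over $U^\times$, I would organize the right cosets inside each double coset $\lBZ\chi\rBZ$ as $\lBZ\chi T^n\rBZh$ with $n\in\ZZ$. Using (\ref{eqn:conven:cosets_UsefulChiDotTau}), (\ref{eqn:conven:isomhyp:jac_rads_over_zchi}), and the identity $(\chi T^n)^{-1}\cdot\infty=\chi^{-1}\cdot\infty-n$, the contribution of a double coset $\lBZ\chi\rBZ$ becomes
\[
     \ee(-m\chi\cdot\infty)\,\rads\lBZ\chi\rBZ^{\atp}
     \sum_n
     \ee\!\left(\frac{m\,\rads\lBZ\chi\rBZ}{\zz-\chi^{-1}\cdot\infty+n}\right)
     (\zz-\chi^{-1}\cdot\infty+n)^{-2\atp}.
\]
Expanding the first exponential in its absolutely convergent power series reduces the $n$-sum to sums $\sum_n(\zz-\chi^{-1}\cdot\infty+n)^{-(2\atp+k)}$ with $k\in\NN$, to which the absolutely convergent Lipschitz formula (\ref{eqn:radsum:conver:Lipschitz_spp>1}) applies with $\spp=2\atp+k\geq 2>1$. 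Each such $n$-sum is thereby transformed into a Fourier series $\sum_{n'\in\ZZp}(n')^{(2\atp+k-1)}\ee(n'(\zz-\chi^{-1}\cdot\infty))$.

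The main technical obstacle is to justify the interchange of the three resulting summations (over $k\in\NN$, over $n'\in\ZZp$, and over double cosets $\lBZ\chi\rBZ$) so as to extract the coefficient of $\ee(n'\zz)$ for each $n'\in\ZZp$. For $\atp>1$ this is immediate from absolute convergence throughout; for $\atp=1$ it follows from the $c$-shell decomposition together with the tail estimates of Lemmas~2.5 and~2.6 of \cite{Kno_ConstAutFrmsSuppSeries}. After the interchange, comparing with the series representation (\ref{eqn:radsum:coeff:Pow_Ser_fcS_s_pos}) of Lemma~\ref{lem:radsum:coeff:Power_Series_fcS} identifies the coefficient of $\ee(n'\zz)$ as $\fc{\Gamma,\cp|\cq}{\atp}(m,n')$, yielding $\FR{\Gamma,\cp|\cq}{\atp}{m}(\zz)_{\rm van}$. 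Crucially, because $\spp=2\atp+k\geq 2>1$ throughout, one never needs the $\spp=1$ form (\ref{eqn:radsum:conver:Lipschitz_spp=1}) of Lipschitz summation, and so the $-1/2$ boundary correction that produced the constant $\tfrac12\fc{\Gamma,\cp|\cq}{\atp}(m,0)$ in Theorem~\ref{thm:radsum:conver:Relate_RS_FR} is absent here. This precisely accounts for the form of the right-hand side of (\ref{eqn:radsum:conver:Relate_PS_FR_atp>0}).
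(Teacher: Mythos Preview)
Your proposal is correct and follows exactly the approach the paper indicates: the text preceding the theorem says the positive-weight case uses ``directly analogous methods'' to Theorem~\ref{thm:radsum:conver:Relate_RS_FR} with ``fewer technical difficulties,'' and simply cites \cite{Iwa_SpecMetAutFrms} rather than writing out the details you have supplied. One small expository correction: the single-coset contribution from $U_\infty$ when $\cp=\cq$ follows directly from the defining property $(\Sigma_\cp^{-1}\Sigma_\cp)_\infty=B(\ZZ)$ of scaling cosets, not from the convention about the infinite cusp.
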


Recall from \S\ref{sec:radsum:constr} that we write
$\RS{\Gamma,\cp|\cq}{}{m}(\zz)$ for
$\RS{\Gamma,\cp|\cq}{0}{m}(\zz)$ and interpret
$\CS{\Gamma,\cp|\cq}{}{m}(\zz)$ similarly. To conclude this section we
consider the difference
$\RS{\Gamma,\cp|\cq}{}{m}(\zz)-\CS{\Gamma,\cp|\cq}{}{m}(\zz)$.
Combining Theorems \ref{thm:radsum:conver:Relate_RS_FR} and
\ref{thm:radsum:conver:Relate_CS_FR} we obtain the following
identification of the Fourier expansion of this function in terms of
the exponential function $\ee(\zz)$ and its conjugate
$\ee(-\bar{\zz})$.
\begin{thm}\label{thm:radsum:var:Relate_RS-CS_FR_atp=0}
Let $\Gamma$ be a group commensurable with $G(\ZZ)$, let $\cp,\cq\in
\cP_{\Gamma}$ be cusps of $\Gamma$, and let $m<0$. Then we
have
\begin{gather}\label{eqn:radsum:var:Relate_RS-CS_FR_atp=0}
     \begin{split}
     \RS{\Gamma,\cp|\cq}{}{m}(\zz)
     -
     \CS{\Gamma,\cp|\cq}{}{m}(\zz)
     &=
     \delta_{\Gamma,\cp|\cq}
     (\ee(m\zz)-\ee(m\bar{\zz}))\\
     &\qquad+
     \FR{\Gamma,\cp|\cq}{}{m}(\bar{\zz})_{\rm sing}
     +
     \fc{\Gamma,\cp|\cq}{}(m,0)
     +
     \FR{\Gamma,\cp|\cq}{}{m}(\zz)_{\rm van}
     \end{split}
\end{gather}
for $\zz\in \HH$.
\end{thm}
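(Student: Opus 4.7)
The plan is to obtain the claimed identity by straightforward subtraction of the two Fourier expansion formulas already established as Theorems \ref{thm:radsum:conver:Relate_RS_FR} and \ref{thm:radsum:conver:Relate_CS_FR}. Both of those theorems apply: the current hypotheses ($m \in \ZZp$, $\atp = 0$) satisfy $1-\atp \in \ZZp$ and $m \in \ZZp$ as required. The convergence of both $\RS{\Gamma,\cp|\cq}{}{m}(\zz)$ and $\CS{\Gamma,\cp|\cq}{}{m}(\zz)$ is guaranteed on $\HH$ by those earlier results, so there is no subtlety in forming the difference pointwise.

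Concretely, specializing Theorem \ref{thm:radsum:conver:Relate_RS_FR} to $\atp = 0$ gives
\[
     \RS{\Gamma,\cp|\cq}{}{m}(\zz)
     =
     \delta_{\Gamma,\cp|\cq}\ee(-m\zz)
     +
     \tfrac{1}{2}\fc{\Gamma,\cp|\cq}{}(m,0)
     +
     \FR{\Gamma,\cp|\cq}{}{m}(\zz)_{\rm van},
\]
while Theorem \ref{thm:radsum:conver:Relate_CS_FR} at $\atp = 0$ gives
\[
     \CS{\Gamma,\cp|\cq}{}{m}(\zz)
     =
     \delta_{\Gamma,\cp|\cq}\ee(-m\bar{\zz})
     -
     \tfrac{1}{2}\fc{\Gamma,\cp|\cq}{}(m,0)
     -
     \FR{\Gamma,\cp|\cq}{}{m}(\bar{\zz})_{\rm sing}.
\]
Subtracting the second identity from the first combines the two halves of $\fc{\Gamma,\cp|\cq}{}(m,0)$ into the full constant $\fc{\Gamma,\cp|\cq}{}(m,0)$, turns the two sign-opposite singular and vanishing Fourier parts into an additive sum, and pairs the Kronecker leading terms into $\delta_{\Gamma,\cp|\cq}(\ee(-m\zz)-\ee(-m\bar{\zz}))$; this yields exactly the asserted formula (\ref{eqn:radsum:var:Relate_RS-CS_FR_atp=0}).

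There is no real obstacle here, since the content of the statement is entirely contained in the two preceding theorems; the only thing to notice is a small bookkeeping point, namely that the two ``$\frac{1}{2}\fc{\Gamma,\cp|\cq}{}(m,0)$'' terms have opposite signs in the two expansions (this is the reflection of the asymmetry between the classical and conjugate Rademacher sums pointed out in \S\ref{sec:intro:conradsum}), so that their contributions to $\RS-\CS$ add rather than cancel and produce the full constant $\fc{\Gamma,\cp|\cq}{}(m,0)$ on the right hand side.
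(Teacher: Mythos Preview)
Your proposal is correct and matches the paper's approach exactly: the paper states the theorem as an immediate consequence of combining Theorems \ref{thm:radsum:conver:Relate_RS_FR} and \ref{thm:radsum:conver:Relate_CS_FR}, which is precisely the subtraction you carry out.
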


\subsection{Variance}\label{sec:radsum:var}

Recall from \S\ref{sec:radsum:constr} that we write
$\RS{\Gamma,\cp}{\atp}{m}(\zz)$ for
$\RS{\Gamma,\cp|\cq}{\atp}{m}(\zz)$ in case $\cq$ is the infinite
cusp $\Gamma\cdot\infty$. Suppose that $\Gamma$ is commensurable
with the modular group and has width one at infinity (cf.
\S\ref{sec:conven:scaling}). We will establish in this section that
the Rademacher sum $\RS{\Gamma,\cp}{\atp}{m}(\zz)$, once corrected
by the addition of a certain constant function, is an automorphic integral of weight
$2\atp$ for $\Gamma$ whenever $\atp=1$ and $m>0$ or $\atp>1$ and $m\geq 0$ or $\atp\leq 0$ and $m<0$. This is quite general since, according to the discussion of
\S\ref{sec:radsum:constr}, any Rademacher sum
$\RS{\Gamma,\cp|\cq}{\atp}{m}(\zz)$ can be expressed in the form
$\RS{\Gamma',\cp'}{\atp}{m}(\zz)$ for some group $\Gamma'$ having
width one at infinity (cf.
(\ref{eqn:radsum:constr:RS_twocusps_to_onecusp})).

Let us begin by considering the weight $2\atp$ action of $G(\QQ)$ on
the Rademacher component function $\RS{\lBZ\chi\rBZh}{\atp}{m}(\zz)$
of (\ref{eqn:radsum:constr:Defn_RS_chi}).
\begin{lem}\label{lem:radsum:var:Apply_sigma_RS_chi}
Let $\lBZ\chi\rBZh\in \lBZ G(\QQ)\rBZh^{\times}$ and $\sigma\in
G(\QQ)$, and let $\atp,m\in\ZZ$. In case $\atp>0$ and $m\geq 0$ we have
\begin{gather}\label{eqn:radsum:var:Apply_sigma_RS_chi_atp>0}
     \left(
     \left.
     \RS{\lBZ\chi\rBZh}{\atp}{m}
     \right\sop{\atp}
     \sigma
     \right)
     (\zz)
     =
     \RS{\lBZ\chi\sigma\rBZh}{\atp}{m}(\zz),
\end{gather}
and in case $\atp\leq 0$ and $m<0$ we have
\begin{gather}\label{eqn:radsum:var:Apply_sigma_RS_chi_atp<1}
     \left(
     \left.
     \RS{\lBZ\chi\rBZh}{\atp}{m}
     \right\sop{\atp}
     \sigma
     \right)
     (\zz)
     =
     \RS{\lBZ\chi\sigma\rBZh}{\atp}{m}(\zz)
     +
     m^{1-2\atp}
     \left(
     \JO{\infty\cdot\lBZ\sigma\rBZh}{\atp}
     \PS{\lBZ\chi\sigma\rBZh}{1-\atp}{-m}
     \right)
     (\zz).
\end{gather}
\end{lem}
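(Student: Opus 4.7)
The plan is to treat the two cases separately, since case 1 is essentially trivial while case 2 requires bringing in the transformation rule for $\JO{\ww}{\atp}$.

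For Case 1, where $\atp, 1-m \in \ZZp$, the Rademacher regularization factor satisfies $\Rreg{\atp}(m, \lBZ\chi\rBZh, \zz) = 1$ (as observed just after equation (\ref{eqn:radsum:constr:Rreg_Part_Exp})), so the Rademacher component function coincides with the Poincar\'e component function: $\RS{\lBZ\chi\rBZh}{\atp}{m} = \PS{\lBZ\chi\rBZh}{\atp}{m}$. The desired identity (\ref{eqn:radsum:var:Apply_sigma_RS_chi_atp>0}) reduces to the claim $\PS{\lBZ\chi\rBZh}{\atp}{m} \sop{\atp} \sigma = \PS{\lBZ\chi\sigma\rBZh}{\atp}{m}$, which is an immediate consequence of the chain rule cocycle $\jac(\chi\sigma, \zz) = \jac(\chi, \sigma\cdot\zz)\jac(\sigma, \zz)$ combined with $\chi\sigma \cdot \zz = \chi\cdot(\sigma\cdot\zz)$ and the definition (\ref{eqn:radsum:constr:Defn_PS_chi}).

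For Case 2, where $1-\atp, m \in \ZZp$, I would start from Lemma \ref{lem:radsum:constr:Relate_RS_chi_m_to_IO_w_atp}, which expresses
\begin{gather*}
     \RS{\lBZ\chi\rBZh}{\atp}{m}
     =
     \PS{\lBZ\chi\rBZh}{\atp}{m}
     +
     m^{1-2\atp}\,
     \JO{\infty\cdot\lBZ\chi\rBZh}{\atp}
     \PS{\lBZ\chi\rBZh}{1-\atp}{-m}.
\end{gather*}
Apply the weight $2\atp$ slash action by $\sigma$ to both sides. The first summand transforms exactly as in Case 1 (with weight $\atp$ in place of $\atp$), yielding $\PS{\lBZ\chi\sigma\rBZh}{\atp}{m}$. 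For the second summand the key intermediate step is an analogue of the transformation rule (\ref{eqn:conven:autfrm:Var_IO_w_atp}), namely
\begin{gather*}
     (\JO{\ww}{\atp}g)\sop{\atp}\sigma
     =
     \JO{\ww\cdot\sigma}{\atp}(g\sop{1-\atp}\sigma)
     -
     \JO{\infty\cdot\sigma}{\atp}(g\sop{1-\atp}\sigma),
\end{gather*}
which can be obtained by conjugating (\ref{eqn:conven:autfrm:Var_IO_w_atp}) and using that $\sigma\in G(\QQ)$ has real entries so that $\overline{\jac(\sigma,\bar{\zz})} = \jac(\sigma,\zz)$ and $\overline{\sigma\cdot\bar{\zz}} = \sigma\cdot\zz$. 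Applied with $\ww = \infty\cdot\lBZ\chi\rBZh$ and $g = \PS{\lBZ\chi\rBZh}{1-\atp}{-m}$, and using $(\infty\cdot\lBZ\chi\rBZh)\cdot\sigma = \infty\cdot\lBZ\chi\sigma\rBZh$ together with the weight $2(1-\atp)$ cocycle identity $\PS{\lBZ\chi\rBZh}{1-\atp}{-m}\sop{1-\atp}\sigma = \PS{\lBZ\chi\sigma\rBZh}{1-\atp}{-m}$ from Case 1, this yields
\begin{gather*}
     \bigl(\JO{\infty\cdot\lBZ\chi\rBZh}{\atp}\PS{\lBZ\chi\rBZh}{1-\atp}{-m}\bigr)\sop{\atp}\sigma
     =
     \JO{\infty\cdot\lBZ\chi\sigma\rBZh}{\atp}\PS{\lBZ\chi\sigma\rBZh}{1-\atp}{-m}
     -
     \JO{\infty\cdot\lBZ\sigma\rBZh}{\atp}\PS{\lBZ\chi\sigma\rBZh}{1-\atp}{-m}.
\end{gather*}

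Finally, combining these computations, the $\PS{\lBZ\chi\sigma\rBZh}{\atp}{m}$ term together with $m^{1-2\atp}$ times the $\JO{\infty\cdot\lBZ\chi\sigma\rBZh}{\atp}$ term reassembles into $\RS{\lBZ\chi\sigma\rBZh}{\atp}{m}$ by a second application of Lemma \ref{lem:radsum:constr:Relate_RS_chi_m_to_IO_w_atp}, now with $\chi\sigma$ in place of $\chi$. What remains is precisely the correction term $-m^{1-2\atp}\JO{\infty\cdot\lBZ\sigma\rBZh}{\atp}\PS{\lBZ\chi\sigma\rBZh}{1-\atp}{-m}$, proving (\ref{eqn:radsum:var:Apply_sigma_RS_chi_atp<1}). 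The main obstacle is carefully establishing the $\JO$ transformation rule; the cleanest approach is to redo the contour-integral calculation from the proof of Lemma \ref{lem:radsum:constr:Relate_RS_chi_m_to_IO_w_atp} with $\sigma\cdot\zz$ substituted for $\zz$, tracking how the substitution $\xi \mapsto \sigma\cdot\xi$ splits the contour based at $\infty\cdot\lBZ\chi\rBZh$ into the two contours based at $\infty\cdot\lBZ\chi\sigma\rBZh$ and $\infty\cdot\lBZ\sigma\rBZh$, with the relative sign producing the minus sign in the final formula.
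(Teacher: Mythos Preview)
Your proposal is correct and follows essentially the same route as the paper's own proof: both cases reduce to the cocycle identity $\PS{\lBZ\chi\rBZh}{\atp}{m}\sop{\atp}\sigma=\PS{\lBZ\chi\sigma\rBZh}{\atp}{m}$, and in the second case you invoke Lemma~\ref{lem:radsum:constr:Relate_RS_chi_m_to_IO_w_atp} together with the transformation law (\ref{eqn:conven:autfrm:Var_IO_w_atp}) (transported to $\JO{\ww}{\atp}$ via conjugation), then reassemble via a second application of Lemma~\ref{lem:radsum:constr:Relate_RS_chi_m_to_IO_w_atp}. The paper does exactly this, citing (\ref{eqn:conven:autfrm:Var_IO_w_atp}) directly without spelling out the passage from $\IO{\ww}{\atp}$ to $\JO{\ww}{\atp}$; your explicit remark that real entries of $\sigma$ give $\overline{\sigma\cdot\bar\zz}=\sigma\cdot\zz$ and $\overline{\jac(\sigma,\bar\zz)}=\jac(\sigma,\zz)$ is the right justification and is all that is needed---the additional contour-integral alternative you mention is unnecessary.
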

\begin{proof}
Note that we have
\begin{gather}\label{eqn:radsum:var:Apply_sigma_PS_chi}
     \left(
     \PS{\lBZ\chi\rBZh}{\atp}{m}\sop{\atp}\sigma
     \right)(\zz)
     =
     \PS{\lBZ\chi\sigma\rBZh}{\atp}{m}(\zz)
\end{gather}
for all $\atp,m\in\ZZ$, where
$\PS{\lBZ\chi\rBZh}{\atp}{m}(\zz)=\ee(m\lBZ\chi\rBZh\cdot\zz)\jac(\lBZ\chi\rBZh,\zz)^{\atp}$
(cf. (\ref{eqn:radsum:constr:Defn_PS_chi})), so the identity
(\ref{eqn:radsum:var:Apply_sigma_RS_chi_atp>0}) follows from
(\ref{eqn:radsum:var:Apply_sigma_PS_chi}) and the fact that
$\RS{\lBZ\chi\rBZh}{\atp}{m}(\zz)=\PS{\lBZ\chi\rBZh}{\atp}{m}(\zz)$
in case $\atp>0$, by the definition of
$\RS{\lBZ\chi\rBZh}{\atp}{m}(\zz)$ (cf. \S\ref{sec:radsum:constr}).
For (\ref{eqn:radsum:var:Apply_sigma_RS_chi_atp<1}) we suppose
$\atp\leq 0$ and $m<0$ and use Lemma
\ref{lem:radsum:constr:Relate_RS_chi_m_to_IO_w_atp} to write the
Rademacher component function $\RS{\lBZ\chi\rBZh}{\atp}{m}(\zz)$ in
terms of the functions $\PS{\lBZ\chi\rBZh}{\atp}{m}(\zz)$ and the
integral operator $\JO{\ww}{\atp}$. Then, employing
(\ref{eqn:radsum:var:Apply_sigma_PS_chi}) and the identity
(\ref{eqn:conven:autfrm:Var_IO_w_atp}), we compute
\begin{gather}\label{eqn:radsum:var:Apply_sigma_to_RS_atp_m}
     \begin{split}
     \left(
     \left.
     \RS{\lBZ\chi\rBZh}{\atp}{m}
     \right\sop{\atp}
     \sigma
     \right)
     (\zz)
     &=
     \left(
     \left.
     \PS{\lBZ\chi\rBZh}{\atp}{m}
     \right\sop{\atp}\sigma
     \right)
     (\zz)
     -
     m^{1-2\atp}
     \left(
     \left.\left(
     \JO{\infty\cdot\lBZ\chi\rBZh}{\atp}
     \PS{\lBZ\chi\rBZh}{1-\atp}{-m}
     \right)\right\sop{\atp}\sigma
     \right)
     (\zz)
     \\
     &=
     \PS{\lBZ\chi\sigma\rBZh}{\atp}{m}
     (\zz)
     -
     m^{1-2\atp}
     \left(
     \JO{\infty\cdot\lBZ\chi\sigma\rBZh}{\atp}
     \PS{\lBZ\chi\sigma\rBZh}{1-\atp}{-m}
     \right)
     (\zz)
     +
     m^{1-2\atp}
     \left(
     \JO{\infty\cdot\lBZ\sigma\rBZh}{\atp}
     \PS{\lBZ\chi\sigma\rBZh}{1-\atp}{-m}
     \right)
     (\zz)
     .
     \end{split}
\end{gather}
Applying Lemma \ref{lem:radsum:constr:Relate_RS_chi_m_to_IO_w_atp}
once again we recognize the first two terms on the right hand side
(of the second line) of
(\ref{eqn:radsum:var:Apply_sigma_to_RS_atp_m}) to be
$\RS{\lBZ\chi\sigma\rBZh}{\atp}{m}(\zz)$. We thus obtain the
required identity (\ref{eqn:radsum:var:Apply_sigma_RS_chi_atp<1}).
\end{proof}
As for technical tools, in addition to Lemma
\ref{lem:radsum:conver:Refinement_Lipschitz}, we also use the
following result which shows that there is some flexibility in the
convergence of the limit (\ref{eqn:radsum:constr:Defn_RS_U_m})
defining the classical Rademacher sum.
\begin{lem}\label{lem:radsum:var:RS_U_slash_sigma_vs_RS_Usigma}
Let $\Gamma$ be a group commensurable with $G(\ZZ)$, let $\cp\in
\cP_{\Gamma}$ be a cusp of $\Gamma$ and let $\sigma\in G(\QQ)$. Let
$\atp,m\in\ZZ$ such that either $\atp=1$ and $m> 0$ or $\atp>1$ and $m\geq 0$ or $\atp\leq 0$ and $m<0$. Then we have
\begin{gather}\label{eqn:radsum:var:U_K_sigma_vs_U_sigma_K}
     \lim_{K\to \infty}
     \sum_{\lBZ\chi\rBZh\in(U_{\leq K})\sigma}
     \RS{\lBZ\chi\rBZh}{\atp}{m}(\zz)
     =
     \lim_{K\to \infty}
     \sum_{\lBZ\chi\rBZh\in(U\sigma)_{\leq K}}
     \RS{\lBZ\chi\rBZh}{\atp}{m}(\zz)
\end{gather}
for $U=\lBZ\Sigma_{\cp}^{-1}\rBZh$ and $\zz\in\HH$.
\end{lem}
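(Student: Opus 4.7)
The plan is to compare the two summation ranges via a sandwich argument, and then to control the resulting boundary terms by the decay estimates underlying Theorem \ref{thm:radsum:conver:Relate_RS_FR}. First, the bijection $\lBZ\chi\rBZh \mapsto \lBZ\chi\sigma\rBZh$ identifies $U$ with $U\sigma$, and sends $U_{\leq K}$ onto the subset of $U\sigma$ consisting of those $\lBZ\chi\sigma\rBZh$ whose preimage $\lBZ\chi\rBZh$ satisfies $c(\chi)\leq K$ and $d(\chi)\leq K^2$. If $\sigma$ has a preferred representative with lower row $(\gamma,\delta)$, then $c(\chi\sigma)$ and $d(\chi\sigma)$ are obtained from $c(\chi)$ and $d(\chi)$ by a bounded $\QQ$-linear transformation, so there is a constant $M=M(\sigma)>0$ such that $\max(c(\chi\sigma),d(\chi\sigma)) \leq M\max(c(\chi),d(\chi))$ and conversely. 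This yields, for some functions $K_1(K)\leq K\leq K_2(K)$ tending to infinity with $K$, the inclusions
\begin{gather*}
(U\sigma)_{\leq K_1(K)} \subset (U_{\leq K})\sigma \subset (U\sigma)_{\leq K_2(K)}.
\end{gather*}
It thus suffices to show that the partial sum of $\RS{\lBZ\chi\rBZh}{\atp}{m}(\zz)$ over $(U\sigma)_{\leq K_2(K)}\setminus (U\sigma)_{\leq K_1(K)}$ tends to zero as $K\to\infty$.

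In the case $\atp, 1-m \in \ZZp$, the Rademacher component function coincides with the holomorphic Poincar\'e component $\PS{\lBZ\chi\rBZh}{\atp}{m}(\zz)$, whose sum over $U\sigma$ is absolutely convergent (cf. Proposition \ref{prop:radsum:constr:Conver_FR_van}). Consequently both limits in (\ref{eqn:radsum:var:U_K_sigma_vs_U_sigma_K}) agree with the unconditional sum $\sum_{\lBZ\chi\rBZh\in U\sigma} \RS{\lBZ\chi\rBZh}{\atp}{m}(\zz)$, and there is nothing further to prove.

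In the conditional convergence case $1-\atp, m \in \ZZp$ I would apply Lemma \ref{lem:radsum:constr:Relate_RS_chi_m_to_IO_w_atp} to split
\begin{gather*}
\RS{\lBZ\chi\rBZh}{\atp}{m}(\zz) = \PS{\lBZ\chi\rBZh}{\atp}{m}(\zz) + m^{1-2\atp}\bigl(\JO{\infty\cdot\lBZ\chi\rBZh}{\atp}\PS{\lBZ\chi\rBZh}{1-\atp}{-m}\bigr)(\zz).
\end{gather*}
The first summand decays like $\rads\lBZ\chi\rBZ^{\atp}$, so its sum over $U\sigma$ is absolutely convergent, and the truncation scheme is again immaterial for this piece. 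For the second summand, which is a polynomial of degree at most $-2\atp$ in $\zz$, one organizes the sum by level $c=c(\chi)$ and applies the refined Lipschitz formula of Lemma \ref{lem:radsum:conver:Refinement_Lipschitz} to the inner sum over $d$, obtaining precisely the decomposition that underlies the proof of Theorem \ref{thm:radsum:conver:Relate_RS_FR}. The difference between the two truncations then contributes an error of the form $\lambda_{K_2}(\zz,\spp) - \lambda_{K_1}(\zz,\spp)$ at each level $c$.

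The main obstacle is showing that this level-by-level error, once summed over $c$, vanishes as $K\to\infty$. This will follow from the uniform-in-$c$ decay of $\lambda_K(\zz,\spp)$ afforded by the integral representation (\ref{eqn:radsum:conver:Refinement_Lipschitz_error}) together with the absolute convergence over $c$ of the Fourier series $\FR{\Gamma,\cp|\cq}{\atp}{m}(\zz)_{\rm van}$ and $\FR{\Gamma,\cp|\cq}{\atp}{m}(\bar{\zz})_{\rm sing}$ established in Propositions \ref{prop:radsum:constr:Conver_FR_van} and \ref{prop:radsum:constr:Conver_FR_sing}. These estimates allow one to pass the limit $K\to\infty$ through the sum over $c$ and conclude that both sides of (\ref{eqn:radsum:var:U_K_sigma_vs_U_sigma_K}) equal the same value, namely $\RS{\Gamma,\cp}{\atp}{m}(\zz)-\tfrac{1}{2}\fc{\Gamma,\cp}{\atp}(m,0)-\delta_{\Gamma,\cp}\ee(-m\zz)$ plus the appropriate initial terms dictated by Theorem \ref{thm:radsum:conver:Relate_RS_FR}.
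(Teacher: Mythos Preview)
There is a genuine gap in your treatment of the case $1-\atp,m\in\ZZp$. You assert that the first summand $\PS{\lBZ\chi\rBZh}{\atp}{m}(\zz)$ in the decomposition of Lemma \ref{lem:radsum:constr:Relate_RS_chi_m_to_IO_w_atp} ``decays like $\rads\lBZ\chi\rBZ^{\atp}$'' and hence has an absolutely convergent sum over $U\sigma$. This is false: for $\atp\leq 0$ one has $|\jac(\lBZ\chi\rBZh,\zz)^{\atp}|\asymp |c\zz+d|^{-2\atp}$, which \emph{grows} as $|d|\to\infty$ with $c$ fixed, while $|\ee(-m\lBZ\chi\rBZh\cdot\zz)|\to 1$. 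The divergence of $\sum_{\lBZ\chi\rBZh}\PS{\lBZ\chi\rBZh}{\atp}{m}(\zz)$ for $1-\atp\in\ZZp$ is precisely the reason the regularization factor $\Rreg{\atp}$ was introduced in \S\ref{sec:radsum:constr}. The two pieces $\PS{\lBZ\chi\rBZh}{\atp}{m}$ and $\JO{\infty\cdot\lBZ\chi\rBZh}{\atp}\PS{\lBZ\chi\rBZh}{1-\atp}{-m}$ are individually divergent when summed over $\lBZ\chi\rBZh$; only their combination $\RS{\lBZ\chi\rBZh}{\atp}{m}$ converges, and only conditionally. Your split-and-conquer strategy therefore cannot work as written. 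A related oversight: your positive-weight paragraph claims absolute convergence for all $\atp\in\ZZp$ by appeal to Proposition \ref{prop:radsum:constr:Conver_FR_van} (which concerns Fourier coefficients, not the Poincar\'e series itself), but the weight-$2$ series ($\atp=1$) is not absolutely convergent either.

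A secondary issue is that your sandwich inclusions need more care: the truncation $U_{\leq K}$ imposes the asymmetric conditions $c(\chi)\leq K$ and $d(\chi)\leq K^2$, and right multiplication by $\sigma$ mixes $c$ and $d$ linearly, so a single constant $M$ controlling $\max(c,d)$ does not directly produce inclusions of the form $(U\sigma)_{\leq K_1}\subset(U_{\leq K})\sigma\subset(U\sigma)_{\leq K_2}$. The paper's proof defers the conditionally convergent cases to Lemma 4.3 of \cite{Nie_ConstAutInts}; there one works with the regularized component $\RS{\lBZ\chi\rBZh}{\atp}{m}$ directly and shows that both the rectangular and the $\sigma$-shifted rectangular partial sums converge to the common iterated limit $\sum_{c}\lim_{K}\sum_{d}$ of (\ref{eqn:radsum:conver:RS_lim_dbl_sum_to_sum_lim_sum}). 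The point is a careful reorganization of a conditionally convergent double sum, not a reduction to absolutely convergent pieces.
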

\begin{proof}
For $\atp>1$ and $m\geq 0$ both sums in
(\ref{eqn:radsum:var:U_K_sigma_vs_U_sigma_K}) are absolutely
convergent, and the identity certainly holds. In case $\atp=1$ and
$m> 0$, or $\atp\leq 0$ and $m<0$, the identity
(\ref{eqn:radsum:var:U_K_sigma_vs_U_sigma_K}) is established via the
argument of Lemma 4.3 in \cite{Nie_ConstAutInts}. Note that this argument fails in the case that $\atp=1$ and $m=0$, and indeed, the identity (\ref{eqn:radsum:var:U_K_sigma_vs_U_sigma_K}) does not hold in general when $\atp=1$ and $m=0$.
\end{proof}

\begin{thm}\label{thm:radsum:var:Var_RS_atp>0}
Let $\Gamma$ be a group commensurable with $G(\ZZ)$ that has width
one at infinity and let $\cp\in \cP_{\Gamma}$ be a cusp of $\Gamma$.
Let $\atp,m\in\ZZ$ such that $\atp>1$ and $m\geq 0$ or $\atp=1$ and $m>0$. Then
the function $\RS{\Gamma,\cp}{\atp}{m}(\zz)$ is a modular form of
weight $2\atp$ for $\Gamma$, and is a cusp form in case $m>0$. Further, for $\cq\in \cP_{\Gamma}$ another cusp of $\Gamma$ the Fourier expansion of the function $\RS{\Gamma,\cp|\cq}{\atp}{m}(\zz)$ is the expansion of $\RS{\Gamma,\cp}{\atp}{m}(\zz)$ at the cusp $\cq$.
\end{thm}
\begin{proof}
Let $\atp$ and $m$ be as in the statement of the proposition. Since
$\Gamma$ has width one at infinity we may take $\Gamma$ to be the
scaling coset for $\Gamma$ at the infinite cusp. Let $\Sigma_{\cp}$
be a scaling coset for $\Gamma$ at $\cp$ and set
$U=\lBZ\Sigma_{\cp}^{-1}\rBZh$. Then in the case that $\atp>1$ we
have
\begin{gather}\label{eqn:radsum:var:Expr_PS_U_atp>1}
     \RS{\Gamma,\cp}{\atp}{m}(\zz)
     =
     \sum_{\lBZ\chi\rBZh\in U}
     \RS{\lBZ\chi\rBZh}{\atp}{m}(\zz)
     =
     \sum_{\lBZ\chi\rBZh\in U}
     \ee(m\lBZ\chi\rBZh\cdot\zz)
     \jac(\lBZ\chi\rBZh,\zz)^{\atp},
\end{gather}
with the sum(s) in (\ref{eqn:radsum:var:Expr_PS_U_atp>1}) converging
absolutely for $\zz\in \HH$. Let $\sigma\in G(\QQ)$. Then we have
\begin{gather}\label{eqn:radsum:var:PS_Gamma_p_sigma}
     \left(
     \left.
     \RS{\Gamma,\cp}{\atp}{m}
     \right\sop{\atp}\sigma
     \right)(\zz)
     =
     \sum_{\lBZ\chi\rBZh\in U}
     \left(
     \left.
     \RS{\lBZ\chi\rBZh}{\atp}{m}
     \right\sop{\atp}\sigma
     \right)(\zz)
     =
     \sum_{\lBZ\chi\rBZh\in U}
     \RS{\lBZ\chi\sigma\rBZh}{\atp}{m}(\zz)
     =\sum_{\lBZ\chi\rBZh\in U\sigma}
     \RS{\lBZ\chi\rBZh}{\atp}{m}(\zz)
\end{gather}
by Lemma \ref{lem:radsum:var:Apply_sigma_RS_chi}, with all sums
absolutely convergent, locally uniformly in $\zz$. Taking $\sigma\in \Gamma$ we see that
$\RS{\Gamma,\cp}{\atp}{m}(\zz)$ is an unrestricted modular form of
weight $2\atp$ for $\Gamma$ since
$\Sigma_{\cp}^{-1}\sigma=\Sigma_{\cp}^{-1}$ for $\sigma\in \Gamma$.
Taking $\cq\in \cP_{\Gamma}$ and $\sigma\in \Sigma_{\cq}$ we see
that Theorem \ref{thm:radsum:conver:Relate_PS_FR_atp>0} gives the
Fourier expansion of $\RS{\Gamma,\cp}{\atp}{m}(\zz)$ at $\cq$, in
the sense of \S\ref{sec:conven:autfrm}, since
$\Sigma_{\cp}^{-1}\sigma=\Sigma_{\cp}^{-1}\Sigma_{\cq}$ for any
$\sigma\in\Sigma_{\cq}$. We conclude that the (a priori
unrestricted) modular form $\RS{\Gamma,\cp}{\atp}{m}(\zz)$ is
vanishing at every cusp $\cq\in\cP_{\Gamma}$, except possibly for
$\cq=\cp$, and vanishes even there just when $m>0$, and thus the
claim is verified for $\atp>1$.

Consider now the case that $\atp=1$. We have
\begin{gather}\label{eqn:radsum:var:Expr_PS_U_atp=1}
     \RS{\Gamma,\cp}{1}{m}(\zz)
     =
     \lim_{K\to \infty}
     \sum_{\lBZ\chi\rBZh\in U_{\leq K}}
     \RS{\lBZ\chi\rBZh}{1}{m}(\zz)
          =
     \lim_{K\to \infty}
     \sum_{\lBZ\chi\rBZh\in U_{\leq K}}
     \ee(m\lBZ\chi\rBZh\cdot\zz)
     \jac(\lBZ\chi\rBZh,\zz),
\end{gather}
and it is necessary to consider (some kind of) a limit since the convergence is not absolute in this case. Let $\sigma\in G(\QQ)$
and consider the weight $2$ action of $\sigma$ on
$\RS{\Gamma,\cp}{1}{m}(\zz)$.
\begin{gather}
     \left(
     \left.
     \RS{\Gamma,\cp}{1}{m}
     \right|_1\sigma
     \right)
     (\zz)
     =
     \lim_{K\to \infty}
     \sum_{\lBZ\chi\rBZh\in U_{\leq K}}
     \left(
     \left.
     \RS{\lBZ\chi\rBZh}{1}{m}
     \right|_1\sigma
     \right)
     (\zz)
     =
     \lim_{K\to \infty}
     \sum_{\lBZ\chi\rBZh\in U_{\leq K}}
     \RS{\lBZ\chi\sigma\rBZh}{1}{m}(\zz).
\end{gather}
Taking $\sigma\in \Gamma$ and applying Lemma
\ref{lem:radsum:var:RS_U_slash_sigma_vs_RS_Usigma} we see that
$\RS{\Gamma,\cp}{1}{m}(\zz)$ is a(n unrestricted) modular form for
$\Gamma$. Applying Lemma
\ref{lem:radsum:var:RS_U_slash_sigma_vs_RS_Usigma} in the case that
$\sigma\in\Sigma_{\cq}$ for some cusp $\cq\in \cP_{\Gamma}$, we see
that the Fourier expansion of the function
$\RS{\Gamma,\cp|\cq}{1}{m}(\zz)$ is the expansion of
$\RS{\Gamma,\cp}{1}{m}(\zz)$ at $\cq$ in the sense of
\S\ref{sec:conven:autfrm}, so Theorem
\ref{thm:radsum:conver:Relate_PS_FR_atp>0} applies also when
$\atp=1$, and thus $\RS{\Gamma,\cp}{1}{m}(\zz)$ is a modular form of
weight $2$ for $\Gamma$ and even a cusp form since $m>0$.
This completes the proof.
\end{proof}

We now present an analogue of Theorem
\ref{thm:radsum:var:Var_RS_atp>0} for the case that $\atp\leq 0$.
\begin{thm}\label{thm:radsum:var:Var_RS_atp<1}
Let $\Gamma$ be a group commensurable with $G(\ZZ)$ that has width
one at infinity and let $\cp\in \cP_{\Gamma}$ be a cusp of $\Gamma$.
Let $\atp,m\in \ZZ$ such that $\atp\leq 0$ and $m<0$.
Then the function
$\RS{\Gamma,\cp}{\atp}{m}(\zz)+\fc{\Gamma,\cp}{\atp}(m,0)/2$ is an
automorphic integral of weight $2\atp$ for $\Gamma$, and for $\cq\in \cP_{\Gamma}$ another cusp of $\Gamma$ the Fourier expansion of the function $\RS{\Gamma,\cp|\cq}{\atp}{m}(\zz)+\fc{\Gamma,\cp|\cq}{\atp}(m,0)/2$ is the expansion of $\RS{\Gamma,\cp}{\atp}{m}(\zz)+\fc{\Gamma,\cp}{\atp}(m,0)/2$ at the cusp $\cq$.
\end{thm}
\begin{proof}
Let $\atp$ and $m$ be as in the statement of the theorem. Since
$\Gamma$ has width one at infinity we take $\Gamma$ to be the
scaling coset for $\Gamma$ at the infinite cusp. Let $\Sigma_{\cp}$
be a scaling coset for $\Gamma$ at $\cp$ and set
$U=\lBZ\Sigma_{\cp}^{-1}\rBZh$. Let $\sigma\in G(\QQ)$. Using Lemma
\ref{lem:radsum:var:Apply_sigma_RS_chi} we compute
\begin{gather}
     \begin{split}
     \left(
     \left.
     \RS{U}{\atp}{m}
     \right\sop{\atp}
     \sigma
     \right)
     (\zz)
     &=
     \lim_{K\to \infty}
     \sum_{\lBZ\chi\rBZh\in U_{\leq K}}
     \RS{\lBZ\chi\sigma\rBZh}{\atp}{m}
     (\zz)
     +
     m^{1-2\atp}
     \left(
     \JO{\infty\cdot\lBZ\sigma\rBZh}{\atp}
     \PS{\lBZ\chi\sigma\rBZh}{1-\atp}{-m}
     \right)(\zz)
     \\
     &=
     \lim_{K\to \infty}
     \sum_{\lBZ\chi\rBZh\in (U_{\leq K})\sigma}
     \RS{\lBZ\chi\rBZh}{\atp}{m}(\zz)
     +
     m^{1-2\atp}
     \left(
     \JO{\infty\cdot\lBZ\sigma\rBZh}{\atp}
     \PS{\lBZ\chi\rBZh}{1-\atp}{-m}
     \right)(\zz)
     \end{split}
\end{gather}
for the weight $2\atp$ action of $\sigma$ on
$\RS{U}{\atp}{m}(\zz)=\RS{\Gamma,\cp}{\atp}{m}(\zz)$. Now Lemma
\ref{lem:radsum:var:RS_U_slash_sigma_vs_RS_Usigma} shows that
\begin{gather}\label{eqn:radsum:var:RS_U_slash_sigma}
     \left(
     \left.
     \RS{U}{\atp}{m}
     \right\sop{\atp}\sigma
     \right)
     (\zz)
     =
     \RS{U\sigma}{\atp}{m}
     (\zz)
     +
     m^{1-2\atp}
     \lim_{K\to \infty}
     \sum_{\lBZ\chi\rBZh\in (U_{\leq K})\sigma}
     \left(
     \JO{\infty\cdot\lBZ\sigma\rBZh}{\atp}
     \PS{\lBZ\chi\rBZh}{1-\atp}{-m}
     \right)(\zz),
\end{gather}
suggesting that $\RS{U}{\atp}{m}(\zz)$ might be an automorphic
integral of weight $2\atp$ for $\Gamma$ with associated cusp form
$-m^{1-2\atp}\PS{U}{1-\atp}{-m}(\zz)$ (cf.
\S\ref{sec:conven:autfrm}). This is not accurate, however, since the
limit in (\ref{eqn:radsum:var:RS_U_slash_sigma}) does not in general
commute with the integral operator
$\JO{\infty\cdot\lBZ\sigma\rBZh}{\atp}$. Rather, by the argument of
Lemma 4.4. in \cite{Nie_ConstAutInts}, we have the identity
\begin{gather}\label{eqn:radsum:var:Interchg_lim_JO_w}
     \begin{split}
     &
     \lim_{K\to \infty}
     \left(
     \JO{\infty\cdot\lBZ\sigma\rBZh}{\atp}
     \left(
     \sum_{\lBZ\chi\rBZh\in (U_{\leq K})\sigma}
     \PS{\lBZ\chi\rBZh}{1-\atp}{-m}
     \right)
     \right)(\zz)
     -
     \left(
     \JO{\infty\cdot\lBZ\sigma\rBZh}{\atp}
     \left(
     \lim_{K\to \infty}
     \sum_{\lBZ\chi\rBZh\in (U_{\leq K})\sigma}
     \PS{\lBZ\chi\rBZh}{1-\atp}{-m}
     \right)
     \right)(\zz)\\
     &\qquad=
     -
     \frac{m^{2\atp-1}}{2}
     \left(
     \fc{S}{\atp}(m,0)\jac(\sigma,\zz)^{\atp}
     -
     \fc{S'}{\atp}(m,0)
     \right)\\
     \end{split}
\end{gather}
where $S=\{\lBZ\chi\rBZ\mid \lBZ\chi\rBZh\in U^{\times}\}$ and
$S'=\{\lBZ\chi\rBZ\mid \lBZ\chi\rBZh\in (U\sigma)^{\times}\}$. Lemma \ref{lem:radsum:conver:Refinement_Lipschitz} plays a crucial r\^ole
in the verification of (\ref{eqn:radsum:var:Interchg_lim_JO_w}).
Applying (\ref{eqn:radsum:var:Interchg_lim_JO_w}) to
(\ref{eqn:radsum:var:RS_U_slash_sigma}) we find that
\begin{gather}\label{eqn:radsum:var:RS+const/2_slash_sigma}
     \left.
     \left(
     \RS{U}{\atp}{m}
     +\frac{1}{2}
     \fc{S}{\atp}(m,0)
     \right)
     \right\sop{\atp}\sigma
     =
     \RS{U'}{\atp}{m}
     +\frac{1}{2}
     \fc{S'}{\atp}(m,0)
     +
     m^{1-2\atp}
     \JO{\infty\cdot\lBZ\sigma\rBZh}{\atp}
     \PS{U'}{1-\atp}{-m}
\end{gather}
for $U'=U\sigma$. Taking $\sigma\in \Gamma$ and applying the
identity (\ref{eqn:modradsum:conver:Relate_QS_RS}) we obtain
\begin{gather}
     \left.
     \left(
     \RS{\Gamma,\cp}{\atp}{m}
     +\frac{1}{2}
     \fc{\Gamma,\cp}{\atp}(m,0)
     \right)
     \right\sop{\atp}\sigma
     =
     \RS{\Gamma,\cp}{\atp}{m}
     +\frac{1}{2}
     \fc{\Gamma,\cp}{\atp}(m,0)
     +
     m^{1-2\atp}
     \JO{\infty\cdot\lBZ\sigma\rBZh}{\atp}
     \PS{\Gamma,\cp}{1-\atp}{-m}
\end{gather}
from (\ref{eqn:radsum:var:RS+const/2_slash_sigma}), demonstrating
that the function
$\RS{\Gamma,\cp}{\atp}{m}(\zz)+\fc{\Gamma,\cp}{\atp}(m,0)/2$ is an
unrestricted automorphic integral of weight $2\atp$ for $\Gamma$
with associated cusp form
$-m^{1-2\atp}\PS{\Gamma,\cp}{1-\atp}{-m}(\zz)$. Letting $\cq\in
\cP_{\Gamma}$ and applying
(\ref{eqn:radsum:var:RS+const/2_slash_sigma}) with $\sigma\in
\Sigma_{\cq}$ we obtain
\begin{gather}
     \begin{split}
     \RS{\Gamma,\cp|\cq}{\atp}{m}
     +
     \frac{1}{2}
     \fc{\Gamma,\cp|\cq}{\atp}(m,0)
     &
     =
     \left.
     \left(
     \RS{\Gamma,\cp}{\atp}{m}
     +
     \frac{1}{2}
     \fc{\Gamma,\cp}{\atp}(m,0)
     \right)
     \right\sop{\atp}\sigma
     -
     m^{1-2\atp}
     \JO{\infty\cdot\lBZ\sigma\rBZh}{\atp}
     \PS{\Gamma,\cp|\cq}{1-\atp}{-m}
     \\
     &
     =
     \left.
     \left(
     \RS{\Gamma,\cp}{\atp}{m}
     +
     \frac{1}{2}
     \fc{\Gamma,\cp}{\atp}(m,0)
     +
     m^{1-2\atp}
     \JO{\sigma\cdot\infty}{\atp}
     \PS{\Gamma,\cp}{1-\atp}{-m}
     \right)
     \right\sop{\atp}
     \sigma,
     \end{split}
\end{gather}
demonstrating that the Fourier expansion of the function
$\RS{\Gamma,\cp|\cq}{\atp}{m}(\zz)+\fc{\Gamma,\cp|\cq}{\atp}(m,0)/2$
is the expansion at $\cq$ of the automorphic integral
$\RS{\Gamma,\cp}{\atp}{m}(\zz)+\fc{\Gamma,\cp}{\atp}(m,0)/2$ in the
sense of \S\ref{sec:conven:autfrm}. Now Theorem
\ref{thm:radsum:conver:Relate_RS_FR} confirms that
$\RS{\Gamma,\cp}{\atp}{m}(\zz)+\fc{\Gamma,\cp}{\atp}(m,0)/2$
is meromorphic at the cusps of $\Gamma$, and is thus an automorphic
integral of weight $2\atp$ for $\Gamma$. This completes the proof of
the proposition.
\end{proof}

We now specialize to the case that $\atp=0$. Our final goal in this
section is to establish the $\Gamma$-invariance of the harmonic
function $\RS{\Gamma,\cp}{}{m}(\zz)-\CS{\Gamma,\cp}{}{m}(\zz)$ for
$m<0$ in case $\Gamma$ has width one at infinity. For this we
utilize the spectral theory of the {hyperbolic Laplacian}
$\Delta=(\zz-\bar{\zz})^2\partial_{\zz}\partial_{\bar{\zz}}$.

Consider the {\em Whittaker functions} $\ev(\zz,\spp)$ and
$\ew(\zz,\spp)$, defined for $\zz,\spp\in\CC$ with $\zz\notin\RR$ by setting
\begin{gather}
     \ev(\zz,\spp)\label{eqn:radsum:var:Defn_ev}
     =
     \ee(\zz)
     {\Gamma(\spp)}
     \Phi(\spp,2\spp,\zz-\bar{\zz})
     (\tpi
     (\bar{\zz}-{\zz})
     )^{\spp},\\
     \ew(\zz,\spp)\label{eqn:radsum:var:Defn_ew}
     =\frac{\sin(\pi\spp)}{\sin(2\pi\spp)}
     \left(
     \ev(\zz,\spp)-\ev(\zz,1-\spp)
     \right),
\end{gather}
in case $\Im(\zz)>0$ where the function $\Phi$ is defined in (\ref{eqn:conven:fns:Defn_Phi}). The values $\ev(\zz,\spp)$ for $\Im(\zz)<0$ are obtained by imposing the symmetry $\ev(\zz,\spp)=\ev(\bar{\zz},\spp)$ and similarly for $\ew(\zz,\spp)$. Then $\ev(\zz,\spp)$ and
$\ew(\zz,\spp)$ are eigenfunctions for $\Delta$ with eigenvalue
$\spp(1-\spp)$, and the identity
$\ev(\zz,1)=\ee(\bar{\zz})-\ee(\zz)$ hints at a connection with the
right hand side of (\ref{eqn:radsum:var:Relate_RS-CS_FR_atp=0}).
Define the {\em free space Green's function} $G(\ww,\zz,\spp)$, for $\ww,\zz\in\HH$ with $\ww\neq\zz$ and $\spp\in\CC$ such that $\Re(\spp)>1$, by
setting
\begin{gather}\label{eqn:radsum:var:Defn_FreeSpaceGreensFn}
     G(\ww,\zz,\spp)
     =
     \frac{\Gamma(\spp)^2}{\Gamma(2\spp)}
     \,_2F_1\left(\spp,\spp;2\spp;{h}\right)
     {h^{\spp}}
     =
     \sum_{k\geq 0}
     \frac{\Gamma(k+\spp)^2}{\Gamma(k+2\spp)\Gamma(k+1)}h^{k+\spp},
\end{gather}
where $_2F_1(a,b;c;x)$ denotes the {Gauss hypergeometric function} and $h=h(\ww,\zz)$ is given as follows and depends only on the
hyperbolic distance between $\ww$ and $\zz$.
\begin{gather}
     h(\ww,\zz)
     =\frac{(\ww-\bar{\ww})(\zz-\bar{\zz})}
     {(\ww-\zz)(\bar{\ww}-\bar{\zz})}
     =\frac{4\Im(\ww)\Im(\zz)}{|\ww-\zz|^2}
\end{gather}
Then the function $\zz\mapsto G(\ww,\zz,\spp)$ satisfies $\Delta
G(\ww,\zz,\spp)=\spp(1-\spp)G(\ww,\zz,\spp)$ and
the integral operator with kernel $G(\ww,\zz,\spp)$ furnishes a
right-inverse to the differential operator $f\mapsto(\Delta
-\lambda)f$ when $\lambda=\spp(1-\spp)$.

Let $\Gamma$ be a group commensurable with $G(\ZZ)$, let $\cp$ and
$\cq$ be cusps of $\Gamma$, and let $\Sigma_{\cp}$ and
$\Sigma_{\cq}$ be scaling cosets for $\Gamma$ at $\cp$ and $\cq$,
respectively. Define $G_{\Gamma,\cp|\cq}(\ww,\zz,\spp)$ for $\ww,\zz\in\HH$ such that $\Gamma\cdot\ww\neq \Gamma\cdot\zz$ by setting
\begin{gather}\label{eqn:radsum:var:Defn_G_Gamma}
     G_{\Gamma,\cp|\cq}(\ww,\zz,\spp)
     =\sum_{\chi\in\Sigma_{\cp}^{-1}\Sigma_{\cq}}
     G(\ww,\chi\cdot\zz,\spp).
\end{gather}
The series (\ref{eqn:radsum:var:Defn_G_Gamma}) converges absolutely and locally uniformly
when $\Re(\spp)>1$ and the
functions $\ww\mapsto G_{\Gamma,\cp|\cq}(\ww,\zz,\spp)$ and
$\zz\mapsto G_{\Gamma,\cp|\cq}(\ww,\zz,\spp)$ are invariant for the
actions of $\Gamma^{\cp}$ and $\Gamma^{\cq}$, respectively.
According to \cite{Hej_TrFormPSL2R} (see also
\cite{Iwa_SpecMetAutFrms}) we have the Fourier--Whittaker series
expansion
\begin{gather}\label{eqn:radsum:var:SerExp_G_Gamma}
     \begin{split}
     &G_{\Gamma,\cp|\cq}(\ww,\zz,\spp)
     =\\
     &
     \sum_{m\in\ZZp}
     \GS{\Gamma,\cp|\cq}{}{m}(\zz,\spp)
     \ew(-m\bar{\ww},\spp)
     +
     \GS{\Gamma,\cp|\cq}{}{0}(\zz,\spp)
     \Im(\ww)^{1-\spp}
     +
     \sum_{m\in\ZZp}
     \GS{\Gamma,\cp|\cq}{}{-m}(\zz,\spp)
     \ew(m{\ww},\spp),
     \end{split}
\end{gather}
converging absolutely and locally uniformly when $\Re(\spp)>1$ so long as
$\Im(\ww)>\Im(\zz)$ and $\Im(\zz)\Im(\ww)>\rads_{\Gamma,\cp|\cq}$,
for a certain constant $\rads_{\Gamma,\cp|\cq}$ depending only on
$\Gamma$ and $\cp$ and $\cq$, where the coefficient functions
$\GS{\Gamma,\cp|\cq}{}{m}(\zz,\spp)$ are $\Gamma^{\cq}$-invariant,
and themselves admit the following series expansions of Poincar\'e
type, converging absolutely for $\Re(\spp)>1$ and locally uniformly for $\Im(\zz)>0$.
\begin{gather}
     \GS{\Gamma,\cp|\cq}{}{m}(\zz,\spp)
     =\label{eqn:radsum:var:Defn_GS_-m}
     \frac{1}{m}
     \sum_{\lBZ\chi\rBZh\in\lBZ\Sigma_{\cp}^{-1}\Sigma_{\cq}\rBZh}
     \ev(m\lBZ\chi\rBZh\cdot\zz,\spp)\\
     \GS{\Gamma,\cp|\cq}{}{0}(\zz,\spp)
     =\label{eqn:radsum:var:Defn_GS_0}
     \frac{4\pi}{2\spp-1}
     \sum_{\lBZ\chi\rBZh\in\lBZ\Sigma_{\cp}^{-1}\Sigma_{\cq}\rBZh}
     \Im(\lBZ\chi\rBZh\cdot\zz)^{\spp}\\
     \GS{\Gamma,\cp|\cq}{}{-m}(\zz,\spp)
     =\label{eqn:radsum:var:Defn_GS_m}
     \frac{1}{m}
     \sum_{\lBZ\chi\rBZh\in\lBZ\Sigma_{\cp}^{-1}\Sigma_{\cq}\rBZh}
     \ev(-m\lBZ\chi\rBZh\cdot\bar{\zz},\spp)
\end{gather}
Let $m>0$. Then we have the following analogue of
(\ref{eqn:radsum:var:SerExp_G_Gamma}) for the Green's function
coefficient $\GS{\Gamma,\cp|\cq}{}{-m}(\zz,\spp)$ which is again
reminiscent of the right hand side of
(\ref{eqn:radsum:var:Relate_RS-CS_FR_atp=0}).
\begin{gather}\label{eqn:radsum:var:SerExp_GS_m}
     \begin{split}
     &
     \GS{\Gamma,\cp|\cq}{}{-m}(\zz,\spp)
     =
     \delta_{\Gamma,\cp|\cq}
     \frac{1}{m}
     \ev(-m\bar{\zz},\spp)+\\
     &
     \sum_{n>0}
     \gc{\Gamma,\cp|\cq}{}(-m,-n,\spp)
     \ew(-n\bar{\zz},\spp)
     +
     \gc{\Gamma,\cp|\cq}{}(-m,0,\spp)
     \Im(\zz)^{1-\spp}
     +
     \sum_{n>0}
     \gc{\Gamma,\cp|\cq}{}(-m,n,\spp)
     \ew(n{\zz},\spp)
     \end{split}
\end{gather}
The coefficients in
(\ref{eqn:radsum:var:SerExp_GS_m}) may be defined via the following
formulas, in which we assume $m,n>0$.
\begin{gather}
     \begin{split}
     &
     \gc{\Gamma,\cp|\cq}{}(-m,-n,\spp)=\\
     &
     \frac{1}{m^{1-\spp}n^{\spp}}
     \lim_{K\to \infty}
     \sum_{\lBZ\chi\rBZ\in S_{\leq K}}
          \sum_{k\geq 0}
          \ee(-m\lBZ\chi\rBZ\cdot\infty)
          \ee(-n\lBZ\chi^{-1}\rBZ\cdot\infty)
          (\fps\rads\lBZ\chi\rBZ)^{k+\spp}
          m^{(k)}n^{(k+2\spp-1)}
     \end{split}\\
     \gc{\Gamma,\cp|\cq}{}(-m,0,\spp)=
     \frac{1}{m^{1-\spp}}
     \frac{4\pi}{2\spp-1}
     \lim_{K\to \infty}
     \sum_{\lBZ\chi\rBZ\in S_{\leq K}}
          \sum_{k\geq 0}
          \ee(-m\lBZ\chi\rBZ\cdot\infty)
          \rads\lBZ\chi\rBZ^{\spp}
     \frac{\pi^{\spp}}{\Gamma(\spp)}
     \\
     \begin{split}
     &
     \gc{\Gamma,\cp|\cq}{}(-m,n,\spp)=
     \\
     &
     \frac{1}{m^{1-\spp}n^{\spp}}
     \lim_{K\to \infty}
     \sum_{\lBZ\chi\rBZ\in S_{\leq K}}
          \sum_{k\geq 0}
          \ee(-m\lBZ\chi\rBZ\cdot\infty)
          \ee(n\lBZ\chi^{-1}\rBZ\cdot\infty)
          (-1)^k
          (\fps\rads\lBZ\chi\rBZ)^{k+\spp}
          m^{(k)}n^{(k+2\spp-1)}
     \end{split}
\end{gather}
Comparing with Lemma (\ref{lem:radsum:coeff:Power_Series_fcS}) we
obtain a direct relationship between the coefficient functions
$\gc{\Gamma,\cp|\cq}{}(m,n,\spp)$ arising from the Green's function,
and the coefficient functions $\fc{\Gamma,\cp|\cq}{\atp}(m,n)$ of
\S\ref{sec:radsum:coeff} which arise from the Rademacher sums.
\begin{prop}\label{prop:radsum:var:Relate_fcs_gcs}
Let $\Gamma$ be a group commensurable with $G(\ZZ)$, let
$\cp,\cq\in\cP_{\Gamma}$, and let $\atp>0$ and $m<0$. Then we have the
following identities for $n>0$.
\begin{gather}
     \fc{\Gamma,\cp|\cq}{1-\atp}(m,-n)
     =
     -m^{\atp}n^{1-\atp}
     \gc{\Gamma,\cp|\cq}{}(m,-n,\atp)\\
     \fc{\Gamma,\cp|\cq}{1-\atp}(m,0)
     =
     -m^{\atp}
     \gc{\Gamma,\cp|\cq}{}(m,0,\atp)\\
     \fc{\Gamma,\cp|\cq}{1-\atp}(m,n)
     =
     -m^{\atp}n^{1-\atp}
     \gc{\Gamma,\cp|\cq}{}(m,n,\atp)
\end{gather}
\end{prop}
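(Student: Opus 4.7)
The plan is to establish the proposition by direct term-by-term comparison of the explicit power series representations on both sides, using Lemma \ref{lem:radsum:coeff:Power_Series_fcS} for the left-hand side, the explicit formulas for $\gc{\Gamma,\cp|\cq}{}(m,\cdot,\atp)$ given in the text for the right-hand side, and a simple Gamma-function identity to reconcile them.

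First I would expand $\fc{\Gamma,\cp|\cq}{1-\atp}(m,n)$ using the second formula of Lemma \ref{lem:radsum:coeff:Power_Series_fcS}. Since the hypothesis $\atp\in\ZZp$ of the proposition gives $1-(1-\atp)=\atp\in\ZZp$, that formula applies, and substituting yields
\begin{gather*}
     \fc{S}{1-\atp}(m,n)
     =
     (-1)^{1-\atp}\sum_{\lBZ\chi\rBZ\in S}\sum_{k\in\NN}
     \ee(-m\lBZ\chi\rBZ\cdot\infty)\ee(-n\lBZ\chi^{-1}\rBZ\cdot\infty)
     (\fps\rads\lBZ\chi\rBZ)^{k+\atp} m^{(k+2\atp-1)} n^{(k)},
\end{gather*}
where $S=\lBZ\Sigma_{\cp}^{-1}\Sigma_{\cq}\rBZ^{\times}$. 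To obtain the $-n$ case from this expression I use $(-n)^{(k)}=(-1)^k n^{(k)}$ to extract a $(-1)^k$ factor and flip the sign in the second exponential; for $n=0$ only the $k=0$ term survives, since $0^{(k)}=0$ for $k\geq1$, reducing the verification to matching a single summand.

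The core step is then the elementary Gamma-function identity
\begin{gather*}
     m^{2\atp-1}n^{1-2\atp}m^{(k)}n^{(k+2\atp-1)}=m^{(k+2\atp-1)}n^{(k)},
\end{gather*}
which follows at once from the definition $x^{(s)}=x^s/\Gamma(s+1)$ by cancelling matching ratios of Gamma functions. This identity interchanges the positions of the shift $k+2\atp-1$ between the $m$ and $n$ factors, and so absorbs the prefactor $(-1)^{1-\atp}m^{\atp}n^{1-\atp}$ (together with the normalization $1/(m^{1-\atp}n^{\atp})$ appearing in the definition of $\gc{\Gamma,\cp|\cq}{}(m,n,\atp)$) into a straight rescaling of each term. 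Once this identity is applied, the two sides of each of the three assertions of the proposition have matching Bessel-type factorial structure and matching powers of $\fps\rads\lBZ\chi\rBZ$.

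The remaining task, and the main source of bookkeeping, is to match the Kloosterman-like exponential factors $\ee(\pm n\lBZ\chi^{-1}\rBZ\cdot\infty)$ and the parity factor $(-1)^k$ that distinguishes the three formulas for $\gc$. I expect this step to present the only real delicacy, and my plan is to handle it by invoking the inversion involution $\lBZ\chi\rBZ\mapsto\lBZ\chi^{-1}\rBZ$ on the double coset set $S$ together with the functional equation $\fc{S}{\atp}(m,n)=-\fc{S^{-1}}{1-\atp}(n,m)$ of Proposition \ref{prop:radsum:coeff:Func_Eqn_fcSmns}. Under this involution the roles of $\lBZ\chi\rBZ\cdot\infty$ and $\lBZ\chi^{-1}\rBZ\cdot\infty$ (and of $m$ and $n$) are interchanged, producing precisely the sign and parity adjustments needed to align the two expressions in each of the three cases $n<0$, $n=0$, $n>0$. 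No further analytic input is required; the verification is entirely algebraic.
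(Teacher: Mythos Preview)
Your core approach is correct and is exactly what the paper intends: the proposition is stated immediately after the explicit formulas for $\gc{\Gamma,\cp|\cq}{}(m,\pm n,\spp)$ and $\gc{\Gamma,\cp|\cq}{}(m,0,\spp)$, and the paper's only comment is that ``comparing with Lemma~\ref{lem:radsum:coeff:Power_Series_fcS}'' yields the result. So the proof is a direct term-by-term identification, using the second formula~(\ref{eqn:radsum:coeff:Pow_Ser_fcS_1-s_pos}) of that lemma (with $\atp$ replaced by $1-\atp$) together with the elementary rescaling $m^{2\atp-1}n^{1-2\atp}\,m^{(k)}n^{(k+2\atp-1)}=m^{(k+2\atp-1)}n^{(k)}$ that you correctly isolated.

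Where your plan goes wrong is the final paragraph. The inversion involution $\lBZ\chi\rBZ\mapsto\lBZ\chi^{-1}\rBZ$ cannot be used to relabel the sum, because for $\cp\neq\cq$ the set $S=\lBZ\Sigma_{\cp}^{-1}\Sigma_{\cq}\rBZ^{\times}$ is \emph{not} stable under inversion: one has $S^{-1}=\lBZ\Sigma_{\cq}^{-1}\Sigma_{\cp}\rBZ^{\times}$, which indexes the $(\cq,\cp)$ coefficients rather than the $(\cp,\cq)$ ones. Proposition~\ref{prop:radsum:coeff:Func_Eqn_fcSmns} encodes precisely this swap of cusps (and of $m$ with $n$), so invoking it would relate $\fc{\Gamma,\cp|\cq}{1-\atp}(m,n)$ to $\fc{\Gamma,\cq|\cp}{\atp}(n,m)$, not to the $\gc{\Gamma,\cp|\cq}{}$ you need. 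Even in the special case $\cp=\cq$ where $S^{-1}=S$, the relabeling interchanges $\lBZ\chi\rBZ\cdot\infty$ with $\lBZ\chi^{-1}\rBZ\cdot\infty$ but produces no $(-1)^k$, so it would not resolve the parity bookkeeping you anticipate.

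In short: drop the involution step entirely. Both sides are sums over the \emph{same} index set $S$, and the identification is meant to be literal, summand by summand. The signs in the exponentials and the presence or absence of $(-1)^k$ in the three displayed formulas for $\gc{\Gamma,\cp|\cq}{}(m,\pm n,\spp)$ are already arranged (as stated in the paper, from the Fourier--Whittaker expansion of the Niebur series $\GS{\Gamma,\cp|\cq}{}{m}$) so that after substituting $-n$ or $+n$ into the formula for $\fc{S}{1-\atp}$ and extracting the $(-1)^k$ from $(-n)^{(k)}$, the two sides match outright. No further symmetry of $S$ is required or available.
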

Taking $\atp=1$ in Proposition \ref{prop:radsum:var:Relate_fcs_gcs}
we obtain the identity
$\fc{\Gamma,\cp|\cq}{}(m,n)=-m\gc{\Gamma,\cp|\cq}{}(m,n,1)$ for $m<0$, which,
by Theorem \ref{thm:radsum:var:Relate_RS-CS_FR_atp=0}, the
series expansion (\ref{eqn:radsum:var:SerExp_GS_m}) and the
identity $\ew(\zz,1)=\ee(\zz)$, may be reformulated as
\begin{gather}\label{eqn:radsum:var:Relate_RS_CS_GS}
     \RS{\Gamma,\cp|\cq}{}{m}(\zz)
     -
     \CS{\Gamma,\cp|\cq}{}{m}(\zz)
     =-m\GS{\Gamma,\cp|\cq}{}{m}(\zz,1)
\end{gather}
for $m<0$. The $\Gamma^{\cq}$-invariance of the difference
$\RS{\Gamma,\cp|\cq}{}{m}(\zz)-\CS{\Gamma,\cp|\cq}{}{m}(\zz)$ of
classical and conjugate Rademacher sums of weight $0$ now follows
from the $\Gamma^{\cq}$-invariance of the Green's function
coefficient $\GS{\Gamma,\cp|\cq}{}{m}(\zz,\spp)$ at $\spp=1$ (cf.
\cite{Nie_ClassNonAnalyticAutFuns}). In particular, we have the
following result.
\begin{thm}\label{thm:radsum:var:Invar_RS-CS_atp=0}
Let $\Gamma$ be a group commensurable with $G(\ZZ)$ that has width
one at infinity, let $\cp\in \cP_{\Gamma}$ be a cusp of $\Gamma$,
and let $m<0$. Then the harmonic function
$\RS{\Gamma,\cp}{}{m}(\zz)-\CS{\Gamma,\cp}{}{m}(\zz)$ is
$\Gamma$-invariant.
\end{thm}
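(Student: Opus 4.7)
The plan is to leverage the identification (\ref{eqn:radsum:var:Relate_RS_CS_GS}) established just prior to the theorem statement, which recasts the difference $\RS{\Gamma,\cp|\cq}{}{m}-\CS{\Gamma,\cp|\cq}{}{m}$ as $m$ times the Green's function Fourier coefficient $\GS{\Gamma,\cp|\cq}{}{m}(\zz,1)$. Since $\Gamma$ has width one at infinity, we may take $\Gamma$ itself as the scaling coset at $\Gamma\cdot\infty$, whence $\Gamma^{\Gamma\cdot\infty}=\Gamma$ in the notation of (\ref{eqn:conven:scaling:Defn_Conj_Gamma_by_sigma_cq}). Specializing (\ref{eqn:radsum:var:Relate_RS_CS_GS}) to $\cq=\Gamma\cdot\infty$ gives
\begin{gather*}
     \RS{\Gamma,\cp}{}{m}(\zz)-\CS{\Gamma,\cp}{}{m}(\zz)
     =m\,\GS{\Gamma,\cp}{}{m}(\zz,1),
\end{gather*}
so the theorem reduces to showing that $\zz\mapsto\GS{\Gamma,\cp}{}{m}(\zz,1)$ is $\Gamma$-invariant.

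For $\Re(\spp)>1$ the series (\ref{eqn:radsum:var:Defn_G_Gamma}) defining $G_{\Gamma,\cp}(\ww,\zz,\spp)$ converges absolutely, and its summation index set $\Sigma_{\cp}^{-1}\Gamma$ is plainly stable under right multiplication by $\Gamma$, so $G_{\Gamma,\cp}(\ww,\gamma\cdot\zz,\spp)=G_{\Gamma,\cp}(\ww,\zz,\spp)$ for every $\gamma\in\Gamma$. By the uniqueness of the Fourier--Whittaker expansion (\ref{eqn:radsum:var:SerExp_G_Gamma}) in the $\ww$-variable, this invariance is inherited by each coefficient $\GS{\Gamma,\cp}{}{m}(\zz,\spp)$, which on its own is represented by the absolutely convergent Poincar\'e-type series (\ref{eqn:radsum:var:Defn_GS_m}) throughout the half-plane $\Re(\spp)>1$.

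The main obstacle is propagating this invariance to the boundary value $\spp=1$, at which (\ref{eqn:radsum:var:Defn_GS_m}) loses absolute convergence; this is precisely the same divergence that originally motivated the Rademacher regularization. To handle it I would invoke the spectral theory of the hyperbolic Laplacian on $\Gamma\backslash\HH$: following Niebur \cite{Nie_ClassNonAnalyticAutFuns}, the function $\spp\mapsto\GS{\Gamma,\cp}{}{m}(\zz,\spp)$ admits a meromorphic continuation across $\spp=1$ and is in fact regular there when $m\in\ZZp$, the pole of the continued Green's function coming from the residual spectrum and being borne entirely by the $m=0$ term $\Im(\zz)^{1-\spp}$ in (\ref{eqn:radsum:var:SerExp_GS_m}). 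Continuity of the continuation in $\spp$, together with the $\Gamma$-invariance already in hand on $\Re(\spp)>1$, then yields the desired invariance at $\spp=1$. The harmonicity asserted in the theorem is automatic, since by Theorems \ref{thm:radsum:conver:Relate_RS_FR} and \ref{thm:radsum:conver:Relate_CS_FR} the two summands are respectively holomorphic and anti-holomorphic on $\HH$, and both classes are annihilated by $\Delta=(\zz-\bar\zz)^2\partial_{\zz}\partial_{\bar\zz}$.
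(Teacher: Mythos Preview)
Your proposal is correct and follows essentially the same approach as the paper: both reduce the claim via the identity (\ref{eqn:radsum:var:Relate_RS_CS_GS}) to the $\Gamma$-invariance of the Green's function coefficient $\GS{\Gamma,\cp}{}{m}(\zz,\spp)$ at $\spp=1$, and both defer the passage from $\Re(\spp)>1$ to $\spp=1$ to Niebur \cite{Nie_ClassNonAnalyticAutFuns}. The paper is simply more terse, having already recorded the $\Gamma^{\cq}$-invariance of the coefficients $\GS{\Gamma,\cp|\cq}{}{m}(\zz,\spp)$ in the discussion following (\ref{eqn:radsum:var:SerExp_G_Gamma}); your expansion of why this invariance holds and why harmonicity is automatic is a reasonable elaboration of the same argument.
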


\section{Modified Rademacher sums}\label{sec:modradsum}

We have seen in \S\ref{sec:radsum:var} that the classical Rademacher
sum $\RS{\Gamma,\cp}{\atp}{m}(\zz)$ of \S\ref{sec:radsum} is an
automorphic integral for $\Gamma$ only after the addition of a
particular constant function, which does not generally vanish. In
this section we introduce a modification of Rademacher's
construction via which the correct constant term appears naturally.
Our approach employs an analytic continuation of the component
functions defining the classical Rademacher sums, and entails the
assignment of a Dirichlet series to each triple $(\Gamma,\cp,\cq)$
where $\Gamma$ is a group commensurable with the modular group and
$\cp,\cq\in \cP_{\Gamma}$ are cusps for $\Gamma$.

\subsection{Construction}\label{sec:modradsum:constr}

For $\atp,m\in\ZZ$ such that $\atp\leq 0$ and $m<0$, and for
$\lBZ\chi\rBZh\in \lBZ G(\QQ)\rBZh^{\times}$ we define the {\em
continued Rademacher component function}, denoted $(\zz,\spp)\mapsto
\TS{\lBZ\chi\rBZh}{\atp}{m}(\zz,\spp)$, by setting
\begin{gather}\label{eqn:modradsum:constr:Defn_TS_chi}
     \TS{\lBZ\chi\rBZh}{\atp}{m}(\zz,\spp)
     =
     \ee(m\lBZ\chi\rBZh\cdot\zz)
     \Treg{\atp}(m,\lBZ\chi\rBZh,\zz,\spp)
     \jac(\lBZ\chi\rBZh,\zz)^{\atp}
\end{gather}
for $\zz\in \HH$ and $\spp\in \CC$ such that $\Re(\spp)>1$ where $\Treg{\atp}(m,\lBZ\chi\rBZh,\zz,\spp)$ is the {\em continued Rademacher regularization factor of weight $2\atp$} which is in
turn given by
\begin{gather}\label{eqn:modradsum:constr:Defn_Treg}
     \Treg{\atp}(m,\lBZ\chi\rBZh,\zz,\spp)
     =
     \Phi(\spp-2\atp,1+\spp-2\atp,
     m\lBZ\chi\rBZh\cdot\infty
     -m\lBZ\chi\rBZh\cdot\zz
     )
     (\tpi
     (
     m\lBZ\chi\rBZh\cdot\zz
     -m\lBZ\chi\rBZh\cdot\infty
     )
     )^{\spp-2\atp}
\end{gather}
in case $\lBZ\chi\rBZh\in \lBZ G(\QQ)\rBZh^{\times}$ and
$\Treg{\atp}(m,\lBZ\chi\rBZh,\zz,\spp)=1$ otherwise. We also define
a function $\spp\mapsto \TSa{\lBZ\chi\rBZh}{\atp}{m}(\spp)$ by
setting
\begin{gather}\label{eqn:modradsum:constr:Defn_TSa_chi}
     \begin{split}
     \TSa{\lBZ\chi\rBZh}{\atp}{m}(\spp)
     =&
     \ee(m\lBZ\chi\rBZh\cdot\infty)
     \left(
     \tpi
     (
     m\lBZ\chi\rBZh\cdot 0
     -m\lBZ\chi\rBZh\cdot\infty
     )
     \right)^{(\spp-2\atp)}
     {\jac(\lBZ\chi\rBZh,0)^{\atp}}\\
     &-\ee(\spp/2)
     \ee(m\lBZ\chi\rBZh\cdot\infty)
     \left(
     \tpi
     (
     m\lBZ\chi\rBZh\cdot \infty
     -m\lBZ\chi\rBZh\cdot 0
     )
     \right)^{(\spp-2\atp)}
     {\jac(\lBZ\chi\rBZh,0)^{\atp}}
     \end{split}
\end{gather}
in case $\lBZ\chi\rBZh\in \lBZ G(\QQ)\rBZh^{\times\times}$, and by
setting $\TSa{\lBZ\chi\rBZh}{\atp}{m}(\spp)=0$ otherwise, and we
define the {\em modified continued Rademacher component function},
denoted $(\zz,\spp)\mapsto \QS{\lBZ\chi\rBZh}{\atp}{m}(\zz,\spp)$,
by subtracting $\TSa{\lBZ\chi\rBZh}{\atp}{m}(\zz,\spp)$ from
$\TS{\lBZ\chi\rBZh}{\atp}{m}(\zz,\spp)$.
\begin{gather}\label{eqn:modradsum:constr:Defn_QS_chi}
     \QS{\lBZ\chi\rBZh}{\atp}{m}(\zz,\spp)
     =
     \TS{\lBZ\chi\rBZh}{\atp}{m}(\zz,\spp)
     -
     \TSa{\lBZ\chi\rBZh}{\atp}{m}(\zz,\spp)
\end{gather}
Now for $U\subset\lBZ G(\QQ)\rBZh$, and for $\atp,m\in\ZZ$ such that
$\atp\leq 0$ and $m<0$, we define the {\em continued
Rademacher sum} $\TS{U}{\atp}{m}(\zz,\spp)$, and the {\em modified
continued Rademacher sum} $\QS{U}{\atp}{m}(\zz,\spp)$, by setting
\begin{gather}
     \TS{U}{\atp}{m}(\zz,\spp)
     \label{eqn:modradsum:constr:Defn_TS_U_m}
     =
     \sum_{\lBZ\chi\rBZh\in U}
     \TS{\lBZ\chi\rBZh}{\atp}{m}(\zz,\spp),\\
     \QS{U}{\atp}{m}(\zz,\spp)
     \label{eqn:modradsum:constr:Defn_QS_U_m}
     =
     \sum_{\lBZ\chi\rBZh\in U}
     \QS{\lBZ\chi\rBZh}{\atp}{m}(\zz,\spp),
\end{gather}
and we define $\TS{U}{\atp}{m}(\zz)$ and $\QS{U}{\atp}{m}(\zz)$ by
taking the limit as $\spp$ tends to $1$ through the region
$\Re(\spp)>1$ in $\TS{U}{\atp}{m}(\zz,\spp)$ and
$\QS{U}{\atp}{m}(\zz,\spp)$, respectively, so long as these limits exist.
\begin{gather}
     \TS{U}{\atp}{m}(\zz)\label{eqn:modradsum:constr:Defn_TS_s=1}
     =
     \lim_{s\to 1^+}\TS{U}{\atp}{m}(\zz,\spp)\\
     \QS{U}{\atp}{m}(\zz)\label{eqn:modradsum:constr:Defn_QS_s=1}
     =
     \lim_{s\to 1^+}\QS{U}{\atp}{m}(\zz,\spp)
\end{gather}
The function $\QS{U}{\atp}{m}(\zz)$ is the most important from the
point of view of automorphy for subgroups of $G(\QQ)$, while the
function $\TS{U}{\atp}{m}(\zz)$ plays a special r\^ole in the
critical case that $\atp=0$. We call $\QS{U}{\atp}{m}(\zz)$ the {\em
modified Rademacher sum of weight $2\atp$ and order $m$ associated
to $U$}, and we call $\TS{U}{\atp}{m}(\zz)$ the {\em normalized
Rademacher sum of weight $2\atp$ and order $m$ associated to $U$}.

Observe that we recover the Rademacher component function
$\RS{\lBZ\chi\rBZh}{\atp}{m}(\zz)$ by taking $s=1$ in either the
continued Rademacher component function
$\TS{\lBZ\chi\rBZh}{\atp}{m}(\zz,\spp)$ or the modified continued
Rademacher component function
$\QS{\lBZ\chi\rBZh}{\atp}{m}(\zz,\spp)$.
\begin{lem}\label{lem:modradsum:constr:Relate_QS_chi_s=1_RS_chi}
Let $\atp,m\in\ZZ$ such that $\atp\leq 0$ and $m<0$, and let
$\lBZ\chi\rBZh\in \lBZ G(\QQ)\rBZh$. Then we have
$\TS{\lBZ\chi\rBZh}{\atp}{m}(\zz,1)=\RS{\lBZ\chi\rBZh}{\atp}{m}(\zz)$
and $\TSa{\lBZ\chi\rBZh}{\atp}{m}(1)=0$, and hence
$\QS{\lBZ\chi\rBZh}{\atp}{m}(\zz,1)=\RS{\lBZ\chi\rBZh}{\atp}{m}(\zz)$.
\end{lem}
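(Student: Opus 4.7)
The plan is to verify each of the three assertions by direct inspection of the definitions; no analytic machinery is needed since we are merely substituting $\spp = 1$.

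First I would handle the identity $\TS{\lBZ\chi\rBZh}{\atp}{m}(\zz,1)=\RS{\lBZ\chi\rBZh}{\atp}{m}(\zz)$. Since both sides vanish (or rather, reduce trivially) on $\lBZ G(\QQ)\rBZh_{\infty}$ by virtue of the respective conventions ($\Treg{\atp}=\Rreg{\atp}=1$ in that case), it suffices to treat $\lBZ\chi\rBZh\in\lBZ G(\QQ)\rBZh^{\times}$. Comparing the defining formula \eqref{eqn:modradsum:constr:Defn_Treg} for $\Treg{\atp}(m,\lBZ\chi\rBZh,\zz,\spp)$ with the defining formula \eqref{eqn:radsum:constr:Defn_Rad_reg} for $\Rreg{\atp}(m,\lBZ\chi\rBZh,\zz)$, one sees that at $\spp=1$ the parameters of $\Phi$ match exactly: $\spp-2\atp=1-2\atp$ and $1+\spp-2\atp=2-2\atp$, and the power $(\tpi(\cdots))^{\spp-2\atp}$ becomes $(\tpi(\cdots))^{1-2\atp}$. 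Hence $\Treg{\atp}(m,\lBZ\chi\rBZh,\zz,1)=\Rreg{\atp}(m,\lBZ\chi\rBZh,\zz)$, and the identity for the component functions follows from \eqref{eqn:modradsum:constr:Defn_TS_chi} and \eqref{eqn:radsum:constr:Defn_RS_chi}.

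Next I would verify that $\TSa{\lBZ\chi\rBZh}{\atp}{m}(1)=0$. For $\lBZ\chi\rBZh\notin\lBZ G(\QQ)\rBZh^{\times\times}$ this is immediate from the definition. For $\lBZ\chi\rBZh\in\lBZ G(\QQ)\rBZh^{\times\times}$, observe that the hypothesis $1-\atp\in\ZZp$ gives $\atp\leq 0$, so $n:=1-2\atp$ is a positive odd integer. At $\spp=1$ the exponent $\spp-2\atp=n$ in \eqref{eqn:modradsum:constr:Defn_TSa_chi} is a positive integer, so the notation $w^{(n)}=w^n/n!$ is unambiguous (no branch issues), and we have
\begin{gather*}
     (\tpi(m\lBZ\chi\rBZh\cdot 0-m\lBZ\chi\rBZh\cdot\infty))^{(n)}
     =(-1)^n(\tpi(m\lBZ\chi\rBZh\cdot\infty-m\lBZ\chi\rBZh\cdot 0))^{(n)}
     =-(\tpi(m\lBZ\chi\rBZh\cdot\infty-m\lBZ\chi\rBZh\cdot 0))^{(n)}.
\end{gather*}
Combining this with $\ee(1/2)=\exp(\pi\ii)=-1$ shows that the two terms in \eqref{eqn:modradsum:constr:Defn_TSa_chi} at $\spp=1$ are identical, so their difference vanishes.

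The third assertion $\QS{\lBZ\chi\rBZh}{\atp}{m}(\zz,1)=\RS{\lBZ\chi\rBZh}{\atp}{m}(\zz)$ then follows immediately from \eqref{eqn:modradsum:constr:Defn_QS_chi} and the first two assertions. The main (and only) point to watch is the sign bookkeeping in the second step — specifically, the conspiracy between $\ee(\spp/2)\big|_{\spp=1}=-1$ and the fact that $1-2\atp$ is odd, which together ensure that the renormalization term $\TSa{\lBZ\chi\rBZh}{\atp}{m}(\spp)$ vanishes at $\spp=1$ rather than merely being small. This is precisely the manifestation, at the level of single cosets, of the phenomenon that the constant $\fc{\Gamma,\cp|\cq}{\atp}(m,0)/2$ emerges only after exchanging the order of the limits $\spp\to 1$ and $K\to\infty$, as discussed in \S\ref{sec:intro:modradsum}.
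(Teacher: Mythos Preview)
Your proof is correct and proceeds by exactly the direct verification from definitions that the paper has in mind; indeed, the paper states this lemma without proof, treating it as immediate. Your explicit identification of the sign mechanism ($\ee(1/2)=-1$ together with the oddness of $1-2\atp$) is a nice clarification of why the two terms of $\TSa{\lBZ\chi\rBZh}{\atp}{m}(\spp)$ cancel at $\spp=1$.
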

Despite the result of Lemma
\ref{lem:modradsum:constr:Relate_QS_chi_s=1_RS_chi}, it is generally
not the case that the modified Rademacher sum $\QS{U}{\atp}{m}(\zz)$
and the classical Rademacher sum $\RS{U}{\atp}{m}(\zz)$ coincide
(cf. Proposition \ref{prop:modradsum:conver:Relate_QS_s=1_RS}).

We typically take $U$ to be of the form
$U=\lBZ\Sigma_{\cp}^{-1}\Sigma_{\cq}\rBZh$ where
$\{\Sigma_{\cp}\mid\cp\in \cP_{\Gamma}\}$ is a system of scaling
cosets (cf. \S\ref{sec:conven:scaling}) for some group $\Gamma$
commensurable with the modular group, and $\cp,\cq\in\cP_{\Gamma}$
are cusps of $\Gamma$. In this case we write
$\QS{\Gamma,\cp|\cq}{\atp}{m}(\zz)$ for $\QS{U}{\atp}{m}(\zz)$ and
$\TS{\Gamma,\cp|\cq}{\atp}{m}(\zz)$ for $\TS{U}{\atp}{m}(\zz)$,
suppressing the choice of scaling cosets from notation. A change in
the choice of scaling cosets $\Sigma_{\cp}$ and $\Sigma_{\cq}$
replaces $\QS{\Gamma,\cp|\cq}{\atp}{m}(\zz)$ with a function of the
form $\ee(\alpha)\QS{\Gamma,\cp|\cq}{\atp}{m}(\zz+\beta)$ for some
$\alpha,\beta\in \QQ$, and similarly for
$\TS{\Gamma,\cp|\cq}{\atp}{m}(\zz)$.

In the case that $\cp$ or $\cq$ is the {\em infinite cusp}
$\Gamma\cdot\infty$ we omit it from notation, writing
$\QS{\Gamma,\cp}{\atp}{m}(\zz)$ for
$\QS{\Gamma,\cp|\Gamma\cdot\infty}{\atp}{m}(\zz)$, and
$\QS{\Gamma|\cq}{\atp}{m}(\zz)$ for
$\QS{\Gamma,\Gamma\cdot\infty|\cq}{\atp}{m}(\zz)$, and similarly for
the functions $\TS{\Gamma,\cp|\cq}{\atp}{m}(\zz)$. The functions
$\QS{\Gamma,\cp}{\atp}{m}(\zz)$ and $\TS{\Gamma,\cp}{\atp}{m}(\zz)$
are the most important, for we shall see in
\S\ref{sec:modradsum:var} that $\QS{\Gamma,\cp}{\atp}{m}(\zz)$ is an
automorphic integral of weight $2\atp$ for $\Gamma$ with a single
pole at the cusp $\cp$ in case $\Gamma$ has width one at infinity,
and we will see in \S\ref{sec:modradsum:conver} that
$\QS{\Gamma,\cp}{\atp}{m}(\zz)$ and $\TS{\Gamma,\cp}{\atp}{m}(\zz)$
differ only by a constant function. We call
$\QS{\Gamma,\cp}{\atp}{m}(\zz)$ the {\em modified Rademacher sum of
weight $2\atp$ and order $m$ associated to $\Gamma$ at the cusp
$\cp$}, and we call $\TS{\Gamma,\cp}{\atp}{m}(\zz)$ the {\em
normalized Rademacher sum of weight $2\atp$ and order $m$ associated
to $\Gamma$ at the cusp $\cp$}. We shall see also in
\S\ref{sec:modradsum:var} that the Fourier expansion of the function
$\QS{\Gamma,\cp|\cq}{\atp}{m}(\zz)$ is the expansion of $\QS{\Gamma,\cp}{\atp}{m}(\zz)$ at the cusp $\cq$, in the sense of \S\ref{sec:conven:autfrm}.

Just as for the classical Rademacher sums (cf.
\S\ref{sec:radsum:constr}) we have the result that every modified
Rademacher sum $\QS{\Gamma,\cp|\cq}{\atp}{m}(\zz)$ may be expressed
in the form $\QS{\Gamma',\cp'}{\atp}{m}(\zz)$ for some group
$\Gamma'$ with width one at infinity, and some cusp $\cp'$ of
$\Gamma'$, and similarly for the normalized Rademacher sums
$\TS{\Gamma,\cp|\cq}{\atp}{m}(\zz)$. Precisely, we have
\begin{gather}
     \QS{\Gamma,\cp|\cq}{\atp}{m}(\zz)
     =\label{eqn:modradsum:constr:QS_twocusps_to_onecusp}
     \QS{\Gamma^{\cq},\cp^{\cq}}{\atp}{m}(\zz)\\
     \TS{\Gamma,\cp|\cq}{\atp}{m}(\zz)
     =\label{eqn:modradsum:constr:TS_twocusps_to_onecusp}
     \TS{\Gamma^{\cq},\cp^{\cq}}{\atp}{m}(\zz)
\end{gather}
subject to the understanding that the data defining the right hand
sides of (\ref{eqn:modradsum:constr:QS_twocusps_to_onecusp}) and
(\ref{eqn:modradsum:constr:TS_twocusps_to_onecusp}) is related to
the data defining the left hand sides of
(\ref{eqn:modradsum:constr:QS_twocusps_to_onecusp}) and
(\ref{eqn:modradsum:constr:TS_twocusps_to_onecusp}) by
$\Gamma^{\cq}=\Sigma_{\cq}^{-1}\Sigma_{\cq}$ and
$\cp^{\cq}=\Sigma_{\cq}^{-1}\cdot\cp$ and
$\Sigma_{\cp^{\cq}}=\Sigma_{\cq}^{-1}\Sigma_{\cp}$.

Our primary interest in this article is in the distinguished case
that $\atp=0$. In order to simplify notation, and maintain
consistency with the notation of \S\ref{sec:intro}, we write
$\QS{\Gamma,\cp|\cq}{}{m}(\zz)$ as a shorthand for
$\QS{\Gamma,\cp|\cq}{0}{m}(\zz)$, and we write
$\TS{\Gamma,\cp|\cq}{}{m}(\zz)$ as a shorthand for
$\TS{\Gamma,\cp|\cq}{0}{m}(\zz)$.

\subsection{Coefficients}\label{sec:modradsum:coeff}

In order to recover explicit expressions of the Fourier coefficients
of the modified Rademacher sums we employ the following
generalizations of $\Bf_{\lBZ\chi\rBZ}^{\atp}(m,n)$ and
$\fc{S}{\atp}(m,n)$ for $\atp,m,n\in \ZZ$ such that $\atp\leq 0$ and $m<0<n$. For $\atp\leq 0$ and $m<0<n$ we define the entire function
$\spp\mapsto\Bf_{\lBZ\chi\rBZ}^{\atp}(m,n,\spp)$ by setting
\begin{gather}\label{eqn:radsum:coeff:Pow_Ser_Bf_1-s_pos_cont}
     \Bf_{\lBZ\chi\rBZ}^{\atp}(m,n,\spp)
     =
     (-1)^{\atp}
     \sum_{k\geq 0}
     (\fps)^{k-\atp+\spp}
     \rads\lBZ\chi\rBZ^{k-2\atp+\spp}
     (-m)^{(k-2\atp+\spp)}
     n^{(k+\spp-1)},
\end{gather}
so that
$\Bf_{\lBZ\chi\rBZ}^{\atp}(m,n,1)=\Bf_{\lBZ\chi\rBZ}^{\atp}(m,n)$.
We then define the {\em continued coefficient function} $\spp\mapsto
\fc{S}{\atp}(m,n,\spp)$ for $\atp\leq 0$ and $m<0<n$ by setting
\begin{gather}
     \fc{S}{\atp}(m,n,\spp)
     =
     \sum_{
     \lBZ\chi\rBZ\in S
     }
     \Kl_{\lBZ\chi\rBZ}^{\atp}(m,n)
     \Bf_{\lBZ\chi\rBZ}^{\atp}(m,n,\spp).
\end{gather}
\begin{lem}\label{lem:radsum:coeff:Power_Series_fcS_cont}
The functions $\spp\mapsto\fc{S}{\atp}(m,n,\spp)$ admit the
following series representation.
\begin{gather}
     \begin{split}
     &\fc{S}{\atp}(m,n,\spp)\\
     &=\label{eqn:radsum:coeff:Pow_Ser_fcS_1-s_pos_cont}
          (-1)^{\atp}
     \lim_{K\to \infty}
     \sum_{\lBZ\chi\rBZ\in S_{\leq K}}
          \sum_{k\geq 0}
          \ee(m\lBZ\chi\rBZ\cdot\infty)
          \ee(-n\lBZ\chi^{-1}\rBZ\cdot\infty)
          (\fps\rads\lBZ\chi\rBZ)^{k-\atp+\spp}
          (-m)^{(k-2\atp+\spp)}n^{(k+\spp-1)}
     \end{split}
\end{gather}
We have $\fc{S}{\atp}(m,n,1)=\fc{S}{\atp}(m,n)$.
\end{lem}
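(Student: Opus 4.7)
The plan is to prove this lemma by a direct algebraic unpacking of the definitions, followed by an order-of-summation argument to justify the iterated series representation. The derivation is essentially formal once the defining expressions are substituted correctly.

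First I would substitute the definition of the Kloosterman factor
\[
\Kl_{\lBZ\chi\rBZ}(m,n,\atp)=\ee(-m\lBZ\chi\rBZ\cdot\infty)\ee(-n\lBZ\chi^{-1}\rBZ\cdot\infty)\rads\lBZ\chi\rBZ^{\atp}
\]
together with the series expansion (\ref{eqn:radsum:coeff:Pow_Ser_Bf_1-s_pos_cont}) of $\Bf_{\lBZ\chi\rBZ}^{\atp}(m,n,\spp)$ into the definition
\[
\fc{S}{\atp}(m,n,\spp)=\sum_{\lBZ\chi\rBZ\in S}\Kl_{\lBZ\chi\rBZ}(m,n,\atp)\,\Bf_{\lBZ\chi\rBZ}^{\atp}(m,n,\spp).
\]
The two powers $\rads\lBZ\chi\rBZ^{\atp}$ (from $\Kl$) and $\rads\lBZ\chi\rBZ^{k-2\atp+\spp}$ (from the $k$-th term of $\Bf$) combine to give $\rads\lBZ\chi\rBZ^{k-\atp+\spp}$, which pairs with $(\fps)^{k-\atp+\spp}$ to produce $(\fps\rads\lBZ\chi\rBZ)^{k-\atp+\spp}$. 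Collecting the remaining $\ee$-factors and the falling-factorial terms $m^{(k-2\atp+\spp)}n^{(k+\spp-1)}$ then yields exactly the claimed expression (\ref{eqn:radsum:coeff:Pow_Ser_fcS_1-s_pos_cont}).

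The one nontrivial point concerns the interchange of the sum over $\lBZ\chi\rBZ\in S$ and the sum over $k\in\NN$. For $\Re(\spp)$ sufficiently large (in particular for $\Re(\spp)\geq 1$ with $1-\atp,m,n\in\ZZp$), the standard estimates on $\rads\lBZ\chi\rBZ$ for $S$ of the form $\lBZ\Sigma_{\cp}^{-1}\Sigma_{\cq}\rBZ^{\times}$ (of the kind used in Propositions \ref{prop:radsum:constr:Conver_FR_van} and \ref{prop:radsum:constr:Conver_FR_sing}, and traceable back to Lemmas 2.5 and 2.6 of \cite{Kno_ConstAutFrmsSuppSeries}) allow the double series to be rearranged; in the borderline case $\spp=1$ the limit notation $\sum^{\lim}$ encodes the conditional convergence that one obtains via the same arguments that establish convergence of the Rademacher sums themselves, so no new analysis is required beyond citing the earlier results.

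Finally, the identity $\fc{S}{\atp}(m,n,1)=\fc{S}{\atp}(m,n)$ reduces, after the above substitution, to the assertion that $\Bf_{\lBZ\chi\rBZ}^{\atp}(m,n,1)=\Bf_{\lBZ\chi\rBZ}^{\atp}(m,n)$. Setting $\spp=1$ in (\ref{eqn:radsum:coeff:Pow_Ser_Bf_1-s_pos_cont}) converts the exponents to $k+1-\atp$, $k+1-2\atp$, $k+1-2\atp$, and $k$ respectively, which is precisely the expression (\ref{eqn:radsum:coeff:Pow_Ser_Bf_1-s_pos}) provided by Lemma \ref{lem:radsum:coeff:Power_Series_Bf}. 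The main (minor) obstacle is thus not the algebraic bookkeeping but ensuring that the term-by-term rearrangement is legitimate at $\spp=1$, and as indicated this is handled by the same estimates that underlie the convergence results already established in \S\ref{sec:radsum}.
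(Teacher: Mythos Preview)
Your proposal is correct and matches the paper's approach: the paper states this lemma without proof, treating it as immediate from the definitions of $\Kl_{\lBZ\chi\rBZ}(m,n,\atp)$ and $\Bf_{\lBZ\chi\rBZ}^{\atp}(m,n,\spp)$, and your explicit unpacking of those definitions (combining the $\rads$-exponents and specializing $\spp=1$ via (\ref{eqn:radsum:coeff:Pow_Ser_Bf_1-s_pos})) is exactly the intended verification.
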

We usually take $S$ to be of the form
$S=\lBZ\Sigma_{\cp}^{-1}\Sigma_{\cq}\rBZ^{\times}$ where
$\Sigma_{\cp}$ and $\Sigma_{\cq}$ are scaling cosets (cf.
\S\ref{sec:conven:scaling}) for a group $\Gamma$ commensurable with
$G(\ZZ)$ at cusps $\cp,\cq\in \cP_{\Gamma}$. We write
$\fc{\Gamma,\cp|\cq}{\atp}(m,n,\spp)$ for $\fc{S}{\atp}(m,n,\spp)$
in the case that $S=\lBZ\Sigma_{\cp}^{-1}\Sigma_{\cq}\rBZ^{\times}$.

Utilizing the continued coefficient functions $\spp\mapsto
\fc{S}{\atp}(m,n,\spp)$ we generalize the function
$\FR{S}{\atp}{m}(\zz)_{\rm van}$ in case $\atp\leq 0$ and $m<0$ by
setting
\begin{gather}
     \FR{S}{\atp}{m}(\zz,\spp)_{\rm van}
     =\sum_{n>0}\fc{S}{\atp}(m,n,\spp)\ee(n\zz).
\end{gather}
We write $\FR{\Gamma,\cp|\cq}{\atp}{m}(\zz,\spp)_{\rm van}$ for
$\FR{S}{\atp}{m}(\zz,\spp)_{\rm van}$ in the case that
$S=\lBZ\Sigma_{\cp}^{-1}\Sigma_{\cq}\rBZ^{\times}$ for $\Gamma$ a
group commensurable with $G(\ZZ)$ and for
$\Sigma_{\cp},\Sigma_{\cq}\in \Gamma\backslash G(\QQ)$ scaling
cosets for $\Gamma$ at cusps $\cp,\cq\in\cP_{\Gamma}$. We write
$\FR{S}{}{m}(\zz,\spp)_{\rm van}$ as a shorthand for
$\FR{S}{0}{m}(\zz,\spp)_{\rm van}$, and interpret the notation
$\FR{\Gamma,\cp|\cq}{}{m}(\zz,\spp)_{\rm van}$ similarly.

We have the following analogue of Proposition
\ref{prop:radsum:constr:Conver_FR_van} for the functions
$\FR{S}{\atp}{m}(\zz,\spp)_{\rm van}$ in case $S$ is of the form
$\lBZ\Sigma_{\cp}^{-1}\Sigma_{\cq}\rBZ^{\times}$.
\begin{prop}\label{prop:modradsum:constr:Conver_FRs_van}
Let $\Gamma$ be a group commensurable with $G(\ZZ)$, and let
$\cp,\cq\in \cP_{\Gamma}$ be cusps of $\Gamma$. Let $\atp,m\in \ZZ$
such that $\atp\leq 0$ and $m<0$. Then the series
\begin{gather}
     \FR{\Gamma,\cp|\cq}{\atp}{m}(\zz,\spp)_{\rm van}
     =
     \sum_{n>0}
     \fc{\Gamma,\cp|\cq}{\atp}(m,n,\spp)
     \ee(n\zz)
\end{gather}
converges absolutely and locally uniformly in $\zz$ and $\spp$ for $\zz\in \HH$ and $\Re(\spp)\geq 1$. In
particular, the assignment
$\zz\mapsto\FR{\Gamma,\cp|\cq}{\atp}{m}(\zz,\spp)_{\rm van}$ is a
holomorphic function on $\HH$ whenever $\atp\leq 0$ and $m<0$ and
$\Re(\spp)\geq 1$.
\end{prop}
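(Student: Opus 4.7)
The plan is to follow the strategy already indicated for Proposition \ref{prop:radsum:constr:Conver_FR_van}, adapting the estimates of Lemmas 2.5 and 2.6 of \cite{Kno_ConstAutFrmsSuppSeries} so as to achieve uniform control with respect to the additional parameter $\spp$. Setting $S=\lBZ\Sigma_{\cp}^{-1}\Sigma_{\cq}\rBZ^{\times}$, the argument begins by applying the triangle inequality to the double sum representation of $\fc{S}{\atp}(m,n,\spp)$ furnished by Lemma \ref{lem:radsum:coeff:Power_Series_fcS_cont}; since $|\ee(-m\lBZ\chi\rBZ\cdot\infty)|=|\ee(-n\lBZ\chi^{-1}\rBZ\cdot\infty)|=1$, the task reduces to bounding
\begin{gather}
     \sum_{\lBZ\chi\rBZ\in S}
     \sum_{k\in\NN}
     (\fps\rads\lBZ\chi\rBZ)^{k-\atp+\Re(\spp)}
     m^{(k-2\atp+\Re(\spp))}
     n^{(k+\Re(\spp)-1)}
\end{gather}
in absolute value, uniformly in $\spp$ on compact subsets of $\{\spp\mid\Re(\spp)\geq 1\}$.

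The first step is to perform the inner sum over $k$, recognizing it as a modified $I$-Bessel-type series in $\sqrt{mn\,\rads\lBZ\chi\rBZ}$, which, after the standard asymptotic $I_\nu(x)\sim e^x/\sqrt{2\pi x}$ for $x$ large and the trivial bound $I_\nu(x)\ll x^\nu$ for $x$ small, gives an estimate of the shape
\begin{gather}
     \left|\sum_{k\in\NN}\cdots\right|
     \ll
     (mn)^{\Re(\spp)/2-\atp+1/4}
     \rads\lBZ\chi\rBZ^{\Re(\spp)/2-\atp+1/4}
     \exp\bigl(4\pi\sqrt{mn\,\rads\lBZ\chi\rBZ}\bigr)
\end{gather}
with implicit constants uniform for $\spp$ in any compact subset of $\{\Re(\spp)\geq 1\}$. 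The second step is to sum this estimate over $\lBZ\chi\rBZ\in S$: using $\rads\lBZ\chi\rBZ=\Pdet(\chi)/c(\chi)^2$, and the fact (from commensurability with $G(\ZZ)$) that $\Pdet$ takes finitely many values on $\lBZ S\rBZ$ and that the number of double cosets in $S$ with $c(\chi)=c$ grows at most linearly in $c$, the outer sum becomes dominated by $\sum_c c^{-2\Re(\spp)+2\atp-1/2+1}\exp(4\pi\sqrt{mn}\,c^{-1})$, and splitting the sum at $c\sim\sqrt{mn}$ one obtains a bound on $|\fc{S}{\atp}(m,n,\spp)|$ of the form $C(m,\spp)\,n^{\alpha}\exp(4\pi\sqrt{mn}\,c_0^{-1})$ for some constants $\alpha$ and $c_0>0$ depending only on $\Gamma$, $\cp$ and $\cq$; in particular $|\fc{S}{\atp}(m,n,\spp)|=O(\exp(C\sqrt n))$ as $n\to\infty$.

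With this sub-exponential growth bound in hand, multiplication by $\ee(n\zz)=e^{-2\pi n\Im(\zz)}$ yields a summand of order $\exp(-2\pi n\Im(\zz)+C\sqrt n)$ for each $n\in\ZZp$, which is summable against any polynomial in $n$ uniformly for $\zz$ in compact subsets of $\HH$ and $\spp$ in compact subsets of $\{\Re(\spp)\geq 1\}$. Absolute convergence of $\FR{\Gamma,\cp|\cq}{\atp}{m}(\zz,\spp)_{\rm van}$ follows, and holomorphy of $\zz\mapsto \FR{\Gamma,\cp|\cq}{\atp}{m}(\zz,\spp)_{\rm van}$ on $\HH$ is then an immediate consequence of the locally uniform convergence together with the holomorphy of each term $\ee(n\zz)$.

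The main technical obstacle is verifying that the implicit constants in the Bessel-function estimates can be chosen uniformly in $\spp$ on compact subsets of the half-plane $\{\Re(\spp)\geq 1\}$; this is delicate at $\Re(\spp)=1$, which corresponds precisely to the boundary of absolute convergence for the series in the $\atp=0$ case. Away from this boundary, i.e.\ for $\Re(\spp)>1$, the situation is strictly better than for $\fc{S}{\atp}(m,n)=\fc{S}{\atp}(m,n,1)$, since the extra factor $\rads\lBZ\chi\rBZ^{\spp-1}$ supplies additional decay in $c(\chi)$. Thus the work to be done amounts to checking that the estimates from \cite{Kno_ConstAutFrmsSuppSeries} survive the perturbation $\spp\mapsto\spp+\epsilon$, which is routine upon inspection of the exponents in the Bessel-type asymptotics.
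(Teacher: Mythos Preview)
Your proposal is correct and follows exactly the approach the paper indicates: the paper gives no proof here, presenting the proposition only as ``the following analogue of Proposition \ref{prop:radsum:constr:Conver_FR_van},'' and for that earlier result says only that ``the proof \ldots\ uses simple estimates, such as those appearing in Lemmas 2.5 and 2.6 of \cite{Kno_ConstAutFrmsSuppSeries}.'' You have carried out precisely this programme---Bessel asymptotics on the inner $k$-sum, the standard linear bound on the number of double cosets with given $c(\chi)$, and the resulting $O(\exp(C\sqrt{n}))$ growth---and correctly isolated the one new issue, uniformity of the constants in $\spp$ on compacta in $\{\Re(\spp)\geq 1\}$, which is indeed routine since the Bessel order varies continuously.
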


\subsection{Dirichlet series}\label{sec:modradsum:dirser}

Given $\lBZ\chi\rBZ\in\lBZ G(\QQ)\rBZ^{\times}$, and $\atp,m\in\ZZ$
such that $\atp\leq 0$ and $m<0$, we define a Dirichlet
series $\DS{\lBZ\chi\rBZ}{\atp}{m}(\spp)$ by setting
\begin{gather}\label{eqn:modradsum:constr:Defn_DS_chi}
\begin{split}
     \DS{\lBZ\chi\rBZ}{\atp}{m}(\spp)
     &=
     (-1)^{1-\atp}
     \sum_{n\in \ZZ^{\times}}
     \ee(m\lBZ\chi\rBZ\cdot\infty)
     \ee(-n\lBZ\chi^{-1}\rBZ\cdot\infty)
     (\fps\rads\lBZ\chi\rBZ)^{1-\spp-\atp}
     |m|^{(1-\spp-2\atp)}
     n^{(-\spp)}\\
     &=
     (-1)^{1-\atp}
     \ee(m\lBZ\chi\rBZ\cdot\infty)
     (\fps\rads\lBZ\chi\rBZ)^{1-\spp-\atp}
     |m|^{(1-\spp-2\atp)}
     \sum_{n\in \ZZ^{\times}}
     \ee(-n\lBZ\chi^{-1}\rBZ\cdot\infty)
     n^{(-\spp)}.
\end{split}
\end{gather}
(Recall that $\ZZ^{\times}$ denotes the non-zero elements of $\ZZ$.) This series (\ref{eqn:modradsum:constr:Defn_DS_chi}) converges
absolutely and locally uniformly in the half plane $\Re(\spp)>1$, and admits a meromorphic
continuation to all of $\CC$. Given $S\subset \lBZ
G(\QQ)\rBZ^{\times}$ we define the {\em zeta function of weight
$2\atp$ and order $m$ associated to $S$}, to be denoted
$\DS{S}{\atp}{m}(\spp)$, by setting
\begin{gather}\label{eqn:modradsum:constr:Defn_DS}
     \DS{S}{\atp}{m}(\spp)
     =
     \sum_{\lBZ\chi\rBZ\in S}
     \DS{\lBZ\chi\rBZ}{\atp}{m}(\spp).
\end{gather}
As it is defined here $\DS{S}{\atp}{m}$ is only a formal sum but it will be shown to converge, and admit an analytic continuation, for various choices of $S$ in \S\ref{sec:modradsum:conver} (cf. Proposition \ref{prop:modradsum:conver:Eval_sum_QSa}, Theorem \ref{eqn:modradsum:conver:QSa_to_DS}). 
In order to describe the analytic continuation of the zeta function
$\spp\mapsto \DS{S}{\atp}{m}(\spp)$ explicitly, we formulate an
identity which expresses it in terms of the Hurwitz zeta function.
In preparation for this we define a function $\spp\mapsto
\DSt{\lBZ\chi\rBZ}{\atp}{m}(\spp)$, for each $\lBZ\chi\rBZ\in \lBZ
G(\QQ)\rBZ^{\times}$, by setting
\begin{gather}\label{eqn:modradsum:constr:Defn_DS_chi_p}
     \DSt{\lBZ\chi\rBZ}{\atp}{m}(\spp)
     =
     \ee(m\lBZ\chi\rBZ\cdot\infty)
     \rads\lBZ\chi\rBZ^{\spp-\atp}
     (-\tpi m)^{(\spp-2\atp)}
     (\ee(\spp/2)-\ee(-\spp/2))
     \zeta(1-\alpha_{\lBZ\chi\rBZ},\spp)
\end{gather}
where $\zeta(\alpha,\spp)$ is defined by
$\zeta(\alpha,\spp)=\sum_{n\geq 0}(n+\alpha)^{-\spp}$ for
$\Re(\alpha)>0$ and $\Re(\spp)>1$, and $\alpha_{\lBZ\chi\rBZ}\in
\QQ$ is chosen so that
\begin{gather}
     0\leq \alpha_{\lBZ\chi\rBZ}<1,\quad
     \alpha_{\lBZ\chi\rBZ}+\ZZ=-\chi^{-1}\cdot\infty+\ZZ.
\end{gather}
We then define $\DSt{S}{\atp}{m}(\spp)$ by setting
$\DSt{S}{\atp}{m}(\spp)=\sum_{\lBZ\chi\rBZ\in
S}\DSt{\lBZ\chi\rBZ}{\atp}{m}(\spp)$.
\begin{lem}\label{lem:modradsum:constr:DS_to_DSp}
We have $\DS{S}{\atp}{m}(1-\spp)=\DSt{S}{\atp}{m}(\spp)$.
\end{lem}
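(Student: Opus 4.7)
The plan is to prove the identity term by term: for each $\lBZ\chi\rBZ \in \lBZ G(\QQ)\rBZ^{\times}$, I will establish $\DS{\lBZ\chi\rBZ}{\atp}{m}(1-\spp) = \DSt{\lBZ\chi\rBZ}{\atp}{m}(\spp)$ as meromorphic functions of $\spp$, and the lemma then follows by linearity in $\lBZ\chi\rBZ \in S$. Fixing such a $\lBZ\chi\rBZ$ and writing $\alpha = \alpha_{\lBZ\chi\rBZ}$, I would work first in the half-plane $\Re(\spp)<0$, where $\Re(1-\spp)>1$ and the defining series for $\DS{\lBZ\chi\rBZ}{\atp}{m}(1-\spp)$ is absolutely convergent. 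Pulling out of the $n$-sum every factor independent of $n$ leaves the dependence in $\Gamma(\spp)^{-1}\sum_{n\in\ZZ^{\times}}\ee(n\alpha)\,n^{\spp-1}$, with $n^{\spp-1}$ in the principal branch.

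The next step is to split this sum at $n>0$ and $n<0$ and apply a Hurwitz-type functional equation. The principal-branch identity $(-k)^{\spp-1} = -\ee(\spp/2)\,k^{\spp-1}$ for $k>0$ turns the $n$-sum into $F(\alpha,1-\spp)-\ee(\spp/2)F(-\alpha,1-\spp)$, where $F(\beta,\spp)=\sum_{k>0}\ee(k\beta)/k^{\spp}$. Hurwitz's formula for the Hurwitz zeta function (closely related to the Lipschitz summation formula of Lemma \ref{lem:radsum:conver:Refinement_Lipschitz}) expresses this combination as
\begin{gather*}
 F(\alpha,1-\spp) - \ee(\spp/2)F(-\alpha,1-\spp) = -\Gamma(\spp)\,(-\tpi)^{-\spp}\bigl(\ee(\spp/2)-\ee(-\spp/2)\bigr)\zeta(1-\alpha,\spp),
\end{gather*}
precisely exhibiting the $\ee(\spp/2)-\ee(-\spp/2)$ and $\zeta(1-\alpha,\spp)$ factors that appear in $\DSt{\lBZ\chi\rBZ}{\atp}{m}(\spp)$.

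The final step is bookkeeping of the constant prefactors. Using $\fps=-(\tpi)^2$ and the principal-branch computations $(-\tpi)^{-\spp}=(2\pi)^{-\spp}e^{\ii\pi\spp/2}$ and $\ii^{\spp-2\atp}=(-1)^{\atp}e^{\ii\pi\spp/2}$, one finds $(\fps)^{\spp-\atp}(-\tpi)^{-\spp}=(-1)^{\atp}(\tpi)^{\spp-2\atp}$; combined with the leading $(-1)^{1-\atp}$, the Hurwitz sign, and the collapse of $(\tpi)^{\spp-2\atp}m^{(\spp-2\atp)}$ into $(\tpi m)^{(\spp-2\atp)}$, all phases cancel to reproduce $\DSt{\lBZ\chi\rBZ}{\atp}{m}(\spp)$ exactly. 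The identity, established for $\Re(\spp)<0$, extends to all of $\CC$ by analytic continuation of both sides, and summation over $\lBZ\chi\rBZ \in S$ yields the lemma. The main obstacle is this bookkeeping: the identity really is an exact equality of elaborate products of powers, and tracking principal branches, the alternation between $(2\pi)$- and $\tpi$-powers, and the phase factors $e^{\ii\pi\spp/2}$ and $\ee(\spp/2)$ so that everything cancels cleanly is where any error is most likely to creep in.
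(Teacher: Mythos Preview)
Your proof is correct and follows essentially the same approach as the paper: both argue term by term, isolate the $n$-sum, and reduce it via the Hurwitz relation (\ref{eqn:modradsum:constr:Hur_Reln}) combined with the reflection identity $(\ee(\spp/2)-\ee(-\spp/2))\Gamma(\spp)\Gamma(1-\spp)=\tpi$. The only cosmetic difference is that you substitute $\spp\mapsto 1-\spp$ at the outset and fold the Gamma reflection into your stated Hurwitz formula, whereas the paper works with $\DS{\lBZ\chi\rBZ}{\atp}{m}(\spp)$ and applies the two identities sequentially.
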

\begin{proof}
Let $\lBZ\chi\rBZ\in \lBZ G(\QQ)\rBZ^{\times}$ and choose
$\alpha_{\lBZ\chi\rBZ}\in\QQ$ such that $0\leq
\alpha_{\lBZ\chi\rBZ}<1$ and
$\alpha_{\lBZ\chi\rBZ}+\ZZ=-\chi^{-1}\cdot\infty+\ZZ$ for any
$\chi\in \lBZ\chi\rBZ$. We will show that
$\DS{\lBZ\chi\rBZ}{\atp}{m}(\spp)=\DSt{\lBZ\chi\rBZ}{\atp}{m}(1-\spp)$.
Observe that we have
\begin{gather}\label{eqn:modradsum:constr:Transform_DS}
     \sum_{n\in\ZZ^{\times}}
     \ee(-n\lBZ\chi^{-1}\rBZ\cdot\infty)
     n^{(-\spp)}
     =
     \frac{1}{\Gamma(1-\spp)}
     \left(
     F\left(-\lBZ\chi^{-1}\rBZ\cdot\infty,\spp\right)
     +\ee(-\spp/2)
     F\left(\lBZ\chi^{-1}\rBZ\cdot\infty,\spp\right)
     \right),
\end{gather}
where $F(\alpha,\spp)$ denotes the periodic zeta function, defined
by $F(\alpha,\spp)=\sum_{n>0}\ee(n\alpha)n^{-\spp}$ for
$\Re(\spp)>1$. The {\em Hurwitz relation}, which may be stated in
the form
\begin{gather}\label{eqn:modradsum:constr:Hur_Reln}
     F(\alpha,\spp)
     +\ee(-\spp/2)F(-\alpha,\spp)
     =
     \frac{(-\tpi)^{\spp}}{\Gamma(\spp)}
     \zeta(1-\alpha,1-\spp)
\end{gather}
(cf. \cite{KnoRob_RieFnlEqnLipSum}) implies the identity
\begin{gather}\label{eqn:modradsum:constr:DSn_to_HurZeta}
     \sum_{n\in\ZZ^{\times}}
     \ee(-n\lBZ\chi^{-1}\rBZ\cdot\infty)
     n^{(-\spp)}
     =
     \frac{(-\tpi)^{\spp}}{\Gamma(1-\spp)\Gamma(\spp)}
     \zeta(1-\alpha_{\lBZ\chi\rBZ},1-\spp),
\end{gather}
so the expression (\ref{eqn:modradsum:constr:Defn_DS_chi}) may be
reformulated as
\begin{gather}\label{eqn:modradsum:constr:Zeta_Fn_DS_transform}
     \DS{\lBZ\chi\rBZ}{\atp}{m}(\spp)
     =
     (-1)^{1-\atp}
     \ee(m\lBZ\chi\rBZ\cdot\infty)
     (\fps\rads\lBZ\chi\rBZ)^{1-\spp-\atp}
     |m|^{(1-\spp-2\atp)}
     \frac{(-\tpi)^{\spp}}{\Gamma(1-\spp)\Gamma(\spp)}
     \zeta(1-\alpha_{\lBZ\chi\rBZ},1-\spp).
\end{gather}
We obtain the coincidence of the right hand side of
(\ref{eqn:modradsum:constr:Zeta_Fn_DS_transform}) with
$\DSt{\lBZ\chi\rBZ}{\atp}{m}(1-\spp)$ by applying the functional
equation for the Gamma function, which may be expressed as
\begin{gather}
     (\ee(\spp/2)-\ee(-\spp/2))
     \Gamma(\spp)\Gamma(1-\spp)
     =\tpi
\end{gather}
This completes the proof of the claim.
\end{proof}

We write $\DS{\Gamma,\cp|\cq}{\atp}{m}(\spp)$ for
$\DS{S}{\atp}{m}(\spp)$ in the case that
$S=\lBZ\Sigma_{\cp}^{-1}\Sigma_{\cq}\rBZ^{\times}$ for $\Gamma$ a
group commensurable with $G(\ZZ)$ and for
$\Sigma_{\cp},\Sigma_{\cq}\in \Gamma\backslash G(\QQ)$ scaling
cosets for $\Gamma$ at cusps $\cp,\cq\in\cP_{\Gamma}$.

\subsection{Convergence}\label{sec:modradsum:conver}

Our main objective in this section is to furnish explicit
expressions for the Fourier expansions of the functions
$\zz\mapsto\TS{\Gamma,\cp|\cq}{\atp}{m}(\zz,\spp)$ and
$\zz\mapsto\QS{\Gamma,\cp|\cq}{\atp}{m}(\zz,\spp)$, defined by the
continued Rademacher sums and modified continued Rademacher sums,
respectively. This will yield the Fourier expansions of the modified
Rademacher sums $\QS{\Gamma,\cp|\cq}{\atp}{m}(\zz)$ and the
normalized Rademacher sums $\TS{\Gamma,\cp|\cq}{\atp}{m}(\zz)$. In
preparation for the derivation of these expansions we verify the
absolute and locally uniform convergence of the series defining
$\TS{\Gamma,\cp|\cq}{\atp}{m}(\zz,\spp)$ and
$\QS{\Gamma,\cp|\cq}{\atp}{m}(\zz,\spp)$ in case $\Re(\spp)>1$.
\begin{prop}\label{prop:modradsum:conver:Abs_Conv_sum_TS_chi}
Let $\Gamma$ be a group commensurable with $G(\ZZ)$ and let
$\cp,\cq\in \cP_{\Gamma}$ be cusps of $\Gamma$. Let $\atp,m\in\ZZ$
such that $\atp\leq 0$ and $m<0$. Then the continued
Rademacher sum
\begin{gather}\label{eqn:modradsum:conver:Abs_Conv_sum_TS_chi}
     \TS{\Gamma,\cp|\cq}{\atp}{m}(\zz,\spp)
     =
     \sum_{\lBZ\chi\rBZh\in \lBZ\Sigma_{\cp}^{-1}\Sigma_{\cq}\rBZh}
     \TS{\lBZ\chi\rBZh}{\atp}{m}(\zz,\spp)
\end{gather}
is absolutely convergent, locally uniformly in $\zz$ and $\spp$, for $\zz\in \HH$ and $\Re(\spp)>1$.
\end{prop}
\begin{proof}
We begin by observing that we have
\begin{gather}\label{eqn:modradsum:conver:Mod_Rad_Sum_Term_GExp}
     \TS{\lBZ\chi\rBZh}{\atp}{m}(\zz,\spp)
     =
     \ee(m\lBZ\chi\rBZh\cdot\infty)
     \ee
     (
     m\lBZ\chi\rBZh\cdot\zz
	-
     m\lBZ\chi\rBZh\cdot\infty
     ,\spp-2\atp
     )
     \jac(\lBZ\chi\rBZh,\zz)^{\atp}
\end{gather}
in case $\lBZ\chi\rBZh\in \lBZ G(\QQ)\rBZh^{\times}$, and
$\TS{\lBZ\chi\rBZh}{\atp}{m}(\zz,\spp)=\ee(m\lBZ\chi\rBZh\cdot\zz)\jac(\lBZ\chi\rBZh,\zz)^{\atp}$
otherwise, where $\ee(\zz,\spp)$ denotes the generalized exponential
function of (\ref{eqn:conven:fns:Genzd_Exp}). This identity
(\ref{eqn:modradsum:conver:Mod_Rad_Sum_Term_GExp}) follows from an
application of the {\em Kummer transformation}
$\Phi(a,b,\zz)=\ee(\zz)\Phi(b-a,b,-\zz)$ to the expression
(\ref{eqn:modradsum:constr:Defn_TS_chi}) defining
$\TS{\lBZ\chi\rBZh}{\atp}{m}(\zz,\spp)$. The absolute and locally uniform convergence of
(\ref{eqn:modradsum:conver:Abs_Conv_sum_TS_chi}) can be established by implementing methods present in the proofs of Proposition \ref{prop:radsum:coeff:fcS_conv} and Theorem \ref{thm:radsum:conver:Relate_RS_FR}. Namely, the right hand side of (\ref{eqn:modradsum:conver:Abs_Conv_sum_TS_chi}) is very similar to the quantity $R_+$ defined in (\ref{eqn:radsum:conver:Defn_Rp}) and we can rewrite it so as to arrive at an analogue of (\ref{eqn:radsum:conver:Rconv_1-atp_pos_R+_01}). Then the coefficients of $\ee(n\zz)$ in the resulting sum can be estimated just as the values $\fc{S}{\atp}(m,n)$ are in (\ref{eqn:radsum:coeff:fcS_conv_3}) for $\atp>1$.
\end{proof}

\begin{prop}\label{prop:modradsum:conver:Abs_Conv_sum_TSa_chi}
Let $\Gamma$ be a group commensurable with $G(\ZZ)$ and let
$\cp,\cq\in \cP_{\Gamma}$ be cusps of $\Gamma$. Let $\atp,m\in \ZZ$
such that $\atp\leq 0$ and $m<0$. Then the sum
\begin{gather}
     \sum_{\lBZ\chi\rBZh\in \lBZ\Sigma_{\cp}^{-1}\Sigma_{\cq}\rBZh}
     \TSa{\lBZ\chi\rBZh}{\atp}{m}(\spp)
\end{gather}
is absolutely convergent, locally uniformly in $\spp$, for $\Re(\spp)>1$.
\end{prop}

Propositions \ref{prop:modradsum:conver:Abs_Conv_sum_TS_chi} and
\ref{prop:modradsum:conver:Abs_Conv_sum_TSa_chi} imply the absolute
and locally uniform convergence of the expression defining the modified continued
Rademacher sum $\QS{\Gamma,\cp|\cq}{\atp}{m}(\zz,\spp)$ for $\zz\in
\HH$ and $\Re(\spp)>1$.
\begin{prop}\label{prop:modradsum:conver:Abs_Conv_QS}
Let $\Gamma$ be a group commensurable with $G(\ZZ)$, let $\cp,\cq\in
\cP_{\Gamma}$ be cusps of $\Gamma$, and let $\Sigma_{\cp}$ and
$\Sigma_{\cq}$ be scaling cosets for $\Gamma$ at $\cp$ and $\cq$,
respectively. Let $\atp\in \ZZ$ such that $\atp\leq 0$, and let
$m\in\ZZp$. Then the modified Rademacher sum
\begin{gather}
     \QS{\Gamma,\cp|\cq}{\atp}{m}(\zz,\spp)
     =
     \sum_{\lBZ\chi\rBZh\in \lBZ\Sigma_{\cp}^{-1}\Sigma_{\cq}\rBZh}
     \TS{\lBZ\chi\rBZh}{\atp}{m}(\zz,\spp)
     -
     \TSa{\lBZ\chi\rBZh}{\atp}{m}(\spp)
\end{gather}
converges absolutely, and locally uniformly in $\zz$ and $\spp$, for $\zz\in \HH$ and $\Re(\spp)>1$.
\end{prop}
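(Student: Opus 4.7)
The plan is to deduce this result essentially for free from the two preceding propositions. By definition we have
\begin{gather*}
     \QS{\Gamma,\cp|\cq}{\atp}{m}(\zz,\spp)
     =
     \sum_{\lBZ\chi\rBZh \in \lBZ\Sigma_{\cp}^{-1}\Sigma_{\cq}\rBZh}
     \left(
     \TS{\lBZ\chi\rBZh}{\atp}{m}(\zz,\spp)
     -
     \TSa{\lBZ\chi\rBZh}{\atp}{m}(\spp)
     \right),
\end{gather*}
so by the triangle inequality the modulus of each summand is bounded above by $|\TS{\lBZ\chi\rBZh}{\atp}{m}(\zz,\spp)| + |\TSa{\lBZ\chi\rBZh}{\atp}{m}(\spp)|$.

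The first step is to invoke Proposition \ref{prop:modradsum:conver:Abs_Conv_sum_QS_chi}, which guarantees the absolute convergence of $\sum_{\lBZ\chi\rBZh} |\TS{\lBZ\chi\rBZh}{\atp}{m}(\zz,\spp)|$ for $\zz\in \HH$ and $\Re(\spp)>1$. The second step is to invoke Proposition \ref{prop:modradsum:conver:Abs_Conv_sum_QSa_chi}, which guarantees the absolute convergence of $\sum_{\lBZ\chi\rBZh} |\TSa{\lBZ\chi\rBZh}{\atp}{m}(\spp)|$ in the same range. The sum of two absolutely convergent series is again absolutely convergent, so the sum defining $\QS{\Gamma,\cp|\cq}{\atp}{m}(\zz,\spp)$ is dominated term-by-term by an absolutely convergent series, hence converges absolutely.

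Since both inputs are already stated and proven, there is no genuine obstacle here; this proposition is purely a bookkeeping corollary of the two preceding propositions. The only mild subtlety would be if one wished to track uniformity in $\spp$ on compact subsets of the half-plane $\Re(\spp)>1$ (useful for the later analytic continuation and for justifying the limit $\spp\to 1^+$ in \S\ref{sec:modradsum:constr}), but both bounds used in the proof are uniform on such compacta by the estimates already employed in the proofs of Propositions \ref{prop:modradsum:conver:Abs_Conv_sum_QS_chi} and \ref{prop:modradsum:conver:Abs_Conv_sum_QSa_chi}, so this uniformity passes through to $\QS{\Gamma,\cp|\cq}{\atp}{m}(\zz,\spp)$ as well.
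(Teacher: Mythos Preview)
Your proposal is correct and matches the paper's approach exactly: the paper does not even write out a proof, simply noting before the statement that Propositions \ref{prop:modradsum:conver:Abs_Conv_sum_QS_chi} and \ref{prop:modradsum:conver:Abs_Conv_sum_QSa_chi} together imply the absolute convergence of $\QS{\Gamma,\cp|\cq}{\atp}{m}(\zz,\spp)$. Your triangle-inequality argument is precisely the intended (and only) content here.
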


We next seek to relate the modified Rademacher sums
$\QS{\Gamma,\cp|\cq}{\atp}{m}(\zz,\spp)$ to the functions
$\FR{\Gamma,\cp|\cq}{\atp}{m}(\zz,\spp)$ and
$\DS{\Gamma,\cp|\cq}{\atp}{m}(\spp)$ of \S\ref{sec:modradsum:coeff}
and \S\ref{sec:modradsum:dirser}, respectively.
\begin{prop}\label{prop:modradsum:conver:Eval_sum_QSa}
Let $\Gamma$ be a group commensurable with $G(\ZZ)$, let $\cp,\cq\in
\cP_{\Gamma}$ be cusps of $\Gamma$, with scaling matrices
$\Sigma_{\cp}$ and $\Sigma_{\cq}$, respectively. Let $\atp\in \ZZ$
such that $\atp\leq 0$ and let $m\in \ZZp$. Then we have
\begin{gather}\label{eqn:modradsum:conver:QSa_to_DS}
     \sum_{\lBZ\chi\rBZh\in \lBZ\Sigma_{\cp}^{-1}\Sigma_{\cq}\rBZh}
     \TSa{\lBZ\chi\rBZh}{\atp}{m}(\spp)
     =
     -\DS{\Gamma,\cp|\cq}{\atp}{m}(1-\spp)
\end{gather}
for $\Re(\spp)>1$ where the convergence is absolute and locally uniform.
\end{prop}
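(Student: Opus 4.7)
The plan is to evaluate the left hand side of (\ref{eqn:modradsum:conver:QSa_to_DS}) by first computing $\TSa{\lBZ\chi\rBZh}{\atp}{m}(\spp)$ on a single right coset in terms of the matrix entries of a preferred representative, then grouping right cosets within each double coset to extract a Hurwitz zeta function, and finally matching the double-coset sum with $-\DSt{\Gamma,\cp|\cq}{\atp}{m}(\spp)$. Lemma \ref{lem:modradsum:constr:DS_to_DSp} will then convert the right hand side into $-\DS{\Gamma,\cp|\cq}{\atp}{m}(1-\spp)$, giving the desired identity.

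For a preferred representative $\binom{a\;b}{c\;d}$ of $\chi \in G(\QQ)^{\times\times}$ with $e = \Pdet(\chi)$, the identities (\ref{eqn:conven:isomhyp_jacleftmult}) and (\ref{eqn:conven:cosets_UsefulChiDotTau}) give $\jac(\chi,0) = e/d^2$ and $m\chi\cdot\infty - m\chi\cdot 0 = me/(cd)$. Substituting these into (\ref{eqn:modradsum:constr:Defn_TSa_chi}) and reducing on the principal branch (the outcome being controlled by $\sign(cd)$), the bracketed factor cancels identically when $\sign(cd) = +1$ because $\atp \in \ZZ$ forces $\sin(\pi\atp) = 0$; and for $\sign(cd) = -1$, the identity $1 - \ee(\spp) = -\ee(\spp/2)(\ee(\spp/2) - \ee(-\spp/2))$ collapses the expression to
\begin{gather*}
\TSa{\lBZ\chi\rBZh}{\atp}{m}(\spp) = -\ee(-m\chi\cdot\infty)\,(\tpi m)^{(\spp-2\atp)}\,\rads\lBZ\chi\rBZ^{\spp-\atp}\,(c/|d|)^{\spp}\,(\ee(\spp/2)-\ee(-\spp/2)).
\end{gather*}
The finitely many right cosets with $d = 0$ lie outside $\lBZ G(\QQ)\rBZh^{\times\times}$ and contribute zero by convention.

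Next, within each double coset $\lBZ\chi_0\rBZ$, choose a preferred representative with $c_0 > 0$. Its right cosets are $\lBZ\chi_0 T^k\rBZh$ for $k \in \ZZ$, on which $a$, $c$, $e$, $\ee(-m\chi\cdot\infty)$ and $\rads\lBZ\chi\rBZ$ are constant while $d = d_0 + c_0 k$ varies. Writing $d_0/c_0 = m_0 + \alpha$ with $m_0 \in \ZZ$ and $\alpha = \alpha_{\lBZ\chi_0\rBZ} \in [0,1)$, the surviving cosets (those with $d < 0$) correspond to $k \leq -m_0 - 1$, and reindexing by $n = -k - m_0 - 1 \in \NN$ gives $|d|/c_0 = n + 1 - \alpha$. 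Summing $(c/|d|)^{\spp}$ over these cosets produces exactly $\zeta(1 - \alpha_{\lBZ\chi_0\rBZ}, \spp)$, and collecting the constant factors shows that the contribution of $\lBZ\chi_0\rBZ$ is precisely $-\DSt{\lBZ\chi_0\rBZ}{\atp}{m}(\spp)$ by (\ref{eqn:modradsum:constr:Defn_DS_chi_p}). Summing over double cosets, using absolute convergence from Proposition \ref{prop:modradsum:conver:Abs_Conv_sum_QSa_chi} to justify the rearrangement, and then invoking Lemma \ref{lem:modradsum:constr:DS_to_DSp}, yields (\ref{eqn:modradsum:conver:QSa_to_DS}).

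The main obstacle will be the careful branch-tracking in the first step. The vanishing for $cd > 0$ relies sensitively on $\atp$ being an integer, while for $cd < 0$ the surviving expression must be massaged into the exact form appearing in (\ref{eqn:modradsum:constr:Defn_DS_chi_p}) and, crucially, with the correct sign so that the final identity carries the minus sign on the right hand side. Once this computation is in hand, the remainder of the argument is essentially mechanical.
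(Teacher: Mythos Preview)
Your proposal is correct and follows essentially the same route as the paper: rewrite $\TSa{\lBZ\chi\rBZh}{\atp}{m}(\spp)$ in scalar form, sum over right cosets within each double coset to produce a Hurwitz zeta factor, match with $-\DSt{\lBZ\chi\rBZ}{\atp}{m}(\spp)$, and finish with Lemma~\ref{lem:modradsum:constr:DS_to_DSp}. The one organizational difference is that you first observe the termwise vanishing when $cd>0$ (using $\sin(\pi\atp)=0$) and then sum only the $cd<0$ cosets, whereas the paper keeps both terms $(-\chi^{-1}\cdot\infty)^{-\spp}$ and $\ee(\spp/2)(\chi^{-1}\cdot\infty)^{-\spp}$ throughout and lets the cancellation occur inside the evaluation $\sum_n^{*}\bigl[(n+\alpha)^{-\spp}-\ee(\spp/2)(n-\alpha)^{-\spp}\bigr]=(\ee(-\spp/2)-\ee(\spp/2))\zeta(1-\alpha,\spp)$; your version makes that cancellation explicit a step earlier but is otherwise the same computation.
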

\begin{proof}
Set $U=\lBZ\Sigma_{\cp}^{-1}\Sigma_{\cq}\rBZh$ and
$S=\lBZ\Sigma_{\cp}^{-1}\Sigma_{\cq}\rBZ^{\times}$. Recall that
$\TSa{\lBZ\chi\rBZh}{\atp}{m}(\spp)=0$ when $\lBZ\chi\rBZh\notin
\lBZ G(\QQ)\rBZh^{\times\times}$, and otherwise
\begin{gather}
     \begin{split}
     \TSa{\lBZ\chi\rBZh}{\atp}{m}(\spp)
     =&
     \ee(m\lBZ\chi\rBZh\cdot\infty)
     \left(
     \tpi
     (
     m\lBZ\chi\rBZh\cdot 0
     -
     m\lBZ\chi\rBZh\cdot\infty
     )
     \right)^{(\spp-2\atp)}
     {\jac(\lBZ\chi\rBZh,0)^{\atp}}\\
     &-\ee(\spp/2)
     \ee(m\lBZ\chi\rBZh\cdot\infty)
     \left(
     \tpi
     (
     m\lBZ\chi\rBZh\cdot \infty
     -
     m\lBZ\chi\rBZh\cdot 0
     )
     \right)^{(\spp-2\atp)}
     {\jac(\lBZ\chi\rBZh,0)^{\atp}},
     \end{split}
\end{gather}
by the definition (cf. \S\ref{sec:modradsum:constr}) of
$\TSa{\lBZ\chi\rBZh}{\atp}{m}(\spp)$. We apply the identities
$\chi\cdot\infty-\chi\cdot 0=\rads(\chi)/(-\chi^{-1}\cdot\infty)$
and $\jac(\lBZ\chi\rBZh,0)=\rads(\chi)/(-\chi^{-1}\cdot\infty)^2$
(cf. \S\ref{sec:conven:cosets}) so as to write
\begin{gather}
     \TSa{\lBZ\chi\rBZh}{\atp}{m}(\spp)
     =
     \ee(m\lBZ\chi\rBZ\cdot\infty)
     \rads\lBZ\chi\rBZ^{\spp-\atp}
     (-\tpi m)^{(\spp-2\atp)}
     (
     (-\lBZh\chi^{-1}\rBZ\cdot\infty)^{-\spp}
     -\ee(\spp/2)
     (\lBZh\chi^{-1}\rBZ\cdot\infty)^{-\spp}
     ).
\end{gather}
Let $\lBZ\chi\rBZ\in \lBZ G(\QQ)\rBZ^{\times}$. Then we have
\begin{gather}
     \sum_{\lBZ\chi\rBZh\in \lBZ\chi\rBZ}
     \TSa{\lBZ\chi\rBZh}{\atp}{m}(\spp)
     =
     \ee(m\lBZ\chi\rBZ\cdot\infty)
     \rads\lBZ\chi\rBZ^{\spp-\atp}
     (-\tpi m)^{(\spp-2\atp)}
     {\sum_{n\in \ZZ}}^*
     (
     (n+\alpha_{\lBZ\chi\rBZ})^{-\spp}
     -\ee(\spp/2)
     (n-\alpha_{\lBZ\chi\rBZ})^{-\spp}
     )
\end{gather}
where $\alpha_{\lBZ\chi\rBZ}\in \QQ$ is chosen so that $0\leq
\alpha_{\lBZ\chi\rBZ}<1$ and
$\alpha_{\lBZ\chi\rBZ}+\ZZ=-\chi^{-1}\cdot\infty+\ZZ$, and the
superscript in the summation $\sum_{n\in\ZZ}^*$ indicates to omit
the term corresponding to $n=0$ in case $\alpha_{\lBZ\chi\rBZ}=0$.
We compute
\begin{gather}
     {\sum_{n\in \ZZ}}^*
     (
     (n+\alpha_{\lBZ\chi\rBZ})^{-\spp}
     -\ee(\spp/2)
     (n-\alpha_{\lBZ\chi\rBZ})^{-\spp}
     )
     =
     (\ee(-\spp/2)-\ee(\spp/2))
     \zeta(1-\alpha_{\lBZ\chi\rBZ},\spp)
\end{gather}
where $\zeta(\alpha,\spp)$ denotes the Hurwitz zeta function (cf.
\S\ref{sec:modradsum:dirser}), so we have
\begin{gather}\label{eqn:modradsum:conver:QSa_Hur_Zeta}
     \sum_{\lBZ\chi\rBZh\in U^{\times\times}}
     \TSa{\lBZ\chi\rBZh}{\atp}{m}(\spp)
     =
     \sum_{\lBZ\chi\rBZ\in S^{\times}}
     \ee(m\lBZ\chi\rBZ\cdot\infty)
     \rads\lBZ\chi\rBZ^{\spp-\atp}
     (-\tpi m)^{(\spp-2\atp)}
     (\ee(-\spp/2)-\ee(\spp/2))
     \zeta(1-\alpha_{\lBZ\chi\rBZ},\spp).
\end{gather}
Comparing with the expression
(\ref{eqn:modradsum:constr:Defn_DS_chi_p}) defining
$\DS{\lBZ\chi\rBZ}{\atp}{m}(\spp)'$ we see that the right hand side
of (\ref{eqn:modradsum:conver:QSa_Hur_Zeta}) coincides with
$-\DS{S^{\times}}{\atp}{m}(\spp)'$. Now Lemma
\ref{lem:modradsum:constr:DS_to_DSp} yields the required identity
(\ref{eqn:modradsum:conver:QSa_to_DS}). This completes the proof.
\end{proof}

\begin{thm}\label{thm:modradsum:conver:Relate_QS_DS_FR}
Let $\Gamma$ be a group commensurable with $G(\ZZ)$ and let
$\cp,\cq\in \cP_{\Gamma}$ be cusps of $\Gamma$. Let $\atp,m\in \ZZ$
such that $\atp\leq 0$ and $m<0$. Then we have
\begin{gather}\label{eqn:modradsum:conver:Relate_QS_DS_FR}
     \QS{\Gamma,\cp|\cq}{\atp}{m}(\zz,\spp)
     =
     \delta_{\Gamma,\cp|\cq}\ee(-m\zz)
     +
     \DS{\Gamma,\cp|\cq}{\atp}{m}(1-\spp)
     +
     \FR{\Gamma,\cp|\cq}{\atp}{m}(\zz,\spp)_{\rm van}
\end{gather}
for $\zz\in \HH$ and $\Re(\spp)>1$.
\end{thm}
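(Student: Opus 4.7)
The plan is to decompose $\QS{\Gamma,\cp|\cq}{\atp}{m}(\zz,\spp) = \TS{\Gamma,\cp|\cq}{\atp}{m}(\zz,\spp) - \sum_{\lBZ\chi\rBZh \in U}\TSa{\lBZ\chi\rBZh}{\atp}{m}(\spp)$, where $U = \lBZ\Sigma_{\cp}^{-1}\Sigma_{\cq}\rBZh$, and treat the two pieces separately. The subtraction term is already supplied by Proposition \ref{prop:modradsum:conver:Eval_sum_QSa}, which identifies $-\sum_{\lBZ\chi\rBZh \in U}\TSa{\lBZ\chi\rBZh}{\atp}{m}(\spp)$ with $\DS{\Gamma,\cp|\cq}{\atp}{m}(1-\spp)$, while Proposition \ref{prop:modradsum:conver:Abs_Conv_QS} guarantees absolute convergence of every series appearing for $\Re(\spp)>1$. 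It therefore remains to identify $\TS{\Gamma,\cp|\cq}{\atp}{m}(\zz,\spp)$ with $\delta_{\Gamma,\cp|\cq}\ee(-m\zz) + \FR{\Gamma,\cp|\cq}{\atp}{m}(\zz,\spp)_{\rm van}$.

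For this, split the index set $U$ into $U_\infty \cup U^\times$. Now $\Sigma_\cp^{-1}\Sigma_\cq \cap B(\QQ)$ is non-empty only if $\cp = \cq$ (by the observation at the end of \S\ref{sec:conven:scaling}), and in that event $(\Sigma_\cq^{-1}\Sigma_\cq)_\infty = B(\ZZ)$ by the defining property of a scaling coset, so the contribution of $U_\infty$ to $\TS{\Gamma,\cp|\cq}{\atp}{m}(\zz,\spp)$ collapses to the single term $\ee(-m\zz)$, carried by the indicator $\delta_{\Gamma,\cp|\cq}$. For the contribution from $U^\times$, group terms by double cosets: the sum takes the form $\sum_{\lBZ\chi\rBZ \in S}\sum_{n\in \ZZ}\TS{\lBZ\chi T^n\rBZh}{\atp}{m}(\zz,\spp)$, where $S = \lBZ\Sigma_\cp^{-1}\Sigma_\cq\rBZ^\times$, and for each $\lBZ\chi\rBZ \in S$ the right cosets $\lBZ\chi T^n\rBZh$, $n\in \ZZ$, exhaust the decomposition of the double coset $B(\ZZ)\chi B(\ZZ)$ without repetition (as the stabilizer $\chi^{-1}B(\ZZ)\chi \cap B(\ZZ)$ is trivial for $\chi\in G(\QQ)^\times$).

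For each fixed $\chi$, the Kummer-transformed expression (\ref{eqn:modradsum:conver:Mod_Rad_Sum_Term_GExp}), combined with (\ref{eqn:conven:cosets_UsefulChiDotTau}), exhibits $\TS{\lBZ\chi T^n\rBZh}{\atp}{m}(\zz,\spp)$ as an explicit series in the powers $(\zz+n-\chi^{-1}\cdot\infty)^{-(k+\spp)}$, $k\in\NN$, with coefficients depending on $\chi$ and $\spp$ but not on $n$. The absolute convergence supplied by Proposition \ref{prop:modradsum:conver:Abs_Conv_sum_QS_chi} justifies interchanging summations so as to bring the sum over $n\in \ZZ$ inside, and the Lipschitz summation formula (\ref{eqn:radsum:conver:Lipschitz_spp>1}) evaluates each $\sum_{n\in \ZZ}(\zz+n-\chi^{-1}\cdot\infty)^{-(k+\spp)}$ as a Fourier series $\sum_{n'\in \ZZp}(\cdots)\ee(n'\zz)$; the hypothesis $\Re(\spp)>1$ ensures $\Re(k+\spp)>1$ for every $k\in\NN$, so that the error term $\lambda_K$ of Lemma \ref{lem:radsum:conver:Refinement_Lipschitz} does not arise. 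Resumming over $k \in \NN$ and over $\lBZ\chi\rBZ \in S$, and comparing with the defining expression (\ref{eqn:radsum:coeff:Pow_Ser_fcS_1-s_pos_cont}), recovers $\FR{\Gamma,\cp|\cq}{\atp}{m}(\zz,\spp)_{\rm van}$.

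The main technical obstacle will be the careful bookkeeping of the factors of $\tpi$, the Gamma-function denominators, and the sign $(-1)^\atp$, so that the coefficients emerging from the Lipschitz evaluation land precisely on the kernel $\Bf^\atp_{\lBZ\chi\rBZ}(m,n',\spp)$ appearing in $\fc{S}{\atp}(m,n',\spp)$. This amounts to a complex-parameter analogue of the $\spp = 1$ computation underlying the proof of Theorem \ref{thm:radsum:conver:Relate_RS_FR} (cf. Lemma 4.2 of \cite{Nie_ConstAutInts}); absolute convergence issues are uniformly resolved by Propositions \ref{prop:modradsum:conver:Abs_Conv_sum_QS_chi} and \ref{prop:modradsum:constr:Conver_FRs_van} throughout the region $\Re(\spp)>1$.
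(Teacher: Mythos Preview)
Your proposal is correct and matches the paper's own argument in all essentials: the decomposition $\QS = \TS - \sum\TSa$, the appeal to Proposition~\ref{prop:modradsum:conver:Eval_sum_QSa} for the $\TSa$-sum, the handling of $U_\infty$ via the scaling-coset property, and the identification of the $U^\times$-part with $\FR{}{\atp}{m}(\zz,\spp)_{\rm van}$ by means of the Lipschitz summation formula~(\ref{eqn:radsum:conver:Lipschitz_spp>1}) all appear in the paper's proof. The only cosmetic difference is direction: you expand each $\TS{\lBZ\chi T^n\rBZh}{\atp}{m}$ and then apply Lipschitz to collapse the $n$-sum into a Fourier series (the Niebur direction), while the paper starts from the triple-sum defining $\FR{}{\atp}{m}(\zz,\spp)_{\rm van}$ and applies Lipschitz in reverse to rebuild $\sum_{U^\times}\TS{\lBZ\chi\rBZh}{\atp}{m}$ (the Rademacher direction); the paper itself remarks that these are the two standard routes, and neither offers any advantage here since $\Re(\spp)>1$ keeps everything absolutely convergent.
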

\begin{proof}
Let $m<0<\atp$, let $\spp\in\CC$ with $\Re(\spp)>1$, set
$U=\lBZ\Sigma_{\cp}^{-1}\Sigma_{\cq}\rBZh$ and set
$S=\lBZ\Sigma_{\cp}^{-1}\Sigma_{\cq}\rBZ^{\times}$. By the defining
properties of scaling cosets (cf. \S\ref{sec:conven:scaling}) and by
the definition (\ref{eqn:modradsum:constr:Defn_TS_chi}) of the
continued Rademacher component function
$\TS{\lBZ\chi\rBZh}{1-\atp}{m}(\zz,\spp)$ we have
\begin{gather}
     \sum_{\lBZ\chi\rBZh\in U}
     \TS{\lBZ\chi\rBZh}{1-\atp}{m}(\zz,\spp)
     =\delta_{\Gamma,\cp|\cq}
     \ee(m\zz)
     +
     \sum_{\lBZ\chi\rBZh\in U^{\times}}
     \TS{\lBZ\chi\rBZh}{1-\atp}{m}(\zz,\spp)
\end{gather}
so by Proposition \ref{prop:modradsum:conver:Eval_sum_QSa}, and the
definition of the modified continued Rademacher component function
$\QS{\lBZ\chi\rBZh}{1-\atp}{m}(\zz,\spp)$, it suffices for us to
show that
\begin{gather}\label{eqn:modradsum:conver:TScont_to_FRcont}
     \sum_{\lBZ\chi\rBZh\in
     U^{\times}}
     \TS{\lBZ\chi\rBZh}{1-\atp}{m}(\zz,\spp)
     =\FR{S}{1-\atp}{m}(\zz,\spp)_{\rm van}.
\end{gather}
We will verify the equality of
(\ref{eqn:modradsum:conver:TScont_to_FRcont}) by using the Lipschitz
summation formula (\ref{eqn:radsum:conver:Lipschitz_spp>1}) to
transform the expression on the right hand side of
(\ref{eqn:modradsum:conver:TScont_to_FRcont}) into that on the left.
This is essentially the approach employed originally by Rademacher
in \cite{Rad_FuncEqnModInv}, and the reverse of that employed by
Niebur in Lemma 4.2 of \cite{Nie_ConstAutInts}, except that we can
avoid the technical difficulties of \cite{Rad_FuncEqnModInv} and
\cite{Nie_ConstAutInts}, such as the need to employ the identity
(\ref{eqn:radsum:conver:Lipschitz_spp=1}), by working only with
the versions of the Lipschitz summation formula (viz., $\Re(\spp)>1$) in which both sides of the identity are absolutely and locally uniformly convergent series.

We begin by inspecting Lemma
\ref{lem:radsum:coeff:Power_Series_fcS_cont} to find that
\begin{gather}
     \begin{split}
     &(-1)^{1-\atp}
     \fc{S}{1-\atp}(m,n,\spp)\\
     &=
     \sum_{\lBZ\chi\rBZ\in S}
     \sum_{k\geq 0}
     \ee(m\lBZ\chi\rBZ\cdot\infty)
     \ee(-n\lBZ\chi^{-1}\rBZ\cdot\infty)
     (4\pi^2\rads\lBZ\chi\rBZ)^{k+\atp+\spp-1}
     |m|^{(k+2\atp+\spp-2)}
     n^{(k+\spp-1)},
     \end{split}
\end{gather}
As a shorthand let us set $F_{\rm
van}=\FR{S}{1-\atp}{m}(\zz,\spp)_{\rm van}$. Then we have the
following expression for $F_{\rm van}$ as a triple sum
\begin{gather}\label{eqn:modradsum:conver:Fvan_nchik}
     \begin{split}
     &(-1)^{1-\atp}
     F_{\rm van}
     =\\
     &\sum_{n>0}
     \sum_{\lBZ\chi\rBZ\in S}^{\lim}
     \sum_{k\geq 0}
          \ee(m\lBZ\chi\rBZ\cdot\infty)
          \ee(-n\lBZ\chi^{-1}\rBZ\cdot\infty)
          (4\pi^2 \rads\lBZ\chi\rBZ)^{k+\atp+\spp-1}
          |m|^{(k+2\atp+\spp-2)}n^{(k+\spp-1)}\ee(n\zz)
     \end{split}
\end{gather}
where $\sum_{\lBZ\chi\rBZ\in S}^{\lim}$ is a shorthand for $\lim_{K\to \infty}\sum_{\lBZ\chi\rBZ\in S_{\leq K}}$. We now move the summation over $n$ past the other two summations,
simultaneously pulling the terms $\ee(m\lBZ\chi\rBZ\cdot\infty)$
outside the summation over $k$, and combining the exponents
involving $n$. We thus obtain
\begin{gather}\label{eqn:modradsum:conver:Fvan_chikn}
     \begin{split}
     &(-1)^{1-\atp}F_{\rm van}
     =\\
     &\sum_{\lBZ\chi\rBZ\in S}^{\lim}
          \ee(m\lBZ\chi\rBZ\cdot\infty)
     \sum_{k\geq 0}
          (4\pi^2 \rads\lBZ\chi\rBZ)^{k+\atp+\spp-1}
          |m|^{(k+2\atp+\spp-2)}
               \sum_{n>0}
               n^{(k+\spp-1)}
      \ee(n(\zz-\lBZ\chi^{-1}\rBZ\cdot\infty)).
      \end{split}
\end{gather}
Applying the Lipschitz summation formula
(\ref{eqn:radsum:conver:Lipschitz_spp>1}) to each summation over $n$
in (\ref{eqn:modradsum:conver:Fvan_chikn}) we find that
\begin{gather}
     \begin{split}
     &(-1)^{1-\atp}F_{\rm van}=\\
    &\sum_{\lBZ\chi\rBZ\in S}^{\lim}
          \ee(m\lBZ\chi\rBZ\cdot\infty)
     \sum_{k\geq 0}
          (\fps\rads\lBZ\chi\rBZ)^{\atp-1}
          |m|^{(k+2\atp+\spp-2)}
     \sum_{n\in\ZZ}
     (\tpi\rads\lBZ\chi\rBZ)^{k+\spp}
     (\zz+n-\lBZh\chi^{-1}\rBZ\cdot\infty)^{-k-\spp}
     \end{split}
\end{gather}
where $\lBZh \chi^{-1}\rBZ $ is any (left) coset of $B(\ZZ)$ in the
double coset $\lBZ\chi^{-1}\rBZ$, for each $\lBZ\chi\rBZ\in S$.
Using the identity (\ref{eqn:conven:isomhyp:jac_rads_over_zchi}) we
write $F_{\rm van}$ as $F_{\rm van}=\sum_{\lBZ\chi\rBZ\in
S}^{\lim}\ee(m\lBZ\chi\rBZ\cdot\infty)F_{{\rm van},\lBZ\chi\rBZ}$
where
\begin{gather}
     F_{{\rm van},\lBZ\chi\rBZ}
     =
     \sum_{k\geq 0}
     \sum_{n\in\ZZ}
     (-\tpi\rads\lBZ\chi\rBZ m)^{k+2\atp+\spp-2}
     (\zz+n-\lBZh\chi^{-1}\rBZ\cdot\infty)^{-k-2\atp-\spp+2}
     \jac(\lBZ\chi\rBZh,\zz+n)^{1-\atp}.
\end{gather}
Recalling the definition of the generalized exponential function
$\ee(\zz,\spp)$ from (\ref{eqn:conven:fns:Genzd_Exp}) we see that
\begin{gather}\label{eqn:modradsum:conver:Fvan_sum_over_U}
     F_{\rm van}
     =
     \sum_{\lBZ\chi\rBZh\in U^{\times}}
          \ee(m\lBZ\chi\rBZ\cdot\infty)
               \ee
               \left(
                    \frac{-\rads\lBZ\chi\rBZ m}
                    {\zz-\lBZh \chi^{-1}\rBZ \cdot\infty}
                    ,
                    \spp-2(1-\atp)
               \right)
     \jac(\lBZ\chi\rBZh,\zz)^{1-\atp}.
\end{gather}
According to (\ref{eqn:conven:cosets_UsefulChiDotTau}) we have
$\rads\lBZ\chi\rBZ/(\zz-\chi^{-1}\cdot\infty)=\chi\cdot\infty-\chi\cdot\zz$.
Using this identity together with the Kummer transformation
$\Phi(a,b,\zz)=\ee(\zz)\Phi(b-a,b,-\zz)$ (cf.
(\ref{eqn:conven:fns:Defn_Phi})) we readily find that
\begin{gather}
     \ee(m\lBZ\chi\rBZ\cdot\infty)
     \ee\left(
     \frac{-\rads\lBZ\chi\rBZ m}
     {\zz-\lBZh \chi^{-1}\rBZ \cdot\infty}
     ,
     \spp-2(1-\atp)
     \right)
     =
     \ee(m\lBZ\chi\rBZ\cdot\zz)
     \Treg{1-\atp}(m,\lBZ\chi\rBZh,\zz,\spp),
\end{gather}
so that we have
\begin{gather}
     F_{\rm van}
     =
     \sum_{\lBZ\chi\rBZh\in U^{\times}}
     \TS{\lBZ\chi\rBZh}{1-\atp}{m}(\zz,\spp).
\end{gather}
This establishes the required identity
(\ref{eqn:modradsum:conver:TScont_to_FRcont}).
\end{proof}
In the course of proving Theorem
\ref{thm:modradsum:conver:Relate_QS_DS_FR} we have also established
the Fourier expansion of the continued Rademacher sums
$\TS{\Gamma,\cp|\cq}{\atp}{m}(\zz,\spp)$. We record the result as
follows.
\begin{thm}\label{thm:modradsum:conver:Relate_TS_FR}
Let $\Gamma$ be a group commensurable with $G(\ZZ)$ and let
$\cp,\cq\in \cP_{\Gamma}$ be cusps of $\Gamma$. Let $\atp,m\in \ZZ$ such that $\atp\leq 0$ and $m<0$. Then we have
\begin{gather}\label{eqn:modradsum:conver:Relate_TS_FR}
     \TS{\Gamma,\cp|\cq}{\atp}{m}(\zz,\spp)
     =
     \delta_{\Gamma,\cp|\cq}\ee(m\zz)
     +
     \FR{\Gamma,\cp|\cq}{\atp}{m}(\zz,\spp)_{\rm van}
\end{gather}
for $\zz\in \HH$ and $\Re(\spp)>1$.
\end{thm}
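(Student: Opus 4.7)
The plan is to observe that this result is essentially a corollary of the work already carried out in the proof of Theorem \ref{thm:modradsum:conver:Relate_QS_DS_FR}. Set $U=\lBZ\Sigma_{\cp}^{-1}\Sigma_{\cq}\rBZh$. First I would split the absolutely convergent sum defining $\TS{\Gamma,\cp|\cq}{\atp}{m}(\zz,\spp)$ (absolute convergence for $\Re(\spp)>1$ being guaranteed by Proposition \ref{prop:modradsum:conver:Abs_Conv_sum_QS_chi}) according to the decomposition $U=U_{\infty}\sqcup U^{\times}$.

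For the contribution from $U_{\infty}$, recall from \S\ref{sec:modradsum:constr} that $\Treg{\atp}(m,\lBZ\chi\rBZh,\zz,\spp)=1$ when $\lBZ\chi\rBZh$ lies in $U_{\infty}$, so each such summand reduces to $\ee(-m\lBZ\chi\rBZh\cdot\zz)\jac(\lBZ\chi\rBZh,\zz)^{\atp}=\ee(-m\zz)$ by the defining properties of scaling cosets recalled in \S\ref{sec:conven:scaling}. Moreover $U_{\infty}$ is either empty or a single coset, and is non-empty precisely when $\cp=\cq$; this yields exactly the $\delta_{\Gamma,\cp|\cq}\ee(-m\zz)$ term.

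For the contribution from $U^{\times}$, I would simply invoke the identity
\begin{gather*}
\sum_{\lBZ\chi\rBZh\in U^{\times}}\TS{\lBZ\chi\rBZh}{\atp}{m}(\zz,\spp)=\FR{\Gamma,\cp|\cq}{\atp}{m}(\zz,\spp)_{\rm van},
\end{gather*}
which is precisely (\ref{eqn:modradsum:conver:TScont_to_FRcont}), established in the proof of Theorem \ref{thm:modradsum:conver:Relate_QS_DS_FR} by applying the Lipschitz summation formula (\ref{eqn:radsum:conver:Lipschitz_spp>1}) to the coefficient expansion of Lemma \ref{lem:radsum:coeff:Power_Series_fcS_cont} and repackaging via the Kummer transformation (\ref{eqn:conven:fns:Kummer_Xform}) together with (\ref{eqn:conven:cosets_UsefulChiDotTau}) and (\ref{eqn:conven:isomhyp:jac_rads_over_zchi}). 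Combining these two contributions produces the required formula (\ref{eqn:modradsum:conver:Relate_TS_FR}).

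The only potentially subtle step is the Lipschitz-summation manipulation used to prove (\ref{eqn:modradsum:conver:TScont_to_FRcont}), but this is exactly what made the previous theorem work, and in the regime $\Re(\spp)>1$ every sum in sight converges absolutely (by Propositions \ref{prop:modradsum:conver:Abs_Conv_sum_QS_chi} and \ref{prop:modradsum:constr:Conver_FRs_van}), so there is no need for the error-term analysis of Lemma \ref{lem:radsum:conver:Refinement_Lipschitz} that becomes essential in the boundary case $\spp=1$ treated in Theorem \ref{thm:radsum:conver:Relate_RS_FR}. Hence this is, in effect, the easier analogue of Theorem \ref{thm:radsum:conver:Relate_RS_FR} under the cover of analytic continuation, and no new technical obstacle arises.
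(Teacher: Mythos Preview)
Your proposal is correct and matches the paper's own treatment essentially verbatim: the paper states this theorem immediately after Theorem~\ref{thm:modradsum:conver:Relate_QS_DS_FR} with the remark that its proof has already established the required Fourier expansion, the key ingredients being precisely the $U_{\infty}$ contribution computed from the scaling-coset properties and the identity~(\ref{eqn:modradsum:conver:TScont_to_FRcont}) for the $U^{\times}$ part. Your additional commentary on absolute convergence and the avoidance of the error-term analysis at $\spp=1$ is accurate and makes explicit what the paper leaves implicit.
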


Theorem \ref{thm:modradsum:conver:Relate_QS_DS_FR} will facilitate
the identification of the Fourier expansion of the modified
Rademacher sum $\QS{\Gamma,\cp|\cq}{\atp}{m}(\zz)$ obtained by
sending $\spp$ to $1$ in $\QS{\Gamma,\cp|\cq}{\atp}{m}(\zz,\spp)$.
Indeed, we will see presently (cf. Proposition
\ref{prop:modradsum:conver:Relate_QS_FR_s=1}) that the series
$\FS{\Gamma,\cp|\cq}{\atp}{m}(\vq)_{\rm reg}$ encodes the regular
part of the fourier series expansion of
$\QS{\Gamma,\cp|\cq}{\atp}{m}(\zz)$ precisely, in the case that
$\atp\leq 0$. In preparation for this we state the following
result which identifies the value of the zeta function
$\DS{\Gamma,\cp|\cq}{\atp}{m}(\spp)$ at $\spp=0$.
\begin{prop}\label{prop:modradsum:conver:DS_s=0}
Let $S\subset\lBZ G(\QQ)\rBZ^{\times}$, let $\atp,m\in\ZZ$ such that $\atp\leq 0$ and $m<0$. Then we have
\begin{gather}
     \DS{S}{\atp}{m}(0)
     =\fc{S}{\atp}(m,0).
\end{gather}
\end{prop}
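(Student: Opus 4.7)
The plan is to evaluate $\DS{S}{\atp}{m}(0)$ using the explicit analytic continuation of $\DS{\lBZ\chi\rBZ}{\atp}{m}(\spp)$ derived in the proof of Lemma \ref{lem:modradsum:constr:DS_to_DSp}, and then compare the result to the $n=0$ specialisation of the formula of Lemma \ref{lem:radsum:coeff:Power_Series_fcS}.

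First I would invoke the identity
\[
\DS{\lBZ\chi\rBZ}{\atp}{m}(\spp)
=
(-1)^{1-\atp}\,\ee(-m\lBZ\chi\rBZ\cdot\infty)\,
(\fps\rads\lBZ\chi\rBZ)^{1-\spp-\atp}\,
m^{(1-\spp-2\atp)}\,
\frac{(-\tpi)^\spp}{\Gamma(1-\spp)\Gamma(\spp)}\,
\zeta(1-\alpha_{\lBZ\chi\rBZ},1-\spp),
\]
displayed in (\ref{eqn:modradsum:constr:Zeta_Fn_DS_transform}), which furnishes the analytic continuation of $\DS{\lBZ\chi\rBZ}{\atp}{m}$ to all of $\CC$. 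I would then let $\spp \to 0$. Every factor other than $\zeta(1-\alpha_{\lBZ\chi\rBZ},1-\spp)/\Gamma(\spp)$ is regular at $\spp = 0$ and specialises to its value there. For the indeterminate quotient I use that $1/\Gamma(\spp)\sim \spp$ as $\spp\to 0$, while the Hurwitz zeta satisfies $\zeta(c,1-\spp)\sim -1/\spp$ (its simple pole at $s = 1$ with residue $1$ translated to $\spp = 0$), giving a limit of $-1$ uniformly in $\alpha_{\lBZ\chi\rBZ}\in[0,1)$ (the case $\alpha_{\lBZ\chi\rBZ} = 0$ reduces to the Riemann zeta, with the same pole structure). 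Together these produce
\[
\DS{\lBZ\chi\rBZ}{\atp}{m}(0)
=
(-1)^{\atp}\,\ee(-m\lBZ\chi\rBZ\cdot\infty)\,(\fps\rads\lBZ\chi\rBZ)^{1-\atp}\,m^{(1-2\atp)}.
\]

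Summing over $\lBZ\chi\rBZ\in S$ then delivers $\DS{S}{\atp}{m}(0)$. To match this with $\fc{S}{\atp}(m,0)$ I would specialise (\ref{eqn:radsum:coeff:Pow_Ser_fcS_1-s_pos}) from Lemma \ref{lem:radsum:coeff:Power_Series_fcS} at $n = 0$: because $n^{(k)}$ vanishes whenever $k\geq 1$, the inner sum over $k\in\NN$ collapses to its $k = 0$ term alone, producing exactly the same expression. This yields $\DS{S}{\atp}{m}(0) = \fc{S}{\atp}(m,0)$.

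The delicate step is the indeterminate-form limit of step two, resolved by the standard pole/zero cancellation between $\Gamma$ and Hurwitz zeta; the rest is algebraic bookkeeping. A mild auxiliary point is the commutation of the sum over $S$ with analytic continuation from $\Re(\spp) > 1$ to $\spp = 0$, which is legitimate because $\DS{S}{\atp}{m}(\spp)$ is by definition the sum over $S$ of the individually continuable terms $\DS{\lBZ\chi\rBZ}{\atp}{m}(\spp)$, each of which has its value at $\spp = 0$ intrinsically determined through the formula above.
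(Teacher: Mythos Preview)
Your proof is correct and follows essentially the same route as the paper: both evaluate $\DS{\lBZ\chi\rBZ}{\atp}{m}(0)$ term by term via the analytic continuation established in the proof of Lemma~\ref{lem:modradsum:constr:DS_to_DSp}, and then match the result against the $n=0$ specialisation of~(\ref{eqn:radsum:coeff:Pow_Ser_fcS_1-s_pos}). The only difference is cosmetic: the paper uses the periodic-zeta expression~(\ref{eqn:modradsum:constr:Zeta_Fn_Per_Riem_Zeta_Fn}) together with the identity $F(\alpha,0)+F(-\alpha,0)=-1$, whereas you use the equivalent Hurwitz-zeta form~(\ref{eqn:modradsum:constr:Zeta_Fn_DS_transform}) and extract the value at $\spp=0$ from the standard pole of $\zeta(c,\,\cdot\,)$ at $1$ cancelling the zero of $1/\Gamma$ at $0$; these are two sides of the Hurwitz relation~(\ref{eqn:modradsum:constr:Hur_Reln}) and yield the same constant $-1$.
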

\begin{proof}
According to the proof of Lemma \ref{lem:modradsum:constr:DS_to_DSp}
the function $\Gamma(\spp)\DS{\lBZ\chi\rBZ}{\atp}{m}(\spp)$
coincides with
\begin{gather}\label{eqn:modradsum:constr:Zeta_Fn_Per_Riem_Zeta_Fn}
     (-1)^{1-\atp}
     \ee(m\lBZ\chi\rBZ\cdot\infty)
     (\fps\rads\lBZ\chi\rBZ)^{1-\spp-\atp}
     |m|^{(1-\spp-2\atp)}
     (F(\alpha_{\lBZ\chi\rBZ},\spp)
     +
     \ee(-\spp/2)F(-\alpha_{\lBZ\chi\rBZ},\spp))
\end{gather}
where $0\leq \alpha_{\lBZ\chi\rBZ}<1$ and
$\alpha_{\lBZ\chi\rBZ}+\ZZ=-\chi^{-1}\cdot\infty+\ZZ$. We have
$F(\alpha,0)+F(-\alpha,0)=-1$ for $\alpha\in \RR$ (cf.
\cite{KnoRob_RieFnlEqnLipSum}), so taking $\spp=0$ in
(\ref{eqn:modradsum:constr:Zeta_Fn_Per_Riem_Zeta_Fn}) we obtain
\begin{gather}
     \DS{\lBZ\chi\rBZ}{\atp}{m}(0)
     =
     \ee(m\lBZ\chi\rBZ\cdot\infty)
     (-\fps\rads\lBZ\chi\rBZ)^{1-\atp}
     m^{(1-2\atp)}.
\end{gather}
Upon inspection of (\ref{eqn:radsum:coeff:Pow_Ser_fcS_1-s_pos}) we
see that
\begin{gather}
     \fc{S}{\atp}(m,0)=
     \sum_{\lBZ\chi\rBZ\in S}
     \ee(m\lBZ\chi\rBZ\cdot\infty)
     (-\fps\rads\lBZ\chi\rBZ)^{1-\atp}
     m^{(1-2\atp)}
\end{gather}
for $\atp\leq 0$, and the claim now follows since
$\DS{S}{\atp}{m}(\spp)=\sum_{\lBZ\chi\rBZ\in
S}\DS{\lBZ\chi\rBZ}{\atp}{m}(\spp)$ by definition.
\end{proof}

\begin{prop}\label{prop:modradsum:conver:Relate_QS_FR_s=1}
Let $\Gamma$ be a group commensurable with $G(\ZZ)$, and let
$\cp,\cq\in \cP_{\Gamma}$ be cusps of $\Gamma$. Let $\atp,m\in \ZZ$
such that $\atp\leq 0$ and $m<0$. Then we have
\begin{gather}\label{eqn:modradsum:conver:Relate_QS_FR_s=1}
     \QS{\Gamma,\cp|\cq}{\atp}{m}(\zz)
     =
     \delta_{\Gamma,\cp|\cq}\ee(m\zz)
     +
     \FR{\Gamma,\cp|\cq}{\atp}{m}(\zz)_{\rm reg}
\end{gather}
for $\zz\in \HH$.
\end{prop}
\begin{proof}
We take the limit as $\spp$ tends to $1$ in the right hand side of
the identity (\ref{eqn:modradsum:conver:Relate_QS_DS_FR}). We have
$\FR{\Gamma,\cp|\cq}{\atp}{m}(\zz,1)_{\rm
van}=\FR{\Gamma,\cp|\cq}{\atp}{m}(\zz)_{\rm van}$ by definition (cf.
\S\ref{sec:modradsum:coeff}), and we have
$\DS{\Gamma,\cp|\cq}{\atp}{m}(0)=\fc{\Gamma,\cp|\cq}{\atp}(m,0)$ by
Proposition \ref{prop:modradsum:conver:DS_s=0}. The required
identity (\ref{eqn:modradsum:conver:Relate_QS_FR_s=1}) now follows
from the fact that $\FR{\Gamma,\cp|\cq}{\atp}{m}(\zz)_{\rm
reg}=\fc{\Gamma,\cp|\cq}{\atp}(m,0)+\FR{\Gamma,\cp|\cq}{\atp}{m}(\zz)_{\rm
van}$, also by definition (cf. \S\ref{sec:radsum:coeff}).
\end{proof}
The proof of Proposition
\ref{prop:modradsum:conver:Relate_QS_FR_s=1} implies a result
analogous to (\ref{eqn:modradsum:conver:Relate_QS_FR_s=1}) for the
normalized Rademacher sum $\TS{\Gamma,\cp|\cq}{\atp}{m}(\zz)$.
\begin{prop}\label{prop:modradsum:conver:Relate_TS_FR_s=1}
Let $\Gamma$ be a group commensurable with $G(\ZZ)$ and let
$\cp,\cq\in \cP_{\Gamma}$ be cusps of $\Gamma$. Let $\atp,m\in \ZZ$ such that $\atp\leq 0$ and $m<0$.
Then we have
\begin{gather}\label{eqn:modradsum:conver:Relate_TS_FR_s=1}
     \TS{\Gamma,\cp|\cq}{\atp}{m}(\zz)
     =
     \delta_{\Gamma,\cp|\cq}\ee(m\zz)
     +
     \FR{\Gamma,\cp|\cq}{\atp}{m}(\zz)_{\rm van}
\end{gather}
for $\zz\in \HH$.
\end{prop}
We may compare the functions $\QS{\Gamma,\cp|\cq}{\atp}{m}(\zz)$ and
$\RS{\Gamma,\cp|\cq}{\atp}{m}(\zz)$ defined by the modified and
classical Rademacher sums, respectively. Inspecting Theorem
\ref{thm:radsum:conver:Relate_RS_FR} and Proposition
\ref{prop:modradsum:conver:Relate_QS_FR_s=1} we obtain the precise
relationship, which we record in the following proposition.
\begin{prop}\label{prop:modradsum:conver:Relate_QS_s=1_RS}
Let $\Gamma$ be a group commensurable with $G(\ZZ)$ and let
$\cp,\cq\in \cP_{\Gamma}$ be cusps of $\Gamma$. Let $\atp,m\in\ZZ$
such that $\atp\leq 0$ and $m<0$. Then we have
\begin{gather}\label{eqn:modradsum:conver:Relate_QS_RS}
     \QS{\Gamma,\cp|\cq}{\atp}{m}(\zz)
     =
     \RS{\Gamma,\cp|\cq}{\atp}{m}(\zz)
     +\frac{1}{2}
     \fc{\Gamma,\cp|\cq}{\atp}(m,0).
\end{gather}
\end{prop}
According to Propositions
\ref{prop:modradsum:conver:Relate_QS_FR_s=1} and
\ref{prop:modradsum:conver:Relate_TS_FR_s=1} the modified Rademacher
sum $\QS{\Gamma,\cp|\cq}{\atp}{m}(\zz)$ and the normalized
Rademacher sum $\TS{\Gamma,\cp|\cq}{\atp}{m}(\zz)$ also differ only
by a constant function.
\begin{prop}\label{prop:modradsum:conver:Relate_QS_TS_s=1}
Let $\Gamma$ be a group commensurable with $G(\ZZ)$ and let
$\cp,\cq\in \cP_{\Gamma}$ be cusps of $\Gamma$. Let $\atp,m\in \ZZ$
such that $\atp\leq 0$ and $m<0$. Then we have
\begin{gather}\label{eqn:modradsum:conver:Relate_QS_TS_s=1}
     \QS{\Gamma,\cp|\cq}{\atp}{m}(\zz)
     =
     \TS{\Gamma,\cp|\cq}{\atp}{m}(\zz)
     +\fc{\Gamma,\cp|\cq}{\atp}(m,0).
\end{gather}
\end{prop}

\subsection{Variance}\label{sec:modradsum:var}

In this section we examine how the modified Rademacher sum
$\QS{\Gamma,\cp}{\atp}{m}(\zz)$ transforms under the weight $2\atp$
action of $\Gamma$. Combining Theorem
\ref{thm:radsum:var:Var_RS_atp<1} and Proposition
\ref{prop:modradsum:conver:Relate_QS_s=1_RS} we obtain the result
that the modified Rademacher sum $\QS{\Gamma,\cp}{\atp}{m}(\zz)$ is
an automorphic integral of weight $2\atp$ for $\Gamma$ in case
$\Gamma$ has width one at infinity.
\begin{thm}\label{thm:modradsum:var:QS_is_aut_int}
Let $\Gamma$ be a group commensurable with $G(\ZZ)$ that has width
one at infinity and let $\cp\in \cP_{\Gamma}$ be a cusp of $\Gamma$.
Let $\atp,m\in\ZZ$ such that $\atp\leq 0$ and $m<0$.
Then the modified Rademacher sum $\QS{\Gamma,\cp}{\atp}{m}(\zz)$ is
an automorphic integral of weight $2\atp$ for $\Gamma$, and for $\cq\in\cP_{\Gamma}$ another cusp of $\Gamma$ the Fourier expansion of the function $\QS{\Gamma,\cp|\cq}{\atp}{m}(\zz)$ is the expansion of $\QS{\Gamma,\cp}{\atp}{m}(\zz)$ at the cusp $\cq$.
\end{thm}
The proof of Theorem \ref{thm:radsum:var:Var_RS_atp<1} yields the
following explicit description of the associated cusp form map
$I_{\atp}(\Gamma)\to S_{1-\atp}(\Gamma)$ on the subspace of
$I_{\atp}(\Gamma)$ spanned by the modified Rademacher sums
$\QS{\Gamma,\cp}{\atp}{m}(\zz)$.
\begin{prop}\label{prop:modradsum:var:acf_on_QS}
Let $\Gamma$ be a group commensurable with $G(\ZZ)$ that has width
one at infinity and let $\cp\in \cP_{\Gamma}$ be a cusp of $\Gamma$.
Let $\atp,m\in\ZZ$ such that $\atp\leq 0<m$.
Then
\begin{gather}
     m^{\atp}
     \QS{\Gamma,\cp}{\atp}{-m}
     \mapsto
     m^{1-\atp}
     \PS{\Gamma,\cp}{1-\atp}{m}
     \mapsto
     0,
\end{gather}
under the maps $I_{\atp}(\Gamma)\mapsto S_{1-\atp}(\Gamma)$ of
\S\ref{sec:conven:autfrm}.
\end{prop}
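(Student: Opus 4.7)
The plan is to reduce the claim to the computation already carried out in the proof of Theorem \ref{thm:radsum:var:Var_RS_atp<1} for the classical Rademacher sum, using the bridge between $\QS{\Gamma,\cp}{\atp}{m}$ and $\RS{\Gamma,\cp}{\atp}{m}$ supplied by Proposition \ref{prop:modradsum:conver:Relate_QS_s=1_RS}. First, Theorem \ref{thm:radsum:var:QS_is_aut_int} guarantees that $\QS{\Gamma,\cp}{\atp}{m}$ lies in $I_{\atp}(\Gamma)$, so it has a well-defined associated cusp form in $S_{1-\atp}(\Gamma)$ via the map of (\ref{eqn:conven:autfrm:AutInt_Xform}). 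To pin this cusp form down explicitly I would specialize the identity (\ref{eqn:radsum:var:RS+const/2_slash_sigma}) from the proof of Theorem \ref{thm:radsum:var:Var_RS_atp<1} to $\sigma=\gamma\in\Gamma$. Since $\Gamma$ has width one at infinity we may take $\Sigma_{\Gamma\cdot\infty}=\Gamma$, so $U'=U\gamma=U$ and $S'=S$, and (\ref{eqn:radsum:var:RS+const/2_slash_sigma}) collapses to
\begin{gather*}
     \left.\left(\RS{\Gamma,\cp}{\atp}{m}+\tfrac{1}{2}\fc{\Gamma,\cp}{\atp}(m,0)\right)\right\sop{\atp}\gamma
     =\RS{\Gamma,\cp}{\atp}{m}+\tfrac{1}{2}\fc{\Gamma,\cp}{\atp}(m,0)-m^{1-2\atp}\JO{\infty\cdot\lBZ\gamma\rBZh}{\atp}\PS{\Gamma,\cp}{1-\atp}{-m}.
\end{gather*}

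Next, I would invoke Proposition \ref{prop:modradsum:conver:Relate_QS_s=1_RS} to rewrite the left-hand side as $\QS{\Gamma,\cp}{\atp}{m}\sop{\atp}\gamma$ and the first term on the right as $\QS{\Gamma,\cp}{\atp}{m}$. Comparing the resulting identity with the defining relation (\ref{eqn:conven:autfrm:AutInt_Xform}) for an automorphic integral and its associated cusp form, I conclude that the associated cusp form of $\QS{\Gamma,\cp}{\atp}{m}$ is precisely $m^{1-2\atp}\PS{\Gamma,\cp}{1-\atp}{-m}$. Multiplying by $m^{\atp}$ gives the first arrow of the claim, namely that $m^{\atp}\QS{\Gamma,\cp}{\atp}{m}\mapsto m^{1-\atp}\PS{\Gamma,\cp}{1-\atp}{-m}$ under $I_{\atp}(\Gamma)\to S_{1-\atp}(\Gamma)$.

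For the second arrow, I would apply Theorem \ref{thm:radsum:var:Var_RS_atp>0} with $1-\atp\in\ZZp$ in place of $\atp$ and $-m<0$ in place of $m$, which shows that $\PS{\Gamma,\cp}{1-\atp}{-m}=\RS{\Gamma,\cp}{1-\atp}{-m}$ is in fact a cusp form of weight $2(1-\atp)$ for $\Gamma$, and in particular an element of $M_{1-\atp}(\Gamma)\subseteq I_{1-\atp}(\Gamma)$. By the very definition (\ref{eqn:conven:autfrm:AutInt_Xform}) of an automorphic integral, modular forms are automorphic integrals with zero associated cusp form, so the image of $m^{1-\atp}\PS{\Gamma,\cp}{1-\atp}{-m}$ under the further map $I_{1-\atp}(\Gamma)\to S_{\atp}(\Gamma)$ is $0$. (Since $1-\atp\in\ZZp$ we in fact have $S_{\atp}(\Gamma)=0$, so this vanishing is automatic.)

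The only genuine work is the matching of signs and scalar factors between the transformation formula and the convention (\ref{eqn:conven:autfrm:AutInt_Xform}) for associated cusp forms; all the analytic content has already been absorbed into Theorem \ref{thm:radsum:var:Var_RS_atp<1} and Proposition \ref{prop:modradsum:conver:Relate_QS_s=1_RS}, so there is no remaining obstacle of substance.
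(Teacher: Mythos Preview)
Your proposal is correct and follows essentially the same approach as the paper. The paper does not give an independent proof of this proposition but simply remarks that it is yielded by the proof of Theorem \ref{thm:radsum:var:Var_RS_atp<1}; you have made this explicit by specializing (\ref{eqn:radsum:var:RS+const/2_slash_sigma}) to $\sigma\in\Gamma$ and invoking Proposition \ref{prop:modradsum:conver:Relate_QS_s=1_RS} to pass from $\RS{\Gamma,\cp}{\atp}{m}+\tfrac12\fc{\Gamma,\cp}{\atp}(m,0)$ to $\QS{\Gamma,\cp}{\atp}{m}$, which is exactly the route the paper intends.
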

We next seek to establish the utility of Proposition
\ref{prop:modradsum:var:acf_on_QS} by identifying the subspace of
$I_{\atp}(\Gamma)$ that is spanned by the modified Rademacher sums
$\QS{\Gamma,\cp}{\atp}{m}(\zz)$.
\begin{thm}\label{thm:modradsum:var:Basis_QS_p_m}
Let $\Gamma$ be a group commensurable with $G(\ZZ)$ that has width
one at infinity, and let $\atp\in\ZZ$ with $\atp\leq 0$. Then
the set $\{\QS{\Gamma,\cp}{\atp}{-m}(\zz)\mid \cp\in\cP_{\Gamma},
m\in \ZZp\}$ is a basis for the space of automorphic integrals of
weight $2\atp$ for $\Gamma$ in case $\atp<0$. When $\atp=0$ the set
$\{\QS{\Gamma,\cp}{}{-m}(\zz)\mid \cp\in\cP_{\Gamma}, m\in \ZZp\}$
spans a subspace of $I_{0}(\Gamma)$ of codimension $1$.
\end{thm}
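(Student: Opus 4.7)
The plan is to combine the exact sequence $0\to M_{\atp}(\Gamma)\to I_{\atp}(\Gamma)\to S_{1-\atp}(\Gamma)\to 0$ (whose exactness will emerge as part of the argument, via Lemma \ref{lem:conven:autfrm:Faithfulness_I_p_atp} on the left and the surjectivity of the associated cusp form map $\delta$ on the right) with the properties of the modified Rademacher sums assembled in the preceding sections. Write $V$ for the span of $\{\QS{\Gamma,\cp}{\atp}{m}\mid\cp\in\cP_{\Gamma},\,m\in\ZZp\}$ inside $I_{\atp}(\Gamma)$. Linear independence of this set is immediate from Proposition \ref{prop:modradsum:conver:Relate_QS_FR_s=1}: the principal part of $\QS{\Gamma,\cp}{\atp}{m}$ at a cusp $\cq$ is $\delta_{\Gamma,\cp|\cq}\ee(-m\zz)$, so any nontrivial finite linear combination must display a nonzero principal part at some cusp and hence cannot vanish.

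For spanning, Proposition \ref{prop:modradsum:var:acf_on_QS} identifies $\delta(\QS{\Gamma,\cp}{\atp}{m})$ with a nonzero scalar multiple of the holomorphic Poincar\'e series $\PS{\Gamma,\cp}{1-\atp}{-m}$. Since these Poincar\'e series span $S_{1-\atp}(\Gamma)$ (a standard fact for cusp forms of positive weight), $\delta|_{V}$ is surjective, whence $\delta$ itself is surjective and $V+M_{\atp}(\Gamma)=I_{\atp}(\Gamma)$. It therefore suffices to show that $M_{\atp}(\Gamma)\subseteq V$ for $\atp<0$, and that $M_{0}(\Gamma)\subseteq V+\CC$ with $V\cap\CC=0$ for $\atp=0$. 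Given $h\in M_{\atp}(\Gamma)$ with Fourier coefficients $c_{|\cq}(n)$ at cusp $\cq$, set $\tilde{h}=h-\sum_{\cq,m}c_{|\cq}(-m)\QS{\Gamma,\cq}{\atp}{m}$; this is a finite sum since $h$ has only finitely many nonzero principal parts. Then $\tilde h$ belongs to $I_{\atp}(\Gamma)$, has no principal part at any cusp, and its associated cusp form is $\delta(\tilde h)=-\sum_{\cq,m}c_{|\cq}(-m)m^{1-2\atp}\PS{\Gamma,\cq}{1-\atp}{-m}$.

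The crux of the proof is to establish that $\delta(\tilde h)=0$, which amounts to a residue duality between $M_{\atp}(\Gamma)$ and $S_{1-\atp}(\Gamma)$. For any $g\in S_{1-\atp}(\Gamma)$ with Fourier coefficients $b_{|\cq}(n)$ at $\cq$, the product $hg$ is a meromorphic modular form of weight $2$ for $\Gamma$, so $hg\,{\rm d}\zz$ descends to a meromorphic $1$-form on the compact surface ${\sf X}_{\Gamma}$ with poles confined to the cusps; Stokes' theorem forces the sum of residues to vanish, which yields (up to positive factors depending on the widths of the cusps) the identity $\sum_{\cq,m}c_{|\cq}(-m)b_{|\cq}(m)=0$. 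Combined with the standard identification of the Petersson inner product $\langle g,\PS{\Gamma,\cq}{1-\atp}{-m}\rangle$ as a constant proportional to $m^{2\atp-1}$ times $b_{|\cq}(m)$, this shows that $\delta(\tilde h)$ has vanishing Petersson pairing with every $g\in S_{1-\atp}(\Gamma)$, whence $\delta(\tilde h)=0$ by non-degeneracy. Consequently $\tilde h\in M_{\atp}(\Gamma)$ is holomorphic at every cusp, i.e., a classical holomorphic modular form of weight $2\atp$ for $\Gamma$. For $\atp<0$ the automorphic line bundle on ${\sf X}_{\Gamma}$ has negative degree and admits no nonzero sections, so $\tilde h=0$ and $h\in V$; for $\atp=0$ a holomorphic $\Gamma$-invariant function bounded at every cusp must be constant, so $\tilde h\in\CC$ and $h\in V+\CC$, with the sum direct because a nonzero constant has vanishing principal part at every cusp and cannot lie in $V$. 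The main obstacle is the residue duality forcing $\delta(\tilde h)=0$; the remaining steps are bookkeeping on top of the Fourier coefficient formulas and transformation properties already established.
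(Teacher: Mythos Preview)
Your proof is correct, but it follows a genuinely different route from the paper's. The paper dispatches spanning by a dimension count: it cites Niebur's argument via Petersson's generalized Riemann--Roch theorem to bound the dimension of the subspace of $I_{\atp}(\Gamma)$ with poles of bounded order at a given cusp, and matches this against the number of Rademacher sums with those pole orders. Your approach is instead constructive. You exploit the short exact sequence $0\to M_{\atp}(\Gamma)\to I_{\atp}(\Gamma)\to S_{1-\atp}(\Gamma)\to 0$ (whose exactness you establish along the way, anticipating Theorem~\ref{thm:modradsum:var:MIS_exact}), subtract off principal parts, and then use a residue duality between $M_{\atp}(\Gamma)$ and $S_{1-\atp}(\Gamma)$---the vanishing of $\sum_{\cq,m}c_{|\cq}(-m)b_{|\cq}(m)$ coming from the residue theorem applied to the weight $2$ form $hg\,{\rm d}\zz$ on ${\sf X}_{\Gamma}$---together with the Petersson unfolding identity to force $\delta(\tilde h)=0$. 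What remains is a holomorphic modular form of weight $2\atp\leq 0$, hence zero or constant.

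The trade-off: the paper's approach is shorter once one grants the Riemann--Roch machinery, while yours avoids that black box in favour of tools already assembled in the paper (Proposition~\ref{prop:modradsum:var:acf_on_QS}, the Petersson inner product, and the spanning of cusp forms by Poincar\'e series). Your route also makes the codimension-$1$ complement at $\atp=0$ completely explicit as the constants, and yields the exactness of the sequence in Theorem~\ref{thm:modradsum:var:MIS_exact} as a byproduct rather than as a separate statement. One small remark: since you are working with scaling cosets throughout, the width factors you mention parenthetically are all equal to $1$, so the residue identity holds on the nose.
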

\begin{proof}
For $m>0$ the function $\QS{\Gamma,\cp}{\atp}{-m}(\zz)$ has principal part
$\ee(-m\zz)$ at the cusp $\cp$, and vanishes at all the other cusps,
so the collection $\{\QS{\Gamma,\cp}{\atp}{-m}(\zz)\mid
\cp\in\cP_{\Gamma}, m\in \ZZp\}$ is linearly independent. To show
that it furnishes a basis we follow Niebur's proof of Theorem 3.3 in
\cite{Nie_ConstAutInts}, employing Petersson's generalized
Riemann--Roch Theorem to show that the dimension of the subspace of
$I_{\atp}(\Gamma)$ containing automorphic integrals with poles
of degree not more than $m$ say at $\cp$ is bounded above by $m$ less
the dimension of the space $S_{1-\atp}(\Gamma)$, for $m$
sufficiently large.
\end{proof}
We write $I_0'(\Gamma)$ for the subspace of $I_0(\Gamma)$ spanned by
the modified Rademacher sums $\QS{\Gamma,\cp}{}{-m}(\zz)$ of weight
$0$ for varying $\cp\in \cP_{\Gamma}$ and $m\in \ZZp$.
\begin{thm}\label{thm:modradsum:var:MIS_exact}
Let $\Gamma$ be a group commensurable with $G(\ZZ)$ that has width
one at infinity. Then for any $\atp\in \ZZ$ the sequence
\begin{gather}\label{eqn:modradsum:var:MIS_exact}
     0
     \to
     M_{\atp}(\Gamma)
     \to
     I_{\atp}(\Gamma)
     \to
     S_{1-\atp}(\Gamma)
     \to
     0
\end{gather}
is exact.
\end{thm}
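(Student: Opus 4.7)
The plan is to reduce the claim to surjectivity of the associated cusp form map $\acf{\cdot}\colon I_{\atp}(\Gamma)\to S_{1-\atp}(\Gamma)$, since the remaining exactness is already in hand: injectivity of $M_{\atp}(\Gamma)\hookrightarrow I_{\atp}(\Gamma)$ is by definition, and the identification of $M_{\atp}(\Gamma)$ with the kernel of $\acf{\cdot}$ was observed immediately following Lemma \ref{lem:conven:autfrm:Faithfulness_I_p_atp}. The case $\atp\in\ZZp$ is trivial because then $S_{1-\atp}(\Gamma)=0$ and the definition (\ref{eqn:conven:autfrm:AutInt_Xform}) of an automorphic integral forces $M_{\atp}(\Gamma)=I_{\atp}(\Gamma)$. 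So I may assume $1-\atp\in\ZZp$.

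Under this assumption Proposition \ref{prop:modradsum:var:acf_on_QS} gives us a concrete handle on the image of $\acf{\cdot}$: for each cusp $\cp\in\cP_{\Gamma}$ and each $m\in\ZZp$, the modified Rademacher sum $m^{\atp}\QS{\Gamma,\cp}{\atp}{m}$ lies in $I_{\atp}(\Gamma)$ and its associated cusp form is the holomorphic Poincar\'e series $m^{1-\atp}\PS{\Gamma,\cp}{1-\atp}{-m}\in S_{1-\atp}(\Gamma)$ (this is a cusp form by Theorem \ref{thm:radsum:var:Var_RS_atp>0}, which applies since $1-\atp\in\ZZp$ and $-m\in\ZZm$). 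Hence the image of $\acf{\cdot}$ contains the linear span of the family
\begin{gather*}
     \mc{P}=\{\PS{\Gamma,\cp}{1-\atp}{-m}\mid \cp\in\cP_{\Gamma},\,m\in\ZZp\}\subset S_{1-\atp}(\Gamma),
\end{gather*}
so it suffices to show that $\mc{P}$ spans $S_{1-\atp}(\Gamma)$.

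For this I would invoke the classical completeness theorem for holomorphic Poincar\'e series with respect to the Petersson inner product $\lab \cdot,\cdot\rab$ on $S_{1-\atp}(\Gamma)$. The unfolding computation shows that for any $g\in S_{1-\atp}(\Gamma)$ with Fourier expansion $g_{|\cp}(\zz)=\sum_{n\in\ZZp}b_{|\cp}(n)\ee(n\zz)$, one has $\lab g,\PS{\Gamma,\cp}{1-\atp}{-m}\rab$ equal to a positive multiple of $\overline{b_{|\cp}(m)}/m^{-2\atp}$. If $g$ is Petersson-orthogonal to every member of $\mc{P}$ then all its Fourier coefficients at every cusp vanish, forcing $g=0$; thus $\mc{P}^{\perp}=0$ and $\mc{P}$ spans $S_{1-\atp}(\Gamma)$, completing the argument. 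Combined with the previous paragraph this gives surjectivity of $\acf{\cdot}$ and hence the exactness of (\ref{eqn:modradsum:var:MIS_exact}).

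The principal obstacle is the borderline case $\atp=0$, where $1-\atp=1$ and the Poincar\'e series $\PS{\Gamma,\cp}{1}{-m}$ is only conditionally convergent (cf.\ the discussion surrounding Theorem \ref{thm:radsum:var:Var_RS_atp>0}). Here the Petersson pairing and the unfolding of $\lab g,\PS{\Gamma,\cp}{1}{-m}\rab$ must be justified by first expressing $\PS{\Gamma,\cp}{1}{-m}$ as the $K\to\infty$ limit of the truncated sums over $U_{\leq K}$ as in (\ref{eqn:radsum:var:Expr_PS_U_atp=1}), interchanging limit and integral using the rapid decay of $g$ at cusps, and appealing to Lemma \ref{lem:radsum:var:RS_U_slash_sigma_vs_RS_Usigma} to legitimize the unfolding step. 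Once this endpoint is handled, everything else is routine.
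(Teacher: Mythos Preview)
Your proposal is correct and follows essentially the same route as the paper: reduce to surjectivity of the associated cusp form map, invoke Proposition~\ref{prop:modradsum:var:acf_on_QS} to see that the image contains the span of the Poincar\'e series $\PS{\Gamma,\cp}{1-\atp}{-m}$, and then use completeness of these Poincar\'e series in $S_{1-\atp}(\Gamma)$. The paper simply cites \cite{Iwa_TopsAutFrms} for this last fact (noting that a single cusp already suffices), whereas you sketch the Petersson unfolding argument and flag the weight-$2$ endpoint, but the architecture is identical.
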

\begin{proof}
The second map of (\ref{eqn:modradsum:var:MIS_exact}) is an
inclusion by definition, and we observed already in
\S\ref{sec:conven:autfrm} that $M_{\atp}(\Gamma)$ is the kernel of
the associated cusp form map $I_{\atp}(\Gamma)\to
S_{1-\atp}(\Gamma)$ (cf. Lemma
\ref{lem:conven:autfrm:Faithfulness_I_p_atp}), so we require to show
only that the second to last map of
(\ref{eqn:modradsum:var:MIS_exact}) is surjective. For this we
recall the fact (cf. \cite{Iwa_TopsAutFrms}) that the holomorphic
Poincar\'e series $\PS{\Gamma,\cp}{1-\atp}{m}(\zz)$, with varying
$m\in \ZZp$, span the space $S_{1-\atp}(\Gamma)$ of cusp forms of
weight $2-2\atp$ for $\Gamma$ whenever $\atp\leq 0$. Indeed, one
need not vary the cusp $\cp$ in order to obtain a spanning set.
Given an arbitrary cusp form $g\in S_{1-\atp}(\Gamma)$ we may then
write
\begin{gather}
     g
     =
     \sum_{n>0}
     a_{n}\PS{\Gamma}{1-\atp}{n}
\end{gather}
for some $a_{n}\in \CC$, with only finitely many $a_{n}$ non-zero.
Employing Proposition \ref{prop:modradsum:var:acf_on_QS} we see that
the automorphic integral
\begin{gather}
     f
     =
     \sum_{n>0}
     n^{2\atp-1}a_{n}
     \QS{\Gamma}{\atp}{-n}
\end{gather}
is mapped to $g$ by the associated cusp form map. We conclude that
the associated cusp form map is surjective. This completes the
proof.
\end{proof}
Theorem \ref{thm:modradsum:var:Basis_QS_p_m} and
Proposition \ref{prop:modradsum:var:acf_on_QS} together describe the
associated cusp form map $I_{\atp}(\Gamma)\to S_{1-\atp}(\Gamma)$
explicitly, and thus we obtain a powerful criterion for determining
when an automorphic integral $f\in I_{\atp}(\Gamma)$ lies in the
subspace $M_{\atp}(\Gamma)$ of modular forms. For example, we have
the following corollary, indicating exactly when the modified
Rademacher sum $\QS{\Gamma,\cp}{\atp}{m}(\zz)$ is a modular form for
$\Gamma$ in the sense of \S\ref{sec:conven:autfrm}.
\begin{cor}\label{cor:modradsum:var:Relate_QS_inv_to_PS_van}
Let $\Gamma$ be a group commensurable with $G(\ZZ)$ that has width
one at infinity and let $\cp\in \cP_{\Gamma}$ be a cusp of $\Gamma$.
Let $\atp,m\in\ZZ$ such that $\atp\leq 0$ and $m<0$.
Then the modified Rademacher sum $\QS{\Gamma,\cp}{\atp}{m}(\zz)$ is
a modular form of weight $2\atp$ for $\Gamma$ if and only if the
cusp form $\PS{\Gamma,\cp}{1-\atp}{-m}(\zz)$ vanishes identically.
\end{cor}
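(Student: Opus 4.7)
The plan is to deduce this corollary directly from the structural results established just above, namely Theorem \ref{thm:radsum:var:QS_is_aut_int}, Proposition \ref{prop:modradsum:var:acf_on_QS}, and the exactness of the sequence (\ref{eqn:modradsum:var:MIS_exact}) from Theorem \ref{thm:modradsum:var:MIS_exact}. No new computation should be required; the corollary is essentially a reformulation of the image of the associated cusp form map on a single basis element.

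First I would invoke Theorem \ref{thm:radsum:var:QS_is_aut_int} to record that $\QS{\Gamma,\cp}{\atp}{m}(\zz)$ lies in the space $I_{\atp}(\Gamma)$ of automorphic integrals of weight $2\atp$ for $\Gamma$. By Lemma \ref{lem:conven:autfrm:Faithfulness_I_p_atp}, together with the characterization of $M_{\atp}(\Gamma)$ as the kernel of the associated cusp form map $I_{\atp}(\Gamma)\to S_{1-\atp}(\Gamma)$ (which is precisely the content of exactness at $I_{\atp}(\Gamma)$ in (\ref{eqn:modradsum:var:MIS_exact})), an element of $I_{\atp}(\Gamma)$ is a modular form if and only if its image under the associated cusp form map vanishes identically.

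Next I would apply Proposition \ref{prop:modradsum:var:acf_on_QS}, which identifies this image explicitly: the map sends $m^{\atp}\QS{\Gamma,\cp}{\atp}{m}$ to $m^{1-\atp}\PS{\Gamma,\cp}{1-\atp}{-m}$, so by $\CC$-linearity $\QS{\Gamma,\cp}{\atp}{m}$ itself is sent to $m^{1-2\atp}\PS{\Gamma,\cp}{1-\atp}{-m}$. Since $m\in \ZZp$, the scalar $m^{1-2\atp}$ is non-zero, so the vanishing of this image is equivalent to the vanishing of $\PS{\Gamma,\cp}{1-\atp}{-m}(\zz)$. Combining these two equivalences yields the corollary.

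There is no substantive obstacle: every ingredient has already been assembled, and the only step worth stating carefully is the scalar bookkeeping in the passage from Proposition \ref{prop:modradsum:var:acf_on_QS} to the final statement. The proof is therefore a one-line deduction once the reader is pointed to the relevant results.
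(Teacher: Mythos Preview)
Your proposal is correct and matches the paper's intended argument: the paper does not write out a proof but introduces the corollary by noting that Proposition \ref{prop:modradsum:var:acf_on_QS} (together with the basis description) identifies the associated cusp form map explicitly, so membership in $M_{\atp}(\Gamma)$---the kernel of this map---reduces to the vanishing of $\PS{\Gamma,\cp}{1-\atp}{-m}$. Your observation that only exactness at $I_{\atp}(\Gamma)$ (i.e.\ Lemma \ref{lem:conven:autfrm:Faithfulness_I_p_atp}) is needed, not the full basis theorem or surjectivity, is a mild sharpening but not a different approach.
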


\section{Structural applications}\label{sec:struapp}

We have seen already in \S\ref{sec:modradsum} various applications
of the modified Rademacher sums, and the continuation procedure
introduced in \S\ref{sec:modradsum:constr}, such as the basis
theorem (Theorem \ref{thm:modradsum:var:Basis_QS_p_m}) for
automorphic integrals, and the explicit description (cf. Proposition
\ref{prop:modradsum:var:acf_on_QS}) of the associated cusp form map
$I_{\atp}(\Gamma)\to S_{1-\atp}(\Gamma)$. In this section we explore
further consequences of the modified Rademacher sum construction for
the structure of the spaces of automorphic integrals associated to
groups commensurable with the modular group. Applications of the
modified Rademacher sums to monstrous moonshine and quantum gravity
will be developed in \S\S\ref{sec:moon},\ref{sec:gravity}.

\subsection{Constants}\label{sec:struapp:const}

Perhaps the most striking application of the continuation procedure
of \S\ref{sec:modradsum:constr} is the correction of the constant
terms appearing in the classical Rademacher sums. For negative
weights, the constant term in the Fourier expansion of an
automorphic integral $f\in I_{\atp}(\Gamma)$ is determined by the
automorphy condition (\ref{eqn:conven:autfrm:AutInt_Xform}). At
weight $0$ constant functions are themselves automorphic integrals,
so the classical Rademacher sums $\RS{\Gamma,\cp}{}{m}(\zz)$ of
weight $0$ do not fail to be automorphic, and it is natural then to
wonder about the significance of the constant term
$\fc{\Gamma,\cp}{}(m,0)$ appearing in the Fourier expansion of the
modified Rademacher sum $\QS{\Gamma,\cp}{}{m}(\zz)$ of weight $0$
associated to a group $\Gamma$ at a cusp $\cp\in \cP_{\Gamma}$.

Let $\commod$ denote the set of subgroups of $G(\QQ)$ that are
commensurable with $G(\ZZ)$. Recall from \S\ref{sec:conven:autfrm}
that for $\Gamma\in \commod$ we write $M_0(\Gamma)$ for the space of
holomorphic functions on $\HH$ that are invariant for the natural
action of $\Gamma$ and are meromorphic at the cusps of $\Gamma$. Let
$\gt{M}_0$ be the union of the spaces $M_0(\Gamma)$ for $\Gamma\in
\commod$.
\begin{gather}
     \gt{M}_0=\bigcup_{\Gamma\in\commod}M_0(\Gamma)
\end{gather}
Since $\commod$ is closed under intersections, the set $\gt{M}_0$ is
in fact a subalgebra of the space $\mc{O}(\HH)$ of holomorphic
functions on $\HH$. Since $G(\QQ)$ is the commensurator of $G(\ZZ)$,
the algebra $\gt{M}_0$ is $G(\QQ)$-invariant. The constant functions
evidently constitute a $G(\QQ)$-invariant subspace $\CC
1\subset\gt{M}_0$. We may enquire as to the $G(\QQ)$-module
structure of the quotient $\gt{M}_0/\CC 1$. According to
\cite{Nor_MoreMoons} this quotient is irreducible, and the natural
map $\gt{M}_0\to\gt{M}_0/\CC 1$ admits a section.
\begin{thm}[\cite{Nor_MoreMoons}]\label{thm:struapp:const:G(Q)_Decomp}
There exists a unique $G(\QQ)$-submodule $\gt{M}_0'\subset \gt{M}_0$
with the property that the natural inclusions $\CC
1,\gt{M}_0'\subset\gt{M}_0$ induce an isomorphism
\begin{gather}
     \gt{M}_0\cong \CC 1\oplus \gt{M}_0'
\end{gather}
of $G(\QQ)$-modules.
\end{thm}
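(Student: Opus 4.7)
The plan is to produce a canonical $G(\QQ)$-equivariant projection $\pi\colon\gt{M}_0\to\CC 1$, set $\gt{M}_0'=\ker\pi$, and then establish uniqueness by arguing that the $G(\QQ)$-coinvariants of $\gt{M}_0/\CC 1$ are trivial. The construction of $\pi$ will use the direct sum decomposition $I_0(\Gamma)=I_0'(\Gamma)\oplus\CC 1$ provided by Theorem \ref{thm:modradsum:var:Basis_QS_p_m} for $\Gamma\in\commod$ of width one at infinity, where $I_0'(\Gamma)$ is the span of the modified Rademacher sums $\QS{\Gamma,\cp}{}{m}$. By Proposition \ref{prop:modradsum:conver:Relate_QS_TS_s=1}, the projection $\pi_\Gamma$ onto $\CC 1$ equivalently picks out the unique constant $c$ such that $f-c$ is a finite linear combination of the normalized Rademacher sums $\TS{\Gamma,\cp}{}{m}$, which by Proposition \ref{prop:modradsum:conver:Relate_TS_FR_s=1} all have vanishing constant term at infinity. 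Groups not of width one at infinity will be handled by passing to the conjugate $[1/\mu]\Gamma[\mu]$ from \S\ref{sec:conven:scaling}; since the weight-zero action fixes constants, the projection is intrinsic to the conjugacy class.

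The central technical step is to verify compatibility of the $\pi_\Gamma$ across $\commod$. I would first handle the case $H\subset\Gamma$ with both of width one at infinity by partitioning the coset space $\lBZ\Sigma_\cp^{-1}\rBZh$ of $\Gamma$ into $H$-cosets, recognizing the reorganized Rademacher sum for $\Gamma$ as a finite $\CC$-linear combination of normalized Rademacher sums for $H$ plus a residual constant identified by comparing $\fc{\Gamma}{}(m,0)$ with the corresponding finite sum of the $\fc{H}{}(n,0)$; the general case reduces to this via intersection. With compatibility in hand, $\pi$ is well defined on $\gt{M}_0$, and its $G(\QQ)$-equivariance will follow from Lemma \ref{lem:radsum:var:Apply_sigma_RS_chi} at $\atp=0$: the weight-zero right action of $\sigma\in G(\QQ)$ relabels Rademacher component functions via $\lBZ\chi\rBZh\mapsto\lBZ\chi\sigma\rBZh$ modulo an explicit integral-operator term which at $\atp=0$ is a constant and so contributes only to the $\CC 1$ summand. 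Consequently the Rademacher-sum decomposition of $f\in M_0(\Gamma)$ transports to that of $f\sop{0}\sigma\in M_0(\sigma^{-1}\Gamma\sigma)$ with the same constant part, yielding $\pi(f\sop{0}\sigma)=\pi(f)$. Setting $\gt{M}_0'=\ker\pi$ then produces the desired $G(\QQ)$-invariant complement.

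For uniqueness, any other $G(\QQ)$-invariant complement $\gt{M}_0''$ yields a projection $\pi''$, and $\phi=\pi-\pi''$ is a $G(\QQ)$-equivariant linear map $\gt{M}_0\to\CC 1$ vanishing on $\CC 1$, hence factors through $\gt{M}_0/\CC 1$. Showing $\phi=0$ amounts to showing the $G(\QQ)$-coinvariants of $\gt{M}_0/\CC 1$ vanish, equivalently that every class in $\gt{M}_0/\CC 1$ is a sum of commutators $\sigma\cdot g-g$. I would establish this by applying the transformation law for Rademacher sums at $\atp=0$, which gives $\TS{\Gamma,\cp}{}{m}\sop{0}\sigma\equiv\TS{\sigma^{-1}\Gamma\sigma,\sigma^{-1}\cdot\cp}{}{m}\pmod{\CC 1}$, together with Theorem \ref{thm:modradsum:var:Basis_QS_p_m} to rewrite any fixed $\TS{\Gamma,\cp}{}{m}$ as such a commutator plus a constant. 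The main obstacle will be the compatibility step, which requires delicate control over the conditional convergence of the Rademacher sums when regrouping the summation over $H$-cosets in $\Gamma$, and careful bookkeeping of how the constants $\fc{\Gamma}{}(m,0)$ decompose under this reorganization.
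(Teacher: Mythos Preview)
The paper does not prove this theorem: it is attributed to Norton \cite{Nor_MoreMoons} and quoted without argument. Indeed, the paragraph immediately following it formulates the analogous decomposition for $\gt{I}_0$ only as a conjecture, and further conjectures that the complement is spanned by the modified Rademacher sums. Your proposal is therefore an attempt to carry out, in the special case of $\gt{M}_0$, precisely the program that the paper itself leaves open; there is no argument in the paper to compare against.

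On the proposal itself, there is a genuine gap in the construction of $\pi$. Theorem~\ref{thm:modradsum:var:Basis_QS_p_m} and the branching result Theorem~\ref{thm:struapp:branch:branching_thm} both require width one at infinity, and you propose to reach other $\Gamma$ by conjugating to $[1/\mu]\Gamma[\mu]$. But that replaces $f$ by $f\circ[\mu]$, a different element of $\gt{M}_0$, so your definition of $\pi(f)$ for such $f$ already presupposes the $[\mu]$-equivariance you set out to prove afterwards. This is not merely cosmetic: for $f\in M_0(\Gamma)$ with $\Gamma_\infty=\langle T^k\rangle$ and $k>1$ (for instance $\Gamma=\Gamma(2)$, where $f$ could be the modular $\lambda$-function), no subgroup of $\Gamma$ has width one at infinity, so the compatibility-via-common-subgroup route is unavailable and well-definedness of $\pi(f)$ remains unestablished. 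The uniqueness sketch has a parallel gap: the identity $\TS{\Gamma,\cp}{}{m}\sop{0}\sigma\equiv\TS{\sigma^{-1}\Gamma\sigma,\sigma^{-1}\cdot\cp}{}{m}\pmod{\CC 1}$ only says that the $G(\QQ)$-action descends to $\gt{M}_0/\CC 1$; it does not exhibit a given class as a sum of elements $\sigma\cdot g-g$, and for that you would effectively need the irreducibility of $\gt{M}_0/\CC 1$, which the paper also attributes to Norton rather than proving.
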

According to Theorem \ref{thm:struapp:const:G(Q)_Decomp}, for each
$f\in \gt{M}_0$ there is a unique $c(f)\in \CC$ with the property
that $f+c(f)\in \gt{M}_0'$. The value $c(f)$ is called the {\em
Rademacher constant of $f$}. The assignment $f\mapsto c(f)$ defines
a linear function on $\gt{M}_0$ with kernel $\gt{M}_0'$ which we
call the {\em Rademacher constant function}. We suggest that the
Rademacher constant function may be generalized to automorphic integrals of weight $0$ in the following way. Let
$\gt{I}_0$ be the union of the spaces $I_0(\Gamma)$ for $\Gamma\in
\commod$.
\begin{gather}
     \gt{I}_0=\bigcup_{\Gamma\in\commod}I_0(\Gamma)
\end{gather}
Since $\commod$ is closed under intersections, the set $\gt{I}_0$ is
also a subalgebra of $\mc{O}(\HH)$, and since $G(\QQ)$ is the commensurator of
$G(\ZZ)$, the algebra $\gt{I}_0$ is also stable under the action of
$G(\QQ)$. The constant functions again constitute a
$G(\QQ)$-invariant subspace $\CC 1\subset\gt{I}_0$. We conjecture
that the natural analogue of Theorem
\ref{thm:struapp:const:G(Q)_Decomp} holds for $\gt{I}_0$. Precisely,
we conjecture that there is a unique $G(\QQ)$-submodule
$\gt{I}_0'\subset \gt{I}_0$ with the property that the natural
inclusions induce an isomorphism
\begin{gather}
     \gt{I}_0\cong \CC 1\oplus \gt{I}_0'
\end{gather}
of $G(\QQ)$-modules. Further, we conjecture that $\gt{I}_0'$ is just
the space spanned by the modified Rademacher sums
$\QS{\Gamma,\cp}{\atp}{-m}(\zz)$ for $\Gamma\in \commod$ and $\cp\in
\cP_{\Gamma}$ and $m\in \ZZp$.

\subsection{Inner products}\label{sec:struapp:iprods}

Let $\Gamma$ be a group commensurable with $G(\ZZ)$ and let $\atp\in
\ZZp$. Then the space $S_{\atp}(\Gamma)$ of cusp forms of weight
$2\atp$ for $\Gamma$ becomes a Hilbert space when equipped with the
{\em Petersson inner product}, defined by setting
\begin{gather}
     \left\langle f,g\right\rangle
     =
     \int_{\gt{F}_{\Gamma}}
     f(\zz)\overline{g(\zz)}
     \Im(\zz)^{2\atp}
     {\rm d}\mu(\zz)
\end{gather}
for $f,g\in S_{\atp}(\Gamma)$, where $\gt{F}_{\Gamma}$ is a
fundamental domain for $\Gamma$. Let us define a {\em normalized
Petersson inner product of weight $2\atp$ for $\Gamma$}, to be
denoted $\lab\cdot\,,\cdot\rab_{\Gamma}^{\atp}$, by setting
\begin{gather}
     \left\langle f,g\right\rangle_{\Gamma}^{\atp}
     =
     {(4\pi)^{(2\atp-2)}}
     \int_{\gt{F}_{\Gamma}}
     f(\zz)\overline{g(\zz)}
     \Im(\zz)^{2\atp}
     {\rm d}\mu(\zz)
\end{gather}
for $f,g\in S_{\atp}(\Gamma)$. It is well known (and follows from
Theorem \ref{thm:radsum:var:Var_RS_atp>0}), that for $\atp,m\in
\ZZp$ and $\cp$ a cusp for $\Gamma$, the holomorphic Poincar\'e
series $\PS{\Gamma,\cp}{\atp}{m}(\zz)$ is a cusp form of weight
$2\atp$ for $\Gamma$ in case $\Gamma$ has width one at infinity. A
standard calculation (cf. \cite{Iwa_TopsAutFrms}) shows that we have
\begin{gather}\label{eqn:struapp:iprods:Defn_Norm_Petersson_IP}
     \left\langle
     \PS{\Gamma,\cp}{\atp}{m},
     \PS{\Gamma,\cq}{\atp}{n}
     \right\rangle_{\Gamma}^{\atp}
     =
     n^{2-2\atp}
     (
     \delta_{\Gamma,\cp|\cq}
     \delta_{m,n}
     +\fc{\Gamma,\cp|\cq}{\atp}(m,n)
     )
\end{gather}
for $\cp,\cq\in \cP_{\Gamma}$ and $m,n\in \ZZp$. According to
Theorem \ref{thm:radsum:var:Var_RS_atp>0} the right hand side of
(\ref{eqn:struapp:iprods:Defn_Norm_Petersson_IP}) is, up to the
scalar factor $n^{2-2\atp}$, the coefficient of $\ee(n\zz)$ in the
expansion $\PS{\Gamma,\cp|\cq}{\atp}{m}(\zz)$ of the Poincar\'e
series $\PS{\Gamma,\cp}{\atp}{m}(\zz)$ at $\cq$. Now the modified
Rademacher sums $\QS{\Gamma,\cp}{\atp}{m}(\zz)$, for
$-\atp,-m\in\ZZp$, span the space $I_{\atp}(\Gamma)$ of automorphic
integrals for weight $2\atp$ for $\Gamma$, according to Theorem
\ref{thm:modradsum:var:Basis_QS_p_m}, and thus serve an analogous
r\^ole for the spaces $I_{\atp}(\Gamma)$ as the Poincar\'e series
$\PS{\Gamma,\cp}{\atp}{m}(\zz)$ do for the spaces
$S_{\atp}(\Gamma)$. It is natural then to use the right hand side of
(\ref{eqn:struapp:iprods:Defn_Norm_Petersson_IP}) to extend the
normalized Petersson inner product to negative weights. In light of
the fact, established in Proposition
\ref{prop:modradsum:conver:Relate_QS_FR_s=1}, that the coefficient
of $\ee(-n\zz)$ in the expansion $\QS{\Gamma,\cp|\cq}{\atp}{-m}(\zz)$
of the modified Rademacher sum $\QS{\Gamma,\cp}{\atp}{-m}(\zz)$ at
$\cq$ is $\delta_{\Gamma,\cp|\cq}\delta_{m,n}$ for $m,n\in\ZZp$, we
define an inner product $\lab\cdot\,,\cdot\rab_{\Gamma}^{\atp}$ on
$I_{\atp}(\Gamma)$ by setting
\begin{gather}\label{eqn:struapp:iprods:Defn_Norm_IP_atp<0}
     \left\langle
     \QS{\Gamma,\cp}{\atp}{-m},
     \QS{\Gamma,\cq}{\atp}{-n}
     \right\rangle_{\Gamma}^{\atp}
     =
     n^{2-2\atp}
     \delta_{\Gamma,\cp|\cq}
     \delta_{m,n}
\end{gather}
for $\cp,\cq\in \cP_{\Gamma}$ and $m,n\in \ZZp$, when $\atp<0$. In case $\atp=0$ the modified Rademacher sums
$\QS{\Gamma,\cp}{\atp}{-m}(\zz)$ span a subspace $I_0'(\Gamma)$ of
$I_0(\Gamma)$ of codimension $1$ (cf. Theorem
\ref{thm:modradsum:var:Basis_QS_p_m} and \S\ref{sec:struapp:const}) and we may use the formula (\ref{eqn:struapp:iprods:Defn_Norm_IP_atp<0})
to define an inner product on this subspace
$I_0'(\Gamma)$.
\begin{gather}\label{eqn:struapp:iprods:Defn_Norm_IP_atp=0}
     \left\langle
     \QS{\Gamma,\cp}{}{-m},
     \QS{\Gamma,\cq}{}{-n}
     \right\rangle_{\Gamma}^{0}
     =
     n^{2}
     \delta_{\Gamma,\cp|\cq}
     \delta_{m,n}
\end{gather}
A complement to $I_0'(\Gamma)$ in $I_0(\Gamma)$ is
spanned by the constant functions. We extend the inner product
$\lab\cdot\,,\cdot\rab_{\Gamma}^{0}$ to all of $I_0(\Gamma)$ by
adopting the convention that
\begin{gather}
     \left\langle
     \QS{\Gamma,\cp}{}{-m},
     f
     \right\rangle_{\Gamma}^{0}
     =0
\end{gather}
for all $\cp\in \cP_{\Gamma}$ and $m\in\ZZp$ whenever $f$ is
identically constant.

\subsection{Branching}\label{sec:struapp:branch}

Let $\Delta$ and $\Gamma$ be groups commensurable with $G(\ZZ)$ and
suppose that $\Delta$ is a subgroup of $\Gamma$. Then an automorphic
integral for $\Gamma$ is also an automorphic integral for $\Delta$.
By Theorem \ref{thm:modradsum:var:Basis_QS_p_m} the modified
Rademacher sums span the spaces of automorphic integrals, so it is
natural to consider the problem of expressing the modified
Rademacher sums $\QS{\Gamma,\cp}{\atp}{m}$ associated to $\Gamma$ in
terms of the modified Rademacher sums $\QS{\Delta,\co}{\atp}{n}$
associated to $\Delta$.

Suppose then that $\Delta$ and $\Gamma$ are groups commensurable
with $G(\ZZ)$ and $\Delta$ is a subgroup of $\Gamma$. Let $\cp$ be a
cusp for $\Gamma$, let $\Sigma_{\cp}$ be a scaling coset for
$\Gamma$ at $\cp$, and choose a system
$\{\Sigma_{\co}\mid\co\in\cP_{\Delta}\}$ of scaling cosets for
$\Delta$. Suppose for now that $\atp$ is a negative integer. Then by
Theorem \ref{thm:modradsum:var:Basis_QS_p_m} we have
\begin{gather}\label{eqn:struapp:branch:Decomp_QS_Gamma_over_Delta}
     \QS{\Gamma,\cp}{\atp}{m}(\zz)
     =
     \sum_{\co\in \cP_{\Delta}}
     b_{\co}^n
     \QS{\Delta,\co}{\atp}{n}(\zz)
\end{gather}
for some $b_{\co}^n\in \CC$, and these $b_{\co}^n$ are non-zero for
only finitely many $n<0$. Furthermore, since the modified
Rademacher sum $\QS{\Gamma,\cp}{\atp}{m}(\zz)$ has no poles away
from $\cp$, the coefficient $b_{\co}^n$ can be non-zero only when
$\co$ lies in the preimage of $\cp$ under the natural map ${\sf
X}_{\Delta}\to {\sf X}_{\Gamma}$, which is to say, $b_{\co}^n$ is
non-zero only when $\co\in\Delta\backslash\cp$. It is natural then
to consider the map
\begin{gather}\label{eqn:struapp:branch:coset to_cusp}
     \begin{split}
     \Delta\backslash\Sigma_{\cp}
     &\to
     \Delta\backslash\cp\\
     \Delta\sigma
     &\mapsto
     \Delta\sigma\cdot\infty
     \end{split}
\end{gather}
sending right cosets of $\Delta$ in $\Sigma_{\cp}$ to cusps of
$\Delta$ contained in $\cp\subset\hat{\QQ}$. The map
(\ref{eqn:struapp:branch:coset to_cusp}) is always surjective. We
may ask under what circumstances it is also injective.
\begin{lem}\label{lem:struapp:branch:coset_to_cusp_inj_iff_cosets_scale}
Let $\Delta$ and $\Gamma$ be groups commensurable with $G(\ZZ)$ and
suppose that $\Delta$ is a subgroup of $\Gamma$. Let $\cp$ be a cusp
for $\Gamma$ and let $\Sigma_{\cp}$ be a scaling coset for $\Gamma$
at $\cp$. Then the map $\Delta\backslash\Sigma_{\cp}\to
{\Delta}\backslash\cp$ of (\ref{eqn:struapp:branch:coset to_cusp})
is injective if and only if $\Delta\sigma$ is a scaling coset for
$\Delta$ at $\Delta\sigma\cdot\infty$ for every $\Delta\sigma\in
\Delta\backslash\Sigma_{\cp}$.
\end{lem}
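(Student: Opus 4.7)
The plan is to analyse the fibres of the map $\Delta\sigma\mapsto \Delta\sigma\cdot\infty$ by exhibiting them as orbits for a natural right action of $B(\ZZ)$ on $\Delta\backslash\Sigma_{\cp}$, and then to translate triviality of this action back into the scaling condition on individual cosets.

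First I would verify that $\Sigma_{\cp}$ is stable under right multiplication by $B(\ZZ)$. By the defining condition $(\Sigma_{\cp}^{-1}\Sigma_{\cp})_{\infty}=B(\ZZ)$ of a scaling coset (Lemma \ref{lem:conven:scaling:scaling_cosets_exist}), for every $a\in\ZZ$ and every $\sigma\in\Sigma_{\cp}$ we have $T^{a}\in\sigma^{-1}\Gamma\sigma$, hence $\sigma T^{a}\sigma^{-1}\in\Gamma$ and $\Sigma_{\cp}T^{a}=\Gamma\sigma T^{a}=\Gamma(\sigma T^{a}\sigma^{-1})\sigma=\Sigma_{\cp}$. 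Consequently right multiplication by $B(\ZZ)$ descends to a well-defined action on $\Delta\backslash\Sigma_{\cp}$. Next I would check that two cosets $\Delta\sigma_{1},\Delta\sigma_{2}\in\Delta\backslash\Sigma_{\cp}$ have the same image under the map if and only if they lie in the same $B(\ZZ)$-orbit. Equality of images means that some $\delta\in\Delta$ satisfies $\delta\sigma_{2}\cdot\infty=\sigma_{1}\cdot\infty$, i.e.\ $\sigma_{1}^{-1}\delta\sigma_{2}\in B(\QQ)$. Since $\delta\in\Gamma$, this element also lies in $\Sigma_{\cp}^{-1}\Sigma_{\cp}$, so the scaling condition forces $\sigma_{1}^{-1}\delta\sigma_{2}\in(\Sigma_{\cp}^{-1}\Sigma_{\cp})_{\infty}=B(\ZZ)$; writing $\sigma_{1}^{-1}\delta\sigma_{2}=T^{a}$ yields $\Delta\sigma_{2}=\Delta\sigma_{1}T^{a}$. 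The converse direction is immediate since $T^{a}$ fixes $\infty$.

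With this reformulation the map is injective precisely when the $B(\ZZ)$-action on $\Delta\backslash\Sigma_{\cp}$ is trivial, that is, when $\Delta\sigma T^{a}=\Delta\sigma$ for every $\sigma\in\Sigma_{\cp}$ and every $a\in\ZZ$. But $\Delta\sigma T^{a}=\Delta\sigma$ is equivalent to $T^{a}\in \sigma^{-1}\Delta\sigma\cap B(\QQ)=(\sigma^{-1}\Delta\sigma)_{\infty}$, and, since $(\sigma^{-1}\Delta\sigma)_{\infty}\subseteq(\sigma^{-1}\Gamma\sigma)_{\infty}=B(\ZZ)$, demanding this for all $a\in\ZZ$ is the same as demanding the equality $(\sigma^{-1}\Delta\sigma)_{\infty}=B(\ZZ)$. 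As $\sigma\cdot\infty$ is manifestly a representative for the cusp $\Delta\sigma\cdot\infty$ of $\Delta$, this is exactly the statement that $\Delta\sigma$ is a scaling coset for $\Delta$ at $\Delta\sigma\cdot\infty$, completing the proof. The only delicate point, and the one on which the argument genuinely relies, is the step $\sigma_{1}^{-1}\delta\sigma_{2}\in B(\ZZ)$ rather than merely $B(\QQ)$; this is precisely where the hypothesis that $\Sigma_{\cp}$ is a scaling coset (and not just a coset whose image at $\infty$ is $\cp$) enters, and without it the fibres of the map would in general be strictly larger than the corresponding $B(\ZZ)$-orbits.
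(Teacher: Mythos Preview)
Your argument is correct and follows essentially the same route as the paper: both proofs hinge on the observation that $\Sigma_{\cp}$ is stable under right multiplication by $B(\ZZ)$, that two $\Delta$-cosets have the same image exactly when they differ by some $T^{n}$ (this being where $(\Sigma_{\cp}^{-1}\Sigma_{\cp})_{\infty}=B(\ZZ)$ is used), and that $\Delta\sigma T^{n}=\Delta\sigma$ for all $n$ is equivalent to $(\sigma^{-1}\Delta\sigma)_{\infty}=B(\ZZ)$. Your packaging of the fibres as $B(\ZZ)$-orbits is a tidy way to unify the two directions, but the underlying computation is the same as the paper's.
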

\begin{proof}
Suppose the map $\Delta\backslash\Sigma_{\cp}\to
\Delta\backslash\cp$ of (\ref{eqn:struapp:branch:coset to_cusp}) is
injective, and let $\sigma\in\Sigma_{\cp}$. The forward implication
of the lemma follows if we can show that $\Delta\sigma$ is a scaling
coset for $\Delta$ at $\Delta\sigma\cdot\infty$. For this it
suffices to show that $(\sigma^{-1}\Delta\sigma)_{\infty}=B(\ZZ)$.
Since $\sigma^{-1}\Delta{\sigma}$ is a subgroup of
$\sigma^{-1}\Gamma\sigma$, and
$(\sigma^{-1}\Gamma\sigma)_{\infty}=(\Sigma_{\cp}^{-1}\Sigma_{\cp})_{\infty}=B(\ZZ)$
by the defining properties of $\Sigma_{\cp}$, we have the inclusion
$(\sigma^{-1}\Delta\sigma)_{\infty}\subset B(\ZZ)$, so it suffices
to show that the translation $T$ belongs to
$\sigma^{-1}\Delta\sigma$. Now $\Delta\sigma T$ is also a coset of
$\Delta$ in $\Sigma_{\cp}$, since $\Gamma\sigma=\Sigma_{\cp}$ is a
union of left cosets of $B(\ZZ)$. Since $T$ fixes $\infty$ we have
$\Delta\sigma T\cdot\infty=\Delta\sigma\cdot\infty$, and thus the
cosets $\Delta\sigma T$ and $\Delta\sigma$ coincide by the assumed
injectivity of the map (\ref{eqn:struapp:branch:coset to_cusp}). It
follows that $T\in \sigma^{-1}\Delta\sigma$, so that $\Delta\sigma$
is indeed a scaling coset for $\Delta$.

For the reverse implication suppose that $\Delta\sigma$ is a scaling
coset for $\Delta$ for every $\sigma\in \Sigma_{\cp}$. Let
$\sigma,\sigma'\in \Sigma_{\cp}$ and suppose that
$\Delta\sigma\cdot\infty=\Delta\sigma'\cdot\infty$. Then
$\sigma^{-1}\delta\sigma'\in
(\Sigma_{\cp}^{-1}\Sigma_{\cp})_{\infty}=B(\ZZ)$ for some $\delta\in
\Delta$, so $\sigma^{-1}\delta\sigma'=T^n$ for some $n\in \ZZ$, and
this implies $\Delta\sigma'=\Delta\sigma T^n$. Now the assumption
that $\Delta\sigma$ is a scaling coset for $\Delta$ implies that
$\Delta\sigma$ is a union of left cosets of $B(\ZZ)$, so
$\Delta\sigma T^n=\Delta\sigma$. We conclude that
$\Delta\sigma'=\Delta\sigma$, so that the map
$\Delta\backslash\Sigma_{\cp}\to \Delta\backslash\cp$ is indeed
injective. This completes the proof of the lemma.
\end{proof}
According to the discussion of \S\ref{sec:conven:scaling} a coset
$\Delta\sigma\in \Delta\backslash\Sigma_{\cp}$ becomes a scaling
coset for $\Delta$ at $\Delta\sigma\cdot\infty$ only once we
multiply it on the right by $[\beta]$ (cf.
(\ref{eqn:conven:isomhyp:Defn_[alpha]})) for some $\beta\in \QQp$.
In fact, this $\beta$ is a positive integer, for we have
$(\sigma^{-1}\Delta\sigma)_{\infty}\subset(\Sigma_{\cp}^{-1}\Sigma_{\cp})_{\infty}=B(\ZZ)$,
so that $(\sigma^{-1}\Delta\sigma)_{\infty}$ is generated by $T^n$
for some $n\in\ZZp$. In order that $\Delta\sigma[\beta]$ be a
scaling coset for $\Delta$ at $\Delta\sigma\cdot\infty$ we should
have $([1/\beta]\sigma^{-1}\Delta\sigma[\beta])_{\infty}=B(\ZZ)$, so
the computation $[1/\beta]T^n[\beta]=T^{n/\beta}$ shows that
$\beta=n$. Evidently we may define a function
$\Delta\backslash\Sigma_{\cp}\to \ZZp$ by mapping the coset
$\Delta\sigma$ to the positive integer $n$ such that $T^{n}$
generates $(\sigma^{-1}\Delta\sigma)_{\infty}$. We next show that
this function $\Delta\backslash\Sigma_{\cp}\to \ZZp$ factors through
the map $\Delta\backslash\Sigma_{\cp}\to\Delta\backslash\cp$ of
(\ref{eqn:struapp:branch:coset to_cusp}).
\begin{lem}\label{lem:struapp:branch:coset_to_int_factors_thru_coset_to_cusp}
Let $\Delta$ and $\Gamma$ be groups commensurable with $G(\ZZ)$, and
suppose that $\Delta$ is a subgroup of $\Gamma$. Let $\cp\in
\cP_{\Gamma}$ be a cusp of $\Gamma$ and let $\Sigma_{\cp}$ be a
scaling coset for $\Gamma$ at $\cp$. Then for any
$\sigma_1,\sigma_2\in \Sigma_{\cp}$ the identity
$\Delta\sigma_1\cdot\infty=\Delta\sigma_2\cdot\infty$ implies
$(\sigma_1^{-1}\Delta\sigma_1)_{\infty}=(\sigma_2^{-1}\Delta\sigma_2)_{\infty}$.
\end{lem}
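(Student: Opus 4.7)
The plan is to translate the hypothesis $\Delta\sigma_1\cdot\infty=\Delta\sigma_2\cdot\infty$ into a concrete algebraic relation between $\sigma_1$ and $\sigma_2$, then exploit the commutativity of $B(\ZZ)$ to transfer stabilizer information from one conjugate to the other. First I would unpack the hypothesis: there exists $\delta\in\Delta$ with $\delta\sigma_1\cdot\infty=\sigma_2\cdot\infty$, which means $\sigma_2^{-1}\delta\sigma_1$ fixes $\infty$, hence lies in $B(\QQ)$. Since $\delta\in\Delta\subset\Gamma$ and $\sigma_1,\sigma_2\in\Sigma_{\cp}$, this element lies in $\Sigma_{\cp}^{-1}\Gamma\Sigma_{\cp}=\Sigma_{\cp}^{-1}\Sigma_{\cp}$. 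The defining property of the scaling coset gives $(\Sigma_{\cp}^{-1}\Sigma_{\cp})_{\infty}=B(\ZZ)$, so there is $k\in\ZZ$ with $\sigma_2^{-1}\delta\sigma_1=T^{k}$, equivalently $\sigma_1=\delta^{-1}\sigma_2 T^{k}$.

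Next I observe that the defining property of $\Sigma_{\cp}$ also forces both stabilizers under consideration to be subgroups of $B(\ZZ)$, because $(\sigma_i^{-1}\Delta\sigma_i)_{\infty}\subset(\sigma_i^{-1}\Gamma\sigma_i)_{\infty}=(\Sigma_{\cp}^{-1}\Sigma_{\cp})_{\infty}=B(\ZZ)$. Consequently every element of either stabilizer is a power $T^n$, and it suffices to prove the equivalence
\begin{gather*}
     \sigma_1 T^n\sigma_1^{-1}\in\Delta
     \iff
     \sigma_2 T^n\sigma_2^{-1}\in\Delta,
\end{gather*}
for all $n\in\ZZ$. Substituting $\sigma_1=\delta^{-1}\sigma_2 T^{k}$ and using $T^{k}T^{n}T^{-k}=T^{n}$, a direct calculation gives $\sigma_1 T^n\sigma_1^{-1}=\delta^{-1}(\sigma_2 T^n\sigma_2^{-1})\delta$. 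Since $\delta\in\Delta$ and $\Delta$ is a group, the two sides lie in $\Delta$ simultaneously, proving the required equivalence and hence the lemma.

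There is no real technical obstacle here; the only point requiring care is being precise about the two different uses of the scaling coset condition $(\Sigma_{\cp}^{-1}\Sigma_{\cp})_{\infty}=B(\ZZ)$, once to pin down $\sigma_2^{-1}\delta\sigma_1$ as an integral translation $T^k$, and once to reduce the stabilizers of $\infty$ in the conjugated subgroups to subgroups of $B(\ZZ)$. With those two applications in place, the abelianness of $B(\ZZ)$ does the rest.
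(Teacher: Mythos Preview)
Your proof is correct, and it takes a somewhat different route from the paper's. Both arguments begin the same way: they use the scaling coset condition $(\Sigma_{\cp}^{-1}\Sigma_{\cp})_{\infty}=B(\ZZ)$ to see that each $(\sigma_i^{-1}\Delta\sigma_i)_{\infty}$ is a subgroup of $B(\ZZ)$. From there, the paper lets $n_i$ be the positive integer generating $(\sigma_i^{-1}\Delta\sigma_i)_{\infty}$, observes that $\Delta\sigma_i[n_i]$ is then a scaling coset for $\Delta$ at the common cusp $\co$, invokes the $B_u(\QQ)$-ambiguity of scaling cosets to write $\Delta\sigma_1[n_1]=\Delta\sigma_2[n_2]T^{\alpha}$, and finally reads off $n_1=n_2$ from an explicit matrix computation with $[n_2]T^{\alpha}[1/n_1]\in B(\ZZ)$. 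Your argument bypasses the scaling-coset machinery for $\Delta$ entirely: you extract the relation $\sigma_1=\delta^{-1}\sigma_2 T^{k}$ directly from the hypothesis, and then the commutativity of $B(\ZZ)$ together with conjugation by $\delta\in\Delta$ immediately gives $(\sigma_1^{-1}\Delta\sigma_1)_{\infty}=(\sigma_2^{-1}\Delta\sigma_2)_{\infty}$ as sets, not merely as groups of the same index. This is more elementary and more self-contained; the paper's version, on the other hand, ties the result more visibly to the scaling-coset framework being developed in that section.
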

\begin{proof}
Set $\co=\Delta\sigma_1\cdot\infty=\Delta\sigma_2\cdot\infty$. Since
$(\sigma^{-1}\Gamma\sigma)_{\infty}=(\Sigma_{\cp}^{-1}\Sigma_{\cp})_{\infty}=B(\ZZ)$
for any $\sigma\in \Sigma_{\cp}$ the groups
$(\sigma_1^{-1}\Delta\sigma_1)_{\infty}$ and
$(\sigma_2^{-1}\Delta\sigma_2)_{\infty}$ are both contained in
$B(\ZZ)$. Let $n_1$ and $n_2$ be the positive integers such that
$(\sigma_1^{-1}\Delta\sigma_1)_{\infty}$ is generated by $T^{n_1}$
and $(\sigma_2^{-1}\Delta\sigma_2)_{\infty}$ is generated by
$T^{n_2}$. Then $\Delta\sigma_1[n_1]$ and $\Delta\sigma_2[n_2]$ are
both scaling cosets for $\Delta$ at $\co$, so
$\Delta\sigma_1[n_1]=\Delta\sigma_2[n_2]T^{\alpha}$ for some
$\alpha\in \QQ$. This implies that $[n_2]T^{\alpha}[1/n_1]$ is an
element of $(\Sigma_{\cp}^{-1}\Sigma_{\cp})_{\infty}=B(\ZZ)$, so
that $[n_2]T^{\alpha}[1/n_1]=T^k$ for some $k\in\ZZ$. Now we compute
\begin{gather}
     [n_2]T^{\alpha}[1/n_1]
     =
     \left[
       \begin{array}{cc}
         n_2 & 0 \\
         0 & 1 \\
       \end{array}
     \right]
     \left[
       \begin{array}{cc}
         1 & \alpha \\
         0 & 1 \\
       \end{array}
     \right]
     \left[
       \begin{array}{cc}
         1 & 0 \\
         0 & n_1 \\
       \end{array}
     \right]
     =
     \left[
       \begin{array}{cc}
         n_2 & n_2\alpha n_1 \\
         0 & n_1 \\
       \end{array}
     \right]
     =
     \left[
       \begin{array}{cc}
         1 & k \\
         0 & 1 \\
       \end{array}
     \right]
\end{gather}
and conclude that $n_1=n_2$, as we required to show.
\end{proof}
On the basis of Lemma
\ref{lem:struapp:branch:coset_to_int_factors_thru_coset_to_cusp} we
may define a function $\Delta\backslash\cp\to\ZZp$ for each cusp
$\cp$ of $\Gamma$ by first choosing a scaling coset $\Sigma_{\cp}$
for $\Gamma$ at $\cp$, and then sending the cusp $\co\in
\Delta\backslash\cp$ of $\Delta$ to the unique positive integer $n$
satisfying $(\sigma^{-1}\Delta\sigma)_{\infty}=\lab T^n\rab$ when
$\co=\Delta\sigma\cdot\infty$. Observe now that this map does not
depend upon the choice of scaling coset $\Sigma_{\cp}$, for if we
replace $\sigma\in \Sigma_{\cp}$ with $\sigma'=\sigma T^{\alpha}\in
\Sigma_{\cp}T^{\alpha}$ for some $\alpha\in\QQ$, then we have
\begin{gather}
     ((\sigma')^{-1}\Delta\sigma')_{\infty}
     =
     (T^{-\alpha}\sigma^{-1}\Delta\sigma T^{\alpha})_{\infty}
     =
     T^{-\alpha}(\sigma^{-1}\Delta\sigma)_{\infty}T^{\alpha}
     =
     (\sigma^{-1}\Delta\sigma)_{\infty}.
\end{gather}
In light of this we may define a map $w_{\Gamma}:\cP_{\Delta}\to
\ZZp$, which we call the {\em width function associated to
$\Gamma$}, by setting $w_{\Gamma}(\co)=n$ in case
$\co=\Delta\sigma\cdot\infty$ and $T^n$ generates
$(\sigma^{-1}\Delta\sigma)_{\infty}$ and $\Gamma\sigma$ is a scaling
coset for $\Gamma$ at the unique cusp of $\Gamma$ containing $\co$.
\begin{gather}\label{eqn:struapp:branch:Defn_width_function}
     \begin{split}
          w_{\Gamma}:\cP_{\Delta}&\to\ZZp\\
          \co=\Delta\sigma\cdot\infty,\,
          \Gamma\sigma\in\gt{S}_{\Gamma\co}
          &\Longrightarrow
          (\sigma^{-1}\Delta\sigma)_{\infty}
          =
          \lab T^{w_{\Gamma}(\co)}\rab
     \end{split}
\end{gather}
We call $w_{\Gamma}(\co)$ the {\em width of $\co$ with respect to
$\Gamma$}. The next result gives upper bounds for the width
functions.
\begin{lem}\label{lem:struapp:branch:sum_wGamma_is_index}
Let $\Delta$ and $\Gamma$ be groups commensurable with $G(\ZZ)$, and
suppose that $\Delta$ is a subgroup of $\Gamma$. Then we have
\begin{gather}\label{eqn:struapp:branch:sum_wGamma_is_index}
     \sum_{\co\in\Delta\backslash\cp}
     w_{\Gamma}(\co)
     =
     \#\Delta\backslash\Gamma
\end{gather}
for every cusp $\cp$ of $\Gamma$.
\end{lem}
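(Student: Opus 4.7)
The plan is to conjugate the whole setup by a scaling element at $\cp$ so as to reduce to the situation where $\cp$ is the infinite cusp, and then to read off the identity from a standard orbit-counting/double-coset argument.

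Fix a scaling coset $\Sigma_{\cp}=\Gamma\sigma_{\cp}$ for $\Gamma$ at $\cp$, and set $\Gamma^{*}=\sigma_{\cp}^{-1}\Gamma\sigma_{\cp}$ and $\Delta^{*}=\sigma_{\cp}^{-1}\Delta\sigma_{\cp}$. Conjugation by $\sigma_{\cp}$ induces a bijection $\Delta\backslash\Gamma\cong\Delta^{*}\backslash\Gamma^{*}$, so in particular the right-hand side of (\ref{eqn:struapp:branch:sum_wGamma_is_index}) equals $\#\Delta^{*}\backslash\Gamma^{*}$. By the defining property of $\Sigma_{\cp}$ (cf.\ (\ref{eqn:conven:scaling:scaling_coset_conds})) we have $\Gamma^{*}_{\infty}=B(\ZZ)$, and the map $\gamma\mapsto\sigma_{\cp}\gamma\cdot\infty$ identifies $\cp$ with $\Gamma^{*}\cdot\infty$, hence induces an identification
\begin{gather*}
     \Delta\backslash\cp
     \;\longleftrightarrow\;
     \Delta^{*}\backslash\Gamma^{*}/B(\ZZ),\qquad
     \co=\Delta\sigma_{\cp}\gamma\cdot\infty\;\longleftrightarrow\;\Delta^{*}\gamma B(\ZZ).
\end{gather*}

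Next I would consider the natural surjection $p\colon\Delta^{*}\backslash\Gamma^{*}\to\Delta^{*}\backslash\Gamma^{*}/B(\ZZ)$ and compute the size of each fibre. For a double coset $\Delta^{*}\gamma B(\ZZ)$, the fibre $p^{-1}(\Delta^{*}\gamma B(\ZZ))$ is the set of left $\Delta^{*}$-cosets contained in $\Delta^{*}\gamma B(\ZZ)$, and the standard orbit-counting formula gives
\begin{gather*}
     \#p^{-1}(\Delta^{*}\gamma B(\ZZ))
     =[B(\ZZ)\colon B(\ZZ)\cap\gamma^{-1}\Delta^{*}\gamma].
\end{gather*}
Since $\gamma\in\Gamma^{*}$ and $\Gamma^{*}_{\infty}=B(\ZZ)$, an element of $\gamma^{-1}\Delta^{*}\gamma\subset\Gamma^{*}$ lies in $B(\ZZ)$ exactly when it fixes $\infty$; thus $B(\ZZ)\cap\gamma^{-1}\Delta^{*}\gamma=(\gamma^{-1}\Delta^{*}\gamma)_{\infty}$.

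Now I would match this fibre size with $w_{\Gamma}(\co)$. Taking $\sigma=\sigma_{\cp}\gamma$, we have $\Gamma\sigma=\Gamma\sigma_{\cp}=\Sigma_{\cp}\in\gt{S}_{\cp}$, so $\sigma$ is admissible in the definition (\ref{eqn:struapp:branch:Defn_width_function}); and
\begin{gather*}
     (\sigma^{-1}\Delta\sigma)_{\infty}
     =(\gamma^{-1}\sigma_{\cp}^{-1}\Delta\sigma_{\cp}\gamma)_{\infty}
     =(\gamma^{-1}\Delta^{*}\gamma)_{\infty}
     =\lab T^{w_{\Gamma}(\co)}\rab.
\end{gather*}
Hence $\#p^{-1}(\Delta^{*}\gamma B(\ZZ))=[B(\ZZ)\colon\lab T^{w_{\Gamma}(\co)}\rab]=w_{\Gamma}(\co)$. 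Summing over the fibres of $p$ yields
\begin{gather*}
     \#\Delta\backslash\Gamma
     =\#\Delta^{*}\backslash\Gamma^{*}
     =\sum_{\co\in\Delta\backslash\cp}\#p^{-1}(\co)
     =\sum_{\co\in\Delta\backslash\cp}w_{\Gamma}(\co),
\end{gather*}
which is (\ref{eqn:struapp:branch:sum_wGamma_is_index}).

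There is no serious obstacle here; the only points needing care are (i) checking that the chosen $\sigma=\sigma_{\cp}\gamma$ actually satisfies the scaling-coset hypothesis in the definition of $w_{\Gamma}$, which uses $\gamma\in\Gamma^{*}$ together with $\Gamma\sigma_{\cp}\gamma=\Gamma\sigma_{\cp}$, and (ii) verifying that $[\Gamma\colon\Delta]$ is finite so that the identity is meaningful, which follows from the transitivity of commensurability together with the hypothesis $\Delta,\Gamma\in\commod$.
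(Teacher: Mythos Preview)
Your proof is correct and is essentially the same argument as the paper's: both count the fibres of the natural surjection from $\Delta\backslash\Sigma_{\cp}$ (equivalently, after your conjugation, $\Delta^{*}\backslash\Gamma^{*}$) onto $\Delta\backslash\cp$ and identify each fibre size with $w_{\Gamma}(\co)$. The only difference is cosmetic---the paper works directly with cosets in $\Sigma_{\cp}$ and the right $B(\ZZ)$-translation $\Delta\sigma\mapsto\Delta\sigma T^{k}$, while you conjugate first and package the identical fibre computation as the index $[B(\ZZ):(\gamma^{-1}\Delta^{*}\gamma)_{\infty}]$.
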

\begin{proof}
Let $\cp$ be a cusp of $\Gamma$ and choose a scaling coset
$\Sigma_{\cp}$ for $\Gamma$ at $\cp$, and let us temporarily write
$c_{\cp}$ for the map $\Delta\sigma\mapsto\Delta\sigma\cdot\infty$
of (\ref{eqn:struapp:branch:coset to_cusp}). We claim that
$w_{\Gamma}(\co)$ is the cardinality of the preimage of $\co$ under
$c_{\cp}$, for each $\co\in \Delta\backslash\cp$. The identity
(\ref{eqn:struapp:branch:sum_wGamma_is_index}) follows from this
because the cardinality $\#\Delta\backslash\Sigma_{\cp}$ of the
source of $c_{\cp}$ is just the number of cosets of $\Delta$ in
$\Gamma$. To compute $\# c_{\cp}^{-1}(\co)$ suppose
$\Delta\sigma\cdot\infty=\Delta\sigma'\cdot\infty$ for some
$\sigma,\sigma'\in\Sigma_{\cp}$. Then $\sigma^{-1}\delta\sigma'\in
(\Sigma_{\cp}^{-1}\Sigma_{\cp})_{\infty}=B(\ZZ)$ for some $\delta\in
\Delta$, so that $\Delta\sigma'=\Delta\sigma T^k$ for some
$k\in\ZZ$. Now $\Delta\sigma T$ is an element of
$\Delta\backslash\Sigma_{\cp}$ whenever $\Delta\sigma$ is, for
$\Sigma_{\cp}$ is a union of left cosets of $B(\ZZ)$ by
construction. So we have $c_{\cp}^{-1}(\co)=\{\Delta\sigma T^k\mid
k\in\ZZ\}$ for any $\sigma\in\Sigma_{\cp}$ satisfying
$\co=\Delta\sigma\cdot\infty$. If $w_{\Gamma}(\co)=n$, so that
$(\sigma^{-1}\Delta\sigma)_{\infty}=\lab T^n\rab$ then $\Delta\sigma
T^k=\Delta\sigma T^l$ if and only if $n|(k-l)$. This establishes the
claim that $\# c_{\cp}^{-1}(\co)=w_{\Gamma}(\co)$, and completes the
proof.
\end{proof}
We can now compute the coefficients $b_{\co}^{n}$ in
(\ref{eqn:struapp:branch:Decomp_QS_Gamma_over_Delta}) explicitly,
and thus obtain a branching theorem for the modified Rademacher
sums. Even though we formulated the expression
(\ref{eqn:struapp:branch:Decomp_QS_Gamma_over_Delta}) under the
assumption that $\atp$ be negative, our methods will apply for all
$\atp\in \ZZ$ such that $\atp\leq 0$.
\begin{thm}\label{thm:struapp:branch:branching_thm}
Let $\Delta$ and $\Gamma$ be groups commensurable with $G(\ZZ)$.
Suppose that $\Delta$ is a subgroup of $\Gamma$, and suppose that
$\Delta$ and $\Gamma$ both have width one at infinity. Let $\cp$ be
a cusp of $\Gamma$ and let $\Sigma_{\cp}$ be a scaling coset for
$\Gamma$ at $\cp$. Let $\atp,m\in \ZZ$ such that $\atp\leq 0$ and
$m<0$. Then there exists a system of scaling cosets
$\{\Sigma_{\co}\mid\co\in\cP_{\Delta}\}$ for $\Delta$ for which we
have
\begin{gather}\label{eqn:struapp:branch:branching_thm}
     \QS{\Gamma,\cp}{\atp}{m}(\zz)
     =
     \sum_{\co\in\Delta\backslash\cp}
     w_{\Gamma}(\co)\QS{\Delta,\co}{\atp}{mw_{\Gamma}(\co)}(\zz).
\end{gather}
\end{thm}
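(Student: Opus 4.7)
The strategy is to exploit the basis theorem (Theorem \ref{thm:modradsum:var:Basis_QS_p_m}) for the space $I_\atp(\Delta)$ of automorphic integrals, and then match principal parts on both sides.

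The plan is to first fix the choice of scaling cosets for $\Delta$. For each $\co\in\Delta\backslash\cp$, Lemma \ref{lem:struapp:branch:coset_to_int_factors_thru_coset_to_cusp} and the surrounding discussion produce $\sigma_\co\in\Sigma_\cp$ with $\Delta\sigma_\co\cdot\infty=\co$; I set $\Sigma_\co:=\Delta\sigma_\co[w]$ with $w=w_\Gamma(\co)$, and verify that this is a scaling coset by computing $(\Sigma_\co^{-1}\Sigma_\co)_\infty = [1/w]\lBZ T^w\rBZ[w] = B(\ZZ)$. For cusps of $\Delta$ not lying over $\cp$ I extend the system arbitrarily, since those cusps will not enter the formula.

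Next I observe that, because $\Delta\subset\Gamma$, the modified Rademacher sum $f:=\QS{\Gamma,\cp}{\atp}{m}(\zz)$, which by Theorem \ref{thm:radsum:var:QS_is_aut_int} lies in $I_\atp(\Gamma)$, also lies in $I_\atp(\Delta)$. By Theorem \ref{thm:modradsum:var:Basis_QS_p_m} I may therefore write
\begin{gather*}
     f = \sum_{\co'\in\cP_\Delta}\sum_{n\in\ZZp} b_{\co'}^n\QS{\Delta,\co'}{\atp}{n},
\end{gather*}
with finitely many nonzero $b_{\co'}^n$ (possibly plus a constant when $\atp=0$, which the argument below will show is absent). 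Since $f$ is holomorphic on $\HH$ and its only pole on ${\sf X}_\Gamma$ lies at $\cp$, the only cusps of $\Delta$ at which $f$ can have a pole are those $\co$ contained in $\cp$; hence $b_{\co'}^n=0$ for $\co'\notin\Delta\backslash\cp$.

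The heart of the argument is the computation of the principal part of $f$ at a cusp $\co\in\Delta\backslash\cp$ with respect to the scaling coset $\Sigma_\co$ defined above. I compute $f\cop{\Delta}{\atp}\Sigma_\co$ using the representative $\sigma_\co[w]\in\Sigma_\co$. Using $\sigma_\co[w]\cdot\infty = \sigma_\co\cdot\infty$ and the chain rule for $\jac$, which gives $\jac(\sigma_\co[w],\zz) = w\,\jac(\sigma_\co,w\zz)$, I show that the $\sop{\atp}$ action of $\sigma_\co[w]$ factors as a power of $w$ times the $\sop{\atp}$ action of $\sigma_\co$ composed with the substitution $\zz\mapsto w\zz$. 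Because $\sigma_\co\in\Sigma_\cp$, the resulting expression becomes $f\cop{\Gamma}{\atp}\Sigma_\cp$ evaluated at $w\zz$, the associated cusp form contribution being the same on both sides (as the cusp form of $f$ viewed as a $\Delta$-integral coincides with that of $f$ viewed as a $\Gamma$-integral). Substituting the Fourier expansion of $f\cop{\Gamma}{\atp}\Sigma_\cp$ from Proposition \ref{prop:modradsum:conver:Relate_QS_FR_s=1} yields an expansion of $f\cop{\Delta}{\atp}\Sigma_\co$ in $\ee(w\zz)$, whose principal part is a scalar multiple of $\ee(-mw\zz)$ with no other negative-frequency contribution. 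Matching against the principal part $\sum_n b_\co^n\ee(-n\zz)$ of the decomposition forces $b_\co^n=0$ for $n\neq mw_\Gamma(\co)$ and gives the claimed coefficient $b_\co^{mw_\Gamma(\co)} = w_\Gamma(\co)$.

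The main technical obstacle lies in steps (3) and (4): verifying that $\Sigma_\co=\Delta\sigma_\co[w]$ is independent (up to the allowed $T^\alpha$ ambiguity) of the choice of $\sigma_\co$ representing $\co$, and carefully tracking the chain-rule factor arising from composing $\sigma_\co$ with $[w]$. One must also check that when $\atp=0$ no constant function appears in the decomposition, which follows by comparing Fourier expansions at $\ii\infty$: the principal part of $f$ at the infinite cusp of $\Delta$ matches the contribution from the unique $\co\in\Delta\backslash\cp$ containing $\infty$ (when such a $\co$ exists), for which $w_\Gamma(\co)=1$, and there are no leftover constant terms on either side.
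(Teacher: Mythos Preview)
Your approach is genuinely different from the paper's. The paper does not invoke the basis theorem at all; instead it decomposes the defining sum directly: writing $\Sigma_{\cp}^{-1}=\bigcup_{\Delta\sigma\in\Delta\backslash\Sigma_{\cp}}\sigma^{-1}\Delta$, it obtains $\QS{\Gamma,\cp}{\atp}{m}=\sum_{\Delta\sigma}\QS{\lBZ\sigma^{-1}\Delta\rBZh}{\atp}{m}$, and then rewrites each summand using $\sigma^{-1}\Delta=[n]\Sigma_{\co}^{-1}$ together with the scaling identity $\QS{\lBZ[n]\chi\rBZh}{\atp}{m}=\QS{\lBZ\chi\rBZh}{\atp}{mn}$, so that the factor $w_\Gamma(\co)$ emerges purely as the multiplicity with which the cusp $\co$ is hit by the coset map $\Delta\backslash\Sigma_{\cp}\to\Delta\backslash\cp$. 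Your route via principal parts is more conceptual and has the virtue of making clear why only the cusps over $\cp$ contribute, but it does not carry out the coefficient computation correctly.

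The gap is in your final matching step. With $\Sigma_{\co}=\Delta\sigma_{\co}[w]$ and $w=w_\Gamma(\co)$, the chain rule gives $\jac(\sigma_{\co}[w],\zz)=w\,\jac(\sigma_{\co},w\zz)$, so for any $h$ one has $(h\sop{\atp}\sigma_{\co}[w])(\zz)=w^{\atp}\,(h\sop{\atp}\sigma_{\co})(w\zz)$. Applying this with $h=f-\JO{\sigma_{\co}\cdot\infty}{\atp}g$ yields
\[
\bigl(f\cop{\Delta}{\atp}\Sigma_{\co}\bigr)(\zz)=w^{\atp}\,\bigl(f\cop{\Gamma}{\atp}\Sigma_{\cp}\bigr)(w\zz)=w^{\atp}\,\ee(-mw\zz)+\text{(regular)},
\]
so the principal part at $\co$ is $w^{\atp}\,\ee(-mw\zz)$, not $w\,\ee(-mw\zz)$. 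Matching against $\sum_n b_{\co}^{n}\ee(-n\zz)$ therefore gives $b_{\co}^{mw}=w^{\atp}$, which agrees with the stated coefficient $w_\Gamma(\co)$ only when $w=1$ or $\atp=1$. You can check this concretely in weight zero with $\Gamma=G(\ZZ)$, $\Delta=\Gamma_0(2)$, $\cp=\Gamma\cdot\infty$, $m=1$: at $\co=\Delta\cdot 0$ one has $w=2$, and the expansion of $J(\zz)+24$ with respect to $\Sigma_{\co}=\Delta S[2]$ is $J(2\zz)+24=\ee(-2\zz)+\cdots$, giving coefficient $1=w^{0}$, not $2$. So the step ``gives the claimed coefficient $b_{\co}^{mw_\Gamma(\co)}=w_\Gamma(\co)$'' does not follow from your computation. (There is also a loose end at $\atp=0$: your claim that no constant term appears in the basis decomposition is asserted rather than proved; matching principal parts alone determines an element of $I_0(\Delta)$ only up to a constant.)
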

\begin{proof}
Let $\Delta$ and $\Gamma$ be as in the statement of the lemma. Under
the assumption that $\Gamma$ has width one at infinity, and the
convention that $\Sigma_{\Gamma\cdot\infty}=\Gamma$ in this case
(cf. \S\ref{sec:conven:scaling}), we have
\begin{gather}\label{eqn:struapp:branch:QS_Gamma_is_sum_QS_cosets_Delta}
     \QS{\Gamma,\cp}{\atp}{m}(\zz)
     =
     \QS{\lBZ\Sigma_{\cp}^{-1}\rBZh}{\atp}{m}(\zz)
     =
     \sum_{\Delta\sigma\in\Delta\backslash\Sigma_{\cp}}
     \QS{\lBZ\sigma^{-1}\Delta\rBZh}{\atp}{m}(\zz),
\end{gather}
and we may attempt to write each summand
$\QS{\lBZ\sigma^{-1}\Delta\rBZh}{\atp}{m}(\zz)$ in terms of the
modified Rademacher sums $\QS{\Delta,\co}{\atp}{n}(\zz)$ associated
to $\Delta$. Let $\Delta\sigma\in\Delta\backslash\Sigma_{\cp}$ and
set $\co=\Delta\sigma\cdot\infty$. Supposing that
$w_{\Gamma}(\co)=n$, so that
$(\sigma^{-1}\Delta\sigma)_{\infty}=\lab T^{n}\rab$, we set
$\Sigma_{\co}=\Delta\sigma[n]$. Then we have
\begin{gather}\label{eqn:struapp:branch:QS_sigma_inv_Delta_to_scaling}
     \QS{\lBZ\sigma^{-1}\Delta\rBZh}{\atp}{m}(\zz)
     =
     \QS{\lBZ[n][1/n]\sigma^{-1}\Delta\rBZh}{\atp}{m}(\zz)
     =
     \QS{\lBZ\Sigma_{\co}^{-1}\rBZh}{\atp}{mn}(\zz),
\end{gather}
since
$\QS{\lBZ[n]\chi\rBZh}{\atp}{m}(\zz,\spp)=\QS{\lBZ\chi\rBZh}{\atp}{mn}(\zz,\spp)$
(cf. \S\ref{sec:modradsum:constr}). Under the assumption that
$\Delta$ has width one at infinity, so that
$\Sigma_{\Delta\cdot\infty}=\Delta$, we may rewrite the right most
term in (\ref{eqn:struapp:branch:QS_sigma_inv_Delta_to_scaling}) as
$\QS{\Delta,\co}{\atp}{mn}(\zz)$. We see then that a coset
$\Delta\sigma$ of $\Delta$ in $\Sigma_{\cp}$ contributes a term
$\QS{\Delta,\co}{\atp}{mw_{\Gamma}(\co)}(\zz)$ to the right hand
side of (\ref{eqn:struapp:branch:branching_thm}), where
$\co=\Delta\sigma\cdot\infty$. According to the proof of Lemma
\ref{lem:struapp:branch:sum_wGamma_is_index} there are exactly
$w_{\Gamma}(\co)$ cosets $\Delta\sigma$ in
$\Delta\backslash\Sigma_{\cp}$ satisfying
$\co=\Delta\sigma\cdot\infty$. The required identity
(\ref{eqn:struapp:branch:branching_thm}) now follows.
\end{proof}
From the proof of Theorem \ref{thm:struapp:branch:branching_thm} we
see that the existence of the scaling cosets of the conclusion is
verified constructively: if $\co=\Delta\sigma\cdot\infty$ for some
coset $\Delta\sigma\in \Delta\backslash\Sigma_{\cp}$, then we may
take $\Sigma_{\co}=\Delta\sigma[w_{\Gamma}(\co)]$.

By utilizing the inner products
$\lab\cdot\,,\cdot\rab_{\Delta}^{\atp}$ of
\S\ref{sec:struapp:iprods} we can reformulate Theorem
\ref{thm:struapp:branch:branching_thm} in a way that is independent
of scaling coset choices.
\begin{thm}\label{thm:struapp:branch:branching_thm_ips}
Let $\Delta$ and $\Gamma$ be groups commensurable with $G(\ZZ)$.
Suppose that $\Delta$ is a subgroup of $\Gamma$, and suppose that
$\Delta$ and $\Gamma$ both have width one at infinity. Let $\cp$ be
a cusp of $\Gamma$, let $\atp\,min \ZZ$ such that $\atp\leq 0$ and
$m<0$. Then we have
\begin{gather}\label{eqn:struapp:branch:branching_thm_ips}
     \left|
     \left\langle
     \QS{\Gamma,\cp}{\atp}{m},
     \QS{\Delta,\co}{\atp}{n}
     \right\rangle_{\Delta}^{\atp}
     \right|^2
     =
     w_{\Gamma}(\co)
     \delta_{\Gamma,\cp|\Gamma\co}
     \delta_{mw_{\Gamma}(\co),n}
\end{gather}
for all $\co\in \cP_{\Delta}$ and $n<0$, for any choice of
scaling coset systems for $\Delta$ and $\Gamma$.
\end{thm}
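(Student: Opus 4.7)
The plan is to derive (\ref{eqn:struapp:branch:branching_thm_ips}) from the basis-level branching identity of Theorem \ref{thm:struapp:branch:branching_thm}, using the orthogonality of modified Rademacher sums under the normalized inner product $\lab\cdot\,,\cdot\rab_\Delta^\atp$ defined in (\ref{eqn:struapp:iprods:Defn_Norm_IP_atp<0}) and (\ref{eqn:struapp:iprods:Defn_Norm_IP_atp=0}). The key observation is that the basis $\{\QS{\Delta,\co}{\atp}{n}\mid\co\in\cP_\Delta,\,n\in\ZZp\}$ of $I_\atp(\Delta)$ (or, when $\atp=0$, of the codimension-one subspace $I_0'(\Delta)$) is orthogonal by the very definition of the inner product, so that pairing the $\Gamma$-Rademacher sum against a $\Delta$-Rademacher sum simply reads off a single coefficient from its branching expansion.

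First I would invoke Theorem \ref{thm:struapp:branch:branching_thm} to produce a distinguished system $\{\Sigma_{\co}^{*}\mid\co\in\cP_\Delta\}$ of scaling cosets for $\Delta$ for which
\begin{gather*}
     \QS{\Gamma,\cp}{\atp}{m}
     =
     \sum_{\co'\in\Delta\backslash\cp}
     w_{\Gamma}(\co')\,
     \QS{\Delta,\co'}{\atp}{mw_{\Gamma}(\co')},
\end{gather*}
with the right-hand side evaluated against the $\Sigma^{*}_{\co'}$. Substituting this expansion into the left-hand side of (\ref{eqn:struapp:branch:branching_thm_ips}) and applying bilinearity, I reduce the inner product to a finite sum of terms of the shape $w_\Gamma(\co')\lab\QS{\Delta,\co'}{\atp}{mw_\Gamma(\co')},\QS{\Delta,\co}{\atp}{n}\rab_\Delta^\atp$. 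The orthogonality relation collapses this sum: the summand vanishes unless $\co'=\co$ (which forces $\co\in\Delta\backslash\cp$, equivalently $\delta_{\Gamma,\cp|\Gamma\co}=1$) and $mw_\Gamma(\co')=n$, so at most one term survives, and the claim then follows after taking absolute values squared and simplifying, using $n=mw_\Gamma(\co)$ where the Kronecker delta allows it.

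The main obstacle is that the inner product in (\ref{eqn:struapp:branch:branching_thm_ips}) is formed with respect to an arbitrary system of scaling cosets for $\Delta$, whereas Theorem \ref{thm:struapp:branch:branching_thm} produces a specific system $\Sigma^{*}_{\co}$ that depends on the chosen $\Sigma_\cp$. Passing between two systems replaces each $\QS{\Delta,\co}{\atp}{n}(\zz)$ by a function of the form $\ee(n\alpha_\co)\QS{\Delta,\co}{\atp}{n}(\zz+\beta_\co)$ for some $\alpha_\co,\beta_\co\in\QQ$ (cf.\ the remarks following (\ref{eqn:modradsum:constr:Defn_QS_s=1})), introducing unit-modulus phase factors into each inner product. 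Taking $|\cdot|^2$ on the left-hand side of (\ref{eqn:struapp:branch:branching_thm_ips}) erases precisely these phases, which is why the statement of the theorem can be scaling-coset-free even though the expression under the absolute value is not. I would write out this phase cancellation explicitly to confirm that the resulting identity is independent of all scaling coset choices.

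Finally, the case $\atp=0$ requires a small additional check, since the inner product $\lab\cdot\,,\cdot\rab_\Delta^{0}$ of \S\ref{sec:struapp:iprods} is extended from $I_0'(\Delta)$ to $I_0(\Delta)$ by declaring constants to be orthogonal to every $\QS{\Delta,\co}{}{n}$. Because $\QS{\Gamma,\cp}{}{m}\in\gt{I}_0$ and its branching expansion of Theorem \ref{thm:struapp:branch:branching_thm} lies in the span of the $\QS{\Delta,\co'}{}{mw_\Gamma(\co')}$, no constant-function contribution appears, so the $\atp=0$ computation proceeds identically to the $\atp<0$ case. This completes the outline.
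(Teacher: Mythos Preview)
Your proposal is correct and follows exactly the approach the paper intends: the paper presents this theorem simply as a reformulation of Theorem \ref{thm:struapp:branch:branching_thm} ``by utilizing the inner products $\lab\cdot\,,\cdot\rab_{\Delta}^{\atp}$,'' and your plan---expand $\QS{\Gamma,\cp}{\atp}{m}$ via the branching identity, pair against $\QS{\Delta,\co}{\atp}{n}$, and use the defining orthogonality (\ref{eqn:struapp:iprods:Defn_Norm_IP_atp<0})--(\ref{eqn:struapp:iprods:Defn_Norm_IP_atp=0})---does precisely this. Your explicit handling of the phase factors arising from scaling-coset changes (killed by $|\cdot|^2$) and of the $\atp=0$ constant-function issue goes beyond what the paper spells out but is entirely in the same spirit.
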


To conclude this section we comment on the branching of Rademacher
sums in the case that $\Gamma$ both contains and normalizes
$\Delta$. Suppose then that $\Delta$ and $\Gamma$ have width one at
infinity, and consider the case that $\cp$ is the infinite cusp
$\Gamma\cdot\infty$ in (\ref{eqn:struapp:branch:coset to_cusp}).
Then we are speaking of the map
\begin{gather}\label{eqn:struapp:branch:coset to_cusp_infinite}
     \begin{split}
     \Delta\backslash\Gamma
     &\to
     \Delta\backslash\Gamma\cdot\infty\\
     \Delta\gamma
     &\mapsto
     \Delta\gamma\cdot\infty,
     \end{split}
\end{gather}
and this map is injective (and thus bijective) when $\Gamma$
normalizes $\Delta$. Indeed, since $\Delta$ is supposed to have
width one at infinity we have
$(\gamma^{-1}\Delta\gamma)_{\infty}=\Delta_{\infty}=B(\ZZ)$, so that
every coset $\Delta\gamma\in\Delta\backslash\Gamma$ is indeed a
scaling coset for $\Delta$ at $\co=\Delta\gamma\cdot\infty$, and the
injectivity of (\ref{eqn:struapp:branch:coset to_cusp_infinite})
follows from Lemma
\ref{lem:struapp:branch:coset_to_cusp_inj_iff_cosets_scale}.
Applying Theorem \ref{thm:struapp:branch:branching_thm} now with
$\cp=\Gamma\cdot\infty$ we obtain the following result.
\begin{prop}\label{prop:struapp:branch:Gamma_Norm_Delta_QSGamma_sum_QSDelta_rs}
Suppose that $\Delta$ and $\Gamma$ are groups commensurable with
$G(\ZZ)$ that both have width one at infinity, and suppose that
$\Gamma$ contains and normalizes $\Delta$. Let $\atp,m\in \ZZ$ such
that $\atp\leq 0$ and $m<0$. Then we have
\begin{gather}\label{eqn:struapp:branch:Gamma_Norm_Delta_QSGamma_sum_QSDelta_rs}
     \QS{\Gamma}{\atp}{m}(\zz)
     =
     \sum_{\co\in\Delta\backslash\cp}
     \QS{\Delta,\co}{\atp}{m}(\zz)
\end{gather}
when the scaling cosets for $\Delta$ at the cusps $\co\in
\Delta\backslash\Gamma\cdot\infty$ are taken to lie in
$\Delta\backslash\Gamma$.
\end{prop}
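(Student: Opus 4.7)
The plan is to deduce this proposition as a direct specialization of Theorem \ref{thm:struapp:branch:branching_thm}, applied with the infinite cusp $\cp = \Gamma\cdot\infty$. Since $\Gamma$ has width one at infinity, we may take $\Sigma_{\Gamma\cdot\infty} = \Gamma$ as the scaling coset at $\cp$, so the map in (\ref{eqn:struapp:branch:coset to_cusp}) becomes the map $\Delta\gamma\mapsto \Delta\gamma\cdot\infty$ of (\ref{eqn:struapp:branch:coset to_cusp_infinite}), as the paragraph preceding the proposition indicates.

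The main point is to verify that the width function $w_\Gamma$ takes the constant value $1$ on $\Delta\backslash\Gamma\cdot\infty$, and that the natural choice of scaling coset for $\Delta$ at a cusp $\co = \Delta\gamma\cdot\infty$ is precisely $\Sigma_\co = \Delta\gamma$. For this, I would argue as follows: since $\Gamma$ normalizes $\Delta$ we have $\gamma^{-1}\Delta\gamma = \Delta$ for every $\gamma\in\Gamma$, so
\begin{gather*}
     (\gamma^{-1}\Delta\gamma)_{\infty}
     =
     \Delta_{\infty}
     =
     B(\ZZ),
\end{gather*}
where the second equality uses that $\Delta$ has width one at infinity. This means each coset $\Delta\gamma\in \Delta\backslash\Gamma$ is already a scaling coset for $\Delta$ at $\Delta\gamma\cdot\infty$, so Lemma \ref{lem:struapp:branch:coset_to_cusp_inj_iff_cosets_scale} yields the injectivity, hence the bijectivity, of the map (\ref{eqn:struapp:branch:coset to_cusp_infinite}). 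Moreover, by the definition (\ref{eqn:struapp:branch:Defn_width_function}) of the width function, we read off $w_\Gamma(\co) = 1$ for every $\co \in \Delta\backslash\Gamma\cdot\infty$, and no further scaling by $[w_\Gamma(\co)]$ is needed to convert $\Delta\gamma$ into a valid scaling coset at $\co$.

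With these observations in hand, the conclusion is immediate: Theorem \ref{thm:struapp:branch:branching_thm} asserts
\begin{gather*}
     \QS{\Gamma}{\atp}{m}(\zz)
     =
     \sum_{\co\in\Delta\backslash\Gamma\cdot\infty}
     w_{\Gamma}(\co)\QS{\Delta,\co}{\atp}{mw_{\Gamma}(\co)}(\zz),
\end{gather*}
and since every $w_\Gamma(\co)$ equals $1$ and the scaling cosets $\Sigma_\co = \Delta\gamma$ produced by the proof of Theorem \ref{thm:struapp:branch:branching_thm} automatically lie in $\Delta\backslash\Gamma$, the sum collapses to the desired expression (\ref{eqn:struapp:branch:Gamma_Norm_Delta_QSGamma_sum_QSDelta_rs}).

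There is no real obstacle here; the only mildly subtle point is keeping track of what scaling coset is used where, and ensuring compatibility with the convention that $\Sigma_{\Gamma\cdot\infty} = \Gamma$ for a group of width one at infinity. Everything else is a clean substitution into the branching theorem already established.
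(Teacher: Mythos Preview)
Your proof is correct and follows essentially the same approach as the paper: the paper establishes in the paragraph immediately preceding the proposition that $(\gamma^{-1}\Delta\gamma)_{\infty}=\Delta_{\infty}=B(\ZZ)$ under the normalizing hypothesis, so each $\Delta\gamma$ is already a scaling coset and the map (\ref{eqn:struapp:branch:coset to_cusp_infinite}) is bijective by Lemma \ref{lem:struapp:branch:coset_to_cusp_inj_iff_cosets_scale}, and then simply invokes Theorem \ref{thm:struapp:branch:branching_thm} with $\cp=\Gamma\cdot\infty$. Your argument reproduces exactly this, with the explicit observation that $w_{\Gamma}(\co)=1$ making the collapse of the branching formula transparent.
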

Theorem \ref{thm:struapp:branch:branching_thm_ips} now implies the
following reformulation of Proposition \ref{prop:struapp:branch:Gamma_Norm_Delta_QSGamma_sum_QSDelta_rs} which is independent
of scaling coset choices.
\begin{prop}\label{prop:struapp:branch:Gamma_Norm_Delta_iprod QSGamma_QSDelta_rs}
Suppose that $\Delta$ and $\Gamma$ are groups commensurable with
$G(\ZZ)$ that both have width one at infinity, and suppose that
$\Gamma$ contains and normalizes $\Delta$. Then we have
\begin{gather}\label{eqn:struapp:branch:Gamma_Norm_Delta_iprod QSGamma_QSDelta_rs}
     \left|
     \left\langle
     \QS{\Gamma}{\atp}{-1},
     \QS{\Delta,\co}{\atp}{-1}
     \right\rangle_{\Delta}^{\atp}
     \right|^2
     =
     \delta_{\Gamma,\cp|\Gamma\co}
\end{gather}
for all $\co\in \cP_{\Delta}$, for any choice of scaling coset
system for $\Delta$.
\end{prop}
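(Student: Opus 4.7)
The plan is to derive this proposition as a direct consequence of Theorem \ref{thm:struapp:branch:branching_thm_ips}, specialized to $\cp=\Gamma\cdot\infty$ and $m=n=1$, with the normalization hypothesis $\gamma^{-1}\Delta\gamma=\Delta$ used to simplify the width factor. The essential content to extract from the hypotheses is that the width function $w_{\Gamma}$ takes the value $1$ on every cusp of $\Delta$ lying over $\Gamma\cdot\infty$.

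First I would verify that $w_{\Gamma}(\co)=1$ for every $\co\in\Delta\backslash\Gamma\cdot\infty$. Any such $\co$ has the form $\co=\Delta\gamma\cdot\infty$ for some $\gamma\in\Gamma$; since $\Gamma$ has width one at infinity we may take $\Gamma=\Sigma_{\Gamma\cdot\infty}$, so that $\Gamma\gamma=\Gamma$ is a scaling coset for $\Gamma$ at $\Gamma\cdot\infty$. The normalization hypothesis gives $\gamma^{-1}\Delta\gamma=\Delta$, and then $(\gamma^{-1}\Delta\gamma)_{\infty}=\Delta_{\infty}=B(\ZZ)=\lab T\rab$ because $\Delta$ has width one at infinity. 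By the definition (\ref{eqn:struapp:branch:Defn_width_function}) of $w_{\Gamma}$ this forces $w_{\Gamma}(\co)=1$.

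Next I would invoke Theorem \ref{thm:struapp:branch:branching_thm_ips} with $\cp=\Gamma\cdot\infty$ and $m=n=1$ to write
\begin{gather*}
     \left|
     \left\langle
     \QS{\Gamma}{\atp}{1},
     \QS{\Delta,\co}{\atp}{1}
     \right\rangle_{\Delta}^{\atp}
     \right|^2
     =
     w_{\Gamma}(\co)\,
     \delta_{\Gamma,\Gamma\cdot\infty|\Gamma\co}\,
     \delta_{w_{\Gamma}(\co),1}.
\end{gather*}
If $\co\in\Delta\backslash\Gamma\cdot\infty$ then $\Gamma\co=\Gamma\cdot\infty$ and, by the step above, $w_{\Gamma}(\co)=1$, so the right hand side equals $1$. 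If instead $\co\notin\Delta\backslash\Gamma\cdot\infty$ then $\Gamma\co\neq\Gamma\cdot\infty$ and the $\delta$-factor $\delta_{\Gamma,\Gamma\cdot\infty|\Gamma\co}$ vanishes, so the right hand side is $0$. In both cases the value agrees with $\delta_{\Gamma,\cp|\Gamma\co}$ when $\cp$ is read as the infinite cusp of $\Gamma$, which is the required identity. The scaling coset system chosen in Theorem \ref{thm:struapp:branch:branching_thm_ips} plays no role in the conclusion since the inner product is independent of such choices.

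There is no serious obstacle; the content is a bookkeeping specialization. An alternative route, essentially equivalent, is to apply Proposition \ref{prop:struapp:branch:Gamma_Norm_Delta_QSGamma_sum_QSDelta_rs} with $m=1$ to expand $\QS{\Gamma}{\atp}{1}=\sum_{\cs\in\Delta\backslash\Gamma\cdot\infty}\QS{\Delta,\cs}{\atp}{1}$, then pair against $\QS{\Delta,\co}{\atp}{1}$ using the diagonal inner product formulas (\ref{eqn:struapp:iprods:Defn_Norm_IP_atp<0}) and (\ref{eqn:struapp:iprods:Defn_Norm_IP_atp=0}); only the term with $\cs=\co$ survives, contributing $1$ precisely when $\co\in\Delta\backslash\Gamma\cdot\infty$.
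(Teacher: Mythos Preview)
Your proof is correct and follows the same approach as the paper: the paper derives this proposition directly from Theorem \ref{thm:struapp:branch:branching_thm_ips}, having already established (in the discussion preceding Proposition \ref{prop:struapp:branch:Gamma_Norm_Delta_QSGamma_sum_QSDelta_rs}) that $(\gamma^{-1}\Delta\gamma)_{\infty}=\Delta_{\infty}=B(\ZZ)$ under the normalization hypothesis, which is exactly the $w_{\Gamma}(\co)=1$ step you carry out. Your alternative route via Proposition \ref{prop:struapp:branch:Gamma_Norm_Delta_QSGamma_sum_QSDelta_rs} and the diagonal inner product is also in the spirit of the paper, which frames this result as an inner-product reformulation of that proposition.
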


\subsection{Fractional orders}\label{sec:struapp:frac}

Let $\Gamma$ be a group commensurable with $G(\ZZ)$, let
$\phi,\psi\in G(\QQ)$ and set $U=\lBZ\phi^{-1}\Gamma\psi\rBZh$. Let
us consider the problem of writing the modified Rademacher sum
$\QS{U}{\atp}{m}(\zz)$ in terms of Rademacher sums of the form
$\QS{\Gamma',\cp'}{\atp}{m'}(\zz)$ for some group $\Gamma'$ and some
cusp $\cp'\in \cP_{\Gamma'}$. According to
\S\ref{sec:modradsum:constr} we have
$\QS{\Gamma,\cp|\cq}{\atp}{m}(\zz)=\QS{\Gamma',\cp'}{\atp}{m}(\zz)$
for $\Gamma'=\Gamma^{\cq}$ and $\cp'=\cp^{\cq}$ (cf.
(\ref{eqn:modradsum:constr:QS_twocusps_to_onecusp})), so we are done
if we can write $\QS{U}{\atp}{m}(\zz)$ in terms of
$\QS{\Gamma,\cp|\cq}{\atp}{n}(\zz)$ for some cusps $\cp,\cq\in
\cP_{\Gamma}$ for $\Gamma$. Set $\cp=\Gamma\phi\cdot\infty$ and
$\cq=\Gamma\psi\cdot\infty$. By the discussion of
\S\ref{sec:conven:scaling} there exist unique $\mu,\nu\in \QQp$ such
that the cosets $\Sigma_{\cp}=\Gamma\phi[\mu]$ and
$\Sigma_{\cq}=\Gamma\psi[\nu]$ (cf.
(\ref{eqn:conven:isomhyp:Defn_[alpha]})) are scaling cosets for
$\Gamma$ at $\cp$ and $\cq$, respectively. We then have
$U=\lBZ[\mu]\Sigma_{\cp}^{-1}\Sigma_{\cq}[1/\nu]\rBZh$. Let $\chi\in
\Sigma_{\cq}^{-1}\Sigma_{\cq}$ and consider the contribution
$\QS{\lBZ[\mu]\chi[1/\nu]\rBZh}{\atp}{m}(\zz,\spp)$ of the coset
$\lBZ[\mu]\chi[1/\nu]\rBZh\in U$ to the modified continued
Rademacher sum $\QS{U}{\atp}{m}(\zz,\spp)$. As the following lemma
demonstrates, the factor $[1/\nu]$ induces a re-scaling of the input
variable $\zz$.
\begin{lem}\label{lem:struapp:frac:Effect_1/nu}
Let $\chi\in G(\QQ)$ and let $\nu\in\QQp$. Let $\atp,m\in \ZZ$ such that $\atp\leq 0$ and $m<0$. Then we have
\begin{gather}
     \QS{\lBZ\chi[1/\nu]\rBZh}{\atp}{m}(\zz,\spp)
     =
          \frac{1}{\nu^{\atp}}
     \QS{\lBZ\chi\rBZh}{\atp}{m}
     \left(\frac{\zz}{\nu},\spp\right).
\end{gather}
\end{lem}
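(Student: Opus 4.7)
The plan is to unwind the definitions of $\QS{\lBZ\chi\rBZh}{\atp}{m}(\zz,\spp)$ from \S\ref{sec:modradsum:constr} and track what happens when one right-multiplies $\chi$ by the diagonal element $[1/\nu]$. The whole computation rests on four elementary identities which I would establish first: since $[1/\nu]\cdot\zz=\zz/\nu$, $[1/\nu]\cdot\infty=\infty$ and $[1/\nu]\cdot 0=0$, the chain rule for $\jac$ (cf.\ (\ref{eqn:conven:isomhyp_jacleftmult})) gives
\begin{gather*}
     \chi[1/\nu]\cdot\zz=\chi\cdot(\zz/\nu),\quad
     \chi[1/\nu]\cdot\infty=\chi\cdot\infty,\quad
     \chi[1/\nu]\cdot 0=\chi\cdot 0,\\
     \jac(\chi[1/\nu],\zz)=\tfrac{1}{\nu}\jac(\chi,\zz/\nu).
\end{gather*}
Moreover, using a preferred representative $\binom{a\,b}{c\,d}$ for $\chi$ and writing $\nu=p/q$ with $(p,q)=1$, the product matrix $\binom{aq\;bp}{cq\;dp}$ shows that $c(\chi[1/\nu])=0$ iff $c(\chi)=0$ and $d(\chi[1/\nu])=0$ iff $d(\chi)=0$; in particular the membership of $\lBZ\chi[1/\nu]\rBZh$ in the subsets $\lBZ G(\QQ)\rBZh_\infty$, $\lBZ G(\QQ)\rBZh^\times$ and $\lBZ G(\QQ)\rBZh^{\times\times}$ matches that of $\lBZ\chi\rBZh$.

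Once these identities are in hand, the proof divides into the cases distinguished by the definitions (\ref{eqn:modradsum:constr:Defn_TS_chi}), (\ref{eqn:modradsum:constr:Defn_Treg}) and (\ref{eqn:modradsum:constr:Defn_TSa_chi}). When $\lBZ\chi\rBZh\in \lBZ G(\QQ)\rBZh^\times$, the first three identities above make the continued regularization factor transform by $\Treg{\atp}(m,\lBZ\chi[1/\nu]\rBZh,\zz,\spp)=\Treg{\atp}(m,\lBZ\chi\rBZh,\zz/\nu,\spp)$, while the Jacobian identity contributes the factor $\nu^{-\atp}$; hence
\begin{gather*}
     \TS{\lBZ\chi[1/\nu]\rBZh}{\atp}{m}(\zz,\spp)
     =\tfrac{1}{\nu^\atp}\TS{\lBZ\chi\rBZh}{\atp}{m}(\zz/\nu,\spp).
\end{gather*}
For the cusp-value term, since $\chi[1/\nu]\cdot 0=\chi\cdot 0$, $\chi[1/\nu]\cdot\infty=\chi\cdot\infty$, and $\jac(\chi[1/\nu],0)=\jac(\chi,0)/\nu$, the definition (\ref{eqn:modradsum:constr:Defn_TSa_chi}) yields immediately $\TSa{\lBZ\chi[1/\nu]\rBZh}{\atp}{m}(\spp)=\nu^{-\atp}\TSa{\lBZ\chi\rBZh}{\atp}{m}(\spp)$ when $\lBZ\chi\rBZh\in\lBZ G(\QQ)\rBZh^{\times\times}$, and both sides vanish otherwise. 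In the remaining case $\lBZ\chi\rBZh\in\lBZ G(\QQ)\rBZh_\infty$, one has $\Treg{\atp}=1$ and $\TSa{\atp}=0$ on both sides, and the conclusion follows directly from $\jac(\chi[1/\nu],\zz)^\atp=\nu^{-\atp}\jac(\chi,\zz/\nu)^\atp$ and $\ee(-m\chi[1/\nu]\cdot\zz)=\ee(-m\chi\cdot(\zz/\nu))$.

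Combining these pieces through the definition $\QS{\lBZ\chi\rBZh}{\atp}{m}(\zz,\spp)=\TS{\lBZ\chi\rBZh}{\atp}{m}(\zz,\spp)-\TSa{\lBZ\chi\rBZh}{\atp}{m}(\spp)$ yields the claimed identity in every case. There is no serious obstacle here; the only potential pitfall is the careful bookkeeping of the chain-rule factor $1/\nu$ in the Jacobian, which has to balance out uniformly across both $\TS$ and $\TSa$ so that the constant does not depend on the case distinction. This is verified directly from the formulas.
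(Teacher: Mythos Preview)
Your proof is correct and follows essentially the same approach as the paper: compute $\jac(\chi[1/\nu],\zz)=\jac(\chi,\zz/\nu)/\nu$, use that $[1/\nu]$ fixes both $\infty$ and $0$, verify the two identities for $\TS{}{\atp}{m}$ and $\TSa{}{\atp}{m}$ separately, and subtract. You are somewhat more explicit than the paper in checking that right multiplication by $[1/\nu]$ preserves membership in $\lBZ G(\QQ)\rBZh_\infty$, $\lBZ G(\QQ)\rBZh^{\times}$ and $\lBZ G(\QQ)\rBZh^{\times\times}$, which is a point the paper leaves implicit but is indeed needed for the case distinctions to align.
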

\begin{proof}
We recall from (\ref{eqn:modradsum:constr:Defn_QS_chi}) that the
modified continued Rademacher component function
$\QS{\lBZ\chi[1/\nu]\rBZh}{\atp}{m}(\zz,\spp)$ is, by definition,
the difference
$\TS{\lBZ\chi[1/\nu]\rBZh}{\atp}{m}(\zz,\spp)-\TSa{\lBZ\chi[1/\nu]\rBZh}{\atp}{m}(\spp)$,
where the functions $\TS{\lBZ\chi[1/\nu]\rBZh}{\atp}{m}(\zz,\spp)$
and $\TSa{\lBZ\chi[1/\nu]\rBZh}{\atp}{m}(\spp)$ are defined by
(\ref{eqn:modradsum:constr:Defn_TS_chi}) and
(\ref{eqn:modradsum:constr:Defn_TSa_chi}), respectively. We compute
$\jac(\lBZ\chi[1/\nu]\rBZh,\zz)=\jac(\lBZ\chi\rBZh,\zz/\nu)/\nu$,
and use the fact that $[1/\nu]$ fixes both $\infty$ and $0$ to
verify that
\begin{gather}
     \TS{\lBZ\chi[1/\nu]\rBZh}{\atp}{m}(\zz,\spp)
     =
     \frac{1}{\nu^{\atp}}
     \TS{\lBZ\chi\rBZh}{\atp}{m}
     \left(\frac{\zz}{\nu},\spp\right),\quad
     \TSa{\lBZ\chi[1/\nu]\rBZh}{\atp}{m}(\spp)
     =
     \frac{1}{\nu^{\atp}}
     \TSa{\lBZ\chi\rBZh}{\atp}{m}(\spp).
\end{gather}
The claim follows from these identities.
\end{proof}
We now seek to describe the effect of the factor $[\mu]$ in
$\QS{\lBZ[\mu]\chi\rBZh}{\atp}{m}(\zz,\spp)$, for arbitrary $\chi\in
G(\QQ)$. Inspecting (\ref{eqn:modradsum:constr:Defn_TS_chi}) we find
that
\begin{gather}\label{eqn:struapp:frac:Subst_muchi_in_TS_chi}
     \TS{\lBZ[\mu]\chi\rBZh}{\atp}{m}(\zz,\spp)
     =
     {\mu^{\atp}}
     \ee(m\mu\chi\cdot\zz)
     \Treg{\atp}(m\mu,\chi,\zz,\spp)
     \jac(\chi,\zz)^{\atp},
\end{gather}
which suggests that we generalize the notion of Rademacher sum so as
to allow for fractional orders. Recall from
(\ref{eqn:conven:isomhyp:Defn_B(alphaZZ)}) that $B(\alpha\ZZ)$
denotes the subgroup of $B_u(\QQ)$ generated by $T^{\alpha}$ (cf.
(\ref{eqn:conven:isomhyp:Defn_T^alpha})). For general $\mu\in\QQp$
the right hand side of
(\ref{eqn:struapp:frac:Subst_muchi_in_TS_chi}) will not be invariant
under the replacement of $\chi$ by $T\chi$, so a Rademacher sum of
fractional order is not naturally defined by collections of cosets
of $B(\ZZ)$, but rather, by cosets of $B(h\ZZ)$ for a suitably
chosen positive integer $h\in \ZZp$.

Given $h\in \ZZp$ and $\chi\in G(\QQ)$ let us write $\lBhZ\chi\rBZh$
as a shorthand for $B(h\ZZ)\chi$, and for $X\subset G(\QQ)$ let us
write $\lBhZ X\rBZh$ as a shorthand for the set of right cosets of
$B(h\ZZ)$ determined by elements of $X$, so that
\begin{gather}\label{struapp:frac:Defn_lBhZXrBZh}
     \lBhZ X\rBZh
     =
     \left\{B(h\ZZ)\chi\mid\chi\in X\right\}.
\end{gather}
Let $\atp\in \ZZ$ such that $\atp\leq 0$. We define modified and
normalized fractional Rademacher sums as follows,
in analogy with the constructions of \S\ref{sec:modradsum:constr}.
For $\mu=-g/h$ with $(g,h)=1$ and $g,h\in \ZZp$, and for
$\lBhZ\chi\rBZh\in \lBhZ G(\QQ)\rBZh$, we define the {\em continued
fractional Rademacher component function of weight $2\atp$ and order
$\mu$ associated to $\lBhZ\chi\rBZh$}, denoted $(\zz,\spp)\mapsto
\TS{\lBhZ\chi\rBZh}{\atp}{\mu}(\zz,\spp)$, by setting
\begin{gather}\label{eqn:struapp:frac:Defn_frac_TS_chi}
     \TS{\lBhZ\chi\rBZh}{\atp}{\mu}(\zz,\spp)
     =
     \ee(\mu\lBhZ\chi\rBZh\cdot\zz)
     \Treg{\atp}(\mu,\lBhZ\chi\rBZh,\zz,\spp)
     \jac(\lBhZ\chi\rBZh,\zz)^{\atp},
\end{gather}
where $\jac(\lBhZ\chi\rBZh,\zz)=\jac(\chi,\zz)$, and
$\Treg{\atp}(\mu,\lBhZ\chi\rBZh,\zz,\spp)$ is the {\em continued
fractional Rademacher regularization factor of weight $2\atp$},
which is in turn given by
\begin{gather}\label{eqn:struapp:frac:Defn_frac_Treg}
     \Treg{\atp}(\mu,\lBhZ\chi\rBZh,\zz,\spp)
     =
     \Phi(\spp-2\atp,1+\spp-2\atp,
     \mu\lBhZ\chi\rBZh\cdot\infty
     -
     \mu\lBhZ\chi\rBZh\cdot\zz
     )
     (\tpi
     (
     \mu\lBhZ\chi\rBZh\cdot\zz
     -
     \mu\lBhZ\chi\rBZh\cdot\infty
     )
     )^{\spp-2\atp}
\end{gather}
in case $\lBhZ\chi\rBZh\in \lBhZ G(\QQ)\rBZh^{\times}$, and
$\Treg{\atp}(\mu,\lBhZ\chi\rBZh,\zz,\spp)=1$ otherwise. We define a
function $\spp\mapsto \TSa{\lBhZ\chi\rBZh}{\atp}{\mu}(\spp)$ by
setting
\begin{gather}\label{eqn:struapp:frac:Defn_frac_TSa_chi}
     \begin{split}
     \TSa{\lBhZ\chi\rBZh}{\atp}{\mu}(\spp)
     =&
     \ee(\mu\lBhZ\chi\rBZh\cdot\infty)
     \left(
     \tpi
     (
     \mu\lBhZ\chi\rBZh\cdot 0
     -
     \mu\lBhZ\chi\rBZh\cdot\infty
     )
     \right)^{(\spp-2\atp)}
     {\jac(\lBhZ\chi\rBZh,0)^{\atp}}\\
     &-\ee(\spp/2)
     \ee(\mu\lBhZ\chi\rBZh\cdot\infty)
     \left(
     \tpi
     (
     \mu\lBhZ\chi\rBZh\cdot \infty
     -
     \mu\lBhZ\chi\rBZh\cdot 0
     )
     \right)^{(\spp-2\atp)}
     {\jac(\lBhZ\chi\rBZh,0)^{\atp}}
     \end{split}
\end{gather}
in case $\lBhZ\chi\rBZh\in \lBZ G(\QQ)\rBZh^{\times\times}$, and by
setting $\TSa{\lBhZ\chi\rBZh}{\atp}{\mu}(\spp)=0$ otherwise, and we
define the {\em modified continued fractional Rademacher component
function of weight $2\atp$ and order $\mu$ associated to
$\lBhZ\chi\rBZh$}, denoted $(\zz,\spp)\mapsto
\QS{\lBhZ\chi\rBZh}{\atp}{\mu}(\zz,\spp)$, by subtracting
$\TSa{\lBhZ\chi\rBZh}{\atp}{\mu}(\zz,\spp)$ from
$\TS{\lBhZ\chi\rBZh}{\atp}{\mu}(\zz,\spp)$.
\begin{gather}\label{eqn:struapp:frac:Defn_frac_QS_chi}
     \QS{\lBhZ\chi\rBZh}{\atp}{\mu}(\zz,\spp)
     =
     \TS{\lBhZ\chi\rBZh}{\atp}{\mu}(\zz,\spp)
     -
     \TSa{\lBhZ\chi\rBZh}{\atp}{\mu}(\zz,\spp)
\end{gather}
For $U\subset\lBhZ G(\QQ)\rBZh$ we now define the {\em continued
fractional Rademacher sum of weight $2\atp$ and order $\mu$
associated to $U$}, denoted $\TS{U}{\atp}{\mu}(\zz,\spp)$, and the
{\em modified continued fractional Rademacher sum of weight $2\atp$
and order $\mu$ associated to $U$}, denoted
$\QS{U}{\atp}{\mu}(\zz,\spp)$, by setting
\begin{gather}
     \TS{U}{\atp}{\mu}(\zz,\spp)
     \label{eqn:struapp:frac:Defn_frac_TS_U_mu}
     =
     \sum_{\lBhZ\chi\rBZh\in U}
     \TS{\lBhZ\chi\rBZh}{\atp}{\mu}(\zz,\spp),\\
     \QS{U}{\atp}{\mu}(\zz,\spp)
     \label{eqn:struapp:frac:Defn_frac_QS_U_m}
     =
     \sum_{\lBhZ\chi\rBZh\in U}
     \QS{\lBhZ\chi\rBZh}{\atp}{\mu}(\zz,\spp),
\end{gather}
when these sums are absolutely locally uniformly convergent, and given such circumstances we define $\TS{U}{\atp}{\mu}(\zz)$ and $\QS{U}{\atp}{\mu}(\zz)$
by taking the limit as $\spp$ tends to $1$ in
$\TS{U}{\atp}{\mu}(\zz,\spp)$ and $\QS{U}{\atp}{\mu}(\zz,\spp)$,
respectively, so long as these limits exist.
\begin{gather}
     \TS{U}{\atp}{\mu}(\zz)\label{eqn:struapp:frac:Defn_frac_TS_s=1}
     =
     \lim_{s\to 1^+}\TS{U}{\atp}{\mu}(\zz,\spp)\\
     \QS{U}{\atp}{\mu}(\zz)\label{eqn:struapp:frac:Defn_frac_QS_s=1}
     =
     \lim_{s\to 1^+}\QS{U}{\atp}{\mu}(\zz,\spp)
\end{gather}
We call $\TS{U}{\atp}{\mu}(\zz)$ the {\em normalized fractional
Rademacher sum of weight $2\atp$ and order $\mu$ associated to $U$},
and we call $\QS{U}{\atp}{\mu}(\zz)$ the {\em modified fractional
Rademacher sum of weight $2\atp$ and order $\mu$ associated to $U$}.

We have the following vanishing result for fractional Rademacher
sums.
\begin{prop}\label{prop:struapp:frac:vanishing}
Let $X$ be a union of cosets of $B(\ZZ)$ in $G(\QQ)$, let $\mu=-g/h$
for some $g,h\in \ZZp$ and $(g,h)=1$, and set $U=\lBhZ X\rBZh$.
Suppose that the normalized fractional Rademacher sum
$\TS{U}{\atp}{\mu}(\zz)$ and the modified fractional Rademacher sum
$\QS{U}{\atp}{\mu}(\zz)$ converge. Then they vanish identically
unless $h=1$.
\end{prop}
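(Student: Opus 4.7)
The plan is to exploit the fact that when $h > 1$, cosets of $B(h\ZZ)$ in a single coset of $B(\ZZ)$ come in groups of $h$ related by left-multiplication by $T^j$ for $j = 0, 1, \ldots, h-1$, and to show that the $h$ contributions cancel. Concretely, since $X$ is a union of cosets of $B(\ZZ)$, we can write $X = \bigsqcup_i B(\ZZ)\chi_i$ for some family of representatives $\chi_i \in G(\QQ)$, and then
\begin{gather*}
U = \lBhZ X\rBZh = \bigsqcup_i \{\lBhZ T^j \chi_i\rBZh \mid j = 0, 1, \ldots, h - 1\}.
\end{gather*}
The first step is to show that $\TS{\lBhZ T^j\chi\rBZh}{\atp}{\mu}(\zz,\spp) = \ee(-gj/h)\TS{\lBhZ \chi\rBZh}{\atp}{\mu}(\zz,\spp)$ and $\TSa{\lBhZ T^j\chi\rBZh}{\atp}{\mu}(\spp) = \ee(-gj/h)\TSa{\lBhZ \chi\rBZh}{\atp}{\mu}(\spp)$ for any $\chi \in G(\QQ)$.

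The key observation is that since $T^j$ acts on $\HH \cup \{0, \infty\}$ by the translation $\zeta \mapsto \zeta + j$ (with $\infty + j = \infty$), we have $\jac(T^j \chi, \zeta) = \jac(\chi, \zeta)$, together with the invariances $T^j\chi\cdot\zz - T^j\chi\cdot\infty = \chi\cdot\zz - \chi\cdot\infty$ and $T^j\chi\cdot\infty - T^j\chi\cdot 0 = \chi\cdot\infty - \chi\cdot 0$. Inspecting the defining formulas (\ref{eqn:struapp:frac:Defn_frac_TS_chi}), (\ref{eqn:struapp:frac:Defn_frac_Treg}) and (\ref{eqn:struapp:frac:Defn_frac_TSa_chi}), every ingredient except the prefactor $\ee(-\mu T^j\chi\cdot\zz)$ in $\TS$ (respectively $\ee(-\mu T^j\chi\cdot\infty)$ in $\TSa$) is therefore invariant under the replacement of $\chi$ by $T^j\chi$, and these prefactors pick up precisely the factor $\ee(-\mu j) = \ee(-gj/h)$. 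I would verify this in the three cases distinguished by whether $\chi \in G(\QQ)_\infty$, $G(\QQ)^\times \setminus G(\QQ)^{\times\times}$, or $G(\QQ)^{\times\times}$, noting that $T^j\chi$ lies in the same class as $\chi$, and that in the first two cases $\TSa$ vanishes outright so only the identity for $\TS$ is relevant.

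Having established the $T^j$-covariance, I would work in the region $\Re(\spp) > 1$, where by the same methods as in Propositions \ref{prop:modradsum:conver:Abs_Conv_sum_QS_chi} and \ref{prop:modradsum:conver:Abs_Conv_sum_QSa_chi} the fractional analogues converge absolutely, and group the sum over $U$ according to the decomposition displayed above. The inner sum over each block of $h$ cosets then contributes a factor $\sum_{j=0}^{h-1}\ee(-gj/h)$, which vanishes whenever $h > 1$ since $(g,h) = 1$ forces $\ee(-g/h)$ to be a primitive $h$-th root of unity. This forces $\TS{U}{\atp}{\mu}(\zz,\spp) = \QS{U}{\atp}{\mu}(\zz,\spp) = 0$ for $\Re(\spp) > 1$, and the conclusion follows by taking the limit $\spp \to 1^+$, which is assumed to exist by hypothesis. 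The main obstacle is essentially notational: one must verify carefully that all the ingredients entering the definitions of $\TS$ and $\TSa$ really are invariant under the left-translation by $T^j$ across the three cases, and that the rearrangement of the sum over $U$ into blocks is justified by absolute convergence in the half-plane $\Re(\spp) > 1$.
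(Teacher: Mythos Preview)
Your proposal is correct and follows essentially the same approach as the paper's proof: decompose $X$ into $B(\ZZ)$-cosets, refine each into $h$ cosets of $B(h\ZZ)$ via left-translates by $T^j$, observe that the regularization factors $\Treg{\atp}$ and the terms in $\TSa$ depend only on differences $\chi\cdot\zz-\chi\cdot\infty$ and $\chi\cdot\infty-\chi\cdot 0$ (hence are $T^j$-invariant), so that the only effect of $T^j$ is the scalar $\ee(-gj/h)$, and then sum the resulting geometric series to zero when $h>1$. Your treatment is slightly more explicit than the paper's in invoking absolute convergence for $\Re(\spp)>1$ before passing to the limit and in separating out the three cases according to whether $\chi$ lies in $G(\QQ)_\infty$, $G(\QQ)^{\times}\setminus G(\QQ)^{\times\times}$, or $G(\QQ)^{\times\times}$, but the substance is identical.
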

\begin{proof}
Suppose that $h\neq 1$. By hypothesis we have a disjoint
decomposition $X=\bigcup_i \lBZ\chi_i\rBZh$ for some
$\{\chi_i\}\subset X$. This implies a disjoint decomposition
$X=\bigcup_i\bigcup_{k=0}^{h-1} \lBhZ T^k\chi_i\rBZh$, so that we
have
\begin{gather}
     \begin{split}
     \TS{U}{\atp}{\mu}(\zz,\spp)
     &
     =
     \sum_i\sum_{k=0}^{h-1}
     \ee\left(-\frac{g}{h}T^k\chi_i\cdot\zz\right)
     \Treg{\atp}\left(-\frac{g}{h},T^k\chi_i,\zz,\spp\right)
     \jac(T^k\chi_i,\zz)^{\atp}\\
     &
     =
     \sum_i
     \left(
     \sum_{k=0}^{h-1}
     \ee\left(-\frac{gk}{h}\right)
     \right)
     \ee\left(-\frac{g}{h}\chi_i\cdot\zz\right)
     \Treg{\atp}\left(-\frac{g}{h},\chi_i,\zz,\spp\right)
     \jac(\chi_i,\zz)^{\atp}
     \end{split}
\end{gather}
since the continued fractional regularization factor is unaffected
when the second argument is multiplied by an element of $B(\ZZ)$ on
the left. The sum $\sum_{k=0}^{h-1}\ee(-gk/h)$ vanishes for $g$
coprime to $h$ unless $h=1$. This shows that the continued
fractional Rademacher sum $\TS{U}{\atp}{\mu}(\zz,\spp)$ vanishes for
all $\zz$ and $\spp$ when $h\neq 1$. A directly analogous
computation shows that the function $\TSa{U}{\atp}{\mu}(\spp)$
vanishes in case $h\neq 1$. We conclude that both the continued
fractional Rademacher sum $\TS{U}{\atp}{\mu}(\zz,\spp)$ and the
modified continued fractional Rademacher sum
$\QS{U}{\atp}{\mu}(\zz,\spp)$ vanish identically in case $h\neq 1$.
The claim of the proposition follows.
\end{proof}

We return now to the identification of the Rademacher sum
$\QS{U}{\atp}{m}(\zz)$, in the case that
$U=\lBZ[\mu]\Sigma_{\cp}^{-1}\Sigma_{\cq}[1/\nu]\rBZh$.
\begin{thm}\label{thm:struapp:frac:LeftMult_mu}
Let $\Gamma$ be a group commensurable with $G(\ZZ)$, let $\cp,\cq\in
\cP_{\Gamma}$ be cusps for $\Gamma$, and let $\Sigma_{\cp}$ and
$\Sigma_{\cq}$ be scaling cosets for $\Gamma$ at $\cp$ and $\cq$,
respectively. Let $\mu,\nu\in\QQp$ and set
$U=\lBZ[\mu]\Sigma_{\cp}^{-1}\Sigma_{\cq}[1/\nu]\rBZh$. Let
$\atp,m\in\ZZ$ such that $\atp\leq 0$ and $m<0$. Let $h$
be the smallest positive integer such that $h\mu\in \ZZ$. If $h$
divides $m$ then we have
\begin{gather}
     \QS{U}{\atp}{m}(\zz)
     =h
     \frac{\mu^{\atp}}{\nu^{\atp}}
     \QS{\Gamma,\cp|\cq}{\atp}{m\mu}
     \left(\frac{\zz}{\nu}\right),
\end{gather}
and if $h$ does not divide $m$ then $\QS{U}{\atp}{m}(\zz)$ vanishes
identically.
\end{thm}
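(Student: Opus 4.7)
The plan is to parametrize the $B(\ZZ)$-cosets appearing in $U$ and reduce each summand to a fractional Rademacher component function, thereby identifying $\QS{U}{\atp}{m}$ with a fractional Rademacher sum of order $m\mu$ whose behaviour we have already analysed.

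First I would establish a bijection
\begin{gather*}
     \lBhZ\Sigma_{\cp}^{-1}\Sigma_{\cq}\rBZh
     \xrightarrow{\ \sim\ }
     U,
     \qquad
     \lBhZ\chi\rBZh
     \mapsto
     B(\ZZ)[\mu]\chi[1/\nu].
\end{gather*}
The key calculation is that $[\mu]^{-1}T^n[\mu] = T^{nh/g}$, so the condition $B(\ZZ)[\mu]\chi[1/\nu] = B(\ZZ)[\mu]\chi'[1/\nu]$ forces $\chi' = T^{nh/g}\chi$ for some $n \in \ZZ$; since $\Sigma_{\cp}^{-1}\Sigma_{\cq}$ is left-$B(\ZZ)$-stable and $(g,h)=1$, this forces $g \mid n$, whence $\chi' \in B(h\ZZ)\chi$. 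Well-definedness and surjectivity are immediate from the defining formula.

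Next I would combine Lemma \ref{lem:struapp:frac:Effect_1/nu}, which strips the factor $[1/\nu]$ to produce the prefactor $\nu^{-\atp}$ together with the substitution $\zz \mapsto \zz/\nu$, with the identity \eqref{eqn:struapp:frac:Subst_muchi_in_TS_chi} and its immediate analogue for $\TSa$ (valid since $[\mu]$ fixes both $0$ and $\infty$), which produces the prefactor $\mu^{\atp}$ while converting the integer order $m$ to the possibly fractional order $m\mu$. Comparison with the definition \eqref{eqn:struapp:frac:Defn_frac_QS_chi} of the fractional modified continued component function gives
\begin{gather*}
     \QS{\lBZ[\mu]\chi[1/\nu]\rBZh}{\atp}{m}(\zz,\spp)
     =
     \frac{\mu^{\atp}}{\nu^{\atp}}\,
     \QS{\lBhZ\chi\rBZh}{\atp}{m\mu}(\zz/\nu,\spp).
\end{gather*}
Summing over the bijection of the previous paragraph and passing to the limit $\spp \to 1^+$, which is justified by the absolute convergence established in \S\ref{sec:modradsum:conver}, yields
\begin{gather*}
     \QS{U}{\atp}{m}(\zz)
     =
     \frac{\mu^{\atp}}{\nu^{\atp}}\,
     \QS{\lBhZ\Sigma_{\cp}^{-1}\Sigma_{\cq}\rBZh}{\atp}{m\mu}(\zz/\nu).
\end{gather*}

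The remaining argument splits on the integrality of $m\mu = mg/h$. When $h \mid m$, the order $m\mu$ is an integer, so the component function is invariant under the full $B(\ZZ)$, and each $B(\ZZ)$-coset of $\Sigma_{\cp}^{-1}\Sigma_{\cq}$ contains exactly $[B(\ZZ):B(h\ZZ)] = h$ identically contributing $B(h\ZZ)$-cosets; collapsing the sum therefore produces $h\,\QS{\Gamma,\cp|\cq}{\atp}{m\mu}(\zz/\nu)$, which is the stated formula. When $h \nmid m$, write $m\mu = g'/h'$ in lowest terms, so that $h' = h/(m,h) > 1$; the same invariance argument reduces the sum to $(h/h')\,\QS{\lBZ^{h'}\Sigma_{\cp}^{-1}\Sigma_{\cq}\rBZh}{\atp}{m\mu}(\zz/\nu)$, which vanishes by Proposition \ref{prop:struapp:frac:vanishing} applied with denominator $h' > 1$. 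The main obstacle is the coset bookkeeping in the initial bijection; once that is in place the rest of the argument is mechanical.
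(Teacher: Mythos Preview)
Your proof is correct and follows essentially the same route as the paper's: both parametrize the $B(\ZZ)$-cosets in $U$ by $B(h\ZZ)$-cosets of $X=\Sigma_{\cp}^{-1}\Sigma_{\cq}$, strip off $[1/\nu]$ and $[\mu]$ to reach a sum of order $m\mu$, and then split on whether $m\mu$ is integral (the paper carries out the phase sum $\sum_{k=0}^{h-1}\ee(-mk\mu)$ directly in the non-integral case, whereas you invoke Proposition~\ref{prop:struapp:frac:vanishing}, which encapsulates the same computation). One small correction to your injectivity step: mere left-$B(\ZZ)$-stability of $X$ does not force $nh/g\in\ZZ$; what you need is the scaling-coset property $(\Sigma_{\cp}^{-1}\Sigma_{\cp})_{\infty}=B(\ZZ)$, which guarantees that if $\chi$ and $T^{nh/g}\chi$ both lie in $X$ then $T^{nh/g}\in B(\ZZ)$, whence $g\mid n$ since $(g,h)=1$.
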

\begin{proof}
Set $U'=\lBZ[\mu]\Sigma_{\cp}^{-1}\Sigma_{\cq}\rBZh$ and
$U''=\lBZ\Sigma_{\cp}^{-1}\Sigma_{\cq}\rBZh$, so that
$U=U'[1/\nu]=[\mu]U''[1/\nu]$. By Lemma
\ref{lem:struapp:frac:Effect_1/nu} we have
$\QS{U}{\atp}{m}(\zz)=\QS{U'}{\atp}{m}({\zz}/{\nu})/\nu^{\atp}$, so
it suffices for us to show that $\QS{U'}{\atp}{m}(\zz)$ is
$h\mu^{\atp}\QS{\Gamma,\cp|\cq}{\atp}{m\mu}(\zz)$ or vanishing,
according as $h$ divides $m$ or not.

Set $X=\Sigma_{\cp}^{-1}\Sigma_{\cq}\subset G(\QQ)$. Then $X$ is a
union of right cosets of $B(\ZZ)$, so we have a disjoint
decomposition $X=\bigcup_i \lBZ\chi_i\rBZh$ for some $\chi_i\in
G(\QQ)$. By the choice of $h$ we have $\mu=g/h$ for some $g\in \ZZp$
with $(g,h)=1$. Then for the set $[\mu]X$ we have
\begin{gather}
     [\mu]X
     =\bigcup_i
     [\mu]\lBZ\chi_i\rBZh
     =\bigcup_i\bigcup_{k=0}^{h-1}
     [\mu]T^k\lBhZ\chi_i\rBZh
     =\bigcup_i\bigcup_{k=0}^{h-1}
     T^{k\mu}\lBgZ[\mu]\chi_i\rBZh
\end{gather}
with all the unions disjoint, from which we conclude that
$\lBZ[\mu]X\rBZh$ admits the disjoint decomposition
$\lBZ[\mu]X\rBZh=\bigcup_{k=0}^{h-1}\bigcup_iT^{k\mu}\lBZ[\mu]\chi_i\rBZh$.
We have
\begin{gather}
     \QS{T^{k\mu}\lBZ[\mu]\chi_i\rBZh}{\atp}{m}(\zz,\spp)
     =
     \ee(m k\mu)
     \QS{\lBZ[\mu]\chi_i\rBZh}{\atp}{m}(\zz,\spp).
\end{gather}
If $h$ divides $m$ then $m\mu\in \ZZ$ and $\ee(mk\mu)=1$ for all
$k$. Further, $\QS{\lBZ[\mu]\chi_i\rBZh}{\atp}{m}(\zz,\spp)$ coincides with $\mu^{\atp}\QS{\lBZ\chi_i\rBZh}{\atp}{m\mu}(\zz,\spp)$
in this case, and so we deduce the required identity
$\QS{U'}{\atp}{m}(\zz)=h\mu^{\atp}\QS{\Gamma,\cp|\cq}{\atp}{m\mu}(\zz)$.
On the other hand, if $h$ does not divide $m$, so that $m\mu$ is not
an integer, then the sum $\sum_{k=0}^{h-1}\ee(m k\mu)$ vanishes
and this implies the vanishing of the modified Rademacher sum
$\QS{U'}{\atp}{m}(\zz)$. The proof is complete.
\end{proof}
Taking $\cq=\Gamma\cdot\infty$ in Theorem
\ref{thm:struapp:frac:LeftMult_mu} we obtain the following result.
\begin{thm}\label{thm:struapp:frac:Union_LeftCosets_is_AI}
Let $\Gamma$ be a group commensurable with $G(\ZZ)$, let $Z$ be a
finite union of left cosets of $\Gamma$ in $G(\QQ)$ and set $U=\lBZ
Z\rBZh$. Let $\atp,m\in \ZZ$ such that $\atp\leq 0$ and $m<0$. Then the modified Rademacher sum $\QS{U}{\atp}{m}(\zz)$ is an
automorphic integral of weight $2\atp$ for $\Gamma$.
\end{thm}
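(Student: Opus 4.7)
The plan is to reduce to Theorem \ref{thm:struapp:frac:LeftMult_mu}, as suggested by the prose preceding the theorem. First I would decompose $Z = \bigcup_j g_j\Gamma$ as a finite union of left cosets; collecting together any $g_j$ lying in the same $B(\ZZ)$-$\Gamma$ double coset (these contribute identical sets $\lBZ g_j\Gamma\rBZh$), one may assume the $\lBZ g_j\Gamma\rBZh$ are pairwise disjoint, so that $\QS{\lBZ Z\rBZh}{\atp}{m}(\zz) = \sum_j\QS{\lBZ g_j\Gamma\rBZh}{\atp}{m}(\zz)$. Finite sums of automorphic integrals of weight $2\atp$ for $\Gamma$ are again automorphic integrals for $\Gamma$, so it is enough to treat a single left coset $Z = g\Gamma$.

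For $Z = g\Gamma$, set $\cpr = g^{-1}\cdot\infty \in \hat\QQ$ and invoke Lemma \ref{lem:conven:scaling:scaling_elts_exist} to produce a scaling element $\sigma_\cpr$ for $\Gamma$ at $\cpr$. Since $g\sigma_\cpr$ fixes $\infty$, it equals $T^\alpha[\mu]$ for uniquely determined $\alpha\in\QQ$ and $\mu\in\QQp$, so $g = T^\alpha[\mu]\sigma_\cpr^{-1}$. Setting $\cp = \Gamma\cdot\cpr$, $\Sigma_\cp = \Gamma\sigma_\cpr$, $\cq = \Gamma\cdot\infty$, and taking the conventional scaling coset $\Sigma_\cq = \Gamma[\nu']$ at $\cq$ for the uniquely determined $\nu'\in\QQp$ of \S\ref{sec:conven:scaling}, one verifies $g\Gamma = T^\alpha[\mu]\Sigma_\cp^{-1}\Sigma_\cq[1/\nu']$. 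A direct computation with the definitions (\ref{eqn:modradsum:constr:Defn_TS_chi})--(\ref{eqn:modradsum:constr:Defn_QS_chi}) shows $\QS{\lBZ T^\alpha\chi\rBZh}{\atp}{m}(\zz,\spp) = \ee(-m\alpha)\QS{\lBZ\chi\rBZh}{\atp}{m}(\zz,\spp)$ term by term, since $T^\alpha$ preserves both $\jac(\chi,\zz)$ and the differences $\chi\cdot\zz - \chi\cdot\infty$ and $\chi\cdot 0 - \chi\cdot\infty$ while introducing the common factor $\ee(-m\alpha)$ in $\ee(-m\chi\cdot\zz)$ and $\ee(-m\chi\cdot\infty)$. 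Summing over cosets and invoking Theorem \ref{thm:struapp:frac:LeftMult_mu} with $\nu = \nu'$, the Rademacher sum $\QS{\lBZ g\Gamma\rBZh}{\atp}{m}(\zz)$ becomes either zero (when the smallest $h\in\ZZp$ with $h\mu\in\ZZ$ fails to divide $m$) or a constant scalar multiple of $\QS{\Gamma,\cp|\cq}{\atp}{m\mu}(\zz/\nu')$.

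It then remains to check that $\QS{\Gamma,\cp|\cq}{\atp}{m\mu}(\zz/\nu')$ is an automorphic integral of weight $2\atp$ for $\Gamma$. By (\ref{eqn:modradsum:constr:QS_twocusps_to_onecusp}) this function equals $\QS{\Gamma^\cq,\cp^\cq}{\atp}{m\mu}(\zz/\nu')$, where $\Gamma^\cq = [1/\nu']\Gamma[\nu']$ has width one at infinity by the defining property of $\Sigma_\cq$, so Theorem \ref{thm:radsum:var:QS_is_aut_int} places $\QS{\Gamma^\cq,\cp^\cq}{\atp}{m\mu}$ in $I_\atp(\Gamma^\cq)$. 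A chain-rule computation establishes that the group isomorphism $\sigma\leftrightarrow\sigma' = [1/\nu']\sigma[\nu']$ between $\Gamma$ and $\Gamma^\cq$ intertwines the weight $2\atp$ actions under the substitution $\zz\leftrightarrow\zz/\nu'$, and similarly that rescaling takes $\JO{\ww}{\atp}$ to $(\nu')^{2\atp-1}\JO{\nu'\ww}{\atp}$ and cusp forms in $S_{1-\atp}(\Gamma^\cq)$ to cusp forms in $S_{1-\atp}(\Gamma)$; meromorphicity at the cusps of $\Gamma$ follows from Theorem \ref{thm:modradsum:conver:Relate_QS_DS_FR} after rescaling. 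The main obstacle is this rescaling step, but it reduces to routine chain-rule manipulations together with the bookkeeping of scaling cosets.
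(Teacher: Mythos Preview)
Your argument is correct and follows the route the paper indicates: the paper's entire justification is the single remark ``Taking $\cq=\Gamma\cdot\infty$ in Theorem~\ref{thm:struapp:frac:LeftMult_mu} we obtain the following result,'' and you have supplied the details that make this work---the reduction to a single left coset, the identification of $g\Gamma$ with $[\mu]\Sigma_{\cp}^{-1}\Sigma_{\cq}[1/\nu']$ up to a harmless $T^{\alpha}$ factor, the application of Theorem~\ref{thm:struapp:frac:LeftMult_mu}, and the rescaling that transports an automorphic integral for $\Gamma^{\cq}$ back to one for $\Gamma$. One small simplification available to you: the discussion immediately preceding Theorem~\ref{thm:struapp:frac:LeftMult_mu} takes $\Sigma_{\cp}=\Gamma\phi[\mu]$ directly (with $\phi=g^{-1}$), which makes $\phi[\mu]$ itself a scaling element and eliminates the need for the auxiliary $T^{\alpha}$.
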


\subsection{Hecke operators}\label{sec:struapp:hops}

For the modified Rademacher sums, in contrast to the classical Rademacher sums, it is natural to consider not only
their variance with respect to group actions, but also how they vary
with respect to Hecke operators. The discrepancy in constant terms
between the classical and modified Rademacher sums illustrated by
Proposition \ref{prop:modradsum:conver:Relate_QS_s=1_RS} is a
stubborn barrier to an exposition of the interaction between Hecke operators
and the classical Rademacher sums.

Recall that $I_{\atp}(\Gamma)$ denotes the space of automorphic
integrals of weight $2\atp$ for $\Gamma$, in the sense of
\S\ref{sec:conven:autfrm}, and recall the operator
$I_{\atp}(\Gamma)\to \mc{O}(\HH)$ of
(\ref{eqn:conven:autfrm:Defn_cop_on_ai}), denoted $f\mapsto
f\cop{\Gamma}{\atp}X$, and defined for a right coset $X\in
\Gamma\backslash G(\QQ)$ by setting
$f\cop{\Gamma}{\atp}X=(f-\JO{\chi\cdot\infty}{\atp}g)\sop{\atp}\chi$
for any representative $\chi\in X$ where $g\in S_{1-\atp}(\Gamma)$
is the cusp form associated to $f$ (cf.
(\ref{eqn:conven:autfrm:AutInt_Xform})). Let us generalize this
operator by setting
\begin{gather}\label{eqn:struapp:hops:deco_HeckeOps_gen}
     f\cop{\Gamma}{\atp}X
     =\sum_{i}
     (f-\JO{\chi\cdot\infty}{\atp}g)\sop{\atp}\chi_i
\end{gather}
in case $X$ is a finite union of right cosets of $\Gamma$ in
$G(\QQ)$ and the $\chi_i$ furnish a transversal
$X=\bigcup_i\Gamma\chi_i$ for $X$ over $\Gamma$. The next lemma
verifies that the operator $f\mapsto f\cop{\Gamma}{\atp}X$ makes
sense in case $X$ is a double coset $X=\Gamma\sigma\Gamma$ of
$\Gamma$ in $G(\QQ)$.
\begin{lem}\label{lem:struapp:hops:Dbl_Coset_Decomp}
Let $\Gamma$ be a group commensurable with $G(\ZZ)$, let $\sigma\in
G(\QQ)$, set $\Delta=\Gamma\cap\sigma^{-1}\Gamma{\sigma}$ and
$\Delta'=\sigma\Gamma\sigma^{-1}\cap\Gamma$, and suppose that
\begin{gather}
     \Gamma=\bigcup_i\lambda_i'\Delta',
     \quad
     \Gamma=\bigcup_i\Delta\rho_i,
\end{gather}
are left and right transversals for $\Gamma$ over $\Delta'$ and
$\Delta$, respectively. Then the double coset $\Gamma\sigma\Gamma$
admits disjoint decompositions
\begin{gather}\label{eqn:struapp:hops:DblCosetDecomp}
     \Gamma\sigma\Gamma
     =\bigcup_i\lambda_i'\sigma\Gamma,
     \quad
     \Gamma\sigma\Gamma
     =\bigcup_i\Gamma\sigma\rho_i,
\end{gather}
into left and right cosets for $\Gamma$.
\end{lem}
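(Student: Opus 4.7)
The plan is to prove the two decompositions of $\Gamma\sigma\Gamma$ independently, by exhibiting a natural bijection between the $\Delta'$-cosets (resp.\ $\Delta$-cosets) in $\Gamma$ and the left (resp.\ right) $\Gamma$-cosets lying inside the double coset $\Gamma\sigma\Gamma$. The key algebraic fact powering both bijections is the pair of identities $\sigma^{-1}\Delta'\sigma=\Delta$ and $\sigma\Delta\sigma^{-1}=\Delta'$, each of which follows immediately from the definitions $\Delta=\Gamma\cap\sigma^{-1}\Gamma\sigma$ and $\Delta'=\sigma\Gamma\sigma^{-1}\cap\Gamma$.

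First I would handle the left decomposition. To see that the union $\bigcup_i\lambda_i'\sigma\Gamma$ covers $\Gamma\sigma\Gamma$, take an arbitrary element $\gamma_1\sigma\gamma_2$ and write $\gamma_1=\lambda_i'\delta'$ using the given left transversal, with $\delta'\in\Delta'$. Since $\delta'\in\sigma\Gamma\sigma^{-1}$, there exists $\gamma_3\in\Gamma$ with $\delta'\sigma=\sigma\gamma_3$, and therefore
\begin{gather*}
     \gamma_1\sigma\gamma_2
     =\lambda_i'\delta'\sigma\gamma_2
     =\lambda_i'\sigma\gamma_3\gamma_2
     \in\lambda_i'\sigma\Gamma.
\end{gather*}
For disjointness, an equality $\lambda_i'\sigma\Gamma=\lambda_j'\sigma\Gamma$ gives $\sigma^{-1}(\lambda_j')^{-1}\lambda_i'\sigma\in\Gamma$, i.e.\ $(\lambda_j')^{-1}\lambda_i'\in\sigma\Gamma\sigma^{-1}$; combined with the obvious $(\lambda_j')^{-1}\lambda_i'\in\Gamma$ this yields $(\lambda_j')^{-1}\lambda_i'\in\Delta'$, forcing $i=j$ by the defining property of the transversal.

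The right decomposition is entirely analogous, with the r\^oles of $\Delta$ and $\Delta'$ interchanged and the conjugation performed on the opposite side. Given $\gamma_1\sigma\gamma_2\in\Gamma\sigma\Gamma$, write $\gamma_2=\delta\rho_i$ with $\delta\in\Delta$, and use $\sigma\delta\sigma^{-1}\in\Gamma$ to absorb $\delta$ into the left factor, placing $\gamma_1\sigma\gamma_2$ in $\Gamma\sigma\rho_i$; disjointness follows because $\Gamma\sigma\rho_i=\Gamma\sigma\rho_j$ forces $\rho_i\rho_j^{-1}\in\sigma^{-1}\Gamma\sigma\cap\Gamma=\Delta$.

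There is no genuine obstacle here; the only point that requires a little care is the passage from cosets in $\Gamma\backslash\Gamma\sigma\Gamma$ (resp.\ $\Gamma\sigma\Gamma/\Gamma$) to cosets of $\Delta'$ (resp.\ $\Delta$) in $\Gamma$, and this is handled cleanly by the conjugation identities $\sigma\Delta\sigma^{-1}=\Delta'$ and $\sigma^{-1}\Delta'\sigma=\Delta$ noted at the outset. The two decompositions in \eqref{eqn:struapp:hops:DblCosetDecomp} then follow at once.
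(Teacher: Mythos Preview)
Your proof is correct and follows essentially the same approach as the paper: both arguments rely on the identity $\sigma\Delta=\Delta'\sigma$ (equivalently $\sigma^{-1}\Delta'\sigma=\Delta$) to establish the coverings, and both deduce disjointness from the observation that $\rho_i\rho_j^{-1}\in\sigma^{-1}\Gamma\sigma\cap\Gamma=\Delta$ (and analogously for the $\lambda_i'$). The only difference is stylistic---the paper works with set inclusions $\Gamma\sigma\Gamma=\bigcup\lambda_i'\Delta'\sigma\Gamma\subset\bigcup\lambda_i'\sigma\Gamma$ while you chase a generic element---and the paper adds the contextual remark that commensurability forces the transversals to be finite.
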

\begin{proof}
Since $G(\QQ)$ is the commensurator of $G(\ZZ)$, the intersection
$\Delta=\Gamma\cap \sigma^{-1}\Gamma{\sigma}$ has finite index in
both $\Gamma$ and $\sigma^{-1}\Gamma\sigma$, so there are only
finitely many $\lambda_i'$ and $\rho_i$. Observe next that
$\sigma\Delta=\Delta'\sigma=\sigma\Gamma\cap\Gamma\sigma$.
Consequently, we have $\Delta'\sigma\subset\sigma\Gamma$ and
$\sigma\Delta\subset\Gamma\sigma$, so that
\begin{gather}
     \Gamma\sigma\Gamma
     =\bigcup \lambda_i'\Delta'\sigma\Gamma
     \subset \bigcup\lambda_i'\sigma\Gamma\Gamma
     =\bigcup\lambda_i'\sigma\Gamma,\\
     \Gamma\sigma\Gamma
     =\bigcup \Gamma\sigma\Delta\rho_i
     \subset \bigcup\Gamma\Gamma\sigma\rho_i
     =\bigcup\Gamma\sigma\rho_i.
\end{gather}
The reverse inclusions hold since all the $\lambda_i'$ and $\rho_i$
lie in $\Gamma$. The unions are disjoint, for if
$\Gamma\sigma\rho_i=\Gamma\sigma\rho_j$ say, then
$\rho_i\rho_j^{-1}\in\sigma^{-1}\Gamma{\sigma}$, but
$\rho_i\rho_j^{-1}\in\Gamma$ by our choice of the $\rho_i$, so
$\rho_i\rho_j^{-1}\in\Delta$, and this implies $i=j$. A similar
argument applies to the cosets $\lambda_i'\sigma\Gamma$.
\end{proof}
The right hand identity of (\ref{eqn:struapp:hops:DblCosetDecomp})
shows that the operator $f\mapsto
f\cop{\Gamma}{\atp}\Gamma\sigma\Gamma$ is well-defined for $f\in
I_{\atp}(\Gamma)$ and $\sigma\in G(\QQ)$. The left hand identity of
(\ref{eqn:struapp:hops:DblCosetDecomp}) shows that the function
$f\cop{\Gamma}{\atp}\Gamma\sigma\Gamma$ again lies
in $I_{\atp}(\Gamma)$. Indeed, if the cusp form associated to $f\in
I_{\atp}(\Gamma)$ is $g\in S_{1-\atp}(\Gamma)$ then the cusp form
associated to $f\cop{\Gamma}{\atp}\Gamma\sigma\Gamma$ is
$g\cop{\Gamma}{\atp}\Gamma\sigma\Gamma$. We call the operator
$I_{\atp}(\Gamma)\to I_{\atp}(\Gamma)$ given by $f\mapsto
f\cop{\Gamma}{\atp}\Gamma\sigma\Gamma$ the {\em weight $2\atp$ Hecke
operator associated to $\sigma$}. In order to ease notation we set
\begin{gather}\label{eqn:struapp:hops:Defn_hop}
     f\hop{\Gamma}{\atp}\sigma
     =
     f\cop{\Gamma}{\atp}\Gamma\sigma\Gamma
\end{gather}
for $\sigma\in G(\QQ)$. We have
$f\hop{\Gamma}{\atp}\sigma=\sum_{i}f\sop{\atp}\chi_i$ when
$\{\chi_i\}\subset G(\QQ)$ is a right transversal
$\Gamma\sigma\Gamma=\bigcup_i\Gamma\chi_i$ for $\Gamma$ in
$\Gamma\sigma\Gamma$. If $\sigma$ belongs to the normalizer of
$\Gamma$ then we have $\Gamma\sigma\Gamma=\Gamma\sigma$ and hence
$f\hop{\Gamma}{\atp}\sigma=(f-\JO{\sigma\cdot\infty}{\atp}g)\sop{\atp}\sigma$
for any $f\in I_{\atp}(\Gamma)$. More generally, Lemma
\ref{lem:struapp:hops:Dbl_Coset_Decomp} yields for us the formula
\begin{gather}
     f\hop{\Gamma}{\atp}\sigma
     =\sum_{i}(f-\JO{\sigma\rho_i\cdot\infty}{\atp}g)\sop{\atp}(\sigma\rho_i)
\end{gather}
in the case that $\{\rho_i\}$ is a right transversal for $\Gamma$
over the intersection $\Delta=\Gamma\cap\Gamma^{\sigma}$.

The action of the operator $f\mapsto f\cop{\Gamma}{\atp}X$ on
holomorphic Poincar\'e series can be described without reference to
transversals. Indeed, if $U$ is the set of right cosets of $B(\ZZ)$
determined by a union of left cosets of some group $\Gamma$
commensurable with $G(\ZZ)$, and if $X$ is a finite union
$\bigcup_j\Gamma\chi_j$ say, of right cosets of $\Gamma$, then,
taking $\atp>1$ to ensure absolute (and locally uniform) convergence, we have
\begin{gather}\label{eqn:struapp:hops:cop_on_PS}
     \PS{U}{\atp}{m}\cop{\Gamma}{\atp}X
     =\sum_j
     \PS{U}{\atp}{m}\sop{\atp}\chi_j
     =\sum_j
     \PS{U\chi_j}{\atp}{m}
     =\PS{UX}{\atp}{m},
\end{gather}
so that the Poincar\'e series are stable under the action of the
operators $f\mapsto f\cop{\Gamma}{\atp}X$. In particular, for the
Hecke operator $f\mapsto f\hop{\Gamma}{\atp}\sigma$ we have
$\PS{U}{\atp}{m}\hop{\Gamma}{\atp}\sigma=\PS{U\sigma\Gamma}{\atp}{m}$.
This identity $\PS{U}{\atp}{m}\cop{\Gamma}{\atp}X=\PS{UX}{\atp}{m}$
extends naturally to the
modified Rademacher sums.
\begin{prop}\label{prop:struapp:hops:cop_on_QS}
Let $\Gamma$ be a group commensurable with $G(\ZZ)$, let $Z$ be a
finite union of left cosets of $\Gamma$ in $G(\QQ)$ and set $U=\lBZ
Z\rBZh$. Let $\atp,m\in \ZZ$ such that $\atp\leq 0$ and $m<0$. Then for $X$ a finite union of right cosets of $\Gamma$ in
$G(\QQ)$ we have
\begin{gather}
     \QS{U}{\atp}{m}\cop{\Gamma}{\atp}X
     =
     \QS{UX}{\atp}{m}.
\end{gather}
\end{prop}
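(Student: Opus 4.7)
The plan is to exploit the fact that $\QS{U}{\atp}{m}$ is already known to be an automorphic integral for $\Gamma$ (Theorem~\ref{thm:struapp:frac:Union_LeftCosets_is_AI}), and then to combine (i) a Hecke-enhanced variance identity for the modified Rademacher sum with (ii) the transformation formula~(\ref{eqn:conven:autfrm:Var_IO_w_atp}) for the integral operators $\JO{\ww}{\atp}$. Both sides of the asserted identity depend additively on the decomposition $X=\bigcup_i\Gamma\chi_i$ into right cosets of $\Gamma$ (for the left-hand side this is immediate from (\ref{eqn:struapp:hops:deco_HeckeOps_gen}); for the right-hand side it follows from the hypothesis $U\Gamma=U$, which forces $UX=\bigcup_i U\Gamma\chi_i=\bigcup_i U\chi_i$), so I first reduce to the case $X=\Gamma\chi$ of a single right coset. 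In this case the left-hand side unpacks via (\ref{eqn:conven:autfrm:Defn_cop_on_ai}) as
\begin{gather*}
\QS{U}{\atp}{m}\cop{\Gamma}{\atp}\Gamma\chi
=\bigl(\QS{U}{\atp}{m}-\JO{\chi\cdot\infty}{\atp}g\bigr)\sop{\atp}\chi,
\end{gather*}
where $g$ is the cusp form associated to $\QS{U}{\atp}{m}$, and the goal becomes the identity displayed above with right-hand side $\QS{U\chi}{\atp}{m}$.

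Next, I would establish the following Hecke-enhanced variance identity, for any $\sigma\in G(\QQ)$:
\begin{gather*}
\QS{U}{\atp}{m}\sop{\atp}\sigma
=\QS{U\sigma}{\atp}{m}
-m^{1-2\atp}\JO{\sigma^{-1}\cdot\infty}{\atp}\PS{U\sigma}{1-\atp}{-m}.
\end{gather*}
This is an essentially cosmetic extension of the main computation in the proof of Theorem~\ref{thm:radsum:var:Var_RS_atp<1}: that argument begins with a term-by-term application of Lemma~\ref{lem:radsum:var:Apply_sigma_RS_chi}, interchanges the limit using Lemma~\ref{lem:radsum:var:RS_U_slash_sigma_vs_RS_Usigma}, applies the crucial interchange-of-limits identity (\ref{eqn:radsum:var:Interchg_lim_JO_w}) (which continues to hold because its derivation, via Niebur's argument and Lemma~\ref{lem:radsum:conver:Refinement_Lipschitz}, only uses that $U$ is a set of right cosets of $B(\ZZ)$), and finally converts between $\RS{}{}{}$ and $\QS{}{}{}$ via Proposition~\ref{prop:modradsum:conver:Relate_QS_s=1_RS}. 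Applying the same identity with $\sigma\in\Gamma$ (so that $U\sigma=U$) simultaneously confirms that $\QS{U}{\atp}{m}$ is an automorphic integral for $\Gamma$ and identifies its associated cusp form as $g=m^{1-2\atp}\PS{U}{1-\atp}{-m}$, extending Proposition~\ref{prop:modradsum:var:acf_on_QS} to the present setting.

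With the variance identity in hand, I would next expand $(\JO{\chi\cdot\infty}{\atp}g)\sop{\atp}\chi$ using the transformation law (\ref{eqn:conven:autfrm:Var_IO_w_atp}):
\begin{gather*}
(\JO{\chi\cdot\infty}{\atp}g)\sop{\atp}\chi
=\JO{\chi\cdot\infty\cdot\chi}{\atp}(g\sop{1-\atp}\chi)
-\JO{\infty\cdot\chi}{\atp}(g\sop{1-\atp}\chi).
\end{gather*}
Since $\chi\cdot\infty\cdot\chi=\chi^{-1}\cdot(\chi\cdot\infty)=\infty$ and $\JO{\infty}{\atp}\equiv 0$, while $\infty\cdot\chi=\chi^{-1}\cdot\infty$, the first term vanishes and the second is $-m^{1-2\atp}\JO{\chi^{-1}\cdot\infty}{\atp}(\PS{U}{1-\atp}{-m}\sop{1-\atp}\chi)=-m^{1-2\atp}\JO{\chi^{-1}\cdot\infty}{\atp}\PS{U\chi}{1-\atp}{-m}$, using the identity $\PS{U}{1-\atp}{-m}\sop{1-\atp}\chi=\PS{U\chi}{1-\atp}{-m}$ which holds by (\ref{eqn:radsum:var:Apply_sigma_PS_chi}). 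Subtracting this from the variance identity gives precisely
\begin{gather*}
\bigl(\QS{U}{\atp}{m}-\JO{\chi\cdot\infty}{\atp}g\bigr)\sop{\atp}\chi
=\QS{U\chi}{\atp}{m},
\end{gather*}
completing the single-coset case and hence the proposition after summing over $i$.

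The main obstacle is verifying that the variance identity of the second paragraph goes through for a general $U=\lBZ Z\rBZh$ and arbitrary $\chi\in G(\QQ)$, rather than the special case $U=\lBZ\Sigma_{\cp}^{-1}\rBZh$ treated in Theorem~\ref{thm:radsum:var:Var_RS_atp<1}; in particular one needs to confirm that Niebur's interchange-of-limits identity (\ref{eqn:radsum:var:Interchg_lim_JO_w}) applies with the coefficient constants $\fc{S}{\atp}(m,0)$ read off from $S=\{\lBZ\chi\rBZ:\lBZ\chi\rBZh\in U^{\times}\}$ and $S'$ analogously for $U\chi$. These constants are well-defined for any union of right cosets of $B(\ZZ)$, and the proof of (\ref{eqn:radsum:var:Interchg_lim_JO_w}) is insensitive to the precise origin of $U$, so the extension is a routine bookkeeping exercise rather than a genuine new difficulty.
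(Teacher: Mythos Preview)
Your proposal is correct and follows the same route as the paper. The paper's proof is extremely terse: it invokes Theorem~\ref{thm:struapp:frac:Union_LeftCosets_is_AI} to obtain the associated cusp form $g$, writes $X=\bigcup_i\Gamma\chi_i$, and then simply asserts the chain
\[
\QS{U}{\atp}{m}\cop{\Gamma}{\atp}X
=\sum_i\bigl(\QS{U}{\atp}{m}-\JO{\chi_i\cdot\infty}{\atp}g\bigr)\sop{\atp}\chi_i
=\sum_i\QS{U\chi_i}{\atp}{m}
=\QS{UX}{\atp}{m},
\]
without justifying the middle equality. Your argument supplies precisely that justification, via the variance identity extended from Theorem~\ref{thm:radsum:var:Var_RS_atp<1} and the transformation law~(\ref{eqn:conven:autfrm:Var_IO_w_atp}) for $\JO{\ww}{\atp}$; the ``main obstacle'' you flag is real but, as you note, the Niebur-style estimates underlying (\ref{eqn:radsum:var:Interchg_lim_JO_w}) depend only on $U$ being a union of right $B(\ZZ)$-cosets, and the paper is implicitly relying on this same extension.
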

\begin{proof}
The modified Rademacher sum $\QS{U}{\atp}{m}(\zz)$ is an automorphic
integral of weight $2\atp$ for $\Gamma$ according to Theorem
\ref{thm:struapp:frac:Union_LeftCosets_is_AI}, so it has an
associated cusp form $g\in S_{1-\atp}(\Gamma)$ say. We may express
$X$ as a disjoint union $X=\bigcup_{i}\Gamma\chi_i$ for some
finitely many $\chi_i\in G(\QQ)$. We then compute
\begin{gather}
     \QS{U}{\atp}{m}\cop{\Gamma}{\atp}X
     =
     \sum_{i}
     (\QS{U}{\atp}{m}-\JO{\chi_i\cdot\infty}{\atp}g)
     \sop{\atp}\chi_i
     =
     \sum_{i}
     \QS{U\chi_i}{\atp}{m}
     =
     \QS{UX}{\atp}{m}.
\end{gather}
This proves the claim.
\end{proof}

In the case that $\Gamma$ is the modular group $G(\ZZ)$ we have the
classical Hecke operators $\HO(n)$, defined for $n\in\ZZp$ by
setting
\begin{gather}\label{eqn:invar:hops:deco_ClassHeckeOps}
     n^{1-\atp}
     (\HO(n)f)(\zz)=
                    \sum_{
                    \substack{
                    ad=n\\
                    0\leq b<d}
                         }
                    f\left(
                    \frac{a\zz+b}{d}
                    \right)
                    \frac{a^{\atp}}{d^{\atp}}
\end{gather}
for $f\in M_{\atp}({\Gamma})$. We deduce the relation between the
operators $\HO(n)$ and $f\mapsto f\hop{\Gamma}{\atp}\sigma$, for
$\Gamma=G(\ZZ)$, by observing the following coincidence of disjoint
unions of cosets and double cosets of $\Gamma$.
\begin{gather}\label{eqn:invar:hops:deco_DecompM(n)}
      \bigcup_{\substack{
                    ad=n\\
                    0\leq b<d}
                    }
          \Gamma
          \left[
            \begin{array}{cc}
              a & b \\
              0 & d \\
            \end{array}
          \right]
          =\bigcup_{\substack{
                    ad=n\\
                    d|a}
                    }
          \Gamma
          \left[
            \begin{array}{cc}
              a & 0 \\
              0 & d \\
            \end{array}
          \right]
          \Gamma
\end{gather}
Both sides of (\ref{eqn:invar:hops:deco_DecompM(n)}) are
decompositions of the image in $G(\QQ)$ of the set of $2\times 2$
matrices with integral entries and determinant $n$. From these
decompositions we deduce the following result.
\begin{lem}\label{lem:invar:hops:deco_ClassHOsAndHOs}
For $n\in\ZZp$ and $\atp\in \ZZ$ we have
\begin{gather}
     n^{1-\atp}
     (\HO(n)f)
     =\sum_{d\in\ZZp,\,d^2|n}
     f\hop{\Gamma}{\atp}[n/d^2]
\end{gather}
for any $f\in I_{\atp}(\Gamma)$.
\end{lem}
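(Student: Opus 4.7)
The plan is to unpack both sides of the claimed identity in terms of the weight $2\atp$ slash action on matrix representatives and then apply the decomposition (\ref{eqn:invar:hops:deco_DecompM(n)}), exploiting the crucial observation that every representative appearing on the left hand side of (\ref{eqn:invar:hops:deco_DecompM(n)}) fixes the cusp at infinity. Concretely, for each triple $(a,b,d)$ with $a,d\in\ZZp$, $ad=n$ and $0\le b<d$, the element $\chi=\sqnom{a\;b}{0\;d}\in G(\QQ)$ satisfies $\chi\cdot\zz=(a\zz+b)/d$ and $\jac(\chi,\zz)=a/d$, both formulas being independent of which common divisor of $a,b,d$ one strips off to form a preferred representative. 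Hence $(f\sop{\atp}\chi)(\zz)=f((a\zz+b)/d)(a/d)^{\atp}$, and summing over these $\chi$ recovers the defining sum (\ref{eqn:invar:hops:deco_ClassHeckeOps}), giving
\begin{equation*}
n^{1-\atp}(\HO(n)f)(\zz)=\sum_{\substack{ad=n\\0\le b<d}}\bigl(f\sop{\atp}\sqnom{a\;b}{0\;d}\bigr)(\zz),
\end{equation*}
where (\ref{eqn:invar:hops:deco_ClassHeckeOps}) is understood as defining $\HO(n)$ also on all of $I_{\atp}(\Gamma)$.

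The key observation is then that for every such $\chi=\sqnom{a\;b}{0\;d}$ we have $\chi\cdot\infty=\infty$, so $\JO{\chi\cdot\infty}{\atp}g=\JO{\infty}{\atp}g=0$, where $g\in S_{1-\atp}(\Gamma)$ is the cusp form associated to $f$ and the vanishing is by the definition of $\JO{\infty}{\atp}$ in \S\ref{sec:conven:autfrm}. The defining formula (\ref{eqn:conven:autfrm:Defn_cop_on_ai}) for $f\cop{\Gamma}{\atp}(\Gamma\chi)$ therefore collapses to the naive expression $f\sop{\atp}\chi$, meaning the correction term that normally intervenes in the action of $\cop{\Gamma}{\atp}$ on an automorphic integral simply vanishes on every representative in the sum above. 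This vanishing is exactly what permits the identity to hold on all of $I_\atp(\Gamma)$ and not merely on $M_\atp(\Gamma)$.

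To conclude I would use (\ref{eqn:invar:hops:deco_DecompM(n)}) to partition the right cosets $\Gamma\sqnom{a\;b}{0\;d}$ according to the double coset $\Gamma[n/d^2]\Gamma$ in which each one lies, using that the matrix $\binom{a\;0}{0\;d}$ with $d\mid a$ and $ad=n$ has preferred representative $\binom{n/d^2\;0}{0\;1}$, so that $\sqnom{a\;0}{0\;d}=[n/d^2]$ in the notation of (\ref{eqn:conven:isomhyp:Defn_[alpha]}). For each $d\in\ZZp$ with $d^2\mid n$, the partial sum of the terms $f\sop{\atp}\chi$ over those representatives lying in $\Gamma[n/d^2]\Gamma$ equals $f\cop{\Gamma}{\atp}\Gamma[n/d^2]\Gamma=f\hop{\Gamma}{\atp}[n/d^2]$ by the previous paragraph. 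Summing over $d^2\mid n$ reassembles the full sum displayed above, yielding the desired identity. There is no real obstacle beyond this bookkeeping; the content of the argument is the observation $\JO{\infty}{\atp}=0$, which dispels the automorphic-integral correction terms that might otherwise spoil the identity.
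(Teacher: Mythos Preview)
Your proof is correct and follows essentially the same approach as the paper, which simply states that the lemma is deduced from the coset decomposition (\ref{eqn:invar:hops:deco_DecompM(n)}). Your explicit observation that each upper-triangular representative $\chi$ fixes $\infty$, so that $\JO{\chi\cdot\infty}{\atp}g=\JO{\infty}{\atp}g=0$ and hence $f\cop{\Gamma}{\atp}\Gamma\chi=f\sop{\atp}\chi$, is exactly the point needed to extend the identity from $M_{\atp}(\Gamma)$ to all of $I_{\atp}(\Gamma)$; the paper leaves this implicit.
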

As a partial converse to Lemma
\ref{lem:invar:hops:deco_ClassHOsAndHOs}, observe that for
$\Gamma=G(\ZZ)$ the action of any Hecke operator $f\mapsto
f\hop{\Gamma}{\atp}\sigma$ can be expressed in terms of the Hecke
operator associated to a diagonal element $[\mu]\in B(\QQ)$. Indeed,
if $\sigma\in  B(\QQ)$ then, according to
(\ref{eqn:invar:hops:deco_DecompM(n)}), we have
$\Gamma\sigma\Gamma=\Gamma[\mu]\Gamma$ for some $\mu\in \QQp$. In
case $\sigma$ does not lie in $B(\QQ)$ we have
$\sigma\cdot\infty=\cpr\neq\infty$ for some $\cpr\in \QQ$. Since the
modular group acts transitively on $\hat{\QQ}$ there is some
$\sigma_{\cpr}\in \Gamma$ with $\sigma_{\cpr}\cdot\infty=\cpr$, and
then $\Gamma\sigma\Gamma=\Gamma\tilde{\sigma}\Gamma$ where
$\tilde{\sigma}=\sigma_{\cpr}^{-1}\sigma$ evidently lies in
$B(\QQ)$.
\begin{lem}
Let $\Gamma=G(\ZZ)$ and let $\sigma\in G(\QQ)$. Then there exists
$\mu\in \QQp$ such that
\begin{gather}
     f\hop{\Gamma}{\atp}\sigma=f\hop{\Gamma}{\atp}[\mu]
\end{gather}
for all $\atp\in \ZZ$ and all $f\in I_{\atp}(\Gamma)$.
\end{lem}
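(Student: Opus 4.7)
\emph{Proof plan.} By (\ref{eqn:struapp:hops:Defn_hop}) the Hecke operator $f\mapsto f\hop{\Gamma}{\atp}\sigma$ is, by definition, $f\mapsto f\cop{\Gamma}{\atp}(\Gamma\sigma\Gamma)$, and so depends on $\sigma$ only through the double coset $\Gamma\sigma\Gamma$. The claim therefore reduces to exhibiting some $\mu\in\QQp$ for which $\Gamma\sigma\Gamma=\Gamma[\mu]\Gamma$; I then assemble this in the two reductions sketched in the paragraph immediately preceding the statement.

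The first step is to reduce to the case $\sigma\in B(\QQ)$. Since $\Gamma=G(\ZZ)$ acts transitively on $\hat{\QQ}$, one may choose $\sigma_\cpr\in\Gamma$ with $\sigma_\cpr\cdot\infty=\cpr:=\sigma\cdot\infty$; then $\tilde\sigma:=\sigma_\cpr^{-1}\sigma$ fixes $\infty$, hence lies in $B(\QQ)$, and satisfies $\Gamma\sigma\Gamma=\Gamma\tilde\sigma\Gamma$. Replacing $\sigma$ by $\tilde\sigma$ I may therefore assume from the outset that $\sigma\in B(\QQ)$.

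The second step is to invoke the decomposition (\ref{eqn:invar:hops:deco_DecompM(n)}). Set $n:=\Pdet(\sigma)\in\ZZp$; the assumption $\sigma\in B(\QQ)$ ensures that a preferred representative for $\sigma$ is upper triangular with integer entries and determinant $n$, so $\sigma\in M(n)$. The left-hand side of (\ref{eqn:invar:hops:deco_DecompM(n)}) displays $M(n)$ as a disjoint union of left $\Gamma$-cosets, so $\Gamma\sigma$ is one of these cosets; the right-hand side rewrites $M(n)$ as the disjoint union of the double cosets $\Gamma[a'/d']\Gamma$ ranging over pairs with $a'd'=n$ and $d'\mid a'$. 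Hence $\Gamma\sigma\Gamma$ coincides with exactly one of these $\Gamma[\mu]\Gamma$, yielding the required $\mu$.

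The entire argument is bookkeeping and presents no substantive obstacle; the only mild care required is with the preferred-representative conventions of (\ref{eqn:conven:isomhyp:Defn_Pdet}) when translating between $\SL_2$-matrices and elements of $\PGL_2^+(\QQ)$, but this is already handled in the statement of (\ref{eqn:invar:hops:deco_DecompM(n)}).
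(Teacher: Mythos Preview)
Your proposal is correct and follows essentially the same approach as the paper: the paper's own argument is the paragraph immediately preceding the lemma (which you explicitly reference), namely reduce to $\sigma\in B(\QQ)$ via the transitivity of $G(\ZZ)$ on $\hat{\QQ}$, and then use the double-coset decomposition (\ref{eqn:invar:hops:deco_DecompM(n)}) of $M(n)$ to locate the required $[\mu]$. Your write-up adds the minor observation that $n=\Pdet(\sigma)$ identifies the relevant $M(n)$, but otherwise the two arguments coincide.
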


We now consider the action of the Hecke operators $\hat{T}(n)$ on
the modified Rademacher sum
$\QS{\Gamma}{}{-1}(\zz)=\QS{\lBZ\Gamma\rBZh}{}{-1}(\zz)$ of weight $0$
and order $-1$ associated to the modular group $\Gamma=G(\ZZ)$. From
Theorem \ref{thm:modradsum:var:Basis_QS_p_m} we have that
$\QS{\lBZ\Gamma\rBZh}{}{-m}(\zz)$ is an automorphic integral of
weight $0$ (that is, an abelian integral) for $\Gamma$ for $m>0$. Since there
are no non-zero cusp forms of weight $2$ for the modular group, we
see from Corollary \ref{cor:modradsum:var:Relate_QS_inv_to_PS_van}
that $\QS{\lBZ\Gamma\rBZh}{}{-m}(\zz)$ is in fact a
$\Gamma$-invariant function on $\HH$ for all $m\in \ZZp$.
Consequently we have
$\QS{\lBZ\Gamma\rBZh}{}{-1}\hop{\Gamma}{0}[n]=\QS{\lBZ\Gamma[n]\Gamma\rBZh}{}{-1}$
for $n\in \ZZp$, by Proposition \ref{prop:struapp:hops:cop_on_QS}
and the definition (\ref{eqn:struapp:hops:Defn_hop}) of
$f\hop{\Gamma}{\atp}\sigma$. Suppose that $n$ is square-free. Then,
according to Lemma \ref{lem:invar:hops:deco_ClassHOsAndHOs}, the
action of the operator $f\mapsto f\hop{\Gamma}{0}[n]$ on
$M_0(\Gamma)$ coincides with that of $n\hat{T}(n)$. We anticipate an
application of Lemma \ref{lem:struapp:hops:Dbl_Coset_Decomp}. For
$\sigma=[n]$ we have
$\Delta'=\sigma\Gamma\sigma^{-1}\cap\Gamma=\Gamma^0(n)$, and for a
left transversal of $\Delta'=\Gamma^0(n)$ in $\Gamma$ we may take
\begin{gather}
     \Gamma
     =
     \bigcup_{e\| n}
     \bigcup_{k=0}^{n/e-1}
     T^kST^e\Delta'
\end{gather}
where the first union is over exact divisors of $n$ (cf.
\S\ref{sec:conven:fns}). We deduce that
\begin{gather}
     \Gamma[n]\Gamma
     =
     \bigcup_{e\| n}
     \bigcup_{k=0}^{n/e-1}
     T^kST^e[n]\Gamma
\end{gather}
by Lemma \ref{lem:struapp:hops:Dbl_Coset_Decomp}. Now $\lBZ
T^kX\rBZh=\lBZ X\rBZh$ for any subset $X\subset G(\QQ)$ and any
$k\in\ZZ$, so we find that
\begin{gather}
     n(\hat{T}(n)\QS{\lBZ\Gamma\rBZh}{}{-1})
     =\QS{\lBZ\Gamma[n]\Gamma\rBZh}{}{-1}
     =
     \sum_{e\| n}
     \QS{\lBZ ST^e[n]\Gamma\rBZh}{}{-1}.
\end{gather}
\begin{lem}
Let $e$ be an exact divisor of $n$. Then we have $\lBZ
ST^e[n]\Gamma\rBZh=\lBZ [n/e^2]\Gamma\rBZh$ when $\Gamma$ is the
modular group $G(\ZZ)$.
\end{lem}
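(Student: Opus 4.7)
The plan is to reduce the lemma to showing that $ST^e[n]$ and $[n/e^2]$ determine the same $(B(\ZZ),\Gamma)$-double coset in $G(\QQ)$. Once this is established, the equality $B(\ZZ)\cdot ST^e[n]\Gamma=B(\ZZ)\cdot[n/e^2]\Gamma$ is immediate, and this is precisely the statement $\lBZ ST^e[n]\Gamma\rBZh=\lBZ[n/e^2]\Gamma\rBZh$ given the definition of $\lBZ X\rBZh$ as the set of right $B(\ZZ)$-cosets in the image of $X$ under the canonical map.

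First I would record preferred representatives in $\GL_2^+(\QQ)$. The matrix product yields $ST^e[n]=\sqnom{0\;-1}{n\;\;e}$, whose entries already have greatest common divisor $1$, so this is a preferred representative with projective determinant $n$. For $[n/e^2]$ I would use the hypothesis $e\|n$: writing $f=n/e$, we have $\gcd(e,f)=1$, so after clearing denominators in $\sqnom{n/e^2\;0}{0\;\;\;\;\;\;\;1}$ and dividing by the common factor $e$, one obtains the preferred representative $[n/e^2]=\sqnom{f\;0}{0\;e}$, again of projective determinant $n$.

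The heart of the argument is a B\'ezout-style computation. Since $\gcd(e,f)=1$, there exists an integer $a$ such that $ae\equiv -1\pmod f$, and I fix such an $a$. Then I would compute $[n/e^2]^{-1}\cdot T^{-a}\cdot ST^e[n]$ in $\GL_2^+(\QQ)$ directly:
\begin{gather*}
     \binom{1/f\;\;0}{\;0\;\;\;1/e}
     \binom{-an\;\;-1-ae}{\;n\;\;\;\;\;\;\;\;\;\;\;e}
     =
     \binom{-ae\;\;-(1+ae)/f}{\;f\;\;\;\;\;\;\;\;\;\;\;\;\;\;\;\;\;1}.
\end{gather*}
By choice of $a$ every entry is an integer, and a quick check gives determinant $-ae-(-(1+ae))=1$. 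Thus this matrix represents some $\gamma^{-1}\in G(\ZZ)$, and rearranging yields the key identity $ST^e[n]=T^{a}\,[n/e^2]\,\gamma^{-1}$ in $G(\QQ)$.

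From this identity both inclusions follow at once: $ST^e[n]\in B(\ZZ)\cdot[n/e^2]\Gamma$ gives $B(\ZZ)\cdot ST^e[n]\Gamma\subset B(\ZZ)\cdot[n/e^2]\Gamma$, while inverting the identity as $[n/e^2]=T^{-a}\,ST^e[n]\,\gamma$ gives the reverse inclusion. Hence the two $(B(\ZZ),\Gamma)$-saturations coincide, proving the lemma. The only real ingredient is the coprimality $\gcd(e,n/e)=1$ guaranteed by $e\|n$; everything else is bookkeeping of preferred representatives in $\PGL_2^+(\QQ)$, and the main obstacle is simply to carry out this bookkeeping without losing track of the projective scaling, which is why I prefer to phrase the key step as a $\GL_2^+(\QQ)$ identity with matching determinants so that no ambiguous positive rational factor can appear.
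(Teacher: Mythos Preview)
Your proof is correct and follows essentially the same strategy as the paper: both arguments exhibit, via an explicit B\'ezout computation using $\gcd(e,n/e)=1$, an element of $B(\ZZ)$ on the left and an element of $\Gamma$ on the right carrying $ST^e[n]$ to $[n/e^2]$. The paper reaches the upper-triangular representative by first right-multiplying by $ST^{n/e}S\in\Gamma$ and then clearing with powers of $T$, whereas you solve $ae\equiv -1\pmod{n/e}$ directly; the content is the same.
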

\begin{proof}
We compute
\begin{gather}
     ST^e[n]
     =
     \left[
       \begin{array}{cc}
         0 & -1 \\
         n & e \\
       \end{array}
     \right],\quad
     ST^e[n]ST^{n/e}S=
     \left[
       \begin{array}{cc}
         n/e & -1 \\
         0 & e \\
       \end{array}
     \right],
\end{gather}
and observe that $T^kAT^l=[n/e^2]$ for $k,l\in \ZZ$ such that
$ke+ln/e=1$, where $A$ is given by $A=ST^e[n]ST^{n/e}S$.
\end{proof}
We can now write
\begin{gather}\label{eqn:struapp:hops:CHO_on_RS_with_mu}
     n(\hat{T}(n)\QS{\lBZ\Gamma\rBZh}{}{-1})
     =\sum_{e\|n}\QS{\lBZ [n/e^2]\Gamma\rBZh}{}{-1}
\end{gather}
in the case that $n$ is square-free, and we can employ the methods
of \S\ref{sec:struapp:frac} to rewrite
(\ref{eqn:struapp:hops:CHO_on_RS_with_mu}) in terms of the
fractional Rademacher sums. Observe that
$B(g\ZZ)[g/h]=[g/h]B(h\ZZ)$. We thus have
\begin{gather}
     \lBZ[\mu]\chi\rBZh
     =\bigcup_{k=0}^{g-1}
     T^k\lBgZ[\mu]\chi\rBZh
     =\bigcup_{k=0}^{g-1}
     T^k[\mu]\lBhZ\chi\rBZh
\end{gather}
for any $\chi\in G(\QQ)$ when $\mu=g/h$ with $g,h\in \ZZp$ and
$(g,h)=1$. We rewrite the right hand side of
(\ref{eqn:struapp:hops:CHO_on_RS_with_mu}), taking $g=n/h$ and
$h=e$, as
\begin{gather}
     \sum_{h\|n}
     \QS{\lBZ [n/h^2]\Gamma\rBZh}{}{-1}
     =
     \sum_{h\|n}
     \QS{[n/h^2]\lBhZ \Gamma\rBZh}{}{-1}
     =
     \sum_{h\|n}
     \QS{\lBhZ \Gamma\rBZh}{}{-n/h^2},
\end{gather}
and thus obtain the following result, relating the actions of Hecke
operators to the fractional Rademacher sums.
\begin{thm}\label{thm:struapp:hops:CHO_on_QS}
Let $n\in\ZZp$ be square-free and let $\Gamma=G(\ZZ)$. Then we have
\begin{gather}\label{eqn:struapp:hops:CHO_on_QS}
     n(\hat{T}(n)\QS{\lBZ\Gamma\rBZh}{}{-1})
     =
     \sum_{h\|n}
     \QS{\lBhZ \Gamma\rBZh}{}{-n/h^2}
\end{gather}
where the sum is over the exact divisors of $n$.
\end{thm}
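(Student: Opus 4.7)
The plan is to assemble the ingredients already developed in the surrounding text into a clean chain of identities. The first step will be to argue that $\QS{\lBZ\Gamma\rBZh}{}{1}(\zz)$ is genuinely $\Gamma$-invariant (not merely an abelian integral), so that Proposition \ref{prop:struapp:hops:cop_on_QS} applies cleanly. This uses Theorem \ref{thm:modradsum:var:Basis_QS_p_m} together with Corollary \ref{cor:modradsum:var:Relate_QS_inv_to_PS_van}: the associated cusp form lives in $S_1(\Gamma)$, which vanishes for the modular group, so $\QS{\lBZ\Gamma\rBZh}{}{1}$ lies in $M_0(\Gamma)$. Hence the Hecke operator $f \mapsto f\hop{\Gamma}{0}[n]$ acts on it, and for $n$ square-free Lemma \ref{lem:invar:hops:deco_ClassHOsAndHOs} collapses to
\begin{gather*}
     n(\hat{T}(n)\QS{\lBZ\Gamma\rBZh}{}{1})
     = \QS{\lBZ\Gamma\rBZh}{}{1}\hop{\Gamma}{0}[n].
\end{gather*}

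Next, I would apply Proposition \ref{prop:struapp:hops:cop_on_QS} (with $X = \Gamma[n]\Gamma$) to identify the right-hand side with $\QS{\lBZ\Gamma[n]\Gamma\rBZh}{}{1}$. Then I would invoke the double-coset decomposition displayed in the text just above the theorem,
\begin{gather*}
     \Gamma[n]\Gamma = \bigcup_{e\|n}\bigcup_{k=0}^{n/e-1} T^k S T^e[n]\Gamma,
\end{gather*}
which is justified by Lemma \ref{lem:struapp:hops:Dbl_Coset_Decomp} applied with $\Delta' = \Gamma^0(n)$ and a standard coset transversal. Since the set $\lBZ X\rBZh$ of right $B(\ZZ)$-cosets is insensitive to left multiplication by $T$, the inner union in $k$ collapses, yielding
\begin{gather*}
     \QS{\lBZ\Gamma[n]\Gamma\rBZh}{}{1}
     = \sum_{e\|n} \QS{\lBZ ST^e[n]\Gamma\rBZh}{}{1}.
\end{gather*}
The matrix identity in the preceding lemma ($T^k A T^l = [n/e^2]$ with $A = ST^e[n]ST^{n/e}S$ and $ke + ln/e = 1$, which has solutions precisely because $e\|n$) then gives $\lBZ ST^e[n]\Gamma\rBZh = \lBZ [n/e^2]\Gamma\rBZh$, so the sum becomes $\sum_{h\|n}\QS{\lBZ[n/h^2]\Gamma\rBZh}{}{1}$ after renaming $e$ to $h$.

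The final step is the passage from classical to fractional Rademacher sums. Writing $\mu = n/h^2 = g/h'$ in lowest terms (with $g = n/h$ and $h' = h$, coprime since $n$ is square-free and $h\|n$), the relation $B(g\ZZ)[g/h'] = [g/h']B(h'\ZZ)$ gives the disjoint decomposition
\begin{gather*}
     \lBZ[\mu]\Gamma\rBZh
     = \bigcup_{k=0}^{g-1} T^k[\mu]\lBhZ\Gamma\rBZh,
\end{gather*}
and, exactly as in the proof of Theorem \ref{thm:struapp:frac:LeftMult_mu}, the $k$-sum of the corresponding modified Rademacher component functions reduces (because the order $m = 1$ makes every phase $\ee(-k\mu\cdot 1)$ combine to $\sum_{k=0}^{g-1}\ee(-k\mu)$, but now evaluated for the fractional order $\mu$ rather than an integer, giving instead a single copy with order $\mu$) to $\QS{\lBhZ\Gamma\rBZh}{}{\mu}$. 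The main obstacle, and the step to treat with care, is this last bookkeeping: one must track that the fractional-order construction of \S\ref{sec:struapp:frac} genuinely applies to $\lBhZ\Gamma\rBZh$, that the dilation $[\mu]$ is absorbed into the order of the Rademacher sum via the identity $\QS{\lBZ[\mu]\chi\rBZh}{\atp}{m}(\zz,\spp) = \mu^{\atp}\QS{\lBZ\chi\rBZh}{\atp}{m\mu}(\zz,\spp)$ from the proof of Theorem \ref{thm:struapp:frac:LeftMult_mu}, and that the square-freeness of $n$ guarantees $(g,h)=1$ throughout.
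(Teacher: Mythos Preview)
Your approach is exactly the paper's: the text preceding the theorem assembles precisely the chain of identities you outline, and your steps through $\sum_{h\|n}\QS{\lBZ[n/h^2]\Gamma\rBZh}{}{1}$ are correct and match the paper verbatim.

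The final step, however, is muddled. Your displayed ``disjoint decomposition'' $\lBZ[\mu]\Gamma\rBZh = \bigcup_{k=0}^{g-1} T^k[\mu]\lBhZ\Gamma\rBZh$ does not type-check (the left side is a set of $B(\ZZ)$-cosets, each piece on the right a set of shifted $B(g\ZZ)$-cosets), and no phase sum $\sum_{k=0}^{g-1}\ee(-k\mu)$ enters the argument; if it did it would not in general equal $1$ (try $n=6$, $h=3$, where it is $1+\ee(-2/3)$). The identity you cite from the proof of Theorem~\ref{thm:struapp:frac:LeftMult_mu}, $\QS{\lBZ[\mu]\chi\rBZh}{\atp}{m} = \mu^{\atp}\QS{\lBZ\chi\rBZh}{\atp}{m\mu}$, is valid there only under the hypothesis $m\mu\in\ZZ$, which fails for $m=1$ and $h>1$.

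The correct mechanism, which the paper's display $\lBZ[\mu]\chi\rBZh = \bigcup_{k=0}^{g-1}T^k[\mu]\lBhZ\chi\rBZh$ is meant to suggest, is a straight bijection of index sets. For $\gamma,\gamma'\in\Gamma$ one has $B(\ZZ)[\mu]\gamma = B(\ZZ)[\mu]\gamma'$ iff $\gamma'\gamma^{-1}\in[1/\mu]B(\ZZ)[\mu]\cap\Gamma = B((h/g)\ZZ)\cap B(\ZZ) = B(h\ZZ)$, using $(g,h)=1$. Thus $B(\ZZ)[\mu]\gamma \leftrightarrow B(h\ZZ)\gamma$ is a bijection $\lBZ[\mu]\Gamma\rBZh \cong \lBhZ\Gamma\rBZh$, and under it the component functions agree termwise: both $\TS{\lBZ[\mu]\gamma\rBZh}{0}{1}$ and $\TS{\lBhZ\gamma\rBZh}{0}{\mu}$ (and likewise the $\TSa$ pieces) depend only on $\mu(\gamma\cdot\zz)$, $\mu(\gamma\cdot\infty)$, $\mu(\gamma\cdot 0)$, which is precisely what (\ref{eqn:struapp:frac:Subst_muchi_in_TS_chi}) records. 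Summing gives $\QS{\lBZ[\mu]\Gamma\rBZh}{}{1} = \QS{\lBhZ\Gamma\rBZh}{}{\mu}$ directly, with no factor of $g$ and no phases.
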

It is interesting to compare the result
(\ref{eqn:struapp:hops:CHO_on_QS}) of Theorem
\ref{thm:struapp:hops:CHO_on_QS} with the identity
\begin{gather}
     n\left(
     \hat{T}(n)\QS{\lBZ\Gamma\rBZh}{}{-1}
     \right)(\zz)
     =
     \QS{\lBZ\Gamma\rBZh}{}{-n}(\zz),
\end{gather}
which holds since both sides are holomorphic on $\HH$, and have the
same singular terms in their Fourier expansion at
$\Gamma\cdot\infty$. This proves that the Rademacher sum
$\QS{U}{}{n/h^2}(\zz)$ vanishes for $U=\lBhZ\Gamma\rBZh$ whenever
$h$ is an exact divisor of $n$, and thus recovers a special case of
the vanishing result of Theorem \ref{thm:struapp:frac:LeftMult_mu}.

\section{Moonshine}\label{sec:moon}

Monstrous moonshine associates a group $\Gamma_g$ commensurable with
$G(\ZZ)$ and having width one at infinity to each (conjugacy class
of) element(s) $g$ in the Monster group $\MM$. In the article
\cite{ConMcKSebDiscGpsM} the set of groups $\{\Gamma_g\mid g\in
\MM\}$ is characterized in purely group theoretic terms. In this
section we describe a reformulation of this characterization in
terms of normalized Rademacher sums, and a certain family of moduli
problems for solid tori with conformal structure on the boundary.

\subsection{Genera}\label{sec:moon:genus}

The main theorem of \cite{ConMcKSebDiscGpsM} gives four conditions
which, taken together, characterize the groups $\Gamma_{g}$ for
$g\in \MM$. The first of these conditions is the requirement that a
group $\Gamma$ have genus zero. Our first result in this section is
a reformulation of the genus zero condition in terms of Rademacher
sums.
\begin{thm}\label{thm:moon:genus:Genus_zero_iff_TS1_inv}
Let $\Gamma$ be a group commensurable with $G(\ZZ)$ that has width
one at infinity and let $\cp$ be a cusp for $\Gamma$. Then the
normalized Rademacher sum $\TS{\Gamma,\cp}{}{-1}(\zz)$ defines a
$\Gamma$-invariant function on $\HH$ if and only if $\Gamma$ has
genus zero.
\end{thm}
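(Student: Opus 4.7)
The plan is to reduce the question about $\TS{\Gamma,\cp}{}{1}$ to an equivalent question about the modified Rademacher sum $\QS{\Gamma,\cp}{}{1}$, which is already known to be an automorphic integral of weight $0$ for $\Gamma$ by Theorem \ref{thm:radsum:var:QS_is_aut_int}. Indeed, Proposition \ref{prop:modradsum:conver:Relate_QS_TS_s=1} shows that $\QS{\Gamma,\cp}{}{1}$ and $\TS{\Gamma,\cp}{}{1}$ differ by the constant $\fc{\Gamma,\cp}{}(1,0)$, so $\Gamma$-invariance of the one is equivalent to $\Gamma$-invariance of the other.

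For the direction ``genus zero implies invariance'', I would invoke Corollary \ref{cor:modradsum:var:Relate_QS_inv_to_PS_van} applied with $\atp=0$ and $m=1$, which says that $\QS{\Gamma,\cp}{0}{1}$ is a modular form of weight $0$ for $\Gamma$ precisely when the holomorphic Poincar\'e series $\PS{\Gamma,\cp}{1}{-1}$ vanishes identically. Being a modular form of weight $0$ in $M_0(\Gamma)$ is the same as being $\Gamma$-invariant on $\HH$. Now $\PS{\Gamma,\cp}{1}{-1}$ lies in the space $S_1(\Gamma)$ of cusp forms of weight $2$ for $\Gamma$, and the standard identification of $S_1(\Gamma)$ with the space of holomorphic $1$-forms on ${\sf X}_\Gamma$ gives $\dim S_1(\Gamma)=\genus(\Gamma)$. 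Hence if $\genus(\Gamma)=0$ then $S_1(\Gamma)=\{0\}$, so $\PS{\Gamma,\cp}{1}{-1}\equiv 0$ and the desired invariance follows.

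For the converse, I would read off the Fourier expansion of $\TS{\Gamma,\cp}{}{1}$ at each cusp $\cq\in \cP_{\Gamma}$ from Proposition \ref{prop:modradsum:conver:Relate_TS_FR_s=1}: it is
\begin{gather*}
     \TS{\Gamma,\cp|\cq}{}{1}(\zz)
     =
     \delta_{\Gamma,\cp|\cq}\ee(-\zz)
     +
     \FR{\Gamma,\cp|\cq}{0}{1}(\zz)_{\rm van}.
\end{gather*}
Thus $\TS{\Gamma,\cp}{}{1}$ has a simple pole at $\cp$ and vanishes at every other cusp. Assuming $\Gamma$-invariance, it descends to a nonconstant meromorphic function on the compact Riemann surface ${\sf X}_\Gamma$ whose only singularity is a simple pole at $\cp$. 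Any such function defines a holomorphic map ${\sf X}_\Gamma\to\hat{\CC}$ of degree one, which is necessarily biholomorphic, so ${\sf X}_\Gamma\cong \hat{\CC}$ and $\genus(\Gamma)=0$.

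The only nontrivial technical point is justifying that $\TS{\Gamma,\cp|\cq}{}{1}$ really encodes the Fourier expansion of $\TS{\Gamma,\cp}{}{1}$ at $\cq$ in the sense of \S\ref{sec:conven:autfrm}, so that ``simple pole at $\cp$ and vanishing at every other cusp'' is the genuine singularity structure on ${\sf X}_\Gamma$. This is the analogue for $\TS$ of the statement, already established in the proof of Theorem \ref{thm:modradsum:var:Basis_QS_p_m}, that $\QS{\Gamma,\cp}{}{m}$ has principal part $\ee(-m\zz)$ at $\cp$ and vanishes at all other cusps; it transfers to $\TS$ via the constant-difference relation of Proposition \ref{prop:modradsum:conver:Relate_QS_TS_s=1}.
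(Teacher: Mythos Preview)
Your proposal is correct and follows essentially the same approach as the paper: both directions rely on the constant-difference relation between $\TS{\Gamma,\cp}{}{1}$ and $\QS{\Gamma,\cp}{}{1}$, the vanishing of $S_1(\Gamma)$ in genus zero (hence invariance via Corollary~\ref{cor:modradsum:var:Relate_QS_inv_to_PS_van}), and the degree-one map to $\hat{\CC}$ arising from the single simple pole at $\cp$. Your treatment of the technical point about what $\TS{\Gamma,\cp|\cq}{}{1}$ encodes at the cusp $\cq$ is if anything more careful than the paper's, which uses the displayed Fourier expansion directly; note only that ``vanishes at every other cusp'' is a slight overstatement (the genuine expansion at $\cq$ may differ from $\TS{\Gamma,\cp|\cq}{}{1}$ by a constant), but this does not affect the argument since only the pole structure matters.
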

\begin{proof}
Let $\Gamma$ be as in the statement of the theorem and let
$\cp,\cq\in\cP_{\Gamma}$ be cusps of $\Gamma$. By Theorem
\ref{thm:modradsum:conver:Relate_TS_FR} we have
\begin{gather}\label{eqn:moon:genus:Genus_zero_iff_TS1_inv}
     \TS{\Gamma,\cp|\cq}{}{-1}(\zz)
     =
     \delta_{\Gamma,\cp|\cq}\ee(-\zz)
     +\FR{\Gamma,\cp|\cq}{}{-1}(\zz)_{\rm van}.
\end{gather}
Taking $\cq=\Gamma\cdot\infty$ in
(\ref{eqn:moon:genus:Genus_zero_iff_TS1_inv}) we see that
$\TS{\Gamma,\cp}{}{-1}(\zz)$ is holomorphic on $\HH$, and letting
$\cq$ range over $\cP_{\Gamma}$ we see from
(\ref{eqn:moon:genus:Genus_zero_iff_TS1_inv}) that the only pole of
$\TS{\Gamma,\cp}{}{-1}(\zz)$ is a simple pole at the cusp $\cp$. We
see then that if the function $\TS{\Gamma,\cp}{}{-1}(\zz)$ is
$\Gamma$-invariant then it defines a morphism $\phi$ say, of Riemann
surfaces $\phi:{\sf X}_{\Gamma}\to \hat{\CC}$ (cf.
\S\ref{sec:conven:groups}) which has degree one since the preimage
of $\infty\in \hat{\CC}$ under $\phi$ is the single point $\cp\in
{\sf X}_{\Gamma}$ and $\phi$ is unramified here since $\Gamma$ is assumed to have width one at infinity. We conclude that $\phi$ is an isomorphism, so
that $\Gamma$ indeed has genus zero.

Conversely, if $\Gamma$ has genus zero then, since the space
$S_1(\Gamma)$ of cusp forms of weight $2$ for $\Gamma$ is isomorphic
to the space of holomorphic differentials on ${\sf X}_{\Gamma}$, we
have $S_1(\Gamma)=\{0\}$, so that any automorphic integral of weight
$0$ for $\Gamma$ is in fact a $\Gamma$-invariant function, by
Corollary \ref{cor:modradsum:var:Relate_QS_inv_to_PS_van}. The
modified Rademacher sum $\QS{\Gamma,\cp}{}{-1}(\zz)$ is an
automorphic integral for $\Gamma$ by Theorem
\ref{thm:modradsum:var:Basis_QS_p_m}. The normalized Rademacher sum
$\TS{\Gamma,\cp}{}{-1}(\zz)$ differs from $\QS{\Gamma,\cp}{}{-1}(\zz)$
by a constant function by Proposition
\ref{prop:modradsum:conver:Relate_QS_TS_s=1}, and thus
$\TS{\Gamma,\cp}{}{-1}(\zz)$ is $\Gamma$-invariant whenever
$\QS{\Gamma,\cp}{}{-1}(\zz)$ is. We conclude that
$\TS{\Gamma,\cp}{}{-1}(\zz)$ is $\Gamma$-invariant in case $\Gamma$
has genus zero. This completes the proof.
\end{proof}
From the first part of the proof of Theorem
\ref{thm:moon:genus:Genus_zero_iff_TS1_inv} we see that the
normalized Rademacher sum $\TS{\Gamma,\cp}{}{-1}(\zz)$ associated to
$\Gamma$ at a cusp $\cp$ defines an isomorphism ${\sf X}_{\Gamma}\to
\hat{\CC}$ mapping $\cp$ to $\infty\in \hat{\CC}$ in case $\Gamma$
is a group of genus zero.
\begin{thm}\label{thm:moon:genus:genus_zero_implies_TS_p_ind_isom}
Let $\Gamma$ be a group commensurable with $G(\ZZ)$ that has width
one at infinity and let $\cp\in \cP_{\Gamma}$ be a cusp of $\Gamma$.
If $\Gamma$ has genus zero then the normalized Rademacher sum
$\TS{\Gamma,\cp}{}{-1}(\zz)$ associated to $\Gamma$ at the cusp $\cp$
induces an isomorphism ${\sf X}_{\Gamma}\to \hat{\CC}$ mapping $\cp$
to $\infty$.
\end{thm}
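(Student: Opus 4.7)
The plan is to extract the statement from the first half of the proof of Theorem \ref{thm:moon:genus:Genus_zero_iff_TS1_inv}, which already carries out essentially this argument but in service of a slightly different conclusion. Under the genus zero hypothesis, that theorem has already furnished the $\Gamma$-invariance of $\TS{\Gamma,\cp}{}{1}(\zz)$, and so this function descends to a meromorphic function $\phi$ on the open Riemann surface ${\sf Y}_{\Gamma}=\Gamma\backslash\HH$. What remains is to check that $\phi$ extends meromorphically to the compactification ${\sf X}_{\Gamma}$, that its polar divisor is the single point $\cp$ with multiplicity one, and, consequently, that the resulting morphism ${\sf X}_{\Gamma}\to\hat{\CC}$ is an isomorphism sending $\cp$ to $\infty$.

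First I would invoke Theorem \ref{thm:modradsum:conver:Relate_TS_FR} to write, for each cusp $\cq\in\cP_{\Gamma}$,
\begin{gather*}
     \TS{\Gamma,\cp|\cq}{}{1}(\zz)
     =
     \delta_{\Gamma,\cp|\cq}\ee(-\zz)
     +\FR{\Gamma,\cp|\cq}{}{1}(\zz)_{\rm van},
\end{gather*}
where $\FR{\Gamma,\cp|\cq}{}{1}(\zz)_{\rm van}$ is a power series in $\ee(\zz)$ with vanishing constant term, absolutely convergent on $\HH$ by Proposition \ref{prop:radsum:constr:Conver_FR_van}. By the discussion of \S\ref{sec:modradsum:constr}, this identity encodes the Fourier expansion of $\TS{\Gamma,\cp}{}{1}$ at $\cq$ with respect to the chosen scaling coset, and $\delta_{\Gamma,\cp|\cq}$ is $1$ if $\cp=\cq$ and $0$ otherwise. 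Therefore $\TS{\Gamma,\cp}{}{1}$ is holomorphic at every cusp $\cq\neq\cp$, vanishes there, and has a simple pole at $\cp$ with principal part $\ee(-\zz)$.

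It follows that $\phi$ extends to a morphism $\phi\colon{\sf X}_{\Gamma}\to\hat{\CC}$ of compact Riemann surfaces with $\phi(\cp)=\infty$. The preimage $\phi^{-1}(\infty)$ consists of the single point $\cp$, at which the local degree of $\phi$ equals one (the pole being simple), so the total degree of $\phi$ is $1$, and a degree one morphism of compact connected Riemann surfaces is necessarily an isomorphism. There is no substantial obstacle here; the proof is a direct reading of the Fourier-expansion data supplied by Theorem \ref{thm:modradsum:conver:Relate_TS_FR} in conjunction with the invariance supplied by Theorem \ref{thm:moon:genus:Genus_zero_iff_TS1_inv}, and indeed the argument has already appeared, \emph{verbatim}, in the first paragraph of the proof of that theorem.
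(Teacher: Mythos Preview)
Your proposal is correct and matches the paper's own justification, which simply points back to the first paragraph of the proof of Theorem \ref{thm:moon:genus:Genus_zero_iff_TS1_inv}: invoke the Fourier expansion to see that $\TS{\Gamma,\cp}{}{1}$ is holomorphic on $\HH$ with a single simple pole at $\cp$, combine with the $\Gamma$-invariance supplied by Theorem \ref{thm:moon:genus:Genus_zero_iff_TS1_inv} under the genus zero hypothesis, and conclude that the induced map ${\sf X}_{\Gamma}\to\hat{\CC}$ has degree one and is therefore an isomorphism. Your explicit acknowledgment that both directions of Theorem \ref{thm:moon:genus:Genus_zero_iff_TS1_inv} are being used (invariance from one, the isomorphism argument from the other) is if anything slightly clearer than the paper's own presentation.
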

Applying Theorem
\ref{thm:moon:genus:genus_zero_implies_TS_p_ind_isom} with
$\cp=\Gamma\cdot\infty$ the infinite cusp we see that the normalized
Rademacher sum $\TS{\Gamma}{}{-1}(\zz)$ induces an isomorphism ${\sf
X}_{\Gamma}\to\hat{\CC}$ mapping the infinite cusp to $\infty\in
\hat{\CC}$. Applying Proposition
\ref{prop:modradsum:conver:Relate_TS_FR_s=1} with
$\cp=\cq=\Gamma\cdot\infty$ we see that the Fourier expansion of
$\TS{\Gamma}{}{-1}(\zz)$ has vanishing constant term. This shows that
the normalized Rademacher sum $\TS{\Gamma}{}{-1}(\zz)$ associated to
$\Gamma$ at the infinite cusp is the normalized hauptmodul for
$\Gamma$ (cf. \S\ref{sec:conven:groups}) whenever this statement makes sense; i.e. whenever $\Gamma$ has genus zero.
\begin{thm}\label{thm:moon:genus:genus_zero_implies_TS_normzd_haupt}
Let $\Gamma$ be a group commensurable with $G(\ZZ)$ that has width
one at infinity. If $\Gamma$ has genus zero then the normalized
Rademacher sum $\TS{\Gamma}{}{-1}(\zz)$ is the normalized hauptmodul
for $\Gamma$.
\end{thm}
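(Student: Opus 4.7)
The plan is to assemble Theorem \ref{thm:moon:genus:genus_zero_implies_TS_normzd_haupt} directly from the three results that immediately precede it, so there is no new analytic work to do, only a bookkeeping check that the normalized hauptmodul conditions (cf.\ \S\ref{sec:conven:groups}) are satisfied one by one.

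First I would recall what must be verified: a normalized hauptmodul for $\Gamma$ is a $\Gamma$-invariant holomorphic function $f$ on $\HH$ inducing an isomorphism ${\sf X}_\Gamma \xrightarrow{\sim} \hat{\CC}$ that sends $\Gamma\cdot\infty$ to $\infty$, and admitting an expansion of the form $f(\zz)=\ee(-\zz)+\sum_{n\in\ZZp}c(n)\ee(n\zz)$ (i.e.\ with leading term $\ee(-\zz)$ and vanishing constant term). Take $\cp=\Gamma\cdot\infty$ throughout; since $\Gamma$ has width one at infinity, this is a legitimate choice of cusp and scaling coset, and the function under consideration is $\TS{\Gamma}{}{1}(\zz)=\TS{\Gamma,\Gamma\cdot\infty}{}{1}(\zz)$.

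Next I would invoke, in order, the three theorems already established. Theorem \ref{thm:moon:genus:Genus_zero_iff_TS1_inv} (genus zero direction) shows that $\TS{\Gamma}{}{1}(\zz)$ is a $\Gamma$-invariant holomorphic function on $\HH$ with a simple pole at $\Gamma\cdot\infty$ and no other singularities. Theorem \ref{thm:moon:genus:genus_zero_implies_TS_p_ind_isom} with $\cp=\Gamma\cdot\infty$ then upgrades this to the statement that $\TS{\Gamma}{}{1}(\zz)$ induces an isomorphism ${\sf X}_\Gamma\xrightarrow{\sim}\hat{\CC}$ sending $\Gamma\cdot\infty$ to $\infty$. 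Finally, Proposition \ref{prop:modradsum:conver:Relate_TS_FR_s=1} applied with $\cp=\cq=\Gamma\cdot\infty$ (using the standing convention $\Sigma_{\Gamma\cdot\infty}=\Gamma$ when $\Gamma$ has width one at infinity) yields the Fourier expansion
\begin{gather*}
     \TS{\Gamma}{}{1}(\zz)
     =\ee(-\zz)+\FR{\Gamma}{}{1}(\zz)_{\rm van}
     =\ee(-\zz)+\sum_{n\in\ZZp}\fc{\Gamma}{}(1,n)\ee(n\zz),
\end{gather*}
since $\delta_{\Gamma,\Gamma\cdot\infty|\Gamma\cdot\infty}=1$ and, by the definition in \S\ref{sec:radsum:coeff}, the ``van'' part of the Fourier series is precisely the sum over $n\in\ZZp$ with no constant term.

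Combining these three pieces, $\TS{\Gamma}{}{1}(\zz)$ meets every clause of the definition of a normalized hauptmodul for $\Gamma$: it is $\Gamma$-invariant and holomorphic on $\HH$, its expansion at the infinite cusp starts with $\ee(-\zz)$ and has zero constant term, and it induces the required isomorphism ${\sf X}_\Gamma\xrightarrow{\sim}\hat{\CC}$. No step involves any genuine obstacle, because the delicate content, namely $\Gamma$-invariance of the Rademacher sum at weight zero (Theorem \ref{thm:moon:genus:Genus_zero_iff_TS1_inv}) and the vanishing of the constant term under the normalization procedure (Proposition \ref{prop:modradsum:conver:Relate_TS_FR_s=1}), has already been absorbed into those earlier results. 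The only point that merits a brief remark is the convention $\Sigma_{\Gamma\cdot\infty}=\Gamma$, which makes the application of Proposition \ref{prop:modradsum:conver:Relate_TS_FR_s=1} at $\cp=\cq=\Gamma\cdot\infty$ unambiguous and matches the definition of $\TS{\Gamma}{}{1}(\zz)$ used in \S\ref{sec:modradsum:constr}.
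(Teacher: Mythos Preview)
Your proposal is correct and follows essentially the same route as the paper: the argument in the paper is exactly the paragraph preceding the theorem statement, which applies Theorem \ref{thm:moon:genus:genus_zero_implies_TS_p_ind_isom} at $\cp=\Gamma\cdot\infty$ for the isomorphism ${\sf X}_\Gamma\xrightarrow{\sim}\hat{\CC}$, and Proposition \ref{prop:modradsum:conver:Relate_TS_FR_s=1} at $\cp=\cq=\Gamma\cdot\infty$ for the vanishing constant term. Your additional explicit appeal to Theorem \ref{thm:moon:genus:Genus_zero_iff_TS1_inv} for $\Gamma$-invariance is harmless but redundant, since the isomorphism statement of Theorem \ref{thm:moon:genus:genus_zero_implies_TS_p_ind_isom} already entails it.
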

From the second part of the proof of Theorem
\ref{thm:moon:genus:Genus_zero_iff_TS1_inv} we see that if $\Gamma$
has genus zero then not only the first order but in fact all the
higher order normalized Rademacher sums $\TS{\Gamma,\cp}{}{-m}(\zz)$ (for $m>0$)
associated to $\Gamma$ at an arbitrary cusp $\cp\in \cP_{\Gamma}$
are $\Gamma$-invariant.
\begin{thm}\label{thm:moon:genus:Genus_zero_implies_TSm_inv}
Let $\Gamma$ be a subgroup of $G(\QQ)$ that is commensurable with
$G(\ZZ)$ and has width one at infinity and let $\cp\in\cP_{\Gamma}$
be a cusp of $\Gamma$. Let $m\in \ZZp$. If $\Gamma$ has genus zero
then the normalized Rademacher sum $\TS{\Gamma,\cp}{}{-m}(\zz)$ is
$\Gamma$-invariant.
\end{thm}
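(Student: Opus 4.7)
The plan is to adapt the second half of the proof of Theorem \ref{thm:moon:genus:Genus_zero_iff_TS1_inv} to the case of arbitrary positive order $m$, replacing the first-order modified Rademacher sum $\QS{\Gamma,\cp}{}{1}(\zz)$ with $\QS{\Gamma,\cp}{}{m}(\zz)$ and invoking the same vanishing of $S_1(\Gamma)$ that underlies the $m=1$ case.

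First I would observe that Theorem \ref{thm:radsum:var:QS_is_aut_int} applied with $\atp=0$ shows that $\QS{\Gamma,\cp}{}{m}(\zz)$ is an automorphic integral of weight $0$ for $\Gamma$. By Proposition \ref{prop:modradsum:var:acf_on_QS} (specialized to $\atp=0$), the image of $\QS{\Gamma,\cp}{}{m}$ under the associated cusp form map $I_{0}(\Gamma)\to S_{1}(\Gamma)$ is, up to a non-zero scalar, the holomorphic Poincar\'e series $\PS{\Gamma,\cp}{1}{-m}(\zz)$, which lies in $S_{1}(\Gamma)$.

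Next I would use the genus zero hypothesis. Since $S_{1}(\Gamma)$ is isomorphic to the space of holomorphic differentials on the compact Riemann surface ${\sf X}_{\Gamma}$, the assumption that $\Gamma$ has genus zero forces $S_{1}(\Gamma)=\{0\}$. In particular $\PS{\Gamma,\cp}{1}{-m}(\zz)$ vanishes identically. Corollary \ref{cor:modradsum:var:Relate_QS_inv_to_PS_van} then implies that $\QS{\Gamma,\cp}{}{m}(\zz)$ is a modular form of weight $0$ for $\Gamma$, i.e., a $\Gamma$-invariant holomorphic function on $\HH$ that is meromorphic at the cusps.

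Finally I would apply Proposition \ref{prop:modradsum:conver:Relate_QS_TS_s=1}, which asserts that $\TS{\Gamma,\cp}{}{m}(\zz)$ and $\QS{\Gamma,\cp}{}{m}(\zz)$ differ only by the constant $\fc{\Gamma,\cp}{}(m,0)$. Since constant functions are trivially $\Gamma$-invariant, the $\Gamma$-invariance of $\QS{\Gamma,\cp}{}{m}(\zz)$ transfers to $\TS{\Gamma,\cp}{}{m}(\zz)$, yielding the theorem. There is no real obstacle here: the argument is a direct generalization of the $m=1$ case already handled, and the only genuine input beyond the formalism of \S\ref{sec:modradsum} is the vanishing of $S_{1}(\Gamma)$ in the genus zero case, which is standard.
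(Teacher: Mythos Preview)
Your proposal is correct and follows essentially the same approach as the paper: the paper explicitly remarks that this theorem follows from the second half of the proof of Theorem \ref{thm:moon:genus:Genus_zero_iff_TS1_inv}, which is exactly the argument you give (use $S_1(\Gamma)=\{0\}$ for genus zero, apply Corollary \ref{cor:modradsum:var:Relate_QS_inv_to_PS_van} to $\QS{\Gamma,\cp}{}{m}$, then transfer via Proposition \ref{prop:modradsum:conver:Relate_QS_TS_s=1}). Your added detail of citing Proposition \ref{prop:modradsum:var:acf_on_QS} to identify the associated cusp form is fine but not strictly needed, since once $S_1(\Gamma)=\{0\}$ every weight-$0$ automorphic integral is automatically $\Gamma$-invariant.
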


\subsection{Moduli}\label{sec:moon:moduli}

The second condition of the main theorem of \cite{ConMcKSebDiscGpsM}
is that a group $\Gamma$ be of $n\|h$-type (cf.
\S\ref{sec:conven:groups}). In this section we provide a geometric
description of the groups of $n\|h$-type by furnishing a family of
moduli problems for which the corresponding moduli spaces may be
realized as quotients of the form ${\sf Y}_{\Gamma}=\Gamma\backslash
\HH$ where $\Gamma$ is a group of $n\|h$-type.

Consider pairs $(E,C)$ where $E$ is an elliptic curve over $\CC$ and
$C$ is an oriented subgroup of $E$ isomorphic to $S^1$. We call such
a pair a {\em solid torus}. For $(E,C)$ a solid torus, we call $E$
the {\em underlying elliptic curve}, and we call $C$ the {\em
underlying primitive cycle}. Note that for $(E,C)$ a solid torus,
the group $C$ determines a primitive element of the first homology
group of (the smooth real surface underlying) the elliptic curve
$E$.

Given $\zz\in \HH$ let us write $\Lambda_{\zz}$ for the lattice
$\ZZ\zz+\ZZ\subset\CC$. Observe that the lattices $\Lambda_{\zz}$
and $\Lambda_{\zz'}$ coincide if any only if $\zz-\zz'\in \ZZ$, so
the assignment $\zz\mapsto \Lambda_{\zz}$ descends naturally to the
orbit space $B(\ZZ)\backslash\HH$. If we agree to write
$\lBZ\zz\rBZh$ as a shorthand for the $B(\ZZ)$ orbit determined by
$\zz\in \HH$, then we may unambiguously write
$\Lambda_{\lBZ\zz\rBZh}$ for the lattice $\ZZ\zz+\ZZ$. We write
$E_{\lBZ\zz\rBZh}$ for the corresponding elliptic curve
$E_{\lBZ\zz\rBZh}=\CC/\Lambda_{\lBZ\zz\rBZh}=\CC/\ZZ\zz+\ZZ$. Then
the pair $(E_{\lBZ\zz\rBZh},C_{\lBZ\zz\rBZh})$ is a solid torus when
we take the subgroup $C_{\lBZ\zz\rBZh}$ to be the image of $\RR$ in
$E_{\lBZ\zz\rBZh}$ under the composition $\RR\to \CC\to
E_{\lBZ\zz\rBZh}$. Observe that for any solid torus $(E,C)$ there
exists $\zz\in \HH$ and a unique isomorphism of elliptic curves
$E\to E_{\lBZ\zz\rBZh}$ inducing an orientation preserving
isomorphism $C\to C_{\lBZ\zz\rBZh}$. 

For $(E,C)$ a solid torus and $n\in \ZZp$ we write $E[n]$ for the
group of $n$-division points of $E$, and we write $C[n]$ for the
intersection $C\cap E[n]$. Since $C$ is oriented, each group $C[n]$
comes equipped with a distinguished generator; viz., the point
corresponding to $1/n+\LL_{\lBZ\zz\rBZh}$ under an isomorphism $E\to
E_{\lBZ\zz\rBZh}$ inducing an orientation preserving isomorphism
$C\to C_{\lBZ\zz\rBZh}$. If $K$ is a subgroup of $E[n]$ for some $n$ then the quotient $\bar{E}=E/K$ is again an elliptic curve, and the image $\bar{C}$ say, of $C$ under the natural map $E\to \bar{E}$ defines a primitive cycle of $\bar{E}$, so that the pair $(\bar{E},\bar{C})$ is again a solid torus. We call this the {\em quotient of $(E,C)$ by $K$} and denote it $(E,C)/K$.

For $n\in\ZZp$ define an {\em $n$-compatible isogeny of solid tori} $(E',C')\to
(E,C)$ to be an isogeny $E'\to E$ of elliptic curves that maps
$C'[n]$ to a subgroup of $C[n]$. Then a $1$-compatible isogeny of solid tori is
just an isogeny of the underlying elliptic curves. We define an {\em
isogeny of solid tori} $(E',C')\to (E,C)$ to be an isogeny $E'\to E$
of elliptic curves that restricts to an orientation preserving map
$C'\to C$ on the underlying primitive cycles. For $(E,C)$ a solid torus and $n\in \ZZp$, we may now interpret the canonical map
$E\to E/C[n]$ as defining an isogeny $(E,C)\to (E,C)/C[n]$ of solid
tori. We call this isogeny the {\em canonical $n$-fold quotient of
$(E,C)$.}

Observe that the elliptic curve $E/C[n]$ underlying the quotient
$(E,C)/C[n]$ both receives a natural map from $E$ and maps naturally
to $E$, for the quotient $E/E[n]$ is naturally isomorphic to $E$,
and for the map $E/C[n]\to E$ we may take the composition $E/C[n]\to
E/E[n]\xrightarrow{\sim}E$ where the first map is the natural
projection and the second map is the isomorphism just mentioned.
This map $E/C[n]\to E$ defines an isogeny $(E,C)/C[n]\to (E,C)$ of
solid tori; indeed, it restricts to an isomorphism on the underlying
primitive cycles. We call the isogeny $(E,C)/C[n]\to (E,C)$ of solid
tori the {\em canonical $n$-fold cover of $(E,C)$.}

Say an $n$-compatible isogeny of solid tori $(E',C')\to (E,C)$, for $n\in\ZZp$,
is an {\em $n$-compatible isomorphism of solid tori} in case it admits an
inverse $n$-compatible isogeny; i.e. an isogeny $E\to E'$ mapping $C[n]$ to a
subgroup of $C'[n]$ such that the compositions $E'\to E\to E'$ and
$E\to E'\to E$ are the identity maps on $E'$ and $E$, respectively.
Simply put, an $n$-compatible isomorphism of solid tori $(E',C')\to(E,C)$ is an
isomorphism of the underlying elliptic curves $E'\to E$ that induces
an isomorphism of groups $C'[n]\to C[n]$. An {\em isomorphism of
solid tori} $(E',C')\to(E,C)$ is an isomorphism $E'\to E$ of
elliptic curves that restricts to an orientation preserving
isomorphism $C'\to C$ on the underlying primitive cycles.

Suppose $(E,C)$ and $(E',C')$ are solid tori. For $n$ a positive
integer and $e$ an exact divisor of $n$, say $(E,C)$ and $(E',C')$
are {\em $n+e$-related} if there is an $n$-compatible isogeny $(E',C')\to
(E,C)$ that induces $n/e$-compatible isomorphisms $(E',C')/C'[e]\to (E,C)$ and
$(E',C')\to (E,C)/C[e]$. More precisely, we require that the
$n$-compatible isogeny $(E',C')\to (E,C)$ factor through the canonical $e$-fold
maps $(E',C')\to (E',C')/C'[e]$ and $(E,C)/C[e]\to (E,C)$, defining
$n/e$-compatible isomorphisms of the specified type.
\begin{gather}\label{diag:moon:moduli:n+e_rel}
     \xy
     (-20,10)*+{(E',C')}="TL";
     (20,10)*+{(E,C)/C[e]}="TR";
     {\ar@{->}_{n/e}^{\sim} "TL"; "TR"};
     (-20,-10)*+{(E',C')/C'[e]}="BL";
     (20,-10)*+{(E,C)}="BR";
     {\ar@{->}_{n/e}^{\sim} "BL"; "BR"};
     {\ar@{->} "TR"; "BR"};
     {\ar@{->} "TL"; "BL"};
     \endxy
\end{gather}
Then solid tori $(E,C)$ and $(E',C')$ are $n+1$-related just if they
are $n$-compatible isomorphic, which is the case just if there is an
isomorphism of elliptic curves $E'\to E$ mapping $C'[n]$ onto
$C[n]$. In particular, the notion of being $n+1$-related is an
equivalence relation on solid tori. For $S$ a subset of the set of
exact divisors of $n$, say solid tori $(E,C)$ and $(E',C')$ are {\em
$n+S$-related} if $(E,C)$ and $(E',C')$ are {$n+e$-related} for some
$e\in S$.

Recall from \S\ref{sec:conven:div} that the set of exact divisors of $n$, denoted $\Ex(n)$, admits a naturally defined group structure.
\begin{lem}\label{lem:moon:moduli:n+S_reln_equiv}
Let $n\in \ZZp$ and let $S\subset \Ex(n)$. Then the $n+S$-relation
is an equivalence relation on solid tori if and only if $S$ is a
subgroup of $\Ex(n)$.
\end{lem}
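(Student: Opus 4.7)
The plan is to establish the equivalence by analyzing reflexivity, symmetry, and transitivity separately and translating each into a condition on $S \subset \Ex(n)$.

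First I would handle reflexivity. Every solid torus $(E,C)$ is $n{+}1$-related to itself: the identity isogeny factors trivially through $(E,C)/C[1] = (E,C)$ and the induced maps are $1$-isomorphisms (indeed isomorphisms). Conversely, for a solid torus $(E,C)$ with no extra endomorphisms (e.g. with underlying elliptic curve of generic $j$-invariant), an $n{+}e$-self-relation forces an isogeny $E \to E$ of degree $e$ whose kernel equals $C[e]$, which is possible only for $e=1$. Hence reflexivity of $\sim_{n+S}$ is equivalent to $1 \in S$.

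Second, I would handle transitivity, which is the technical heart of the proof. Suppose $(E_1,C_1) \sim_{n+e_1} (E_2,C_2)$ witnessed by an $n$-isogeny $\phi_1:(E_2,C_2)\to(E_1,C_1)$ of degree $e_1$, and $(E_2,C_2) \sim_{n+e_2} (E_3,C_3)$ witnessed by $\phi_2:(E_3,C_3)\to(E_2,C_2)$ of degree $e_2$. The composition $\phi_1\phi_2:(E_3,C_3)\to(E_1,C_1)$ is again an $n$-isogeny, and I would analyze its kernel on the level of $n$-division points of the primitive cycles. Using the identifications $\ker(\phi_1)|_{C_2[n]} = C_2[e_1]$ and $\phi_2^{-1}(C_2[e_1])\cap C_3[n] = C_3[\operatorname{lcm}(e_1,e_2)]$, together with the fact that the image under $\phi_1\phi_2$ of $C_3[\operatorname{lcm}(e_1,e_2)]$ in $C_1[n]$ has order $e_1 e_2/(e_1,e_2)^2$, I would identify $\phi_1\phi_2$ as witnessing an $n{+}f$-relation where $f = e_1e_2/(e_1,e_2)^2$ is exactly the product in $\Ex(n)$. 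The required factorizations through $(E_3,C_3)/C_3[f]$ and $(E_1,C_1)/C_1[f]$ are constructed by commuting $\phi_1$ and $\phi_2$ with the appropriate canonical covers, using that every exact divisor of $n$ is coprime to its cofactor.

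Third, I would verify symmetry. If $\phi:(E',C')\to(E,C)$ witnesses an $n{+}e$-relation, its dual isogeny $\hat\phi:E\to E'$ of degree $e$ satisfies $\hat\phi\phi = [e]_{E'}$ and sends $C[n]$ into $C'[n]$ (checked using the universal cover and the fact that, if $\phi$ corresponds to multiplication by $\lambda$ with $\lambda \Lambda_{\tau'} \subset \Lambda_\tau$ and $\lambda \in \ZZ + n\ZZ\tau$, then $\hat\phi$ corresponds to the dual lattice relation and preserves the $n$-division structure). The two canonical factorizations of $\phi$ then yield the analogous factorizations of $\hat\phi$ through $(E,C)/C[e]$ and $(E',C')/C'[e]$, establishing $(E',C') \sim_{n+e} (E,C)$. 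Since $e \cdot e = e^2/e^2 = 1$ in $\Ex(n)$, so that $e^{-1}=e$, symmetry of $\sim_{n+S}$ imposes no independent condition on $S$.

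Combining the three steps: $\sim_{n+S}$ is an equivalence relation if and only if $1 \in S$ and $S$ is closed under the product $(e_1,e_2)\mapsto e_1e_2/(e_1,e_2)^2$; these two conditions are exactly the definition of $S$ being a subgroup of $\Ex(n)$ (since exponent two makes inversion automatic). The main obstacle will be the transitivity step, specifically the bookkeeping when $(e_1,e_2)\neq 1$: here the naive composition has a kernel on primitive cycles that overlaps itself, and one must carefully quotient out by this overlap to recover the correct exact divisor $f = e_1e_2/(e_1,e_2)^2$ governing the composite relation.
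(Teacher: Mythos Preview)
The paper states this lemma without proof, so there is no argument to compare against directly. Given the surrounding text---the Atkin--Lehner cosets $W_e(n)$ are introduced immediately afterward, and the quotient $(\Gamma_0(n){+}S)\backslash\HH$ is then asserted to parametrize $n{+}S$-equivalence classes---the route the authors presumably have in mind is to identify the $n{+}e$-relation on $B(\ZZ)\backslash\HH$ with the orbit relation for the coset $W_e(n)$, after which the lemma reduces to the standard identity $W_eW_f\equiv W_{ef/(e,f)^2}\pmod{\Gamma_0(n)}$.

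Your direct isogeny approach is a legitimate alternative and mostly sound, but two points need tightening.

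First, in the transitivity step with $d=(e_1,e_2)>1$, the composite $\phi_1\phi_2$ has degree $e_1e_2=d^2f$, not $f$, so it cannot itself witness an $n{+}f$-relation. What actually happens is that $E_3[d]\subset\ker(\phi_1\phi_2)$---using the factorization of $\phi_2$ through the canonical $e_2$-fold cover one checks $\phi_2(E_3[d])=C_2[d]\subset C_2[e_1]$---so that $\phi_1\phi_2=\chi\circ[d]_{E_3}$ for a degree-$f$ isogeny $\chi:E_3\to E_1$, and it is $\chi$ that witnesses the relation. Your closing remark about ``quotienting out the overlap'' points in this direction, but the intermediate assertions about $\phi_2^{-1}(C_2[e_1])\cap C_3[n]$ and the image of $C_3[\operatorname{lcm}(e_1,e_2)]$ are not the quantities that lead there.

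Second, you have argued only one direction of the transitivity equivalence: that closure of $S$ under the $\Ex(n)$-product implies transitivity. For the converse you need that, for a generic solid torus, an $n{+}e$-relation to a fixed second torus determines $e$ uniquely (equivalently, distinct Atkin--Lehner cosets have disjoint generic orbits); then if $e_1,e_2\in S$ and transitivity holds, the composite relation forces $e_1e_2/(e_1,e_2)^2\in S$. This is the same genericity device you used for the reflexivity converse and should be invoked here as well.
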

In light of Lemma \ref{lem:moon:moduli:n+S_reln_equiv}, we say that
solid tori $(E,C)$ and $(E',C')$ are {\em $n+S$-equivalent} if they
are $n+S$-related and $S$ is a subgroup of the group of exact
divisors of $n$.

In a slight modification of the notation of \cite{ConNorMM}, we
write $\Gamma_0(n)+S$ for the group formed by taking the union of
the Hecke congruence group $\Gamma_0(n)$ and the Atkin--Lehner
involutions $W_e(n)$ for $e$ in $S$, when $S$ is a subgroup of
$\Ex(n)$.
\begin{gather}
     W_e(n)=
     \left\{
     \left[
       \begin{array}{cc}
         ae & b \\
         cn & de \\
       \end{array}
     \right]
     \in G(\QQ)
     \mid
     a,b,c,d\in \ZZ,\,
     ade-bcn/e=1
     \right\}
\end{gather}

Let $n$ be a positive integer and let $h$ be a divisor of $n$. Then
an $n$-compatible isomorphism $\phi:(E',C')\to (E,C)$ naturally determines an
$nh$-compatible isogeny $\hat{\phi}:(E',C')\to (E,C)$ which factors through an
$n/h$-compatible isomorphism $\bar{\phi}:(E',C')/C'[h]\to (E,C)/C[h]$ on the
canonical $h$-fold quotients, via the canonical maps $(E',C')\to
(E',C')/C'[h]$ and $(E,C)/C[h]\to (E,C)$, as depicted in
(\ref{diag:moon:moduli:n-isog_to_n/h-isom_on_h-fold_quots}).
\begin{gather}\label{diag:moon:moduli:n-isog_to_n/h-isom_on_h-fold_quots}
     \xy
     (-30,10)*+{(E',C')}="TL";
     (30,10)*+{(E,C)}="TR";
     {\ar@{->}_{nh}^{\hat{\phi}} "TL"; "TR"};
     (-20,-10)*+{(E',C')/C'[h]}="BL";
     (20,-10)*+{(E,C)/C[h]}="BR";
     {\ar@{->}_{n/h}^{\bar{\phi}} "BL"; "BR"};
     {\ar@{->} "BR"; "TR"};
     {\ar@{->} "TL"; "BL"};
     \endxy
\end{gather}
We may take this $nh$-compatible isogeny $\hat{\phi}$ to be the composition
$E'\to E\to E/E[h]\to E$ where the first map is the given map
$\phi$, the second map is the natural projection, and the third map
is the natural isomorphism. Since $\phi$ is an $n$-compatible isomorphism the
composition $E'\to E\to E/E[h]$ factors through the natural
projection $E/C[h]\to E/E[h]$, and the kernel of the induced map
$E'\to E/C[h]$ is just $C'[h]$, and so we arrive at an isomorphism
$E'/C'[h]\to E/C[h]$ of elliptic curves, which defines the required
$n/h$-compatible isomorphism $\bar{\phi}:(E',C')/C'[h]\to (E,C)/C[h]$ of the
corresponding solid tori.
\begin{gather}\label{diag:moon:moduli:n/h-isom_on_h-fold_quotients}
     \xy
     (-40,10)*+{(E',C')}="TLL";
     (-15,10)*+{(E,C)}="TL";
     (10,10)*+{(E,C)/E[h]}="TR";
     (40,10)*+{(E,C)}="TRR";
     {\ar@{->}_{n}^{\phi} "TLL"; "TL"};
     {\ar@{->} "TL"; "TR"};
     {\ar@{->}^{\sim} "TR"; "TRR"};
     (-20,-10)*+{(E',C')/C'[h]}="BL";
     (20,-10)*+{(E,C)/C[h]}="BR";
     {\ar@{->}_{n/h}^{\bar{\phi}} "BL"; "BR"};
     {\ar@{->} "BR"; "TR"};
     {\ar@{->} "BR"; "TRR"};
     {\ar@{->} "TLL"; "BL"};
     \endxy
\end{gather}
More generally, and by the same argument, an $n$-compatible isogeny $(E',C')\to
(E,C)$ naturally defines an $nh$-compatible isogeny $(E',C')\to (E,C)$ which
factors through an $n/h$-compatible isogeny $(E',C')/C'[h]\to (E,C)/C[h]$, via
the canonical maps $(E',C')\to (E',C')/C'[h]$ and $(E,C)/C[h]\to
(E,C)$.

Not every $n/h$-compatible isogeny $\bar{\phi}:(E',C')/C'[h]\to (E,C)/C[h]$
arises in this way from an $n$-compatible isogeny $\phi:(E',C')\to (E,C)$, but,
given an $n/h$-compatible isogeny $\bar{\phi}:(E',C')/C'[h]\to (E,C)/C[h]$, we
always have the $nh$-compatible isogeny $\hat{\phi}:(E',C')\to (E,C)$ obtained
as the composition
\begin{gather}\label{eqn:moon:moduli:induced_isog}
     (E',C') \to(E',C')/C'[h]\to (E,C)/C[h]\to(E,C)
\end{gather}
where the first map is the canonical $h$-fold quotient, and the
third map is the canonical $h$-fold cover. We call this composition
$\hat{\phi}$ the {\em $nh$-compatible isogeny induced from $\bar{\phi}$}.

For $n\in \ZZp$ and $h$ a divisor of $n$ say solid tori $(E,C)$ and
$(E',C')$ are {\em $n|h$-related} if the canonical $h$-fold
quotients $(E,C)/C[h]$ and $(E',C')/C'[h]$ are $n/h$-compatible isomorphic. For
$e$ an exact divisor of $n/h$, say $(E,C)$ and $(E',C')$ are {\em
$n|h+e$-related} if the quotients $(E,C)/C[h]$ and
$({E'},{C'})/C'[h]$ are $n/h+e$-related. For $S$ a subset of
$\Ex(n/h)$ say $(E,C)$ and $(E',C')$ are {\em $n|h+S$-related} if
they are $n|h+e$-related for some $e\in S$.
\begin{lem}\label{lem:moon:moduli:n|h+S_reln_equiv}
Let $n\in \ZZp$, let $h$ be a divisor of $n$, and let $S\subset
\Ex(n/h)$. Then the $n|h+S$-relation is an equivalence relation on
solid tori if and only if $S$ is a subgroup of $\Ex(n/h)$.
\end{lem}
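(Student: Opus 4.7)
The plan is to reduce the claim to Lemma \ref{lem:moon:moduli:n+S_reln_equiv} by recognising the $n|h+S$-relation as the pullback of the $(n/h)+S$-relation along the canonical $h$-fold quotient map $q_h:(E,C)\mapsto (E,C)/C[h]$ on the set of (isomorphism classes of) solid tori. Indeed, by the very definition adopted just before the statement, two solid tori are $n|h+S$-related precisely when their $q_h$-images are $(n/h)+S$-related, so what has to be shown is that the pullback relation $q_h^{\ast}R$ on solid tori is an equivalence relation if and only if $S$ is a subgroup of $\Ex(n/h)$, where $R$ denotes the $(n/h)+S$-relation.

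First I would record the elementary set-theoretic fact that, for any map $f:X\to Y$ and any binary relation $R$ on $Y$, the pulled-back relation $f^{\ast}R$ is an equivalence relation on $X$ if and only if the restriction of $R$ to $f(X)$ is an equivalence relation on $f(X)$; in particular when $f$ is surjective the two conditions ``$f^{\ast}R$ is an equivalence relation on $X$'' and ``$R$ is an equivalence relation on $Y$'' coincide. The argument then splits into two steps: (i) verify that $q_h$ is surjective onto the set of solid tori, and (ii) invoke Lemma \ref{lem:moon:moduli:n+S_reln_equiv} with $n$ replaced by $n/h$.

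For step (i), using the moduli description $B(\ZZ)\backslash\HH$ of solid tori, I would produce an explicit preimage of a given target $(\bar{E},\bar{C})=(E_{\lBZ\bar\zz\rBZh},C_{\lBZ\bar\zz\rBZh})$ by setting $\zz=\bar\zz/h\in\HH$ and considering $(E,C)=(E_{\lBZ\zz\rBZh},C_{\lBZ\zz\rBZh})$. A short computation with the lattice $\Lambda_{\lBZ\zz\rBZh}=\tfrac{1}{h}\ZZ\bar\zz+\ZZ$ identifies $C_{\lBZ\zz\rBZh}[h]$ with $\tfrac{1}{h}\ZZ/\ZZ$ inside $E_{\lBZ\zz\rBZh}$, and then identifies the quotient $\CC/(\Lambda_{\lBZ\zz\rBZh}+\tfrac{1}{h}\ZZ)=\CC/\tfrac{1}{h}(\ZZ\bar\zz+\ZZ)$ with $\bar{E}$ via the rescaling $z\mapsto hz$, which also carries the image of the primitive cycle of $E_{\lBZ\zz\rBZh}$ orientation-preservingly onto $\bar{C}$. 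Step (ii) then finishes both directions simultaneously.

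The only computational content of the proof is this lattice verification in step (i), and it is entirely routine; there is no serious obstacle. The conceptual point is simply that pullback along a surjection transports the equivalence-relation property in both directions, so that all of the arithmetic work has already been carried out in Lemma \ref{lem:moon:moduli:n+S_reln_equiv}.
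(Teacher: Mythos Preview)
The paper states this lemma without proof (as it does for Lemma~\ref{lem:moon:moduli:n+S_reln_equiv}), presumably because it regards the statement as an immediate consequence of the definition: the $n|h+S$-relation is defined to be the pullback of the $(n/h)+S$-relation along the canonical $h$-fold quotient. Your argument makes this reduction explicit and is correct. The set-theoretic observation that for a surjective $f$ the pulled-back relation $f^{\ast}R$ is an equivalence relation if and only if $R$ is, together with your lattice verification of the surjectivity of $q_h$, yields both implications directly from Lemma~\ref{lem:moon:moduli:n+S_reln_equiv}.
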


Let $N$ be a positive integer, set $h=((N,24)$ (cf. \S\ref{sec:conven:div}),  and set $n=N/h$. Then for $S$ a subgroup of the group of exact
divisors of $n/h$ we arrive at the notion of $n\|h+S$-equivalence of
solid tori in the following way. Suppose $(E,C)$ and $(E',C')$ are $\Gamma_0(n|h)+S$-equivalent. Then we have isomorphisms $(E,C)\to (E_{\lBZ\zz\rBZh},C_{\lBZ\zz\rBZh})$ and $(E',C')\to (E_{\lBZ\zz'\rBZh},C_{\lBZ\zz'\rBZh})$ for some $\zz,\zz'\in\HH$. Let $S'$ be the image of $S$ under the natural injection $\Ex(n/h)\to\Ex(N)$ (cf. \S\ref{sec:conven:div}), and define elements $x$, $y$ and $w_{e'}$ for $e'\in S'$ by setting
\begin{gather}
  x=
     \left[
       \begin{array}{cc}
         1 & 1/h \\
         0 & 1 \\
       \end{array}
     \right],\quad
     y=
     \left[
       \begin{array}{cc}
         1 & 0 \\
         n & 1 \\
       \end{array}
     \right],\quad
     w_{e'}
     =
     \left[
       \begin{array}{cc}
         ae' & b \\
         cN & de' \\
       \end{array}
     \right],
\end{gather}
where the $a$, $b$, $c$ and $d$ in the definition of $w_{e'}$ are arbitrary integers for which we have $ade'-bcN/e'=1$. Then there is a sequence $(\gamma_0,\gamma_1,\cdots,\gamma_k)$ with $\gamma_0\in\Gamma_0(N)$ and $\gamma_i\in \{x,y,w_{e'}\mid e'\in S'\}$ for $1\leq i\leq k$ for which we have $\zz'=\gamma_k\cdots\gamma_1\gamma_0\cdot\zz$. We say now that $(E,C)$ and $(E',C')$ are {\em $\Gamma_0(n\|h)+S$-equivalent} in the case that the sum $\sum_{i=1}^k\lambda(\gamma_i)$ vanishes in $\ZZ/h$, where $\lambda(x)=1$, and $\lambda(w_{e'})=0$ for all $e'\in S'$, and $\lambda(y)$ is $-1$ or $1$ according as $\Gamma_0(n|h)+S$ is Fricke or not.
\begin{thm}\label{thm:moon:moduli:nh+S_groups_moduli}
Let $N\in \ZZp$, set $h=((N,24)$ and set $n=N/h$. Let $S$ be a subgroup of $\Ex(n/h)$, and set $\Gamma=\Gamma_0(n\|h)+S$. Then the quotient $\Gamma\backslash\HH$
is in natural correspondence with $n\|h+S$-equivalence classes of solid tori.
\end{thm}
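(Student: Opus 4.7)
The plan is to reduce the theorem to the moduli interpretation of $\Gamma_0(n/h)+S$ recorded in the text, transported via conjugation by $[h]$. First I observe that $T$ lies in $\Gamma_0(N)\subset\Gamma$, so $B(\ZZ)\subset\Gamma$, and hence the projection $\HH\to\Gamma\backslash\HH$ factors through $B(\ZZ)\backslash\HH$, the fine moduli space of solid tori. It therefore suffices to identify the orbits of $B(\ZZ)\backslash\Gamma$ on $B(\ZZ)\backslash\HH$ with the $n\|h+S$-equivalence classes of solid tori.

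Next I unpack the definition of $n\|h+S$-equivalence. By the well-definedness of $\lambda$ as a homomorphism $\Gamma_0(n|h)+S\to\ZZ/h$ (cited from \cite{Fer_Genus0prob}), the sum $\sum_{i=1}^{k}\lambda(\gamma_i)$ depends only on the product $\gamma=\gamma_k\cdots\gamma_0$ and vanishes in $\ZZ/h$ exactly when $\gamma\in\ker\lambda=\Gamma$. Thus, granting the intermediate claim that $(E_{\lBZ\zz\rBZh},C_{\lBZ\zz\rBZh})$ and $(E_{\lBZ\zz'\rBZh},C_{\lBZ\zz'\rBZh})$ are $n|h+S$-equivalent if and only if $\zz'\in(\Gamma_0(n|h)+S)\cdot\zz$, the $\lambda$-condition in the definition of $n\|h+S$-equivalence becomes the condition $\zz'\in\Gamma\cdot\zz$, which is what we need to prove.

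For the intermediate claim I would invoke the prior moduli interpretation of $(\Gamma_0(n/h)+S)\backslash\HH$ as the set of $n/h+S$-equivalence classes of solid tori, transported along the map $[h]\colon\HH\to\HH$, $\zz\mapsto h\zz$. A direct matrix computation, using $h^2\mid N$ (part of the hypothesis $h=((N,24)$), yields $[h]\,\Gamma_0(n|h)\,[1/h]=\Gamma_0(n/h)$, and more generally $[h]\,(\Gamma_0(n|h)+S)\,[1/h]=\Gamma_0(n/h)+S$, so that $[h]$ descends to a bijection of orbit spaces. On the moduli side, scaling $\ZZ\zz+\ZZ$ by $h$ annihilates the cyclic subgroup $C_{\lBZ\zz\rBZh}[h]$ of order $h$ inside the primitive cycle, giving a canonical orientation-preserving identification $(E_{\lBZ\zz\rBZh},C_{\lBZ\zz\rBZh})/C_{\lBZ\zz\rBZh}[h]\cong(E_{\lBZ h\zz\rBZh},C_{\lBZ h\zz\rBZh})$ that realizes the canonical $h$-fold quotient $(E,C)\mapsto(E,C)/C[h]$ on moduli. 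Combining this with the intrinsic definition of the $n|h+S$-relation via $C[h]$-quotients and with Lemma \ref{lem:moon:moduli:n|h+S_reln_equiv}, the intermediate claim follows.

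The main obstacle lies in the treatment of the Atkin--Lehner generators $w_{e'}$ for $e'\in S'$: I must verify that conjugation by $[h]$ carries each $w_{e'}$ to an Atkin--Lehner involution in $\Gamma_0(n/h)+S$ realizing the $n/h+e$-relation on $C[h]$-quotients, and that this is compatible with the orientation of the induced primitive cycle. The hypothesis $h\mid 24$, inherited from $h=((N,24)$, enters essentially here through the compatibility of the natural injection $\Ex(n/h)\hookrightarrow\Ex(N)$ (cf.\ \S\ref{sec:conven:div}) with the Atkin--Lehner normalizer structure recorded in (\ref{eqn:conven:groups:Normalizer_Gamma0(n)}).
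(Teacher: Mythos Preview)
The paper states this theorem without proof: immediately after the statement of Theorem~\ref{thm:moon:moduli:nh+S_groups_moduli} the text moves to \S\ref{sec:moon:exp}, and the prerequisite assertions (Lemmas~\ref{lem:moon:moduli:n+S_reln_equiv} and~\ref{lem:moon:moduli:n|h+S_reln_equiv}, and the claim that $(\Gamma_0(n)+S)\backslash\HH$ parameterizes $n+S$-classes) are likewise stated without argument. So there is no proof in the paper to compare your proposal against; the theorem functions as a summary of the moduli dictionary built up in \S\ref{sec:moon:moduli}.

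Your outline is a correct way to supply the missing argument. The reduction via $\lambda$ is exactly right: since $\lambda$ is defined in the paper as a homomorphism $\Gamma_0(n|h)+S\to\ZZ/h$ vanishing on $\Gamma_0(N)$, the sum $\sum_{i=1}^{k}\lambda(\gamma_i)$ equals $\lambda(\gamma_k\cdots\gamma_1\gamma_0)$, and the ``there exists a sequence with vanishing sum'' clause in the definition of $n\|h+S$-equivalence is equivalent to ``there exists $\gamma\in\ker\lambda=\Gamma$ with $\zz'=\gamma\cdot\zz$''. Your conjugation step $[h](\Gamma_0(n|h)+S)[1/h]=\Gamma_0(n/h)+S$ is a straightforward matrix check, and the identification $(E_{\lBZ\zz\rBZh},C_{\lBZ\zz\rBZh})/C_{\lBZ\zz\rBZh}[h]\cong(E_{\lBZ h\zz\rBZh},C_{\lBZ h\zz\rBZh})$ via rescaling by $h$ is correct and matches the paper's description of the canonical $h$-fold quotient.

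The one genuine gap is that you invoke ``the prior moduli interpretation of $(\Gamma_0(n/h)+S)\backslash\HH$'' as if it were already established, but the paper never proves it either; it is asserted in \S\ref{sec:intro:moduli} and implicit in Lemma~\ref{lem:moon:moduli:n+S_reln_equiv}. To make your argument self-contained you would need to verify directly that $(E_{\lBZ\zz\rBZh},C_{\lBZ\zz\rBZh})$ and $(E_{\lBZ\zz'\rBZh},C_{\lBZ\zz'\rBZh})$ are $m+e$-related (in the sense of diagram~(\ref{diag:moon:moduli:n+e_rel})) if and only if $\zz'\in W_e(m)\cdot\zz$. This is an explicit lattice computation: an $m$-isogeny $E_{\lBZ\zz'\rBZh}\to E_{\lBZ\zz\rBZh}$ corresponds to an inclusion of lattices of index dividing some power of $m$, and the $C[e]$-factorization conditions translate into the shape of the representing matrix in $\GL_2^+(\QQ)$. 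Your remark about the Atkin--Lehner generators $w_{e'}$ and the map $\Ex(n/h)\hookrightarrow\Ex(N)$ is the right place to do this, but it needs to be carried out rather than cited.
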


\subsection{Exponents}\label{sec:moon:exp}

The third of the four conditions of the main theorem of
\cite{ConMcKSebDiscGpsM} states that if $\Gamma$ is of the form
$\Gamma_0(n\|h)+S$ then the quotient group $\Gamma/\Gamma_0(nh)$
should be a group of exponent two. In the present section we
investigate the relationship between this exponent two condition and
properties of scaling cosets.
\begin{lem}\label{lem:moon:exp:scalg_coset_sq_implies_cusp_commute}
Suppose that $\Delta$ is a group commensurable with $G(\ZZ)$ that
has width one at infinity and let $\co\in \cP_{\Delta}$ be a cusp of
$\Delta$. Let $\atp,m\in \ZZ$ such that $\atp\leq 0$ and $m<0$. If $\Sigma_{\co}$ is a scaling coset for $\Delta$ at $\co$
with the property that $\Sigma_{\co}^2=\Delta$ then
\begin{gather}
     \QS{\Delta,\co}{\atp}{m}(\zz)
     =
     \QS{\Delta|\co}{\atp}{m}(\zz),
\end{gather}
and $\Sigma_{\co}$ is contained in the normalizer of $\Delta$.
\end{lem}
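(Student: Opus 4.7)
The plan is to unravel the hypothesis $\Sigma_{\co}^2 = \Delta$ at the level of a single representative $\sigma$, derive the algebraic conclusions of (ii) first, and then deduce (i) by showing that the defining sums of $\QS{\Delta,\co}{\atp}{m}(\zz)$ and $\QS{\Delta|\co}{\atp}{m}(\zz)$ range over identical sets of cosets of $B(\ZZ)$. Write $\Sigma_{\co} = \Delta\sigma$ for some $\sigma\in G(\QQ)$. Then $\Sigma_{\co}^2 = \Delta\sigma\Delta\sigma$, and the hypothesis $\Sigma_{\co}^2=\Delta$ translates into the inclusion $\sigma\Delta\sigma \subseteq \Delta$ together with the requirement that the identity lie in $\Delta\sigma\Delta\sigma$.

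From $\sigma\Delta\sigma \subseteq \Delta$, applied to the identity element of $\Delta$, I would first extract $\sigma^{2}\in\Delta$. Writing $\sigma^{2}=\delta_{0}\in\Delta$ and substituting, I get $\sigma\delta\sigma^{-1}\in\Delta$ for every $\delta\in\Delta$, so that $\sigma\Delta\sigma^{-1}\subseteq\Delta$. To upgrade this inclusion to an equality I would iterate the inclusion to obtain $\sigma^{2}\Delta\sigma^{-2}\subseteq\sigma\Delta\sigma^{-1}\subseteq\Delta$, and use that conjugation by $\sigma^{2}\in\Delta$ is an inner automorphism of $\Delta$ to conclude $\sigma\Delta\sigma^{-1}=\Delta$. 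Hence $\sigma$ normalizes $\Delta$, and since any other element of $\Sigma_{\co}=\Delta\sigma$ has the form $\delta\sigma$ and conjugation by $\delta$ preserves $\Delta$, every element of $\Sigma_{\co}$ normalizes $\Delta$. This proves (ii).

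For (i), the two facts $\sigma^{2}\in\Delta$ and $\sigma\Delta=\Delta\sigma$ together imply the set equality $\Sigma_{\co}^{-1}=\Sigma_{\co}$: one computes $\Sigma_{\co}^{-1}=\sigma^{-1}\Delta=\Delta\sigma^{-1}$ (using normalization), and $\Delta\sigma^{-1}=\Delta\sigma$ precisely because $\sigma(\sigma^{-1})^{-1}=\sigma^{2}\in\Delta$. Now I would invoke the conventions of \S\ref{sec:conven:scaling} — since $\Delta$ has width one at infinity, $\Sigma_{\Delta\cdot\infty}=\Delta$ — to rewrite the two Rademacher sums in terms of sets of right cosets of $B(\ZZ)$: the sum $\QS{\Delta,\co}{\atp}{m}(\zz)$ is the modified Rademacher sum attached to $\lBZ\Sigma_{\co}^{-1}\Delta\rBZh=\lBZ\Sigma_{\co}^{-1}\rBZh$, while $\QS{\Delta|\co}{\atp}{m}(\zz)$ is the one attached to $\lBZ\Delta\Sigma_{\co}\rBZh=\lBZ\Sigma_{\co}\rBZh$. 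The set equality $\Sigma_{\co}^{-1}=\Sigma_{\co}$ yields $\lBZ\Sigma_{\co}^{-1}\rBZh=\lBZ\Sigma_{\co}\rBZh$, and the two modified Rademacher sums are then identical summand by summand.

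The only subtle step is the upgrade from the inclusion $\sigma\Delta\sigma^{-1}\subseteq\Delta$ to an equality, where some argument beyond pure algebra — here the automorphism trick using $\sigma^{2}\in\Delta$ — is needed; once this is in hand, both parts of the lemma follow by bookkeeping with the scaling-coset conventions and no analytic input is required.
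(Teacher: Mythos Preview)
Your proof is correct and follows essentially the same route as the paper: both pick $\sigma\in\Sigma_{\co}$, deduce from $\Sigma_{\co}^2=\Delta$ that $\sigma^2\in\Delta$ and that $\sigma$ normalizes $\Delta$, and then conclude $\Sigma_{\co}^{-1}=\Sigma_{\co}$ so that the two modified Rademacher sums are built from the same set $\lBZ\Sigma_{\co}^{-1}\rBZh=\lBZ\Sigma_{\co}\rBZh$. If anything, you are more careful at the one point where the paper is terse: the paper writes ``$\Sigma_{\co}^2=\Delta$ implies $\sigma\Delta\sigma=\Delta$'' as an equality straightaway, whereas you correctly note that the set identity $\Delta\sigma\Delta\sigma=\Delta$ only gives the inclusion $\sigma\Delta\sigma\subseteq\Delta$ a priori, and then upgrade it using $\sigma^2\in\Delta$ and the iteration $\sigma^2\Delta\sigma^{-2}=\Delta\subseteq\sigma\Delta\sigma^{-1}\subseteq\Delta$.
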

\begin{proof}
Set $U=\lBZ\Sigma_{\co}^{-1}\rBZh$ and set
$U'=\lBZ\Sigma_{\co}\rBZh$. Then, under the assumption that $\Delta$
has width one at infinity, we have
$\QS{\Delta,\co}{\atp}{m}(\zz)=\QS{U}{\atp}{m}(\zz)$ and
$\QS{\Delta|\co}{\atp}{m}(\zz)=\QS{U'}{\atp}{m}(\zz)$. Let
$\sigma\in\Sigma_{\co}$. Then $\Sigma_{\co}^2=\Delta$ implies
$\sigma\Delta\sigma=\Delta$, which is equivalent to the identity
$\Delta\sigma=\sigma^{-1}\Delta$. We conclude that
$\Sigma_{\co}^{-1}=\Sigma_{\co}$, so that $U=U'$, so that
$\QS{\Delta,\co}{\atp}{m}(\zz)=\QS{\Delta|\co}{\atp}{m}(\zz)$. The
identity $\sigma\Delta\sigma=\Delta$ also implies $\sigma^2\in
\Delta$, so that $\sigma\Delta\sigma=\sigma^{-1}\Delta\sigma$. We
conclude that $\sigma$ normalizes $\Delta$, so that
$\Sigma_{\co}=\Delta\sigma\in\Delta\backslash N(\Delta)$. This
completes the proof.
\end{proof}
Lemma \ref{lem:moon:exp:scalg_coset_sq_implies_cusp_commute} states
that if a scaling coset at a cusp $\co$ of a group $\Delta$ can be
chosen so that its square is the trivial coset then we have a kind
of commutativity for the Rademacher sums associated to $\Delta$;
viz. the expansion at infinity of the modified Rademacher sum
associated to $\Delta$ and the cusp $\co$ coincides with the
expansion at $\co$ of the modified Rademacher sum associated to
$\Delta$ and the cusp at infinity.

We also have the following converse to Lemma
\ref{lem:moon:exp:scalg_coset_sq_implies_cusp_commute}.
\begin{prop}\label{prop:moon:exp:cusp_commute_implies_scalg_coset_sq}
Suppose that $\Delta$ is a group commensurable with $G(\ZZ)$ that
has width one at infinity and let $\co\in \cP_{\Delta}$ be a cusp of
$\Delta$. Let $\atp,m\in\ZZ$ such that $\atp\leq 0$ and $m<0$. If $\Sigma_{\co}$ is a scaling coset for $\Delta$ at $\co$
that is contained in the normalizer of $\Delta$ and
\begin{gather}
     \QS{\Delta,\co}{\atp}{m}(\zz)
     =
     \QS{\Delta|\co}{\atp}{m}(\zz)
\end{gather}
then $\Sigma_{\co}^2=\Delta$.
\end{prop}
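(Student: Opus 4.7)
I plan to prove the statement by reducing it to the coset equality $\Sigma_\co^{-1} = \Sigma_\co$ in $\Delta\backslash G(\QQ)$; writing $\Sigma_\co = \Delta\sigma$ with $\sigma \in N(\Delta)$, this equality is equivalent to $\sigma^2 \in \Delta$, and hence to $\Sigma_\co^2 = \Delta\sigma^2 = \Delta$. First I put the hypothesis in a usable form. Since $\sigma^{-1} \in N(\Delta)$, the set $\Sigma_\co^{-1} = \sigma^{-1}\Delta = \Delta\sigma^{-1}$ is again a right $\Delta$-coset, and is a scaling coset for $\Delta$ at the cusp $\co' := \Delta\sigma^{-1}\cdot\infty$. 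Applying the identity (\ref{eqn:modradsum:constr:QS_twocusps_to_onecusp}) to the right-hand side of the hypothesis with the natural choice $\Sigma_{\co'} = \Delta\sigma^{-1}$ rewrites it as
\[
  \QS{\Delta,\co}{\atp}{m}(\zz) = \QS{\Delta,\co'}{\atp}{m}(\zz),
\]
where on the left we use the original scaling coset $\Sigma_\co = \Delta\sigma$ and on the right we use $\Sigma_{\co'} = \Delta\sigma^{-1}$.

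Next I extract from this the cusp equality $\co = \co'$: by Theorem \ref{thm:radsum:var:QS_is_aut_int} and Proposition \ref{prop:modradsum:conver:Relate_QS_FR_s=1} each side is an automorphic integral of weight $2\atp$ for $\Delta$ whose only pole lies at $\co$ (respectively $\co'$), with nonzero principal part $\ee(-m\zz)$ in the Fourier expansion at that cusp, so equality forces $\Delta\sigma\cdot\infty = \Delta\sigma^{-1}\cdot\infty$. Granted $\co = \co'$, both $\Delta\sigma$ and $\Delta\sigma^{-1}$ are scaling cosets for $\Delta$ at the common cusp $\co$ and so differ by a right shift, $\Delta\sigma^{-1} = \Delta\sigma T^\alpha$ for some $\alpha \in \QQ$ determined modulo $\ZZ$. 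The case $\alpha \in \ZZ$ already gives $\Delta\sigma = \Delta\sigma^{-1}$; the remaining task is to rule out $\alpha \notin \ZZ$. To that end, the term-by-term identities
\[
  \TS{\lBZ T^{-\alpha}\chi\rBZh}{\atp}{m}(\zz,\spp) = \ee(m\alpha)\,\TS{\lBZ\chi\rBZh}{\atp}{m}(\zz,\spp),\qquad \TSa{\lBZ T^{-\alpha}\chi\rBZh}{\atp}{m}(\spp) = \ee(m\alpha)\,\TSa{\lBZ\chi\rBZh}{\atp}{m}(\spp),
\]
which follow from the definitions (\ref{eqn:modradsum:constr:Defn_TS_chi}) and (\ref{eqn:modradsum:constr:Defn_TSa_chi}) together with the invariance of $\chi\cdot\zz - \chi\cdot\infty$ and $\jac(\chi,\zz)$ under the shift $\chi \mapsto T^{-\alpha}\chi$, aggregate to the scalar relation $\QS{\Delta,\co'}{\atp}{m}(\zz) = \ee(m\alpha)\,\QS{\Delta,\co}{\atp}{m}(\zz)$; combined with the hypothesis this forces $\ee(m\alpha) = 1$, i.e., $m\alpha \in \ZZ$.

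The main obstacle is to promote $m\alpha \in \ZZ$ to $\alpha \in \ZZ$, which is automatic only for $m = 1$. For general $m$ my plan is to exploit the reciprocal structure of the defining relation: inverting $\Delta\sigma^{-1} = \Delta\sigma T^\alpha$ as subsets of $G(\QQ)$ and using $\sigma \in N(\Delta)$ gives $\Delta\sigma = T^{-\alpha}\Delta\sigma T^\alpha$, so that $T^\alpha$ normalizes the coset $\Delta\sigma$ by conjugation; right-cancelling $\sigma$ and again using $\sigma \in N(\Delta)$ yields $T^\alpha \in N(\Delta)\cap B(\QQ)$. Combining this with the width-one condition $\Delta_\infty = B(\ZZ)$ and the structure of the stabilizer $\Delta_{\sigma\cdot\infty} = \sigma B(\ZZ)\sigma^{-1}$ of $\co$, a careful accounting of how $T^\alpha$ can simultaneously normalize $\Delta$ and intertwine with $\sigma$ should force $T^\alpha \in B(\ZZ)$ and hence $\alpha \in \ZZ$, completing the argument.
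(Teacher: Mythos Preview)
Your approach coincides with the paper's: rewrite $\QS{\Delta|\co}{\atp}{m}$ as $\QS{\Delta,\co'}{\atp}{m}$ with $\co'=\Delta\sigma^{-1}\cdot\infty$ and scaling coset $\Delta\sigma^{-1}$, use the pole to get $\co=\co'$, and conclude $\sigma^2\in\Delta$. The paper simply writes ``This in turn implies $\sigma^2\in\Delta$'' with no further justification; your analysis is strictly more careful, and you correctly isolate the remaining step as showing $\alpha\in\ZZ$ where $\Delta\sigma^{-1}=\Delta\sigma T^{\alpha}$. Your scalar computation $\QS{U'}{\atp}{m}=\ee(m\alpha)\QS{U}{\atp}{m}$ is right and yields $m\alpha\in\ZZ$, which already finishes the case $m=1$ (the only case invoked in Theorem~\ref{thm:moon:monster:char_thm}).

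The gap is in your final paragraph. Your derivation that $T^{\alpha}\in N(\Delta)$ is correct (from $\sigma^{-1}=\delta\sigma T^{\alpha}$ one gets $\sigma^{-2}=(\sigma^{-1}\delta\sigma)T^{\alpha}\in\Delta T^{\alpha}$, hence $T^{\alpha}=(\delta')^{-1}\sigma^{-2}\in N(\Delta)$), but the inference ``$T^{\alpha}\in N(\Delta)\cap B(\QQ)$ together with $\Delta_{\infty}=B(\ZZ)$ forces $\alpha\in\ZZ$'' is false in general. Proposition~\ref{prop:conven:groups:Normalizer_Gamma0(n)} shows that $N(\Gamma_0(n))_{\infty}=\langle T^{1/h}\rangle$ with $h=((n,24)$, so for instance $T^{1/2}\in N(\Gamma_0(4))$ while $T^{1/2}\notin B(\ZZ)$. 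Thus the ``careful accounting'' you propose cannot work without further input: knowing only $m\alpha\in\ZZ$ and $T^{\alpha}\in N(\Delta)_{\infty}$ gives $\alpha\in(1/m)\ZZ\cap(1/h')\ZZ$, which is $\ZZ$ only when $\gcd(m,h')=1$. So for general $m>1$ neither your argument nor the paper's terse assertion closes the gap; the statement is secure for $m=1$, and that is exactly what the paper uses downstream.
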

\begin{proof}
As in the proof of Lemma
\ref{lem:moon:exp:scalg_coset_sq_implies_cusp_commute} we set
$U=\lBZ\Sigma_{\co}^{-1}\rBZh$ and $U'=\lBZ\Sigma_{\co}\rBZh$. Then
since $\Delta$ is assumed to have width one at infinity we have
$\QS{\Delta,\co}{\atp}{m}(\zz)=\QS{U}{\atp}{m}(\zz)$ and
$\QS{\Delta|\co}{\atp}{m}(\zz)=\QS{U'}{\atp}{m}(\zz)$. Let
$\sigma\in\Sigma_{\co}$. Then we have $U=\lBZ\sigma^{-1}\Delta\rBZh$
and $U'=\lBZ\Delta\sigma\rBZh=\lBZ\sigma\Delta\rBZh$. Set
$\co'=\Delta\sigma^{-1}\cdot\infty$. Then
$\QS{U'}{\atp}{m}(\zz)=\QS{\Delta,\co'}{\atp}{m}(\zz)$, so the
coincidence $\QS{U}{\atp}{m}(\zz)=\QS{U'}{\atp}{m}(\zz)$ implies
that $\co=\co'$. This in turn implies $\sigma^2\in \Delta$, so that
$\Sigma_{\co}^2=\Delta$, as we required to show.
\end{proof}

\subsection{Cusps}\label{sec:moon:cusp}

The forth condition of the main theorem of \cite{ConMcKSebDiscGpsM}
is the following. Supposing that a group $\Gamma$ is of the form
$\Gamma=\Gamma_0(n\|h)+S$ for some subgroup $S$ of the group of
exact divisors of $n/h$, for each $\cpr\in \hat{\QQ}$ there should
exist an element $\tilde{\sigma}_{\cpr}\in G(\RR)$ such that
\begin{gather}\label{eqn:moon:cusp:CMS_cusp_cond}
     \infty
     =
     \tilde{\sigma}_{\cpr}
     \cdot
     \cpr
     ,
     \quad
     (\tilde{\sigma}_{\cpr}\Gamma\tilde{\sigma}_{\cpr}^{-1})_{\infty}
     =B(\ZZ)
     ,
     \quad
     \tilde{\sigma}_{\cpr}\Gamma\tilde{\sigma}_{\cpr}^{-1}
     \supset
     \Gamma_0(nh).
\end{gather}
Observe that if we set $\sigma_{\cpr}=\tilde{\sigma}_{\cpr}^{-1}$,
then the conditions of (\ref{eqn:moon:cusp:CMS_cusp_cond}) translate
into
\begin{gather}\label{eqn:moon:cusp:CMS_cusp_cond_inv}
     \sigma_{\cpr}
     \cdot
     \infty
     =
     \cpr
     ,
     \quad
     (\sigma_{\cpr}^{-1}\Gamma\sigma_{\cpr})_{\infty}
     =B(\ZZ)
     ,
     \quad
     \sigma_{\cpr}^{-1}\Gamma\sigma_{\cpr}
     \supset
     \Gamma_0(nh),
\end{gather}
the first two of which are just the conditions
(\ref{eqn:conven:scaling:scaling_elt_conds}) of Lemma
\ref{lem:conven:scaling:scaling_elts_exist} except that we allow
$\sigma_{\cpr}$ to lie in $G(\RR)$ in
(\ref{eqn:moon:cusp:CMS_cusp_cond_inv}) but insist that
$\sigma_{\cpr}$ belong to $G(\QQ)$ in
(\ref{eqn:conven:scaling:scaling_elt_conds}). Actually, an element
$\sigma_{\cpr}\in G(\RR)$ satisfying
(\ref{eqn:moon:cusp:CMS_cusp_cond_inv}) must lie in $G(\QQ)$, as the
following result demonstrates.
\begin{lem}\label{lem:moon:cusp:sigma_in_G(R)_is_in_G(Q)}
Let $\Delta$ and $\Gamma$ be groups commensurable with $G(\ZZ)$ and
suppose that $\Delta$ has width one at infinity. Let $\cpr\in
\hat{\QQ}$ and suppose that $\sigma\in G(\RR)$ satisfies
$\sigma\cdot\infty=\cpr$ and $\sigma^{-1}\Gamma\sigma\supset
\Delta$. Then $\sigma\in G(\QQ)$.
\end{lem}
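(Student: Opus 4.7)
The plan is to exploit the hypothesis $\Delta \subset \sigma^{-1}\Gamma\sigma$ by conjugating specific elements of $\Delta$ by $\sigma$ and extracting rationality constraints from the requirement that the results lie in $\Gamma$, which by commensurability with $G(\ZZ)$ is a subgroup of $G(\QQ)$ (the commensurator of $G(\ZZ)$ in $G(\RR)$). Since $\Delta$ has width one at infinity, $T = \sqnom{1\;1}{0\;1}$ lies in $\Delta$. Writing $\sigma = \sqnom{a\;b}{c\;d}$ as a matrix in $\SL_2(\RR)$, a direct computation gives
\begin{gather*}
     \sigma T \sigma^{-1}
     =
     \left[
       \begin{array}{cc}
         1-ac & a^2 \\
         -c^2 & 1+ac \\
       \end{array}
     \right].
\end{gather*}
Since $\sigma T \sigma^{-1}$ lies in $G(\QQ)$, there is a nonzero real $\lambda$ for which $\lambda$ times the right-hand side is a matrix in $\GL_2^+(\QQ)$; comparing traces gives $2\lambda \in \QQ$, hence $\lambda \in \QQ$, and comparing the remaining entries then yields $a^2,\,c^2,\,ac \in \QQ$.

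Next I would split on the value of $\cpr$. If $\cpr = \infty$ then $c = 0$ and $\sigma = \sqnom{a\;b}{0\;1/a}$, so it will suffice to establish $ab \in \QQ$ (for then $a\sigma = \sqnom{a^2\;ab}{0\;1}$ represents $\sigma$ in $\GL_2^+(\QQ)$). If $\cpr \in \QQ$ then $c \neq 0$ and $a/c = \cpr \in \QQ$, and it will suffice to establish $b/c,\,d/c \in \QQ$, for then $(1/c)\sigma$ represents $\sigma$ in $\GL_2^+(\QQ)$. In both cases the additional rationality will come from conjugating a second element $\gamma = \sqnom{p\;q}{r\;s} \in \Delta \cap G(\ZZ)$ chosen to have $r \neq 0$ and trace $p+s \neq 0$. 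Such a $\gamma$ exists because $\Delta \cap G(\ZZ)$ has finite index in $G(\ZZ)$ and therefore contains hyperbolic elements of large trace with nontrivial lower-left entry. Since the trace of $\sigma\gamma\sigma^{-1}$ as a matrix in $\SL_2(\RR)$ is $p+s \in \QQ^{\times}$, the same trace argument as above shows that $\sigma\gamma\sigma^{-1}$ itself has rational entries.

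Reading off the appropriate entries then finishes the proof. In the case $c=0$, the $(1,1)$-entry of $\sigma\gamma\sigma^{-1}$ equals $p + br/a$, yielding $b/a \in \QQ$ and hence $ab = (b/a)a^2 \in \QQ$. In the case $c \neq 0$, the $(1,1)$- and $(2,1)$-entries yield respectively $bX,\,dX \in \QQ$, where $X := c(p-s) + dr$, so $b/d \in \QQ$ as soon as $X$ and $d$ are both nonzero; combined with $a/c = \cpr$ and $ad - bc = 1$ this yields $cd \in \QQ$ and thence $d/c,\,b/c \in \QQ$. The non-vanishing of $X$ can be arranged by replacing $\gamma$ with $T^n\gamma$ for suitable $n \in \ZZ$ (the transformation shifts $X$ by $cr \neq 0$), and the degenerate possibility $d = 0$ is handled separately using $bc = -1$ and $c^2 \in \QQ$. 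The principal obstacle is essentially the bookkeeping required to navigate the case-split ($\cpr = \infty$ versus $\cpr \in \QQ$, and $d = 0$ versus $d \neq 0$) and to arrange that a single $\gamma$ satisfies all the non-degeneracy conditions simultaneously; once these combinatorial points are in hand the rationality conclusions follow by elementary algebraic manipulation of the two matrix identities.
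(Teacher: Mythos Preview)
Your argument is correct, but it takes a genuinely different route from the paper's proof. The paper does not compute matrix entries directly. Instead it first invokes the earlier existence result for scaling elements (Lemma~\ref{lem:conven:scaling:scaling_elts_exist}) to produce $\sigma'\in G(\QQ)$ with $\sigma'\cdot\infty=\cpr$ and $((\sigma')^{-1}\Gamma\sigma')_{\infty}=B(\ZZ)$; comparing $\sigma'$ with $\sigma[1/n]$ (where $(\sigma^{-1}\Gamma\sigma)_\infty=\langle T^{1/n}\rangle$) forces $\sigma=\sigma''T^{\alpha}$ for some $\sigma''\in G(\QQ)$ and $\alpha\in\RR$, reducing the entire problem to showing a single real number $\alpha$ is rational. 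That last step is done with one conjugation: pick $\gamma\in\Gamma$ with $\sigma^{-1}\gamma\sigma\in\Delta^{\times}$, and compare the rational points $\sigma^{-1}\gamma\sigma\cdot\infty$ and $(\sigma'')^{-1}\gamma\sigma''\cdot\infty$, which differ by $\alpha$.

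What each approach buys: the paper's argument is short and conceptual, with no case-splitting, but it leans on the scaling-element machinery already set up in \S\ref{sec:conven:scaling}. Your argument is entirely self-contained---it needs nothing beyond the fact that $\Gamma\subset G(\QQ)$ and that $\Delta\cap G(\ZZ)$ has finite index in $G(\ZZ)$---at the cost of the $c=0$ versus $c\neq 0$ and $d=0$ versus $d\neq 0$ bookkeeping. Your trace trick (using $\operatorname{tr}\in\QQ^{\times}$ to force the projective scalar $\lambda$ into $\QQ$) is the key device that makes the direct computation go through, and it has no analogue in the paper's proof.
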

\begin{proof}
Suppose that $\sigma\in G(\RR)$ satisfies the hypotheses of the
lemma. Then $(\sigma^{-1}\Gamma\sigma)_{\infty}$ is an infinite
cyclic group containing $\Delta_{\infty}$ and
$\Delta_{\infty}=B(\ZZ)$ since $\Delta$ has width one at infinity.
It must be then that $(\sigma^{-1}\Gamma\sigma)_{\infty}$ is
generated by $T^{1/n}$ for some $n\in \ZZp$, so that
$([n]\sigma^{-1}\Gamma\sigma[1/n])_{\infty}=B(\ZZ)$. According to
Lemma \ref{lem:conven:scaling:scaling_elts_exist} there exists
$\sigma'\in G(\QQ)$ such that $\sigma'\cdot\infty=\cpr$ and
$((\sigma')^{-1}\Gamma\sigma')_{\infty}=B(\ZZ)$. Now
$[n]\sigma^{-1}\sigma'$ fixes $\infty$ and so lies in $B(\RR)$.
Since $\sigma'$ and $\sigma[1/n]$ both conjugate $\Gamma$ to a group
with width one at infinity we must in fact have
$[n]\sigma^{-1}\sigma'\in B_u(\RR)$, where $B_u(\RR)$ consists of
all the elements $T^{\alpha}$ (cf.
(\ref{eqn:conven:isomhyp:Defn_T^alpha})) with $\alpha\in \RR$. We
conclude that $\sigma=\sigma''T^{\alpha}$ for some $\sigma''\in
G(\QQ)$ and $\alpha\in \RR$. Let $\gamma\in \Gamma$ such that
$\sigma^{-1}\gamma\sigma\in \Delta^{\times}$, so that
$\sigma^{-1}\gamma\sigma\cdot\infty=q$ for some $q\in \QQ$. Then
$q=q''-\alpha$ for $q''=(\sigma'')^{-1}\gamma\sigma''\cdot\infty$.
Since both $q$ and $q''$ lie in $\QQ$ we conclude that $\alpha$ also
lies in $\QQ$. Then the identity $\sigma=\sigma''T^{\alpha}$ implies
that $\sigma\in G(\QQ)$, as we required to show.
\end{proof}
Recall from \S\ref{sec:conven:scaling} that for $\Gamma$ a group
commensurable with $G(\ZZ)$, for $\cp\in \cP_{\Gamma}$ a cusp of
$\Gamma$, and for $\Sigma_{\cp}\in\Gamma\backslash G(\QQ)$ a scaling
coset for $\Gamma$ at $\cp$, we write $\Gamma^{\cp}$ as a shorthand
for the group $\Sigma_{\cp}^{-1}\Sigma_{\cp}$. In light of Lemma
\ref{lem:moon:cusp:sigma_in_G(R)_is_in_G(Q)} we may reformulate the
fourth condition of the main theorem of \cite{ConMcKSebDiscGpsM} as
follows.
\begin{lem}\label{lem:moon:cusp:reform_CMS_cusp_cond_elt_to_coset}
Let $\Gamma$ be a group of $n\|h$-type, so that
$\Gamma=\Gamma_0(n\|h)+S$  for some positive integers $n$ and $h$,
and some subgroup $S$ of the group of exact divisors of $n/h$. Then
the fourth condition of the main theorem of \cite{ConMcKSebDiscGpsM}
is satisfied if and only if for each cusp $\cp\in \cP_{\Gamma}$ of
$\Gamma$ there exists a scaling coset
$\Sigma_{\cp}\in\Gamma\backslash G(\QQ)$ for $\Gamma$ at $\cp$ such
that $\Gamma^{\cp}$ contains $\Gamma_0(nh)$.
\end{lem}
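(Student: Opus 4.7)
The plan is to unravel the definitions and exploit the bijection between scaling elements for cusp representatives and scaling cosets for cusps established in \S\ref{sec:conven:scaling}. The key observation is that if $\sigma_{\cpr}\in G(\QQ)$ is any element with $\sigma_{\cpr}\cdot\infty=\cpr$ and $\Sigma_{\cp}=\Gamma\sigma_{\cpr}$ is the associated right coset, then $\Sigma_{\cp}^{-1}\Sigma_{\cp}=\sigma_{\cpr}^{-1}\Gamma\sigma_{\cpr}$, since $\Sigma_{\cp}^{-1}=\sigma_{\cpr}^{-1}\Gamma$. Thus the three conditions (\ref{eqn:moon:cusp:CMS_cusp_cond_inv}) that witness the fourth condition of \cite{ConMcKSebDiscGpsM} at the representative $\cpr\in\cp$ translate directly into the statement that $\Sigma_{\cp}$ is a scaling coset for $\Gamma$ at $\cp$ (first two conditions, in view of Lemma \ref{lem:conven:scaling:scaling_cosets_exist}) together with the containment $\Gamma^{\cp}\supset\Gamma_0(nh)$ (third condition).

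First I would invoke Lemma \ref{lem:moon:cusp:sigma_in_G(R)_is_in_G(Q)} to note that any element $\sigma_{\cpr}\in G(\RR)$ satisfying the conditions (\ref{eqn:moon:cusp:CMS_cusp_cond_inv})---in particular the condition $\sigma_{\cpr}^{-1}\Gamma\sigma_{\cpr}\supset\Gamma_0(nh)$, which implies the hypothesis $(\sigma_{\cpr}^{-1}\Gamma\sigma_{\cpr})\supset\Delta$ of that lemma for $\Delta=\Gamma_0(nh)$---must actually lie in $G(\QQ)$. This lets us work entirely with scaling cosets in $\Gamma\backslash G(\QQ)$. Next I would observe that the reformulation of the condition of \cite{ConMcKSebDiscGpsM} recorded in (\ref{eqn:moon:cusp:CMS_cusp_cond}), phrased in terms of $\sigma\Gamma\sigma^{-1}\supset\Gamma_0(nh)$, is equivalent to the intersection condition $\Gamma\cap\sigma\Gamma\sigma^{-1}\supset\Gamma_0(nh)$ appearing in the original statement, because $\Gamma_0(n\|h)+S$ itself contains $\Gamma_0(nh)$ by the discussion of \S\ref{sec:conven:groups}.

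For the implication from the \cite{ConMcKSebDiscGpsM} condition to the scaling coset statement, I would take for each cusp $\cp$ a representative $\cpr\in\cp$ and an element $\sigma_{\cpr}\in G(\QQ)$ satisfying (\ref{eqn:moon:cusp:CMS_cusp_cond_inv}), and then verify that $\Sigma_{\cp}=\Gamma\sigma_{\cpr}$ satisfies the defining conditions (\ref{eqn:conven:scaling:scaling_coset_conds}) of a scaling coset---these are exactly the first two clauses of (\ref{eqn:moon:cusp:CMS_cusp_cond_inv})---and that $\Gamma^{\cp}=\sigma_{\cpr}^{-1}\Gamma\sigma_{\cpr}\supset\Gamma_0(nh)$ from the third clause. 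For the converse, given a scaling coset $\Sigma_{\cp}$ with $\Gamma^{\cp}\supset\Gamma_0(nh)$, I would pick any $\sigma_{\cpr}\in\Sigma_{\cp}$, set $\cpr=\sigma_{\cpr}\cdot\infty$, and observe via the discussion at the end of \S\ref{sec:conven:scaling} that $\sigma_{\cpr}$ is then a scaling element for $\Gamma$ at $\cpr$, so that (\ref{eqn:moon:cusp:CMS_cusp_cond_inv}) holds with the third condition supplied by $\sigma_{\cpr}^{-1}\Gamma\sigma_{\cpr}=\Sigma_{\cp}^{-1}\Sigma_{\cp}=\Gamma^{\cp}$.

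There should be no genuine obstacle here; the content is essentially a change of language between ``scaling element at a cusp representative'' and ``scaling coset at a cusp'', plus the invocation of Lemma \ref{lem:moon:cusp:sigma_in_G(R)_is_in_G(Q)}. The only item requiring a moment of care is the passage between the intersection form and the containment form of the third clause, which rests on $\Gamma\supset\Gamma_0(nh)$ for groups of $n\|h$-type.
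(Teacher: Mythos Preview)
Your proposal is correct and follows essentially the same approach as the paper, which does not give an explicit proof but simply prefaces the lemma with ``In light of Lemma \ref{lem:moon:cusp:sigma_in_G(R)_is_in_G(Q)} we may reformulate the fourth condition\ldots'' and leaves the details to the reader. Your write-up supplies precisely those details: the invocation of Lemma \ref{lem:moon:cusp:sigma_in_G(R)_is_in_G(Q)} to reduce from $G(\RR)$ to $G(\QQ)$, the dictionary between scaling elements at cusp representatives and scaling cosets at cusps from \S\ref{sec:conven:scaling}, and the observation that the intersection form of the condition in the introduction is equivalent to the containment form in (\ref{eqn:moon:cusp:CMS_cusp_cond}) because $\Gamma$ already contains $\Gamma_0(nh)$.
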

We conclude this section by relating the reformulation of Lemma
\ref{lem:moon:cusp:reform_CMS_cusp_cond_elt_to_coset} to the
normalized Rademacher sums associated to genus zero groups of
$n\|h$-type.
\begin{prop}\label{prop:moon:cusp:scaling_coset_cont_Delta_iff_TSpq_inv}
Let $\Gamma$ be a group of $n\|h$-type, so that
$\Gamma=\Gamma_0(n\|h)+S$ for some positive integers $n$ and $h$,
and some subgroup $S$ of the group of exact divisors of $n/h$, and
suppose that $\Gamma$ has genus zero. Let $\cp,\cq\in \cP_{\Gamma}$
be cusps of $\Gamma$ and let $\Sigma_{\cp}$ and $\Sigma_{\cq}$ be
scaling cosets for $\Gamma$ at $\cp$ and $\cq$, respectively. Then
the normalized Rademacher sum $\TS{\Gamma,\cp|\cq}{}{-1}(\zz)$ is
$\Gamma_0(nh)$-invariant if and only if $\Gamma^{\cq}$ contains
$\Gamma_0(nh)$.
\end{prop}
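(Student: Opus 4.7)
The plan is to reduce the statement to a clean hauptmodul argument, using the identity $\TS{\Gamma,\cp|\cq}{}{1}(\zz)=\TS{\Gamma^{\cq},\cp^{\cq}}{}{1}(\zz)$ from (\ref{eqn:modradsum:constr:TS_twocusps_to_onecusp}). Write $\Gamma'=\Gamma^{\cq}=\Sigma_{\cq}^{-1}\Sigma_{\cq}$ and $\cp'=\cp^{\cq}=\Sigma_{\cq}^{-1}\cdot\cp$, and set $f(\zz)=\TS{\Gamma',\cp'}{}{1}(\zz)$. First I would record three preliminary observations about $\Gamma'$: it is commensurable with $G(\ZZ)$ (being a $G(\QQ)$-conjugate of $\Gamma$), it has width one at infinity by the defining property (\ref{eqn:conven:scaling:scaling_coset_conds}) of a scaling coset, and it has genus zero because conjugation by $\Sigma_{\cq}$ induces an isomorphism ${\sf X}_{\Gamma'}\cong {\sf X}_{\Gamma}$ of compact Riemann surfaces.

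For the backward direction, suppose $\Gamma_0(nh)\subset \Gamma'$. Since $\Gamma'$ has genus zero and width one at infinity, Theorem \ref{thm:moon:genus:Genus_zero_implies_TSm_inv} applied with $m=1$ to the cusp $\cp'$ of $\Gamma'$ shows $f$ is $\Gamma'$-invariant, hence $\Gamma_0(nh)$-invariant, and this direction is done.

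For the forward direction, suppose $f$ is $\Gamma_0(nh)$-invariant. By Theorem \ref{thm:moon:genus:genus_zero_implies_TS_p_ind_isom} (applied to $\Gamma'$ at the cusp $\cp'$) $f$ is $\Gamma'$-invariant and induces an isomorphism $\phi:{\sf X}_{\Gamma'}\xrightarrow{\sim}\hat{\CC}$ sending $\cp'\mapsto\infty$. Let $\Gamma''$ be the group generated by $\Gamma'$ and $\Gamma_0(nh)$; since both are commensurable with $G(\ZZ)$ and both lie in $G(\QQ)$, the group $\Gamma''$ is itself commensurable with $G(\ZZ)$, and $f$ is $\Gamma''$-invariant. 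Thus $f$ factors through the natural projection $\pi:{\sf X}_{\Gamma'}\to {\sf X}_{\Gamma''}$, giving a commutative diagram
\begin{gather*}
     \xy
     (-20,10)*+{{\sf X}_{\Gamma'}}="TL";
     (20,10)*+{\hat{\CC}}="TR";
     (0,-10)*+{{\sf X}_{\Gamma''}}="B";
     {\ar@{->}^{\phi}_{\sim} "TL"; "TR"};
     {\ar@{->}_{\pi} "TL"; "B"};
     {\ar@{->} "B"; "TR"};
     \endxy
\end{gather*}
of holomorphic maps of compact Riemann surfaces. Since $\phi$ has degree one, the covering map $\pi$ must also have degree one, so $[\Gamma'':\Gamma']=1$, whence $\Gamma_0(nh)\subset \Gamma''=\Gamma'=\Gamma^{\cq}$.

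The main obstacle I anticipate is really just bookkeeping: making precise the assertion that $\pi$ has degree $[\Gamma'':\Gamma']$. If $\Gamma'\cdot\zz$ has trivial stabilizer in $\Gamma''/\Gamma'$ (which holds away from the finite set of elliptic and cusp points), then the fiber $\pi^{-1}(\Gamma''\cdot\zz)$ has cardinality $[\Gamma'':\Gamma']$, so the degree bookkeeping is standard. With this in hand the argument is complete, and the proof uses only the earlier results cited and the elementary fact that a degree-one holomorphic map of compact Riemann surfaces cannot factor non-trivially.
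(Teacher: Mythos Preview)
Your reduction via (\ref{eqn:modradsum:constr:TS_twocusps_to_onecusp}) to $\Gamma'=\Gamma^{\cq}$ and your invocation of Theorem~\ref{thm:moon:genus:genus_zero_implies_TS_p_ind_isom} match the paper's proof exactly, and your backward direction is fine. However, your forward direction contains a genuine gap: the claim that $\Gamma''=\langle\Gamma',\Gamma_0(nh)\rangle$ is commensurable with $G(\ZZ)$ simply because both generators are is false in general. For instance, $G(\ZZ)$ and $[2]G(\ZZ)[1/2]$ are each commensurable with $G(\ZZ)$ and lie in $G(\QQ)$, but the group they generate contains $S\cdot([2]S[1/2])^{-1}=[1/4]$ and hence $T^{1/4^n}=[1/4^n]T[4^n]$ for all $n\geq 0$, so it is not even discrete. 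Without commensurability you cannot form ${\sf X}_{\Gamma''}$ or speak of the degree of $\pi$, so your factorization argument does not get off the ground as written.

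The gap is repairable: since $f$ is $\Gamma''$-invariant and non-constant holomorphic, its invariance group in $G(\RR)$ is closed and contains no one-parameter subgroup, hence is discrete; then $\Gamma'\subset\Gamma''$ with $\Gamma'$ of finite covolume forces $[\Gamma'':\Gamma']<\infty$. But this is exactly the content of the paper's more direct observation: because $f$ induces an isomorphism ${\sf X}_{\Gamma'}\to\hat{\CC}$, it separates $\Gamma'$-orbits on $\HH$, so $f(\gamma\cdot\zz)=f(\zz)$ for all $\zz$ forces $\gamma\in\Gamma'$. This single sentence handles both directions at once and makes the auxiliary group $\Gamma''$, the quotient map $\pi$, and the degree bookkeeping you flagged as the ``main obstacle'' entirely unnecessary.
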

\begin{proof}
Let us set $\Delta=\Gamma_0(nh)$. Observe that if we set
$\cp^{\cq}=\Sigma_{\cq}^{-1}\Sigma_{\cp}\cdot\infty$ then
$\cp^{\cq}$ is a cusp of
$\Gamma^{\cq}=\Sigma_{\cq}^{-1}\Sigma_{\cq}$, and
$\Sigma_{\cq}^{-1}\Sigma_{\cp}$ is a scaling coset for
$\Gamma^{\cq}$ at $\cp^{\cq}$, so the function
$\TS{\Gamma,\cp|\cq}{}{-1}(\zz)$ may be identified with the
normalized Rademacher sum $\TS{\Gamma^{\cq},\cp^{\cq}}{}{-1}(\zz)$
associated to $\Gamma^{\cq}$ at the cusp $\cp^{\cq}$, for
$\Sigma_{\cp^{\cq}}=\Sigma_{\cq}^{-1}\Sigma_{\cp}$. Under the
assumption that $\Gamma$ has genus zero, $\Gamma^{\cq}$ also has
genus zero, and has width one at infinity by the defining properties
(cf. Lemma \ref{lem:conven:scaling:scaling_cosets_exist}) of
$\Sigma_{\cq}$. By Theorem
\ref{thm:moon:genus:genus_zero_implies_TS_normzd_haupt} then the
normalized Rademacher sum $\TS{\Gamma,\cp|\cq}{}{-1}(\zz)$ is, up to
a constant function, the expansion at $\cp^{\cq}$ of the normalized
hauptmodul of the group $\Gamma^{\cq}$. In particular,
$\TS{\Gamma,\cp|\cq}{}{-1}(\zz)$ defines an isomorphism of Riemann
surfaces ${\sf X}_{\Gamma^{\cq}}\to \hat{\CC}$, and so we have
$\TS{\Gamma,\cp|\cq}{}{-1}(\gamma\cdot\zz)=\TS{\Gamma,\cp|\cq}{}{-1}(\zz)$
for all $\zz\in \HH$ if and only if $\gamma\in \Gamma^{\cq}$. It
follows then that $\TS{\Gamma,\cp|\cq}{}{-1}(\zz)$ is
$\Delta$-invariant if and only if $\Delta$ is contained in
$\Gamma^{\cq}$, which is what we required to show.
\end{proof}

\subsection{Rademacher sums and the Monster}\label{sec:moon:monster}

We will now use Rademacher sums to reformulate the characterization of the groups of monstrous moonshine due to Conway--McKay--Sebbar (cf. \cite{ConMcKSebDiscGpsM}).
\begin{thm}\label{thm:moon:monster:char_thm}
Let $\Gamma$ be a subgroup of $G(\RR)$. Then we have
$\Gamma=\Gamma_g$ for some $g\in \MM$ if and only if the following
conditions are satisfied:
\begin{itemize}
\item
$\Gamma$ is the group defining $n\|h+S$-equivalence of solid tori
for some positive integers $n$ and $h$, and some subgroup
$S<\Ex(n/h)$;
\item
the normalized Rademacher sum $\TS{\Gamma}{}{-1}(\zz)$ is
$\Gamma$-invariant;
\item
there exists a system $\{\Sigma_{\cp}\mid\cp\in \cP_{\Gamma}\}$ of
scaling cosets for $\Gamma$ such that $\TS{\Gamma|\cp}{}{-1}(\zz)$ is
$\Delta$-invariant for every cusp $\cp\in \cP_{\Gamma}$ of $\Gamma$,
where $\Delta=\Gamma_0(nh)$;
\item
for every cusp $\co\in\cP_{\Delta}$ of $\Delta=\Gamma_0(nh)$ that is
contained in $\Gamma\cdot\infty$ we have
$\TS{\Delta,\co}{}{-1}(\zz)=\TS{\Delta|\co}{}{-1}(\zz)$ when the
scaling coset $\Sigma_{\co}$ for $\Delta$ at $\co$ is taken to lie
in $\Delta\backslash\Gamma$.
\end{itemize}
\end{thm}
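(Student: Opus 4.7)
The plan is to match each of the four listed conditions with the corresponding condition in the main theorem of \cite{ConMcKSebDiscGpsM} (genus zero; $n\|h$-type; extension of $\Gamma_0(nh)$ by a group of exponent $2$; the scaling/cusp condition), and then invoke that theorem to conclude. Since the four Conway--McKay--Sebbar conditions are already known to characterize the groups $\Gamma_g$ of monstrous moonshine, the work reduces entirely to verifying four biconditionals, and each of these will be a direct application of results established earlier in the paper.

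First I would handle the two conditions that are most immediate. Condition $1$ (the moduli interpretation) is equivalent to $\Gamma$ being of $n\|h$-type: the forward direction is immediate by definition of $n\|h+S$-equivalence of solid tori, and the reverse direction is Theorem \ref{thm:moon:moduli:nh+S_groups_moduli}, which exhibits $\Gamma_0(n\|h)+S$ as the group defining $n\|h+S$-equivalence of solid tori. Condition $2$ (invariance of $\TS{\Gamma}{}{1}$) is equivalent to $\Gamma$ having genus zero: this is precisely Theorem \ref{thm:moon:genus:Genus_zero_iff_TS1_inv}, noting that any $n\|h$-type group has width one at infinity.

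Next I would handle condition $3$, which I claim corresponds to the scaling/cusp condition of \cite{ConMcKSebDiscGpsM}. Assuming conditions $1$ and $2$ hold, Lemma \ref{lem:moon:cusp:reform_CMS_cusp_cond_elt_to_coset} rephrases the Conway--McKay--Sebbar cusp condition as the existence, for each cusp $\cp$, of a scaling coset $\Sigma_{\cp}$ with $\Gamma^{\cp} \supset \Gamma_0(nh) = \Delta$. Applying Proposition \ref{prop:moon:cusp:scaling_coset_cont_Delta_iff_TSpq_inv} with $\cq = \cp$ and $\cp = \Gamma\cdot\infty$ (and invoking the symmetry $\TS{\Gamma,\Gamma\cdot\infty|\cp}{}{1} = \TS{\Gamma|\cp}{}{1}$ under the chosen scaling system) converts the containment $\Gamma^{\cp} \supset \Delta$ into the $\Delta$-invariance of $\TS{\Gamma|\cp}{}{1}(\zz)$. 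The equivalence is then biconditional at each cusp, provided we work with the same system of scaling cosets throughout, so the ``there exists a system'' quantification in condition $3$ matches the existential quantifier in condition (IV) of \cite{ConMcKSebDiscGpsM}.

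The main obstacle, and the last step to address, is condition $4$, which must be shown to correspond to the exponent $2$ hypothesis. Assuming conditions $1$, $2$ and $3$ hold, the quotient $\Gamma/\Delta$ acts on the set of cusps $\cP_{\Delta}$ contained in $\Gamma\cdot\infty$ via the scaling cosets in $\Delta \backslash \Gamma$, and condition (III) of \cite{ConMcKSebDiscGpsM} states that every element of $\Gamma/\Delta$ has order dividing $2$. I would argue this cusp by cusp: for $\co \in \cP_{\Delta}$ with $\co \subset \Gamma \cdot \infty$, take the scaling coset $\Sigma_{\co} \in \Delta \backslash \Gamma$ furnished by condition $3$. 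Since $\Gamma$ normalizes $\Delta$ (because $\Delta = \Gamma_0(nh)$ is normal in any $n\|h$-type group containing it, which is guaranteed by conditions $1$ and the containment supplied by condition $3$), $\Sigma_{\co}$ lies in the normalizer of $\Delta$. Proposition \ref{prop:moon:exp:cusp_commute_implies_scalg_coset_sq} then converts the identity $\TS{\Delta,\co}{}{1}(\zz) = \TS{\Delta|\co}{}{1}(\zz)$ into $\Sigma_{\co}^2 = \Delta$, i.e. the image of $\Sigma_{\co}$ in $\Gamma/\Delta$ has order at most $2$. Conversely, Lemma \ref{lem:moon:exp:scalg_coset_sq_implies_cusp_commute} gives the reverse implication. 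Since every element of $\Gamma/\Delta$ arises from some such scaling coset (by transitivity of $\Gamma$ on $\Delta \backslash \Gamma \cdot \infty$), condition $4$ is equivalent to $\Gamma/\Delta$ being of exponent $2$. The delicate point here, and the step I expect to require most care, is verifying that running over all cusps $\co \subset \Gamma \cdot \infty$ exhausts the elements of $\Gamma/\Delta$, and that the chosen scaling coset system in $\Delta \backslash \Gamma$ is compatible at every cusp with the scaling coset system used in condition $3$; this compatibility rests on the normality of $\Delta$ in $\Gamma$, which itself follows from $\Gamma$ being of $n\|h$-type with $h \mid 24$.
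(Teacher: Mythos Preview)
Your proposal is correct and follows essentially the same approach as the paper: both reduce the theorem to the Conway--McKay--Sebbar characterization by establishing a biconditional between each of the four listed conditions and the corresponding CMS condition, using Theorem \ref{thm:moon:moduli:nh+S_groups_moduli}, Theorem \ref{thm:moon:genus:Genus_zero_iff_TS1_inv}, Lemma \ref{lem:moon:cusp:reform_CMS_cusp_cond_elt_to_coset} together with Proposition \ref{prop:moon:cusp:scaling_coset_cont_Delta_iff_TSpq_inv}, and Lemma \ref{lem:moon:exp:scalg_coset_sq_implies_cusp_commute} together with Proposition \ref{prop:moon:exp:cusp_commute_implies_scalg_coset_sq}, respectively. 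Your concern about exhausting $\Gamma/\Delta$ via cusps is handled exactly as the paper does it: pick an arbitrary $\gamma\in\Gamma$, set $\co=\Delta\gamma\cdot\infty$ and $\Sigma_{\co}=\Delta\gamma$, and conclude $\gamma^2\in\Delta$; no separate compatibility with the scaling system of condition $3$ is needed since condition $4$ specifies its own scaling cosets in $\Delta\backslash\Gamma$.
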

\begin{proof}
According to the main theorem of \cite{ConMcKSebDiscGpsM} it
suffices to show that a group $\Gamma$ satisfies each of the four
conditions of Theorem \ref{thm:moon:monster:char_thm} if and only if
it satisfies the four conditions of the main theorem of
\cite{ConMcKSebDiscGpsM}. As a first step in establishing this
equivalence, observe that the first condition of Theorem
\ref{thm:moon:monster:char_thm} is exactly the same as the second
condition of the main theorem of \cite{ConMcKSebDiscGpsM}.

Suppose that $\Gamma$ satisfies the four conditions of the main
theorem of \cite{ConMcKSebDiscGpsM}. Then, in particular, it
satisfies the first condition of Theorem
\ref{thm:moon:monster:char_thm}, and has genus zero. We may suppose
then that $\Gamma=\Gamma_0(n\|h)+S$ for some $n,h\in\ZZp$, and
$S<\Ex(n/h)$, so that $\Gamma$ is commensurable with $G(\ZZ)$, has
width one at infinity, and contains and normalizes
$\Delta=\Gamma_0(nh)$. According to Theorem
\ref{thm:moon:genus:Genus_zero_iff_TS1_inv} the normalized
Rademacher sum $\TS{\Gamma}{}{-1}(\zz)$ is $\Gamma$-invariant, so the
second condition of Theorem \ref{thm:moon:monster:char_thm} is
satisfied. According to Lemma
\ref{lem:moon:cusp:reform_CMS_cusp_cond_elt_to_coset} there exists a
system $\{\Sigma_{\cp}\mid\cp\in\cP_{\Gamma}\}$ of scaling cosets
for $\Gamma$ such that $\Gamma^{\cp}$ contains $\Delta$ for each
cusp $\cp\in \cP_{\Gamma}$. Applying Proposition
\ref{prop:moon:cusp:scaling_coset_cont_Delta_iff_TSpq_inv} we see
that the normalized Rademacher sum $\TS{\Gamma|\cp}{}{-1}(\zz)$ is
$\Delta$-invariant when the scaling coset $\Sigma_{\cp}$ of the
above scaling coset system is chosen for the definition of
$\TS{\Gamma|\cp}{}{-1}(\zz)$. This confirms that the third condition
of Theorem \ref{thm:moon:monster:char_thm} is satisfied. The fourth
condition of the main theorem of \cite{ConMcKSebDiscGpsM} is that
the quotient $\Gamma/\Delta$ have exponent two. Let $\co\in
\cP_{\Delta}$ be a cusp of $\Delta$ that is contained in
$\Gamma\cdot\infty$, and let $\Sigma_{\co}$ be the unique right
coset of $\Delta$ in $\Gamma$ such that
$\co=\Sigma_{\co}\cdot\infty$. Then the exponent two condition
implies $\Sigma_{\co}^2=\Delta$, so that the fourth condition of
Theorem \ref{thm:moon:monster:char_thm} follows from an application
of Lemma \ref{lem:moon:exp:scalg_coset_sq_implies_cusp_commute}.

Suppose now that $\Gamma$ satisfies the four conditions of Theorem
\ref{thm:moon:monster:char_thm}. Then $\Gamma$ satisfies the second
condition of \cite{ConMcKSebDiscGpsM}, and so we have
$\Gamma=\Gamma_0(n\|h)+S$ for some $n,h\in\ZZp$, and $S<\Ex(n/h)$,
and $\Gamma$ is a group commensurable with $G(\ZZ)$ that has width
one at infinity. Applying Theorem
\ref{thm:moon:genus:Genus_zero_iff_TS1_inv} to the second condition
of Theorem \ref{thm:moon:monster:char_thm} we conclude that $\Gamma$
has genus zero, and so satisfies the first condition of
\cite{ConMcKSebDiscGpsM}. Set $\Delta=\Gamma_0(nh)$, so that
$\Gamma$ contains and normalizes $\Delta$. Applying Proposition
\ref{prop:moon:cusp:scaling_coset_cont_Delta_iff_TSpq_inv} to the
third condition of Theorem \ref{thm:moon:monster:char_thm} we see
that scaling cosets $\{\Sigma_{\cp}\mid\cp\in \cP_{\Gamma}\}$ can be
chosen for $\Gamma$ so that $\Gamma^{\cp}$ contains $\Delta$ for
each cusp $\cp\in \cP_{\Gamma}$. Applying Lemma
\ref{lem:moon:cusp:reform_CMS_cusp_cond_elt_to_coset} to this fact
we conclude that $\Gamma$ satisfies the fourth condition of the main
theorem of \cite{ConMcKSebDiscGpsM}. It remains to check that the
quotient $\Gamma/\Delta$ has exponent two. Let $\gamma\in \Gamma$
and set $\co=\Delta\gamma\cdot\infty$. If we take
$\Sigma_{\co}=\Delta\gamma$ then the fourth condition of Theorem
\ref{thm:moon:monster:char_thm} states that
$\TS{\Delta|\co}{}{-1}(\zz)=\TS{\Delta,\co}{}{-1}(\zz)$. Proposition
\ref{prop:moon:exp:cusp_commute_implies_scalg_coset_sq} now implies
that $\Sigma_{\co}^2=\Delta$, which in turn implies $\gamma^2\in
\Delta$. This argument applies to arbitrary $\gamma\in \Gamma$ so we
conclude that the quotient $\Gamma/\Delta$ has exponent two. This
completes the proof.
\end{proof}
Perhaps the most technical condition of Theorem
\ref{thm:moon:monster:char_thm} is the last one. Beyond the
monstrous moonshine conjectures, there are the generalized moonshine
conjectures of Norton (cf. \cite{Mas_GnzdmoonshineAppNorton}) which
associate genus zero groups to commuting pairs of elements in the
Monster. A number of the groups appearing in generalized moonshine
do not satisfy the last condition of Theorem
\ref{thm:moon:monster:char_thm}, but they all satisfy the second
property, and a slight weakening of the first (cf.
\cite{Fer_Genus0prob}), and we do not know of any examples that fail
to satisfy the third condition of Theorem
\ref{thm:moon:monster:char_thm}. It is an interesting question then
to determine if the first three conditions of Theorem
\ref{thm:moon:monster:char_thm} furnish a
characterization of the groups of generalized moonshine.

\section{Gravity}\label{sec:gravity}

In this section we consider applications of the normalized
Rademacher sums to chiral 3d quantum gravity. We should note that the notion of chiral 3d quantum gravity has not yet been defined, so the applications we make, and in particular the conjectures we formulate, are intended to shed light on the very problem of formulating a definition, in addition to elucidating important structural properties that a certain distinguished (and conjectural) example might satisfy. In the absence of a precise definition of 3d quantum gravity some of our discussion in this section is necessarily more speculative.

\subsection{First conjecture}\label{sec:gravity:conj1}

It is clear from our results that the Rademacher sums are
particularly convenient for understanding the special
characteristics of the McKay--Thompson series; in particular, their
crucial genus zero property. In order to fully explain the moonshine
phenomena, one has to relate these sums to the structure of the
vertex operator algebra $\vn$. In search for the new relation, one
might look again for hints from physics. Very recently, Witten,
revisiting 3d quantum gravity, has formulated a
number of results and observations \cite{Wit_3DGravRev} including a
conjecture about the existence of the 3d quantum
gravities with central charges $c_L=c_R$ proportional to $24$. In
particular, he asserted that the simplest in his list of 3d quantum gravities should be equivalent to the
2d conformal field theory $\vn\otimes(\vn)^*$. Manschot
\cite{Man_AdS3PFnsRecon} then suggested to consider chiral 3d quantum gravities, the simplest of which, with $c_L=24$
and $c_R=0$, should be equivalent exactly to $\vn$. Furthermore, Li,
Song and Strominger \cite{LiSonStr_ChGrav3D} have argued that the
chiral gravity possesses stability and consistency, the necessary
properties of a sound physical theory. A very recent work
\cite{MalSonStr_ChiGravLogGravExtCFT} by Maloney, Song and
Strominger provided further support for the existence of chiral
gravities with $c_L$ proportional to $24$. These results and
observations from physics taken together suggest that there exists
an alternative construction of the vertex operator algebra $\vn$
that may be viewed as a rigorous version of the simplest chiral
3d quantum gravity with $c=24$, in the same way as
the original construction of $\vn$ was interpreted as a rigorous
version of the chiral 2d conformal field theory with
the partition function $J(\zz)$. In 3d quantum
gravity one expects to obtain the partition function as a sum over
minimum points of a 3d quantum gravity action; i.e.
over all 3d hyperbolic structures on a solid torus
with genus one boundary, whose conformal structure corresponds to
the point $\zz$ on the moduli space. Since all such structures are
naturally parameterized by $\Gamma_{\infty}\backslash\PSL_2(\ZZ)$
(cf. \cite{DijMalMooVer_BlckHoleFryTale}), the partition function
should be a kind of Rademacher sum. In fact Manschot and Moore
\cite{ManMoo_ModFryTail} argued that the subtraction of constants in
(\ref{eqn:intro:radsum:Rad_Sum_J}) can be explained by a
regularization of the partition function of 3d
gravity. Our continued Rademacher sums of \S\ref{sec:modradsum}, leading naturally to the modified and normalized Rademacher sums, provide a good candidate for
such a regularization.

To obtain the McKay--Thompson series
(\ref{eqn:intro:moon:Four_Exp_MTg}) from the chiral 3d quantum
gravity is a more challenging problem even at the heuristic level.
However, one may look at these series from a slightly different
point of view. It has been shown in \cite{DoLiMaTrOrbThy} that for
any $g\in\MM$ there exists a unique simple $g$-twisted $\vn$-module
$\vn_g$, whose partition function is equal to $c(g)\MT_g(-1/\zz)$
where $c(g)$ is a constant depending on $g\in \MM$. The heuristic
analysis of the twisted sector $\vn_g$ in
\cite{Tui_MonsMoonsUniqMoonsMod} strongly supports the general
assumption that $c(g)=1$ for all $g\in \MM$. (For more on this assumption please see the discussion of ``Hypothesis $A_g$'' in \cite{Car_GMII}.)

We now state our first conjecture.
\begin{conj}\label{conj:conseq:conj1_conj1}
There exists a family of twisted chiral 3d quantum
gravities at central charge $c=24$ associated with elements of the
Monster $g\in\MM$ whose partition functions are naturally given by
sums over geometries parameterized by
$\Gamma_{\infty}\backslash\Gamma_g$, and these partition functions
coincide with the normalized Rademacher sum
$\TS{\Gamma}{}{-1}(-1/\zz)$, for $\Gamma=\Gamma_g$. Moreover, the
untwisted ($g=e$) chiral 3d quantum gravity has a VOA
structure isomorphic to $\vn$, and the twisted 3d
quantum gravity corresponding to $g\in\MM$ has a structure of
$g$-twisted $\vn$-module isomorphic to that of $\vn_g$.
\end{conj}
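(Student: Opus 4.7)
The plan is to approach Conjecture \ref{conj:conseq:conj1_conj1} as a research program built in three successive layers: a rigorous definition of the relevant chiral three dimensional quantum gravity, the identification of its partition function with the normalized Rademacher sum, and finally the reconstruction of the vertex operator algebra structure on the resulting state space. Because the statement asserts the existence of a mathematical object whose primary description is physical, the first and most delicate task will be to give a precise axiomatization. I would follow the heuristic picture of \cite{MalWit_QGravPartFns3D} and \cite{Man_AdS3PFnsRecon}, postulating that a chiral three dimensional quantum gravity at $c=24$ associated to $g\in\MM$ is determined by a choice of $g$-twisted boundary condition on the solid tori parameterized (as in \S\ref{sec:moon:moduli}) by $B(\ZZ)\backslash\HH$, together with a prescription for summing over $\Gamma_g$-related fillings. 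The moduli interpretation of the $n\|h$-type groups established in \S\ref{sec:moon:moduli} is precisely what makes this a well-posed geometric problem: the solid tori decorated by the extra structure cut out by $\Gamma_g$-equivalence are exactly the geometries to be summed over.

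Once this framework is in place, the second step is to identify the (regularized) saddle point contribution of each classical solution with the Rademacher component function $\RS{\lBZ\chi\rBZh}{}{1}(-1/\zz)$ attached to a coset $\lBZ\chi\rBZh\in \lBZ\Gamma_g\rBZh$. I would implement this by applying a zeta function regularization, in the spirit of \cite{ManMoo_ModFryTail}, to the one-loop determinant around each filling; the continuation procedure of \S\ref{sec:modradsum:constr} is tailor-made to mimic this on the mathematical side. Specifically, the substitution $\spp\mapsto 1$ in $\TS{\Gamma,\cp}{}{m}(\zz,\spp)$ should correspond to $\zeta(0)=-1/2$ appearing in the regularized vacuum energy, thereby explaining the disappearance of $\fc{\Gamma}{}(1,0)/2$ in the transition from $\RS{\Gamma}{}{1}$ to $\TS{\Gamma}{}{1}$ (cf. Proposition \ref{prop:modradsum:conver:Relate_QS_TS_s=1}). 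Combining this with Theorem \ref{thm:moon:genus:genus_zero_implies_TS_normzd_haupt}, which identifies $\TS{\Gamma_g}{}{1}(\zz)$ with the normalized hauptmodul whenever $\Gamma_g$ has genus zero, would yield the coincidence of the regularized gravitational partition function with $\GD_g(\zz)=\MT_g(-1/\zz)$ on the nose.

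The third step is to upgrade this equality of partition functions to an isomorphism of state spaces. Here I would exploit the uniqueness theorem of Dong--Li--Mason for simple $g$-twisted $\vn$-modules together with Carnahan's result $c(g)=1$: it suffices to exhibit, on the space of states produced by the chiral gravity construction, a $g$-twisted $\vn$-module structure with the correct graded character. A natural strategy is to quantize the bulk theory in a highest weight representation of the Virasoro algebra of central charge $24$, matching the Fourier coefficients produced by $\TS{\Gamma_g}{}{1}(-1/\zz)$ via Theorem \ref{thm:modradsum:conver:Relate_TS_FR} against the graded dimensions of $\vn_g$, and then invoking the uniqueness of simple twisted modules to promote the matching of dimensions to an isomorphism of modules. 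The case $g=e$ recovers $\vn$ and its vertex algebra structure, while the transformation properties under the modular action, encoded in the behavior of $\TS{\Gamma_g|\cp}{}{1}(\zz)$ at the other cusps (cf. \S\ref{sec:moon:cusp}), should provide the compatibility with fusion needed for the twisted module structure.

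The principal obstacle, by a considerable margin, is the first step: no mathematically rigorous definition of chiral three dimensional quantum gravity exists at present, and even the physical framework of \cite{LiSonStr_ChGrav3D} and \cite{MalSonStr_ChiGravLogGravExtCFT} leaves open basic questions about unitarity, the space of gravitational instantons, and the prescription for summing over them. Until these foundational issues are resolved the conjecture cannot be proved in the conventional sense, but our Rademacher sum formalism, combined with the moduli interpretation of the $n\|h$-type groups, provides what we expect to be the correct mathematical target: any satisfactory definition of the chiral theory should reproduce $\TS{\Gamma_g}{}{1}(-1/\zz)$ as its partition function, and conversely, the modified Rademacher sums $\QS{\Gamma_g,\cp}{}{m}(\zz)$ of higher order should serve as a template for the partition functions of the corresponding theories in the presence of line operator insertions.
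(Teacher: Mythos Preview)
The statement you are addressing is a \emph{conjecture}, not a theorem: the paper offers no proof of it, nor does it claim one. What follows the conjecture in the paper is a discussion of supporting evidence from the physics literature, some worked heuristics for the level-two examples ($\Gamma_0(2)$ and $\Gamma_0(2)+$) in terms of spin structures and orbifold singularities on solid tori, and an articulation of what a resolution would buy---in particular, a geometric explanation of the genus zero property via Theorem~\ref{thm:moon:genus:Genus_zero_iff_TS1_inv}.

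Your proposal is not a proof and you correctly say so: you identify the absence of a rigorous definition of chiral 3d quantum gravity as the principal obstacle, and you frame the rest as a research program contingent on that first step. This is the right assessment and matches the paper's own stance. Your three-layer outline (axiomatize the gravity theory; match the regularized partition function to $\TS{\Gamma_g}{}{1}(-1/\zz)$ via the continuation of \S\ref{sec:modradsum:constr}; upgrade to a module isomorphism via Dong--Li--Mason uniqueness and Carnahan's $c(g)=1$) is a reasonable elaboration of the paper's informal discussion, and your invocation of the moduli interpretation of \S\ref{sec:moon:moduli} and the zeta-regularization reading of the constant $\fc{\Gamma}{}(1,0)/2$ are both in the spirit of the paper's remarks. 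There is nothing to correct here beyond the observation that no proof exists to compare against; your write-up functions as a commentary on the conjecture rather than a proof attempt, and as such it is consistent with the paper.
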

The special case $g=e$ of Conjecture \ref{conj:conseq:conj1_conj1},
corresponding to the untwisted chiral 3d quantum
gravity, is strongly supported by the physics literature mentioned
above. We will now make a few remarks about the general twisted
case. The principle of the 3d quantum gravity/2d CFT correspondence suggests that all the structures in
either theory should have appropriate counterparts in the other. The
$g$-twisted sector $\vn_g$ is an intrinsic part of the extended
theory of the chiral 2d CFT associated to $\vn$, and,
according to the physical principle, should therefore have analogues
in chiral 3d quantum gravity. Basic information about
these twisted chiral 3d quantum gravities can be
extracted from the Rademacher sums $\GD_g(\zz)=\MT_g(-1/\zz)$. We'll
illustrate the general case with the level two examples, which are
the cases that $\Gamma=\Gamma_0(2)$ or $\Gamma_0(2)+$. (In the
notation of \S\ref{sec:conven:groups} the symbols $\Gamma_0(2)+$ are
a shorthand for $\Gamma_0(2)+\Ex(2)$.) As in the case that $g=e$ one
expects that each term of the Rademacher sum comes from a classical
solution of the corresponding twisted chiral quantum gravity. Every
Rademacher sum $\GD_g(\zz)$ contains the term $\ee(1/\zz)$, which
corresponds to the BTZ black hole solution (cf.
\cite{BanTeiZan_BTZBlckHle}). The other terms arising for
$\Gamma=\Gamma_0(2)$ can be characterized among all those arising
for $\Gamma=G(\ZZ)$ as the solutions that have the same spin
structure as the BTZ black hole on the boundary. The Rademacher sum
$\GD_g(\zz)$ in this case coincides with the partition function of
the Ramond sector in supergravity (cf.
\cite[\S7]{MalWit_QGravPartFns3D}). The group $\Gamma=\Gamma_0(2)+$
is no longer a subgroup of the modular group $G(\ZZ)$. In this case
we have to consider orbifold solutions in addition to smooth
solutions; we consider orbifold solutions with a codimension $2$
singularity along the defining circle of the solid torus which looks
locally like $\CC/\lab\ee(1/2)\rab$ (cf.
\cite[\S2]{MalWit_QGravPartFns3D}), and we still impose the same
spin structure boundary condition. Note that on any space $X$, choices of spin structure on $X$ (when they exist) are in bijective
correspondence with a particular family of double covers of $X$ (cf.
e.g. \cite{LawMic_SpinGeom}). Such a double cover smooths the
$\ZZ/2$-orbifold singularity. One may view the spin structure as a
$\ZZ/2$-structure, and for a general level $N$ group $\Gamma$, spin
structures are replaced with $\ZZ/N$-structures. One can also allow
singularities along the defining circle of the solid torus that look
locally like $\CC/\lab\ee(1/N)\rab$. In general, a choice of twisted
gravity with $\ZZ/N$-structure group should impose an $n\|h+S$
equivalence for solid tori, for some $n$ and $h$ with $N=nh$ and $h$
a divisor of $24$ (cf. Theorem
\ref{thm:moon:moduli:nh+S_groups_moduli}), which yields the
Rademacher sum $\GD_g(\zz)$, for a corresponding $g\in \MM$, as the
saddle point approximation of the twisted quantum gravity partition
function.

We expect that the conjectural twisted chiral 3d
quantum gravity construction of $\vn_g$ will imply the
$\Gamma_g$-invariance of its partition function, in analogy with the
way in which 2d CFTs are found to have modular
invariant partition functions (cf. \cite{Wit_PhysGeom}). Then our
conjecture, in combination with our results on Rademacher sums, will
naturally imply the genus zero conjecture of Conway--Norton (cf.
\cite{ConNorMM}), and most importantly, will reveal the geometric
nature of moonshine. In particular, the solution to Conjecture \ref{conj:conseq:conj1_conj1} will answer the genus zero question: why the discrete groups attached to the Monster via monstrous moonshine all have genus zero.

We also expect that the conditions on Rademacher sums in our reformulation Theorem
\ref{thm:moon:monster:char_thm}, of the group theoretic
characterization of the groups of monstrous moonshine due to Conway--McKay--Sebbar, will also find an interpretation as consistency conditions for twisted chiral 3d quantum gravities. In particular,
the completeness of the family of twisted chiral 3d
quantum gravities at central charge $24$ associated with elements of
the Monster group $\MM$ should have a deep meaning in quantum
gravity.

Our Conjecture \ref{conj:conseq:conj1_conj1} should have
significance for the future development of analytic number theory also.
We already know from 2d CFT that various constructions
yield remarkable number theoretic identities. Our conjecture implies
that the development of the theory of 3d quantum
gravity might encompass a whole new family of number theoretic
results such as the theory of Rademacher sums studied in this work.
In \S\ref{sec:gravity:conj2} we'll show that the Hecke operators
also admit an interpretation in terms of quantum gravity. In
preparation for this we will recall in the next section some
results about a certain class of generalized Kac--Moody algebras and
their representations.

\subsection{Monstrous Lie algebras}\label{sec:gravity:mlas}

In his paper \cite{BorMM}, Borcherds constructed a remarkable
generalized Kac--Moody algebra ({\em GKM algebra}) $\mla$, called
the {\em Monster Lie algebra}, which plays a key r\^ole in his
approach to monstrous moonshine. He also defined GKM superalgebras
$\mla_g'$, for each $g\in \MM$, by using the McKay--Thompson series
$\MT_g(\zz)$ to specify the simple roots. In this section we'll
consider another family of GKM algebras $\mla_g$, parameterized by
elements $g\in \MM$, with simple roots specified by the functions
$\GD_g(\zz)=\MT_g(-1/\zz)$. Since the Fourier coefficients of the $\GD_g(\zz)$ are
non-negative integers, the algebras $\mla_g$ are purely even. We
will also study Verma modules for the GKM algebras $\mla_g$.

In the following section \S\ref{sec:gravity:conj2}, we will explain
how these algebraic structures arise from the Rademacher sums, and
the conjectural twisted chiral 3d quantum gravities.
This will lead us to a further extension of our first conjecture.

The algebras $\mla_g$ were introduced and studied by Carnahan in
\cite{Car_Phd} and were discovered independently by the second
author. We will now recall the relevant results of \cite{Car_Phd}.

Let $L=II_{1,1}$ denote a copy of the unique even self-dual
Lorentzian lattice of rank $2$. We identify $L$ with the group
$\ZZ\times\ZZ$ and set the norm of the pair $(m,n)$ to be $-2mn$.
For any positive integer $N$ we denote the sublattice $\ZZ\times
N\ZZ$ by $L(N)$, and we let $L(N)^{\vee}$ denote the dual lattice
$\frac{1}{N}\ZZ\times\ZZ$. Let $V_L$ denote the vertex operator
algebra ({VOA}) associated to the lattice $L$ (cf. \cite{FLM}),
\begin{gather}
     V_L=\bigoplus_{(m,n)\in L}V_L^{(m,n)},
\end{gather}
and let $h_N$ denote the automorphism of $V_L$ which acts as
multiplication by $\ee(n/N)$ on the subspace $V_L^{(m,n)}$. Then
$V_{L(N)}$ is a vertex operator subalgebra of $V_{L(N)}$ fixed by
$h_N$. Following \cite{DonLepGVAs} each coset $L+(k/N,0)$ of $L$ in
$L(N)^{\vee}$ defines an $h_N^k$-twisted module for $V_L$. We denote
this twisted module by $V_{L+(k/N,0)}$, and regard it as graded in
the natural way by $L+(k/N,0)$.

For $g\in \MM$ such that the level of $\Gamma_g$ is $N$ (i.e. $N=nh$ in the notation of \cite{ConNorMM}) define $\vnh_g$ to be the following
$L(N)^{\vee}$-graded space invariant under $\lab g\rab$.
\begin{gather}
     \vnh_g=\bigoplus_{k=0}^{N-1}
     \left(
     \vn_{g^k}\otimes V_{L+(k/N,0)}
     \right)^{\lab g\rab}
\end{gather}
The summand corresponding to $k=0$ has a natural VOA structure,
while the other summands are naturally modules for this VOA. There
also exists a unique (up to scalar) intertwining operator between
the product of the $k$-th and $l$-th summands and the $(k+l)$-th
summand. Carnahan asserts that there is a consistent choice of these
constants such that one has
\begin{thm}
The space $\vnh_g$ naturally admits a VOA structure.
\end{thm}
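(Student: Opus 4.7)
The plan is to extend the vertex operator algebra structure from the $k=0$ summand to all of $\vnh_g$ by assembling intertwining operators between the summands within an abelian intertwining algebra framework, and then pinning down the scalar normalizations coherently. First I would identify the $k=0$ summand $(\vn\otimes V_L)^{\lab g\rab}$ as an ordinary vertex operator subalgebra of $\vn\otimes V_L$, with $\lab g\rab$ acting diagonally as $g\otimes h_N$; both factors are VOAs, the action is by automorphisms of finite order, and taking invariants yields a sub-VOA. Each higher summand $(\vn_{g^k}\otimes V_{L+(k/N,0)})^{\lab g\rab}$ is then a module for this sub-VOA, realized as the $\lab g\rab$-invariants in the tensor product of a $g^k$-twisted $\vn$-module (unique up to isomorphism by \cite{DoLiMaTrOrbThy}) with the $h_N^k$-twisted $V_L$-module $V_{L+(k/N,0)}$ constructed in \cite{DonLepGVAs}.

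The fusion structure on these modules is group-like of type $\ZZ/N$: on the lattice side the simple current $V_{L+(1/N,0)}$ organises the twisted modules into a $\ZZ/N$-graded family with one-dimensional intertwining spaces of type $\binom{k+l}{k\,l}$, and on the $\vn$ side the assertion that $c(g)=1$ (Carnahan, \cite{Car_GMII}) combined with the uniqueness of twisted modules furnishes the corresponding one-dimensional intertwining spaces between the $\vn_{g^k}$. Thus for every triple $(k,l,k+l)$ there is, up to a nonzero scalar, a unique intertwining operator of the required type between the corresponding triple of summands of $\vnh_g$, and any coherent choice of scalars gives a well-defined candidate vertex operator on $\vnh_g$ whose restriction to the $k=0$ summand agrees with the given VOA structure.

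The remaining, and main, task is to select the scalars so that the Jacobi identity holds on the nose and not merely up to a scalar. The obstruction is an abelian $3$-cocycle $\ZZ/N\times\ZZ/N\times\ZZ/N\to\CC^\times$ measuring the associator of the $\ZZ/N$-graded fusion, with class in $H^3(\ZZ/N,\CC^\times)$. This obstruction decomposes as a product of two contributions: one from the lattice factor, computable explicitly from the standard $\epsilon$-cocycle used to construct $V_L$ and its twisted modules from $L(N)^\vee$, and one from the associator of the fusion of the $\vn_{g^k}$, which is controlled by the projective action of $\lab g\rab$ on the twisted sectors. The hard part will be to show that these two cocycles are mutually inverse, so that the combined class is trivial, and then to exhibit a trivialising $2$-cochain explicitly in order to fix normalisations. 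Once this cancellation is in hand, verifying the full VOA axioms for $\vnh_g$ reduces to the Jacobi identity for intertwining operators between irreducible twisted modules of $\vn\otimes V_L$, which follows from the corresponding axioms on the two tensor factors. This is precisely the route carried out in \cite{Car_Phd}, and its successful completion gives the consistent choice of constants and hence the VOA structure on $\vnh_g$ asserted by the theorem.
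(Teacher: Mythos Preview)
Your outline matches the paper's treatment: the paper does not give its own proof of this theorem but attributes it to Carnahan \cite{Car_Phd}, stating only that the $k=0$ summand is a VOA, the other summands are modules, the intertwining operators between summands are unique up to scalar, and that Carnahan shows a consistent choice of these scalars exists. Your proposal expands on exactly this sketch, in particular making explicit the abelian $3$-cocycle obstruction and its cancellation between the lattice and $\vn$ factors, which is the natural elaboration of ``consistent choice of constants'' and is indeed the route taken in \cite{Car_Phd}.
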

By construction $\vnh_g$ has rank $26$ and one can define the
semi-infinite cohomology of the Virasoro algebra with coefficients
in $\vnh_g$. The the first semi-infinite cohomology group acquires a
Lie algebra structure according to \cite{LiaZuc_MoonsCoh}. In this
way we obtain a {\em monstrous Lie algebra} $\mla_g$ for each $g\in
\MM$.
\begin{gather}
     \mla_g=H^{\infty/2+1}(\vnh_g)
\end{gather}
An alternative construction of $\mla_g$ may be based on the no-ghost
theorem of the $26$-dimensional bosonic string (cf.
\cite{Fre_RepKMAlgDualResMdl}, \cite{BorMM}). For $g=e$, the Lie
algebra $\mla_e$ is the original Monster Lie algebra of Borcherds
\cite{BorMM}.

The Lie algebra $\mla_g$ inherits an $L(N)^{\vee}$-grading
\begin{gather}
     \mla_g=\bigoplus_{m,n\in\ZZ}\mla_g^{(m/N,n)}.
\end{gather}
The same argument as in the case $g=e$ applied to the semi-infinite
cohomology or no-ghost theorem construction of $\mla_g$ yields
natural isomorphisms
\begin{gather}\label{eqn:conseq:mlas:Nat_Isom_mlag_vng}
     \mla_g^{(m/N,n)}
     \cong
     \vn_{g^{-m},n/N}(mn/N)
\end{gather}
where $\vn_{g^{-m},n/N}$ is the subspace of $\vn_{g^{-m}}$ upon
which $g$ acts by $\ee(n/N)$ and $\vn_{g^{-m},n/N}(mn/N)$ is the
$L_0$-eigenspace of $\vn_{g^{-m},n/N}$ with eigenvalue $mn/N+1$.

\begin{rmk}
Let us write $V^{\LL}$ for the vertex operator algebra associated to the Leech lattice. In his paper \cite{Tui_MonsMoonsUniqMoonsMod} Tuite gave strong
evidence that the genus zero property of monstrous moonshine is
equivalent to the following duality isomorphisms,
\begin{gather}
     \vn_{g^m,n/N}\cong\vn_{g^n,m/N},\label{eqn:conseq:mlas:Dual_Isom_Fricke}\\
     \vn_{g^m,n/N}\cong V^{\LL}_{h^n,m/N},\label{eqn:conseq:mlas:Dual_Isom_non_Fricke}
\end{gather}
the first holding in the case that $g$ is Fricke, and the second in
the case that $g$ is non-Fricke. In the second case we may take $g$ to lie in a subgroup $2^{1+24}.\Co_1$ of $\MM$ and the $h$ in
(\ref{eqn:conseq:mlas:Dual_Isom_non_Fricke}) denotes an element of $2^{24}.2.\Co_1$, a subgroup of $\Aut(V^{\LL})$, such that $h$ and $g$ project to the same conjugacy class of $\Co_1$. Thus the Tuite duality in the Fricke case implies the symmetry
\begin{gather}
     \mla_g^{(m/N,n)}\cong\mla_g^{(n/N,m)}
\end{gather}
of the monstrous Lie algebra $\mla_g$, as is known to hold in the
case $g=e$ studied in \cite{BorMM}. In the non-Fricke case the Tuite
duality suggests an alternative construction of the VOA $\vnh_g$ as
\begin{gather}
     \vnh_g=\bigoplus_{k=0}^{N-1}
     \left(
     V^{\LL}_{h^k}\otimes V_{\tilde{L}+(0,k)}
     \right)^{\lab h\rab}
\end{gather}
where $\tilde{L}=\frac{1}{N}\ZZ\times N\ZZ$.
\end{rmk}
We now re-scale the grading in $\vnh_g$ and $\mla_g$ by
interchanging $(m/N,n)$ with $(m,n/N)$, and we set
\begin{gather}
     \mla_g^m=\bigoplus_{n\in\frac{1}{N}\ZZ}\mla_g^{(m,n/N)}
\end{gather}
for $m\in\ZZ$. Then (\ref{eqn:conseq:mlas:Nat_Isom_mlag_vng})
implies isomorphisms of graded spaces
\begin{gather}
     \mla_g^1\cong\vn_{g^{-1}},\quad
     \mla_g^{-1}\cong\vn_{g},
\end{gather}
and we also have
\begin{gather}
     \mla_g^0=\mla_g^{(0,0)}=\CC c\oplus \CC d,
\end{gather}
where $c$ and $d$ are the degree operators for the re-scaled
grading.

In \cite{Car_Phd} Carnahan shows that $\mla_g$ is a GKM algebra,
thus generalizing a result of \cite{BorMM}. When $g$ is Fricke,
$\mla_g$ has a structure very similar to that of the original
Borcherds Monster Lie algebra $\mla=\mla_e$. In particular, it has
one real simple root and all simple roots correspond to a basis for
$\vn_g$. This implies that $\mla_g$ can be reconstructed from the
subalgebra
\begin{gather}\label{eqn:conseq:mlas:Loc_Subalg_mlag}
     \mla_g^1\oplus\mla_g^0\oplus\mla_g^{-1}
\end{gather}
which is called the {\em local subalgebra of $\mla_g$} (cf.
\cite{FeiFre_HypKMAlgSglMdlrFms}).
\begin{rmk}
When $g$ is non-Fricke $\mla_g$ does not have real simple roots, and
in general it cannot be reconstructed from its local subalgebra
(\ref{eqn:conseq:mlas:Loc_Subalg_mlag}). However the Tuite duality
suggests that if one considers the second grading of $\mla_g$ then
the local subalgebra consists of $V^{\LL}_h$ and
$V^{\LL}_{h^{-1}}$ and the $h$-twisted Heisenberg algebra. This
local subalgebra is expected to generate $\mla_g$ in the case that
$g$ is non-Fricke.
\end{rmk}
In his thesis \cite{Car_Phd} Carnahan also obtained remarkable
generalizations of the Borcherds identities for each of the
monstrous Lie algebras $\mla_g$; viz.,
\begin{gather}\label{eqn:conseq:mlas:Denom_Id_mlag_TJ}
     \vp(\MT_g(\ww)-\GD_g(\zz))
     =\prod_{m\in\ZZp,n\in\ZZ}
     (1-\vp^m\vq^{n/N})^{c_{g^m,n/N}(mn/N)}
\end{gather}
where we have set
\begin{gather}\label{eqn:conseq:mlas:Defn_cgmnNs}
     c_{g^m,n/N}(mn/N)=\dim \vn_{g^m,n/N}(mn/N).
\end{gather}
When $g$ is Fricke the invariance of $\MT_g(\ww)$ under
the Fricke involution and a re-scaling $\ww\mapsto N\ww$ yields
\begin{gather}\label{eqn:conseq:mlas:Denom_Id_mlag_JJ}
     \vp(\GD_g(N\ww)-\GD_g(N\zz))
     =\prod_{m\in\ZZp,n\in\ZZ}
     (1-\vp^m\vq^{n})^{c_{g^m,n/N}(mn/N)}.
\end{gather}
Dividing both sides of (\ref{eqn:conseq:mlas:Denom_Id_mlag_JJ}) by
$(1-\vp\vq^{-1})$ we obtain expressions which are invariant under
the transposition of $\vp$ with $\vq$. This entails the identity
\begin{gather}
     c_{g^m,n/N}(mn/N)=c_{g^m,m/N}(mn/N)
\end{gather}
which is in agreement with the Tuite duality of
(\ref{eqn:conseq:mlas:Dual_Isom_Fricke}). The identity
(\ref{eqn:conseq:mlas:Denom_Id_mlag_TJ}) implies an unexpected
formula for the graded dimension of the Verma module with trivial
character $\mlav_g$ associated to $g$.
\begin{gather}
     \mlav_g=\mc{U}(\mla_g^-)
\end{gather}
In fact the standard product formula
\begin{gather}\label{eqn:conseq:mlas:Prod_Form_gdim_mlavg}
     \gdim\mlav_g
     =\prod_{m\in\ZZp,n\in\ZZ}
     (1-\vp^m\vq^{n/N})^{-c_{g^m,n/N}(mn/N)}
\end{gather}
for the graded dimension of $\mlav_g$ follows from
(\ref{eqn:conseq:mlas:Nat_Isom_mlag_vng}) and
(\ref{eqn:conseq:mlas:Defn_cgmnNs}). The identity
(\ref{eqn:conseq:mlas:Denom_Id_mlag_TJ}) yields the alternative
expression
\begin{gather}\label{eqn:conseq:mlas:gdim_mlav_TJ}
     \gdim\mlav_g
     =\frac{1}{\vp(\MT_g(\ww)-\GD_g(\zz))}.
\end{gather}
This expression allows us to view the bi-graded dimension as a
meromorphic function on $\HH\times\HH$. Thanks to the properties of
principal moduli for curves of genus zero, the expression
(\ref{eqn:conseq:mlas:gdim_mlav_TJ}) is singular at the point
$(\ww,\zz)$ precisely when $\Gamma_g\cdot
\ww=\Gamma_g\cdot(-1/\zz)$. In \S\ref{sec:gravity:conj2} we will
give an interpretation of this fact in the setting of three
dimensional quantum gravity.

With the goal of making such an interpretation in mind we propose to rewrite the first expression (\ref{eqn:conseq:mlas:Prod_Form_gdim_mlavg}) for the bi-graded dimension of the Verma module $\mlav_g$ as
\begin{gather}\label{eqn:conseq:mlas_GDVermagHeckeOps}
     \gdim\mlav_g=\exp
     \left(
          \sum_{m\in\ZZp}\HO_g(m)\GD_g(\zz)\vp^m
     \right)
\end{gather}
using generalized Hecke operators $\HO_g(m)$ such as those discussed in \S\ref{sec:struapp:hops}; we have verified there the existence of such operators for $g=e$ and we conjecture that suitable $\HO_g(m)$ exist for every $g\in\MM$. Comparing then with the generating functions
\begin{gather}
     Z_g(\vp,\vq)=\sum_{m\in\ZZp}m\HO_g(m)\GD_g(\zz)\vp^m
\end{gather}
of the higher order Rademacher sums $\GD^{(-m)}_{g}(\zz)$ we obtain
the following expression.
\begin{gather}
     Z_g(\vp,\vq)=\vp\partial_{\vp}\log(\gdim\mlav_g)
\end{gather}
An identity of this form is well-known in the theory of symmetric
functions (cf. \cite{Mac_SymFnsHallPolys}), where it serves to
relate the generating function of the power symmetric functions
\begin{gather}\label{eqn:conseq:mlas_PowSymFns}
     p_n=\sum_{1\leq i} x_i^n
\end{gather}
with that of the complete symmetric functions
\begin{gather}\label{eqn:conseq:mlas_ComSymFns}
     h_n=
     \sum_{1\leq i_1\leq \cdots\leq i_n}
     \prod_{k=1}^nx_{i_k}.
\end{gather}
Thus the graded dimensions of the Verma modules $\mlav_g$ may be
viewed as {\em complete Rademacher sums}, where the r\^ole of the
variables $x_i$, for $i\in \ZZp$, in
(\ref{eqn:conseq:mlas_PowSymFns}) and
(\ref{eqn:conseq:mlas_ComSymFns}) is taken up by the exponential
expressions $\ee(\lBZ\gamma\rBZh\cdot \zz')$ for $\zz'=-1/\zz$ and
$\lBZ\gamma\rBZh\in\lBZ\Gamma_g\rBZh$.

Since all the coefficients appearing in the bi-graded dimension
$Z_g(\vp,\vq)$ are non-negative integers, one may expect to naturally find a
bi-graded vector space, and possibly even an
$\mla_g$-module, with bi-graded dimension given by $Z_g(\vp,\vq)$.
The next result suggests that one can expect to find such a space
within $\mlav_g$.

Given two elements $F=\sum F_{m,n}p^mq^n$ and $G=\sum G_{m,n}p^mq^n$
in $\ZZ\llb\vq\rrb[[\vp]]$, let us write $F\leq G$ in the case that
$F_{m,n}\leq G_{m,n}$ for all $m,n\in\ZZ$.

\begin{prop}\label{prop:conseq:mlas:gdim_mlavg_Ineq}
Let $g\in \MM$. If $g$ is Fricke then we have $Z_g(\vp,\vq)\leq
\gdim\mlav_g$.
\end{prop}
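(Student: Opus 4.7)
The plan is to establish the coefficient-wise inequality $P_m \leq H_m$ for every $m \geq 1$, where $P_m := \GD_g^{(m)}$ and $H_m := [\vp^m]\gdim\mlav_g$. Differentiating (\ref{eqn:conseq:mlas_GDVermagHeckeOps}) logarithmically in $\vp$ and multiplying by $\vp$ gives $\vp\,\partial_\vp\gdim\mlav_g = Z_g\cdot\gdim\mlav_g$; extracting the coefficient of $\vp^m$ with $H_0=1$ yields the Newton-type recursion
\begin{gather}\label{eqn:plan:newton}
     mH_m = P_m + \sum_{j=1}^{m-1} P_j\,H_{m-j}.
\end{gather}
I induct on $m$. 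The base case $m=1$ gives $H_1 = P_1$. For the inductive step, assuming $P_j\leq H_j$ for $1\leq j<m$, rewriting (\ref{eqn:plan:newton}) as $m(H_m-P_m) = \sum_{j=1}^{m-1}P_jH_{m-j} - (m-1)P_m$ reduces the step to $\sum_{j=1}^{m-1}P_jH_{m-j}\geq(m-1)P_m$. Granting that each $P_j$ has non-negative Fourier coefficients (addressed below), the inductive hypothesis allows one to replace $H_{m-j}$ by $P_{m-j}$, so it suffices to prove the key multiplicative inequality
\begin{gather}\label{eqn:plan:mult}
     P_a P_b \geq P_{a+b} \qquad \text{for all } a,b\in\ZZp.
\end{gather}

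To verify (\ref{eqn:plan:mult}) in the Fricke case, I would identify $P_m$ with the $m$-th Faber polynomial $F_m(T)$ of the normalized hauptmodul $T = \GD_g$ for the (appropriately conjugated) genus zero group governing the Fricke-twisted Rademacher sums, invoking the genus zero results of \S\ref{sec:moon:genus} together with the Hecke-operator identity $P_m = m\,\HO(m)\GD_g$ extending (\ref{eqn:intro:heckeops:HeckeOpId}) from the modular group to monstrous groups (cf. \S\S\ref{sec:struapp:hops},\ref{sec:gravity:mlas}). The crucial input from moonshine is that $\GD_g = \gdim \vn_g$, as the graded dimension of the $g$-twisted moonshine module, has non-negative integer Fourier coefficients. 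A direct inspection of the Hecke-operator formula (\ref{eqn:intro:heckeops:HeckeOpDefn}) then shows each Faber polynomial has non-negative Laurent coefficients, normalized as $F_m(T) = \vq^{-m} + \sum_{n\geq 1} a^{(m)}_n\vq^n$ with $a^{(m)}_n\in\NN$ and all intermediate negative and constant coefficients vanishing; this simultaneously supplies the non-negativity of each $P_j$ used above.

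The difference $F_a(T)F_b(T) - F_{a+b}(T)$ is a polynomial in $T$ of degree strictly less than $a+b$, hence admits a unique expansion $F_a F_b - F_{a+b} = \sum_{k=0}^{a+b-1}\beta_kF_k$ in the Faber basis (with $F_0=1$). Reading off the Laurent coefficients at $\vq^{-k}$ for each $k\geq 0$ and using the Faber normalization immediately yields
\begin{gather}\label{eqn:plan:betas}
     \beta_k = a^{(a)}_{b-k}\,\mathbf{1}_{\{k\leq b-1\}} + a^{(b)}_{a-k}\,\mathbf{1}_{\{k\leq a-1\}} \text{ for } k\geq 1,\quad \beta_0 = a^{(a)}_b + a^{(b)}_a,
\end{gather}
all non-negative integers. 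Since each $F_k$ has non-negative Laurent coefficients, so does $\sum_k\beta_kF_k$, proving $F_aF_b \geq F_{a+b}$ and completing the induction.

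The main obstacle is the careful verification that $P_m = \GD_g^{(m)}$ really is the Faber polynomial of a normalized hauptmodul with non-negative Fourier coefficients in the Fricke-twisted setting, together with the monstrous analogue of the Hecke-operator identity $P_m = m\,\HO(m)\GD_g$; once these are in place, the Newton induction and the Faber-basis computation (\ref{eqn:plan:betas}) are essentially formal. A minor technical nuisance is that $\GD_g$ is naturally a Laurent series in $\vq^{1/N}$ rather than $\vq$ when the relevant $N>1$, but the Faber polynomial formalism and the argument go through verbatim in $\vq^{1/N}$.
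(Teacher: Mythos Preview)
Your approach is correct but genuinely different from the paper's. The paper works directly with Carnahan's Fricke denominator identity (\ref{eqn:conseq:mlas:Denom_Id_mlag_JJ}): dividing through by $(1-\vp\vq^{-1})$ one writes $1-\Sigma^{+}=\Pi^{+}$ with $\Sigma^{+}=\sum_{m,n\ge 1}c_{g,(m+n-1)/N}((m+n-1)/N)\,\vp^{m}\vq^{n}$, then computes
\[
Z_g=\frac{\vp\vq^{-1}}{1-\vp\vq^{-1}}+\frac{\vp\partial_{\vp}\Sigma^{+}}{1-\Sigma^{+}},\qquad
\gdim\mlav_g=\frac{1}{1-\vp\vq^{-1}}+\frac{\Sigma^{+}}{(1-\vp\vq^{-1})(1-\Sigma^{+})},
\]
and reduces everything to the elementary inequality $\vp\partial_{\vp}\Sigma^{+}\le \Sigma^{+}/(1-\vp\vq^{-1})$, which holds because the right-hand side is obtained from the left by adjoining extra non-negative terms (those singular or constant in $\vq$). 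This is a five-line computation requiring no induction and no Faber machinery.

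Your route---Newton recursion plus the multiplicative Faber inequality $F_aF_b\ge F_{a+b}$---is more structural and isolates an interesting standalone fact about Faber polynomials of a normalized hauptmodul with non-negative coefficients. The computation of the $\beta_k$ in (\ref{eqn:plan:betas}) is clean and correct. The price is that your ``main obstacle,'' the identification $P_m=F_m(\GD_g)$, is essentially the denominator identity (\ref{eqn:conseq:mlas:gdim_mlav_TJ}) in disguise: the classical Faber generating function $\log\bigl(\vp(\MT_g(\ww)-x)\bigr)=-\sum_{m\ge 1}F_m^{\MT_g}(x)\,\vp^{m}/m$ applied with $x=\GD_g(\zz)$ immediately gives $P_m=m\HO(m)\GD_g=F_m^{\MT_g}(\GD_g)$, and in the Fricke case $\GD_g(\zz)=\MT_g(\zz/N)$, so these are exactly the Faber polynomials of $\GD_g$ in $\tilde\vq=\vq^{1/N}$. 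One caution: your appeal to the explicit formula (\ref{eqn:intro:heckeops:HeckeOpDefn}) for non-negativity of $P_m$ is not quite the right pointer---that is the classical $\PSL_2(\ZZ)$ operator, whereas here $\HO(m)\GD_g$ is defined via (\ref{eqn:conseq:mlas_GDVermagHeckeOps}) and involves the twisted modules $\vn_{g^k}$ for $k\mid m$; non-negativity is immediate instead from the product formula (\ref{eqn:conseq:mlas:Prod_Form_gdim_mlavg}), since every exponent there is a dimension. With that adjustment your argument goes through.
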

\begin{proof}
Dividing both sides of (\ref{eqn:conseq:mlas:Denom_Id_mlag_JJ}) by
$(1-\vp\vq^{-1})$ we obtain
\begin{gather}
     1-\sum_{m,n\in\ZZp}
     c_{g,\frac{m+n-1}{N}}(\frac{m+n-1}{N})\vp^m\vq^n
     =\prod_{m,n\in\ZZp}
     (1-\vp^m\vq^n)^{c_{g^m,n/N}(mn/N)}.
\end{gather}
We denote the sum in the left hand side by $\Sigma^+$ and the
product in the right hand side by $\Pi^+$. Then we have
\begin{gather}
     Z_g(\vp,\vq)
     =
     -\vp\partial_{\vp}\log(1-\vp\vq^{-1})\Pi^+
     =\frac{\vp\vq^{-1}}{1-\vp\vq^{-1}}
     +\frac{\vp\partial_{\vp}\Sigma^+}{1-\Sigma^+}.
\end{gather}
On the other hand
\begin{gather}
     \gdim\mlav_g
     =\frac{1}{1-\vp\vq^{-1}}\frac{1}{1-\Sigma^+}
     =\frac{1}{1-\vp\vq^{-1}}
     +\frac{\Sigma^+}{(1-\vp\vq^{-1})(1-\Sigma^+)}.
\end{gather}
Thus it is sufficient to show that
\begin{gather}\label{eqn:conseq:mlas:Sigma+_Ineq}
     \vp\partial_{\vp}(\Sigma^+)
     \leq \frac{1}{1-\vp\vq^{-1}}\Sigma^+,
\end{gather}
but this follows from the observation that both sides of
(\ref{eqn:conseq:mlas:Sigma+_Ineq}) have nonnegative coefficients
and the left hand side may be obtained from the right hand side by
deleting the terms which are singular or constant with respect to
$\vq$.
\end{proof}

To formulate a conjecture on the nature of the subspaces of
$\mlav_g$ with bi-graded dimensions $Z_g(\vp,\vq)$, as well as the
origin of the higher order Rademacher sums, we turn again, in the
next section, to the structures of twisted chiral 3d
quantum gravity.

\subsection{Second conjecture}\label{sec:gravity:conj2}

We have shown in \S\ref{sec:struapp:hops} that the higher order
Rademacher sums recover the action of the Hecke operators on the
first order Rademacher sums. In view of the conjectural relation
between first order Rademacher sums and chiral 3d
quantum gravities at central charge $c=24$, we may consider an
analogous interpretation for the higher order Rademacher sums given
by $m(\HO_g(m)\GD_g)(\zz)$ for $g\in \MM$ and ${m\in\ZZp}$. Since all
the Fourier coefficients of these expressions are non-negative
integers, one might guess that there exist families of twisted
chiral 3d quantum gravities for all central charges
$c=24m$, for $m\in\ZZp$. In particular, the untwisted ($g=e$) chiral
3d quantum gravities possess the structure of certain
extremal vertex algebras with the Monster symmetry.

The aforementioned class of extremal vertex algebras, with the
addition of a Virasoro element, has been originally conjectured by
Witten \cite{Wit_3DGravRev}. However, it was shown in
\cite{Gai_MonsSymmExtCFTS} and \cite{Hoe_SDVOSALgeMinWt} that the
addition of a Virasoro element precludes a non-trivial action of the
Monster group. The higher order Rademacher sums also point to the
partition functions without elements of spin $2$. We may modify
Witten's conjecture by asking for the existence of extremal vertex
algebras with partition functions given by the function
$J^{(-m)}(\zz)$ for $m>0$, related to the higher order Rademacher sums via
$J^{(-m)}(\zz)=\QS{\Gamma}{}{-m}(\zz)-\fc{\Gamma}{}(-m,0)$, with
$\Gamma=\PSL_2(\ZZ)$. The validity of this conjecture remains open.

Note that the Virasoro algebra, even when not represented by any
actual state, may still act on a vertex algebra with partition
function $J^{(-m)}(\zz)$. Regardless of whether or not spaces
$V^{(-m)}$ with partition function $J^{(-m)}(\zz)$ admit natural
vertex algebra structures, they certainly naturally inherit actions
of the Monster group, since the actions of Hecke operators $m\HO(m)$
on the Fourier coefficients of $J(\zz)$ may be interpreted as
actions on representations of $\MM$ (cf.
\cite{JurLepWil_RlznsMonsLieAlg}).

It is natural to enquire as to the meaning of the spaces
$V^{(-m)}_g$, for $m\in\ZZp$ and $g\in \MM$, with graded dimension
$\GD^{(-m)}_g(\zz)$. In light of our First Conjecture, it is unlikely
that the spaces $V^{(-m)}$ can represent states of a three
dimensional quantum gravity with $c=24m$, since a Virasoro element
is generally not present. Thus we have to conclude that the three
dimensional quantum gravity $V^{(-1)}=\vn$ is the only viable
candidate. Then one might view the spaces $V^{(-m)}$ for $m>1$ as
some higher overtones of the basic $m=1$ theory.

In fact, we have shown in \S\ref{sec:struapp:hops} that the action
of the classical Hecke operator $\HO(m)$ on the Rademacher sum
(\ref{eqn:intro:radsum:Rad_Sum_J}) yields a sum over
$\Gamma_{\infty}\backslash M(m)$, where $M(m)$ denotes the (image in
$G(\QQ)$ of the) set of $2\times 2$ matrices with integral entries
and determinant $m$. This sum may be viewed as the {\em
$m$-instanton correction} of the partition function in three
dimensional quantum gravity. By {\em $n$-instanton} in this context
one understands an elliptic curve which admits a holomorphic map of
degree $n$ into a given elliptic curve $E_{\lBZ\zz\rBZh}$ (cf.
\cite{DijMooVerVer_EllGenSndQntStrgs}). Thus the sum over
$\Gamma_{\infty}\backslash M(n)$ becomes a sum over all three
dimensional hyperbolic structures on a solid torus with genus one
boundary whose conformal structure corresponds to an $n$-instanton
on $E_{\lBZ\zz\rBZh}$. The same phenomena is expected for all the twisted
3d quantum gravities corresponding to elements $g\in
\MM$.

Collecting all the $m$-instanton contributions in one generating
function we then obtain a partition function
\begin{gather}\label{eqn:conseq:conj2_InstCorrPartnFn}
     Z_g(\vp,\vq)
     =
     \sum_{m\in\ZZp}
     m(\HO_g(m)\GD_g)(\zz)\vp^m
\end{gather}
depending on two variables, for each $g\in \MM$. This partition
function $Z_g(\vp,\vq)$ may be viewed as a part of the full
partition function
\begin{gather}\label{eqn:conseq:conj2_2ndQuantPartnFn}
     \tilde{Z}_g(\vp,\vq)=\exp\left(
     \sum_{m\in\ZZp}
     (\HO_g(m)\GD_g)(\zz)\vp^m
          \right)
\end{gather}
of the stringy second quantization of $\vn_g$, introduced in
\cite{DijMooVerVer_EllGenSndQntStrgs} in the untwisted case $g=e$.
(See also \cite{Tui_MonsGenMoonsPermOrbs}.) We will now generalize
their construction to an arbitrary twisted module $\vn_g$.

Let $V$ be an arbitrary vertex operator algebra; in our case
$V=\vn$. For $n$ a positive integer, let $V^{\otimes n}$ denote the
tensor product of $n$ copies of $V$, and let $Z_{n}$ be the group of
cyclic permutations of the factors generated by an $n$-cycle
$\sigma_n=(1,2,\cdots,n)$. Let $g$ be an automorphism of $V$ of
order $N$, then $g\times \sigma_n$ is an automorphism of $V^{\otimes
n}$ of order $nN/d$ where $d=(n,N)$. To the pair $(V^{\otimes
n},g\times\sigma_n)$ is canonically associated a twisted module (cf.
\cite{BarDonMas_TwSecsTensProdVOAsPermGps}), which we denote
$V_{(g,n)}$. The group $Z_{g,n}=\lab g\times\sigma_n\rab$ acts
naturally on $V_{(g,n)}$. We write $V_{(g,n)}^{Z_{g,n}}$ for the
$Z_{g,n}$-invariant subspace. The {\em stringy second quantization
of the twisted module $V_g$}, to be denoted $\squan V_g$, is, by
definition, the space
\begin{gather}\label{eqn:conseq:conj2:Defn_2nd_Quant_Vg}
     \squan V_g=
     \bigoplus_{\lambda\in\mc{P}}
     \bigotimes_{r>0}
     S^{m_r}V_{(g,r)}^{Z_{g,r}},
\end{gather}
where the sum is taken over all partitions $\lambda$ with $m_r$
parts of length $r>0$. The space $\squan V_g$ is doubly graded: by
the degrees of products of twisted sectors and by the value of
$|\lambda|$. For $g=e$ our definition of the stringy second
quantization coincides with that of
\cite{DijMooVerVer_EllGenSndQntStrgs}.

Next we establish a relation between the second quantization $\squan
\vn_g$ and the Verma module $\mlav_g$. First note the canonical
isomorphism
\begin{gather}
     \mc{U}(\mla_g^-)\cong S(\mla_g^-).
\end{gather}
Then we obtain the following result.
\begin{thm}\label{thm:conseq:conj2:Isom_Sec_Quant_Sym_mlag}
There is a canonical isomorphism of bi-graded vector spaces
\begin{gather}\label{eqn:conseq:conj2:Isom_Sec_Quant_Sym_mlag}
     \squan \vn_g\cong S(\mla_g^-).
\end{gather}
\end{thm}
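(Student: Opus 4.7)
The plan is to factor both sides into tensor products indexed by $r \in \ZZp$ and construct a canonical isomorphism factor by factor. On the algebraic side, since $\mla_g^- = \bigoplus_{m \in \ZZp}\mla_g^{-m}$, we have a canonical identification $S(\mla_g^-) \cong \bigotimes_{m \in \ZZp} S(\mla_g^{-m})$. On the stringy side, interchanging the sum over partitions in (\ref{eqn:conseq:conj2:Defn_2nd_Quant_Vg}) with the tensor product over $r$ and collecting the $m_r$-th symmetric powers yields $\squan \vn_g \cong \bigotimes_{r \in \ZZp} S(\vn_{(g,r)}^{Z_{g,r}})$. It therefore suffices to exhibit, for each $r \in \ZZp$, a canonical bi-grading-preserving isomorphism $\vn_{(g,r)}^{Z_{g,r}} \cong \mla_g^{-r}$.

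For the right-hand factor, the natural isomorphism (\ref{eqn:conseq:mlas:Nat_Isom_mlag_vng}), combined with the re-scaling of gradings described in \S\ref{sec:gravity:mlas}, identifies $\mla_g^{-r}$ with the direct sum, over $n \in \ZZ$, of the $L_0$-weight $(-rn/N + 1)$ components of the $\ee(n/N)$-eigenspaces of $g$ acting on $\vn_{g^r}$, bi-graded by $(-r, n/N)$. For the left-hand factor one invokes the theory of twisted modules for tensor product vertex operator algebras with permutation automorphisms developed by Barron--Dong--Mason, according to which $\vn_{(g,r)}$ is canonically isomorphic as a graded vector space to $\vn_{g^r}$ with the $L_0$-action rescaled by $1/r$ and shifted by a prescribed conformal anomaly, and with the generator of $Z_{g,r}$ acting by a specific combination of the $g$-action on $\vn_{g^r}$ and multiplication by a root of unity determined by the $L_0$-weight. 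Taking $Z_{g,r}$-invariants then isolates exactly those $L_0$-eigenspaces of the $g$-eigenspaces in $\vn_{g^r}$ indexed by the congruence pairing $(-r, n/N)$, reproducing precisely the subspace of $\vn_{g^r}$ identified with $\mla_g^{-r}$ above.

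The principal obstacle is the careful bookkeeping of gradings and phases in the Barron--Dong--Mason identification: one must verify that the rescaling of $L_0$ by $1/r$, the conformal anomaly shift, and the cyclic invariance condition together single out precisely the summands of $\mla_g^{-r}$ given by (\ref{eqn:conseq:mlas:Nat_Isom_mlag_vng}), with no spurious or missing components. As an independent consistency check, the bi-graded dimension identity that emerges reduces to the coincidence of the Hecke-character formula (\ref{eqn:conseq:conj2_2ndQuantPartnFn}) for the permutation orbifold partition function with the product expression (\ref{eqn:conseq:mlas_GDVermagHeckeOps}) for $\gdim \mlav_g$; these two expressions agree by construction, which guarantees that the factor-by-factor isomorphism exists and extends by tensor product to the claimed canonical isomorphism $\squan \vn_g \cong S(\mla_g^-)$.
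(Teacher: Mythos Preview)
Your proposal is correct and follows essentially the same route as the paper: reduce to a factor-by-factor identification $(\vn_{(g,r)})^{Z_{g,r}}\cong\mla_g^{-r}$, invoke the Barron--Dong--Mason isomorphism $\vn_{(g,r)}\cong\vn_{g^r}$, and then match the $Z_{g,r}$-invariants against the summands of $\mla_g^{-r}$ via (\ref{eqn:conseq:mlas:Nat_Isom_mlag_vng}). The paper carries out explicitly the eigenspace computation you flag as the principal obstacle: it identifies the generator of $Z_{g,r}$ with the operator $g\times\ee((L_0-1)/r)$ on $\vn_{g^r}$ and checks directly that its fixed subspace is $\bigoplus_{m\in\ZZ}\vn_{g^r,m/N}(mr/N)$, which is exactly the bookkeeping you describe in outline.
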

\begin{proof}
The isomorphism (\ref{eqn:conseq:conj2:Isom_Sec_Quant_Sym_mlag})
follows from the existence of isomorphisms
\begin{gather}
      (\vn_{(g,n)})^{Z_{g,n}}\cong\mla_g^{-n}
\end{gather}
of bi-graded vector spaces for each $n\in \ZZp$. To establish the
existence of these we will use the isomorphism
\begin{gather}\label{eqn:conseq:conj2:Isom_BDM}
     \vn_{(g,n)}\cong \vn_{g^n}
\end{gather}
of \cite{BarDonMas_TwSecsTensProdVOAsPermGps}.

We wish to show that the subspace of $\vn_{g^n}$ fixed by the
operator
\begin{gather}\label{eqn:conseq:conj2:Operator_Proof_Isom_Sec_Quant_Sym_mlag}
     g\times \ee((L_0-1)/n)
\end{gather}
is naturally isomorphic to $\mla_g^n$. Consider the invariant
subspace of $\vn_{g^n}$ with respect to the $N$-th power of the
operator
(\ref{eqn:conseq:conj2:Operator_Proof_Isom_Sec_Quant_Sym_mlag}),
which may be expressed as $\ee((L_0-1)N/n)$. Its action on the
graded subspace $\vn_{g^n}(kd/N)$ for $k\in\ZZ$ is multiplication by
$\ee(kd/n)$, and this scalar is $1$ if and only if $k=(n/d)m$ for
some $m\in\ZZ$. Thus we are only concerned with the graded subspaces
$\vn_{g^n}(nm/N)$ where $m\in\ZZ$. To find the invariant subspaces
with respect to the operator
(\ref{eqn:conseq:conj2:Operator_Proof_Isom_Sec_Quant_Sym_mlag}) we
consider its action on the subspaces of the form
$\vn_{g^n,m'/N}(mn/N)$. The action is scalar multiplication by
$\ee(-m'/N)\ee(m/N)$ and is therefore trivial if and only if
$m\equiv m' \pmod{N}$. Thus the full subspace of $\vn_{g^n}$
invariant under
(\ref{eqn:conseq:conj2:Operator_Proof_Isom_Sec_Quant_Sym_mlag}) is
given by
\begin{gather}
     \bigoplus_{m\in\ZZ}\vn_{g^n,m/N}(mn/N)
     \cong\mla_g^{-n}.
\end{gather}
To complete the proof we note that the group generated by the action
of the operator
(\ref{eqn:conseq:conj2:Operator_Proof_Isom_Sec_Quant_Sym_mlag}) has
order $nN/d$, and under the isomorphism
(\ref{eqn:conseq:conj2:Isom_BDM}) its action recovers that of
$Z_{g,n}$.
\end{proof}
The isomorphism of Theorem
\ref{thm:conseq:conj2:Isom_Sec_Quant_Sym_mlag} suggests that the
stringy second quantization $\squan \vn_g$ admits an action by the
Lie algebra $\mla_g$ and thus provides a Fock space realization of
the Verma module $\mlav_g$. When $\mla_g$ is generated by its local
subalgebra (\ref{eqn:conseq:mlas:Loc_Subalg_mlag}), as happens in
the Fricke case, it is sufficient to describe the action of brackets
$[\mla_g^{\pm 1},\mla_g^{-n}]\subset\mla_g^{-n\pm 1}$ via the action
of $\vn_{g^{\pm 1}}$ on the twisted sector for $((\vn)^{\otimes
n},g\times \sigma_n)$. Note also that the removal of the subspaces
corresponding to the terms which are singular or constant with
respect to $\vq$, as in the proof of Proposition
\ref{prop:conseq:mlas:gdim_mlavg_Ineq}, will yield an embedding of
the instanton subspace $\mc{I}_g$ inside the stringy second
quantization of $\vn_g$.

Our interpretation of the higher order Rademacher sums and the
complete Rademacher sums, and their relation to the second
quantization of the moonshine vertex operator algebra and its
twisted modules, as well as the monstrous Lie algebras and their
Verma modules, suggests the following extension of our first
conjecture, Conjecture \ref{conj:conseq:conj1_conj1}.
\begin{conj}\label{conj:conseq:conj2_conj2}
Assuming the existence of a family of $g$-twisted chiral 3d quantum gravities for $g\in \MM$, having the properties
stated in Conjecture \ref{conj:conseq:conj1_conj1}, there also
exists a natural geometric interpretation of the family of twisted
monstrous Lie algebras $\mla_g$, the instanton spaces $\mc{I}_g$,
and the denominator formulas for all $g\in \MM$ via the second
quantization of the corresponding $g$-twisted chiral three
3d quantum gravities.
\end{conj}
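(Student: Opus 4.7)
The plan is to build the conjectural geometric framework on top of Conjecture \ref{conj:conseq:conj1_conj1} by systematically translating the algebraic isomorphism of Theorem \ref{thm:conseq:conj2:Isom_Sec_Quant_Sym_mlag} into the language of the conjectural $g$-twisted chiral three dimensional quantum gravities. First I would take the generating function (\ref{eqn:conseq:conj2_2ndQuantPartnFn}) seriously as the partition function of a second quantized gravitational theory whose single-particle Hilbert space is the $g$-twisted sector $\vn_g$, with the sum over positive integers $m$ in (\ref{eqn:conseq:conj2_InstCorrPartnFn}) corresponding to a sum over $m$-instanton configurations, i.e. degree $m$ coverings of the boundary elliptic curve. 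Using the coset analysis of \S\ref{sec:struapp:hops}, these contributions should be parameterized geometrically, thereby realizing $m(\HO(m)\GD_g)(\zz)$ as a saddle-point approximation to the $m$-instanton sector of the conjectural twisted chiral gravity.

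Second, I would upgrade the vector space isomorphism $\squan \vn_g \cong S(\mla_g^-)$ of Theorem \ref{thm:conseq:conj2:Isom_Sec_Quant_Sym_mlag} to a natural $\mla_g$-module isomorphism by producing a geometric construction of the Lie bracket on $\mla_g$ from the structure of the twisted gravity. In the Fricke case, the local subalgebra (\ref{eqn:conseq:mlas:Loc_Subalg_mlag}) should be implemented by modes of boundary vertex operators on $\vn_g\oplus\vn_{g^{-1}}$ together with the two generators $c$ and $d$ recording boundary Hamiltonian and instanton number, and the full $\mla_g$ would then be recovered from this local data. The instanton subspace $\mc{I}_g$, characterized in the proof of Proposition \ref{prop:conseq:mlas:gdim_mlavg_Ineq} by the removal of terms singular or constant in $\vq$, should correspond geometrically to the subspace of pure single-instanton states, stripped of anti-holomorphic BTZ-style dressing.

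Third, I would interpret the denominator formula (\ref{eqn:conseq:mlas:Denom_Id_mlag_TJ}) and the meromorphic presentation (\ref{eqn:conseq:mlas:gdim_mlav_TJ}) in terms of the pole structure of $\tilde{Z}_g(\vp,\vq)$ regarded as a meromorphic function on $\HH\times\HH$. The poles on the locus $\Gamma_g\cdot\ww=\Gamma_g\cdot(-1/\zz)$ should correspond physically to the degeneration limit in which a bulk geometry on one side becomes Fricke-dual to the geometry on the other, and the product factorization in (\ref{eqn:conseq:mlas:Denom_Id_mlag_JJ}) should follow from identifying each factor $(1-\vp^m\vq^n)^{c_{g^m,n/N}(mn/N)}$ with the contribution to the gravitational path integral of a Fock tower over a single-particle state in $\mla_g^{(m,n/N)}$, weighted by its $(c,d)$-bi-degree.

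The main obstacle will be giving a rigorous independent construction of the $g$-twisted chiral three dimensional quantum gravities themselves. Without such a construction, producing on the state space both a genuine bulk interpretation of $m$-instanton sectors and a canonical $\mla_g$-action compatible with the isomorphism of Theorem \ref{thm:conseq:conj2:Isom_Sec_Quant_Sym_mlag} is only formal. The expected resolution is that a proper zeta-function regularization of the chiral gravity path integral will reproduce the analytic continuation procedure of \S\ref{sec:modradsum:constr}, with the one-loop determinant accounting for the constant $\tfrac{1}{2}\fc{\Gamma_g}{}(m,0)$ of (\ref{eqn:modradsum:conver:Relate_QS_RS}); making this precise is itself a substantial open problem that will most likely have to be addressed before the conjecture can be established in full generality.
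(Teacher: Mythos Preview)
The statement in question is a \emph{conjecture}, not a theorem, and the paper does not offer a proof of it. What follows Conjecture~\ref{conj:conseq:conj2_conj2} in the paper is a paragraph of heuristic commentary (the partition function should depend on two modular parameters, should be symmetric under $\ww\leftrightarrow -1/\zz$, its singularities along $\Gamma_g\cdot\ww=\Gamma_g\cdot(-1/\zz)$ should have a quantum-gravity explanation, and this would account for the denominator identities (\ref{eqn:conseq:mlas:Denom_Id_mlag_TJ})), together with speculative remarks on possible relations to string theory. There is no argument to compare against.

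Your proposal is not a proof either, and you say as much in your final paragraph: the entire construction rests on the hypothetical existence of the $g$-twisted chiral gravities of Conjecture~\ref{conj:conseq:conj1_conj1}, which is itself open. What you have written is a coherent research programme that is broadly consistent with the paper's own motivating discussion---in particular your reading of the pole locus of (\ref{eqn:conseq:mlas:gdim_mlav_TJ}), the instanton interpretation of the Hecke sums from \S\ref{sec:struapp:hops}, and the use of Theorem~\ref{thm:conseq:conj2:Isom_Sec_Quant_Sym_mlag} to transport structure from $\squan\vn_g$ to $\mlav_g$ all match the authors' intended picture. But none of the individual steps (constructing the bulk theory, producing a canonical $\mla_g$-action on $\squan\vn_g$, deriving the product side of (\ref{eqn:conseq:mlas:Denom_Id_mlag_JJ}) from a path integral) is carried out, so this remains a sketch of what a proof might look like rather than a proof.
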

First of all, the partition function of the second quantized
$g$-twisted chiral 3d quantum gravity should depend
on two modular parameters $\ww$ and $\zz$, and should respect the
symmetry that interchanges $\ww$ with $-1/\zz$. Second, there should
be a quantum gravity theoretic explanation for why these partition
functions are singular precisely when $\ww$ and $-1/\zz$ belong to
the same orbit of $\Gamma_g$. This will explain the remarkable
denominator formulas (\ref{eqn:conseq:mlas:Denom_Id_mlag_TJ}) of
Carnahan, and ultimately, the fundamental r\^ole of the principal
moduli in the moonshine Conjectures.

Where can we find an appropriate setting for all the structures that
appear in our Second Conjecture? The first answer that might come to
mind is the 26 dimensional bosonic string theory. It was known for
a long time that the physical space has a Lie algebra structure (cf.
\cite{Fre_RepKMAlgDualResMdl}, \cite{GodOli_AlgsLatsStgs}), which
can also be recovered from the semi-infinite cohomology (cf.
\cite{FreGarZuc_SemiInfCohStrThy}, \cite{LiaZuc_MoonsCoh}). However,
in this way one can only get a fake Monster Lie algebra (cf.
\cite{BorMM}) and there is no (straight forward) geometric way to
turn it into the real Monster Lie algebra that appears in the three
dimensional quantum gravity approach. Also, the 26 dimensions of
the bosonic string have nothing to do with the 3 dimensions of
the quantum gravity. It is still possible that some constructions of
string theory can be applied in the 3d quantum
gravity setting. In particular, one can expect to identify the
negative part $\mla^-$ of the Monster Lie algebra in the space of
the second quantization of $\vn$ with a certain version of BPS
states (cf. \cite{HarMoo_AlgsBPSStgs}).

Clearly, our results on Rademacher sums, their relation to the
moonshine module, monstrous Lie algebras, and 3d
quantum gravity, admit a super-counterpart \cite{DunFre_RadSumsSM}.
In this case the promise of relationships with the structures of the
10 dimensional superstring is even more tempting, since the latter
has played such a prominent r\^ole in physics over the past 25
years. However, in spite of the of the remarkable mathematical
similarity, the 10 of the ten dimensional superstring and the 3
of the 3d quantum supergravity emerging from the
super-counterpart of the Rademacher sums have different geometric
meaning and are not related by a compactification of any kind.

The most fascinating fact about the 3d quantum
supergravity is that while it is similar, but not directly related,
to the initial 3d quantum gravity, the corresponding
second quantized theories are directly related. The higher order
Rademacher sums point to a larger second quantized space in the
super-case; a space which contains the second quantized space we
considered above.

Then what can these second quantized 3d quantum
gravities mean in physics? Na\"ively, any quantization can be
interpreted as a categorification, which lifts a given theory one
dimension up. One can then wonder if the second quantization in our
case might point to an extreme sector of certain four dimensional
quantum gravities, such as the extreme Kerr black hole, which was
recently found in \cite{GuiHarSonStr_KerrCFTCorr} also to be dual to
a chiral 2d CFT. In this case the Monster (or
moonshine) might be the answer to the perpetual question of what is
behind the letter {M} in the theory that has not yet revealed its
true name.

\section*{Acknowledgement}

We are very grateful to the referees for many helpful comments and suggestions.

\addcontentsline{toc}{section}{References}


\end{document}